\documentclass[11pt]{article}
\usepackage{amssymb,amscd,amsfonts,amsbsy,amsmath,amsthm,amssymb}
\usepackage{enumerate}
\usepackage{mathrsfs}
\usepackage{epsf,epsfig}

\usepackage{color}

\newcommand{\Bk}{\color{black}}

\newcommand{\dist}{\operatorname{dist}}

\newcommand{\ree}{\mathbb{R}^{n+1}}

\setlength{\oddsidemargin}{0.00in}
\setlength{\oddsidemargin}{0.00in}
\setlength{\evensidemargin}{0in}
\setlength{\textwidth}{6.20in}
\setlength{\textheight}{8.80in}
\setlength{\topmargin}{0.00in}

\newcommand{\meanint}{{\int{\mkern-19mu}-}}
\newcommand{\dmeanint}{{\int{\mkern-16mu}-}}

\newtheorem{proposition}{Proposition}[section]
\newtheorem{theorem}[proposition]{Theorem}
\newtheorem{lemma}[proposition]{Lemma}
\newtheorem{corollary}[proposition]{Corollary}

\newtheorem{remark}[proposition]{Remark}

\newtheorem{definition}[proposition]{Definition}

\newtheorem{convention}[proposition]{Convention}

\DeclareMathOperator{\diam}{diam}

\begin{document}

\title{$L^p$-Square Function Estimates on Spaces of Homogeneous Type and on 
Uniformly Rectifiable Sets    
\footnotetext[1]{The work of the authors has been supported in part by the US NSF and
the Simons Foundation}
\footnotetext[2]{{\it{\rm 2010} Mathematics Subject Classification:} 
Primary: 28A75, 42B20;
Secondary: 28A78, 42B25, 42B30}
\footnotetext[3]{{\it Key words and phrases:} Square function, quasi-metric space,
space of homogeneous type, Ahlfors-David regularity, singular integral operators, 
area function, Carleson operator, $T(1)$ theorem for the square function, local $T(b)$
theorem for the square function, uniformly rectifiable sets, tent spaces, 
variable coefficient kernels}}

\author{
Steve Hofmann\\
University of Missouri, Columbia\\
hofmanns@missouri.edu\\
\and  
Dorina Mitrea\\
University of Missouri, Columbia\\
mitread@missouri.edu\\
\and  
Marius Mitrea\\
University of Missouri, Columbia\\
mitream@missouri.edu\\
\and  
Andrew J. Morris\\
University of Missouri, Columbia\\
morrisaj@missouri.edu
}

\date{\today}

\maketitle

\begin{abstract}
We establish square function estimates for integral operators on uniformly rectifiable sets by proving a local $T(b)$ theorem and applying it to show that such estimates are stable under the so-called big pieces functor. More generally, we consider integral operators associated with Ahlfors-David regular sets of arbitrary codimension in ambient quasi-metric spaces. The local $T(b)$ theorem is then used to establish an inductive scheme in which square function estimates on so-called big pieces of an Ahlfors-David regular set are proved to be sufficient for square function estimates to hold on the entire set. Extrapolation results for $L^p$ and Hardy space versions of these 
estimates are also established. Moreover, we prove square function estimates for 
integral operators associated with variable coefficient kernels, including  
the Schwartz kernels of pseudodifferential operators acting between vector bundles
on subdomains with uniformly rectifiable boundaries on manifolds.
\end{abstract}

\newpage
\tableofcontents
\newpage

\section{Introduction}\label{Sect:1}
\setcounter{equation}{0}

The purpose of this work is three-fold:  first, to develop the so-called 
``local $T(b)$ theory" for square functions in a very general context, in which 
we allow the ambient space to be of homogeneous type, and in which the ``boundary" 
of the domain is of arbitrary (positive integer) co-dimension;  second, to use a 
special case of this local $T(b)$ theory to establish boundedness, for a rather 
general class of square functions, on uniformly rectifiable sets of codimension one 
in Euclidean space; and third, to establish an extrapolation principle whereby  an 
$L^p$ (or even weak-type $L^p$) estimate for a square function, for {\it one} fixed 
$p$, yields a full range of $L^p$ bounds.  We shall describe these results in more 
detail below, but let us first recall some of the history of the development  of 
the theory of square functions.

Referring to the role square functions play in mathematics, E. Stein wrote in 1982
(cf. \cite{St82}) that 
``{\it [square] functions are of fundamental importance in analysis,
standing as they do at the crossing of three important roads many of us have
travelled by: complex function theory, the Fourier transform (or orthogonality
in its various guises), and real-variable methods.}"  In the standard 
setting of the unit disc ${\mathbb{D}}$ in the complex plane, the classical 
square function $Sf$ of some $f:{\mathbb{T}}\to{\mathbb{C}}$ 
(with ${\mathbb{T}}:=\partial{\mathbb{D}}$) is defined 
in terms of the Poisson integral $u_f(r,\omega)$ of $f$ in ${\mathbb{D}}$ 
(written in polar coordinates) by the formula
\begin{eqnarray}\label{D-FC45}
(Sf)(z):=\Bigl(\int_{(r,\omega)\in\Gamma(x)}
|(\nabla u_f)(r,\omega)|^2\,r\,dr\,d\omega\Bigr)^{1/2},\qquad z\in{\mathbb{T}},
\end{eqnarray}
where $\Gamma(z)$ stands for the Stolz domain 
$\{(r,\omega):\,|{\rm arg}(z)-\omega|<1-r<\tfrac{1}{2}\}$ in ${\mathbb{D}}$. 
Let $v$ denote the (normalized) complex conjugate of $u_f$ in ${\mathbb{D}}$.
Then, if the analytic function $F:=u_f+iv$ is one-to-one, the quantity $(Sf)(z)^2$ 
may be naturally interpreted as the area of the region
$F(\Gamma(z))\subseteq{\mathbb{C}}$ (recall that ${\rm det}(DF)=|\nabla u_f|^2$).
The operator \eqref{D-FC45} was first considered by Lusin and the observation 
just made justifies the original name for \eqref{D-FC45} as Lusin's area function
(or Lusin's area integral). A fundamental property of $S$, originally proved by 
complex methods (cf. \cite[Theorem~3, pp.\,1092-1093]{Cal65}, and \cite{FeSt72}
for real-variable methods) is that
\begin{eqnarray}\label{D-FC46}
\|Sf\|_{L^p({\mathbb{T}})}\approx\|f\|_{H^p({\mathbb{T}})}
\quad\mbox{ for }\,\,p\in(0,\infty),
\end{eqnarray}
which already contains the $H^p$-boundedness of the Hilbert transform. 
Indeed, if $F=u+iv$ is analytic then the Cauchy-Riemann equations entail 
$|\nabla u|=|\nabla v|$ and, hence, $S(u|_{\mathbb{T}})=S(v|_{\mathbb{T}})$. 
In spite of the technical, seemingly intricate nature of \eqref{D-FC45} and its 
generalizations to higher dimensions, such as
\begin{eqnarray}\label{D-FC47}
(Sf)(x):=\Bigl(\int_{|x-y|<t}|(\nabla u_f)(y,t)|^2\,t^{1-n}dydt\Bigr)^{1/2},
\qquad x\in{\mathbb{R}}^n:=\partial{\mathbb{R}}^{n+1}_{+},
\end{eqnarray}
a great deal was known by the 1960's about the information encoded into 
the size of $Sf$, measured in $L^p$, thanks to the pioneering work of 
D.L. Burkholder, A.P. Calder\'on, C. Fefferman, R.F. Gundy, N. Lusin, 
J. Marcinkiewicz, C. Segovia, M. Silverstein, E.M. Stein, and A. Zygmund, among others. 
See, e.g., \cite{BGS}, \cite{Cal50}, \cite{Cal65}, \cite{FeSt72}, \cite{Se69}, 
\cite{St70}, \cite{St82}, \cite{STEIN}, and the references therein.

Subsequent work by B. Dahlberg, E. Fabes, D. Jerison, C. Kenig and others,
starting in the late 1970's (cf. \cite{Dah80}, \cite{DJK}, \cite{Fa88}, \cite{Ke},
\cite{M-DAH}), has brought to prominence the relevance of square function estimates 
in the context of partial differential equations in non-smooth settings, whereas 
work by D. Jerison and C. Kenig \cite{JeKe82} in the 1980's as well as G. David 
and S. Semmes in the 1990's (cf. \cite{DaSe91}, \cite{DaSe93}) has lead to 
the realization that square function estimates are also intimately connected 
with the geometry of sets (especially geometric measure theoretic aspects). 
More recently, square function estimates have played an important role in the 
solution of the Kato problem in \cite{HMc}, \cite{HLMc}, \cite{AHLMcT}.

The operator $S$ defined in \eqref{D-FC45} is obviously non-linear but 
the estimate 
\begin{eqnarray}\label{D-Fa44}
\|Sf\|_{L^p}\leq C\|f\|_{H^p}
\end{eqnarray}
may be linearized by introducing a suitable (linear) vector-valued operator.
Specifically, set $\Gamma:=\{(z,t)\in{\mathbb{R}}^{n+1}_{+}:\,|z|<t\}$ and 
consider the Hilbert space 
\begin{eqnarray}\label{D-Fa45}
{\mathscr{H}}:=\Bigl\{h:\Gamma\to{\mathbb{C}}^n:\,\mbox{ $h$ is measurable and }\,
\|h\|_{\mathscr{H}}:=
\Bigl(\int_{\Gamma}|h(z,t)|^2t^{1-n}dtdz\Bigr)^{\frac{1}{2}}<\infty\Bigr\}.
\end{eqnarray}
Also, let $\widetilde{S}f:{\mathbb{R}}^n\to{\mathscr{H}}$ be defined by the formula 
\begin{eqnarray}\label{D-Fa46}
\Bigl((\widetilde{S}f)(x)\Bigr)(z,t):=(\nabla u_f)(x-z,t),\qquad
\forall\,x\in{\mathbb{R}}^n,\,\,\,\forall\,(z,t)\in\Gamma,
\end{eqnarray}
i.e., $\widetilde{S}$ is the integral operator (mapping scalar-valued functions
defined on ${\mathbb{R}}^n$ into ${\mathscr{H}}$-valued functions defined 
on ${\mathbb{R}}^n$), whose kernel
$k:{\mathbb{R}}^{n}\times{\mathbb{R}}^{n}\setminus{\rm diagonal}\to{\mathscr{H}}$,
which is of convolution type, is 
given by $(k(x,y))(z,t):=(\nabla P_t)(x-y-z)$, for all $x,y\in{\mathbb{R}}^n$, $x\not=y$, 
and $(z,t)\in\Gamma$, where $P_t(x)$ is the Poisson kernel in ${\mathbb{R}}^{n+1}_{+}$.
Then, if $L^p({\mathbb{R}}^n,{\mathscr{H}})$ stands for the B$\hat{\rm o}$chner 
space of ${\mathscr{H}}$-valued, $p$-th power integrable functions on 
${\mathbb{R}}^n$, it follows that 
\begin{eqnarray}\label{D-Fa47}
\|Sf\|_{L^p({\mathbb{R}}^n)}\leq C\|f\|_{H^p({\mathbb{R}}^n)}\,\Longleftrightarrow\,
\|\widetilde{S}f\|_{L^p({\mathbb{R}}^n,\,{\mathscr{H}})}\leq C\|f\|_{H^p({\mathbb{R}}^n)}.
\end{eqnarray}
The relevance of the linearization procedure described in 
\eqref{D-Fa45}-\eqref{D-Fa47} is that it highlights the basic role of the 
case $p=2$ in \eqref{D-Fa44}. This is because the operator $\widetilde{S}$ falls
within the scope of theory of Hilbert space-valued singular integral operators of 
Calder\'on-Zygmund type for which boundedness on $L^2$ automatically extrapolates 
to the entire scale $L^p$, for $1<p<\infty$ (the extension to the case when 
$p\leq 1$ makes use of other specific features of $\widetilde{S}$).

From the point of view of geometry, what makes the above reduction to the case 
$p=2$ work is the fact that the upper-half space has the property that 
$x+\Gamma\subseteq{\mathbb{R}}^{n+1}_{+}$ for every $x\in\partial{\mathbb{R}}^{n+1}_{+}$.
Such a cone property actually characterizes Lipschitz domains (cf. \cite{HMT}), 
in which scenario this is the point of view adopted in 
\cite[Theorem~4.11, p.\,73]{M-LNM}. 

Hence, $S$ may be eminently regarded as a singular integral operator with 
a Hilbert space-valued Calder\'on-Zygmund kernel and, as such, establishing 
the $L^2$ bound 
\begin{eqnarray}\label{Tfgg-77}
\|\widetilde{S}f\|_{L^2({\mathbb{R}}^n,\,{\mathscr{H}})}
\leq C\|f\|_{L^2({\mathbb{R}}^n)}
\end{eqnarray}
is of basic importance to jump-start the study of the operator $S$. 
Now, as is well-known (and easy to check; see, e.g., \cite[pp.\,27-28]{STEIN}), 
\eqref{Tfgg-77} follows from Fubini's and Plancherel's theorems. 
 
For the goals we have in mind in the present work, it is worth recalling
a quote from C. Fefferman's 1974 ICM address \cite{Feff74} where he writes that
``{\it When neither the Plancherel theorem nor Cotlar's lemma applies, 
$L^2$-boundedness of singular operators presents very hard problems, 
each of which must (so far) be dealt with on its own terms.}" 
For scalar singular integral operators, this situation  began to be remedied 
in 1984 with the advent of the $T(1)$-Theorem,
proved by G. David and J.-L. Journ\'e in \cite{DJ84}. This was initially done in the Euclidean setting, using Fourier analysis methods. It was subsequently generalized 
and refined in a number of directions, including the extension to spaces of 
homogeneous type by R. Coifman (unpublished, see the discussion in \cite{Ch}), 
and the $T(b)$ Theorems proved by A.\,McIntosh and Y.\,Meyer in \cite{McM85}, 
and by G.\,David, J.L.\,Journ\'e and S.\,Semmes in \cite{DJS}. The latter reference
also contains an extension to the class of singular-integral operators with 
matrix-valued kernels. The more general case of operator-valued kernels has been 
treated by Figiel \cite{F} and by T. Hyt\"onen and L. Weis  \cite{HyWe}, who prove 
$T(1)$ Theorems in the spirit of the original work in \cite{DJ84} for singular 
integrals associated with kernels taking values in Banach spaces satisfying 
the UMD property.   Analogous $T(b)$ theorems were obtained by H\"ytonen \cite{Hy} 
(in Euclidean space) and by H\"ytonen and Martikainen \cite{HyM1} (in a metric 
measure space). Yet in a different direction, initially motivated by applications 
to the theory of analytic capacity, $L^2$-boundedness criteria which are local 
in nature appeared in the work of M. Christ \cite{Christ}. Subsequently, Christ's 
local $T(b)$ theorem has been extended to the setting of non-doubling spaces by 
F. Nazarov, S. Treil and A. Volberg in \cite{NTV}. Further extensions of the local 
$T(b)$ theory for singular integrals appear in \cite{AHMTT}, \cite{AY},
\cite{AR} and \cite{HyM2}.

Much of the theory mentioned in the preceding paragraph has also been developed in the context of square functions, as opposed to singular integrals.
In the convolution setting discussed above, \eqref{Tfgg-77} follows immediately from Plancherel's theorem, but the latter tool fails \Bk
in the case when ${\mathbb{R}}^{n+1}_{+}$ is replaced by a domain whose 
geometry is rough (so that, e.g., the cone property is violated), 
and/or one considers a square-function operator whose integral kernel ${\theta}(x,y)$ 
is no longer of convolution type (as was the case for $\widetilde{S}$).
A case in point is offered by the square-function estimate of the type 
\begin{eqnarray}\label{TD-56R}
\int_0^\infty\|\Theta_t f\|_{L^2({\mathbb{R}}^n)}^2\frac{dt}{t}
\leq C\|f\|^2_{L^2({\mathbb{R}}^n)},
\end{eqnarray}
where
\begin{eqnarray}\label{TD-56R.2}
\bigl(\Theta_t f\bigr)(x):=\int_{{\mathbb{R}}^n}{\theta}_t(x,y)f(y)\,dy,\qquad
x\in{\mathbb{R}}^n,\qquad t>0,
\end{eqnarray}
with $\{{\theta}_t(\cdot,\cdot)\}_{t>0}$ a standard Littlewood-Paley family, i.e., 
satisfying for some exponent $\alpha>0$,
\begin{eqnarray}\label{TD-56R.3A}
&& |{\theta}_t(x,y)|\leq C\frac{t^\alpha}{(t+|x-y|)^{n+\alpha}}\quad\mbox{ and}
\\[4pt]
&& |{\theta}_t(x,y)-{\theta}_t(x,y')|\leq C\frac{|y-y'|^\alpha}{(t+|x-y|)^{n+\alpha}}
\,\,\mbox{ if }\,\,|y-y'|<t/2.
\label{TD-56R.3B}
\end{eqnarray}
Then, in general, linearizing estimate \eqref{TD-56R} in a manner similar
to \eqref{D-Fa47} yields an integral operator which is no longer of convolution type.
As such, Plancherel's theorem is no longer directly effective in dealing 
with \eqref{TD-56R} given that the task at hand is establishing the $L^2$-boundedness 
of a variable kernel (Hilbert-valued) singular integral operator. 
However, M. Christ and J.-L. Journ\'e have shown in \cite{CJ} 
(under the same size/regularity conditions in \eqref{TD-56R.3A}-\eqref{TD-56R.3B})
that the square function estimate \eqref{TD-56R} is valid if the following 
Carleson measure condition holds:
\begin{eqnarray}\label{TD-5ii}
\sup_{Q\subseteq{\mathbb{R}}^n}\Bigl(
\int_0^{\ell(Q)}\meanint_Q|(\Theta_t 1)(x)|^2\,\frac{dxdt}{t}\Bigr)<\infty,
\end{eqnarray}
where the supremum is taken over all cubes $Q$ in ${\mathbb{R}}^n$.
The latter result is also implicit in the work of Coifman and Meyer \cite{CM}.
Moreover, S. Semmes' has shown in \cite{Sem90}  that, in the above 
\Bk setting, \eqref{TD-56R} holds if there exists a para-accretive 
function $b$ such that \eqref{TD-5ii} holds with ``1" replaced by ``$b$". 
%
%

Refinements of Semmes' global $T(b)$ theorem for square functions,
in the spirit of M. Christ's local $T(b)$ theorem for singular integrals \cite{Christ}, 
have subsequently been established in \cite{Au}, \cite{Ho3}, \cite{Ho4}.
The local $T(b)$ theorem for square functions which constitutes the main result 
in \cite{Ho4} reads as follows. Suppose $\Theta_t$ is as in \eqref{TD-56R.2} with 
kernel satisfying \eqref{TD-56R.3A}-\eqref{TD-56R.3B} as well as 
\begin{eqnarray}\label{TD-56R.3C}
|{\theta}_t(x,y)-{\theta}_t(x',y)|\leq C\frac{|x-x'|^\alpha}{(t+|x-y|)^{n+\alpha}}
\,\,\mbox{ if }\,\,|x-x'|<t/2.
\end{eqnarray}
In addition, assume that there exists a constant
$C_o\in(0,\infty)$ along with an exponent $q\in(1,\infty)$ and a system $\{b_Q\}_Q$ 
of functions indexed by dyadic cubes $Q$ in ${\mathbb{R}}^n$, such that for 
each dyadic cube $Q\subseteq{\mathbb{R}}^n$ one has:

(i) $\int_{{\mathbb{R}}^n}|b_Q(x)|^q\,dx\leq C_o|Q|$;

(ii) $\frac{1}{C_o}|Q|\leq\Bigl|\int_{{\mathbb{R}}^n}b_Q(x)\,dx\Bigr|$;

(iii) $\int_Q\Bigl(\int_0^{\ell(Q)}|(\Theta_t b_Q)(x)|^2\,\frac{dt}{t}\Bigr)^{q/2}\,dx
\leq C_o|Q|$.

\noindent Then the square function estimate \eqref{TD-56R} holds. The case
$q=2$ of this theorem does not require \eqref{TD-56R.3C} (just regularity 
in the second variable, as in \eqref{TD-56R.3B})\footnote{In fact, even the 
case $q\neq 2$ does not require \eqref{TD-56R.3C}, if the vertical square 
function is replaced by a conical one; see \cite{G} for details.}, and was 
already implicit in the solution of the Kato problem in~\cite{HMc}, 
\cite{HLMc}, \cite{AHLMcT}. It was formulated explicitly in \cite{Au},
\cite{Ho3}. An extension of the result of \cite{Ho4} to the case 
that the half-space is replaced by $\mathbb{R}^{n+1}\setminus E$, where $E$ 
is a closed Ahlfors-David regular set (cf. Definition \ref{Rcc-TG34} below) 
of Hausdorff dimension $n$, appears in \cite{GM}.  The latter extension has 
been used to prove a result of {\it free boundary} type, in which higher 
integrability of the Poisson kernel, in the presence of certain natural 
background hypotheses, is shown to be equivalent to uniform rectifiability 
(cf. Definition \ref{Def-unif.rect} below) of the boundary \cite{HM}, 
\cite{HMU}. Further extensions of the result of \cite{Ho4}, to the case in 
which the kernel $\theta_t$ and pseudo-accretive system $b_Q$ may be 
matrix-valued (as in the setting of the Kato problem), and in which $\theta_t$ 
need no longer satisfy the pointwise size and regularity conditions 
\eqref{TD-56R.3A}-\eqref{TD-56R.3B}, will appear in the forthcoming Ph.D. 
thesis of A. Grau de la Herran \cite{G}. 

A primary motivation for us in the present work is the connection between 
square function bounds (or their localized versions in the form of
``Carleson measure estimates"), and a quantitative, scale invariant notion
of rectifiability.  This subject has been developed extensively by 
David and Semmes \cite{DaSe91}, \cite{DaSe93} (but with some key ideas already 
present in the work of P. Jones \cite{J}). Following \cite{DaSe91}, \cite{DaSe93}, 
we shall give in the sequel (cf. Definition~\ref{Def-unif.rect}), a precise definition 
of the property that a closed set $E$ is ``Uniformly Rectifiable" (UR), but for now 
let us merely mention that UR sets are the ones on which ``nice" singular integral
operators are bounded on $L^2$.  David and Semmes have shown that these sets may also
be characterized via certain square function estimates, or equivalently, via Carleson 
measure estimates.  For example, let $E\subset \mathbb{R}^{n+1}$ be a closed set of 
codimension one, which is  ($n$-dimensional) Ahlfors-David regular (ADR) 
(cf. Definition~\ref{Rcc-TG34} below).  Then $E$ is UR if and only if  we have 
the Carleson measure estimate 
\begin{eqnarray}\label{eq1.sf}
\sup_B r^{-n}\int_{B}
\bigl|\bigl(\nabla^2{\mathcal{S}}1\bigr)(x)\bigr|^2\,\dist(x,E)\,dx<\infty,
\end{eqnarray} 
where the supremum runs over all Euclidean balls $B:=B(z,r)\subseteq{\mathbb{R}}^{n+1}$,
with $r\leq\diam(E)$, and center $z\in E$, and where ${\mathcal{S}}f$ is the harmonic 
single layer potential of the function $f$, i.e.,
\begin{eqnarray}\label{eq1.layer}
{\mathcal{S}}f(x):=c_n\,\int_{E}|x-y|^{1-n}f(y)\,d{\mathcal{H}}^n(y),\qquad
x\in{\mathbb{R}}^{n+1}\setminus E.
\end{eqnarray}
Here ${\mathcal{H}}^n$ denotes $n$-dimensional Hausdorff measure. For an appropriate normalizing constant $c_n|x|^{1-n}$ is the usual fundamental solution for the 
Laplacian in ${\mathbb{R}}^{n+1}$. We refer the reader to \cite{DaSe93} for details, 
but see also Section~\ref{Sect:4} where we present some related results.  
We note that by ``$T1$" reasoning (cf. Section~\ref{Sect:3} below), \eqref{eq1.sf} 
is equivalent to the square function bound
\begin{eqnarray}\label{eq1.sf2}
\int_{{\mathbb{R}}^{n+1}\setminus E}\bigl|\bigl(\nabla^2{\mathcal{S}}f\bigr)(x)|^2\,
\dist(x,E)\,dx\leq C\int_E |f(x)|^2\,d{\mathcal{H}}^n(x)\,.
\end{eqnarray}
Using an idea of P. Jones \cite{J}, one may derive, for UR sets, a quantitative 
version of the fact that rectifiability may be characterized in terms of existence 
a.e. of approximate tangent planes. Again, a Carleson measure is used to
express matters quantitatively. For $x\in E$ and $t>0$ we set
\begin{eqnarray}\label{eq1.beta}
\beta_2(x,t):=\inf_P\left(\frac{1}{t^{n}}
\int_{B(x,t)\cap E}\left(\frac{\mbox{dist}(y,P)}{t}\right)^2
\,d{\mathcal{H}}^n(y)\right)^{1/2},
\end{eqnarray}
where the infimum runs over all $n$-planes $P$. Then a closed, ADR set $E$ of 
codimension one is UR if and only if the following Carleson measure estimate 
holds on $E\times{\mathbb{R}}_+$:
\begin{eqnarray}\label{eq1.UR}
\sup_{x_0\in E,\,r\,>\,0}r^{-n}\int_0^r\int_{B(x_0,t)\cap E}
\beta_2(x,t)^2\,d{\mathcal{H}}^n(x)\frac{dt}{t}\,<\,\infty.
\end{eqnarray}
See \cite{DaSe91} for details, and for a formulation in the case of higher codimension.
A related result, also obtained in \cite{DaSe91}, is that a set $E$ as above is UR 
if and only if, for every odd $\psi\in C^\infty_0(\ree)$, one has the following 
discrete square function bound
\begin{eqnarray}\label{eq1.sf3}
\sum_{k=-\infty}^\infty\int_{E}
\left|\int_E2^{-kn}\psi\bigl(2^{-k}(x-y)\bigr)f(y)\,d{\mathcal{H}}^n(y)\right|^2
\,d{\mathcal{H}}^n(x)\leq C_\psi\int_E|f(x)|^2\,d{\mathcal{H}}^n(x)\,.
\end{eqnarray}
Again, there is a Carleson measure analogue, and also a version for sets 
$E$ of higher codimension.

The following theorem collects some of the main results in our present work.  
It generalizes results described earlier in the introduction, which were valid in the 
codimension one case, and in which the ambient space ${\mathscr{X}}$ was Euclidean.
To state it, recall that (in a context to be made precise below) a measurable 
function $b:E\to{\mathbb{C}}$ is called para-accretive if it is essentially bounded 
and there exist constants $c,C\in(0,\infty)$ such that the following conditions are satisfied: 
\begin{eqnarray}\label{para-acc}
\forall\,Q\in{\mathbb{D}}(E)\quad\exists\,\widetilde{Q}\in{\mathbb{D}}(E)\quad
\mbox{such that }\,\,\,
\widetilde{Q}\subseteq Q,\quad\ell(\widetilde{Q})\geq c\ell(Q),\quad
\left|\meanint_{\widetilde{Q}} b\,d\sigma\right|\geq C.
\end{eqnarray}
Other relevant definitions will be given in the sequel. 

\begin{theorem}\label{M-TTHH}
Suppose that $({\mathscr{X}},\rho,\mu)$ is an $m$-dimensional {\rm ADR} space 
for some $m>0$ and fix a number $d\in(0,m)$. Also, let 
\begin{eqnarray}\label{K234-AXXX}
{\theta}:(\mathscr{X}\times\mathscr{X})
\setminus\{(x,x):\,x\in \mathscr{X}\}\longrightarrow{{\mathbb{R}}}
\end{eqnarray}
be a function which is Borel measurable with respect to the product topology 
$\tau_\rho\times\tau_\rho$, and which has the property that there exist finite positive 
constants $C_{\theta},\,\alpha,\,\upsilon$ such that for all $x,y\in\mathscr{X}$ 
with $x\neq y$ the following hold:
\begin{eqnarray}\label{hszz-AXXX}
&& \hskip -0.40in
|{\theta}(x,y)|\leq\frac{C_{\theta}}{\rho(x,y)^{d+\upsilon}},
\\[4pt]
&& \hskip -0.40in
|{\theta}(x,y)-{\theta}(x,\widetilde{y})|\leq C_{\theta} \frac{\rho(y,\widetilde{y})^\alpha}{\rho(x,y)^{d+\upsilon+\alpha}}, 
\quad\forall\,\widetilde{y}\in\mathscr{X}\setminus\{x\}\,\,\mbox{ with }\,\,
\rho(y,\widetilde{y})\leq\tfrac{1}{2}\rho(x,y).\quad
\label{hszz-3-AXXX}
\end{eqnarray}
Assume that $E$ is a closed subset of $({\mathscr{X}},\tau_\rho)$
and that $\sigma$ is a Borel regular measure 
on $(E,\tau_{\rho|_{E}})$ such that $\bigl(E,\rho\bigl|_E,\sigma\bigr)$ 
is a $d$-dimensional {\rm ADR} space, and define the integral operator 
$\Theta=\Theta_E$ for all functions $f\in L^p(E,\sigma)$, $1\leq p\leq\infty$, by
\begin{eqnarray}\label{operator-AXXX}
(\Theta f)(x):=\int_E {\theta}(x,y)f(y)\,d\sigma(y),
\qquad\forall\,x\in\mathscr{X}\setminus E.
\end{eqnarray}
Let ${\mathbb{D}}(E)$ denote a dyadic cube structure on $E$ and, for each
$Q\in{\mathbb{D}}(E)$, denote by $T_E(Q)$ the dyadic Carleson tent over $Q$.
Finally, let $\rho_{\#}$ be the regularized version of the quasi-distance $\rho$ 
as in Theorem~\ref{JjEGh} and, for each $x\in{\mathscr{X}}$, set 
$\delta_E(x):=\inf\{\rho_{\#}(x,y):\,y\in E\}$.

Then the following are equivalent:
\begin{enumerate}
\item[(1)] $[${\tt $L^2$\! square\! function\! estimate}$]$
There exists $C\in(0,\infty)$ with the property that 
for each $f\in L^2(E,\sigma)$ one has
\begin{eqnarray}\label{G-UFXXX.2}
\int_{\mathscr{X}\setminus E}|(\Theta f)(x)|^2\delta_E(x)^{2\upsilon-(m-d)}\,
d\mu(x)\leq C\int_E|f(x)|^2\,d\sigma(x).
\end{eqnarray}
\item[(2)] $[${\tt Carleson\! measure\! condition\! on\! dyadic\! tents\!
for\! $\Theta$\! tested\! on\! $1$}$]$
There holds 
\begin{eqnarray}\label{UEHgXXX}
\sup_{Q\in{\mathbb{D}}(E)}\left(\tfrac{1}{\sigma(Q)}\int_{T_E(Q)}
\bigl|\bigl(\Theta 1\bigr)(x)\bigr|^2\,
\delta_E(x)^{2\upsilon-(m-d)}\,d\mu(x)\right)<\infty.
\end{eqnarray}
\item[(3)] $[${\tt Carleson\! measure\! condition\! on\! dyadic\! tents\!
for\! $\Theta$\! acting\! on\! $L^\infty$}$]$
There exists a constant $C\in(0,\infty)$ with the property that
for each $f\in L^\infty(E,\sigma)$ 
\begin{eqnarray}\label{UEHgXXX.22}
\sup_{Q\in{\mathbb{D}}(E)}\left(\tfrac{1}{\sigma(Q)}\int_{T_E(Q)}
|(\Theta f)(x)|^2\delta_E(x)^{2\upsilon-(m-d)}\,d\mu(x)\right)^{1/2}
\leq C\|f\|_{L^\infty(E,\sigma)}.
\end{eqnarray}
\item[(4)] $[${\tt Carleson\! measure\! condition\! on\! balls\!
for\! $\Theta$\! tested\! on\! $1$}$]$
There holds 
\begin{eqnarray}\label{UEHgXXX.2}
\sup_{x\in E,\,r>0}
\left(\tfrac{1}{\sigma\bigl(E\cap B_{\rho_{\#}}(x,r)\bigr)}
\int_{B_{\rho_{\#}}(x,r)\setminus E}
|\Theta 1|^2\delta_E^{2\upsilon-(m-d)}\,d\mu\right)<\infty.
\end{eqnarray}
\item[(5)] $[${\tt Carleson\! measure\! condition\! for\! $\Theta$\! tested\! on\! a\! 
para-accretive\! function}$]$
There exists a para-accretive function $b:E\to{\mathbb{C}}$ with the 
property that 
\begin{eqnarray}\label{UEHgXXX.2PA}
\sup_{Q\in{\mathbb{D}}(E)}
\left(\tfrac{1}{\sigma(Q)}
\int_{T_E(Q)}|(\Theta b)(x)|^2\delta_E(x)^{2\upsilon-(m-d)}\,d\mu(x)\right)<\infty.
\end{eqnarray}
\item[(6)] $[${\tt Carleson\! measure\! condition\! on\! balls\! for\! 
$\Theta$\! acting\! on\! $L^\infty$}$]$
There exists $C\in(0,\infty)$ with the property that
for each $f\in L^\infty(E,\sigma)$ 
\begin{eqnarray}\label{UEHgXXX.2S}
\sup_{x\in E,\,r>0}
\left(\tfrac{1}{\sigma\bigl(E\cap B_{\rho_{\#}}(x,r)\bigr)}
\int_{B_{\rho_{\#}}(x,r)\setminus E}
|\Theta f|^2\delta_E^{2\upsilon-(m-d)}\,d\mu\right)^{1/2}
\leq C\|f\|_{L^\infty(E,\sigma)}.
\end{eqnarray}
\item[(7)] $[${\tt Local\! $T(b)$\! condition\! on dyadic\! cubes}$]$
There exist two finite constants $C_0\geq 1$ and $c_0\in(0,1]$, along with a 
collection $\{b_Q\}_{Q\in{\mathbb{D}}(E)}$ of $\sigma$-measurable 
functions $b_Q:E\rightarrow{\mathbb{C}}$ such that for each $Q\in{\mathbb{D}}(E)$ 
the following hold:
\begin{eqnarray}\label{CON-BB}
\begin{array}{c}
\displaystyle\int_E |b_Q|^2\,d\sigma\leq C_0\sigma(Q),
\\[16pt]
\left|\int_{\widetilde{Q}} b_Q\,d\sigma\right|\geq\frac{1}{C_0}\,\sigma(\widetilde{Q})\quad
\mbox{for some }\widetilde{Q}\subseteq Q,\,\,\,
\ell(\widetilde{Q})\geq c_0\ell(Q),
\\[16pt]
\displaystyle\int_{T_E(Q)}|(\Theta\,b_Q)(x)|^2
\delta_E(x)^{2\upsilon-(m-d)}\,d\mu(x)\leq C_0\sigma(Q).
\end{array}
\end{eqnarray}
\item[(8)] $[${\tt Local\! $T(b)$\! condition\! on\! surface\! balls}$]$
There exist $C_0\in[1,\infty)$ and, for each surface ball $\Delta=\Delta(x_o,r):=B_{\rho_{\#}}(x_o,r)\cap E$, where $x_o$ is a point in $E$ and $r$ 
is a finite number in $\bigl(0,{\rm diam}_\rho(E)\bigr]$, a $\sigma$-measurable function
$b_\Delta:E\rightarrow{\mathbb{C}}$ supported in $\Delta$, such that the following 
estimates hold:
\begin{eqnarray}\label{CON-BB.789}
\begin{array}{c}
\displaystyle\int_E |b_\Delta|^2\,d\sigma\leq C_0\sigma(\Delta),\quad
\left|\int_\Delta b_\Delta\,d\sigma\right|\geq\frac{1}{C_0}\,\sigma(\Delta),
\\[16pt]
\displaystyle\int_{B_{\rho_{\#}}(x_o,2C_\rho r)\setminus E}|(\Theta\,b_\Delta)(x)|^2
\delta_E(x)^{2\upsilon-(m-d)}\,d\mu(x)\leq C_0\sigma(\Delta).
\end{array}
\end{eqnarray}
\item[(9)] $[${\tt Big\! Pieces\! of\! Square\! Function\! Estimate}$]$
The set $E$ has {\rm BPSFE} relative to the kernel ${\theta}$ 
(cf. Definition~\ref{sjvs}).
\item[(10)] $[${\tt Iterated\! Big\! Pieces\! of\! Square\! Function\! 
Estimate}$]$
The set $E$ has ${\rm (BP)}^k${\rm SFE} relative to the kernel ${\theta}$
(cf. Definition~\ref{sjvs-DDD}) for some, or any, $k\in{\mathbb{N}}$.
\item[(11)] $[${\tt Weak-$L^p$\! square\! function\! estimate}$]$
There exist an exponent $p\in(0,\infty)$ and constants $C,\kappa\in(0,\infty)$ 
such that for every $f\in L^{p}(E,\sigma)$ 
\begin{eqnarray}\label{GvBhXXX}
\hskip -0.20in
\sup_{\lambda>0}\left[\lambda\cdot
\sigma\Bigl(\Bigl\{x\in E:\int_{\Gamma_{\kappa}(x)}|(\Theta f)(y)|^2\,
\frac{d\mu(y)}{\delta_E(y)^{m-2\upsilon}}>\lambda^{2}\Bigr\}\Bigr)^{1/p}\right]
\leq C\Bigl(\int_{E}|f|^p\,d\sigma\Bigr)^{1/p},
\end{eqnarray}
where $\Gamma_\kappa(x)$ stands for the nontangential approach region defined 
in \eqref{TLjb}.
\item[(12)] $[${\tt Hardy\! and\! $L^p$\! square\! function\! estimates}$]$
Set $\gamma:=\min\,\{\alpha,(\log_2C_\rho)^{-1}\}$. Then for each
$p\in\bigl(\frac{d}{d+\gamma},\infty\bigr)$ the operator $\Theta$ 
extends to the space $H^p(E,\rho|_{E},\sigma)$, defined as the Lebesgue space  
$L^p(E,\sigma)$ if $p\in(1,\infty)$, and the Coifman-Weiss Hardy  
space on the space of homogeneous type $(E,\rho|_{E},\sigma)$ if 
$p\in\bigl(\frac{d}{d+\gamma},1\bigr]$, and this extension satisfies
\begin{eqnarray}\label{CfrdXXX}
\left\|\Bigl(\int_{\Gamma_{\kappa}(x)}|(\Theta f)(y)|^2\,
\frac{d\mu(y)}{\delta_E(y)^{m-2\upsilon}}\Bigr)^{\frac{1}{2}}
\right\|_{L^p_x(E,\sigma)}\!\!\!\leq C\|f\|_{H^p(E,\rho|_{E},\sigma)},
\end{eqnarray}
for each function $f\in H^p(E,\rho|_{E},\sigma)$.
\item[(13)] $[${\tt Mixed-norm\! space\! estimate}$]$
For each $p\in\bigl(\frac{d}{d+\gamma},\infty\bigr)$, with 
$\gamma:=\min\,\{\alpha,(\log_2C_\rho)^{-1}\}$, and each $q\in(1,\infty)$, 
the operator 
\begin{eqnarray}\label{ki-DUDXXX}
\delta_E^{\upsilon-m/q}\Theta:H^p(E,\rho|_{E},\sigma)
\longrightarrow L^{(p,q)}({\mathscr{X}},E)
\end{eqnarray}
is well-defined, linear and bounded, where $L^{(p,q)}({\mathscr{X}},E)$ is the
mixed norm space defined in \eqref{Mixed-FF7}.
\end{enumerate}
\end{theorem}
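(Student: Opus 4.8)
The plan is to prove the thirteen statements equivalent by running a circle of implications whose single hard step is the local $T(b)$ theorem for square functions; all the remaining links are soft, and rest on geometric comparisons between dyadic Carleson tents $T_E(Q)$ and $\rho_\#$-surface balls, on tail estimates exploiting the kernel size bound \eqref{hszz-AXXX}, on the Carleson embedding theorem, and on the Calder\'on--Zygmund/Coifman--Weiss extrapolation machinery in tent spaces. First I would settle the core cycle $(1)\Leftrightarrow(2)\Leftrightarrow(7)$. For $(1)\Rightarrow(2)$, fix $Q\in{\mathbb D}(E)$, split $1=\mathbf{1}_{CQ}+\mathbf{1}_{E\setminus CQ}$ over a fixed dilate $CQ$, bound $\int_{T_E(Q)}|\Theta\mathbf{1}_{CQ}|^2\delta_E^{2\upsilon-(m-d)}\,d\mu$ by \eqref{G-UFXXX.2} applied to $\mathbf{1}_{CQ}$ together with the ADR doubling of $\sigma$, and bound the tail $|\Theta\mathbf{1}_{E\setminus CQ}(x)|\lesssim\ell(Q)^{-\upsilon}$ on $T_E(Q)$ by summing \eqref{hszz-AXXX} over $\rho$-annuli, then use the standard ADR computation $\int_{T_E(Q)}\delta_E^{2\upsilon-(m-d)}\,d\mu\approx\ell(Q)^{2\upsilon}\sigma(Q)$. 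For $(2)\Rightarrow(7)$ take $b_Q:=\mathbf{1}_Q$: the first two lines of \eqref{CON-BB} are immediate, and the third follows from \eqref{UEHgXXX} once the tail $\Theta(\mathbf{1}_{E\setminus Q})$ on $T_E(Q)$ is absorbed via \eqref{hszz-AXXX} as above. The implication $(7)\Rightarrow(1)$ is precisely the local $T(b)$ theorem for square functions established elsewhere in the paper.

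Next I would attach the remaining Carleson-type and testing conditions to this cycle. The equivalences $(2)\Leftrightarrow(4)$ and $(3)\Leftrightarrow(6)$ hold because each $T_E(Q)$ is sandwiched between two $\rho_\#$-balls of radius comparable to $\ell(Q)$, and conversely each surface ball is comparable to a bounded union of dyadic cubes at the relevant scale, by ADR and the properties of ${\mathbb D}(E)$ recorded in Theorem~\ref{JjEGh}. For $(1)\Rightarrow(3)$ and $(1)\Rightarrow(6)$, given $f\in L^\infty(E,\sigma)$ split $f=f\mathbf{1}_{CQ}+f\mathbf{1}_{E\setminus CQ}$, control the local piece by \eqref{G-UFXXX.2} and $\|f\|_{L^\infty}$, and the tail by \eqref{hszz-AXXX}; the reverse implications $(3)\Rightarrow(2)$ and $(6)\Rightarrow(4)$ are the special case $f\equiv1$. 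For $(2)\Rightarrow(5)$ take the para-accretive function $b\equiv1$, so that \eqref{UEHgXXX.2PA} reduces to \eqref{UEHgXXX}; for $(5)\Rightarrow(7)$ take $b_Q:=b\mathbf{1}_Q$, where the first two lines of \eqref{CON-BB} use $\|b\|_{L^\infty}$ and the para-accretivity \eqref{para-acc}, and the third follows from \eqref{UEHgXXX.2PA} plus the usual tail estimate for $\Theta(b\mathbf{1}_{E\setminus Q})$. This closes $(1)\Leftrightarrow(2)\Leftrightarrow(3)\Leftrightarrow(4)\Leftrightarrow(5)\Leftrightarrow(6)\Leftrightarrow(7)$. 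The surface-ball local $T(b)$ condition $(8)$ is tied in by $(7)\Leftrightarrow(8)$, transferring between dyadic cubes and surface balls of comparable size (passing from \eqref{CON-BB} to \eqref{CON-BB.789} and back, using that $B_{\rho_\#}(x_o,2C_\rho r)\setminus E$ contains, and is contained in, tents over boundedly many cubes of size $\approx r$).

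The geometric and analytic ``big pieces'' items, the weak-type estimate, and the $H^p$/$L^p$/mixed-norm versions are attached as follows. Since $E$ is trivially a big piece of itself, $(1)\Rightarrow(9)$ is immediate; the converse $(9)\Rightarrow(8)$ is the \emph{big pieces functor}: the hypothesis of Definition~\ref{sjvs} furnishes, in each surface ball $\Delta$, a subset of proportional $\sigma$-measure sitting inside an ADR set on which the square function estimate for the \emph{same} kernel $\theta$ holds, and from this one manufactures the pseudo-accretive functions $b_\Delta$ in \eqref{CON-BB.789} and invokes the local $T(b)$ theorem. Iterating the functor gives $(9)\Leftrightarrow(10)$ by induction on $k$ (Definition~\ref{sjvs-DDD}), the case $k=1$ being $(9)$ itself. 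For $(1)\Rightarrow(11)$ one first uses Fubini and ADR to pass between the vertical weight $\delta_E^{2\upsilon-(m-d)}$ and the conical weight $\delta_E^{-(m-2\upsilon)}$ integrated over $\Gamma_\kappa(x)$ (so that $(1)$ is equivalent to its conical $L^2$ form), and then $(11)$ with $p=2$ follows by Chebyshev. The self-improvement $(11)\Rightarrow(2)$ is a stopping-time/John--Nirenberg argument: over a fixed surface ball one extracts, from the weak-$L^p$ bound \eqref{GvBhXXX} applied to a localized version of $1$, a good set of proportional measure on which $\Theta1$ is controlled, and iterates the construction to recover the Carleson bound \eqref{UEHgXXX.2}. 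Finally, $(1)\Rightarrow(12)$ and $(1)\Rightarrow(13)$: knowing $(1)$, the linearization of $f\mapsto(\Theta f)|_{\Gamma_\kappa(\cdot)}$ is a Calder\'on--Zygmund operator valued in a tent space, respectively in the mixed-norm space $L^{(p,q)}({\mathscr X},E)$, which is $L^2$-bounded by $(1)$; the Calder\'on--Zygmund theory on the space of homogeneous type $(E,\rho|_E,\sigma)$ together with the Coifman--Weiss atomic $H^p$ theory then extends it to the stated ranges, yielding \eqref{CfrdXXX} and the boundedness of \eqref{ki-DUDXXX}. The endpoint cases $p=2$ (and $q=2$) give $(12)\Rightarrow(1)$ and $(13)\Rightarrow(1)$.

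The main obstacle is the implication $(7)\Rightarrow(1)$, the local $T(b)$ theorem for square functions in this generality: a quasi-metric ambient $({\mathscr X},\rho,\mu)$, a set $E$ of arbitrary codimension $m-d$, and the regularized distance $\rho_\#$. This requires a corona/stopping-time decomposition of ${\mathbb D}(E)$ adapted to the system $\{b_Q\}$, delicate bookkeeping with the quasi-triangle inequality and the exponents $\upsilon,\alpha$ in \eqref{hszz-AXXX}--\eqref{hszz-3-AXXX}, a telescoping estimate that reconstructs $\Theta1$ from the $\Theta b_Q$ on coronal regions, and a self-improvement step bootstrapping the \emph{local} $L^2$ testing condition in \eqref{CON-BB} into the \emph{global} bound \eqref{G-UFXXX.2}. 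The next most substantial point is the big pieces functor $(9)\Rightarrow(8)$, which must upgrade a purely geometric approximation hypothesis into an analytic pseudo-accretive system; the weak-type self-improvement $(11)\Rightarrow(2)$ and the tent-space extrapolations $(1)\Rightarrow(12),(13)$ are technical but follow well-established templates once the $L^2$ theory is in hand.
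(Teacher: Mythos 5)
Most of your scheme coincides with the paper's actual proof: the implication $(1)\Rightarrow(2)$ is the converse half of the $T(1)$ theorem (Theorem~\ref{SChg}), $(2)\Rightarrow(7)$ is the choice $b_Q:={\mathbf 1}_Q$, $(7)\Rightarrow(1)$ is Theorem~\ref{Thm:localTb}, items $(9)$--$(10)$ are handled by the big--pieces scheme (Theorem~\ref{Thm:BPSFtoSF} and its iteration), $(1)\Rightarrow(11)$ is the Fubini/Tschebyshev computation, and your ``stopping--time/John--Nirenberg'' route for $(11)\Rightarrow(2)$ is exactly Proposition~\ref{VGds-L2} and Lemma~\ref{SQ-lema}. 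Your minor re-routings (proving $(2)\Leftrightarrow(4)$ by direct geometric comparison instead of passing through $(1)$, or going $(9)\Rightarrow(8)$ rather than building the dyadic system $b_Q:={\mathbf 1}_{Q\cap E_Q}$ directly) are harmless, though be aware that the tail absorptions you invoke casually (e.g.\ controlling $\Theta({\mathbf 1}_{E\setminus Q})$ on $T_E(Q)$, and the far-field part of a large ball when $E$ is bounded) are not free: the first needs the thin-boundary property \eqref{hacc-76es} of the dyadic cubes, and the second needs the global decay estimate coming from \eqref{hszz-AXXX} alone.

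The genuine gap is your treatment of $(1)\Rightarrow(12),(13)$. You propose to linearize $f\mapsto(\Theta f)\big|_{\Gamma_\kappa(\cdot)}$ into a tent-space-valued Calder\'on--Zygmund operator and then quote vector-valued CZ theory plus Coifman--Weiss atoms ``to the stated ranges.'' This fails for the upper range of $p$. The kernel $\theta$ is assumed regular only in the second variable (see \eqref{hszz-3-AXXX}; the paper in fact exploits the absence of any $x$-regularity), and the cones $\Gamma_\kappa(x)$ of \eqref{TLjb} are not translates of a fixed cone, so the vector-valued kernel $K(x,y):={\mathbf 1}_{\Gamma_\kappa(x)}(\cdot)\,\theta(\cdot,y)$ has no usable H\"older or H\"ormander regularity in $x$: the symmetric difference of two nearby cones is a thin shell whose $\mu$-measure cannot be controlled in a general $m$-dimensional ADR ambient space the way it can in ${\mathbb R}^{n+1}_+$. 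Calder\'on--Zygmund theory with regularity only in $y$ yields the weak $(1,1)$ bound, the atomic $H^p$ estimates and interpolation, i.e.\ precisely the range $p\le 2$ (more generally $p\le q$ for \eqref{ki-DUDXXX}), but it cannot reach $p\in(2,\infty)$, since the duality/adjoint route would require $x$-regularity that is simply not there. This upper range is exactly where the paper abandons the linearization (the introduction states explicitly that the environment no longer permits it) and instead proves $L^p$ bounds via the good-$\lambda$ comparison between the Lusin operator ${\mathscr A}_{q,\kappa}$ and the Carleson operator ${\mathfrak C}_{q,\kappa}$ (Theorem~\ref{AsiC}, estimate \eqref{sbrn}), combined with the pointwise domination \eqref{PaSeD-2} of the Carleson operator by maximal functions, which itself uses the already-established $L^2$ (or $L^q$) bound; this is the content of Cases 2--3 in the proof of Theorem~\ref{VGds-2} and of the tent-space machinery of Subsections~\ref{SSect:5.1}--\ref{SSect:5.2}, and your outline contains no substitute for it. Note also that \eqref{ki-DUDXXX} with general $q\in(1,\infty)$ requires the $L^q$ input of Theorem~\ref{VGds-2}, not merely the $q=2$ case in \eqref{CfrdXXX}.
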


A few comments pertaining to the nature and scope of Theorem~\ref{M-TTHH} are in order.

\vskip 0.08in
$\bullet$ Theorem~\ref{M-TTHH} makes the case that estimating the square function 
in $L^p$, along with other related issues considered above, may be regarded as 
``zeroth order calculus", since only integrability and quasi-metric geometry 
are involved, without recourse to differentiability (or vector space structures). 
In particular, our approach is devoid of any PDE results and techniques. Compared 
with works in the upper-half space ${\mathbb{R}}^n\times (0,\infty)$, or so-called 
generalized upper-half spaces $E\times (0,\infty)$ (cf., e.g., \cite{HMY} and 
the references therein), here we work in an ambient ${\mathscr{X}}$ with no 
distinguished ``vertical" direction. Moreover, the set $E$ is allowed to have 
arbitrary ADR co-dimension in the ambient ${\mathscr{X}}$. In this regard we also
wish to point out that Theorem~\ref{M-TTHH} permits the consideration of fractal
subsets of the Euclidean space (such as the case when $E$ is the von Koch's 
snowflake in ${\mathbb{R}}^2$, in which scenario $d=\frac{\ln 4}{\ln 3}$).

$\bullet$ Passing from $L^2$ estimates to $L^p$ estimates is no longer done via a 
linearization procedure (since the environment no longer permits it) and, instead,
we use tent space theory and exploit the connection between the Lusin and the Carleson 
operators on spaces of homogeneous type (thus generalizing work from \cite{CoMeSt} 
in the Euclidean setting). This reinforces the philosophy that the square-function 
is a singular integral operator at least in spirit (if not in the letter). 

$\bullet$ The various quantitative aspects of the claims in items 
{\it (1)}-{\it (11)} of Theorem~\ref{M-TTHH} are naturally related to one another. 
The reader is also alerted to the fact that similar results to those contained
in Theorem~\ref{M-TTHH} are proved in the body of the manuscript for a larger class
of kernels (satisfying less stringent conditions) than in the theorem above.
The specific way in which Theorem~\ref{M-TTHH} follows from these more general 
results is discussed in \S\,\ref{Sect:6}.

\vskip 0.08in

We now proceed to describe several consequences of Theorem~\ref{M-TTHH} for 
subsets $E$ of the Euclidean space. First we record the following square function 
estimate, which extends work from \cite{DaSe91}. 

\begin{theorem}\label{sfe-cor}
Suppose that $E$ is a closed subset of ${\mathbb{R}}^{n+1}$ which is $d$-dimensional 
{\rm ADR} for some $d\in\{1,\dots,n\}$ and denote by $\sigma$ the surface measure 
induced by the $d$-dimensional Hausdorff measure on $E$. Assume that $E$ has big 
pieces of Lipschitz images of subsets of ${\mathbb{R}}^d$, i.e., there 
exist $\varepsilon,M\in(0,\infty)$ so that for every $x\in E$ and every 
$R\in(0,\infty)$, there is a Lipschitz mapping $\varphi$ with Lipschitz 
norm $\leq M$ from the ball $B^d(0,R)$ in ${\mathbb{R}}^d$ into 
${\mathbb{R}}^{n+1}$ such that
\begin{eqnarray}\label{nvdz}
\sigma\left(E\cap B(x,R)\cap\varphi(B^d(0,R))\right)\leq \varepsilon R^d.
\end{eqnarray}
Suppose $\psi:{\mathbb{R}}^{n+1}\to{\mathbb{R}}$ 
is a compactly supported, smooth, odd function and for each $k\in{\mathbb{Z}}$ set 
$\psi_k(x):=2^{-kd}\psi\bigl(\frac{x}{2^k}\bigr)$ for $x\in{\mathbb{R}}^{n+1}$.
Then for every $q\in(1,\infty)$ and every $p\in\bigl(\frac{d}{d+1},\infty\bigr)$ 
there exists $C\in(0,\infty)$ such that
\begin{eqnarray}\label{bh}
\left\|\Bigl(\sum\limits_{k=-\infty}^{+\infty}
\meanint_{y\in\Delta(x,2^k)}\Bigl|\int_E\psi_k(z-y)f(z)\,d\sigma(z)
\Bigr|^q\,d\sigma(y)\Bigr)^{1/q}
\right\|_{L^p_x(E,\sigma)}\leq C\|f\|_{H^p(E,\sigma)}
\end{eqnarray}
for every $f\in H^p(E,\sigma)$, where $\Delta(x,2^k):=\{y\in E:\,|y-x|<2^k\}$
for each $x\in E$ and $k\in{\mathbb{Z}}$.
\end{theorem}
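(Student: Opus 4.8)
The plan is to deduce Theorem~\ref{sfe-cor} from the master equivalence in Theorem~\ref{M-TTHH}, specialized to the Euclidean ambient space $\mathscr{X}=\mathbb{R}^{n+1}$ (so $m=n+1$) with the set $E$ being $d$-dimensional ADR. First I would reconcile the discrete square function appearing in \eqref{bh} with the continuous integral operator $\Theta$ of Theorem~\ref{M-TTHH}. The natural move is to introduce the continuous analogue $\theta_t(x,y):=t^{-d}\psi\bigl((x-y)/t\bigr)$ for $x,y\in\mathbb{R}^{n+1}$ and $t>0$, or rather to absorb the parameter $t$ into the distance-to-$E$ variable via the device already used throughout the paper: set $\theta(x,y):=\delta_E(x)^{-d}\psi\bigl((x-y)/\delta_E(x)\bigr)$ (up to the regularization $\rho_\#$), which one checks satisfies the size bound \eqref{hszz-AXXX} and the Hölder regularity \eqref{hszz-3-AXXX} with $\upsilon$ chosen so that the weight $\delta_E^{2\upsilon-(m-d)}\,d\mu$ in \eqref{CfrdXXX} matches the unweighted sum over dyadic scales $2^k$ in \eqref{bh}; since $\psi$ is smooth and compactly supported, $\alpha$ can be taken to be $1$. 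The oddness of $\psi$ is exactly the cancellation hypothesis needed to make $\Theta 1$ (or $\Theta b$) enjoy a Carleson measure bound. Once this dictionary is in place, \eqref{bh} is precisely the estimate \eqref{CfrdXXX} of item~{\it (12)}, after passing from the conical square function with integral over $\Gamma_\kappa(x)$ to the discrete Whitney-type sum $\sum_k \meanint_{\Delta(x,2^k)}$, which is a routine comparison given ADR.

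Next I would verify that the geometric hypothesis of Theorem~\ref{sfe-cor} — namely, that $E$ has big pieces of Lipschitz images of subsets of $\mathbb{R}^d$ — implies one of the equivalent conditions {\it (1)}--{\it (10)}, the most convenient being item~{\it (9)}, the Big Pieces of Square Function Estimate {\rm BPSFE}. For this I need to know that on a Lipschitz graph (or Lipschitz image) of a subset of $\mathbb{R}^d$, the square function estimate \eqref{G-UFXXX.2} for this particular kernel holds. This is where one invokes the classical result: on a Lipschitz graph, a smooth odd truncated kernel generates an $L^2$-bounded square function — this follows from the Calderón--Zygmund / David--Semmes theory for UR sets, or directly from the $T(1)$ reasoning alluded to in Section~\ref{Sect:3}, since the odd kernel integrated against $1$ has a pointwise bound that sums to a finite Carleson measure because the graph is flat at small scales in the relevant averaged sense. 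The BPSFE property is then exactly the statement that every ball in $E$ contains a big piece (a subset of proportional measure) of such a good set; this is the content of \eqref{nvdz}. Thus Theorem~\ref{M-TTHH} item~{\it (9)} holds, hence all items hold, in particular item~{\it (12)}, which gives \eqref{bh}.

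The passage from the $q=2$ conical square function produced by Theorem~\ref{M-TTHH} to the general $q\in(1,\infty)$ mixed-norm statement in \eqref{bh} uses item~{\it (13)}: the operator $\delta_E^{\upsilon-m/q}\Theta$ is bounded into the mixed-norm space $L^{(p,q)}(\mathscr{X},E)$, and unwinding the definition of $L^{(p,q)}$ in terms of Whitney averages over $E\times\mathbb{R}_+$ reproduces precisely the $\ell^q$-in-$k$, $L^p$-in-$x$ structure of the left-hand side of \eqref{bh}. The Hardy space endpoint $p\in\bigl(\frac{d}{d+1},1\bigr]$ in \eqref{bh} corresponds to $\gamma=\min\{\alpha,(\log_2 C_\rho)^{-1}\}=1$ here, since in the Euclidean metric $C_\rho=1$ and $\alpha=1$ for smooth $\psi$; so $H^p(E,\sigma)$ is the Coifman--Weiss Hardy space, matching the statement.

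I expect the main obstacle to be the bookkeeping needed to fit the \emph{dilation-indexed} kernels $\psi_k$ of Theorem~\ref{sfe-cor} into the \emph{single} kernel $\theta(x,y)$ framework of Theorem~\ref{M-TTHH}, and in particular checking that the weight exponent $2\upsilon-(m-d)$ comes out to make the sum over $k$ (rather than an integral in $t$) the correct object — i.e., identifying the value of $\upsilon$ forced by the normalization $\psi_k=2^{-kd}\psi(\cdot/2^k)$ and confirming that the resulting $\theta$ genuinely verifies \eqref{hszz-AXXX}--\eqref{hszz-3-AXXX} with $d$ equal to the ADR dimension of $E$ and $\upsilon>0$. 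A secondary technical point is justifying that big pieces of Lipschitz \emph{images} of subsets of $\mathbb{R}^d$ (not full Lipschitz graphs) still support the required square function estimate for the odd kernel; this is standard but needs the observation that a Lipschitz image of a measurable subset of $\mathbb{R}^d$ is contained in a Lipschitz graph after a suitable decomposition, reducing to the graph case. Everything else — the tent-space / Lusin--Carleson machinery giving the full $L^p$ and $H^p$ range, and the comparison between conical and discrete square functions — is supplied by Theorem~\ref{M-TTHH} and the results cited before it.
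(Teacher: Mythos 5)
Your high-level skeleton (feed the problem into the square-function machinery of Theorem~\ref{M-TTHH}/Theorem~\ref{VGds-2}, get the $L^2$ input from the big-pieces hypothesis, then extrapolate to $H^p$ and general $q$) is the right one, but the two pivotal steps are not carried out correctly. First, your kernel $\theta(x,y):=\delta_E(x)^{-d}\psi\bigl((x-y)/\delta_E(x)\bigr)$ in the ambient space ${\mathbb{R}}^{n+1}$ does not belong to the admissible class: on its support one has $\rho(x,y)\approx\delta_E(x)$, so $|\theta(x,y)|\approx\rho(x,y)^{-d}$, which violates \eqref{hszz-AXXX} (and even the weaker \eqref{hszz}, since $a<\upsilon$ is required) for every $\upsilon>0$; an extra factor of order $\delta_E(x)^{-\upsilon}$ is indispensable. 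Moreover, even after fixing the normalization, your ``routine comparison given ADR'' between the conical square function of this operator and the left-hand side of \eqref{bh} is not routine: your $\Theta f$ evaluates $\psi$ at off-$E$ centers $x$ and continuously varying scales $\delta_E(x)$, whereas \eqref{bh} involves $\psi_k(y-z)$ with $y\in E$ and dyadic scales, and since $\psi$ is merely compactly supported these can differ completely at comparable points and scales; bridging them needs a separate square-function estimate for the differences. The paper avoids both issues at once by changing the ambient space to ${\mathscr{X}}:=E\times[0,\infty)$ with the max quasi-distance \eqref{GCvca-LL1} and taking the kernel $\theta\bigl((x,t),y\bigr):=2^{-k}\psi_k(x-y)$ for $2^k\le t<2^{k+1}$ as in \eqref{K-spc}: the extra factor $2^{-k}\approx\delta_E(x,t)^{-1}$ makes the kernel satisfy \eqref{hszz}--\eqref{hszz-3} with $\upsilon=1$, the kernel is deliberately rough in its first variable (which is allowed, and is the point emphasized in the introduction), and the Fubini computation \eqref{sBBv} identifies the conical square function with the discrete sum in \eqref{bh} exactly, with no comparison lemma needed.

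Second, your source of the $L^2$ input is not available as claimed. You propose to verify item {\it (9)} (BPSFE) by proving the square function estimate for this kernel on Lipschitz graphs/images, citing ``the Calder\'on--Zygmund/David--Semmes theory'' or Section~\ref{Sect:SFE}; but Theorem~\ref{Theorem1.1} and the whole of Section~\ref{Sect:SFE} are codimension-one results for gradients of smooth odd homogeneous kernels, not for $d$-dimensional sets in ${\mathbb{R}}^{n+1}$ with the compactly supported kernel $\psi_k$, and your heuristic that ``the odd kernel integrated against $1$ has a pointwise bound that sums to a finite Carleson measure because the graph is flat at small scales'' is essentially the statement of the David--Semmes theorem, not a proof of it. (Also, a Lipschitz image of a subset of ${\mathbb{R}}^d$ is not in general contained in a Lipschitz graph, so that reduction fails as stated.) The paper does not run the BPSFE induction here at all: it quotes \cite{DaSe91} directly for the discrete $L^2$ estimate \eqref{PP-Zs} on $E$ itself, whose hypothesis is exactly \eqref{nvdz}, thereby obtaining \eqref{kt-Dc-BIS} with $q=2$ via \eqref{sBBv}, and then uses only the extrapolation Theorem~\ref{VGds-2} (items {\it (11)}--{\it (13)} of Theorem~\ref{M-TTHH}) plus a final rescaling of $\psi$ to adjust the aperture. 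Unless you supply a proof of the square function bound for $\psi_k$ on $d$-dimensional Lipschitz images in arbitrary codimension, or import the David--Semmes result as the paper does, your argument has a genuine hole precisely where the geometric hypothesis is used.
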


The particular case when $p=q=2$, in which scenario \eqref{bh} takes the form 
\begin{eqnarray}\label{bh.Biss}
\sum\limits_{k=-\infty}^{+\infty}\int_E\left|
\int_E\psi_k(x-y)f(y)\,d\sigma(y)\right|^2d\sigma(x)\leq C\int_E|f|^2\,d\sigma,
\end{eqnarray}
has been treated in \cite[\S\,3, p.\,21]{DaSe91}. The main point
of Theorem~\ref{sfe-cor} is that \eqref{bh.Biss} continues to hold, 
when formulated as in \eqref{bh} for every $p\in\bigl(\frac{d}{d+1},\infty\bigr)$.
The proof of this result, presented in the last part of \S\,\ref{Sect:6}, relies
on Theorem~\ref{M-TTHH} and uses the fact that no regularity condition 
on the kernel $\theta(x,y)$ is assumed in the variable $x$ (compare with 
\eqref{hszz-AXXX}-\eqref{hszz-3-AXXX}).

Next, we discuss another consequence of Theorem~\ref{M-TTHH} in the Euclidean setting
which gives an extension of results due to G. David and S. Semmes.

\begin{theorem}\label{UR-rest}
Suppose that $K$ is a real-valued function satisfying
\begin{eqnarray}\label{K-BIS}
\begin{array}{c}
K\in C^2({\mathbb{R}}^{n+1}\setminus\{0\}),\quad K\,\,\mbox{ is odd, and}
\\[4pt]
K(\lambda x)=\lambda^{-n}K(x)\,\,\mbox{ for all }\,\lambda>0,\,\,
x\in{\mathbb{R}}^{n+1}\setminus\{0\}.
\end{array}
\end{eqnarray}
Let $E$ be a closed subset of ${\mathbb{R}}^{n+1}$ which is $n$-dimensional {\rm ADR},
denote by $\sigma$ the surface measure induced by the $n$-dimensional Hausdorff measure 
on $E$, and define the integral operator ${\mathcal{T}}$ acting on functions
$f\in L^p(E,\sigma)$, $1\leq p\leq\infty$, by 
\begin{eqnarray}\label{T-BIS.2a}
{\mathcal{T}}f(x):=\int_E K(x-y)f(y)\,d{\sigma}(y),\qquad 
\forall\,x\in{\mathbb{R}}^{n+1}\setminus E.
\end{eqnarray}
Finally, let ${\mathbb{D}}(E)$ denote a dyadic cube structure on $E$ and, for each
$Q\in{\mathbb{D}}(E)$, denote by $T_E(Q)$ the dyadic Carleson tent over $Q$.

Then, if the set $E$ is actually uniformly rectifiable ({\rm UR}), in the sense of  Definition~\ref{Def-unif.rect}, conditions {\it (1)-(5)} below hold:
\begin{enumerate}
\item[(1)] $[${\tt $L^2$\! square\! function\! estimate}$]$
There exists $C\in(0,\infty)$ with the property that 
for each $f\in L^2(E,\sigma)$ one has
\begin{eqnarray}\label{SF-BIS}
\int\limits_{{\mathbb{R}}^{n+1}\setminus E}|(\nabla{\mathcal{T}}f)(x)|^2
\,{\rm dist}\,(x,E)\,dx\leq C\int_E|f(x)|^2\,d\sigma(x).
\end{eqnarray}
\item[(2)] $[${\tt Carleson\! measure\! condition\! on\! dyadic\! tents\!
for\! ${\mathcal{T}}$\! acting\! on\! $L^\infty$}$]$
There exists a constant $C\in(0,\infty)$ with the property that
for each $f\in L^\infty(E,\sigma)$ 
\begin{eqnarray}\label{SF-BIS-3}
\sup_{Q\in{\mathbb{D}}(E)}\left(\tfrac{1}{\sigma(Q)}\int_{T_E(Q)}
|(\nabla{\mathcal{T}} f)(x)|^2\,{\rm dist}\,(x,E)\,dx\right)^{1/2}
\leq C\|f\|_{L^\infty(E,\sigma)}.
\end{eqnarray}
In particular,
\begin{eqnarray}\label{SF-BIS-2}
\sup_{Q\in{\mathbb{D}}(E)}\left(\tfrac{1}{\sigma(Q)}\int_{T_E(Q)}
|(\nabla{\mathcal{T}}1)(x)|^2\,{\rm dist}\,(x,E)\,dx\right)<\infty.
\end{eqnarray}
\item[(3)] $[${\tt Carleson\! measure\! condition\! on\! balls\!
for\! ${\mathcal{T}}$\! acting\! on\! $L^\infty$}$]$
There exists a constant $C\in(0,\infty)$ with the property that
for each $f\in L^\infty(E,\sigma)$ 
\begin{eqnarray}\label{SF-BIS-6}
\sup_{x\in E,\,r>0}\left(\tfrac{1}{\sigma\bigl(E\cap B(x,r)\bigr)}
\int_{B(x,r)\setminus E}|(\nabla{\mathcal{T}} f)(y)|^2\,{\rm dist}\,(y,E)\,dy\right)^{1/2}
\leq C\|f\|_{L^\infty(E,\sigma)}.
\end{eqnarray}
In particular,
\begin{eqnarray}\label{SF-BIS-4}
\sup_{x\in E,\,r>0}\left(\tfrac{1}{\sigma\bigl(E\cap B(x,r)\bigr)}
\int_{B(x,r)\setminus E}|(\nabla{\mathcal{T}} 1)(y)|^2\,{\rm dist}\,(y,E)\,dy\right)<\infty.
\end{eqnarray}
\item[(4)] $[${\tt Hardy\! and\! $L^p$\! square\! function\! estimates}$]$
For each $p\in\bigl(\frac{n}{n+1},\infty\bigr)$ let $H^p(E,\sigma)$ stand for the Lebesgue scale $L^p(E,\sigma)$ if $p\in(1,\infty)$, and the Coifman-Weiss scale of 
Hardy spaces on the space of homogeneous type $(E,|\cdot-\cdot|,\sigma)$ if 
$p\in\bigl(\frac{n}{n+1},1\bigr]$. Then the operator ${\mathcal{T}}$ 
extends to the space $H^p(E,\sigma)$ and this extension satisfies
\begin{eqnarray}\label{SF-BIS-10}
\hskip -0.20in
\left\|\Bigl(\int_{\Gamma_{\kappa}(x)}|(\nabla{\mathcal{T}} f)(y)|^2\,
\frac{dy}{{\rm dist}\,(y,E)^{n-1}}\Bigr)^{\frac{1}{2}}
\right\|_{L^p_x(E,\sigma)}\!\!\!\leq C\|f\|_{H^p(E,\sigma)},
\quad\forall\,f\in H^p(E,\sigma).
\end{eqnarray}
\item[(5)] $[${\tt Mixed-norm\! space\! estimate}$]$
For each $p\in\bigl(\frac{n}{n+1},\infty\bigr)$ and each $q\in(1,\infty)$ the operator 
\begin{eqnarray}\label{SF-BIS-11}
{\rm dist}\,(\cdot,E)\,\nabla{\mathcal{T}}:
H^p(E,\sigma)\longrightarrow L^{(p,q)}({\mathbb{R}}^{n+1},E)
\end{eqnarray}
is well-defined, linear and bounded, where $L^{(p,q)}({\mathbb{R}}^{n+1},E)$ is the
mixed norm space defined in \eqref{Mixed-FF7} (corresponding here to 
${\mathscr{X}}:={\mathbb{R}}^{n+1}$ and $\rho:=|\cdot-\cdot|$).
\end{enumerate}
\end{theorem}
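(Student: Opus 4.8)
\textit{Strategy.} The plan is to deduce Theorem~\ref{UR-rest} from Theorem~\ref{M-TTHH}: first I would choose the abstract data so that the kernel $(\nabla_x K)(x-y)$ falls under the hypotheses of Theorem~\ref{M-TTHH}, then read off that conclusions \eqref{SF-BIS}--\eqref{SF-BIS-11} are the specializations of items {\it (1)}--{\it (13)} there, and finally verify \emph{one} of those (mutually equivalent) conditions for a {\rm UR} set $E$ --- which I would do through the big pieces item {\it (9)} of Theorem~\ref{M-TTHH}.

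\textit{Fitting the kernel.} Take $\mathscr{X}:=\ree$, $\rho:=|\cdot-\cdot|$, $\mu:=$ Lebesgue measure, so $\mathscr{X}$ is $m$-{\rm ADR} with $m=n+1$; since $\rho$ is a genuine distance one has $\rho_{\#}\approx\rho$ and $\delta_E\approx\dist(\cdot,E)$, and the quasi-triangle constant is $C_\rho=1$. Set $d:=n$ (so $m-d=1$), and choose $\upsilon:=1$ and $\alpha:=1$. Define $\theta(x,y):=(\nabla_x K)(x-y)$ for $x\neq y$; this is $\ree$-valued, so one either argues componentwise or invokes the vector-valued version of the machinery (no genuine loss, since the hypotheses and conclusions are stable under such a reduction). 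Because $\nabla K\in C^1(\ree\setminus\{0\})$ is odd and positively homogeneous of degree $-n-1$, we get $|\theta(x,y)|\lesssim|x-y|^{-(n+1)}=|x-y|^{-(d+\upsilon)}$, which is \eqref{hszz-AXXX}; and since $\nabla^2 K$ is homogeneous of degree $-n-2$, the mean value theorem gives $|\theta(x,y)-\theta(x,\widetilde y)|\lesssim|y-\widetilde y|\,|x-y|^{-(n+2)}$ whenever $|y-\widetilde y|\le\tfrac12|x-y|$, i.e.\ \eqref{hszz-3-AXXX} with $\alpha=1$. Crucially, no regularity of $\theta$ in the first variable is needed, which is why the hypothesis \eqref{K-BIS} on $K$ (with no cancellation or smoothness imposed in $x$) suffices. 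Differentiation under the integral sign, legitimate on $\ree\setminus E$, identifies the operator $\Theta$ of \eqref{operator-AXXX} with $\nabla\mathcal T$, and the weight exponents match: $2\upsilon-(m-d)=1$ and $m-2\upsilon=n-1$, as in \eqref{SF-BIS} and \eqref{SF-BIS-10}.

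\textit{Transcribing the conclusions.} With these identifications Theorem~\ref{M-TTHH} asserts that items {\it (1)}--{\it (13)} there are equivalent, and they become precisely: {\it (1)} $\leftrightarrow$ \eqref{SF-BIS}; {\it (3)} $\leftrightarrow$ \eqref{SF-BIS-3} (whence \eqref{SF-BIS-2} by testing on $1$); {\it (6)} $\leftrightarrow$ \eqref{SF-BIS-6} (whence \eqref{SF-BIS-4}); {\it (12)} $\leftrightarrow$ \eqref{SF-BIS-10}; and {\it (13)} $\leftrightarrow$ \eqref{SF-BIS-11}. Here one also uses $C_\rho=1$, so that $\gamma:=\min\{\alpha,(\log_2 C_\rho)^{-1}\}=1$ and $\tfrac{d}{d+\gamma}=\tfrac{n}{n+1}$, matching the ranges of $p$ in the statement, while the definition \eqref{Mixed-FF7} of $L^{(p,q)}(\ree,E)$ reconciles $\delta_E^{\upsilon-m/q}\Theta$ with $\dist(\cdot,E)\,\nabla\mathcal T$. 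Consequently it remains only to check that \emph{some} condition among {\it (1)}--{\it (13)} of Theorem~\ref{M-TTHH} holds for a {\rm UR} set $E$.

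\textit{Verifying the big pieces hypothesis, and the main obstacle.} I would verify item {\it (9)}: that $E$ has {\rm BPSFE} relative to $\theta$ in the sense of Definition~\ref{sjvs}. By the David--Semmes characterization of uniform rectifiability (cf.\ Definition~\ref{Def-unif.rect}), a {\rm UR} set has big pieces of Lipschitz graphs, each of which is a closed $n$-{\rm ADR} subset of $\ree$. On a Lipschitz graph $\Gamma$ the estimate \eqref{SF-BIS} --- i.e.\ condition {\it (1)} of Theorem~\ref{M-TTHH} for the set $\Gamma$ --- is classical: the $L^2(\Gamma)$-boundedness of the principal-value singular integral with kernel $K$ (reduce $K$ by the method of rotations to directional Cauchy/Hilbert-type kernels and invoke Coifman--McIntosh--Meyer, or cite David--Semmes directly), together with the standard passage from an $L^2$ bound for a Calder\'on--Zygmund operator to the square function bound for the gradient of its single-layer potential (a $T(1)$/Littlewood--Paley argument, or tent-space duality), yields exactly this. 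Feeding the Lipschitz-graph big-pieces data supplied by uniform rectifiability into Definition~\ref{sjvs} gives item {\it (9)}, and Theorem~\ref{M-TTHH} then delivers {\it (1)}--{\it (5)}. (One could instead bypass {\it (9)} and verify the local $T(b)$ condition {\it (7)} directly, building a pseudo-accretive system on $E$ from the same big pieces, but the {\rm BPSFE} route is the more economical.) The only genuinely nontrivial input is the square function estimate on a single Lipschitz graph for the class \eqref{K-BIS}, which is classical; the remaining effort is the bookkeeping above --- matching weight exponents, the vector-valued reduction, the $\rho_{\#}$-versus-Euclidean comparison, and checking that big pieces of Lipschitz graphs for {\rm UR} sets feed correctly into Definition~\ref{sjvs}.
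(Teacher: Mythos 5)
Your overall architecture --- taking $\theta:=\nabla K$ (componentwise), checking \eqref{hszz-AXXX}--\eqref{hszz-3-AXXX} with $d=n$, $m=n+1$, $\upsilon=\alpha=1$, and then verifying one of the equivalent conditions of Theorem~\ref{M-TTHH} --- is exactly the paper's route, and the bookkeeping is essentially right (minor slip: for the Euclidean distance the constant in \eqref{C-RHO.111} is $C_\rho=2$, not $1$, but $\gamma=1$ and the range $p>\tfrac{n}{n+1}$ come out the same). The genuine gap is in your verification of the big-pieces hypothesis. You assert that, by Definition~\ref{Def-unif.rect}, a UR set has big pieces of Lipschitz \emph{graphs}; but that definition gives big pieces of Lipschitz \emph{images}, and UR does not imply BPLG (this implication is known to fail, by an example of Hrycak --- which is precisely why the Azzam--Schul theorem is needed). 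A Lipschitz image of a ball need not be ADR and cannot serve as the set $E_{x,r}$ in Definition~\ref{sjvs}, so a single application of the big-pieces functor starting from Definition~\ref{Def-unif.rect} does not yield BPSFE. The paper's fix is the result of Azzam--Schul that UR sets have ${\rm (BP)}^2{\rm LG}$, fed into the \emph{iterated} scheme: Theorem~\ref{Thm:BPSFtoSF} applied twice (equivalently, item {\it (10)} of Theorem~\ref{M-TTHH} via Theorem~\ref{Thm:BPSFtoSF.XXX}), carried out in Theorem~\ref{thm:BPmLGimSFE} and Corollary~\ref{cor:URimSFE}. Your argument, as written, stalls at the false step ${\rm UR}\Rightarrow{\rm BPLG}$ and never invokes this two-step descent.

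A secondary but real issue is the base case on a Lipschitz graph. It is not available by citation for the whole class \eqref{K-BIS} (David--Semmes's square-function results concern the Riesz kernels $x_j/|x|^{n+1}$; the generality in $K$ is exactly the new content here), and there is no ``standard passage'' from $L^2$-boundedness of the principal-value operator with kernel $K$ on the graph to the Carleson/square-function bound for $\nabla{\mathcal{T}}$ in the complement. The paper proves the graph case directly (Theorem~\ref{Theorem1.1}): after flattening, the $T(1)$ theorem for square functions reduces matters to a Carleson measure estimate for $T^j(1)$, which is obtained by comparing the kernel with its flattened version via the Jones-type estimate of Lemma~\ref{lgih} and then annihilating the flattened term by Lemma~\ref{Lemma1.3}, where the oddness and homogeneity of $K$ are genuinely used. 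So the Lipschitz-graph input should be treated as a theorem requiring proof (or a reduction to the arguments of \cite{HL}, \cite{Ho}), not as a classical fact to be quoted.
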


Theorem~\ref{M-TTHH} particularized to the setting of Theorem~\ref{UR-rest} 
gives that conditions {\it (1)}-{\it (5)} above are equivalent. The fact that 
{\it (1)} holds in the special case when ${\mathcal{T}}$ is associated as 
in \eqref{T-BIS.2a} with each of the kernels $K_j(x):=x_j/|x|^{n+1}$, $1\leq j\leq n+1$, 
is due to David and Semmes \cite{DaSe93}. The new result here is that 
{\it (1)} (hence also all of {\it (1)-(5)}) holds more generally 
for the entire class of kernels described in \eqref{K-BIS}.
We shall prove the latter fact in Corollary~\ref{cor:URimSFE} below.  
Compared with \cite{DaSe91}, the class of kernels \eqref{K-BIS} is not 
tied up to any particular partial differential operator (in the manner that
the kernels $K_j(x):=x_j/|x|^{n+1}$, $1\leq j\leq n+1$, are related to the Laplacian).
Moreover, in \S\,\ref{SSect:PDO} we establish a version of Theorem~\ref{UR-rest}
for variable coefficient kernels, which ultimately applies to integral operators
on domains on manifolds associated with the Schwartz kernels of certain 
classes of pseudodifferential operators acting between vector bundles. 

The condition that the set $E$ is UR in the context of Theorem~\ref{UR-rest} 
is optimal, as seen from the converse statement stated below. This result is 
closely interfaced with the characterization of uniform rectifiability, due David 
and Semmes, in terms of \eqref{eq1.sf}-\eqref{eq1.layer}. In keeping with 
these conditions, the formulation of our result involves the 
Riesz-transform operator ${\mathcal{R}}:=\nabla{\mathcal{S}}$. 

\begin{theorem}\label{UR-rest.BIS}
Let $E$ be a closed subset of ${\mathbb{R}}^{n+1}$ which is $n$-dimensional 
{\rm ADR}, denote by $\sigma$ the surface measure induced by the $n$-dimensional
Hausdorff measure on $E$, and define the vector-valued integral operator 
${\mathcal{R}}$ acting on functions $f\in L^p(E,\sigma)$, $1\leq p\leq\infty$, by 
\begin{eqnarray}\label{T-BIS.3a}
{\mathcal{R}}f(x):=\int_E \frac{x-y}{|x-y|^{n+1}}f(y)\,d{\sigma}(y),\qquad 
\forall\,x\in{\mathbb{R}}^{n+1}\setminus E.
\end{eqnarray}
As before, let ${\mathbb{D}}(E)$ denote a dyadic cube structure on $E$ and, for each
$Q\in{\mathbb{D}}(E)$, denote by $T_E(Q)$ the dyadic Carleson tent over $Q$.
In this setting, consider the following conditions:
\begin{enumerate}
\item[(1)] $[${\tt $L^2$\! square\! function\! estimate}$]$
There exists $C\in(0,\infty)$ with the property that 
for each $f\in L^2(E,\sigma)$ one has
\begin{eqnarray}\label{SF-BIS.BIS}
\int\limits_{{\mathbb{R}}^{n+1}\setminus E}|(\nabla{\mathcal{R}}f)(x)|^2
\,{\rm dist}\,(x,E)\,dx\leq C\int_E|f(x)|^2\,d\sigma(x).
\end{eqnarray}
\item[(2)] $[${\tt Carleson\! measure\! condition\! on\! dyadic\! tents\!
for\! ${\mathcal{R}}$\! tested\! on\! $1$}$]$
There holds 
\begin{eqnarray}\label{SF-BIS-2.BIS}
\sup_{Q\in{\mathbb{D}}(E)}\left(\tfrac{1}{\sigma(Q)}\int_{T_E(Q)}
|(\nabla{\mathcal{R}}1)(x)|^2\,{\rm dist}\,(x,E)\,dx\right)<\infty.
\end{eqnarray}
\item[(3)] $[${\tt Carleson\! measure\! condition\! on\! dyadic\! tents\!
for\! ${\mathcal{R}}$\! acting\! on\! $L^\infty$}$]$
There exists a constant $C\in(0,\infty)$ with the property that
for each $f\in L^\infty(E,\sigma)$ 
\begin{eqnarray}\label{SF-BIS-3.BIS}
\sup_{Q\in{\mathbb{D}}(E)}\left(\tfrac{1}{\sigma(Q)}\int_{T_E(Q)}
|(\nabla{\mathcal{R}}f)(x)|^2\,{\rm dist}\,(x,E)\,dx\right)^{1/2}
\leq C\|f\|_{L^\infty(E,\sigma)}.
\end{eqnarray}
\item[(4)] $[${\tt Carleson\! measure\! condition\! on\! balls\!
for\! ${\mathcal{R}}$\! tested\! on\! $1$}$]$
There holds 
\begin{eqnarray}\label{SF-BIS-4.BIS}
\sup_{x\in E,\,r>0}\left(\tfrac{1}{\sigma\bigl(E\cap B(x,r)\bigr)}
\int_{B(x,r)\setminus E}|(\nabla{\mathcal{R}}1)(y)|^2\,
{\rm dist}\,(y,E)\,dy\right)<\infty.
\end{eqnarray}
\item[(5)] $[${\tt Carleson\! measure\! condition\! 
for\! ${\mathcal{R}}$\! tested\! on\! a\! para-accretive\! function}$]$
There exists a para-accretive function $b:E\to{\mathbb{C}}$ with the 
property that 
\begin{eqnarray}\label{SF-BIS-5.BIS}
\sup_{Q\in{\mathbb{D}}(E)}\left(\tfrac{1}{\sigma(Q)}
\int_{T_E(Q)}|(\nabla{\mathcal{R}}b)(x)|^2\,{\rm dist}\,(x,E)\,dx\right)<\infty.
\end{eqnarray}
\item[(6)] $[${\tt Carleson\! measure\! condition\! on\! balls\!
for\! ${\mathcal{R}}$\! acting\! on\! $L^\infty$}$]$
There exists a constant $C\in(0,\infty)$ with the property that
for each $f\in L^\infty(E,\sigma)$ 
\begin{eqnarray}\label{SF-BIS-6.BIS}
\sup_{x\in E,\,r>0}\left(\tfrac{1}{\sigma\bigl(E\cap B(x,r)\bigr)}
\int_{B(x,r)\setminus E}|(\nabla{\mathcal{R}}f)(y)|^2\,{\rm dist}\,(y,E)\,dy\right)^{1/2}
\leq C\|f\|_{L^\infty(E,\sigma)}.
\end{eqnarray}
\item[(7)] $[${\tt Local\! $T(b)$\! condition\! on dyadic\! cubes}$]$
There exist finite constants $C_0\geq 1$, $c_0\in(0,1]$ as well as a 
collection $\{b_Q\}_{Q\in{\mathbb{D}}(E)}$ of $\sigma$-measurable 
functions $b_Q:E\rightarrow{\mathbb{C}}$ such that for each $Q\in{\mathbb{D}}(E)$ 
the following hold:
\begin{eqnarray}\label{SF-BIS-7.BIS}
\begin{array}{c}
\displaystyle\int_E |b_Q|^2\,d\sigma\leq C_0\sigma(Q),
\\[16pt]
\left|\int_{\widetilde{Q}} b_Q\,d\sigma\right|\geq\frac{1}{C_0}\,\sigma(\widetilde{Q})\quad
\mbox{for some }\widetilde{Q}\subseteq Q,\,\,\,
\ell(\widetilde{Q})\geq c_0\ell(Q),
\\[16pt]
\displaystyle\int_{T_E(Q)}|(\nabla{\mathcal{R}}b_Q)(x)|^2
\,{\rm dist}\,(x,E)\,dx\leq C_0\sigma(Q).
\end{array}
\end{eqnarray}
\item[(8)] $[${\tt Local\! $T(b)$\! condition\! on surface\! balls}$]$
There exist $C_0\in[1,\infty)$ and, for each surface ball $\Delta=\Delta(x_o,r):=B(x_o,r)\cap E$, where $x_o$ is a point in $E$ and $r$ 
is a finite number in $\bigl(0,{\rm diam}(E)\bigr]$, a $\sigma$-measurable function
$b_\Delta:E\rightarrow{\mathbb{C}}$ supported in $\Delta$, such that the following 
estimates hold:
\begin{eqnarray}\label{SF-BIS-8.BIS}
\begin{array}{c}
\displaystyle\int_E |b_\Delta|^2\,d\sigma\leq C_0\sigma(\Delta),\quad
\left|\int_\Delta b_\Delta\,d\sigma\right|\geq\frac{1}{C_0}\,\sigma(\Delta),
\\[16pt]
\displaystyle\int_{B(x_o,4r)\setminus E}|(\nabla{\mathcal{R}}\,b_\Delta)(x)|^2
\,{\rm dist}\,(x,E)\,dx\leq C_0\sigma(\Delta).
\end{array}
\end{eqnarray}
\item[(9)] $[${\tt Weak-$L^p$\! square\! function\! estimate}$]$
There exist an index $p\in(0,\infty)$ and constants $C,\kappa\in(0,\infty)$ such that 
for every $f\in L^{p}(E,\sigma)$ 
\begin{eqnarray}\label{SF-BIS-9.BIS}
\hskip -0.30in
\sup_{\lambda>0}\left[\lambda\cdot
\sigma\Bigl(\Bigl\{x\in E:\int_{\Gamma_{\kappa}(x)}
\frac{|(\nabla{\mathcal{R}}f)(y)|^2}
{{\rm dist}\,(y,E)^{n-1}}\,dy>\lambda^{2}\Bigr\}\Bigr)^{1/p}\right]
\leq C\Bigl(\int_{E}|f|^p\,d\sigma\Bigr)^{1/p},
\end{eqnarray}
where $\Gamma_\kappa(x):=\bigl\{y\in{\mathbb{R}}^{n+1}\setminus E:\,
|x-y|<(1+\kappa)\,{\rm dist}(y,E)\bigr\}$ for each $x\in E$.
\end{enumerate}

Then if any of properties {\it (1)-(9)} holds it follows that $E$ is a {\rm UR} set.
\end{theorem}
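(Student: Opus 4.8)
The strategy is to recognize conditions {\it (1)--(9)} as the specialization of the list in Theorem~\ref{M-TTHH} to a concrete Euclidean kernel, and then feed the resulting Carleson measure estimate into the David--Semmes characterization of uniform rectifiability recorded in \eqref{eq1.sf}--\eqref{eq1.sf2}. Thus the proof splits into two steps: first, establishing that {\it (1)--(9)} are mutually equivalent by appealing to Theorem~\ref{M-TTHH}; second, deducing uniform rectifiability of $E$ from the form of condition {\it (4)} that emerges.

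For the first step I would take $\mathscr{X}=\mathbb{R}^{n+1}$, $\rho=|\cdot-\cdot|$, and $\mu$ the Lebesgue measure, so that $(\mathscr{X},\rho,\mu)$ is $m$-dimensional {\rm ADR} with $m=n+1$; I would take $E$ to be the given set together with $\sigma=\mathcal{H}^n|_E$, which is $d$-dimensional {\rm ADR} with $d=n$; and for the kernel I would choose $\theta(x,y):=(\nabla_xK_0)(x-y)$, where $K_0(z):=z/|z|^{n+1}$. Since $K_0$ is smooth away from the origin and positively homogeneous of degree $-n$, the kernel $\theta$ is smooth off the diagonal and homogeneous of degree $-(n+1)$, so \eqref{hszz-AXXX}--\eqref{hszz-3-AXXX} hold with $\upsilon=1$ and $\alpha=1$ (recall that only regularity in the second variable is demanded there, and $\theta$ is smooth in both variables). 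The weight exponent is then $2\upsilon-(m-d)=2-1=1$, so $\delta_E(x)^{2\upsilon-(m-d)}\approx\dist(x,E)$. Finally, writing $|(\nabla\mathcal{R}f)(x)|^2=\sum_{i,j}|(\Theta_{ij}f)(x)|^2$ with $\Theta_{ij}$ the integral operator having scalar kernel $(\partial_iK_{0,j})(x-y)$ of the above type, one may invoke Theorem~\ref{M-TTHH} (applied componentwise, or in the vector-valued form proved in the body of the manuscript). Matching conditions term by term, {\it (1)--(9)} of the present theorem are exactly conditions {\it (1)--(8)} and {\it (11)} of Theorem~\ref{M-TTHH} for this data; hence they are all mutually equivalent, and in particular each of them implies condition {\it (4)}.

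For the second step I would rewrite condition {\it (4)}. Using the {\rm ADR} bound $\sigma(E\cap B(x,r))\approx r^n$ (for $0<r\le\diam(E)$, to which range it suffices to restrict) together with $\mathcal{H}^{n+1}(E)=0$, which lets us replace $\int_{B(z,r)\setminus E}$ by $\int_{B(z,r)}$, condition {\it (4)} becomes
\begin{eqnarray*}
\sup_{z\in E,\,0<r\le\diam(E)}r^{-n}\int_{B(z,r)}|(\nabla\mathcal{R}1)(x)|^2\,\dist(x,E)\,dx<\infty.
\end{eqnarray*}
Now $\mathcal{R}=c\,\nabla\mathcal{S}$ for a nonzero constant $c$, where $\mathcal{S}$ is the single layer potential for the Laplacian in $\mathbb{R}^{n+1}$ (with fundamental solution $c_n|x|^{1-n}$ if $n\ge2$ and $c_1\log|x|$ if $n=1$), since in either case the gradient of the fundamental solution is a nonzero multiple of $(x-y)/|x-y|^{n+1}$. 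Consequently $\nabla\mathcal{R}1=c\,\nabla^2\mathcal{S}1$, and the displayed estimate is precisely the Carleson measure bound \eqref{eq1.sf}. By the theorem of David and Semmes \cite{DaSe93} recalled around \eqref{eq1.sf}--\eqref{eq1.sf2}---namely that for a closed $n$-dimensional {\rm ADR} subset of $\mathbb{R}^{n+1}$ the estimate \eqref{eq1.sf} is equivalent to uniform rectifiability---it follows that $E$ is {\rm UR}, as desired. I expect the only genuinely delicate point to be the bookkeeping in the first step: verifying the standing hypotheses of Theorem~\ref{M-TTHH} with the correct parameters and, because $\nabla\mathcal{R}$ is matrix-valued, checking that the Carleson and local $T(b)$ conditions phrased in terms of the full square function $|\nabla\mathcal{R}\,\cdot\,|^2$ translate faithfully into the corresponding conditions for the scalar component system $\{\Theta_{ij}\}$; everything after the identification $\nabla\mathcal{R}1=c\,\nabla^2\mathcal{S}1$ is then immediate.
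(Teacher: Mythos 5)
Your proposal is correct and follows essentially the same route as the paper: specialize Theorem~\ref{M-TTHH} componentwise to the scalar kernels $\partial_i\bigl(x_j/|x|^{n+1}\bigr)$ (with $m=n+1$, $d=n$, $\upsilon=\alpha=1$) so that conditions {\it (1)--(9)} become mutually equivalent, and then conclude with the David--Semmes converse. The only, immaterial, difference is that you exit through condition {\it (4)} and the Carleson-measure form \eqref{eq1.sf}, whereas the paper exits through condition {\it (1)} and the $L^2$ square-function statement of \cite[pp.\,252-267]{DaSe93}; these are the two forms of the same David--Semmes result already recorded in the paper's introduction.
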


The fact that condition {\it (1)} above implies that $E$ is a UR set has been 
proved by David and Semmes (see \cite[pp.\,252-267]{DaSe93}). Based on this result,
that {\it (2)}-{\it (3)} also imply that $E$ is a UR set then follows with the help 
of Theorem~\ref{M-TTHH} upon observing that the components of ${\mathcal{R}}$ 
are operators ${\mathcal{T}}$ as in \eqref{T-BIS.2a} associated with the 
kernels $K_j(x):=x_j/|x|^{n+1}$, $j\in\{1,...,n+1\}$, which satisfy 
\eqref{hszz-AXXX}-\eqref{hszz-3-AXXX}. Compared to David and Semmes' result
mentioned above (to the effect that the $L^2$ square function for the 
operators associated with the kernels $K_j$, $1\leq j\leq n+1$, implies
that the set $E$ is UR), a remarkable corollary of Theorem~\ref{UR-rest.BIS}
is that a mere weak-$L^2$ square function estimate for the operators  
associated with the kernels $K_j(x):=x_j/|x|^{n+1}$, $j\in\{1,...,n+1\}$,
as in \eqref{T-BIS.2a} implies that $E$ is a UR set.

Throughout the manuscript, we adopt the following conventions. The letter $C$ 
represents a finite positive constant that may change from one line to the next. 
The infinity symbol $\infty:=+\infty$. The set of positive integers is denoted 
by $\mathbb{N}$, and the set $\mathbb{N}_0:=\mathbb{N}\cup\{0\}$.

\section{Analysis and Geometry on Quasi-Metric Spaces}
\setcounter{equation}{0}
\label{Sect:2}

This section contains preliminary material, organized into four subsections dealing, 
respectively, with: a metrization result for arbitrary quasi-metric spaces, 
geometrically doubling quasi-metric spaces, approximations to the identity, and 
a discussion of the nature of Carleson tents in quasi-metric spaces.

\subsection{A metrization result for general quasi-metric spaces}
\label{SSect:2.1}

Here the goal is to review a sharp quantitative metrization result for 
quasi-metric spaces (cf. Theorem~\ref{JjEGh}), and record some useful 
properties of the Hausdorff outer-measure on quasi-metric spaces 
(cf. Proposition~\ref{PWRS22}). We begin, however, by introducing 
basic terminology and notation in the definition below. 

\begin{definition}\label{zbnb}
Assume that ${\mathscr{X}}$ is a set of cardinality at least two:
\begin{enumerate}
\item[(1)] A function $\rho:{\mathscr{X}}\times{\mathscr{X}}\to[0,\infty)$
is called a {\tt quasi-distance} on ${\mathscr{X}}$ provided there exist 
two constants $C_0,C_1\in[1,\infty)$ with the property that for every $x,y,z\in X$, 
one has 
\begin{eqnarray}\label{gabn-T.2}
\rho(x,y)=0\Leftrightarrow x=y,\quad
\rho(y,x)\leq C_0\rho(x,y),\quad
\rho(x,y)\leq C_1\max\{\rho(x,z),\rho(z,y)\}.
\end{eqnarray}
\item[(2)] Denote by ${\mathfrak{Q}}({\mathscr{X}})$ the collection of 
all quasi-distances on ${\mathscr{X}}$, and call a pair $({\mathscr{X}},\rho)$ 
a {\tt quasi-metric\! space} provided $\rho\in{\mathfrak{Q}}({\mathscr{X}})$.
Also, given $\rho\in{\mathfrak{Q}}({\mathscr{X}})$ and $E\subseteq{\mathscr{X}}$ 
of cardinality at lest two, denote by $\rho\bigl|_{E}\in{\mathfrak{Q}}(E)$ 
the restriction of the function $\rho$ to $E\times E$.
\item[(3)] For each $\rho\in{\mathfrak{Q}}({\mathscr{X}})$, define $C_\rho$ to be 
the smallest constant which can play the role of $C_1$ in the last inequality in \eqref{gabn-T.2}, i.e.,  
\begin{eqnarray}\label{C-RHO.111}
C_\rho:=\sup_{\stackrel{x,y,z\in{\mathscr{X}}}{\mbox{\tiny{not all equal}}}}
\frac{\rho(x,y)}{\max\{\rho(x,z),\rho(z,y)\}}\in[1,\infty),
\end{eqnarray}
and define $\widetilde{C}_\rho$ to be the smallest constant
which can play the role of $C_0$ in the first inequality in \eqref{gabn-T.2}, i.e.,  
\begin{eqnarray}\label{C-RHO.111XXX}
\widetilde{C}_\rho:=\sup_{\stackrel{x,y\in{\mathscr{X}}}{x\not=y}}
\frac{\rho(y,x)}{\rho(x,y)}\,\in[1,\infty).
\end{eqnarray}
\item[(4)] Given $\rho\in{\mathfrak{Q}}({\mathscr{X}})$, define the $\rho$-{\tt ball}
(or, simply {\tt ball} if the quasi-distance $\rho$ is clear from the context) 
centered at $x\in{\mathscr{X}}$ with radius $r\in(0,\infty)$ to be
\begin{eqnarray}\label{hdc-587}
B_\rho(x,r):=\left\{y\in{\mathscr{X}}:\rho(x,y)<r\right\}.
\end{eqnarray}
Also, call $E\subseteq{\mathscr{X}}$ $\rho$-{\tt bounded} if $E$ is contained in a $\rho$-ball, and define its $\rho$-{\tt diameter} (or, simply, {\tt diameter}) as 
\begin{eqnarray}\label{DIA-TT}
{\rm diam}_{\rho}(E):=\sup\,\bigl\{\rho(x,y):\,x,y\in E\bigr\}.
\end{eqnarray}
The $\rho$-{\tt distance} (or, simply {\tt distance}) between two arbitrary, 
nonempty sets $E,F\subseteq{\mathscr{X}}$ is naturally defined as
\begin{eqnarray}\label{HG-DD.6}
{\rm dist}_\rho(E,F):=\inf\,\{\rho(x,y):\,x\in E,\,\,y\in F\}.
\end{eqnarray}
If $E=\{x\}$ for some $x\in{\mathscr{X}}$ and $F\subseteq{\mathscr{X}}$,
abbreviate ${\rm dist}_\rho(x,F):={\rm dist}_\rho(\{x\},F)$.
\item[(5)] Given a quasi-distance $\rho\in{\mathfrak{Q}}({\mathscr{X}})$ define $\tau_{\rho}$, the {\tt topology canonically induced by $\rho$ on} ${\mathscr{X}}$,
to be the largest topology on ${\mathscr{X}}$ with the property that for each 
point $x\in{\mathscr{X}}$ the family $\{B_\rho(x,r)\}_{r>0}$ 
is a fundamental system of neighborhoods of $x$. 
\item[(6)] Call two functions
$\rho_1,\rho_2:{\mathscr{X}}\times{\mathscr{X}}\to[0,\infty)$ {\tt equivalent}, 
and write $\rho_1\approx\rho_2$, if there exist $C',C''\in(0,\infty)$ with the 
property that 
\begin{eqnarray}\label{JH-7pp}
C'\rho_1\leq\rho_2\leq C''\rho_1\,\quad\mbox{on }\,\,{\mathscr{X}}\times{\mathscr{X}}.
\end{eqnarray}
\end{enumerate}
\end{definition}

\noindent A few comments are in order. Suppose that $({\mathscr{X}},\rho)$ is 
a quasi-metric space. It is then clear that if 
$\rho':{\mathscr{X}}\times{\mathscr{X}}\rightarrow[0,\infty)$ is such that
$\rho'\approx\rho$ then $\rho'\in{\mathfrak{Q}}({\mathscr{X}})$ and 
$\tau_{\rho'}=\tau_\rho$. Also, it may be checked that
\begin{eqnarray}\label{topoQMS}
\mathcal{O}\in\tau_\rho\,\Longleftrightarrow\,
\mathcal{O}\subseteq{\mathscr{X}}\,\,\mbox{ and }\,\,\forall\,x\in\mathcal{O}\,\,
\,\,\exists\,r>0\,\mbox{ such that }\,B_\rho(x,r)\subseteq\mathcal{O}.
\end{eqnarray}

As is well-known, the topology induced by the given quasi-distance on a 
quasi-metric space is metrizable. Below we shall review a recent result proved in  
\cite{MMMM-G} which is an optimal quantitative version of this fact, and which 
sharpens earlier work from \cite{MaSe79}. To facilitate the 
subsequent discussion we first make a definition. Assume that ${\mathscr{X}}$ is an 
arbitrary, nonempty set. Given an arbitrary function 
$\rho:{\mathscr{X}}\times{\mathscr{X}}\to[0,\infty)$ define its 
{\tt symmetrization} $\rho_{sym}$ as 
\begin{eqnarray}\label{sfgu.GG}
\rho_{sym}:{\mathscr{X}}\times{\mathscr{X}}\longrightarrow[0,\infty),\qquad
\rho_{sym}(x,y):=\max\,\{\rho(x,y),\rho(y,x)\},\quad\forall\,x,y\in{\mathscr{X}}.
\end{eqnarray}
Then $\rho_{sym}$ is symmetric, i.e., $\rho_{sym}(x,y)=\rho_{sym}(y,x)$ for every 
$x,y\in{\mathscr{X}}$, and $\rho_{sym}\geq\rho$ on ${\mathscr{X}}\times{\mathscr{X}}$. 
In fact, $\rho_{sym}$ is the smallest nonnegative function defined on 
${\mathscr{X}}\times{\mathscr{X}}$ which is symmetric and pointwise $\geq\rho$. Furthermore, if $({\mathscr{X}},\rho)$ is a quasi-metric space then 
\begin{eqnarray}\label{yabv.GG}
\rho_{sym}\in{\mathfrak{Q}}({\mathscr{X}}),\,\,\,\,C_{\rho_{sym}}\leq C_\rho,
\,\,\,\,\widetilde{C}_{\rho_{sym}}=1,
\,\,\mbox{ and }\,\,\rho\leq\rho_{sym}\leq\,\widetilde{C}_\rho\,\rho. 
\end{eqnarray}

Here is the quantitative metrization theorem from \cite{MMMM-G} alluded to above.

\begin{theorem}\label{JjEGh}
Let $({\mathscr{X}},\rho)$ be a quasi-metric space and assume that 
$C_\rho,\widetilde{C}_\rho\in[1,\infty)$ are as in
\eqref{C-RHO.111}-\eqref{C-RHO.111XXX}. Introduce 
\begin{eqnarray}\label{Cro}
\alpha_\rho:=\frac{1}{\log_2 C_\rho}\in(0,\infty],
\end{eqnarray}
and define the regularization $\rho_{\#}:{\mathscr{X}}\times{\mathscr{X}}\to[0,\infty)$ 
of $\rho$ as follows. When $\alpha_\rho<\infty$, for each $x,y\in{\mathscr{X}}$ set
\begin{eqnarray}\label{R-sharp}
&& \hskip -0.60in
\rho_{\#}(x,y):=\inf\,\Bigl\{\Bigl(\sum\limits_{i=1}^N
\rho_{sym}(\xi_i,\xi_{i+1})^{\alpha_\rho}\Bigr)^{\frac{1}{\alpha_\rho}}:\,
N\in{\mathbb{N}}\mbox{ and }\xi_1,\dots,\,\xi_{N+1}\in{\mathscr{X}}, 
\nonumber
\\[4pt]
&& \hskip 1.00in
\mbox{(not necessarily distinct) such that $\xi_1=x$ and $\xi_{N+1}=y$}\Bigr\},
\end{eqnarray}
while if $\alpha_\rho=\infty$ then for each $x,y\in{\mathscr{X}}$ set
\begin{eqnarray}\label{BV+uuu}
&& \hskip -0.60in
\rho_{\#}(x,y):=\inf\,\Bigl\{\max\limits_{1\leq i\leq N}
\rho_{sym}(\xi_i,\xi_{i+1}):\,N\in{\mathbb{N}}\mbox{ and }\,
\xi_1,\dots,\,\xi_{N+1}\in{\mathscr{X}}, 
\\[4pt]
&& \hskip 1.00in
\mbox{(not necessarily distinct) such that $\xi_1=x$ and $\xi_{N+1}=y$}\Bigr\}.
\nonumber
\end{eqnarray}
Then the following properties hold:
\begin{enumerate}
\item[(1)] The function $\rho_{\#}$ is a symmetric quasi-distance on ${\mathscr{X}}$ 
and $\rho_{\#}\approx\rho$. More specifically,
\begin{eqnarray}\label{DEQV1}
(C_\rho)^{-2}\rho(x,y)\leq\rho_{\#}(x,y)\leq\widetilde{C}_\rho\,\rho(x,y),\quad
\forall\,x,y\in{\mathscr{X}}.
\end{eqnarray}
In particular, $\tau_{\rho_{\#}}=\tau_{\rho}$. Also, $C_{\rho_{\#}}\leq C_\rho$.
Furthermore, for any nonempty set $E$ of ${\mathscr{X}}$, there holds 
\begin{eqnarray}\label{RHo-evv}
(\rho|_{E})_{\#}\approx\rho|_{E}\approx(\rho_{\#})\bigl|_{E}.
\end{eqnarray}
\item[(2)] For each finite number $\beta\in(0,\alpha_{\rho}]$, the function 
\begin{eqnarray}\label{Ubhb-657.GG}
d_{\rho,\beta}:{\mathscr{X}}\times{\mathscr{X}}\to [0,\infty),\qquad
d_{\rho,\beta}(x,y):=\bigl[\rho_{\#}(x,y)\bigr]^\beta,
\qquad\forall\,x,y\in{\mathscr{X}}, 
\end{eqnarray}
is a distance on ${\mathscr{X}}$, and has the property that 
$(d_{\rho,\beta})^{1/\beta}\approx\rho$. In particular, $d_{\rho,\beta}$ induces 
the same topology on ${\mathscr{X}}$ as $\rho$, hence $\tau_\rho$ is metrizable. 
\item[(3)] For each finite number $\beta\in(0,\alpha_{\rho}]$, the function $\rho_{\#}$ 
satisfies the following H\"older-type regularity condition of order $\beta$
(in both variables, simultaneously) on ${\mathscr{X}}\times{\mathscr{X}}$:
\begin{eqnarray}\label{NNN-g8-tt}
&& \hskip -0.50in
\bigl|\rho_{\#}(x,y)-\rho_{\#}(z,w)\bigr|
\\[4pt]
&& \hskip 0.50in
\leq{\textstyle\frac{1}{\beta}}\,
\max\,\bigl\{\rho_{\#}(x,y)^{1-\beta},\rho_{\#}(z,w)^{1-\beta}\bigr\}
\bigl(\bigl[\rho_{\#}(x,z)\bigr]^\beta+\bigl[\rho_{\#}(y,w)\bigr]^\beta\bigr)
\nonumber
\end{eqnarray}
whenever $x,y,z,w\in{\mathscr{X}}$ (with the understanding that when $\beta\geq 1$ 
one also imposes the conditions $x\not=y$ and $z\not=w$). In particular, 
\begin{eqnarray}\label{CC-NNu}
\rho_{\#}:{\mathscr{X}}\times{\mathscr{X}}\longrightarrow [0,\infty)
\quad\,\mbox{is continuous},
\end{eqnarray}
when ${\mathscr{X}}\times{\mathscr{X}}$ is equipped with the natural product 
topology $\tau_\rho\times\tau_\rho$. 
\item[(4)] If $E$ is a nonempty subset of $({\mathscr{X}},\tau_{\rho})$, then 
the regularized distance function 
\begin{eqnarray}\label{REG-DDD}
\delta_E:={\rm dist}_{\rho_{\#}}(\cdot,E):{\mathscr{X}}\longrightarrow[0,\infty)
\end{eqnarray}
is equivalent to ${\rm dist}_{\rho}(\cdot,E)$. Furthermore, $\delta_E$ is 
locally H\"older of order $\beta$ on ${\mathscr{X}}$ for every 
$\beta\in(0,\min\,\{1,\alpha_\rho\}]$, in the sense that there exists 
$C\in(0,\infty)$ which depends only on $C_\rho,\widetilde{C}_\rho$ and $\beta$ 
such that 
\begin{eqnarray}\label{TBn-3}
\frac{|\delta_E(x)-\delta_E(y)|}{\rho(x,y)^\beta}\leq C\,
\Bigl(\rho(x,y)+\max\,\{{\rm dist}_\rho(x,E)\,,\,{\rm dist}_\rho(y,E)\}\Bigr)^{1-\beta}
\end{eqnarray}
for all $x,y\in{\mathscr{X}}$ with $x\not=y$. In particular,
\begin{eqnarray}\label{DEDE-CC}
\delta_E:({\mathscr{X}},\tau_\rho)\longrightarrow [0,\infty)
\quad\,\mbox{is continuous}.
\end{eqnarray}
\end{enumerate}
\end{theorem}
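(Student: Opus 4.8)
The plan is to pass from the chain definition of $\rho_{\#}$ to its basic metric properties, and then deduce the H\"older regularity from an elementary power inequality. I would begin by recording the easy facts. The one-step chain $\xi_1=x,\xi_2=y$ gives $\rho_{\#}(x,y)\le\rho_{sym}(x,y)\le\widetilde C_\rho\,\rho(x,y)$ (the right-hand bound in \eqref{DEQV1}), reversing chains together with the symmetry of $\rho_{sym}$ gives $\rho_{\#}(x,y)=\rho_{\#}(y,x)$, and $\rho_{\#}(x,x)=0$ is clear. Concatenating a near-optimal chain from $x$ to $z$ with one from $z$ to $y$ produces an admissible chain from $x$ to $y$ whose $\ell^{\alpha_\rho}$-cost is the sum of the two costs; infimizing yields $\rho_{\#}(x,y)^{\alpha_\rho}\le\rho_{\#}(x,z)^{\alpha_\rho}+\rho_{\#}(z,y)^{\alpha_\rho}$ (with the analogous $\max$-inequality when $\alpha_\rho=\infty$). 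Since $C_\rho^{\alpha_\rho}=2$ by the definition of $\alpha_\rho$ in \eqref{Cro}, this gives $\rho_{\#}(x,y)\le C_\rho\max\{\rho_{\#}(x,z),\rho_{\#}(z,y)\}$, hence $C_{\rho_{\#}}\le C_\rho$.

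The crux, and \textbf{the main obstacle}, is the lower bound $\rho_{\#}(x,y)\ge C_\rho^{-2}\rho(x,y)$ in \eqref{DEQV1}: one must show the chain infimum does not collapse. I would prove, by induction on the chain length $N$, the estimate $\rho_{sym}(\xi_1,\xi_{N+1})\le C_\rho^{2}\bigl(\sum_{i=1}^{N}\rho_{sym}(\xi_i,\xi_{i+1})^{\alpha_\rho}\bigr)^{1/\alpha_\rho}$ for every chain, after which $\rho\le\rho_{sym}$ gives the claim. The inductive step uses the classical splitting device: writing $S$ for the total $\ell^{\alpha_\rho}$-cost, let $k$ be the largest index with $\sum_{i<k}\rho_{sym}(\xi_i,\xi_{i+1})^{\alpha_\rho}\le S/2$, so the cost from $\xi_{k+1}$ to $\xi_{N+1}$ is also $\le S/2$; then apply the inductive hypothesis to the two sub-chains, the trivial bound $\rho_{sym}(\xi_k,\xi_{k+1})\le S^{1/\alpha_\rho}$ to the middle link, and the quasi-triangle inequality for $\rho_{sym}$ (constant $\le C_\rho$, by \eqref{yabv.GG}) twice, exploiting $(S/2)^{1/\alpha_\rho}=C_\rho^{-1}S^{1/\alpha_\rho}$ to keep the overall constant at $C_\rho^2$; tracking the constants sharply is the only delicate point. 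Once \eqref{DEQV1} is in hand, $\rho_{\#}\approx\rho$, so $\tau_{\rho_{\#}}=\tau_\rho$ by the remark following Definition~\ref{zbnb}; applying everything above to $\rho|_E$ on $E$ (whose constants are dominated by $C_\rho,\widetilde C_\rho$) and combining with $\rho_{\#}\approx\rho$ yields \eqref{RHo-evv}. Part (2) is then formal: $d_{\rho,\alpha_\rho}=\rho_{\#}^{\alpha_\rho}$ satisfies the triangle inequality by the concatenation argument, is symmetric, and vanishes exactly on the diagonal, hence is a distance; for $\beta\in(0,\alpha_\rho]$ one has $d_{\rho,\beta}=(d_{\rho,\alpha_\rho})^{\beta/\alpha_\rho}$ with exponent in $(0,1]$, and a power $s\in(0,1]$ of a distance is a distance since $(a+b)^s\le a^s+b^s$, while $(d_{\rho,\beta})^{1/\beta}\approx\rho$ is again \eqref{DEQV1}.

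For Part (3) I would combine the triangle inequality for $d:=d_{\rho,\beta}$ with an elementary power inequality. Applying the metric inequality twice gives $|d(x,y)-d(z,w)|\le d(x,z)+d(y,w)=\rho_{\#}(x,z)^{\beta}+\rho_{\#}(y,w)^{\beta}$. On the other hand, for any $s>0$ and $a,b>0$ the mean value theorem yields $|a^{s}-b^{s}|\le s\max\{a^{s-1},b^{s-1}\}\,|a-b|$, valid whether $s\ge1$ or $s<1$ since in both cases $\xi^{s-1}\le\max\{a^{s-1},b^{s-1}\}$ for $\xi$ between $a$ and $b$. Taking $s=1/\beta$, $a=d(x,y)=\rho_{\#}(x,y)^{\beta}$ and $b=d(z,w)=\rho_{\#}(z,w)^{\beta}$ converts the previous display into exactly \eqref{NNN-g8-tt}; the proviso $x\ne y$, $z\ne w$ when $\beta\ge1$ is precisely what keeps $a,b>0$ so the power inequality applies. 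Continuity \eqref{CC-NNu} follows since, off the diagonal, $\max\{\rho_{\#}(x,y)^{1-\beta},\rho_{\#}(z,w)^{1-\beta}\}$ stays bounded while $\rho_{\#}(x,z)^\beta+\rho_{\#}(y,w)^\beta\to0$.

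Finally, for Part (4), taking the infimum over $E$ in \eqref{DEQV1} gives $C_\rho^{-2}\dist_\rho(\cdot,E)\le\delta_E\le\widetilde C_\rho\,\dist_\rho(\cdot,E)$. For \eqref{TBn-3}, fix $\beta\in(0,\min\{1,\alpha_\rho\}]$ so that $\rho_{\#}^{\beta}$ is a distance; for each $y\in E$ one has $\rho_{\#}(x,y)^\beta\le\rho_{\#}(x,x')^\beta+\rho_{\#}(x',y)^\beta$, and infimizing over $y$ gives $\delta_E(x)^\beta\le\rho_{\#}(x,x')^\beta+\delta_E(x')^\beta$, hence $|\delta_E(x)^\beta-\delta_E(x')^\beta|\le\rho_{\#}(x,x')^\beta$ by symmetry. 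Applying the power inequality with $s=1/\beta\ge1$ yields $|\delta_E(x)-\delta_E(x')|\le\tfrac1\beta\max\{\delta_E(x),\delta_E(x')\}^{1-\beta}\rho_{\#}(x,x')^\beta$; dividing by $\rho(x,x')^\beta$, using $\rho_{\#}\approx\rho$ and $\delta_E\approx\dist_\rho(\cdot,E)$, and the trivial bound $\max\{\dist_\rho(x,E),\dist_\rho(x',E)\}^{1-\beta}\le\bigl(\rho(x,x')+\max\{\dist_\rho(x,E),\dist_\rho(x',E)\}\bigr)^{1-\beta}$ (as $1-\beta\ge0$) gives \eqref{TBn-3}, and \eqref{DEDE-CC} is a special case. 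The case $\alpha_\rho=\infty$ (i.e.\ $C_\rho=1$) runs in parallel with suprema along chains replacing $\ell^{\alpha_\rho}$-sums: the concatenation argument shows $\rho_{\#}$ is an ultrametric and the splitting induction degenerates to $\rho_{sym}(\xi_1,\xi_{N+1})\le\max_i\rho_{sym}(\xi_i,\xi_{i+1})$, so $\rho_{\#}=\rho_{sym}$ up to the same bounds and the remaining assertions are immediate in the relevant ranges of $\beta$.
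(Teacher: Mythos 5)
Your reduction of \eqref{DEQV1} to the chain inequality $\rho_{sym}(\xi_1,\xi_{N+1})\leq C_\rho^2\bigl(\sum_{i=1}^N\rho_{sym}(\xi_i,\xi_{i+1})^{\alpha_\rho}\bigr)^{1/\alpha_\rho}$ is the right target, but the inductive step you sketch for it does not close, and this is precisely the hard point of the theorem rather than a matter of "tracking constants sharply." Run your own recursion: if the inductive hypothesis holds with a constant $A$, your split gives two sub-chains of cost at most $(S/2)^{1/\alpha_\rho}=C_\rho^{-1}S^{1/\alpha_\rho}$ each, hence endpoint bounds $AC_\rho^{-1}S^{1/\alpha_\rho}$, plus the middle link bounded by $S^{1/\alpha_\rho}$; two applications of the quasi-triangle inequality (constant $C_\rho$ for $\rho_{sym}$) then yield $\rho_{sym}(\xi_1,\xi_{N+1})\leq C_\rho\max\bigl\{AC_\rho^{-1},\,C_\rho\max\{1,AC_\rho^{-1}\}\bigr\}S^{1/\alpha_\rho}=C_\rho A\,S^{1/\alpha_\rho}$ once $A\geq C_\rho$. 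So the step proves the statement with constant $C_\rho A$, not $A$: taking $A=C_\rho^2$ you land at $C_\rho^3$, and no fixed constant works at all when $C_\rho>1$, since closing the induction would require $C_\rho A\leq A$. The failure is structural: the same bookkeeping shows the splitting argument has positive slack only for exponents $\beta\leq\alpha_\rho/2$ (essentially the Mac\'{\i}as--Segovia range), whereas the theorem is stated at the sharp exponent $\alpha_\rho=1/\log_2C_\rho$ of \eqref{Cro}, where the slack is exactly zero. Establishing the chain inequality at this sharp exponent (equivalently, after normalizing $d:=\rho_{sym}^{\alpha_\rho}$, the inequality $d(\xi_1,\xi_{N+1})\leq 4\sum_i d(\xi_i,\xi_{i+1})$ for symmetric $d$ with $d(x,y)\leq 2\max\{d(x,z),d(z,y)\}$, i.e. the corrected Frink-type lemma) requires a genuinely more delicate induction with a finer case analysis of the split; this is the content of the result in \cite{MMMM-G} that the paper itself invokes without proof. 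Note also that without some positive lower bound for $\rho_{\#}$ in terms of $\rho$ you have not even shown $\rho_{\#}(x,y)>0$ for $x\neq y$, so part (1) is not available as a fallback in weaker form.

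By contrast, everything you build on top of \eqref{DEQV1} is sound and standard: the one-link upper bound, symmetry, the concatenation subadditivity $\rho_{\#}(x,y)^{\alpha_\rho}\leq\rho_{\#}(x,z)^{\alpha_\rho}+\rho_{\#}(z,y)^{\alpha_\rho}$ and hence $C_{\rho_{\#}}\leq C_\rho$, the passage to $d_{\rho,\beta}$ via $(a+b)^s\leq a^s+b^s$ for $s\in(0,1]$, the mean-value inequality $|a^s-b^s|\leq s\max\{a^{s-1},b^{s-1}\}|a-b|$ giving \eqref{NNN-g8-tt} and \eqref{TBn-3}, and the ultrametric degeneration when $\alpha_\rho=\infty$. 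So the proposal is complete except at its crux; to repair it you must either prove the sharp chain inequality by the more involved argument (as in \cite{MMMM-G}) or, like the paper, cite that result.
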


\noindent The key feature of the result discussed in Theorem~\ref{JjEGh} 
is the fact that if $({\mathscr{X}},\rho)$ is any quasi-metric space then 
$\rho^\beta$ is equivalent to a genuine distance on ${\mathscr{X}}$ for 
any finite number $\beta\in(0,(\log_2C_\rho)^{-1}]$. This result is 
sharp and improves upon an earlier version due to R.A.~Mac\'{\i}as and 
C. Segovia \cite{MaSe79}, in which these authors have identified a smaller, 
non-optimal upper-bound for the exponent $\beta$. 

In anticipation of briefly reviewing the notion of Hausdorff outer-measure
on a quasi-metric space, we recall a couple of definitions from measure theory.
Specifically, given an outer-measure $\mu^{\ast}$ on an arbitrary set 
${\mathscr{X}}$, consider the collection of all $\mu^{\ast}$-measurable 
sets defined as 
\begin{eqnarray}\label{MM-RE.89}
{\mathfrak{M}}_{\mu^{\ast}}:=\{A\subseteq{\mathscr{X}}:\,
\mu^{\ast}(Y)=\mu^{\ast}(Y\cap A)+\mu^{\ast}(Y\setminus A),\,\,
\forall\,Y\subseteq{\mathscr{X}}\}.
\end{eqnarray}
Carath\'eodory's classical theorem allows one to pass from a given 
outer-measure $\mu^{\ast}$ on ${\mathscr{X}}$ to a genuine measure by observing that
\begin{eqnarray}\label{MM-RE.90}
{\mathfrak{M}}_{\mu^{\ast}}\mbox{ is a sigma-algebra, and 
$\mu^{\ast}\bigl|_{{\mathfrak{M}}_{\mu^{\ast}}}$ is a complete measure}.
\end{eqnarray}
The restriction of an outer-measure $\mu^\ast$ on ${\mathscr{X}}$ to a subset 
$E$ of ${\mathscr{X}}$, denoted by $\mu^\ast\lfloor E$, is defined naturally 
by restricting the function $\mu^\ast$ to the collection of all subsets of $E$. 
We shall use the same symbol, $\lfloor\,\,$, in denoting the restriction of a measure  
to a measurable set. In this regard, it is useful to know when the measure 
associated with the restriction of an outer-measure to a set coincides with 
the restriction to that set of the measure associated with the given outer-measure.
Specifically, it may be checked that if $\mu^\ast$ is an outer-measure 
on ${\mathscr{X}}$, then 
\begin{eqnarray}\label{hgdss-F22}
\bigl(\mu^\ast\lfloor E\bigr)\Bigl|_{{\mathfrak{M}}_{(\mu^\ast\lfloor E)}}
=\bigl(\mu^\ast\bigl|_{{\mathfrak{M}}_{\mu^\ast}}\bigr)\lfloor E,\qquad
\forall\,E\in{\mathfrak{M}}_{\mu^\ast}. 
\end{eqnarray}

Next, if $({\mathscr{X}},\tau)$ is a topological space and $\mu^{\ast}$ is 
an outer-measure on ${\mathscr{X}}$ such that ${\mathfrak{M}}_{\mu^{\ast}}$ contains the Borel sets in $({\mathscr{X}},\tau)$, then call $\mu^{\ast}$ a {\tt Borel outer-measure} 
on ${\mathscr{X}}$. Furthermore, call such a Borel outer-measure $\mu^{\ast}$ a 
{\tt Borel regular outer-measure} if 
\begin{eqnarray}\label{Tgabv10}
\forall\,A\subseteq {\mathscr{X}}\,\,\,\exists 
\mbox{ a Borel set $B$ in $({\mathscr{X}},\tau)$ such that $A\subseteq B$ 
and $\mu^{\ast}(A)=\mu^{\ast}(B)$}.
\end{eqnarray}

After this digression, we now proceed to introduce the concept of 
$d$-dimensional Hausdorff outer-measure for a subset of a quasi-metric space. 

\begin{definition}\label{SKJ38}
Let $({\mathscr{X}},\rho)$ be a quasi-metric space and fix $d\geq 0$.  
Given a set $A\subseteq{\mathscr{X}}$, for every $\varepsilon>0$ define
${\mathcal{H}}_{{\mathscr{X}}\!,\,\rho,\,\varepsilon}^d(A)\in[0,\infty]$ by setting 
\begin{eqnarray}\label{SKJ38-1}
\hskip -0.20in
{\mathcal{H}}_{{\mathscr{X}}\!,\,\rho,\,\varepsilon}^d(A)
:=\inf\,\Bigl\{\sum_{j=1}^\infty({\rm diam}_{\rho}(A_j))^d:\,
A\subseteq\bigcup_{j=1}^\infty A_j\mbox{ and }
{\rm diam}_{\rho}(A_j)\leq\varepsilon\mbox{ for every }j\Bigr\}
\end{eqnarray}
(with the convention that $\inf\,\emptyset:=+\infty$), then take 
\begin{eqnarray}\label{SKJ38-2}
{\mathcal{H}}_{{\mathscr{X}}\!,\,\rho}^d(A)
:=\lim_{\varepsilon\to0^+}{\mathcal{H}}_{{\mathscr{X}}\!,\,\rho,\,\varepsilon}^d(A)
=\sup_{\varepsilon>0}{\mathcal{H}}_{{\mathscr{X}}\!,\,\rho,\,\varepsilon}^d(A)
\in[0,\infty].
\end{eqnarray}
The quantity ${\mathcal{H}}_{{\mathscr{X}}\!,\,\rho}^d(A)$ is called the 
$d$-{\tt dimensional Hausdorff outer-measure} in $({\mathscr{X}},\rho)$ of the 
set $A$. Whenever the choice of the quasi-distance $\rho$ is irrelevant or 
clear from the context, ${\mathcal{H}}_{{\mathscr{X}}\!,\,\rho}^d(A)$ is 
abbreviated as ${\mathcal{H}}_{{\mathscr{X}}}^d(A)$.
\end{definition}

It is readily verified that ${\mathcal{H}}_{{\mathscr{X}}\!,\,\rho}^0$ is equivalent 
to the counting measure. Other basic properties of the Hausdorff outer-measure are 
collected in the proposition below, proved in \cite{MMMM-B}. To state it, recall 
that a measure $\mu$ on a quasi-metric space $({\mathscr{X}},\rho)$ is called
{\tt Borel regular} provided it is Borel on $({\mathscr{X}},\tau_\rho)$ and 
\begin{eqnarray}\label{T-gc2w}
\forall\,\mbox{$\mu$-measurable}\,A\subseteq{\mathscr{X}}\,\,\,\exists
\mbox{ a Borel set $B$ in $({\mathscr{X}},\tau_\rho)$ such that $A\subseteq B$ 
and $\mu(A)=\mu(B)$}.\quad
\end{eqnarray}
Also, we make the convention that, given a quasi-metric space $({\mathscr{X}},\rho)$ 
and $d\geq 0$, 
\begin{eqnarray}\label{HHH-56}
\mbox{${\mathscr{H}}_{{\mathscr{X}}\!,\,\rho}^d$ denotes the 
{\it measure} associated with the outer-measure 
${\mathcal{H}}_{{\mathscr{X}}\!,\,\rho}^d$ as in \eqref{MM-RE.90}}.
\end{eqnarray}

\begin{proposition}\label{PWRS22}
Let $({\mathscr{X}},\rho)$ be a quasi-metric space and fix $d\geq 0$.
Then the following properties hold:
\begin{enumerate}
\item[(1)] ${\mathcal{H}}_{{\mathscr{X}}\!,\,\rho}^d$ is a Borel outer-measure on 
$({\mathscr{X}},\tau_\rho)$. In particular, ${\mathscr{H}}_{{\mathscr{X}}\!,\,\rho}^d$
(introduced in \eqref{HHH-56}) is a Borel measure on $({\mathscr{X}},\tau_\rho)$.
\item[(2)] If $\rho_{\#}$ is as in Theorem~\ref{JjEGh} then 
${\mathcal{H}}_{{\mathscr{X}}\!,\,\rho_{\#}}^d$ is actually a 
Borel regular outer-measure on $({\mathscr{X}},\tau_\rho)$. 
Moreover, ${\mathscr{H}}_{{\mathscr{X}}\!,\,\rho_{\#}}^d$ 
is a Borel regular measure on $({\mathscr{X}},\tau_\rho)$.
\item[(3)] One has ${\mathcal{H}}_{{\mathscr{X}}\!,\,\rho'}^d\approx
{\mathcal{H}}_{{\mathscr{X}}\!,\,\rho}^d$ whenever $\rho'\approx\rho$, in the sense
that there exist two finite constants $C_1,C_2>0$, which depend only on $\rho$ 
and $\rho'$, such that
\begin{eqnarray}\label{PWR-Cx}
C_1\,{\mathcal{H}}_{{\mathscr{X}}\!,\,\rho}^d(A)
\leq{\mathcal{H}}_{{\mathscr{X}}\!,\,\rho'}^d(A)
\leq C_2\,{\mathcal{H}}_{{\mathscr{X}}\!,\,\rho}^d(A)
\quad\mbox{for all }\,A\subseteq{\mathscr{X}}.
\end{eqnarray}
\item[(4)] Let $E\subseteq{\mathscr{X}}$ and consider the quasi-metric 
space $(E,\rho|_E)$. Then the $d$-dimensional Hausdorff outer-measure in 
$(E,\rho|_{E})$ is equivalent to the restriction to $E$ of the 
$d$-dimensional Hausdorff outer-measure in ${\mathscr{X}}$. 
That is, in the sense of \eqref{PWR-Cx},
\begin{eqnarray}\label{jkan}
{\mathcal{H}}_{E,\,\rho|_{E}}^d\approx
{\mathcal{H}}_{{\mathscr{X}}\!,\,\rho}^d\big\lfloor E. 
\end{eqnarray}
\item[(5)] For any $E\subseteq{\mathscr{X}}$, 
${\mathcal{H}}_{{\mathscr{X}}\!,\,\rho_{\#}}^d\big\lfloor E$ is a Borel regular 
outer-measure on $(E,\tau_{\rho|_{E}})$, and the measure associated with it 
(as in \eqref{MM-RE.90}) is a Borel regular measure on $(E,\tau_{\rho|_{E}})$.

Furthermore, if $E$ is ${\mathcal{H}}_{{\mathscr{X}}\!,\,\rho_{\#}}^d$-measurable
(in the sense of \eqref{MM-RE.89}; hence, in particular, if $E$ is a Borel subset 
of $({\mathscr{X}},\tau_\rho)$), then 
${\mathscr{H}}_{{\mathscr{X}}\!,\,\rho_{\#}}^d\lfloor E$ 
is a Borel regular measure on $(E,\tau_{\rho|_{E}})$
and it coincides with the measure associated with the outer-measure
${\mathcal{H}}_{{\mathscr{X}}\!,\,\rho_{\#}}^d\lfloor E$. 
\item[(6)] Assume that $m\in(d,\infty)$. Then for each $E\subseteq{\mathscr{X}}$ 
one has 
\begin{eqnarray}\label{haTT.2}
{\mathcal{H}}_{{\mathscr{X}}\!,\,\rho}^{d}(E)<\infty\,\Longrightarrow\, 
{\mathcal{H}}_{{\mathscr{X}}\!,\,\rho}^{m}(E)=0.
\end{eqnarray}
\end{enumerate}
\end{proposition}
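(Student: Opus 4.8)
The plan is to reduce all six assertions to classical facts about Hausdorff measure in genuine metric spaces, using the regularization $\rho_{\#}$ supplied by Theorem~\ref{JjEGh}. Fix a finite number $\beta\in(0,\alpha_\rho]$ (with $\alpha_\rho$ as in \eqref{Cro}) and recall from \eqref{Ubhb-657.GG} that $d_{\rho,\beta}:=(\rho_{\#})^\beta$ is a genuine distance on $\mathscr{X}$ which induces the topology $\tau_\rho$. Since $t\mapsto t^\beta$ is increasing, $\diam_{d_{\rho,\beta}}(A)=\bigl(\diam_{\rho_{\#}}(A)\bigr)^\beta$ for every $A\subseteq\mathscr{X}$, so for each $\varepsilon>0$ the families of admissible coverings entering the definitions of $\mathcal{H}^d_{\mathscr{X},\rho_{\#},\varepsilon}$ and of $\mathcal{H}^{d/\beta}_{\mathscr{X},d_{\rho,\beta},\varepsilon^\beta}$ coincide and produce the same sums; letting $\varepsilon\to0^+$ yields the identity of outer-measures $\mathcal{H}^d_{\mathscr{X},\rho_{\#}}=\mathcal{H}^{d/\beta}_{\mathscr{X},d_{\rho,\beta}}$. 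In other words, $\mathcal{H}^d_{\mathscr{X},\rho_{\#}}$ is, up to a harmless rescaling of the dimension parameter, an honest Hausdorff outer-measure in the metric space $(\mathscr{X},d_{\rho,\beta})$, so the classical theory applies to it directly.

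With this in hand I would first dispose of the comparison statements. For item (3), if $C'\rho\le\rho'\le C''\rho$ on $\mathscr{X}\times\mathscr{X}$, then $C'\diam_\rho(A)\le\diam_{\rho'}(A)\le C''\diam_\rho(A)$ for every $A$; feeding this into the covering definitions (a cover by sets of $\rho'$-diameter $\le\varepsilon$ is a cover by sets of $\rho$-diameter $\le\varepsilon/C'$, and conversely) and passing to the limit in $\varepsilon$ gives $(C')^d\mathcal{H}^d_{\mathscr{X},\rho}\le\mathcal{H}^d_{\mathscr{X},\rho'}\le(C'')^d\mathcal{H}^d_{\mathscr{X},\rho}$. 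Item (4) is proved the same way: intersecting any covering of $A\subseteq E$ by subsets of $\mathscr{X}$ with the set $E$ leaves diameters unchanged or smaller, so $\mathcal{H}^d_{E,\rho|_E}(A)\le\mathcal{H}^d_{\mathscr{X},\rho}(A)$, while a covering of $A$ by subsets of $E$ is in particular a covering by subsets of $\mathscr{X}$, giving the opposite inequality; one in fact obtains the equality $\mathcal{H}^d_{E,\rho|_E}=\mathcal{H}^d_{\mathscr{X},\rho}\lfloor E$ on subsets of $E$. Item (6) is the standard vanishing-above-the-critical-exponent estimate: if $m>d$ and $\{A_j\}$ covers $E$ with $\diam_\rho(A_j)\le\varepsilon$ and $\sum_j(\diam_\rho A_j)^d\le\mathcal{H}^d_{\mathscr{X},\rho}(E)+1$, then $\sum_j(\diam_\rho A_j)^m\le\varepsilon^{m-d}\bigl(\mathcal{H}^d_{\mathscr{X},\rho}(E)+1\bigr)$, which tends to $0$ as $\varepsilon\to0^+$, whence $\mathcal{H}^m_{\mathscr{X},\rho}(E)=0$.

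For items (1) and (2), the parts that are a matter of definition-chasing and a limiting argument are that $\mathcal{H}^d_{\mathscr{X},\rho}$ is an outer-measure (monotone, countably subadditive, vanishing on $\emptyset$). The substance is Borel-ness and Borel regularity. Since $\mathcal{H}^d_{\mathscr{X},\rho_{\#}}$ is an honest Hausdorff outer-measure in $(\mathscr{X},d_{\rho,\beta})$ it is a metric outer-measure there; and using $\rho\approx\rho_{\#}$ one checks directly that $\mathcal{H}^d_{\mathscr{X},\rho}$ too is additive on pairs $A,B$ with $\dist_{d_{\rho,\beta}}(A,B)>0$, because for $\varepsilon$ small relative to this separation any covering of $A\cup B$ by sets of $\rho$-diameter $\le\varepsilon$ splits into a covering of $A$ and a covering of $B$ indexed by disjoint sets. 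Carath\'eodory's criterion for metric outer-measures then yields that every Borel subset of $(\mathscr{X},\tau_{d_{\rho,\beta}})=(\mathscr{X},\tau_\rho)$ is $\mathcal{H}^d_{\mathscr{X},\rho}$-measurable, which via \eqref{MM-RE.90} gives item (1). For item (2), one runs the classical Borel-hull construction for $\rho_{\#}$: given $A\subseteq\mathscr{X}$ and $k\in\mathbb{N}$, choose a covering $\{A^k_j\}_j$ of $A$ by sets of $\rho_{\#}$-diameter $\le1/k$ with $\sum_j(\diam_{\rho_{\#}}A^k_j)^d\le\mathcal{H}^d_{\mathscr{X},\rho_{\#},1/k}(A)+1/k$; since $\rho_{\#}$ is continuous on $\mathscr{X}\times\mathscr{X}$ by part (3) of Theorem~\ref{JjEGh}, closures of sets have the same $\rho_{\#}$-diameter, so $B:=\bigcap_k\bigcup_j\overline{A^k_j}$ is a $G_\delta$ (hence Borel) superset of $A$ with $\mathcal{H}^d_{\mathscr{X},\rho_{\#}}(B)\le\mathcal{H}^d_{\mathscr{X},\rho_{\#}}(A)$, and monotonicity forces equality.

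Item (5) is then obtained by combining the previous steps. Replacing $\rho$ by $\rho_{\#}$ in the equality version of the argument for (4) shows $\mathcal{H}^d_{\mathscr{X},\rho_{\#}}\lfloor E=\mathcal{H}^d_{E,\rho_{\#}|_E}$ as set functions on subsets of $E$; since $\rho_{\#}|_E$ is symmetric and (being the restriction of the continuous $\rho_{\#}$) continuous, the Borel-hull construction carried out inside $(E,\tau_{\rho|_E})$ shows this outer-measure is Borel regular on $(E,\tau_{\rho|_E})$, whence so is its associated measure. Finally, when $E$ is $\mathcal{H}^d_{\mathscr{X},\rho_{\#}}$-measurable, the identity \eqref{hgdss-F22} (applied with $\mu^\ast=\mathcal{H}^d_{\mathscr{X},\rho_{\#}}$) shows the measure associated with $\mathcal{H}^d_{\mathscr{X},\rho_{\#}}\lfloor E$ coincides with $\mathscr{H}^d_{\mathscr{X},\rho_{\#}}\lfloor E$, and Borel regularity of the latter as a measure on $(E,\tau_{\rho|_E})$ follows at once from Borel regularity of the former outer-measure. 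The only real obstacle here is conceptual rather than computational: Carath\'eodory's metric criterion and the closure trick both require a genuine metric, whereas the diameters in the Hausdorff sums are measured with the possibly highly irregular $\rho$; the device that resolves this is precisely Theorem~\ref{JjEGh}, which furnishes the equivalent, symmetric, \emph{continuous} regularization $\rho_{\#}$ together with the honest metric $d_{\rho,\beta}=(\rho_{\#})^\beta$ for the same topology, allowing one to transfer the question to $(\mathscr{X},d_{\rho,\beta})$ and back via items (3)--(4).
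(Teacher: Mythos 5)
Your proof is correct. Note that the paper itself gives no argument for Proposition~\ref{PWRS22} — it simply cites \cite{MMMM-B} — so there is no in-paper proof to compare against; your route (transferring matters to the genuine metric $d_{\rho,\beta}=(\rho_{\#})^\beta$ from Theorem~\ref{JjEGh}, invoking Carath\'eodory's criterion for metric outer-measures for item {\it (1)}, the closure/Borel-hull construction, which indeed hinges on the continuity of $\rho_{\#}$, for items {\it (2)} and {\it (5)}, and direct covering comparisons for items {\it (3)}, {\it (4)}, {\it (6)}) is the standard and natural one, and all the steps check out. Two cosmetic points: the hull $B=\bigcap_k\bigcup_j\overline{A^k_j}$ is a countable intersection of $F_\sigma$ sets rather than a $G_\delta$ (it is Borel either way, which is all you use), and in the Borel-hull step you should dispose of the degenerate case ${\mathcal{H}}^d_{{\mathscr{X}},\rho_{\#}}(A)=\infty$ (no admissible covers) by simply taking $B:={\mathscr{X}}$, respectively $B:=E$ in item {\it (5)}.
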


\subsection{Geometrically doubling quasi-metric spaces}
\label{SSect:2.2}

In this subsection we shall work in a more specialized setting than that 
of general quasi-metric spaces considered so far, by considering geometrically 
doubling quasi-metric spaces, as described in the definition below. 

\begin{definition}\label{Gd_ZZ}
A quasi-metric space $({\mathscr{X}},\rho)$ is called {\tt geometrically doubling} 
if there exists a number $N\in{\mathbb{N}}$, called the geometric doubling constant 
of $({\mathscr{X}},\rho)$, with the property that any $\rho$-ball of radius $r$ 
in ${\mathscr{X}}$ may be covered by at most $N$ $\rho$-balls in ${\mathscr{X}}$ 
of radii $r/2$.  
\end{definition}

To put this matter into a larger perspective, recall that a subset $E$ 
of a quasi-metric space $({\mathscr{X}},\rho)$ is said to be {\tt totally bounded}
provided that for any $r\in(0,\infty)$ there exists a finite covering of $E$ with
$\rho$-balls of radii $r$. Then for a quasi-metric space $({\mathscr{X}},\rho)$
the quality of being geometrically doubling may be regarded as a scale-invariant
version of the demand that all $\rho$-balls in ${\mathscr{X}}$ are totally bounded.
In fact it may be readily verified that if $({\mathscr{X}},\rho)$ is a 
geometrically doubling quasi-metric space, then 
\begin{eqnarray}\label{RAF-e}
\begin{array}{l}
\exists\,N\in{\mathbb{N}}\mbox{ such that $\forall\,\vartheta\in(0,1)$
any $\rho$-ball of radius $r$ in ${\mathscr{X}}$} 
\\[4pt]
\mbox{may be covered by at most $N^{-[\log_2\vartheta]}$ 
$\rho$-balls in ${\mathscr{X}}$ of radii $\vartheta r$},
\end{array}
\end{eqnarray}
where $[\log_2\vartheta]$ is the smallest integer greater than or equal to $\log_2 \vartheta$. En route, let us also point out that {\it the property of being geometrically doubling is hereditary}, in the sense that if $({\mathscr{X}},\rho)$ is a geometrically 
doubling quasi-metric space with geometric doubling constant $N$, and if $E$ is an arbitrary subset of ${\mathscr{X}}$, then $(E,\rho\bigl|_E)$ is a geometrically doubling quasi-metric space with geometric doubling constant at most equal to 
$N^{\log_2 C_\rho}N$. 

The relevance of the property (of a quasi-metric space) of being 
geometrically doubling is apparent from the fact that in such a context 
a number of useful geometrical results hold, which are akin to those available 
in the Euclidean setting. A case in point, is the Whitney decomposition theorem
discussed in Proposition~\ref{H-S-Z} below. A version of the classical 
Whitney decomposition theorem in the Euclidean setting (as presented in, 
e.g., \cite[Theorem~1.1, p.\,167]{St70}) has been worked out in
\cite[Theorem~3.1, p.\,71]{CoWe71} and \cite[Theorem~3.2, p.\,623]{CoWe77}
in the context of bounded open sets in spaces of homogeneous type.
Recently, the scope of this work has been further refined  in \cite{MMMM-G} by  
allowing arbitrary open sets in geometrically doubling quasi-metric spaces, 
as presented in the following proposition.

\begin{proposition}\label{H-S-Z}
Let $({\mathscr{X}},\rho)$ be a geometrically doubling quasi-metric space. 
Then for each number $\lambda\in (1,\infty)$ there exist constants
$\Lambda\in(\lambda,\infty)$ and $N\in{\mathbb{N}}$, both depending only on $C_\rho,\widetilde{C}_\rho,\lambda$ and the geometric doubling constant 
of $({\mathscr{X}},\rho)$, and which have the following significance. 

For each open, nonempty, proper subset ${\mathcal{O}}$ of the topological space
$({\mathscr{X}},\tau_\rho)$ there exist an at most countable family of points $\{x_j\}_{j\in J}$ in ${\mathcal{O}}$ along with a family of real numbers 
$r_j>0$, $j\in J$, for which the following properties are valid:
\begin{enumerate}
\item[(1)] ${\mathcal{O}}=\bigcup\limits_{j\in J}B_\rho(x_j,r_j)$;
\item[(2)] $\sum\limits_{j\in J}{\mathbf 1}_{B_\rho(x_j,\lambda r_j)}\leq N$ 
on ${\mathcal{O}}$. In fact, there exists $\varepsilon\in(0,1)$, which depends 
only on $C_\rho,\lambda$ and the geometric doubling constant of $({\mathscr{X}},\rho)$,
with the property that for any $x_o\in{\mathcal{O}}$ 
\begin{eqnarray}\label{Lay-ff.u-c}
\hskip -0.20in
\#\,\Bigl\{j\in J:\,B_\rho\bigl(x_o,\varepsilon\,
{\rm dist}_\rho(x_o,{\mathscr{X}}\setminus{\mathcal{O}})\bigr)
\cap B_{\rho}(x_j,\lambda r_j)\not=\emptyset\Bigr\}\leq N.
\end{eqnarray}
\item[(3)] $B_\rho(x_j,\lambda r_j)\subseteq{\mathcal{O}}$ and 
$B_\rho(x_j,\Lambda r_j)\cap\bigl[{\mathscr{X}}\setminus{\mathcal{O}}\bigr]\not=\emptyset$
for every $j\in J$. 
\item[(4)] $r_i\approx r_j$ uniformly for $i,j\in J$ such that 
$B_\rho(x_i,\lambda r_i)\cap B_\rho(x_j,\lambda r_j)\not=\emptyset$, and there 
exists a finite constant $C>0$ with the property that 
$r_j\leq C\,{\rm diam}_{\rho}({\mathcal{O}})$ for each $j\in J$.
\end{enumerate}
\end{proposition}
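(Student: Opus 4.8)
The plan is to run the classical Whitney construction, adapted to the quasi-metric and geometrically doubling setting, after a preliminary reduction to a symmetric, regular quasi-distance. First I would replace $\rho$ by its regularization $\rho_{\#}$ from Theorem~\ref{JjEGh}: this is a symmetric quasi-distance with $C_{\rho_{\#}}\le C_\rho$ and $\rho_{\#}\approx\rho$, so that $\tau_{\rho_{\#}}=\tau_\rho$ and $({\mathscr{X}},\rho_{\#})$ is again geometrically doubling with comparable constants; moreover the distance-to-complement function $\delta:=\dist_{\rho_{\#}}(\cdot,{\mathscr{X}}\setminus{\mathcal{O}})$ is then continuous and strictly positive on ${\mathcal{O}}$ and satisfies $\delta\approx\dist_\rho(\cdot,{\mathscr{X}}\setminus{\mathcal{O}})$, by parts (3)--(4) of Theorem~\ref{JjEGh}. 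Since $C_\rho^{-2}\rho\le\rho_{\#}\le\widetilde{C}_\rho\,\rho$, a family of $\rho_{\#}$-balls enjoying properties (1)--(4) with some constants is, after rescaling its radii by an absolute factor, a family of $\rho$-balls enjoying (1)--(4) with constants adjusted only by quantities depending on $C_\rho,\widetilde{C}_\rho$. Hence I may assume henceforth that $\rho$ itself is symmetric, that $\delta$ is continuous on ${\mathcal{O}}$, and that $\delta$ obeys the quasi-Lipschitz bound $\delta(x)\le C_\rho\max\{\rho(x,y),\delta(y)\}$ for all $x,y\in{\mathscr{X}}$.

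Next I would build the family. Stratify ${\mathcal{O}}=\bigsqcup_{k\in{\mathbb{Z}}}{\mathcal{O}}_k$ with ${\mathcal{O}}_k:=\{x\in{\mathcal{O}}:2^{k}<\delta(x)\le 2^{k+1}\}$, fix a small $\eta\in(0,1)$ to be pinned down at the very end in terms of $C_\rho$ and $\lambda$ alone, and for each $k$ choose a maximal $\eta 2^{k}$-separated subset $\{x^{(k)}_j\}_j$ of ${\mathcal{O}}_k$. Such sets are at most countable, since a geometrically doubling quasi-metric space is separable: all quasi-distances being finite, ${\mathscr{X}}$ is a countable union of $\rho$-balls, each of which is totally bounded by \eqref{RAF-e}. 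Re-indexing $\{x^{(k)}_j\}_{k,j}$ as $\{x_j\}_{j\in J}$ and setting $r_j:=2\eta 2^{k}$ whenever $x_j\in{\mathcal{O}}_k$, the proposed Whitney family is $\{B_\rho(x_j,r_j)\}_{j\in J}$.

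The verification of (1)--(4) then reduces to the quasi-triangle inequality together with the quantitative doubling property \eqref{RAF-e}. Property (1) holds since, by maximality, every $x\in{\mathcal{O}}_k$ lies within $\eta 2^{k}<r_j$ of some center $x_j$. For (3): if $\rho(x_j,y)<\lambda r_j$ with $y\notin{\mathcal{O}}$ then $2^{k}<\delta(x_j)\le\rho(x_j,y)<2\lambda\eta 2^{k}$, impossible once $\eta<(2\lambda)^{-1}$, so $B_\rho(x_j,\lambda r_j)\subseteq{\mathcal{O}}$; and $\delta(x_j)\le 2^{k+1}$ produces some $z\notin{\mathcal{O}}$ with $\rho(x_j,z)<2^{k+2}=\Lambda r_j$, where $\Lambda:=2/\eta>\lambda$. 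For (4): if $p\in B_\rho(x_i,\lambda r_i)\cap B_\rho(x_j,\lambda r_j)$ with $r_i\ge r_j$, the quasi-triangle inequality gives $\rho(x_i,x_j)\le C_\rho\lambda r_i$, and then $2^{k_i}<\delta(x_i)\le C_\rho\max\{C_\rho\lambda r_i,\delta(x_j)\}\le C_\rho\max\{2C_\rho\lambda\eta 2^{k_i},2^{k_j+1}\}$ forces, for $\eta$ small, $2^{k_i}<2C_\rho\,2^{k_j}$; symmetrically $|k_i-k_j|\le\lceil\log_2 C_\rho\rceil=:K_0$, whence $r_i\approx r_j$, while $r_j\le C\,\diam_\rho({\mathcal{O}})$ is evident from $r_j\approx\delta(x_j)$. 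Finally, for (2): if $p\in\bigcap_{j\in J'}B_\rho(x_j,\lambda r_j)$, then by (4) all radii $r_j$ with $j\in J'$ are comparable to a single value $r$, at most $2K_0+1$ strata occur, all the $x_j$ lie in one $\rho$-ball of radius $\le C_\lambda r$, and inside each stratum the $x_j$ are $\approx r$-separated; \eqref{RAF-e} then caps the number of $x_j$ per stratum, hence $\#J'$, by a constant $N=N(C_\rho,\widetilde{C}_\rho,\lambda,\text{geometric doubling constant})$. The localized bound \eqref{Lay-ff.u-c} follows by the same count once $\varepsilon\in(0,1)$ is chosen small enough (in terms of $C_\rho,\lambda$) that $B_\rho(x_o,\varepsilon\,\dist_\rho(x_o,{\mathscr{X}}\setminus{\mathcal{O}}))$ meeting $B_\rho(x_j,\lambda r_j)$ forces $r_j\approx\dist_\rho(x_o,{\mathscr{X}}\setminus{\mathcal{O}})$ and $x_j$ into a fixed ball about $x_o$.

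The step I expect to be the main obstacle is the bounded-overlap estimate (2) in conjunction with the requirement that $\Lambda$ and $N$ be permitted to depend on the arbitrary parameter $\lambda\in(1,\infty)$: one must fix the hierarchy of constants in the correct order — first the absolute dilation factor ($=2$ above) and the core-separation factor, then $\eta=\eta(C_\rho,\lambda)$ small enough to make the containment in (3) and the stratum-gap bound in (4) go through, and only afterwards $\Lambda=2/\eta$ and $N$ — so that no circularity arises, and each counting bound must be wrung out of the geometric doubling property alone via \eqref{RAF-e}. The reduction in the first paragraph to the symmetric, continuous $\rho_{\#}$ is precisely what keeps the "slowly varying" estimates for $\delta$ used repeatedly in (2)--(4) from accumulating further powers of the asymmetry constant $\widetilde{C}_\rho$.
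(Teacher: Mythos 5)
The paper itself offers no proof of Proposition~\ref{H-S-Z}: it is imported verbatim from \cite{MMMM-G}, so there is no in-paper argument to compare against. Your construction -- pass to the regularized symmetric quasi-distance $\rho_{\#}$ of Theorem~\ref{JjEGh}, stratify ${\mathcal{O}}$ by the dyadic size of $\delta={\rm dist}_{\rho_{\#}}(\cdot,{\mathscr{X}}\setminus{\mathcal{O}})$, take maximal $\eta 2^k$-separated nets in each stratum, assign radii $r_j\simeq\eta 2^k$, and then verify everything from the quasi-triangle inequality, the quasi-Lipschitz bound $\delta(x)\le C_\rho\max\{\rho(x,y),\delta(y)\}$, and the iterated doubling property \eqref{RAF-e} -- is the classical Whitney argument in the spirit of \cite{CoWe71} and of the cited source, and the verifications of (1), of both halves of (3), of the comparability in (4), and of the two overlap counts in (2) (including the localized bound \eqref{Lay-ff.u-c}) all go through as you sketch them, with the constants fixed in a consistent, non-circular order ($\eta$ first, then $\Lambda=2/\eta$ and the counting constant $N$), and with the $\rho_{\#}\to\rho$ bookkeeping costing only powers of $C_\rho,\widetilde{C}_\rho$ as you say.

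The one place where you wave your hands is the last clause of (4): you call $r_j\le C\,\diam_\rho({\mathcal{O}})$ ``evident from $r_j\approx\delta(x_j)$'', but that requires ${\rm dist}_\rho(x_j,{\mathscr{X}}\setminus{\mathcal{O}})\lesssim\diam_\rho({\mathcal{O}})$, which is \emph{not} a consequence of geometric doubling. Take ${\mathscr{X}}=[0,1]\cup\{R\}\subseteq{\mathbb{R}}$ with the Euclidean metric ($R$ large) and ${\mathcal{O}}=[0,1]$: then $\delta\approx R$ while $\diam_\rho({\mathcal{O}})=1$, and in fact \emph{no} family can satisfy both the second half of (3) and a bound $r_j\le C\,\diam_\rho({\mathcal{O}})$ with constants depending only on $C_\rho,\widetilde{C}_\rho,\lambda$ and the doubling constant, since reaching the complement forces $\Lambda r_j\ge R-1$. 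So either one reads the constant $C$ in (4) as being allowed to depend on ${\mathcal{O}}$ -- in which case you should replace ``evident'' by the short argument: fix $x_1\in{\mathcal{O}}$ and $z_0\in{\mathscr{X}}\setminus{\mathcal{O}}$, so that $\delta(x_j)\le\widetilde{C}_\rho\,C_\rho\max\{\rho(x_j,x_1),\rho(x_1,z_0)\}\le\widetilde{C}_\rho\,C_\rho\max\{\diam_\rho({\mathcal{O}}),\rho(x_1,z_0)\}$, which yields $r_j\le C({\mathcal{O}})\,\diam_\rho({\mathcal{O}})$ whenever $\diam_\rho({\mathcal{O}})>0$ -- or one must note that the structural version of this clause needs an extra hypothesis (connectedness-type, or simply that it is vacuous when $\diam_\rho({\mathcal{O}})=\infty$, which covers the paper's application to ${\mathcal{O}}={\mathscr{X}}\setminus E$). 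Apart from this point, the proposal is complete and correct.
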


Regarding terminology, we shall frequently employ the following convention:

\begin{convention}\label{WWVc} 
Given a geometrically doubling quasi-metric space $({\mathscr{X}},\rho)$,
an open, nonempty, proper subset ${\mathcal{O}}$ of $({\mathscr{X}},\tau_\rho)$, 
and a parameter $\lambda\in (1,\infty)$, we will refer to the 
balls $B_{\rho_{\#}}(x_j,r_j)$ obtained by treating $(\mathscr{X},\rho_{\#})$ in Proposition~\ref{H-S-Z}
as {\tt Whitney cubes}, denote the collection of these cubes 
by ${\mathbb{W}}_\lambda({\mathcal{O}})$, and for each 
$I\in{\mathbb{W}}_\lambda({\mathcal{O}})$, write $\ell(I)$ for the 
{\tt radius of $I$}. 

Furthermore, if  $I\in{\mathbb{W}}_\lambda({\mathcal{O}})$ and $c\in(0,\infty)$, 
we shall denote by $cI$ the {\tt dilate of the cube $I$ by factor $c$}, i.e., 
the ball having the same center as $I$ and radius $c\ell(I)$. 
\end{convention}

Spaces of homogeneous type, reviewed next, are an important subclass
of the class of geometrically doubling quasi-metric spaces. 

\begin{definition}\label{zbnb-hom}
A {\tt space of homogeneous type} is a triplet $({\mathscr{X}},\rho,\mu)$, where 
$({\mathscr{X}},\rho)$ is a quasi-metric space and $\mu$ is a Borel measure on
$({\mathscr{X}},\tau_\rho)$ with the property that all $\rho$-balls are $\mu$-measurable,
and which satisfies the doubling condition
\begin{eqnarray}\label{Doub-1}
0<\mu\bigl(B_{\rho}(x,2r)\bigr)\leq C\mu\bigl(B_{\rho}(x,r)\bigr)<\infty,\quad
\forall\,x\in {\mathscr{X}},\,\,\,\forall\,r>0, 
\end{eqnarray}
for some finite constant $C\geq 1$. 
\end{definition}

\noindent In the context of the above definition, call the number 
\begin{eqnarray}\label{Doub-1XX}
C_\mu:=\sup_{x\in{\mathscr{X}},r>0}\frac{\mu\bigl(B_{\rho}(x,2r)\bigr)}
{\mu\bigl(B_{\rho}(x,r)\bigr)}\in[1,\infty)
\end{eqnarray}
the {\tt doubling constant} of $\mu$. Iterating \eqref{Doub-1} then gives
\begin{eqnarray}\label{Doub-2}
\begin{array}{c}
\frac{\mu(B_1)}{\mu(B_2)}\leq C_{\mu,\rho}\Bigl(\frac{\mbox{radius of $B_1$}}
{\mbox{radius of $B_2$}}\Bigr)^{D_\mu},\qquad\mbox{for all $\rho$-balls }\,\,
B_2\subseteq B_1,
\\[8pt]
\mbox{where }\,\,D_\mu:=\log_2\,C_\mu\geq 0\,\,\mbox{ and }\,\,
C_{\mu,\rho}:=C_\mu\bigl(C_\rho\widetilde{C}_\rho\bigr)^{D_\mu}\geq 1.
\end{array}
\end{eqnarray}
The exponent $D_\mu$ is referred to as the {\tt doubling order} of $\mu$. 
For further reference, let us also record here the well-known fact that 
\begin{eqnarray}\label{DIA-MEA}
\begin{array}{c}
\mbox{given a space of homogeneous type $({\mathscr{X}},\rho,\mu)$, one has}
\\[4pt]
\mbox{${\rm diam}_{\rho}\,({\mathscr{X}})<\infty$ if and only if
$\mu({\mathscr{X}})<\infty$}.
\end{array}
\end{eqnarray}

Going further, a distinguished subclass of the class of spaces of homogeneous 
type, which is going to play a basic role in this work, is the category of 
Ahlfors-David regular spaces defined next. 

\begin{definition}\label{Rcc-TG34}
Suppose that $d>0$. A $d$-{\tt dimensional Ahlfors-David regular} (or, simply, $d$-dimensional {\rm ADR}, or $d$-{\rm ADR}) {\tt space} is a triplet $({\mathscr{X}},\rho,\mu)$, where $({\mathscr{X}},\rho)$ is a quasi-metric space and $\mu$ is a Borel measure on $({\mathscr{X}},\tau_\rho)$ with the property that all $\rho$-balls are $\mu$-measurable, and for which there exists a constant $C\in[1,\infty)$ such that
\begin{eqnarray}\label{Q3HF}
C^{-1}\,r^d\leq\mu\bigl(B_\rho(x,r)\bigr)\leq C\,r^d,
\quad\forall\,x\in {\mathscr{X}},\,\,\,\,\mbox{for every finite }\,r
\in(0,{\rm diam}_\rho({\mathscr{X}})].
\end{eqnarray}
The constant $C$ in \eqref{Q3HF} will be referred to 
as the {\rm ADR} constant of ${\mathscr{X}}$. 
\end{definition}

\noindent As alluded to earlier, if $({\mathscr{X}},\rho,\mu)$ is a 
$d$-dimensional ADR space then, trivially, $({\mathscr{X}},\rho,\mu)$ is also 
a space of homogeneous type. For further reference let us also note here that 
(cf., e.g., \cite{MMMM-B})
\begin{eqnarray}\label{Rss-TASS}
\mbox{$({\mathscr{X}},\rho,\mu)$ is $d$-ADR}\,\,\Longrightarrow\,\,
\mbox{$\bigl({\mathscr{X}},\rho_{\#},{\mathscr{H}}^d_{{\mathscr{X}},\rho_{\#}}\bigr)$ 
is $d$-ADR}.
\end{eqnarray}
In particular, it follows from \eqref{Rss-TASS}, \eqref{RHo-evv}, 
and parts {\it (3)-(5)} in Proposition~\ref{PWRS22} that 
\begin{eqnarray}\label{TASS.bis2}
\left.
\begin{array}{r}
\mbox{$({\mathscr{X}},\rho)$ quasi-metric space,}
\\[4pt]
\mbox{$E$ Borel subset of $({\mathscr{X}},\tau_\rho)$}
\\[4pt]
\mbox{$\sigma$ Borel measure on $(E,\tau_{\rho|_{E}})$}
\\[4pt]
\mbox{such that $(E,\rho|_{E},\sigma)$ is $d$-{\rm ADR}}
\end{array}
\right\}
\,\,\Longrightarrow\,\,\mbox{
$\bigl(E,\rho_{\#}\bigl|_{E},{\mathscr{H}}^d_{{\mathscr{X}},\rho_{\#}}\lfloor E\bigr)$ 
is $d$-ADR}.
\end{eqnarray}
Also, if $({\mathscr{X}},\rho,\mu)$ is
$d$-ADR, then there exists a finite constant $C>0$ such that
\begin{eqnarray}\label{PEY88-2}
&& {\mathcal{H}}_{{\mathscr{X}}\!,\,\rho_{\#}}^d(A)
\leq C\inf\limits_{{\mathcal{O}}{\mbox{\tiny{ open}}},\,A\subseteq{\mathcal{O}}}
\mu({\mathcal{O}})\quad\mbox{for every }A\subseteq{\mathscr{X}},\,\mbox{ and}
\\[4pt]
&& \mu(A)\leq C{\mathscr{H}}_{{\mathscr{X}}\!,\,\rho_{\#}}^d(A)
\quad\mbox{for every Borel subset $A$ of $({\mathscr{X}},\tau_\rho)$}.
\label{PEY88-2BIS}
\end{eqnarray}
In addition, if $\mu$ is actually a Borel regular measure, then  
\begin{eqnarray}\label{P2-GFs}
\mu(A)\approx{\mathscr{H}}_{{\mathscr{X}}\!,\,\rho_{\#}}^d(A),
\qquad\mbox{uniformly for Borel subsets $A$ of $({\mathscr{X}},\tau_\rho)$}.
\end{eqnarray}

We now discuss a couple of technical lemmas 
which are going to be useful for us later on.

\begin{lemma}\label{ME-ZZ}
Let $({\mathscr{X}},\rho,\mu)$ be an $m$-dimensional {\rm ADR} space for some 
$m\in(0,\infty)$ and suppose that $E$ is a Borel subset of $({\mathscr{X}},\tau_{\rho})$
with the property that there exists a Borel measure $\sigma$ on $(E,\tau_{\rho|_{E}})$
such that $(E,\rho\bigl|_E,\sigma)$ is a $d$-dimensional {\rm ADR}
space for some $d\in(0,m)$. Then $\mu(E)=0$. 
\end{lemma}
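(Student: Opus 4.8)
The plan is to exploit the fact that, $E$ being $d$-dimensional ADR, it has ``Hausdorff dimension $d$'', so that its $m$-dimensional Hausdorff measure vanishes (since $m>d$), and to then conclude via the domination of the $m$-ADR measure $\mu$ by $m$-dimensional Hausdorff measure on Borel sets. Throughout I would work with the regularized quasi-distance $\rho_{\#}$ of Theorem~\ref{JjEGh}, since the two comparison statements I intend to invoke---the ADR-transfer relation \eqref{TASS.bis2} and the domination \eqref{PEY88-2BIS}---are phrased in terms of $\rho_{\#}$; this costs nothing because $\rho_{\#}\approx\rho$ and Hausdorff measures for equivalent quasi-distances are comparable (part {\it (3)} of Proposition~\ref{PWRS22}).

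\textbf{Reduction to $\sigma$-finite pieces.} After disposing of the trivial case $E=\emptyset$, fix a point $x_0\in E$ and decompose $E=\bigcup_{k\in{\mathbb{N}}}E_k$ with $E_k:=E\cap B_{\rho_{\#}}(x_0,k)=B_{\rho_{\#}|_E}(x_0,k)$. Each ball $B_{\rho_{\#}}(x_0,k)$ is open in $({\mathscr{X}},\tau_\rho)$ by the continuity of $\rho_{\#}$ recorded in \eqref{CC-NNu}, so each $E_k$ is a bounded Borel subset of ${\mathscr{X}}$. Invoking \eqref{TASS.bis2}---whose hypotheses ($E$ Borel, $\sigma$ Borel, $(E,\rho|_E,\sigma)$ being $d$-ADR) are exactly those of the lemma---gives that $\bigl(E,\rho_{\#}|_E,{\mathscr{H}}^d_{{\mathscr{X}},\rho_{\#}}\lfloor E\bigr)$ is itself $d$-ADR. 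Since $E_k$ is a $\rho_{\#}|_E$-ball of finite radius, the ADR upper bound in \eqref{Q3HF} (used for $r=k$ when $k\le{\rm diam}_{\rho_{\#}|_E}(E)$, and for $r={\rm diam}_{\rho_{\#}|_E}(E)$ otherwise) yields ${\mathcal{H}}^d_{{\mathscr{X}},\rho_{\#}}(E_k)<\infty$ for every $k$.

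\textbf{Conclusion.} With this $\sigma$-finiteness in hand, applying part {\it (6)} of Proposition~\ref{PWRS22} in the quasi-metric space $({\mathscr{X}},\rho_{\#})$---legitimate since $m>d>0$---gives ${\mathcal{H}}^m_{{\mathscr{X}},\rho_{\#}}(E_k)=0$ for each $k$, and countable subadditivity of the outer-measure then gives ${\mathcal{H}}^m_{{\mathscr{X}},\rho_{\#}}(E)\le\sum_k{\mathcal{H}}^m_{{\mathscr{X}},\rho_{\#}}(E_k)=0$. Finally, since $E$ is Borel in $({\mathscr{X}},\tau_\rho)$ and $({\mathscr{X}},\rho,\mu)$ is $m$-ADR, the domination \eqref{PEY88-2BIS} (with $m$ in the role of the dimension there) gives $\mu(E)\le C\,{\mathcal{H}}^m_{{\mathscr{X}},\rho_{\#}}(E)=0$, which is the desired conclusion.

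\textbf{Main obstacle.} I do not expect a genuine difficulty here: the one point requiring a little care is precisely the passage to the $\sigma$-finite decomposition $E=\bigcup_k E_k$, which cannot be skipped because an unbounded $d$-ADR set (e.g.\ ${\mathbb{R}}^d\subset{\mathbb{R}}^m$) has infinite $d$-dimensional Hausdorff measure, so part {\it (6)} of Proposition~\ref{PWRS22} does not apply to $E$ itself. Everything else amounts to quoting the comparison results between $\mu$, the Hausdorff measures ${\mathcal{H}}^d$ and ${\mathcal{H}}^m$, and the quasi-distances $\rho$ and $\rho_{\#}$, all already assembled in Section~\ref{Sect:2}.
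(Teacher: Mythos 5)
Your proof is correct and follows essentially the same route as the paper's: both arguments combine \eqref{TASS.bis2}, the finiteness of ${\mathcal{H}}^d_{{\mathscr{X}},\rho_{\#}}$ on the bounded pieces $E\cap B_{\rho_{\#}}(x_0,k)$, item {\it (6)} of Proposition~\ref{PWRS22}, an exhaustion over $k$, and the domination \eqref{PEY88-2BIS} of $\mu$ by ${\mathscr{H}}^m_{{\mathscr{X}},\rho_{\#}}$ on Borel sets. The only cosmetic difference is that the paper applies \eqref{PEY88-2BIS} first and then passes to the limit over the balls, whereas you establish ${\mathcal{H}}^m_{{\mathscr{X}},\rho_{\#}}(E)=0$ by countable subadditivity and conclude afterwards.
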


\begin{proof}
Fix $x\in E$. Using \eqref{PEY88-2BIS}, \eqref{TASS.bis2} and 
item {\it (6)} in Proposition~\ref{PWRS22}, we obtain
\begin{eqnarray}\label{bgSS-1}
\mu(E) \leq C {\mathscr{H}}^m_{{\mathscr{X}},\rho_{\#}}(E) = C \lim_{n\rightarrow\infty} {\mathscr{H}}^m_{{\mathscr{X}},\rho_{\#}}(E\cap B_{\rho_{\#}}(x,n))=0,
\end{eqnarray}
since ${\mathscr{H}}^d_{{\mathscr{X}},\rho_{\#}}(E\cap B_{\rho_{\#}}(x,n))\leq Cn^d<\infty$ for all $n\in{\mathbb{N}}$. 
\end{proof}

\begin{lemma}\label{segj}
Let $({\mathscr{X}},\rho)$ be a quasi-metric space. Suppose 
that $E$ is a Borel subset of $({\mathscr{X}},\tau_\rho)$ such that there exists a 
Borel measure $\sigma$ 
on $(E,\tau_{\rho|_{E}})$ with the property that $(E,\rho\bigl|_E,\sigma)$ 
is a $d$-dimensional {\rm ADR} space for some $d\in(0,\infty)$. Then there 
exists a constant $c\in(0,\infty)$ such that
\begin{eqnarray}\label{bgSS}
\begin{array}{c}
\forall\,x\in{\mathscr{X}},\,\,\forall\,r\in(0,{\rm diam}_{\rho_{\#}}(E)]
\mbox{ with $B_{\rho_{\#}}(x,r)\cap E\not=\emptyset$ there holds}
\\[4pt]
{\mathscr{H}}_{{\mathscr{X}}\!,\,\rho_{\#}}^d\bigl(B_{\rho_{\#}}(x,C_\rho r)\cap E\bigr)
\geq c\,r^d.
\end{array}
\end{eqnarray}
\end{lemma}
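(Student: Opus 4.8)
\textbf{Proof plan for Lemma~\ref{segj}.}
The plan is to reduce the claim to the lower Ahlfors-David regularity bound for $\sigma$, transported from the intrinsic Hausdorff measure ${\mathscr{H}}^d_{E,\rho_\#|_E}$ on $(E,\rho_\#|_E)$ to the ambient measure ${\mathscr{H}}^d_{{\mathscr{X}},\rho_\#}\lfloor E$ via part {\it (4)} of Proposition~\ref{PWRS22}. First I would fix $x\in{\mathscr{X}}$ and $r\in(0,{\rm diam}_{\rho_\#}(E)]$ with $B_{\rho_\#}(x,r)\cap E\neq\emptyset$, and pick a point $y\in B_{\rho_\#}(x,r)\cap E$. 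Since $\rho_\#$ is a symmetric quasi-distance with constant $C_{\rho_\#}\leq C_\rho$ (Theorem~\ref{JjEGh}{\it (1)}), the quasi-triangle inequality gives $B_{\rho_\#}(y,r)\subseteq B_{\rho_\#}(x,C_\rho r)$: indeed if $\rho_\#(y,z)<r$ then $\rho_\#(x,z)\leq C_\rho\max\{\rho_\#(x,y),\rho_\#(y,z)\}<C_\rho r$. Hence it suffices to bound ${\mathscr{H}}^d_{{\mathscr{X}},\rho_\#}\bigl(B_{\rho_\#}(y,r)\cap E\bigr)$ from below by $c\,r^d$, with $y\in E$.

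Next I would invoke \eqref{TASS.bis2}, whose hypotheses are exactly those of the present lemma, to conclude that $\bigl(E,\rho_\#|_E,{\mathscr{H}}^d_{{\mathscr{X}},\rho_\#}\lfloor E\bigr)$ is itself a $d$-ADR space. Let $C_E$ denote its ADR constant. Observe that $B_{\rho_\#}(y,r)\cap E = B_{\rho_\#|_E}(y,r)$ is precisely a $\rho_\#|_E$-ball in the metric measure space $(E,\rho_\#|_E)$ centered at the point $y\in E$. We need $r$ to lie in the admissible range $(0,{\rm diam}_{\rho_\#|_E}(E)]$ for the ADR lower bound in \eqref{Q3HF} to apply; this holds because ${\rm diam}_{\rho_\#|_E}(E)={\rm diam}_{\rho_\#}(E)$ (the quasi-distance on $E$ is just the restriction) and $r\leq{\rm diam}_{\rho_\#}(E)$ by hypothesis. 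Therefore the lower ADR inequality yields
\begin{eqnarray}\label{bgSS-pf}
{\mathscr{H}}^d_{{\mathscr{X}},\rho_\#}\bigl(B_{\rho_\#}(y,r)\cap E\bigr)
={\mathscr{H}}^d_{{\mathscr{X}},\rho_\#}\lfloor E\,\bigl(B_{\rho_\#|_E}(y,r)\bigr)
\geq C_E^{-1}\,r^d.
\end{eqnarray}
Combining with the inclusion from the first paragraph, ${\mathscr{H}}^d_{{\mathscr{X}},\rho_\#}\bigl(B_{\rho_\#}(x,C_\rho r)\cap E\bigr)\geq C_E^{-1}r^d$, so \eqref{bgSS} holds with $c:=C_E^{-1}$.

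The only genuinely delicate point is the measurability/identification bookkeeping implicit in writing ${\mathscr{H}}^d_{{\mathscr{X}},\rho_\#}\lfloor E$ as the $d$-ADR measure on $E$: one must know that this restricted outer-measure's associated measure is a Borel regular measure on $(E,\tau_{\rho|_E})$ and that it agrees with ${\mathscr{H}}^d_{{\mathscr{X}},\rho_\#}\lfloor E$ on Borel subsets of $E$ (so that the balls appearing in \eqref{bgSS} are measured consistently). This is exactly what part {\it (5)} of Proposition~\ref{PWRS22} supplies, given that $E$ is Borel in $({\mathscr{X}},\tau_\rho)$. Beyond this, there is no real obstacle: everything else is the quasi-triangle inequality and a direct appeal to \eqref{TASS.bis2} and the defining inequality \eqref{Q3HF}. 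One should also note that the constant $c$ produced depends only on the ADR constant of $(E,\rho_\#|_E,{\mathscr{H}}^d_{{\mathscr{X}},\rho_\#}\lfloor E)$, which by \eqref{TASS.bis2} and Proposition~\ref{PWRS22}{\it (4)} is controlled by the ADR constant of $(E,\rho|_E,\sigma)$ together with $C_\rho,\widetilde{C}_\rho$, so the dependence claimed (implicitly) in the statement is respected.
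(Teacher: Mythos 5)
Your proof is correct and follows essentially the same route as the paper: pick $y\in B_{\rho_{\#}}(x,r)\cap E$, use the quasi-triangle inequality to get $B_{\rho_{\#}}(y,r)\subseteq B_{\rho_{\#}}(x,C_\rho r)$, and apply the lower ADR bound for $\bigl(E,\rho_{\#}|_{E},{\mathscr{H}}^d_{{\mathscr{X}},\rho_{\#}}\lfloor E\bigr)$ furnished by \eqref{TASS.bis2}, yielding $c=C^{-1}$ with $C$ that space's ADR constant. The extra measurability bookkeeping via Proposition~\ref{PWRS22} is harmless but not needed beyond what \eqref{TASS.bis2} already encodes.
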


\begin{proof}
Fix a point $x\in{\mathscr{X}}$ with the property that 
$B_{\rho_{\#}}(x,r)\cap E\not=\emptyset$.
If we now select $y\in B_{\rho_{\#}}(x,r)\cap E$ then 
$B_{\rho_{\#}}(y,r)\subseteq B_{\rho_{\#}}(x,C_\rho r)$. Recall \eqref{TASS.bis2} 
and let $C$ be the {\rm ADR} constant of $\bigl(E,\rho_{\#}\bigl|_{E},
{\mathscr{H}}^d_{{\mathscr{X}}\!,\,\rho_{\#}}\lfloor E\bigr)$.
Then, since $y\in E$,  
\begin{eqnarray}\label{ZHvd}
{\mathscr{H}}_{{\mathscr{X}}\!,\,\rho_{\#}}^d
\bigl(B_{\rho_{\#}}(x,C_\rho r)\cap E\bigr)
\geq {\mathscr{H}}_{{\mathscr{X}}\!,\,\rho_{\#}}^d
\bigl(B_{\rho_{\#}}(y,r)\cap E\bigr)\geq C^{-1}r^d.
\end{eqnarray}
Hence \eqref{bgSS} holds with $c:=C^{-1}$.
\end{proof}

Following work in \cite{Christ} and \cite{David1988}, we now discuss the 
existence of a {\tt dyadic grid structure} on geometrically doubling quasi-metric spaces. 
The following result is essentially due to M. Christ \cite{Christ}, with  
two refinements. First, Christ's dyadic grid result is established
in the presence of a background doubling, Borel regular measure, which is more restrictive than merely assuming that the ambient quasi-metric space is 
geometrically doubling. Second, Christ's dyadic grid result involves a 
scale $\delta\in(0,1)$ which we here show may be taken to be $\frac12$,
as in the Euclidean setting. 

\begin{proposition}\label{Diad-cube}
Assume that $(E,\rho)$ is a geometrically doubling quasi-metric space and 
select $\kappa_E\in{\mathbb{Z}}\cup\{-\infty\}$ with the property that 
\begin{eqnarray}\label{Jcc-KKi}
2^{-\kappa_E-1}< {\rm diam}_\rho(E)\leq 2^{-\kappa_E}.
\end{eqnarray}
Then there exist finite constants $a_1\geq a_0>0$ such that for each 
$k\in{\mathbb{Z}}$ with $k\geq\kappa_E$, there exists a 
collection ${\mathbb{D}}_k(E):=\{Q_\alpha^k\}_{\alpha\in I_k}$ of 
subsets of $E$ indexed by a nonempty, at most countable set of indices $I_k$, 
as well as a family $\{x_\alpha^k\}_{\alpha\in I_k}$ of points in $E$, 
such that the collection of all {\tt dyadic cubes} in $E$, i.e., 
\begin{eqnarray}\label{gcEd}
{\mathbb{D}}(E):=\bigcup_{k\in{\mathbb{Z}},\,k\geq\kappa_E}{\mathbb{D}}_k(E),
\end{eqnarray}
has the following properties:
\begin{enumerate}
\item[(1)] $[${\rm All dyadic cubes are open}$]$ \\
For each $k\in{\mathbb{Z}}$ with $k\geq\kappa_E$ and each $\alpha\in I_k$, 
the set $Q_\alpha^k$ is open in $\tau_\rho$;
\item[(2)] $[${\rm Dyadic cubes are mutually disjoint within the same generation}$]$ \\
For each $k\in{\mathbb{Z}}$ with $k\geq\kappa_E$ and each $\alpha,\beta\in I_k$ with
$\alpha\not=\beta$ there holds $Q_\alpha^k\cap Q_\beta^k=\emptyset$;
\item[(3)] $[${\rm No partial overlap across generations}$]$ \\
For each $k,\ell\in{\mathbb{Z}}$ with $\ell>k\geq\kappa_E$, and each 
$\alpha\in I_k$, $\beta\in I_\ell$, either 
$Q_\beta^\ell\subseteq Q_\alpha^k$ or $Q_\alpha^k\cap Q_\beta^\ell=\emptyset$;
\item[(4)] $[${\rm Any dyadic cube has a unique ancestor in any earlier generation}$]$ \\
For each $k,\ell\in{\mathbb{Z}}$ with $k>\ell\geq\kappa_E$, and each $\alpha\in I_k$ 
there is a unique $\beta\in I_\ell$ such that $Q_\alpha^k\subseteq Q_\beta^\ell$;
\item[(5)] $[${\rm The size is dyadically related to the generation}$]$\\
For each $k\in{\mathbb{Z}}$ with $k\geq\kappa_E$ and each $\alpha\in I_k$ one has
\begin{eqnarray}\label{ha-GVV}
B_{\rho}(x_\alpha^k,a_0 2^{-k})\subseteq Q_\alpha^k\subseteq B_{\rho}(x_\alpha^k,a_1 2^{-k});
\end{eqnarray}
In particular, given a measure $\sigma$ on $E$ for which $(E,\rho,\sigma)$ 
is a space of homogeneous type, there exists $c>0$ such that if 
$Q^{k+1}_\beta\subseteq Q^k_\alpha$, then 
$\sigma(Q^{k+1}_\beta)\geq c\sigma(Q^k_\alpha)$.

\item[(6)] $[${\rm Control of the number of children}$]$\\
There exists an integer $N\in{\mathbb{N}}$ with the property that for each 
$k\in{\mathbb{Z}}$ with $k\geq\kappa_E$ one has
\begin{eqnarray}\label{ha-GMM}
\#\bigl\{\beta\in I_{k+1}:\,Q^{k+1}_\beta\subseteq Q^k_{\alpha}\bigr\}\leq N,\quad
\mbox{ for every }\,\,\alpha\in I_{k}.
\end{eqnarray}
Furthermore, this integer may be chosen such that, for each $k\in{\mathbb{Z}}$ 
with $k\geq\kappa_E$, each $x\in E$ and $r\in(0,2^{-k})$, the number of 
$Q$'s in ${\mathbb{D}}_k(E)$ that intersect $B_\rho(x,r)$ is at most $N$. 

\item[(7)] $[${\rm Any generation covers a dense subset of the entire space}$]$\\
For each $k\in{\mathbb{Z}}$ with $k\geq\kappa_E$, the set 
$\bigcup_{\alpha\in I_k}Q_\alpha^k$ is dense in $(E,\tau_\rho)$. In particular, 
for each $k\in{\mathbb{Z}}$ with $k\geq\kappa_E$ one has 
\begin{eqnarray}\label{ha-GMM.2}
E=\bigcup_{\alpha\in I_k}\bigl\{x\in E:\,{\rm dist}_{\rho}(x,Q_\alpha^k)
\leq\varepsilon 2^{-k}\bigr\},\qquad\forall\,\varepsilon>0,
\end{eqnarray}
and there exist $b_0,b_1\in(0,\infty)$ depending only on the 
geometrically doubling character of $E$ with the property that 
\begin{eqnarray}\label{ha-GL54}
\begin{array}{c}
\forall\,x_o\in E,\,\,\,\forall\,r\in(0,{\rm diam}_\rho(E)],\,\,\,
\exists\,k\in{\mathbb{Z}}\,\mbox{ with }\,k\geq\kappa_E\,\mbox{ and }\,
\exists\,\alpha\in I_k
\\[4pt]
\mbox{with the property that }\,Q^k_\alpha\subseteq B_{\rho}(x_o,r)
\,\mbox{ and }\,b_0 r\leq 2^{-k}\leq b_1 r.
\end{array}
\end{eqnarray}
Moreover, for each $k\in{\mathbb{Z}}$ with $k\geq\kappa_E$ and each $\alpha\in I_k$ 
\begin{eqnarray}\label{ha-GMM.2JJ}
\bigcup_{\beta\in I_{k+1},\,Q^{k+1}_\beta\subseteq Q^k_{\alpha}}Q^{k+1}_\beta
\,\,\mbox{ is dense in }\,\,Q^k_{\alpha},
\end{eqnarray}
and 
\begin{eqnarray}\label{ha-GMM.2II}
Q^k_{\alpha}\subseteq
\bigcup_{\beta\in I_{k+1},\,Q^{k+1}_\beta\subseteq Q^k_{\alpha}}
\bigl\{x\in E:\,{\rm dist}_{\rho}(x,Q_\beta^{k+1})
\leq\varepsilon 2^{-k-1}\bigr\},\qquad\forall\,\varepsilon>0.
\end{eqnarray}

\item[(8)] $[${\rm Dyadic cubes have thin boundaries with respect 
to a background doubling measure}$]$ \\
Given a measure $\sigma$ on $E$ for which $(E,\rho,\sigma)$ 
is a space of homogeneous type, a collection ${\mathbb{D}}(E)$ may be constructed 
as in \eqref{gcEd} such that properties {\it (1)-(7)} above hold and, in addition, 
there exist constants $\vartheta\in(0,1)$ and $c\in(0,\infty)$ such that 
for each $k\in{\mathbb{Z}}$ with $k\geq\kappa_E$ and each $\alpha\in I_k$ one has
\begin{eqnarray}\label{hacc-76es}
\sigma \left(\bigl\{x\in Q_\alpha^k:\,{\rm dist}_{\rho_{\#}}(x,E\setminus Q_\alpha^k)
\leq t\,2^{-k}\bigr\}\right)\leq c\,t^\vartheta\sigma(Q_\alpha^k),\qquad\forall\,t>0.
\end{eqnarray}
Moreover, in such a context matters may be arranged so that, 
for each $k\in{\mathbb{Z}}$ with $k\geq\kappa_E$ and each $\alpha\in I_k$,
\begin{eqnarray}\label{ihgc}
\bigl(Q^k_\alpha,\rho|_{Q^k_\alpha},\sigma\lfloor{Q^k_\alpha}\bigr)
\quad\mbox{ is a space of homogeneous type},
\end{eqnarray}
and the doubling constant of the measure $\sigma\lfloor{Q^k_\alpha}$ 
is independent of $k,\alpha$ (i.e., {\rm the quality of being a space of homogeneous 
type is hereditary at the level of dyadic cubes, in a uniform fashion}).
\item[(9)] $[${\rm All generations cover the space a.e. with respect to a doubling 
Borel regular measure}$]$\\
If $\sigma$ is a Borel measure on $E$ which is both doubling 
(cf. \eqref{Doub-1}) and Borel regular (cf. \eqref{T-gc2w})
then a collection ${\mathbb{D}}(E)$ associated with the doubling measure 
$\sigma$ as in {\it (8)} may be constructed with the additional property that 
\begin{eqnarray}\label{T-rcs}
\sigma\Bigl(E\setminus\bigcup_{\alpha\in I_k}Q_\alpha^k\Bigr)=0
\qquad\mbox{for each }\,\,k\in{\mathbb{Z}},\,\,k\geq\kappa_E.
\end{eqnarray}
In particular, in such a setting, for each $k\in{\mathbb{Z}}$ 
with $k\geq\kappa_E$ one has 
\begin{eqnarray}\label{ha-GMM.67}
\sigma\Bigl(Q^k_{\alpha}\setminus
\bigcup\limits_{\beta\in I_{k+1},\,Q^{k+1}_\beta\subseteq Q^k_{\alpha}}
Q^{k+1}_\beta\Bigr)=0,\quad\mbox{ for every }\,\,\alpha\in I_{k}.
\end{eqnarray}
\end{enumerate}
\end{proposition}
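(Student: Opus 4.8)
The plan is to follow M.~Christ's construction \cite{Christ} (as reworked in \cite{MMMM-G}), streamlined so that only geometric doubling is used for items (1)--(7), with two extra ingredients: the regularized quasi-distance $\rho_{\#}$ from Theorem~\ref{JjEGh}, which lets us work with a \emph{symmetric} quasi-distance, and a subsampling of generations that normalizes the scale ratio to $\tfrac12$. First I would reduce to the case in which $\rho$ is symmetric: since $\rho_{\#}\approx\rho$ with $(C_\rho)^{-2}\rho\le\rho_{\#}\le\widetilde C_\rho\,\rho$, $\tau_{\rho_{\#}}=\tau_\rho$, $C_{\rho_{\#}}\le C_\rho$, $\widetilde C_{\rho_{\#}}=1$, and both geometric doubling (with comparable constant) and the space-of-homogeneous-type / ADR properties pass to $\rho_{\#}$ (cf. \eqref{RHo-evv}, \eqref{Rss-TASS}), it suffices to build the grid for $(E,\rho_{\#})$ and then transfer all statements back to $\rho$, which only affects the constants $a_0,a_1,N,\dots$. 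So assume henceforth that $\rho$ is symmetric, and note that $\rho^{\alpha_\rho}$ is subadditive along finite chains.

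Fix a small ratio $\delta\in(0,1)$, to be chosen in terms of $C_\rho$ and the geometric doubling constant. For each integer $n$ with $n\ge n_0$, where $\delta^{n_0}\approx\mathrm{diam}_\rho(E)$, use Zorn's lemma to pick a maximal $\delta^{n}$-separated subset $\mathscr{Y}_n=\{x^n_\alpha\}_{\alpha\in I_n}$ of $E$, chosen inductively so that $\mathscr{Y}_n\subseteq\mathscr{Y}_{n+1}$ (possible since a $\delta^{n}$-separated set is $\delta^{n+1}$-separated); geometric doubling makes each $I_n$ at most countable and the coarsest one finite. Build a tree on $\bigsqcup_n I_n$ by assigning to each $x^{n+1}_\beta\notin\mathscr{Y}_n$ the nearest point of $\mathscr{Y}_n$ as its parent (within $\delta^{n}$ by maximality), breaking ties by a fixed well-ordering of $I_n$, and declaring points of $\mathscr{Y}_n$ their own parents; take the transitive closure to get a partial order $\preceq$ with a unique ancestor in each earlier generation. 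Following \cite{Christ}, define $Q^n_\alpha$ as the interior of the closure of $\bigcup_{(m,\beta)\preceq(n,\alpha)}B_\rho(x^m_\beta,\lambda\delta^m)$ for a small fraction $\lambda$ of $\delta$ (with the usual care in the interior/closure passage to arrange disjointness). Transitivity of $\preceq$ gives the nesting (3)--(4) at once; the inclusion $B_\rho(x^n_\alpha,c\delta^n)\subseteq Q^n_\alpha$ comes from the $(m,\beta)=(n,\alpha)$ term; and $Q^n_\alpha\subseteq B_\rho(x^n_\alpha,C\delta^n)$ follows from the drift bound $\rho(x^m_\beta,x^n_\alpha)^{\alpha_\rho}\le\sum_{j\ge n}\delta^{j\alpha_\rho}\le C'\delta^{n\alpha_\rho}$, valid whenever $(m,\beta)\preceq(n,\alpha)$. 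Openness (1), intra-generational disjointness (2), density of $\bigcup_\alpha Q^n_\alpha$ in $E$ (7), and the two counting assertions in (6) are then verified from the $\delta^n$-separation of $\mathscr{Y}_n$, the drift bound, and geometric doubling; the density refinements \eqref{ha-GMM.2}, \eqref{ha-GL54}, \eqref{ha-GMM.2JJ}, \eqref{ha-GMM.2II} and the measure comparison appended to (5) are then formal consequences of (1)--(7) together with the doubling of $\sigma$.

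To obtain the ratio $\tfrac12$ in place of $\delta$ (the improvement over \cite{Christ}), pass to the sub-collection of generations $n_j$ with $\delta^{n_j}\in[2^{-j-1},2^{-j}]$, so that $n_{j+1}-n_j$ is a bounded positive integer, and relabel $\mathbb{D}_{n_j}$ as the new generation $\mathbb{D}_j$ (after a harmless shift so that $j$ ranges over $k\ge\kappa_E$). Properties (1)--(4), (7) are inherited verbatim; (5) becomes the stated estimate with $2^{-k}$ upon enlarging $a_0,a_1$ by a factor $\delta^{\mp1}$; and (6) persists because a new cube sits only $n_{j+1}-n_j$ old generations above its new children, a bounded number, so geometric doubling still caps their count. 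Finally, items (8)--(9) bring in a background doubling (and, for (9), Borel regular) measure $\sigma$: here one performs the standard refinement of the construction in which, at each step, $\mathscr{Y}_{n+1}$ is chosen to keep its points a definite $\rho$-distance away from the boundaries of all ancestor cubes, so that the inner layer $\{x\in Q^k_\alpha:\mathrm{dist}_{\rho_{\#}}(x,E\setminus Q^k_\alpha)\le t2^{-k}\}$ is covered by a controlled union of annuli whose $\sigma$-measures form a geometric series governed by the doubling constant of $\sigma$; this yields \eqref{hacc-76es}, letting $t\downarrow0$ and using countable subadditivity gives \eqref{T-rcs} and \eqref{ha-GMM.67}, and the hereditary homogeneous-type statement \eqref{ihgc} follows from (5), the doubling of $\sigma$, and \eqref{hacc-76es}.

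The main obstacle is the simultaneous validity of intra-generational disjointness (2) and density (7): the drift of a deep descendant away from its generation-$k$ ancestor's center is comparable to the separation radius $\delta^k$, so adjacent generation-$k$ cubes can interpenetrate near their common ``boundary,'' and one must tune $\lambda$ against $C_\rho$ and the doubling constant — and choose the interior/closure operations carefully — so that these overlap regions have empty interior without destroying the covering. Closely related is the thin-boundary estimate \eqref{hacc-76es}, which is the one place where the construction must be \emph{actively modified} (by steering new points away from old boundaries) rather than merely analyzed a posteriori; everything else is bookkeeping with separated nets, geometric series, and the doubling property.
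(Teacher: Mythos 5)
Your route is the one the paper itself takes (Christ's net-and-partial-order construction run over the regularized quasi-distance $\rho_{\#}$, with geometric doubling replacing a background measure for items (1)--(7), followed by a renormalization of the scale ratio to $\tfrac12$), but the renormalization step fails as written. You fix $\delta$ \emph{small} and then propose to ``pass to the sub-collection of generations $n_j$ with $\delta^{n_j}\in[2^{-j-1},2^{-j}]$,'' claiming $n_{j+1}-n_j$ stays bounded. When $\delta<\tfrac12$, consecutive powers of $\delta$ drop by the factor $\delta$, so all but roughly one in every $\log_2(1/\delta)$ dyadic windows $[2^{-j-1},2^{-j}]$ contain no power of $\delta$ at all: for most $j$ there is no $n_j$ to select, the relabelled family has no cubes at most generations $k\geq\kappa_E$, and \eqref{ha-GVV} with sidelength $2^{-k}$ cannot hold. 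Subsampling is the right move only when $\delta\in(\tfrac12,1)$; for small $\delta$ one must instead \emph{repeat} each Christ generation across the $\approx\log_2(1/\delta)$ intervening dyadic indices (set ${\mathbb{D}}_j(E):=\mathfrak{D}_k(E)$ for $m_k\leq j<m_{k+1}$, where $2^{-m_k}\leq\delta^k<2^{-m_k+1}$), at the harmless cost of bounded factors in $a_0,a_1$. This two-case bookkeeping is exactly what the paper's proof supplies and what your single case misses.

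The treatment of (8)--(9) has a more substantive gap. Deriving \eqref{T-rcs} (and then \eqref{ha-GMM.67}) from \eqref{hacc-76es} ``by letting $t\downarrow0$ and using countable subadditivity'' cannot work: the layers in \eqref{hacc-76es} are subsets of the \emph{open} cube $Q^k_\alpha$, whereas the uncovered set $E\setminus\bigcup_{\alpha}Q^k_\alpha$ is disjoint from every cube of every generation $\geq k$ (by the unique-ancestor property (4) one has $\bigcup_{\beta}Q^m_\beta\subseteq\bigcup_{\alpha}Q^k_\alpha$ for $m\geq k$), so no limit of \eqref{hacc-76es} ever sees it; as $t\downarrow0$ the layer shrinks to the empty set and the estimate becomes vacuous. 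The a.e.-covering in (9) needs a genuinely different ingredient, namely a density-point/Lebesgue differentiation argument for the doubling measure, and this is precisely why Borel regularity of $\sigma$ is hypothesized in (9) but not in (8); your proposal never invokes that hypothesis, which signals the missing idea. Relatedly, your mechanism for \eqref{hacc-76es} --- steering new net points away from ancestor boundaries and summing ``annuli whose $\sigma$-measures form a geometric series governed by the doubling constant'' --- presupposes the conclusion: doubling alone gives no geometric decay for the layers $\bigl\{x\in Q^k_\alpha:\ {\rm dist}_{\rho_{\#}}(x,E\setminus Q^k_\alpha)\approx 2^{-j}2^{-k}\bigr\}$. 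Christ proves the thin-boundary estimate for the \emph{unmodified} construction by iterating over generations, using the inner-ball inclusion in \eqref{ha-GVV} together with doubling to show that a fixed proportion of each cube's near-boundary mass recedes into the interior at every scale, which is where the exponent $\vartheta$ comes from. (A smaller soft spot: defining cubes as interiors of closures of unions of balls endangers the intra-generational disjointness (2) that Christ's plain union-of-balls definition yields directly.)
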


Before discussing the proof of this result we wish to say a few words clarifying
terminology. As already mentioned in the statement, sets $Q$ belonging to 
${\mathbb{D}}(E)$ will be referred to as {\it dyadic cubes} (on $E$). 
Also, following a well-established custom, whenever 
$Q_\alpha^{k+1}\subseteq Q_\beta^k$ we shall call $Q_\alpha^{k+1}$ 
a {\it child} of $Q_\beta^{k}$, and we shall say that $Q_\beta^{k}$ 
is a {\it parent} of $Q_\alpha^{k+1}$. For a given dyadic cube, an {\it ancestor} 
is then a parent, or a parent of a parent, or so on. Moreover, for each $k\in{\mathbb{Z}}$ with $k\geq\kappa_E$, we shall call ${\mathbb{D}}_k(E)$ 
the {\it dyadic cubes of generation} $k$ and, for each $Q\in{\mathbb{D}}_k(E)$, 
define the {\it side-length} of $Q$ to be $\ell(Q):=2^{-k}$, and the 
{\it center} of $Q$ to be the point $x^k_\alpha\in E$ if $Q=Q^k_\alpha$.

Finally, we make the convention that saying that 
{\it ${\mathbb{D}}(E)$ is a dyadic cube structure (or dyadic grid) on $E$} will 
always indicate that the collection ${\mathbb{D}}(E)$ is associated with $E$ as in 
Proposition~\ref{Diad-cube}. This presupposes that $E$ is the ambient set for a 
geometrically doubling quasi-metric space, in which case ${\mathbb{D}}(E)$ 
satisfies properties {\it (1)-(7)} above and that, in the presence of a background
measure $\sigma$ satisfying appropriate conditions (as stipulated in Proposition~\ref{Diad-cube}), properties {\it (8)} and {\it (9)} also hold.

We are now ready to proceed with the 

\vskip 0.08in
\begin{proof}[Proof of Proposition~\ref{Diad-cube}] 
This is a slight extension and clarification of a result proved by M.~Christ 
in \cite{Christ}, generalizing earlier work by G. David in \cite{David1988}, 
and we will limit ourselves to discussing only the novel aspects of the 
present formulation. For the sake of reference, we debut by recalling the 
main steps in the construction in \cite{Christ}. For a fixed real number 
$\delta\in(0,1)$ and for any integer $k$, Christ considers a maximal collection 
of points $z^k_\alpha\in E$ such that
\begin{eqnarray}\label{centers}
\rho_{\#}(z^k_\alpha,z^k_\beta)\geq\delta^k,\qquad\forall\,\alpha\not=\beta.
\end{eqnarray}
Hence, for each fixed $k$, the set $\{z^k_\alpha\}_{\alpha}$ is $\delta^k$-dense 
in $E$ in the sense that for each $k\in{\mathbb{Z}}$ and $x\in E$ there exists 
$\alpha$ such that $\rho_{\#}(x,z^k_\alpha)<\delta^k$.
Then (cf. \cite[Lemma~13, p.\,8]{Christ}) there exists a partial order relation 
$\preceq$ on the set $\{(k,\alpha):\,k\in{\mathbb{Z}},\alpha\in I_k\}$ 
with the following properties:

1) if $(k,\alpha)\preceq(l,\beta)$ then $k\geq l$;

2) for each $(k,\alpha)$ and $l\leq k$ there exists a unique $\beta$ such that $(k,\alpha)\preceq(l,\beta)$;

3) if $(k,\alpha)\preceq(k-1,\beta)$ then 
$\rho_{\#}(z^k_\alpha,z^{k-1}_\beta)<\delta^{k-1}$;

4) if $\rho_{\#}(z^l_\beta,z^k_\alpha)\leq 2C_\rho\delta^k$
then $(l,\beta)\preceq(k,\alpha)$.

\noindent Having established this, Christ then chooses a number $c\in(0,\frac{1}{2C_\rho})$ 
and defines 
$$
Q^k_\alpha:=\bigcup_{(l,\beta)\preceq(k,\alpha)}B_{\rho_{\#}}(z^l_\beta,c\delta^{\,l}).
$$

First, the dyadic cubes in \cite[Theorem~11, p.\,7]{Christ} 
are labeled over all $k\in{\mathbb{Z}}$. However, \eqref{ha-GVV} shows that in the
case when $E$ is bounded the index set $I_k$ becomes a singleton whenever $2^{-k}$ is
sufficiently large. Hence, in particular, ${\mathbb{D}}_k(E)$ becomes stationary as 
$k$ approaches $-\infty$, in the sense that this collection of cubes reduces to just 
the set $E$ provided $2^{-k}$ is sufficiently large. While this is not an issue in and
of itself, for later considerations we find it useful to eliminate this redundancy and
this is the reason why we restrict ourselves to only $k\geq\kappa_E$.

Second, \cite[Theorem~11, p.\,7]{Christ} is stated with $\delta^k$ replacing $2^{-k}$ 
in \eqref{ha-GVV}-\eqref{hacc-76es}, for some $\delta\in(0,1)$. The reason why we may 
always assume that $\delta=1/2$ is discussed later below. 
Third, Christ's result just mentioned is formulated in the setting 
of spaces of homogeneous type (equipped with symmetric quasi-distance), but
a cursory inspection of the proof reveals that for properties {\it (1)-(6)} 
in our statement the same type of arguments as in \cite[pp.\,7-10]{Christ} 
go through (working with the regularization $\rho_{\#}$ of $\rho$, as in 
Theorem~\ref{JjEGh}) under the weaker assumption that $(E,\rho)$ is a 
geometrically doubling quasi-metric space.

Fourth, {\it (7)} follows from a careful inspection of the proof of 
\cite[Theorem~11, p.\,7]{Christ}, which reveals that for each $k\in{\mathbb{Z}}$ 
with $k\geq\kappa_E$, and any $j\in{\mathbb{N}}$ sufficiently large (compared to $k$) 
the set $\bigcup_{\alpha\in I_k}Q_\alpha^k$ contains a $2^{-j}$-dense subset of $E$ 
that is maximal with respect to inclusion. Of course, this shows that the union in 
question is dense in $(E,\tau_\rho)$, and \eqref{ha-GMM.2} is a direct consequence of it.

Fifth, with the exception of using the regularization $\rho_{\#}$ of the original 
quasi-distance $\rho$ from Theorem~\ref{JjEGh} in place of the regularization devised 
in \cite{MaSe79}, property {\it (8)} is identical to condition (3.6) in 
\cite[Theorem~11, p.\,7]{Christ}. Sixth, property {\it (9)} corresponds to (3.1) in
\cite[Theorem~11, p.\,7]{Christ} except that we are presently assuming that the doubling
measure $\sigma$ is Borel regular. The reason for this assumption is that the proof of 
(3.1) in \cite[Theorem~11, p.\,7]{Christ} uses the Lebesgue Differentiation Theorem, 
whose proof requires that continuous functions vanishing outside bounded subsets 
of $E$ are dense in $L^1(E,\sigma)$. It is precisely here that the aforementioned
regularity of the measure intervenes and the reader is referred to 
\cite[Theorem~7.10]{MMMM-G} for a density result of this nature.

The remainder of this proof consists of a verification that, compared with 
\cite{Christ}, it is always possible to take $\delta=1/2$ (as described in the
first paragraph of this proof). In the process, we shall adopt Christ's convention of
labeling the dyadic cubes over all $k\in{\mathbb{Z}}$ (eliminating the inherent redundancy
in the case when $E$ is bounded may be done afterwards). To get started, let 
$\mathfrak{D}(E):=\bigcup\limits_{k\in{\mathbb{Z}}}\mathfrak{D}_k(E)$ 
denote a collection of dyadic cubes enjoying properties {\it (1)-(9)} listed 
in Proposition~\ref{Diad-cube} but with $\delta^k$ replacing $2^{-k}$ 
in \eqref{ha-GVV}-\eqref{hacc-76es}. Our goal in this 
part of the proof is to construct another collection of dyadic cubes, 
${\mathbb{D}}(E):=\bigcup\limits_{k\in{\mathbb{Z}}}{\mathbb{D}}_k(E)$ 
satisfying similar properties for $\delta=1/2$. We shall consider two cases.

\medskip
{\it Case~I: $1/2<\delta<1$.} Set $m_0:=0$ and, for each integer $k>0$, let 
$m_k$ be the largest positive integer such that $\delta^{m_k}\geq 2^{-k}$. Thus,
\begin{eqnarray}\label{HS1}
\delta^{m_k+1}<2^{-k}\leq \delta^{m_k}.
\end{eqnarray}  
Similarly, for each $k<0$, let $m_k$ denote the least integer such that 
$\delta^{m_k+1}< 2^{-k}$. Thus, again we have \eqref{HS1}. Of course, we shall 
have $m_k<0$ when $k<0$. The sequence $\{m_k\}_{k\in{\mathbb{Z}}}$ is 
strictly increasing. Indeed, for every $k\in \mathbb{Z}$, we have
\begin{eqnarray}\label{HS2}
m_k+1\leq m_{k+1}.
\end{eqnarray} 
To see this in the case that $k\geq 0$, observe that
\begin{eqnarray}\label{HS3}
2^{-k-1}=\tfrac{1}{2}2^{-k}\leq\tfrac{1}{2}\delta^{m_k}<\delta^{m_k+1},
\end{eqnarray} 
where in the first inequality we have used \eqref{HS1} and in the second 
that $1/2<\delta$. Thus, \eqref{HS2} holds, since by definition $m_{k+1}$ 
is the greatest integer for which $2^{-(k+1)}\leq\delta^{m_{k+1}}$.  
In the case $k\leq 0$, since $1<2\delta$ we have
\begin{eqnarray}\label{HS4}
\delta^{(m_{k+1}-1)+1}<2\delta^{m_{k+1}+1}<2\cdot2^{-(k+1)}=2^{-k}
\end{eqnarray} 
where in the second inequality we have used \eqref{HS1}.  
Since $m_k$ is the smallest integer for which $\delta^{m_k+1}<2^{-k}$, 
we again obtain \eqref{HS2}.

We then define
\begin{eqnarray}\label{HS5}
{\mathbb{D}}_k(E):=\mathfrak{D}_{m_k}(E).  
\end{eqnarray} 
It is routine to verify that ${\mathbb{D}}_k(E)$ satisfies the desired properties, 
with some of the constants possibly depending upon $\delta$.

\medskip

{\it Case~II: $0<\delta<1/2$.} In this case we reverse the roles of $1/2$ and 
$\delta$ in the construction in Case~I above, to construct a strictly increasing 
sequence of integers $\{m_k\}_{k\in{\mathbb{Z}}}$, with $m_0:=0$, for which
\begin{eqnarray}\label{2HHSS}
2^{-m_k}\leq \delta^{k}<2^{-m_k+1},\qquad\forall\,k\in{\mathbb{Z}}.
\end{eqnarray}  
It then follows that there is a fixed positive integer 
$q_0\approx\log_2(1/\delta)$ such that for each $k\in{\mathbb{Z}}$,
\begin{eqnarray}\label{3HHSS}
m_{k+1}-q_0\leq m_k<m_{k+1}.
\end{eqnarray}
Indeed, we have
\begin{eqnarray}\label{4HHSS}
2^{-m_k}\leq\delta^k =\frac1\delta\delta^{k+1}<\frac1\delta 2^{-m_{k+1}+1}
=\frac2\delta 2^{-m_{k+1}},
\end{eqnarray}
where in the two inequalities we have used \eqref{2HHSS}.   
We then obtain \eqref{3HHSS} by taking logarithms. For each $k\in\mathbb{Z}$ 
we now set
\begin{eqnarray}\label{6HHSS}
{\mathbb{D}}_j(E):=\mathfrak{D}_k(E),\qquad m_k\leq j<m_{k+1}.
\end{eqnarray}
It is now routine to check that the collection 
${\mathbb{D}}(E):=\bigcup\limits_{k\in{\mathbb{Z}}}{\mathbb{D}}_k(E)$, so defined,
satisfies the desired properties, with some of the constants possibly 
depending on $\delta$. In verifying the various properties, it is helpful to 
observe that by \eqref{3HHSS}, we have that 
\begin{eqnarray}\label{7HHSS}
2^{-j}\approx 2^{-m_k}\approx \delta^k,\qquad
\mbox{whenever $m_k\leq j<m_{k+1}$}.
\end{eqnarray}
This finishes the proof of the proposition.
\end{proof}

\subsection{Approximations to the identity on quasi-metric spaces}
\label{SSect:2.3}

This subsection is devoted to reviewing the definition and properties 
of approximations to the identity on {\rm ADR} spaces. To set the stage, we make the 
following definition. 

\begin{definition}\label{Besov-S}
Assume that $(E,\rho,\sigma)$ is a $d$-dimensional {\rm ADR} space 
for some $d>0$ and recall $\kappa_E\in{\mathbb{Z}}\cup\{-\infty\}$ from \eqref{Jcc-KKi}.
In this context, call a family $\{{\mathcal{S}}_l\}_{l\in{\mathbb{Z}},\,l\geq \kappa_E}$ 
of integral operators 
\begin{eqnarray}\label{Taga-6}
{\mathcal{S}}_l f(x):=\int_{E}S_l(x,y)f(y)\,d\sigma(y),\qquad x\in E,
\end{eqnarray}
with integral kernels $S_l:E\times E\to{\mathbb{R}}$, an 
{\tt approximation to the identity of order} $\gamma$ on $E$ 
provided there exists a finite constant $C>0$ such that, for every 
$l\in{\mathbb{Z}}$ with $l\geq \kappa_E$, the following properties hold:
\begin{enumerate}
\item[(i)] $0\leq S_l(x,y)\leq C 2^{ld}$ for all $x,y\in E$, and 
$S_l(x,y)=0$ if $\rho(x,y)\geq C2^{-l}$;
\item[(ii)] $|S_l(x,y)-S_l(x',y)|\leq C 2^{l(d+\gamma)}\rho(x,x')^{\gamma}$ 
for every $x,x',y\in E$;
\item[(iii)] $\bigl|[S_l(x,y)-S_l(x',y)]-[S_l(x,y')-S_l(x',y')]\bigr| 
\leq C2^{l(d+2\gamma)}\rho(x,x')^{\gamma}\rho(y,y')^{\gamma}$
for every point $x,x',y,y'\in E$;
\item[(iv)] $S_l(x,y)=S_l(y,x)$ for every $x,y\in E$,
and $\int_{E}S_l(x,y)\,d\sigma(y)=1$ for every $x\in E$.
\end{enumerate}
\end{definition}

Starting with the work of Coifman (cf. the discussion in 
\cite[pp.\,16-17 and p.\,40]{DJS}), the existence of approximations 
to the identity of some order $\gamma>0$ on {\rm ADR} spaces has been established in \cite[p.\,40]{DJS}, \cite[pp.\,10-11]{HaSa94}, \cite[p.\,16]{DeHa09} 
(at least when $d=1$) for various values of $\gamma>0$ and, more recently, 
in \cite{MMMM-G} for the value of the order parameter 
$\gamma$ which is optimal in relation to the quasi-metric space structure.
From \cite{MMMM-G}, we quote the following result:

\begin{proposition}\label{Besov-ST}
Let $(E,\rho,\sigma)$ be a $d$-dimensional {\rm ADR} space for 
some $d>0$ and assume that
\begin{eqnarray}\label{TFv-5tG}
0<\gamma<\min\bigl\{d+1,\alpha_\rho\bigr\},
\end{eqnarray}
where the index $\alpha_\rho\in(0,\infty]$ is associated to the quasi-distance 
$\rho$ as in \eqref{Cro}. Then, in the sense 
of Definition~\ref{Besov-S}, there exists an approximation 
to the identity of order $\gamma$ on $E$, denoted by $\{{\mathcal{S}}_l\}_{l\in {\mathbb{Z}},\,l\geq \kappa_E}$. Furthermore, given $p\in(1,\infty)$ and 
$f\in L^p(E,\sigma)$, it follows that: 
\begin{eqnarray}\label{fcc-5tnew}
\sup\limits_{l\in{\mathbb{Z}},\,l\geq \kappa_E}
\bigl\|{\mathcal{S}}_l\bigr\|_{L^p(E,\sigma)\to L^p(E,\sigma)}<+\infty,
\end{eqnarray}
\begin{eqnarray}\label{fcc-5t}
\mbox{if the measure $\sigma$ is Borel regular on $(E,\tau_{\rho})$}
\,\,\Longrightarrow\,\,
\lim_{l\to +\infty}{\mathcal{S}}_lf=f\,\,\,\mbox{ in $L^p(E,\sigma)$},
\end{eqnarray}
and 
\begin{eqnarray}\label{fcc-5t2}
\mbox{if } {\rm diam}_\rho(E)=+\infty\,\,\Longrightarrow\,\,
\lim_{l\to -\infty}{\mathcal{S}}_lf=0\,\,\,\mbox{ in $L^p(E,\sigma)$}.
\end{eqnarray}
\end{proposition}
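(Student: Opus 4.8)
The plan is to reduce everything to the regularized quasi-distance $\rho_\#$ of Theorem~\ref{JjEGh}, build the kernels by a mollification-plus-normalization scheme, and then read off the $L^p$ statements from Schur's lemma together with a density argument.

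\textbf{Construction of the kernels.} First I would pass from $\rho$ to $\rho_\#$: by Theorem~\ref{JjEGh} the function $\rho_\#$ is a symmetric quasi-distance, equivalent to $\rho$, and Hölder continuous of every order $\beta\in(0,\min\{1,\alpha_\rho\}]$, while by \eqref{TASS.bis2} the triplet $\bigl(E,\rho_\#|_E,{\mathscr H}^d_{{\mathscr X},\rho_\#}\lfloor E\bigr)$ is $d$-{\rm ADR}; since $\sigma$ is comparable to this Hausdorff measure and $\rho_\#\approx\rho$, a family obeying (i)--(iv) of Definition~\ref{Besov-S} relative to $\rho_\#$ also does so relative to $\rho$, with adjusted constants. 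Starting from a fixed profile $\varphi:[0,\infty)\to[0,1]$ with $\varphi\equiv1$ near $0$ and $\mathrm{supp}\,\varphi\subseteq[0,1]$, one forms a crude kernel $S_l^{0}(x,y):=\varphi\bigl(2^{l}\rho_\#(x,y)\bigr)$ and boosts its regularity by convolving it against $\sigma$-mollifiers a controlled, $l$-independent number of times (replacing $S_l^{0}$ by $\int_E S_l^{0}(x,z)\,\omega_l(z,y)\,d\sigma(z)$ for suitable averaging kernels $\omega_l$); each pass raises the effective Hölder exponent, the gain saturating precisely at the barrier $\min\{d+1,\alpha_\rho\}$, which is why the hypothesis \eqref{TFv-5tG} is sharp — the role of the {\rm ADR} dimension $d$ is exactly to allow smoothing past the intrinsic Hölder exponent of $\rho_\#$. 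After symmetrizing ($S_l^{0}\rightsquigarrow\tfrac12\bigl(S_l^{0}(x,y)+S_l^{0}(y,x)\bigr)$) one verifies the size, support, and double Hölder bounds (i)--(iii), and by the {\rm ADR} property \eqref{Q3HF} one gets $m_l(x):=\int_E S_l^{0}(x,y)\,d\sigma(y)\in[c,C]$ uniformly in $x$ and $l$.

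\textbf{Normalization (Coifman's trick).} To upgrade $\int S_l^{0}\,d\sigma\approx1$ to $\int S_l\,d\sigma=1$ while keeping symmetry I set $S_l(x,y):=a_l(x)\,S_l^{0}(x,y)\,a_l(y)$ with $a_l>0$ bounded above and below; property (iv) then becomes the fixed-point equation $a_l(x)\,(\mathcal S_l^{0}a_l)(x)=1$, where $\mathcal S_l^{0}$ has kernel $S_l^{0}$. One solves it by the iteration $a\mapsto(\mathcal S_l^{0}a)^{-1}$ started at $a\equiv1$: since $\mathcal S_l^{0}$ is positivity-preserving with $\mathcal S_l^{0}1=m_l\approx1$, this map is a contraction on the positive cone in the logarithmic (Thompson) metric, producing $a_l\approx1$. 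Differencing the fixed-point identity and feeding in (ii)--(iii) for $S_l^{0}$ with the {\rm ADR} bounds shows $a_l$ is Hölder of order $\gamma$ at the correct scaling, so $S_l$ inherits (i)--(iii), while symmetry and (iv) are built in. This is the classical device of Coifman recorded in \cite[pp.\,16--17, p.\,40]{DJS}, \cite{Christ}, \cite{HaSa94}, \cite{DeHa09}, here in the sharp form of \cite{MMMM-G}.

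\textbf{Uniform $L^p$ bounds and convergence.} Estimate \eqref{fcc-5tnew} is immediate from Schur's test: by (iv) and (i) one has $\sup_x\int_E|S_l(x,y)|\,d\sigma(y)=1$ and, by symmetry, $\sup_y\int_E|S_l(x,y)|\,d\sigma(x)=1$, whence $\|\mathcal S_l\|_{L^p(E,\sigma)\to L^p(E,\sigma)}\le1$ for every $p\in[1,\infty]$, uniformly in $l$ (no kernel regularity needed). For \eqref{fcc-5t}, since $\int_E S_l(x,y)\,d\sigma(y)=1$ one writes $(\mathcal S_l f-f)(x)=\int_E S_l(x,y)\bigl(f(y)-f(x)\bigr)\,d\sigma(y)$, an integral over $\rho(x,y)<C2^{-l}$; for $f$ bounded and continuous with bounded support this tends to $0$ pointwise (uniform continuity) and hence in $L^p(E,\sigma)$ by dominated convergence, and the general $f\in L^p(E,\sigma)$ follows from \eqref{fcc-5tnew} and a standard three-$\varepsilon$ argument once such functions are known to be dense in $L^p(E,\sigma)$ — this is precisely where the Borel regularity of $\sigma$ is used, via the density result already invoked in the proof of Proposition~\ref{Diad-cube} (see \cite[Thm.~7.10]{MMMM-G}). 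Finally, for \eqref{fcc-5t2}, fix $f$ bounded with support in a $\rho$-ball of radius $R$; when $l$ is negative enough ($2^{-l}\gtrsim R$), $\mathcal S_l f$ is supported in a $\rho$-ball of radius $\lesssim2^{-l}$, and $|\mathcal S_l f|\le C2^{ld}\|f\|_{L^1(E,\sigma)}$ by (i), so by {\rm ADR} (using $\mathrm{diam}_\rho(E)=\infty$) one gets $\|\mathcal S_l f\|_{L^p(E,\sigma)}^p\le C\,2^{ld(p-1)}\|f\|_{L^1(E,\sigma)}^p\to0$ as $l\to-\infty$ since $p>1$; density and \eqref{fcc-5tnew} again extend this to all $f\in L^p(E,\sigma)$.

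\textbf{Main difficulty.} The only substantive point is the first step: manufacturing $S_l^{0}$ with Hölder regularity of order $\gamma$ up to the sharp threshold $\min\{d+1,\alpha_\rho\}$ — in particular the regime $\gamma>\alpha_\rho\wedge1$, where one must genuinely use Ahlfors–David regularity to smooth beyond the natural Hölder exponent of $\rho_\#$ — and then propagating that regularity through the Coifman normalization. The uniform boundedness \eqref{fcc-5tnew} and the two convergence statements \eqref{fcc-5t}, \eqref{fcc-5t2} are soft consequences of Schur's lemma and density.
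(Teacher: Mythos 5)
First, note that the paper does not actually prove Proposition~\ref{Besov-ST}: it is quoted verbatim from \cite{MMMM-G} (with earlier, non-optimal versions credited to \cite{DJS}, \cite{HaSa94}, \cite{DeHa09}), so the benchmark is the construction carried out in that reference rather than an argument in this manuscript. Your treatment of the three displayed estimates is essentially sound: once a family satisfying (i)--(iv) of Definition~\ref{Besov-S} exists, \eqref{fcc-5tnew} follows from Schur's test since nonnegativity, symmetry and the exact normalization give row and column integrals equal to $1$; \eqref{fcc-5t} follows from the identity $\mathcal{S}_lf-f=\int_E S_l(\cdot,y)(f(y)-f(\cdot))\,d\sigma(y)$, dominated convergence, and the density of (H\"older) continuous functions with bounded support, which is exactly where Borel regularity of $\sigma$ enters (the same density result the paper invokes in the proof of Proposition~\ref{Diad-cube}); and \eqref{fcc-5t2} follows from the size bound (i), the upper ADR bound at all scales (this is where ${\rm diam}_\rho(E)=\infty$ is used), $p>1$, and density.

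The genuine gap is in the construction of the kernels, which is the entire content of the proposition. Your central mechanism --- that composing the crude kernel with $\sigma$-mollifiers ``raises the effective H\"older exponent'' with a gain ``saturating precisely at $\min\{d+1,\alpha_\rho\}$'' --- is asserted, not argued, and it is not a real mechanism: on a quasi-metric measure space there is no translation structure, so the kernel of a composition $\int_E S^0_l(x,z)\omega_l(z,y)\,d\sigma(z)$ is, in the $x$-variable, no smoother than $S^0_l(x,z)$ uniformly in $z$; iterated averaging does not create regularity, and the ADR dimension $d$ plays no such smoothing role. The actual source of high-order regularity is Theorem~\ref{JjEGh}: $(\rho_{\#})^\beta$ is a genuine distance for every finite $\beta\le\alpha_\rho$, so a bump of the form $\varphi\bigl(2^{l\beta}\rho_{\#}(x,y)^\beta\bigr)$ already satisfies (i)--(ii) of order $\beta$ (even $\beta>1$) at the correct scaling, with no boosting; what is genuinely delicate, and what your outline never addresses, is the mixed double-difference condition (iii), which does not follow from the one-variable H\"older bound (the naive estimate via $\min\{\rho_{\#}(x,x')^\beta,\rho_{\#}(y,y')^\beta\}$ only yields (iii) of order about $\beta/2$). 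In addition, your normalization step is flawed as stated: the map $a\mapsto(\mathcal{S}^0_la)^{-1}$ is only non-expansive in the Hilbert--Thompson metric, and Birkhoff's contraction theorem does not apply because $S^0_l(x,y)$ vanishes for $\rho(x,y)\gtrsim 2^{-l}$, so the claimed contraction (hence existence and H\"older regularity of the fixed point $a_l$) is unjustified; the classical Coifman device is the explicit three-factor composition $M_{w_l}T_lM_{v_l}T_lM_{w_l}$, which avoids solving a nonlinear equation altogether. Finally, your claim that \eqref{TFv-5tG} is sharp is neither needed for the proposition nor supported by the sketch. As written, then, the proposal establishes the soft consequences but not the existence statement itself; for that you would either have to carry out the genuine construction (in particular verify (iii)) or, as the paper does, cite \cite{MMMM-G}.
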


Later on we shall need a Calder\'on-type reproducing formula involving the conditional expectation operators associated with an 
approximation to the identity, as discussed above. While this is a topic 
treated at some length in \cite{DJS}, \cite{DeHa09}, \cite{HaSa94}, we prove below
a version of this result which best suits the purposes we have in mind. 

To state the result, we first record the following preliminaries.

\begin{definition}\label{def:uncond} 
A series $\sum\limits_{j\in{\mathbb{N}}}x_j$ of vectors in a Banach space
${\mathscr{B}}$ is said to be {\tt unconditionally convergent} if the series
$\sum\limits_{j=1}^\infty x_{\sigma(j)}$ converges in ${\mathscr{B}}$ for 
all permutations $\sigma$ of ${\mathbb{N}}$.
\end{definition}
Clearly, if a series $\sum_{j\in{\mathbb{N}}}x_j$ of vectors in a Banach space ${\mathscr{B}}$ is unconditionally convergent then so is 
$\sum\limits_{j=1}^\infty x_{\sigma(j)}$ for any permutation $\sigma$ of ${\mathbb{N}}$.
It is also well-known (cf., e.g., \cite[Corollary~3.11, p.\,99]{Heil}) that, given a 
sequence of vectors $\{x_j\}_{j\in{\mathbb{N}}}$ in a Banach space
${\mathscr{B}}$, 
\begin{eqnarray}\label{UNC-11}
&& \hskip -0.50in
\mbox{$\displaystyle\sum\limits_{j\in{\mathbb{N}}}x_j$ unconditionally convergent}
\nonumber\\[-4pt]
&& \hskip 0.50in
\Longrightarrow\,
\sum\limits_{j=1}^\infty x_{\sigma_1(j)}=\sum\limits_{j=1}^\infty x_{\sigma_2(j)},
\qquad\mbox{ $\forall\,\sigma_1,\sigma_2$ permutations of ${\mathbb{N}}$}.
\end{eqnarray}
Hence, whenever $\sum\limits_{j\in{\mathbb{N}}}x_j$ is unconditionally convergent,
we may unambiguously define 
\begin{eqnarray}\label{UNC-11.BBB}
\sum\limits_{j\in{\mathbb{N}}}x_j:=\sum\limits_{j=1}^\infty x_{\sigma(j)}
\,\,\mbox{ for some (hence any) permutation $\sigma$ of ${\mathbb{N}}$}.
\end{eqnarray}
Let us also record here the following useful characterizations of 
unconditional convergence (in a Banach space setting):
\begin{eqnarray}\label{UNC-12}
\mbox{$\displaystyle\sum\limits_{j\in{\mathbb{N}}}x_j$ unconditionally convergent}
&\Longleftrightarrow &
\mbox{$\displaystyle\sum\limits_{j=1}^\infty\varepsilon_jx_j$ convergent}
\,\,\mbox{ $\forall\,\varepsilon_j=\pm 1$}
\\[4pt]
&\Longleftrightarrow &
\left\{
\begin{array}{l}
\forall\,\varepsilon>0\,\,\exists\,N_{\varepsilon}\in{\mathbb{N}}\mbox{ such that }
\Bigl\|\displaystyle\sum\limits_{j\in{\mathcal{I}}}x_j\Bigr\|<\varepsilon
\\[4pt]
\forall\,{\mathcal{I}}
\mbox{ finite subset of ${\mathbb{N}}$ with $\min\,{\mathcal{I}}\geq N_{\varepsilon}$}.
\end{array}
\right.
\nonumber
\end{eqnarray}
See, e.g., \cite[Theorem~3.10, p.\,94]{Heil} where these and other equivalent 
characterizations are proved.

The following notion of unconditional convergence applies to series indexed 
by any countable set other than ${\mathbb{N}}$.

\begin{definition}\label{def:uncond2}
For any countable set ${\mathbb{I}}$, a series $\sum_{j\in{\mathbb{I}}}x_j$ of vectors 
in a Banach space ${\mathscr{B}}$ is said to be {\tt unconditionally convergent} if 
there exists a bijection $\varphi:{\mathbb{N}}\rightarrow{\mathbb{I}}$ such that $\sum_{j\in{\mathbb{N}}}x_{\varphi(j)}$ is unconditionally convergent in the sense of
Definition~\ref{def:uncond}, in which case the sum of the series in ${\mathscr{B}}$ is 
defined as $\sum_{j\in{\mathbb{I}}}x_j:= \sum\limits_{j=1}^\infty x_{\varphi(j)}$.
\end{definition}

Note that the property of being unconditionally convergent as introduced in 
Definition~\ref{def:uncond2} is independent of the bijection ${\varphi}$ used. 
To see this, suppose that $\sum_{j\in{\mathbb{I}}}x_j$ is unconditionally convergent 
in ${\mathscr{B}}$ and let $\varphi:{\mathbb{N}}\rightarrow{\mathbb{I}}$ be a 
bijection such that $\sum_{j\in{\mathbb{N}}}x_{\varphi(j)}$ is unconditionally 
convergent in the sense of Definition~\ref{def:uncond}. If 
$\widetilde{\varphi}:{\mathbb{N}}\rightarrow{\mathbb{I}}$ is another bijection, then 
$\varphi^{-1}\circ\widetilde{\varphi}$ is a permutation of ${\mathbb{N}}$ hence,
as noted right after Definition~\ref{def:uncond},  
$\sum_{j\in{\mathbb{N}}}x_{\widetilde{\varphi}(j)}$
is also unconditionally convergent. Moreover, \eqref{UNC-11} ensures that 
$\sum\limits_{j=1}^\infty x_{\widetilde{\varphi}(j)}
=\sum\limits_{j=1}^\infty x_{\varphi(\varphi^{-1}(\widetilde{\varphi}(j)))}
=\sum\limits_{j=1}^\infty x_{\varphi(j)}
=\sum_{j\in{\mathbb{I}}}x_j$.

We also have the following equivalent characterization for unconditional convergence.

\begin{lemma}\label{SBFF}
Suppose ${\mathscr{B}}$ is a Banach space and ${\mathbb{I}}$ is a countable set. Then
a series $\sum_{j\in{\mathbb{I}}}x_j$ of vectors in ${\mathscr{B}}$ is unconditionally
convergent in ${\mathscr{B}}$ if and only if
\begin{eqnarray}\label{S+SS}
\begin{array}{c}
\forall\,\{S_i\}_{i\in{\mathbb{N}}}\mbox{ such that 
$S_i$ finite and $S_i\subseteq S_{i+1}\subseteq{\mathbb{I}}$ for each $i\in{\mathbb{N}}$},
\\[4pt]
\mbox{the sequence }\,\,\bigl\{\sum\limits_{j\in S_i}x_j\bigr\}_{i\in{\mathbb{N}}} 
\mbox{ converges in ${\mathscr{B}}$}.
\end{array}
\end{eqnarray}
\end{lemma}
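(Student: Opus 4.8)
The plan is to reduce the equivalence to the Cauchy-type characterization of unconditional convergence recorded in \eqref{UNC-12}, transported from series indexed by ${\mathbb{N}}$ to series indexed by the abstract countable set ${\mathbb{I}}$ via a bijection $\varphi:{\mathbb{N}}\to{\mathbb{I}}$. (The case in which ${\mathbb{I}}$ is finite is trivial, since then any nondecreasing sequence of finite subsets of ${\mathbb{I}}$ is eventually constant; so assume ${\mathbb{I}}$ is countably infinite.) The first step is to record, as a working reformulation, that $\sum_{j\in{\mathbb{I}}}x_j$ is unconditionally convergent if and only if for every $\varepsilon>0$ there is a \emph{finite} set $F_\varepsilon\subseteq{\mathbb{I}}$ with $\bigl\|\sum_{j\in G}x_j\bigr\|<\varepsilon$ for every finite $G\subseteq{\mathbb{I}}\setminus F_\varepsilon$. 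This follows at once from Definitions~\ref{def:uncond} and \ref{def:uncond2} together with \eqref{UNC-12}: given $\varepsilon$, let $N_\varepsilon$ be as in \eqref{UNC-12} for the series $\sum_j x_{\varphi(j)}$ and put $F_\varepsilon:=\varphi(\{1,\dots,N_\varepsilon-1\})$; conversely, from such an $F_\varepsilon$ one takes $N_\varepsilon:=1+\max\varphi^{-1}(F_\varepsilon)$, and the whole equivalence is independent of $\varphi$ by the remark following Definition~\ref{def:uncond2}.

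For the forward implication, fix a sequence $\{S_i\}_{i\in{\mathbb{N}}}$ with each $S_i$ finite and $S_i\subseteq S_{i+1}\subseteq{\mathbb{I}}$, and set $a_i:=\sum_{j\in S_i}x_j$. Given $\varepsilon>0$, choose $F_\varepsilon$ as above. Since $\{S_i\cap F_\varepsilon\}_{i}$ is a nondecreasing family of subsets of the finite set $F_\varepsilon$, it stabilizes: there is $i_0$ with $S_i\cap F_\varepsilon=S_{i_0}\cap F_\varepsilon$ for all $i\geq i_0$. Consequently, for $i'>i\geq i_0$ the finite set $S_{i'}\setminus S_i$ is disjoint from $F_\varepsilon$, whence $\|a_{i'}-a_i\|=\bigl\|\sum_{j\in S_{i'}\setminus S_i}x_j\bigr\|<\varepsilon$. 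Thus $\{a_i\}$ is Cauchy, hence convergent in ${\mathscr{B}}$, which is \eqref{S+SS}.

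For the reverse implication, argue by contraposition. If $\sum_{j\in{\mathbb{I}}}x_j$ is not unconditionally convergent, then by the reformulation above there is $\varepsilon_0>0$ such that for every finite $F\subseteq{\mathbb{I}}$ one can find a finite $G\subseteq{\mathbb{I}}\setminus F$ with $\bigl\|\sum_{j\in G}x_j\bigr\|\geq\varepsilon_0$. Applying this recursively --- at stage $k$ with $F=B_1\cup\dots\cup B_{k-1}$ --- produces pairwise disjoint finite sets $B_1,B_2,\dots\subseteq{\mathbb{I}}$ with $\bigl\|\sum_{j\in B_k}x_j\bigr\|\geq\varepsilon_0$ for every $k$. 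Put $U_0:=\emptyset$ and $U_k:=B_1\cup\dots\cup B_k$, and define the nondecreasing sequence of finite sets $S_{2k-1}:=U_{k-1}$, $S_{2k}:=U_k$, so that $S_i\subseteq S_{i+1}\subseteq{\mathbb{I}}$ for every $i$. Then $\bigl\|\sum_{j\in S_{2k}}x_j-\sum_{j\in S_{2k-1}}x_j\bigr\|=\bigl\|\sum_{j\in B_k}x_j\bigr\|\geq\varepsilon_0$ for all $k$, so $\{\sum_{j\in S_i}x_j\}_{i}$ is not a Cauchy sequence and therefore fails to converge, contradicting \eqref{S+SS}.

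The one point that requires care, and which I expect to be the main obstacle, is the forward direction: the tempting move of arranging $F_\varepsilon\subseteq S_i$ for all large $i$ is simply not available, because the $S_i$ need not exhaust --- nor even meet --- $F_\varepsilon$. The correct mechanism is the stabilization of $S_i\cap F_\varepsilon$ inside the \emph{finite} set $F_\varepsilon$, after which the increments $S_{i'}\setminus S_i$ avoid $F_\varepsilon$ and the tail estimate applies. The remaining verifications --- that the $B_k$ may be chosen pairwise disjoint, that the constructed $\{S_i\}$ is nondecreasing with finite terms, and that the $\varphi$-transport of \eqref{UNC-12} is well posed --- are routine.
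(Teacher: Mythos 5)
Your proof is correct, but it follows a genuinely different route from the paper's in both directions, so let me compare. For the implication \eqref{S+SS} $\Rightarrow$ unconditional convergence, the paper argues directly: for any bijection $\varphi:{\mathbb{N}}\to{\mathbb{I}}$ and any permutation $\sigma$ of ${\mathbb{N}}$, the initial segments $S_i:=\{\varphi(\sigma(k)):1\leq k\leq i\}$ form an admissible nondecreasing family of finite sets, so \eqref{S+SS} forces every rearranged series $\sum_j x_{\varphi(\sigma(j))}$ to converge, which is unconditional convergence by definition; you instead argue by contraposition, using the transported Cauchy-type criterion from \eqref{UNC-12} to produce pairwise disjoint blocks $B_k$ with $\bigl\|\sum_{j\in B_k}x_j\bigr\|\geq\varepsilon_0$ and a nondecreasing sequence along which the partial sums fail to be Cauchy. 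Both are valid; the paper's version is shorter since it needs no block construction, while yours spells out the gliding-hump mechanism. For the converse, unconditional convergence $\Rightarrow$ \eqref{S+SS}, the paper sets $S:=\bigcup_i S_i$, invokes \eqref{UNC-12} to see that the subseries over $S$ is unconditionally convergent, and leaves the conclusion as ``readily implies''; your stabilization argument --- $S_i\cap F_\varepsilon$ is eventually constant inside the finite set $F_\varepsilon$, hence the increments $S_{i'}\setminus S_i$ avoid $F_\varepsilon$ and the tail estimate yields the Cauchy property --- supplies exactly the detail the paper suppresses, and has the small advantage of not needing to restrict to $S$ at all (it works whether or not $F_\varepsilon\subseteq S$). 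Your preliminary reformulation of unconditional convergence over ${\mathbb{I}}$ in terms of finite sets $F_\varepsilon$, together with the remark that it does not depend on the chosen bijection $\varphi$, is a correct transport of the last characterization in \eqref{UNC-12}, and your trivial treatment of finite ${\mathbb{I}}$ is harmless.
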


\begin{proof}
Suppose $\sum_{j\in{\mathbb{I}}}x_j$ is such that \eqref{S+SS} holds and let
$\varphi:{\mathbb{N}}\to{\mathbb{I}}$ be a bijection. Also fix an arbitrary permutation
$\sigma:{\mathbb{N}}\to{\mathbb{N}}$. Then the sequence 
$S_i:=\{\varphi(\sigma(k)):\,1\leq k\leq i\}$, $i\in{\mathbb{N}}$, of subsets of
${\mathbb{I}}$ satisfies
the conditions in the first line of \eqref{S+SS}. Hence, 
$\bigl\{\sum\limits_{j=1}^i x_{\varphi(\sigma(j))}\bigr\}_{i\in{\mathbb{N}}}$ 
is convergent in ${\mathscr{B}}$, which is equivalent with $\sum\limits_{j=1}^\infty x_{\varphi(\sigma(j))}$ being convergent in ${\mathscr{B}}$. Since 
the permutation $\sigma$ of ${\mathbb{N}}$ has been arbitrarily chosen, 
this shows that $\sum_{j\in{\mathbb{N}}} x_{\varphi(j)}$ is
unconditionally convergent in ${\mathscr{B}}$, thus  $\sum_{j\in{\mathbb{I}}}x_j$ is unconditionally convergent in ${\mathscr{B}}$. For the converse implication, suppose 
that $\sum_{j\in{\mathbb{I}}}x_j$ is unconditionally convergent in ${\mathscr{B}}$. 
Thus, for any bijection $\varphi:{\mathbb{N}}\to{\mathbb{I}}$ we have that
$\sum_{j\in{\mathbb{N}}}x_{\varphi(j)}$ is unconditionally convergent in ${\mathscr{B}}$. 
Let $\{S_i\}_{i\in{\mathbb{N}}}$ be as in the first line of \eqref{S+SS} 
and set $S:=\bigcup\limits_{i\in{\mathbb{N}}}S_i$. Using \eqref{UNC-12}, it follows that 
$\sum\limits_{j\in{\mathbb{N}}\setminus\varphi^{-1}({\mathbb{I}}\setminus S)}x_{\varphi(j)}$
is also unconditionally convergent in ${\mathscr{B}}$. In turn, the latter readily 
implies that $\bigl\{\sum\limits_{j\in S_i}x_j\bigr\}_{i\in{\mathbb{N}}}$ is convergent in
${\mathscr{B}}$, as wanted. 
\end{proof}

We now state the aforementioned Calder\'on-type reproducing formula.

\begin{proposition}\label{HS-PP.3}
Let $(E,\rho,\sigma)$ be a $d$-dimensional {\rm ADR} space for some $d>0$ and 
assume that the measure $\sigma$ is Borel regular on $(E,\tau_\rho)$.
In this context, recall $\kappa_E$ from \eqref{Jcc-KKi} and, for some fixed $\gamma$ as 
in \eqref{TFv-5tG}, let $\{{\mathcal{S}}_l\}_{l\in{\mathbb{Z}},\,l\geq \kappa_E}$ be 
an approximation to the identity of order $\gamma$ on $E$ 
(cf. Proposition~\ref{Besov-ST}). Finally, introduce the integral operators 
(see \cite{DJS})
\begin{eqnarray}\label{opD}
D_l:={\mathcal{S}}_{l+1}-{\mathcal{S}}_l,\quad l\in{\mathbb{Z}},\,\,\,l\geq \kappa_E.
\end{eqnarray}

Then there exist a linear and bounded operator $R$ on $L^2(E,\sigma)$ and
a family $\bigl\{\widetilde{D}_l\bigr\}_{l\in{\mathbb{Z}},\,l\geq \kappa_E}$ 
of linear operators on $L^2(E,\sigma)$ with the property that
\begin{eqnarray}\label{PKc-2}
\sum\limits_{l\in{\mathbb{Z}},\,l\geq \kappa_E}\|\widetilde{D}_lf\|^2_{L^2(E,\sigma)}
\leq C\|f\|^2_{L^2(E,\sigma)},\quad\mbox{ for each }\,\,f\in L^2(E,\sigma),
\end{eqnarray}
and, with $I$ denoting the identity operator on $L^2(E,\sigma)$,
\begin{eqnarray}\label{PKc}
I+{\mathcal{S}}_{\kappa_E}R=\sum_{l\in{\mathbb{Z}},\,
l\geq \kappa_E}D_l\widetilde{D}_l
\qquad\mbox{pointwise unconditionally in $L^2(E,\sigma)$},
\end{eqnarray}
with the convention (taking effect when ${\rm diam}_\rho(E)=+\infty$) 
that ${\mathcal{S}}_{-\infty}:=0$.
\end{proposition}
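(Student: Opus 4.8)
The plan is to build the operators $\widetilde{D}_l$ and the remainder $R$ via the standard Coifman--type argument, adapted from \cite{DJS}, but organized so that unconditional convergence (in the precise sense of Lemma~\ref{SBFF}) falls out of an honest $L^2$ estimate. First I would record the telescoping identity: since $\{{\mathcal{S}}_l\}$ is an approximation to the identity, \eqref{fcc-5t} and \eqref{fcc-5t2} give ${\mathcal{S}}_l f\to f$ as $l\to+\infty$ and ${\mathcal{S}}_l f\to 0$ (resp.\ $\to {\mathcal{S}}_{\kappa_E}f$) as $l\to-\infty$ in $L^2(E,\sigma)$, so that $\sum_{l\geq\kappa_E} D_l = I - {\mathcal{S}}_{\kappa_E}$ (with ${\mathcal{S}}_{-\infty}:=0$) with convergence in the strong operator topology. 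Thus $I + {\mathcal{S}}_{\kappa_E}R$ will be realized once we write $I - {\mathcal{S}}_{\kappa_E}$ plus a correction in the form $\sum_l D_l\widetilde{D}_l$. The key analytic input is the almost-orthogonality (Cotlar--Stein) estimate $\|D_l D_k^*\|_{L^2\to L^2} + \|D_l^* D_k\|_{L^2\to L^2}\leq C\,2^{-\epsilon|l-k|}$ for some $\epsilon>0$, which follows from the size, regularity, and cancellation properties (i)--(iv) in Definition~\ref{Besov-S} together with ADR; this yields both the square-function bound $\sum_l\|D_l f\|_{L^2}^2\leq C\|f\|_{L^2}^2$ and, dually, $\|\sum_l D_l g_l\|_{L^2}^2\leq C\sum_l\|g_l\|_{L^2}^2$.

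Next I would set $T_N := \sum_{\kappa_E\leq l\leq N} D_l^2$ (or $\sum D_l D_l^*$, whichever is convenient) and show, again by Cotlar--Stein applied to the family $\{D_l\}$, that $\|T_N\|_{L^2\to L^2}\leq C$ uniformly in $N$, and moreover that $T_N$ converges in the strong operator topology to a bounded operator $T:=\sum_{l\geq\kappa_E}D_l^2$. The operator $T$ is not invertible in general, but the point (exactly as in \cite{DJS}) is that $\sum_l D_l = I - {\mathcal{S}}_{\kappa_E}$ and $T$ agree ``to first order'': one writes $I - {\mathcal{S}}_{\kappa_E} = \sum_l D_l = \sum_l D_l^2 + \sum_l D_l(I - D_l - {\mathcal{S}}_{\kappa_E})$ and iterates, or more cleanly, one introduces a single correction. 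I would instead follow the cleanest route: let $M:=\sum_{l\geq\kappa_E} D_l^2 + $ (boundary term) and show $M = I + {\mathcal{S}}_{\kappa_E}R_0$ for a bounded $R_0$; then $\widetilde{D}_l$ is defined by $\widetilde{D}_l := D_l M^{-1}$ if $M$ were invertible — but since it need not be, the actual construction uses the Neumann-type series for the ``error'' operator $E_N := I - {\mathcal{S}}_{\kappa_E} - \sum_{l\leq N}D_l$, whose small-tail behavior (from the strong convergence of $\sum D_l$) lets one absorb it. Concretely, set $\widetilde{D}_l := D_l + D_l\big(\sum_{k\geq\kappa_E}D_k^2\big)^{-1}\big(\cdots\big)$; the honest statement is that there is a bounded operator $R$ with $I + {\mathcal{S}}_{\kappa_E}R = \sum_l D_l\widetilde{D}_l$, where $\widetilde{D}_l$ are built from the $D_k$'s so that $\sum_l\|\widetilde{D}_l f\|_{L^2}^2\leq C\|f\|_{L^2}^2$ by the same Cotlar--Stein bound.

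For the unconditional convergence in \eqref{PKc}, I would invoke Lemma~\ref{SBFF}: it suffices to show that for any increasing sequence of finite sets $S_i\subseteq\{l\in{\mathbb{Z}}: l\geq\kappa_E\}$, the partial sums $\sum_{l\in S_i} D_l\widetilde{D}_l f$ form a Cauchy sequence in $L^2(E,\sigma)$. For a finite set $F$, Cauchy--Schwarz and the dual square-function bound give $\big\|\sum_{l\in F}D_l\widetilde{D}_l f\big\|_{L^2}^2\leq C\sum_{l\in F}\|\widetilde{D}_l f\|_{L^2}^2$; since $\sum_{l\geq\kappa_E}\|\widetilde{D}_l f\|_{L^2}^2<\infty$ by \eqref{PKc-2}, the tail $\sum_{l\in F}\|\widetilde{D}_l f\|_{L^2}^2$ is small whenever $F$ lies in a far-out window, which is precisely the Cauchy criterion in \eqref{S+SS}. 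This simultaneously proves \eqref{PKc-2} (from the Cotlar--Stein bound on the $\widetilde{D}_l$) and the pointwise-unconditional convergence claim.

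The main obstacle I anticipate is \emph{not} the almost-orthogonality estimates themselves (those are routine given (i)--(iv) and ADR, e.g.\ splitting into $\rho(x,y)\lesssim 2^{-\max(l,k)}$ vs.\ larger and using cancellation in the smaller-scale variable), but rather handling the \emph{boundary generation} $l=\kappa_E$ cleanly when ${\rm diam}_\rho(E)<\infty$. In that case the sum is one-sided, $\sum_l D_l = I - {\mathcal{S}}_{\kappa_E}$ genuinely picks up the nontrivial term ${\mathcal{S}}_{\kappa_E}$, and one must verify that the operator ${\mathcal{S}}_{\kappa_E}$ is itself bounded on $L^2$ (true by Proposition~\ref{Besov-ST}) and that the error operator $R$ one constructs is bounded — this is where keeping careful track of which pieces are ``${\mathcal{S}}_{\kappa_E}\times(\text{bounded})$'' matters, and where the convention ${\mathcal{S}}_{-\infty}:=0$ for the unbounded case must be consistently threaded through. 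I would therefore structure the proof to treat $I - {\mathcal{S}}_{\kappa_E}$ as the main term throughout and only at the end identify the leftover bounded operator, composed on the left with ${\mathcal{S}}_{\kappa_E}$, as $R$.
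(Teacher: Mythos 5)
Your overall strategy (almost--orthogonality of the $D_l$'s, Cotlar--Stein, telescoping $\sum_{l\ge\kappa_E}D_l=I-{\mathcal{S}}_{\kappa_E}$, and the use of Lemma~\ref{SBFF} to convert an $\ell^2$ tail bound into unconditional convergence) is in the right spirit, but the central construction of $\widetilde{D}_l$ and $R$ is never actually carried out, and the two concrete routes you sketch both break down. First, the claim that $M:=\sum_{l\ge\kappa_E}D_l^2+(\text{boundary term})$ can be written as $I+{\mathcal{S}}_{\kappa_E}R_0$ with $R_0$ bounded is false in general: $M$ differs from $(I-{\mathcal{S}}_{\kappa_E})^2$ by all the cross terms $\sum_{j\neq k}D_jD_k$, and while each $\|D_jD_k\|_{L^2\to L^2}\le C2^{-|j-k|\gamma}$, the \emph{near-diagonal} cross terms (say $|j-k|=1$) have norms of order one; their sum is bounded but is neither small nor of the form ${\mathcal{S}}_{\kappa_E}\cdot(\text{bounded})$, so $M$ is not a perturbation of the identity that you can invert. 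Second, the ``Neumann-type series for $E_N:=I-{\mathcal{S}}_{\kappa_E}-\sum_{l\le N}D_l$'' cannot work either: by telescoping $E_N=I-{\mathcal{S}}_{N+1}$, which tends to zero only in the strong operator topology, not in operator norm, so there is no small-norm quantity on which to run a Neumann series. The phrase ``$\widetilde{D}_l:=D_l+D_l(\sum_k D_k^2)^{-1}(\cdots)$'' leaves unspecified exactly the object whose existence is the content of the proposition.

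The missing idea is the near-diagonal/far-off-diagonal splitting of the \emph{square} $(I-{\mathcal{S}}_{\kappa_E})^2=\sum_{j,k\ge\kappa_E}D_jD_k$, together with a refined Cotlar--Stein estimate applied, for each fixed shift $i$, to the family $\{D_{l+i}D_l\}_l$: one shows $\|(D_{j+i}D_j)^{\ast}(D_{k+i}D_k)\|\le C\min\{2^{-|j-k|\gamma},2^{-|i|\gamma}\}$ (cf.\ \eqref{T-77-AB}), whence $\bigl\|\sum_l D_{l+i}D_l\bigr\|\le C(1+|i|)2^{-|i|\gamma/2}$ as in \eqref{T-GBn88}, and therefore the far-off-diagonal operator $R_N:=\sum_{|j-k|>N}D_jD_k$ satisfies $\|R_N\|\le C_\gamma N2^{-N\gamma/2}$, which \emph{is} small for large $N$ (see \eqref{dec-id2}). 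Keeping the near-diagonal block as the main term, i.e.\ writing $T_N=\sum_{l}D_lD_l^N$ with $D_l^N:=\sum_{|i|\le N,\ i\ge\kappa_E-l}D_{l+i}$, one gets $I=R_N+\widetilde{T}_N$ with $\widetilde{T}_N:=T_N+{\mathcal{S}}_{\kappa_E}(2I-{\mathcal{S}}_{\kappa_E})$ as in \eqref{dec-id1ABIS}; now $\widetilde{T}_N=I-R_N$ is invertible by a norm-convergent Neumann series for $N$ large, and one sets $\widetilde{D}_l:=D_l^N(\widetilde{T}_N)^{-1}$ and $R:=({\mathcal{S}}_{\kappa_E}-2I)(\widetilde{T}_N)^{-1}$, which gives \eqref{PKc}; the bound \eqref{PKc-2} then follows from the Littlewood--Paley estimate \eqref{FC+MN}, the boundedness of $(\widetilde{T}_N)^{-1}$, and the fact that each $D_l^N$ is a sum of at most $2N+1$ terms. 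Without this splitting (or an equivalent device), your argument does not produce the operators whose existence is asserted, so as written the proposal has a genuine gap at its core.
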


As a preamble to the proof of the above proposition we momentarily digress 
and record a version of the Cotlar-Knapp-Stein lemma which suits our purposes.

\begin{lemma}\label{L-CKS}
Assume that ${\mathscr{H}}_0$, ${\mathscr{H}}_1$ are two Hilbert spaces 
and consider a family of operators $\{T_j\}_{j\in{\mathbb{I}}}$, indexed by 
a countable set ${\mathbb{I}}$, with $T_j:{\mathscr{H}}_0\to{\mathscr{H}}_1$ 
linear and bounded for every $j\in{\mathbb{I}}$. Then, if the $T_j$'s are almost 
orthogonal in the sense that
\begin{eqnarray}\label{Gvvv-42E}
C_0:=\sup_{j\in{\mathbb{I}}}\Bigl(\sum_{k\in{\mathbb{I}}}
\sqrt{\Vert T_j^\ast T_k\Vert_{{\mathscr{H}}_0\to{\mathscr{H}}_0}}\,\Bigr)<\infty,
\quad 
C_1:=\sup_{k\in{\mathbb{I}}}\Bigl(\sum_{j\in{\mathbb{I}}}
\sqrt{\Vert T_j T_k^\ast\Vert_{{\mathscr{H}}_1\to{\mathscr{H}}_1}}\,\Bigr)<\infty
\end{eqnarray}
it follows that for any subset $J$ of ${\mathbb{I}}$, 
\begin{eqnarray}\label{Gvvv-43E}
\begin{array}{c}
\mbox{$\displaystyle\sum_{j\in J}T_jx$ converges unconditionally in ${\mathscr{H}}_1$
for each $x\in{\mathscr{H}}_0$, and}
\\[6pt]
\mbox{if $\displaystyle\Bigl(\sum_{j\in J}T_j\Bigr)x:=\sum_{j\in J}T_jx$ then }\,\,
\displaystyle\Bigl\Vert\sum_{j\in J}T_j\Bigr\Vert_{{\mathscr{H}}_0\to{\mathscr{H}}_1}
\leq\sqrt{C_0C_1}. 
\end{array}
\end{eqnarray}
Furthermore, 
\begin{eqnarray}\label{Gvvv-43Ej}
\Bigl(\sum_{j\in{\mathbb{I}}}\|T_jx\|_{{\mathscr{H}}_1}^2\Bigr)^{1/2}
\leq 2\sqrt{C_0C_1}\|x\|_{{\mathscr{H}}_0},\qquad\forall\,x\in{\mathscr{H}}_0.
\end{eqnarray}
\end{lemma}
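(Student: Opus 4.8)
The plan is to prove the Cotlar–Knapp–Stein lemma in Lemma~\ref{L-CKS} by the standard $(TT^\ast)^N$ trick, adapted to the Hilbert-space-valued and countably-indexed setting, paying attention to the unconditional convergence claim. First I would observe that it suffices to prove the norm bound $\bigl\|\sum_{j\in F}T_j\bigr\|_{{\mathscr{H}}_0\to{\mathscr{H}}_1}\leq\sqrt{C_0C_1}$ for every \emph{finite} subset $F\subseteq{\mathbb{I}}$, with the constant independent of $F$; the unconditional convergence over an arbitrary $J\subseteq{\mathbb{I}}$ will then follow from this uniform bound together with an almost-orthogonality (Cauchy-criterion) argument, using the characterization in \eqref{UNC-12} and Lemma~\ref{SBFF}.

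For the finite bound, fix a finite $F\subseteq{\mathbb{I}}$ and set $S:=\sum_{j\in F}T_j$. For any positive integer $N$ one has the two elementary operator-norm identities $\|S\|^{2N}=\|(S^\ast S)^N\|=\|(SS^\ast)^N\|$ (valid since $S^\ast S$ and $SS^\ast$ are self-adjoint and positive). Expanding $(S^\ast S)^N=\sum T_{j_1}^\ast T_{j_2}T_{j_3}^\ast T_{j_4}\cdots T_{j_{2N-1}}^\ast T_{j_{2N}}$ over all $(j_1,\dots,j_{2N})\in F^{2N}$, I would estimate each summand in two ways: grouping the factors as $(T_{j_1}^\ast T_{j_2})(T_{j_3}^\ast T_{j_4})\cdots$ gives the bound $\prod_{i=1}^N\|T_{j_{2i-1}}^\ast T_{j_{2i}}\|$, while grouping as $T_{j_1}^\ast(T_{j_2}T_{j_3}^\ast)(T_{j_4}T_{j_5}^\ast)\cdots T_{j_{2N}}$ gives $\|T_{j_1}^\ast\|\,\bigl(\prod_{i=1}^{N-1}\|T_{j_{2i}}T_{j_{2i+1}}^\ast\|\bigr)\,\|T_{j_{2N}}\|$. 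Here I also use the crude bounds $\|T_j^\ast T_k\|\le\|T_j\|\,\|T_k\|$ and note that \eqref{Gvvv-42E} forces $\sup_j\|T_j\|<\infty$. Taking the geometric mean of the two estimates for each summand, summing the resulting telescoping-type product over $j_1,\dots,j_{2N}\in F$ one factor at a time (each intermediate sum controlled by $C_0$ or $C_1$ via \eqref{Gvvv-42E}), yields $\|(S^\ast S)^N\|\le (\sup_j\|T_j\|)\cdot (C_0C_1)^{N}\cdot\#F$ up to a harmless constant, hence $\|S\|\le (C_0C_1)^{1/2}\cdot\bigl(C\,\#F\bigr)^{1/(2N)}$; letting $N\to\infty$ gives $\|S\|\le\sqrt{C_0C_1}$. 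This is the heart of the argument and the step I expect to be the most delicate, since one must organize the $2N$-fold sum so that exactly the quantities $\sqrt{\|T_j^\ast T_k\|}$ and $\sqrt{\|T_jT_k^\ast\|}$ appear and get absorbed into $C_0,C_1$; the dependence of the error factor on $\#F$ must be shown to be subexponential in $N$ so that it disappears in the limit.

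Next I would upgrade the uniform finite bound to unconditional convergence: given $x\in{\mathscr{H}}_0$ and a subset $J\subseteq{\mathbb{I}}$, for any finite ${\mathcal{I}}\subseteq J$ one has $\bigl\|\sum_{j\in{\mathcal{I}}}T_jx\bigr\|_{{\mathscr{H}}_1}\le\sqrt{C_0C_1}\,\|x\|_{{\mathscr{H}}_0}$, but more is needed for convergence. Replacing $T_j$ by $\varepsilon_jT_j$ for signs $\varepsilon_j=\pm1$ does not change the hypotheses \eqref{Gvvv-42E} (since $\|(\varepsilon_jT_j)^\ast(\varepsilon_kT_k)\|=\|T_j^\ast T_k\|$), so the same bound holds for every signed partial sum; combined with \eqref{UNC-12}, once I show the partial sums form a Cauchy net this gives unconditional convergence. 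To get the Cauchy property I would argue that for a ``tail" finite set ${\mathcal{I}}$ consisting of indices beyond a large finite initial segment, $\bigl\|\sum_{j\in{\mathcal{I}}}T_jx\bigr\|^2=\langle\sum_{j\in{\mathcal{I}}}T_jx,\sum_{k\in{\mathcal{I}}}T_kx\rangle\le\sum_{j,k\in{\mathcal{I}}}\|T_j^\ast T_k\|\,\|x\|^2$, and the double sum over far-out indices is small because each row $\sum_k\sqrt{\|T_j^\ast T_k\|}$ is summable uniformly; invoking Lemma~\ref{SBFF} (applied with the Banach space ${\mathscr{H}}_1$) then packages this into the stated unconditional convergence over $J$, with the operator $\sum_{j\in J}T_j$ inheriting the bound $\sqrt{C_0C_1}$ by density/limit of finite partial sums.

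Finally, for the square-function estimate \eqref{Gvvv-43Ej}: given $x\in{\mathscr{H}}_0$, pick signs $\varepsilon_j$ and observe that $\bigl\|\sum_{j}\varepsilon_jT_jx\bigr\|_{{\mathscr{H}}_1}^2\le C_0C_1\|x\|_{{\mathscr{H}}_0}^2$ for every finite partial sum; averaging over independent random signs (Rademacher averaging) and using $\mathbb{E}\bigl\|\sum_j\varepsilon_j y_j\bigr\|_{{\mathscr{H}}_1}^2=\sum_j\|y_j\|_{{\mathscr{H}}_1}^2$ in the Hilbert space ${\mathscr{H}}_1$ gives $\sum_{j\in{\mathcal{I}}}\|T_jx\|_{{\mathscr{H}}_1}^2\le C_0C_1\|x\|_{{\mathscr{H}}_0}^2$ for every finite ${\mathcal{I}}\subseteq{\mathbb{I}}$, hence by monotone convergence $\sum_{j\in{\mathbb{I}}}\|T_jx\|_{{\mathscr{H}}_1}^2\le C_0C_1\|x\|_{{\mathscr{H}}_0}^2$. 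This already gives \eqref{Gvvv-43Ej} with constant $\sqrt{C_0C_1}$, which is even better than the claimed $2\sqrt{C_0C_1}$; alternatively, if one prefers to avoid probabilistic averaging, the factor $2$ in the statement accommodates the cruder deterministic bound obtained by the identity $\|\sum_j\varepsilon_jT_jx\|^2-\|\sum_jT_jx\|^2$ summed over the $2^{|{\mathcal{I}}|}$ choices of signs, which isolates the diagonal terms $\sum_j\|T_jx\|^2$ up to a factor $2$. Either way the proof is complete.
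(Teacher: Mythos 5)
Your finite-family bound via the $(TT^\ast)^N$ expansion and your Rademacher-averaging derivation of $\sum_{j}\|T_jx\|_{{\mathscr{H}}_1}^2\leq C_0C_1\|x\|_{{\mathscr{H}}_0}^2$ are both sound (the latter even improves the constant $2\sqrt{C_0C_1}$ in \eqref{Gvvv-43Ej} to $\sqrt{C_0C_1}$); note the paper does not re-prove the finite case at all, but simply cites the classical Cotlar--Knapp--Stein lemma (Torchinsky, Stein) and reduces everything else to the abstract Hilbert-space Lemma~\ref{Tgv-x99}. The genuine gap is in your unconditional-convergence step. You propose to verify the Cauchy property for the net of finite partial sums by estimating, for a ``far-out'' finite set ${\mathcal{I}}$, $\bigl\|\sum_{j\in{\mathcal{I}}}T_jx\bigr\|^2\leq\sum_{j,k\in{\mathcal{I}}}\|T_j^\ast T_k\|\,\|x\|^2$ and claiming the right-hand side is small because the rows $\sum_k\sqrt{\|T_j^\ast T_k\|}$ are uniformly summable. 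This fails: uniform boundedness of the rows gives no smallness of tails, and the bound discards the dependence on the fixed vector $x$, which is exactly what makes tails small. Concretely, take ${\mathscr{H}}_0={\mathscr{H}}_1=\ell^2({\mathbb{I}})$ and $T_j$ the $j$-th coordinate projection; then $C_0=C_1=1$, while $\sum_{j,k\in{\mathcal{I}}}\|T_j^\ast T_k\|=\#{\mathcal{I}}$ for every finite ${\mathcal{I}}$, no matter how far out it sits, so your estimate can never produce the Cauchy property (even though $\sum_jT_jx=x$ does of course converge). Invoking Lemma~\ref{SBFF} afterwards does not repair this, since that lemma only repackages convergence once it is known.

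The fix is available with tools you already have. Either use the paper's route: by sign-invariance of \eqref{Gvvv-42E}, all \emph{signed} finite partial sums obey $\bigl\|\sum_{j\in J_o}\varepsilon_jT_jx\bigr\|\leq\sqrt{C_0C_1}\,\|x\|$, and Lemma~\ref{Tgv-x99} (equivalence \eqref{FAFFF}) asserts that in a Hilbert space uniform boundedness of all finite partial sums already implies unconditional convergence --- its proof is precisely a block decomposition combined with the Rademacher square-function estimate \eqref{Hc-77.UU} that you derive in your last paragraph, so this is the natural completion of your argument. Alternatively, use Stein's density argument: for $x=T_k^\ast y$ one has absolute convergence, since $\sum_j\|T_jT_k^\ast y\|\leq\bigl(\sup_l\|T_l\|\bigr)\sum_j\sqrt{\|T_jT_k^\ast\|}\,\|y\|\leq C_0C_1\|y\|$; for $x$ orthogonal to $\overline{{\rm span}}\bigcup_k{\rm ran}\,T_k^\ast$ all $T_jx$ vanish; and the uniform bound $\sqrt{C_0C_1}$ on finite partial sums transfers convergence (indeed the Cauchy property for the net, hence unconditional convergence, via \eqref{UNC-12}) to general $x$ by approximation. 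With either repair, the operator bound in \eqref{Gvvv-43E} then follows by passing to the limit of finite partial sums, as you indicate.
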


\begin{proof} 
This result is typically stated with $J$ finite and without including \eqref{Gvvv-43Ej}.
See, for example, \cite[Lemma~4.1, p.\,285]{Tor86} as well as
\cite[Theorem~1, p.280 and comment following it]{STEIN}.
The fact that the more general version formulated above 
holds is an immediate consequence of the standard version of the Cotlar-Knapp-Stein lemma
as stated in the aforementioned references
and the abstract functional analytic result contained in Lemma~\ref{Tgv-x99} below.
\end{proof}

\begin{lemma}\label{Tgv-x99}
Let ${\mathscr{H}}$ be a Hilbert space with norm $\|\cdot\|_{\mathscr{H}}$ and assume that $\{x_j\}_{j\in{\mathbb{I}}}$ is a sequence of vectors in ${\mathscr{H}}$ indexed by a countable set ${\mathbb{I}}$. Then 
\begin{eqnarray}\label{Hc-77.UU}
\Bigl(\sum\limits_{j\in{\mathbb{I}}}\|x_j\|_{\mathscr{H}}^2\Bigr)^{1/2}
\leq 2\cdot
\sup\limits_{\stackrel{J_o\subseteq{\mathbb{I}}}{J_o\,\mbox{\tiny{finite}}}}
\Bigl\|\sum\limits_{j\in J_o}x_j\Bigr\|_{\mathscr{H}},
\end{eqnarray}
and 
\begin{eqnarray}\label{FAFFF}
\mbox{$\displaystyle\sum\limits_{j\in{\mathbb{I}}}x_j$ is unconditionally convergent}\,
\Longleftrightarrow\,
\sup\limits_{\stackrel{J_o\subseteq{\mathbb{I}}}{J_o\,\mbox{\tiny{finite}}}}
\Bigl\|\sum\limits_{j\in J_o}x_j\Bigr\|_{\mathscr{H}}<\infty.
\end{eqnarray}
Moreover, if the above supremum is finite, then 
\begin{eqnarray}\label{TGbb-88u}
\Bigl\|\sum\limits_{j\in{\mathbb{I}}}x_j\Bigr\|_{\mathscr{H}}\leq 
\sup\limits_{\stackrel{J_o\subseteq{\mathbb{I}}}{J_o\,\mbox{\tiny{finite}}}}
\Bigl\|\sum\limits_{j\in J_o}x_j\Bigr\|_{\mathscr{H}}.
\end{eqnarray}
\end{lemma}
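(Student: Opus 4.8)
The plan is to reduce everything to two ingredients: the Rademacher (randomized-signs) averaging inequality behind \eqref{Hc-77.UU}, and a gliding-hump argument. Write $M:=\sup\bigl\{\|\sum_{j\in J_o}x_j\|_{\mathscr{H}}:\,J_o\subseteq{\mathbb{I}}\mbox{ finite}\bigr\}$; we may assume ${\mathbb{I}}$ is infinite, the finite case being immediate. First I would prove \eqref{Hc-77.UU}. Fix a finite $F\subseteq{\mathbb{I}}$ and a sign vector $(\varepsilon_j)_{j\in F}\in\{\pm1\}^F$; splitting $F=F_+\sqcup F_-$ according to the sign of $\varepsilon_j$ and using the triangle inequality gives $\|\sum_{j\in F}\varepsilon_j x_j\|_{\mathscr{H}}=\|\sum_{F_+}x_j-\sum_{F_-}x_j\|_{\mathscr{H}}\le 2M$. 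Averaging $\|\sum_{j\in F}\varepsilon_j x_j\|_{\mathscr{H}}^2$ over all $2^{\#F}$ sign choices, the off-diagonal terms drop out (the average of $\varepsilon_j\overline{\varepsilon_k}$ over independent mean-zero signs is $\delta_{jk}$), so $\sum_{j\in F}\|x_j\|_{\mathscr{H}}^2$ equals that average and is therefore $\le(2M)^2$. Taking the supremum over finite $F$ yields $\bigl(\sum_{j\in{\mathbb{I}}}\|x_j\|_{\mathscr{H}}^2\bigr)^{1/2}\le 2M$, which is \eqref{Hc-77.UU}; in particular $M<\infty$ forces $\sum_{j\in{\mathbb{I}}}\|x_j\|_{\mathscr{H}}^2<\infty$.

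The heart of the argument is the following claim: if $M<\infty$, then for every $\varepsilon>0$ there is a finite set $F_\varepsilon\subseteq{\mathbb{I}}$ with $\|\sum_{j\in G}x_j\|_{\mathscr{H}}<\varepsilon$ for every finite $G\subseteq{\mathbb{I}}\setminus F_\varepsilon$. I would prove this by contradiction: if it fails for some $\varepsilon_0>0$, recursively construct pairwise disjoint finite sets $G_1,G_2,\dots\subseteq{\mathbb{I}}$ with $\|\sum_{j\in G_i}x_j\|_{\mathscr{H}}\ge\varepsilon_0$ — having chosen $G_1,\dots,G_{i-1}$, apply the assumed failure with the finite set $F:=G_1\cup\cdots\cup G_{i-1}$ to produce $G_i$ disjoint from it. Put $y_i:=\sum_{j\in G_i}x_j$. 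Since the $G_i$ are pairwise disjoint, for any finite $S\subseteq{\mathbb{N}}$ one has $\|\sum_{i\in S}y_i\|_{\mathscr{H}}=\|\sum_{j\in\bigcup_{i\in S}G_i}x_j\|_{\mathscr{H}}\le M$, so the sequence $\{y_i\}_{i\in{\mathbb{N}}}$ satisfies the hypothesis of the lemma with the \emph{same} bound $M$. Applying \eqref{Hc-77.UU} to $\{y_i\}$ gives $\sum_i\|y_i\|_{\mathscr{H}}^2\le 4M^2<\infty$, contradicting $\|y_i\|_{\mathscr{H}}\ge\varepsilon_0>0$ for all $i$.

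With the claim in hand, I would deduce \eqref{FAFFF} and \eqref{TGbb-88u}. For the implication $M<\infty\Rightarrow$ unconditional convergence: fix a bijection $\varphi:{\mathbb{N}}\to{\mathbb{I}}$; given $\varepsilon>0$, choose $F_\varepsilon$ from the claim and then $N\in{\mathbb{N}}$ with $F_\varepsilon\subseteq\varphi(\{1,\dots,N\})$, so that $\|\sum_{j\in{\mathcal{I}}}x_{\varphi(j)}\|_{\mathscr{H}}<\varepsilon$ whenever ${\mathcal{I}}\subseteq{\mathbb{N}}$ is finite with $\min{\mathcal{I}}>N$. By the last characterization in \eqref{UNC-12}, $\sum_{j\in{\mathbb{N}}}x_{\varphi(j)}$ is unconditionally convergent, hence so is $\sum_{j\in{\mathbb{I}}}x_j$ in the sense of Definition~\ref{def:uncond2}. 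Conversely, if $\sum_{j\in{\mathbb{I}}}x_j$ is unconditionally convergent, fix $\varphi$ and apply \eqref{UNC-12} with $\varepsilon=1$ to get $N_1$ with $\|\sum_{j\in{\mathcal{I}}}x_{\varphi(j)}\|_{\mathscr{H}}<1$ for finite ${\mathcal{I}}$ with $\min{\mathcal{I}}\ge N_1$; then for any finite $J_o\subseteq{\mathbb{I}}$, splitting $\varphi^{-1}(J_o)$ into its part in $\{1,\dots,N_1-1\}$ and its part in $\{N_1,N_1+1,\dots\}$ gives $\|\sum_{j\in J_o}x_j\|_{\mathscr{H}}\le\sum_{k=1}^{N_1-1}\|x_{\varphi(k)}\|_{\mathscr{H}}+1$, a bound independent of $J_o$, so $M<\infty$; this proves \eqref{FAFFF}. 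Finally, when $M<\infty$ we have just shown $\sum_{j\in{\mathbb{I}}}x_j=\lim_{i\to\infty}\sum_{k=1}^{i}x_{\varphi(k)}$, and each partial sum has norm $\le M$, so continuity of $\|\cdot\|_{\mathscr{H}}$ gives \eqref{TGbb-88u}.

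The only step I expect to require genuine care is the gliding-hump extraction in the second paragraph: it is what converts mere boundedness of finite partial sums into unconditional convergence, and it hinges on the observation that the block sums $y_i$ inherit the bound $M$ so that \eqref{Hc-77.UU} can be reapplied to them. Everything else is routine bookkeeping with \eqref{UNC-12} and the definition of the unordered sum.
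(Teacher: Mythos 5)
Your proposal is correct and follows essentially the same route as the paper: the inequality \eqref{Hc-77.UU} via averaging over random signs (the paper uses Rademacher functions on $[0,1]$, you average over the $2^{\#F}$ discrete sign choices, which is the same computation), and the hard implication in \eqref{FAFFF} by extracting blocks of norm at least $\varepsilon_0$ whose finite sums inherit the bound $M$ and then reapplying \eqref{Hc-77.UU} to reach a contradiction, with the converse and \eqref{TGbb-88u} handled exactly as in the paper. The only cosmetic difference is that your gliding hump negates a tail-smallness (Cauchy) criterion and uses arbitrary pairwise disjoint finite subsets of ${\mathbb{I}}$, whereas the paper negates the signed-series characterization in \eqref{UNC-12} and takes consecutive signed blocks, which costs it a harmless extra factor in the bound on the block sums.
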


\begin{proof}
It suffices to assume that ${\mathbb{I}}={\mathbb{N}}$. This follows from Definition~\ref{def:uncond2}, since for any bijection $\varphi: {\mathbb{N}} \rightarrow {\mathbb{I}}$, we have
\begin{eqnarray}\label{eq:NIequiv}
\sup\limits_{\stackrel{J_o\subseteq{\mathbb{N}}}{J_o\,\mbox{\tiny{finite}}}}
\Bigl\|\sum\limits_{n\in J_o}x_{\varphi(n)}\Bigr\|_{\mathscr{H}}
=\sup\limits_{\stackrel{J_o\subseteq{\mathbb{I}}}{J_o\,\mbox{\tiny{finite}}}}
\Bigl\|\sum\limits_{j\in J_o}x_j\Bigr\|_{\mathscr{H}}.
\end{eqnarray}
We begin by establishing \eqref{Hc-77.UU}. To get started, 
let $\{x_j\}_{j\in{\mathbb{N}}}\subseteq{\mathscr{H}}$ be such that 
\begin{eqnarray}\label{Hc-77.A}
C:=\sup\limits_{\stackrel{J_o\subseteq{\mathbb{N}}}{J_o\,\mbox{\tiny{finite}}}}
\Bigl\|\sum\limits_{j\in J_o}x_j\Bigr\|_{\mathscr{H}}<\infty.
\end{eqnarray}
Assume that $\{r_j\}_{j\in{\mathbb{N}}}$ is Rademacher's system of 
functions on $[0,1]$, i.e., for each $j\in{\mathbb{N}}$,
\begin{eqnarray}\label{Hc-77.B1}
r_j(t)={\rm sign}\,\bigl(\sin(2^j\pi t)\bigr)\in\{-1,0,+1\},
\quad\mbox{ for all $t\in[0,1]$}.
\end{eqnarray}
Hence, in particular,  
\begin{eqnarray}\label{Hc-77.B2}
\int_0^1 r_j(t)r_k(t)\,dt=\delta_{jk},\qquad\forall\,j,k\in{\mathbb{N}}.
\end{eqnarray}
Consequently, if $\langle\cdot,\cdot\rangle_{\mathscr{H}}$ stands for 
the inner product in ${\mathscr{H}}$, then for any finite set 
$J_o\subseteq{\mathbb{N}}$, 
\begin{eqnarray}\label{Hc-77.C}
\int_0^1\Bigl\|\sum_{j\in J_o}r_j(t)x_j\Bigl\|^2_{\mathscr{H}}\,dt
&=& \int_0^1\Big\langle\sum_{j\in J_o}r_j(t)x_j,\sum_{k\in J_o}r_k(t)x_k
\Big\rangle_{\mathscr{H}}\,dt
\\[4pt]
&=& \sum_{j,k\in J_o}\Bigl(\int_0^1r_j(t)r_k(t)\,dt\Bigr)
\langle x_j,x_k\rangle_{\mathscr{H}}=\sum_{j\in J_o}\|x_j\|^2_{\mathscr{H}}.
\nonumber
\end{eqnarray}
On the other hand, thanks to \eqref{Hc-77.B1},
for each $t\in[0,1]$ we may estimate 
\begin{eqnarray}\label{Hc-77.D}
\Bigl\|\sum_{j\in J_o}r_j(t)x_j\Bigl\|_{\mathscr{H}}
&=&\Bigl\|\Bigl(\sum_{j\in J_o,\,r_j(t)=+1}x_j\Bigr)
-\Bigl(\sum_{j\in J_o,\,r_j(t)=-1}x_j\Bigr)\Bigl\|_{\mathscr{H}}
\nonumber\\[4pt]
&\leq & \Bigl\|\sum_{j\in J_o,\,r_j(t)=+1}x_j\Bigl\|_{\mathscr{H}}
+\Bigl\|\sum_{j\in J_o,\,r_j(t)=-1}x_j\Bigl\|_{\mathscr{H}}
\leq 2C.
\end{eqnarray}
By combining \eqref{Hc-77.C} and \eqref{Hc-77.D} we therefore obtain 
\begin{eqnarray}\label{Hc-77.E}
\sum_{j\in J_o}\|x_j\|^2_{\mathscr{H}}\leq 4C^2,
\qquad\mbox{for every finite subset $J_o$ of ${\mathbb{N}}$},
\end{eqnarray}
from which \eqref{Hc-77.UU} readily follows. 

Moving on, assume that \eqref{Hc-77.A} holds yet $\sum_{j\in{\mathbb{N}}}x_j$ 
does not converge unconditionally, and seek a contradiction.  
Then (cf. the first equivalence in \eqref{UNC-12}),
there exists a choice of signs $\varepsilon_j\in\{\pm 1\}$, $j\in{\mathbb{N}}$, 
with the property that the sequence of partial sums of the 
series $\sum_{j\in{\mathbb{N}}}\varepsilon_jx_j$ is not Cauchy in ${\mathscr{H}}$. 
In turn, this implies that there exist $\vartheta>0$ along with two sequences 
$\{a_i\}_{i\in{\mathbb{N}}}$, $\{b_i\}_{i\in{\mathbb{N}}}$ of numbers in ${\mathbb{N}}$,
such that
\begin{eqnarray}\label{Hc-77.G}
a_i\leq b_i<a_{i+1}\quad\mbox{ and }\quad
\Bigl\|\sum_{a_i\leq j\leq b_i}\varepsilon_j
x_j\Bigl\|_{\mathscr{H}}\geq\vartheta,
\quad\mbox{ for every $i\in{\mathbb{N}}$}.
\end{eqnarray}
In this scenario, consider the sequence $\{y_i\}_{i\in{\mathbb{N}}}$ of vectors 
in ${\mathscr{H}}$ defined by 
\begin{eqnarray}\label{Hc-77.H-1}
y_i:=\sum_{a_i\leq j\leq b_i}\varepsilon_j x_j
\,\,\mbox{ for every $i\in{\mathbb{N}}$},
\end{eqnarray}
and note that, by \eqref{Hc-77.G}, 
\begin{eqnarray}\label{Hc-77.H}
\|y_i\|_{\mathscr{H}}\geq\vartheta,\quad\mbox{ for every $i\in{\mathbb{N}}$}.
\end{eqnarray}
Fix now an arbitrary finite subset $I_o$ of ${\mathbb{N}}$ and set 
$J_o:=\bigl\{j\in{\mathbb{N}}:\,\exists\,i\in I_o\mbox{ such that }
a_i\leq j\leq b_i\bigr\}$. Thus, $J_o$ is a finite subset of ${\mathbb{N}}$.
Then with the constant $C$ as in \eqref{Hc-77.A}, we have
\begin{eqnarray}\label{Hc-77.Axxx-1}
\Bigl\|\sum\limits_{i\in I_o}y_i\Bigr\|_{\mathscr{H}}
&=& \Bigl\|\sum\limits_{i\in I_o}\Bigl(\sum_{a_i\leq j\leq b_i}
\varepsilon_j x_j\Bigr)\Bigr\|_{\mathscr{H}}
= \Bigl\|\Bigl(\sum\limits_{j\in J_o,\,\varepsilon_j=+1}x_j\Bigr)
-\Bigl(\sum\limits_{j\in J_o,\,\varepsilon_j=-1}x_j\Bigr)
\Bigr\|_{\mathscr{H}}
\nonumber\\[4pt]
&\leq & \Bigl\|\sum\limits_{j\in J_o,\,\varepsilon_j=+1}x_j\Bigr\|_{\mathscr{H}}
+\Bigl\|\sum\limits_{j\in J_o,\,\varepsilon_j=-1}x_j\Bigr\|_{\mathscr{H}}\leq 2C,
\end{eqnarray}
where the second equality relies on the fact from~\eqref{Hc-77.G} that $a_i\leq b_i < a_{i+1}$. Hence, 
\begin{eqnarray}\label{Hc-77.Axxx}
\sup\limits_{\stackrel{I_o\subseteq{\mathbb{N}}}{I_o\,\mbox{\tiny{finite}}}}
\Bigl\|\sum\limits_{i\in I_o}y_i\Bigr\|_{\mathscr{H}}\leq 2C.
\end{eqnarray}
Having established this, \eqref{Hc-77.UU} then gives 
$\sum_{i\in{\mathbb{N}}}\|y_i\|^2_{\mathscr{H}}\leq 16C^2<\infty$ which, in particular, 
forces $\lim\limits_{i\to\infty}\|y_i\|_{\mathscr{H}}=0$. This, however, contradicts 
\eqref{Hc-77.H}. 

To summarize, the proof so far shows that if \eqref{Hc-77.A} holds then 
the series $\sum_{j\in{\mathbb{N}}}x_j$ is unconditionally convergent. 
Of course, once the (norm) convergence of the series has been established 
then  \eqref{Hc-77.A} also gives
$\displaystyle\Bigl\|\sum\limits_{j\in{\mathbb{N}}}x_j\Bigr\|_{\mathscr{H}}
\leq\limsup\limits_{N\to\infty}\Bigl\|\sum\limits_{j=1}^N x_j\Bigr\|_{\mathscr{H}}
\leq C$, proving \eqref{TGbb-88u}.

There remains to prove that the finiteness condition in \eqref{Hc-77.A} holds 
if the series $\sum_{j\in{\mathbb{N}}}x_j$ is unconditionally convergent. 
With $N_1\in{\mathbb{N}}$ denoting the integer $N_{\varepsilon}$ corresponding 
to taking $\varepsilon=1$ in the last condition in \eqref{UNC-12}, consider 
\begin{eqnarray}\label{Hc-77.FV}
M:=\sup\limits_{I_o\subseteq\{1,...,N_1\}}
\Bigl\|\sum\limits_{j\in I_o}x_j\Bigr\|_{\mathscr{H}}<\infty.
\end{eqnarray}
Then, given any finite subset $J_o$ of ${\mathbb{N}}$ we may write 
\begin{eqnarray}\label{Hc-77.FV2}
\Bigl\|\sum\limits_{j\in J_o}x_j\Bigr\|_{\mathscr{H}}
\leq\Bigl\|\sum\limits_{j\in J_o\cap\{1,...,N_1\}}x_j\Bigr\|_{\mathscr{H}}
+\Bigl\|\sum\limits_{j\in J_o\setminus\{1,...,N_1\}}x_j\Bigr\|_{\mathscr{H}}
\leq M+1,
\end{eqnarray}
from which the finiteness condition in \eqref{Hc-77.A} follows.
\end{proof}

For further reference, given an ambient quasi-metric space $({\mathscr{X}},\rho)$ 
and a set $E$ with the property that there exists a Borel measure $\sigma$ on 
$(E,\tau_{\rho|_{E}})$ such that $(E,\rho_{\#}|_{E},\sigma)$ is a space of 
homogeneous type, we shall denote by $M_E$ the {\tt Hardy-Littlewood maximal 
function} in this context, i.e., 
\begin{eqnarray}\label{HL-MAX}
(M_E f)(x):=\sup_{r>0}\frac{1}{\sigma\bigl(B_{\rho_{\#}}(x,r)\bigr)}
\int_{B_{\rho_{\#}}(x,r)}|f(y)|\,d\sigma(y),\qquad x\in E.
\end{eqnarray}

We next present the 

\vskip 0.08in
\begin{proof}[Proof of Proposition~\ref{HS-PP.3}]
For each $l\in{\mathbb{Z}}$ with $l\geq \kappa_E$, denote by $h_l(\cdot,\cdot)$ 
the integral kernel of the operator $D_l$. Thus, 
$h_l(\cdot,\cdot)=S_{l+1}(\cdot,\cdot)-S_l(\cdot,\cdot)$ and, 
as a consequence of properties $(i)-(iv)$ in Definition~\ref{Besov-S}, we see 
that $h_l(\cdot,\cdot)$ is a symmetric function on $E\times E$, and there 
exists $C\in(0,\infty)$ such that for each $l\in{\mathbb{Z}}$ with $l\geq \kappa_E$ we have 
\begin{eqnarray}\label{condh-1}
&& |h_l(\cdot,\cdot)|\leq C2^{\,ld}{\mathbf{1}}_{\{\rho(\cdot,\cdot)\leq C2^{-l}\}},
\qquad\mbox{ on }\,\,E\times E,
\\[4pt]
&&|h_l(x,y)-h_l(x',y)|\leq C2^{\,l(d+\gamma)}\rho(x,x')^\gamma
\quad\mbox{ $\forall\, x,x',y\in E$},
\label{condh-2}
\\[4pt]
&& \int_Eh_l(x,y)\,d\sigma(x)=0\quad\forall\,y\in E.
\label{condh-4}
\end{eqnarray}
Of course, due to the symmetry of $h$, smoothness and cancellation conditions in the 
second variable, similar to \eqref{condh-2} and \eqref{condh-4}, respectively, also hold.

Furthermore, for each $j,k\in{\mathbb{Z}}$ with $j,k\geq \kappa_E$, using first 
\eqref{condh-4}, then \eqref{condh-1} and \eqref{condh-2}, and then the fact that 
$(E,\rho,\sigma)$ is $d$-ADR, we may write
\begin{eqnarray}\label{TT-sH}
\left|\int_{E}h_j(x,z)h_k(z,y)\,d\sigma(z)\right|
& = & \left|\int_{E}[h_j(x,z)-h_j(x,y)]h_k(z,y)\,d\sigma(z)\right|
\nonumber\\[4pt]
& \leq & C2^{j(d+\gamma)}\int_E\rho_{\#}(y,z)^\gamma 2^{kd}
{\mathbf{1}}_{\{\rho_{\#}(y,\cdot)\leq C2^{-k}\}}(z)\,d\sigma(z)
\nonumber\\[4pt]
& \leq & C2^{j(d+\gamma)}2^{-k\gamma}.
\end{eqnarray}
Similarly, 

\begin{eqnarray}\label{TT-sH-2}
\left|\int_{E}h_j(x,z)h_k(z,y)\,d\sigma(z)\right|
& = & \left|\int_{E}h_j(x,z)[h_k(z,y)-h_k(x,y)]\,d\sigma(z)\right|
\nonumber\\[4pt]
& \leq & C2^{k(d+\gamma)}2^{-j\gamma}.
\end{eqnarray}
Combining \eqref{TT-sH}, \eqref{TT-sH-2}, and the support condition \eqref{condh-1},
it follows that for each $j,k\in{\mathbb{Z}}$ with $j,k\geq \kappa_E$, there holds
(compare with \cite[p.\,15]{DJS} and \cite[(1.14), p.\,16]{DeHa09})
\begin{eqnarray}\label{hvs-RT}
\left|\int_{E}h_j(x,z)h_k(z,y)\,d\sigma(z)\right|\leq C2^{-|j-k|\gamma}\,
2^{d\cdot\min(j,k)}{\mathbf{1}}_{\{\rho(x,y)\leq C2^{-\min(j,k)}\}},
\quad\forall\,x,y\in E.
\end{eqnarray}
Note that for each $j,k\in{\mathbb{Z}}$ with 
$j,k\geq\kappa_E$ we have that $D_jD_k:L^2(E,\sigma)\to L^2(E,\sigma)$ is 
a linear and bounded integral operator whose integral kernel is given 
by $\int_{E}h_j(x,z)h_k(z,y)\,d\sigma(z)$, for $x,y\in E$. Based on this and 
\eqref{hvs-RT} we may then conclude that for each $j,k\in{\mathbb{Z}}$ with 
$j,k\geq \kappa_E$, 
\begin{eqnarray}\label{hvs-Raaa}
\bigl|(D_jD_kf)(x)\bigr| &\leq & C2^{-|j-k|\gamma}
\meanint_{B_{\rho_{\#}}(x,C2^{-\min(j,k)})}|f(y)|\,d\sigma(y)
\nonumber\\[4pt]
&\leq & C2^{-|j-k|\gamma}M_E(f)(x),\qquad\forall\,x\in E,
\end{eqnarray}
for every $f\in L^1_{loc}(E,\sigma)$. In turn, the boundedness of $M_E$ and
\eqref{hvs-Raaa} yield 
\begin{eqnarray}\label{hvs-RTS}
\|D_jD_k\|_{L^2(E,\sigma)\to L^2(E,\sigma)}\leq C2^{-|j-k|\gamma},
\qquad\forall\,j,k\in{\mathbb{Z}},\,\,j,k\geq\kappa_E.
\end{eqnarray}

Having established \eqref{hvs-RTS}, it follows that the family of linear operators
$\bigl\{D_l\bigr\}_{l\in{\mathbb{Z}},\,l\geq\kappa_E}$, from $L^2(E,\sigma)$ into 
itself, is almost orthogonal. As such, Lemma~\ref{L-CKS} applies and gives that
\begin{eqnarray}\label{fcc-5tNN2}
\sup\limits_{\stackrel{J\subseteq{\mathbb{Z}}}{J\,\mbox{\tiny{finite}}}}
\Bigl\|\sum_{l\in J,\,l\geq\kappa_E}D_l\Bigr\|_{L^2(E,\sigma)\to L^2(E,\sigma)}
\leq C<\infty,
\end{eqnarray}
the following Littlewood-Paley estimate holds
\begin{eqnarray}\label{FC+MN}
\Bigl(\sum\limits_{l\in{\mathbb{Z}},\,l\geq \kappa_E}\|D_lf\|^2_{L^2(E,\sigma)}
\Bigr)^{1/2}\leq C\|f\|_{L^2(E,\sigma)},\quad\mbox{ for each }\,\,f\in L^2(E,\sigma),
\end{eqnarray}
and, making use of \eqref{fcc-5t} and \eqref{fcc-5t2} as well, we have
\begin{eqnarray}\label{fcc-5tNN1}
\begin{array}{l}
\bigl(I-{\mathcal{S}}_{\kappa_E}\bigr)f
=\displaystyle\sum\limits_{l\in{\mathbb{Z}},\,l\geq \kappa_E}D_l f
\quad\mbox{ for each $f\in L^2(E,\sigma)$},
\\[4pt]
\mbox{where the series converges unconditionally in $L^2(E,\sigma)$}.
\end{array}
\end{eqnarray}

To proceed, fix a number $N\in{\mathbb{N}}$. Based on \eqref{fcc-5tNN2}, we may square \eqref{fcc-5tNN1} and obtain, pointwise in $L^2(E,\sigma)$,
\begin{eqnarray}\label{fcc-5A.1}
\bigl(I-{\mathcal{S}}_{\kappa_E}\bigr)^2 
&=& \lim_{M\to\infty}
\Bigl[\Bigl(\sum_{j\in{\mathbb{Z}},\,j\geq\kappa_E,\,|j|\leq M}D_j\Bigr)
\Bigl(\sum_{k\in{\mathbb{Z}},\,k\geq \kappa_E,\,|k|\leq M}D_k\Bigr)\Bigr]
\nonumber\\[4pt]
&=& \lim_{M\to\infty}\Bigl(\sum\limits_{\stackrel{|j-k|\leq N}{\,j,k\geq \kappa_E,\,|j|,|k|
\leq M}}D_jD_k
+\sum\limits_{\stackrel{|j-k|>N}{j,k\geq \kappa_E,\,|j|,|k|\leq M}}D_jD_k\Bigr).
\end{eqnarray}
Going further, fix $i\in{\mathbb{Z}}$ and consider the 
family $\{T_l\}_{l\in J_i}$ of operators on $L^2(E,\sigma)$, where 
\begin{eqnarray}\label{T-GBn89}
T_l:=D_{l+i}D_l\quad\mbox{ for every }\quad
l\in J_i:=\bigl\{l\in{\mathbb{Z}}:\,l\geq\max\{\kappa_E,\kappa_E-i\}\bigr\}.
\end{eqnarray}
Then, with $\|\cdot\|$ temporarily abbreviating 
$\|\cdot\|_{L^2(E,\sigma)\to L^2(E,\sigma)}$, for each $j,k\in J_i$ we may estimate 
\begin{eqnarray}\label{T-77-AB}
\|T_j^\ast T_k\| &\leq &\min\,\Bigl\{\|D_j\|\|D_{j+i}D_{k+i}\|\|D_k\|\,,\,
\|D_jD_{j+i}\|\|D_{k+i}\|\|D_k\|\Bigr\}
\nonumber\\[4pt]
&\leq & C\,\min\,\Bigl\{2^{-|k-j|\gamma}\,,\,2^{-|i|\gamma}\Bigr\},
\end{eqnarray}
thanks to \eqref{fcc-5tNN2} and \eqref{hvs-RTS}. 
This readily implies that $\sup\limits_{j\in J_i}\Bigl(\sum\limits_{k\in J_i}
\sqrt{\|T_j^\ast T_k\|}\Bigr)\leq C(1+|i|)2^{-|i|\gamma/2}$
and $\sup\limits_{k\in J_i}\Bigl(\sum\limits_{j\in J_i} \sqrt{\|T_j^{\phantom{\ast}}T_k^\ast \|}\Bigr)\leq C(1+|i|)2^{-|i|\gamma/2}$ for some $C\in(0,\infty)$ 
independent of $i$. Hence, for each $i\in{\mathbb{Z}}$, the family 
$\bigl\{D_{l+i}D_l\bigr\}_{l\in{\mathbb{Z}},\,l\geq\max\{\kappa_E,\kappa_E-i\}}$ is 
almost orthogonal, and by Lemma~\ref{L-CKS} there exists some constant $C\in(0,\infty)$ independent 
of $i$ such that for every set $J\subseteq J_i$ we have that
$\sum\limits_{l\in J}D_{l+i}D_l$ converges pointwise 
unconditionally in $L^2(E,\sigma)$ and 
\begin{eqnarray}\label{T-GBn88}
\Bigl\|\sum\limits_{l\in J}D_{l+i}D_l
\Bigr\|_{L^2(E,\sigma)\to L^2(E,\sigma)}\leq C(1+|i|)2^{-|i|\gamma/2}.
\end{eqnarray} 
Next, fix $N\in{\mathbb{N}}$ and let ${\mathscr{I}}$ be an arbitrary finite subset of
$\{(l,m)\in{\mathbb{Z}}\times{\mathbb{Z}}:\,l,\,m\geq\kappa_E\}$. Then for each function $f\in L^2(E,\sigma)$ with $\|f\|_{L^2(E,\sigma)}=1$, using \eqref{T-GBn88} we may estimate
\begin{eqnarray}\label{RGbb-6Y}
&& \hskip -0.20in
\Bigl\|\sum_{(j,k)\in{\mathscr{I}},\,|j-k|>N}D_jD_kf\Bigr\|_{L^2(E,\sigma)}
=\Bigl\|\sum_{i\in{\mathbb{Z}},\,|i|>N}
\Bigl(\sum_{l\in{\mathbb{Z}},\,(l+i,l)\in{\mathscr{I}}}
D_{l+i}D_l f\Bigr)\Bigr\|_{L^2(E,\sigma)}
\\[4pt]
&& \hskip 0.15in
\leq\sum_{i\in{\mathbb{Z}},\,|i|>N}
\Bigl\|\sum_{l\in{\mathbb{Z}},\,(l+i,l)\in{\mathscr{I}}}
D_{l+i}D_l f\Bigr\|_{L^2(E,\sigma)} \leq  \sum_{i\in{\mathbb{Z}},\,|i|>N}C(1+|i|)2^{-|i|\gamma/2} \leq C_\gamma N2^{-N\gamma/2},
\nonumber
\end{eqnarray}
for some finite constant $C_\gamma>0$ which is independent of $N$. In turn, based on \eqref{FAFFF}, \eqref{TGbb-88u} and \eqref{RGbb-6Y} we deduce that   
\begin{eqnarray}\label{dec-id2}
\begin{array}{c}
R_N:=\displaystyle\sum\limits_{\stackrel{|j-k|>N}{j,k\geq \kappa_E}}D_jD_k
\,\,\mbox{ converges pointwise unconditionally in $L^2(E,\sigma)$, and}
\\[4pt]
\mbox{there exists $C_\gamma\in(0,\infty)$ such that }\,\,
\|R_N\|_{L^2(E,\sigma)\to L^2(E,\sigma)}\leq C_\gamma N2^{-N\gamma/2}.
\end{array} 
\end{eqnarray} 
In a similar fashion to \eqref{T-GBn88}-\eqref{dec-id2}, we may also deduce that 
\begin{eqnarray}\label{de-FFws}
T_N:=\sum\limits_{\stackrel{|j-k|\leq N}{j,k\geq \kappa_E}}D_jD_k
\,\,\mbox{ converges pointwise unconditionally in $L^2(E,\sigma)$}.
\end{eqnarray} 
Consequently, if we now set 
\begin{eqnarray}\label{dTF-gAA}
D_l^N:=\sum\limits_{\stackrel{i\in{\mathbb{Z}},\,|i|\leq N}{i\geq \kappa_E-l}}D_{l+i},
\qquad\mbox{for each }\,\,l\in{\mathbb{Z}},
\end{eqnarray}
then (cf. \eqref{UNC-11}) the series $T_N$ may be rearranged as 
\begin{eqnarray}\label{dec-id1}
T_N =\sum\limits_{l\in{\mathbb{Z}},\,l\geq \kappa_E}D_lD_l^N, 
\end{eqnarray} 
where the sum converges pointwise unconditionally in $L^2(E,\sigma)$. Combining \eqref{fcc-5A.1}, \eqref{dec-id2} and \eqref{de-FFws}, we arrive at 
the identity
\begin{eqnarray}\label{dec-id1A}
\bigl(I-{\mathcal{S}}_{\kappa_E}\bigr)^2=R_N+T_N\quad\mbox{on }\,\,L^2(E,\sigma),
\end{eqnarray}
which is convenient to further re-write as
\begin{eqnarray}\label{dec-id1ABIS}
I=R_N+\widetilde{T}_N\quad\mbox{on }\,\,L^2(E,\sigma),\qquad
\mbox{where }\quad
\widetilde{T}_N:=T_N+{\mathcal{S}}_{\kappa_E}\bigl(2I-{\mathcal{S}}_{\kappa_E}\bigr).
\end{eqnarray}
Thanks to the estimate in \eqref{dec-id2}, it follows from \eqref{dec-id1ABIS} that  
\begin{eqnarray}\label{dec-id1B}
\mbox{$\widetilde{T}_N:L^2(E,\sigma)\to L^2(E,\sigma)$ is boundedly invertible
for $N\in{\mathbb{N}}$ sufficiently large}.
\end{eqnarray}
Hence, for $N$ sufficiently large and fixed, based on \eqref{dec-id1B} 
we may write that $I=\widetilde{T}_N(\widetilde{T}_N)^{-1}$, and keeping in mind  
\eqref{dec-id1} and \eqref{dec-id1ABIS}, we arrive at the following 
Calder\'on-type reproducing formula
\begin{eqnarray}\label{PKc.TG}
I=\Bigl(\sum_{l\in{\mathbb{Z}},\,l\geq \kappa_E}D_l\widetilde{D}_l\Bigr)
+{\mathcal{S}}_{\kappa_E}\bigl(2I-{\mathcal{S}}_{\kappa_E}\bigr)(\widetilde{T}_N)^{-1},
\end{eqnarray}
where the sum converges pointwise unconditionally in $L^2(E,\sigma)$, and
\begin{eqnarray}\label{egFvo}
\widetilde{D}_l:=D_l^N(\widetilde{T}_N)^{-1},\qquad\forall\,l\in{\mathbb{Z}}
\,\,\mbox{ with }\,\,l\geq \kappa_E.
\end{eqnarray}
From this \eqref{PKc} follows with
$R:=\bigl({\mathcal{S}}_{\kappa_E}-2I\bigr)(\widetilde{T}_N)^{-1}$.
Finally, \eqref{PKc-2} is a consequence of \eqref{egFvo}, the fact that the 
sum in \eqref{dTF-gAA} has a finite number of terms, \eqref{dec-id1B} and 
\eqref{FC+MN}.
\end{proof}

\subsection{Dyadic Carleson tents}
\label{SSect:2.4}

Suppose that $({\mathscr{X}},\rho)$ is a geometrically doubling quasi-metric space 
and that $E$ is a nonempty, closed, proper subset of $({\mathscr{X}},\tau_\rho)$. 
It follows from the discussion below Definition~\ref{Gd_ZZ} that $(E,\rho\bigl|_E)$ is also a geometrically doubling quasi-metric space. 
We now introduce dyadic Carleson tents in this setting. These are sets in 
${\mathscr{X}}\setminus E$ that are adapted to $E$ in the same way that classical Carleson boxes or tents in the upper-half space $\mathbb{R}^{n+1}_+$ are adapted to $\mathbb{R}^n$. 
We require a number of preliminaries before we introduce these sets in \eqref{gZSZ-3} below. First, fix a collection ${\mathbb{D}}(E)$ of dyadic cubes contained in $E$ as in Proposition~\ref{Diad-cube}. Second, choose $\lambda\in[2C_\rho,\infty)$ and fix a Whitney covering ${\mathbb{W}}_\lambda({\mathscr{X}}\setminus E)$ of balls contained in ${\mathscr{X}}\setminus E$ as in Proposition~\ref{H-S-Z}. Following Convention~\ref{WWVc}, we refer to these $\rho_{\#}$-balls as Whitney cubes, and for each $I\in{\mathbb{W}}_\lambda({\mathscr{X}}\setminus E)$, we use the notation $\ell(I)$ for the radius of $I$. Third, choose $C_\ast\in[1,\infty)$, and for each $Q\in{\mathbb{D}}(E)$, define the following collection of Whitney cubes:
\begin{equation}\label{gZSZa}
W_Q:=\{I\in{\mathbb{W}}_\lambda({\mathscr{X}}
\setminus E):\,C_\ast^{-1}\ell(I)\leq\ell(Q)\leq C_\ast\ell(I)\mbox{ and }
{\rm dist}_\rho(I,Q)\leq\ell(Q)\}.
\end{equation}
Fourth, for each $Q\in{\mathbb{D}}(E)$, define the following subset of $({\mathscr{X}},\tau_{\rho})$:
\begin{eqnarray}\label{gZSZb}
{\mathcal{U}}_Q:=\bigcup\limits_{I\in W_Q}I.
\end{eqnarray}
Since from Theorem~\ref{JjEGh} we know that the regularized quasi-distance 
$\rho_\#$ is continuous, it follows that the $\rho_\#$-balls are open. 
As such, that each $I$ in $W_Q$, hence ${\mathcal{U}}_Q$ itself, is open.

Finally, for each $Q\in{\mathbb{D}}(E)$, the {\tt dyadic Carleson tent} $T_E(Q)$ 
{\tt over} $Q$ is defined as follows:
\begin{eqnarray}\label{gZSZ-3}
T_E(Q):=\bigcup_{Q'\in{\mathbb{D}}(E),\,\,Q'\subseteq Q}{\mathcal{U}}_{Q'}.
\end{eqnarray}

For most of the subsequent work we will assume that the Whitney covering 
${\mathbb{W}}_\lambda({\mathscr{X}}\setminus E)$ and the constant $C_\ast$ 
are chosen as in the following lemma. 

\begin{lemma}\label{Lem:CQinBQ-N}
Let $({\mathscr{X}},\rho)$ be a geometrically doubling quasi-metric space and 
suppose that $E$ is a nonempty, closed, proper subset of $({\mathscr{X}},\tau_\rho)$. 
Fix a collection ${\mathbb{D}}(E)$ of dyadic cubes in $E$ as in
Proposition~\ref{Diad-cube}. Next, choose $\lambda\in[2C_\rho,\infty)$, 
fix a Whitney covering ${\mathbb{W}}_\lambda({\mathscr{X}}\setminus E)$ of
${\mathscr{X}}\setminus E$, and let $\Lambda$ denote the constant associated 
with $\lambda$ as in Proposition~\ref{H-S-Z}. Finally, choose
\begin{eqnarray}\label{NeD-67}
C_\ast \in [4C_\rho^4\,\Lambda,\infty),
\end{eqnarray}
and define the collection $\{{\mathcal{U}}_Q\}_{Q\in{\mathbb{D}}(E)}$ associated with ${\mathbb{W}}_\lambda({\mathscr{X}}\setminus E)$ and $C_\ast$ as in \eqref{gZSZa}-\eqref{gZSZb}.

Then there exists $\epsilon\in(0,1)$, depending only on $\lambda$ 
and geometry, with the property that
\begin{eqnarray}\label{doj.cF}
\bigl\{x\in{\mathscr{X}}\setminus E:\,\delta_E(x)<\epsilon\,{\rm diam}_{\rho}(E)\bigr\}
\subseteq\bigcup\limits_{Q\in{\mathbb{D}}(E)}{\mathcal{U}}_Q.
\end{eqnarray} 
\end{lemma}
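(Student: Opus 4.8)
The goal is to show that every point $x \in {\mathscr{X}}\setminus E$ with $\delta_E(x) < \epsilon\,{\rm diam}_\rho(E)$ lies in some ${\mathcal{U}}_Q$; by definition \eqref{gZSZb} this means we must produce a dyadic cube $Q\in{\mathbb{D}}(E)$ and a Whitney cube $I\in{\mathbb{W}}_\lambda({\mathscr{X}}\setminus E)$ containing $x$ such that $C_\ast^{-1}\ell(I)\leq\ell(Q)\leq C_\ast\ell(I)$ and ${\rm dist}_\rho(I,Q)\leq\ell(Q)$. The plan is as follows. First, since the Whitney covering ${\mathbb{W}}_\lambda({\mathscr{X}}\setminus E)$ covers all of ${\mathscr{X}}\setminus E$ (part (1) of Proposition~\ref{H-S-Z}, applied to the open set ${\mathcal{O}}:={\mathscr{X}}\setminus E$ with the quasi-distance $\rho_{\#}$), fix $I\in{\mathbb{W}}_\lambda({\mathscr{X}}\setminus E)$ with $x\in I$; write $I=B_{\rho_{\#}}(x_I,\ell(I))$. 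By part (3) of Proposition~\ref{H-S-Z}, $B_{\rho_{\#}}(x_I,\lambda\ell(I))\subseteq{\mathscr{X}}\setminus E$ and $B_{\rho_{\#}}(x_I,\Lambda\ell(I))$ meets $E$; hence $\delta_E(x_I)=\,{\rm dist}_{\rho_{\#}}(x_I,E)$ is comparable to $\ell(I)$, say $\lambda\ell(I)\le\delta_E(x_I)\le C_\rho\Lambda\ell(I)$ (using the quasi-triangle inequality for $\rho_{\#}$, whose constant is $\le C_\rho$ by part (1) of Theorem~\ref{JjEGh}). Since $x\in I$, also $\delta_E(x)\approx\ell(I)$.

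Second, pick a point $y\in E$ realizing (approximately) the distance from $x$ to $E$, i.e. $\rho_{\#}(x,y)\le 2\delta_E(x)$, and choose $r:=\ell(I)$ (up to a fixed constant). Invoke the dyadic cube structure: by part (7) of Proposition~\ref{Diad-cube}, and more precisely \eqref{ha-GL54}, for the point $y\in E$ and the radius $r$ (which we must first check satisfies $r\le{\rm diam}_\rho(E)$ — this is exactly where the smallness hypothesis $\delta_E(x)<\epsilon\,{\rm diam}_\rho(E)$ enters, forcing $\ell(I)$, hence $r$, to be much smaller than the diameter), there exist $k\ge\kappa_E$ and $\alpha\in I_k$ with $Q:=Q_\alpha^k\subseteq B_\rho(y,r)$ and $b_0 r\le 2^{-k}=\ell(Q)\le b_1 r$. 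Thus $\ell(Q)\approx\ell(I)$, so $C_\ast^{-1}\ell(I)\le\ell(Q)\le C_\ast\ell(I)$ provided $C_\ast$ is taken large enough (this is where the quantitative choice \eqref{NeD-67} is used — one must track the constants $b_0,b_1,\lambda,\Lambda,C_\rho$ and check $\max(b_1, b_0^{-1})$ times the comparability constants between $\ell(I)$ and $\delta_E(x)$ stays $\le C_\ast$). Finally, ${\rm dist}_\rho(I,Q)\le{\rm dist}_\rho(x,y)+\,{\rm diam}_\rho(I)+\,{\rm diam}_\rho(B_\rho(y,r))$, and each term is $\lesssim\ell(I)\approx\ell(Q)$, so after adjusting constants (again absorbing them into the choice of $\epsilon$ and into the verification that $C_\ast\ge 4C_\rho^4\Lambda$ suffices) we get ${\rm dist}_\rho(I,Q)\le\ell(Q)$. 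Hence $I\in W_Q$, so $x\in I\subseteq{\mathcal{U}}_Q$, as desired.

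The main obstacle I expect is not the geometry but the bookkeeping of constants: one has to be careful that the two quasi-distances $\rho$ and $\rho_{\#}$ (the Whitney cubes are $\rho_{\#}$-balls by Convention~\ref{WWVc}, while $W_Q$ in \eqref{gZSZa} uses ${\rm dist}_\rho$, and the dyadic cubes in \eqref{ha-GVV} are controlled by $\rho$-balls) are interchangeable only up to the explicit factors in \eqref{DEQV1}, namely $C_\rho^{-2}\rho\le\rho_{\#}\le\widetilde{C}_\rho\rho$; propagating these through the chain $\ell(I)\approx\delta_E(x)\approx\rho_{\#}(x,y)\approx r\approx\ell(Q)$ and through the distance estimate must be done so that the final comparability constants are dominated by the prescribed value $C_\ast\ge 4C_\rho^4\Lambda$ and so that a single $\epsilon\in(0,1)$ (depending only on $\lambda$ and the geometric doubling data) makes the hypothesis $\delta_E(x)<\epsilon\,{\rm diam}_\rho(E)$ strong enough to guarantee $r\le{\rm diam}_\rho(E)$ in the application of \eqref{ha-GL54}. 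Everything else is a routine application of Propositions~\ref{H-S-Z} and~\ref{Diad-cube} together with Theorem~\ref{JjEGh}.
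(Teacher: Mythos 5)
There is a genuine gap at the final step. The membership $I\in W_Q$ requires, by \eqref{gZSZa}, the inequality ${\rm dist}_\rho(I,Q)\leq\ell(Q)$ with the constant fixed equal to $1$; this constant cannot be ``absorbed'' into $\epsilon$ or into $C_\ast$, since neither parameter appears in that condition ($C_\ast$ only governs the side-length comparability). Your selection of $Q$ via \eqref{ha-GL54} does not deliver this: that statement only gives a cube $Q\subseteq B_\rho(y,r)$ with $b_0r\leq\ell(Q)\leq b_1r$, and such a $Q$ may sit anywhere inside the ball, e.g.\ near its boundary, so the best one can say is ${\rm dist}_\rho(I,Q)\lesssim r\leq b_0^{-1}\ell(Q)$, where $b_0$ is a fixed geometric constant (typically $<1$) that you cannot tune. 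Taking $r\approx\ell(I)$ makes matters worse, since already $\rho(x,y)\lesssim\delta_E(x)\leq C_\rho\Lambda\ell(I)$ can exceed $b_1\,\ell(I)\geq\ell(Q)$; taking $r=A\ell(I)$ with $A$ large does not help either, because the ratio ${\rm dist}_\rho(I,Q)/\ell(Q)$ stays of size $C_\rho/b_0$ independently of $A$. So ``adjusting constants'' does not yield ${\rm dist}_\rho(I,Q)\leq\ell(Q)$, and this is precisely the step where the specific values of $\epsilon$ and of the lower bound \eqref{NeD-67} must do real work.

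The paper's mechanism is different and is worth internalizing: one fixes a large integer $N$ with $2^N>C_\rho^3$, sets $\epsilon:=2^{-N}$, and chooses the \emph{generation} $k\geq\kappa_E$ so that $2^{-N-k-1}\leq\delta_E(x)<2^{-N-k}$, i.e.\ $\ell(Q)=2^{-k}$ is a definite factor $2^N$ \emph{larger} than $\delta_E(x)\approx\ell(I)$. Then, instead of \eqref{ha-GL54}, one uses the density property in part {\it (7)} of Proposition~\ref{Diad-cube} (cf.\ \eqref{ha-GMM.2}) to find $Q\in{\mathbb{D}}_k(E)$ meeting the tiny ball $B_{\rho_{\#}}(x_0,2^{-N-k})$, where $x_0\in E$ satisfies $\rho_{\#}(x,x_0)<2^{-N-k}$. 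This forces ${\rm dist}_\rho(I,Q)\leq C_\rho^3\,2^{-N}\ell(Q)\leq\ell(Q)$, because the distance from $x$ to $Q$ is of order $2^{-N}\ell(Q)$ rather than of order $\ell(Q)$; the price is the scale mismatch $\ell(Q)\approx 2^N\ell(I)$, which is exactly what the requirement $C_\ast\geq 2^{N+1}C_\rho\Lambda$ (hence \eqref{NeD-67}) is designed to accommodate, while the smallness hypothesis $\delta_E(x)<2^{-N}{\rm diam}_\rho(E)$ is what guarantees that a generation $k\geq\kappa_E$ at this larger scale actually exists. Your outline identifies the right ingredients (Whitney cube comparability $\ell(I)\approx\delta_E(x)$, a nearby boundary point, a dyadic cube of comparable scale), but without the ``zoom out by $2^N$ and pick $Q$ through the density of generation-$k$ cubes'' step, the distance condition in \eqref{gZSZa} is not verified.
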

\begin{proof}
If ${\rm diam}_{\rho}(E) = \infty$, then both sides of \eqref{doj.cF} are equal to ${\mathscr{X}}\setminus E$ for all $\epsilon\in(0,1)$, since the Whitney cubes cover ${\mathscr{X}}\setminus E$, so the result is immediate. Now assume that ${\rm diam}_{\rho}(E) < \infty$. Fix some integer $N\in{\mathbb{N}}$, to be specified later, and consider an arbitrary
point $x\in{\mathscr{X}}\setminus E$ with $\delta_E(x)<2^{-N}{\rm diam}_{\rho}(E)$. 
Then by \eqref{topoQMS} and \eqref{Jcc-KKi} we have $0<\delta_E(x)<2^{-N-\kappa_E}$, hence there 
exists $k\in{\mathbb{Z}}$ with $k\geq\kappa_E$ such 
that $2^{-N-k-1}\leq\delta_E(x)<2^{-N-k}$. Now, select a ball 
$I=B_{\rho_{\#}}(x_I,\ell(I))\in{\mathbb{W}}_\lambda({\mathscr{X}}\setminus E)$ 
such that $x\in I$. Then, by {\it (3)} in Proposition~\ref{H-S-Z}, there exists 
$z\in E$ such that $\rho_{\#}(x_I,z)<\Lambda\ell(I)$. Consequently, 
\begin{eqnarray}\label{jaf-UU.1}
\delta_{E}(x)\leq\rho_{\#}(x,z)\leq C_\rho\,\max\,\{\rho_{\#}(x,x_I),\rho_{\#}(x_I,z)\}
<C_\rho\Lambda\ell(I).
\end{eqnarray}
In addition, {\it (3)} in Proposition~\ref{H-S-Z} also gives 
that $B_{\rho_{\#}}(x_I,\lambda\ell(I))\subseteq{\mathscr{X}}\setminus E$ and, hence, 
for every $y\in E$ 
\begin{eqnarray}\label{jaf-UU.2}
2C_\rho\ell(I) &\leq & \lambda\ell(I)\leq\rho_{\#}(x_I,y)
\leq C_\rho\,\rho_{\#}(x_I,x)+C_\rho\,\rho_{\#}(x,y)
\nonumber\\[4pt]
&\leq & C_\rho\,\ell(I)+C_\rho\,\rho_{\#}(x,y).
\end{eqnarray}
After canceling like-terms in the most extreme sides of \eqref{jaf-UU.2} and 
taking the infimum over all $y\in E$, we arrive at 
\begin{eqnarray}\label{jaf-UU.3}
\ell(I)\leq\delta_{E}(x).
\end{eqnarray}
Next, since $\delta_E(x)<2^{-N-k}$, there exists $x_0\in E$ such that 
$\rho_{\#}(x,x_0)<2^{-N-k}$. Furthermore, by invoking {\it (7)} 
in Proposition~\ref{Diad-cube} we may choose $Q\in{\mathbb{D}}_k(E)$ 
with the property that $B_{\rho_{\#}}(x_0,2^{-N-k})\cap Q$ contains at least 
one point $x_1$. Thus, by \eqref{DEQV1} we have
\begin{eqnarray}\label{jaf-UU.4}
{\rm dist}_\rho(I,Q) &\leq &{\rm dist}_\rho(x,Q)\leq\rho(x,x_1)
\leq C_\rho^2\rho_{\#}(x,x_1)
\\[4pt]
&\leq & {C_\rho\!\!\!\!\phantom{.}^2}C_{\rho_{\#}}\max\,\{\rho_{\#}(x,x_0),\rho_{\#}(x_0,x_1)\}
<C_\rho^3 2^{-N-k}=C_\rho^3 2^{-N}\ell(Q).
\nonumber
\end{eqnarray}
This shows that 
\begin{eqnarray}\label{jaf-UU.5}
2^N>C_\rho^3\,\Longrightarrow\,
{\rm dist}_\rho(I,Q)\leq\ell(Q).
\end{eqnarray}
Starting with \eqref{jaf-UU.3} and keeping in mind that $\delta_E(x)<2^{-N-k}$, 
we obtain 
\begin{eqnarray}\label{jaf-UU.6}
\ell(I)<2^{-N-k}=2^{-N}\ell(Q)\leq\ell(Q).
\end{eqnarray}
Finally, with the help of \eqref{jaf-UU.1} we write 
$2^{-N-1}\ell(Q)=2^{-N-k-1}\leq\delta_E(x)\leq C_\rho\Lambda\ell(I)$
which further entails
\begin{eqnarray}\label{jaf-UU.8}
C_\ast\geq 2^{N+1}C_\rho\Lambda\,\Longrightarrow\,
\ell(I)\geq C_\ast^{-1}\ell(Q).
\end{eqnarray}
At this stage, by choosing $N\in\mathbb{N}$ such that
\begin{eqnarray}\label{jaf-UU.7}
N-1 \leq \log_2\bigl(C_\rho^3\bigr) < N,
\end{eqnarray}
we may conclude from \eqref{jaf-UU.5}, \eqref{jaf-UU.6} and \eqref{jaf-UU.8}
that $I\in W_Q$ when $C_\ast\geq 4 C_\rho^4\,\Lambda$. This, in turn, 
forces $x\in I\subseteq{\mathcal{U}}_Q$. Taking $\epsilon:=2^{-N}$ with 
$N$ as in \eqref{jaf-UU.7} then justifies \eqref{doj.cF},
and finishes the proof of the lemma.
\end{proof}

We now return to the context introduced in the first paragraph of this subsection, and in particular, where $\lambda\in[2C_\rho,\infty)$ and $C_\ast\in[1,\infty)$. For further reference, we note that then there exists $C_o\in[1,\infty)$ such that 
\begin{eqnarray}\label{UUU-rf}
C_o^{-1}\ell(Q)\leq\delta_E(x)\leq C_o\ell(Q),\qquad\forall\,Q\in{\mathbb{D}}(E)
\,\mbox{ and }\,\forall\,x\in{\mathcal{U}}_Q.
\end{eqnarray}
Indeed, an inspection of \eqref{jaf-UU.1}, \eqref{jaf-UU.3}, \eqref{gZSZa} and \eqref{gZSZb}  
shows that \eqref{UUU-rf} holds when
\begin{eqnarray}\label{UUU-rf-N} 
C_o:=C_\ast C_\rho\Lambda,
\end{eqnarray}
where $\Lambda$ is the constant associated with $\lambda$ as in Proposition~\ref{H-S-Z}.

The reader should be aware of the fact that even when \eqref{doj.cF} holds 
it may happen that some ${\mathcal{U}}_Q$'s are empty. However, under the 
assumption that $({\mathscr{X}},\rho,\mu)$ is an $m$-dimensional {\rm ADR} space 
and granted the existence of a measure $\sigma$ such that $(E,\rho|_{E},\sigma)$ 
becomes a $d$-dimensional {\rm ADR} space for some $d\in(0,m)$, matters may be arranged
so that this eventuality never materializes. In particular, if $C_\ast$ is large enough (depending on $\lambda$ and geometry), then ${\mathcal{U}}_Q\not=\emptyset$ 
for all $Q\in{\mathbb{D}}(E)$. This is a simple consequence of the 
following lemma, which is proved in \cite{MMMM-B}.

\begin{lemma}\label{FR-DF-4}
Let $({\mathscr{X}},\rho,\mu)$ be an $m$-dimensional {\rm ADR} space, for some $m>0$, 
and assume that $E$ is a closed subset of $({\mathscr{X}},\tau_\rho)$ with the 
property that there exists a measure $\sigma$ on $E$ for which $(E,\rho|_{E},\sigma)$ 
is a $d$-dimensional {\rm ADR} space for some $d\in(0,m)$. 

Then there exists 
$\vartheta\in(0,1)$ such that for each $x_0\in{\mathscr{X}}$ and each finite 
$r\in(0,{\rm diam}_\rho({\mathscr{X}})]$ one may find $x\in{\mathscr{X}}$ 
with the property that $B_\rho(x,\vartheta r)\subseteq B_\rho(x_0,r)\setminus E$. 
\end{lemma}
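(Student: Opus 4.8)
The plan is to produce $x$ by a measure-counting argument whose crux is that the $\vartheta r$-neighbourhood of the $d$-dimensional set $E$ inside the ball $B_\rho(x_0,r)$ can be covered by about $\vartheta^{-d}$ $\rho$-balls of radius $\approx\vartheta r$, and hence has $\mu$-mass only $\approx\vartheta^{m-d}r^m$, which is a small fraction of $\mu\bigl(B_\rho(x_0,r)\bigr)\approx r^m$ once $\vartheta$ is small (this is the only place the strict inequality $d<m$ enters). First I would dispose of a trivial case: if ${\rm dist}_\rho(x_0,E)\geq\vartheta r$ then $x:=x_0$ works, since $B_\rho(x_0,\vartheta r)\cap E=\emptyset$ and $B_\rho(x_0,\vartheta r)\subseteq B_\rho(x_0,r)$. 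So I may assume there is $e_\ast\in E$ with $\rho(x_0,e_\ast)<\vartheta r<r$; then $E\cap B_\rho(x_0,r)\neq\emptyset$, and the quasi-triangle inequality (with constants $C_\rho,\widetilde{C}_\rho$) gives $E\cap B_\rho(x_0,r)\subseteq E\cap B_\rho(e_\ast,\Lambda_0 r)$ with $\Lambda_0:=C_\rho\widetilde{C}_\rho$.

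Next I would build the covering. Work inside $B:=B_\rho(x_0,r/C_\rho)$; note that if $\vartheta<1/C_\rho$ then $B_\rho(x,\vartheta r)\subseteq B_\rho(x_0,r)$ for every $x\in B$. Choose a maximal $\vartheta r$-separated family $\{e_i\}_{i\in I}$ (with respect to the symmetrization $\rho_{sym}$) inside $E\cap B_\rho(e_\ast,\Lambda_0 r)$. Since $(\mathscr{X},\rho)$ is geometrically doubling — being the quasi-metric space underlying a space of homogeneous type — the index set $I$ is finite, and the usual disjointification together with the $d$-ADR property of $(E,\rho|_E,\sigma)$ (the lower bound on the pairwise disjoint balls $B_{\rho|_E}(e_i,c\vartheta r)$ and the upper bound on $E\cap B_\rho(e_\ast,C\Lambda_0 r)$) yields $\#I\leq C\vartheta^{-d}$; when an auxiliary radius exceeds ${\rm diam}_\rho(E)$ one simply replaces the corresponding ball by all of $E$ and uses $\sigma(E)\leq C\,{\rm diam}_\rho(E)^d$. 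By maximality, $E\cap B_\rho(e_\ast,\Lambda_0 r)\subseteq\bigcup_{i\in I}B_\rho(e_i,\vartheta r)$, and a short quasi-triangle computation then shows that every $x\in B$ with ${\rm dist}_\rho(x,E)<\vartheta r$ lies in $\bigcup_{i\in I}B_\rho(e_i,C_\rho\widetilde{C}_\rho\vartheta r)$.

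Finally I would pigeonhole. Using the $m$-ADR property of $(\mathscr{X},\rho,\mu)$ — and that all $\rho$-balls are $\mu$-measurable — one has $\mu(B)\geq C^{-1}(r/C_\rho)^m$, while $\mu\bigl(B_\rho(e_i,C_\rho\widetilde{C}_\rho\vartheta r)\bigr)\leq C\,(C_\rho\widetilde{C}_\rho\vartheta r)^m$ for each $i$ (again replacing by $\mathscr{X}$ and invoking $\mu(\mathscr{X})\leq C\,{\rm diam}_\rho(\mathscr{X})^m$ if the radius overflows), so
\[
\mu\Bigl(\bigcup_{i\in I}B_\rho(e_i,C_\rho\widetilde{C}_\rho\vartheta r)\Bigr)
\leq C\vartheta^{-d}\cdot C(C_\rho\widetilde{C}_\rho)^m(\vartheta r)^m
= C'\vartheta^{\,m-d}r^m .
\]
Because $m>d$, I would now fix $\vartheta\in(0,1/C_\rho)$ small enough — depending only on $m$, $d$, the two ADR constants and $C_\rho,\widetilde{C}_\rho$ — that $C'\vartheta^{\,m-d}<C^{-1}C_\rho^{-m}$. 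Then $B\not\subseteq\bigcup_{i\in I}B_\rho(e_i,C_\rho\widetilde{C}_\rho\vartheta r)$, so there is $x\in B$ outside this union; by the inclusion from the previous paragraph, ${\rm dist}_\rho(x,E)\geq\vartheta r$, hence $B_\rho(x,\vartheta r)\cap E=\emptyset$, and $B_\rho(x,\vartheta r)\subseteq B_\rho(x_0,r)$ since $x\in B$ and $\vartheta<1/C_\rho$. This $x$ is the desired point, and $\vartheta$ depends only on geometry, as required.

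The only genuinely delicate point is keeping every constant uniform in $x_0$ and $r$: one must check that the covering number $\approx\vartheta^{-d}$ and the per-ball mass $\approx(\vartheta r)^m$ combine to beat the lower bound $\approx r^m$ for $\mu(B)$ with a threshold for $\vartheta$ that is independent of $x_0$ and $r$. The slightly fussy bookkeeping occurs at the ${\rm diam}_\rho(\cdot)$ endpoints of the ADR inequalities (when $r$ is comparable to the diameter, the auxiliary balls can swallow the whole space), but this is handled once and for all by the substitutions indicated above and presents no real obstacle.
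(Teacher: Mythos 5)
Your argument is correct, and there is in fact no in-paper proof to compare it with: the paper quotes this lemma from \cite{MMMM-B}, so your covering-and-pigeonhole argument serves as a self-contained substitute. Its engine is exactly the right one: the $\vartheta r$-neighbourhood of $E$ inside $B_\rho(x_0,r/C_\rho)$ is covered by at most $C\vartheta^{-d}$ balls of radius $C_\rho\widetilde{C}_\rho\vartheta r$, whose total $\mu$-mass is $\lesssim\vartheta^{m-d}r^m$ and is beaten by $\mu\bigl(B_\rho(x_0,r/C_\rho)\bigr)\gtrsim r^m$ once $\vartheta$ is small, precisely because $d<m$. The reductions are all sound: the trivial case ${\rm dist}_\rho(x_0,E)\geq\vartheta r$, the transfer of the centre to $e_\ast\in E$ with $\Lambda_0=C_\rho\widetilde{C}_\rho$, the restriction $\vartheta\leq 1/C_\rho$ ensuring $B_\rho(x,\vartheta r)\subseteq B_\rho(x_0,r)$ for $x\in B_\rho(x_0,r/C_\rho)$, and the endpoint bookkeeping when an auxiliary radius exceeds ${\rm diam}_\rho(E)$ or ${\rm diam}_\rho({\mathscr{X}})$ (where the ADR bounds extend by monotonicity, and where $\#I\geq 2$ already forces $\vartheta r\leq\widetilde{C}_\rho\,{\rm diam}_\rho(E)$). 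If you write this up, make two constants explicit: the disjointification radius should be taken $c<\bigl(C_\rho\widetilde{C}_\rho^{\,2}\bigr)^{-1}$ so that $\vartheta r$-separation in $\rho_{sym}$ genuinely forces the balls $B_{\rho|_E}(e_i,c\vartheta r)$ to be pairwise disjoint, and these balls should be noted to lie in $E\cap B_\rho(e_\ast,C_\rho\Lambda_0 r)$ before the ADR upper bound is invoked. With those in place the threshold for $\vartheta$ depends only on $m$, $d$, $C_\rho$, $\widetilde{C}_\rho$ and the two ADR constants, uniformly in $x_0$ and $r$, as the lemma requires.
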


We again return to the context introduced in the first paragraph of this subsection, and in particular, where $\lambda\in[2C_\rho,\infty)$ and $C_\ast\in[1,\infty)$. For each $Q\in{\mathbb{D}}(E)$, recall the dyadic Carleson tent $T_E(Q)$ over $Q$ from~\eqref{gZSZ-3}:
\begin{eqnarray}\label{gZSZ-3aux}
T_E(Q):=\bigcup_{Q'\in{\mathbb{D}}(E),\,\,Q'\subseteq Q}{\mathcal{U}}_{Q'}.
\end{eqnarray}
A property that will be needed later is the fact that 
\begin{eqnarray}\label{dFvK}
\begin{array}{c}
\mbox{there exists }\,C\in(0,\infty)\,
\mbox{ depending only on $C_\ast$ from \eqref{gZSZa} and $\rho$}
\\[4pt]
\mbox{such that }\,\,
T_E(Q)\subseteq B_\rho\bigl(x,C\ell(Q)\bigr)\setminus E,\quad
\forall\,Q\in{\mathbb{D}}(E),\,\,\forall\,x\in Q.
\end{array}
\end{eqnarray}
Indeed, if $Q\in{\mathbb{D}}(E)$ and $y\in T_E(Q)$ are arbitrary, then there exists
$Q'\in{\mathbb{D}}(E)$, $Q'\subseteq Q$ such that $y\in I$, for some $I\in W_{Q'}$. 
Hence, for each $x\in Q$, we have
\begin{eqnarray}\label{wqMY}
\rho(y,x) &\leq & C{\rm diam}_\rho(I)+C{\rm dist}_\rho(I,Q')+C{\rm diam}_\rho(Q)
\nonumber\\[4pt]
&\leq & C\ell(Q')+C\ell(Q)
\leq C\ell(Q),
\end{eqnarray}
where $C$ is a finite positive geometric constant. Now \eqref{dFvK} follows from
\eqref{wqMY}.

The following lemma compliments the containment in~\eqref{dFvK}. 

\begin{lemma}\label{b:SV}
Assume all of the hypotheses contained in the first paragraph of Lemma~\ref{Lem:CQinBQ-N} and recall the family of dyadic Carleson tents $\{T_E(Q)\}_{Q\in{\mathbb{D}}(E)}$ defined in \eqref{gZSZ-3}.

Then there exists $\varepsilon\in(0,1)$, depending only on $\lambda$ 
and geometry, with the property that
\begin{eqnarray}\label{zjrh}
B_{\rho_{\#}}\bigl(x_Q,\varepsilon\ell(Q)\bigr)\setminus E\subseteq T_E(Q),
\qquad\forall\,Q\in{\mathbb{D}}(E).
\end{eqnarray} 
\end{lemma}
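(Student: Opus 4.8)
The plan is to show that a ball $B_{\rho_\#}(x_Q,\varepsilon\ell(Q))\setminus E$ is covered by the Whitney cubes collected in $T_E(Q)$, by verifying that any Whitney cube $I$ meeting such a ball actually belongs to $W_{Q'}$ for some subcube $Q'\subseteq Q$. Fix $Q\in{\mathbb{D}}(E)$ with center $x_Q$ and side-length $\ell(Q)=2^{-k}$, and let $y\in B_{\rho_\#}(x_Q,\varepsilon\ell(Q))\setminus E$ for a small $\varepsilon\in(0,1)$ to be chosen at the end, depending only on $\lambda$ and geometry. Since the Whitney cubes $\{I\}_{I\in{\mathbb{W}}_\lambda({\mathscr{X}}\setminus E)}$ cover ${\mathscr{X}}\setminus E$, pick $I=B_{\rho_\#}(x_I,\ell(I))$ with $y\in I$. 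The first key estimates are the two-sided comparisons $\ell(I)\approx\delta_E(y)$, exactly as in \eqref{jaf-UU.1} and \eqref{jaf-UU.3} of the proof of Lemma~\ref{Lem:CQinBQ-N}: namely $\ell(I)\leq\delta_E(y)\leq C_\rho\Lambda\,\ell(I)$, using property {\it (3)} of Proposition~\ref{H-S-Z}. On the other hand, since $y$ is within $\rho_\#$-distance $\varepsilon\ell(Q)$ of $x_Q\in E$, we get $\delta_E(y)\leq\widetilde{C}_\rho\,\rho_\#(x_Q,y)<\widetilde{C}_\rho\,\varepsilon\ell(Q)$, and more to the point $\delta_E(y)$ can be bounded below only trivially, so I instead control the \emph{location} of $I$ relative to $Q$.

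Next I would produce the subcube $Q'\subseteq Q$ with $\ell(Q')\approx\ell(I)$ lying near $I$. Since $0<\delta_E(y)<\widetilde{C}_\rho\varepsilon\ell(Q)=\widetilde{C}_\rho\varepsilon 2^{-k}$, there is an integer $k'\geq k$ (hence $\geq\kappa_E$) with $2^{-k'-1}\leq\delta_E(y)<2^{-k'}$, and $k'-k$ is bounded below by a constant once $\varepsilon$ is small. Choose $x_0\in E$ with $\rho_\#(y,x_0)<2^{-k'}$; by property {\it (7)} of Proposition~\ref{Diad-cube} there is a cube $Q'\in{\mathbb{D}}_{k'}(E)$ such that $B_{\rho_\#}(x_0,2^{-k'})\cap Q'\neq\emptyset$, say containing $x_1$. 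Then $\ell(Q')=2^{-k'}\approx\delta_E(y)\approx\ell(I)$, which gives $C_\ast^{-1}\ell(I)\leq\ell(Q')\leq C_\ast\ell(I)$ provided $C_\ast$ is taken at least a fixed multiple of $C_\rho\Lambda$ — precisely the lower bound $C_\ast\in[4C_\rho^4\Lambda,\infty)$ already imposed in Lemma~\ref{Lem:CQinBQ-N}. Moreover, using \eqref{DEQV1} and the quasi-triangle inequality as in \eqref{jaf-UU.4}, $\mathrm{dist}_\rho(I,Q')\leq\rho(y,x_1)\leq C_\rho^3\max\{\rho_\#(y,x_0),\rho_\#(x_0,x_1)\}<C_\rho^3 2^{-k'}=C_\rho^3\ell(Q')\leq\ell(Q')$ once $k'\geq k$ is large enough, i.e. once $\varepsilon$ is small. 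Hence $I\in W_{Q'}$.

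It then remains to check that $Q'\subseteq Q$, so that ${\mathcal{U}}_{Q'}\subseteq T_E(Q)$ and therefore $y\in I\subseteq{\mathcal{U}}_{Q'}\subseteq T_E(Q)$. Because $\ell(Q')=2^{-k'}\leq 2^{-k}=\ell(Q)$, property {\it (3)} of Proposition~\ref{Diad-cube} (no partial overlap across generations) says that either $Q'\subseteq Q$ or $Q'\cap Q=\emptyset$; so I just need to exhibit a point of $Q'$ that lies in $Q$, equivalently to show $\mathrm{dist}_\rho(Q',Q)=0$ by using the $\approx$ comparisons and $\mathrm{dist}_\rho(I,Q')$ together with $\mathrm{dist}_\rho(I,Q)\lesssim\varepsilon\ell(Q)$ — more carefully, since $x_1\in Q'$ and $\rho(x_1,x_Q)\leq C_\rho^2\rho_\#(x_1,x_Q)\lesssim\rho_\#(x_1,x_0)+\rho_\#(x_0,y)+\rho_\#(y,x_Q)<C(2^{-k'}+\varepsilon\ell(Q))<\varepsilon' \ell(Q)$ with $\varepsilon'$ as small as we like, and using that $\ell(Q)\,a_0$-balls around $x_Q$ are contained in $Q$ by \eqref{ha-GVV}, we conclude $x_1\in B_\rho(x_Q,a_0\ell(Q))\subseteq Q$, so $Q'\cap Q\neq\emptyset$ and hence $Q'\subseteq Q$. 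Finally, collecting the constraints — $\varepsilon$ small enough that $k'-k$ is large enough to force $C_\rho^3 2^{-k'}\leq 2^{-k}$ and $\rho(x_1,x_Q)<a_0\ell(Q)$ — gives the desired $\varepsilon\in(0,1)$ depending only on $\lambda$ and geometry, completing the proof. The main obstacle is bookkeeping the quasi-triangle-inequality constants so that the single small parameter $\varepsilon$ simultaneously forces both $\mathrm{dist}_\rho(I,Q')\leq\ell(Q')$ and $Q'\subseteq Q$, while the comparability $\ell(I)\approx\ell(Q')$ is guaranteed purely by the already-fixed choice of $C_\ast$; there is no genuinely deep point, only careful tracking of constants, mirroring the computations in the proof of Lemma~\ref{Lem:CQinBQ-N}.
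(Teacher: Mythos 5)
There is a genuine gap at the step where you verify $I\in W_{Q'}$. The membership condition \eqref{gZSZa} requires ${\rm dist}_\rho(I,Q')\leq\ell(Q')$ with constant exactly $1$, but with your choice of $Q'$ at the generation $k'$ determined by $2^{-k'-1}\leq\delta_E(y)<2^{-k'}$ (so $\ell(Q')\approx\delta_E(y)\approx\ell(I)$), the best you obtain is ${\rm dist}_\rho(I,Q')\leq C_\rho^3\,2^{-k'}=C_\rho^3\ell(Q')$, and your justification that ``$C_\rho^3\ell(Q')\leq\ell(Q')$ once $k'\geq k$ is large enough, i.e.\ once $\varepsilon$ is small'' is false whenever $C_\rho>1$: the ratio ${\rm dist}_\rho(I,Q')/\ell(Q')$ is scale invariant, so shrinking $\varepsilon$ (hence increasing $k'$) shrinks both sides by the same factor and does nothing to absorb $C_\rho^3$. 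Since moreover $\rho_{\#}(y,x_0)\geq\delta_E(y)\geq 2^{-k'-1}$, no cleverer choice of $x_0$ or $x_1$ at this scale can remove the obstruction.

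The paper avoids exactly this by taking $Q'$ one fixed number of generations coarser: it chooses $\ell(Q')\approx 2^{N}\delta_E(y)$, with $N$ the geometric integer of \eqref{jaf-UU.7} satisfying $2^{N}>C_\rho^3$, so that the analogue of \eqref{jaf-UU.4} reads ${\rm dist}_\rho(I,Q')<C_\rho^3\,2^{-N}\ell(Q')\leq\ell(Q')$ by \eqref{jaf-UU.5}; the side-length constraints are then rechecked as in \eqref{jaf-UU.6} and \eqref{jaf-UU.8}, and this is precisely where the hypothesis $C_\ast\geq 4C_\rho^4\Lambda$ is genuinely needed (your claim that $\ell(Q')\approx\ell(I)$ handles the comparability is no longer available at the coarser scale). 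With this corrected choice one must also impose $\varepsilon<C_o^{-1}$, using \eqref{UUU-rf}, to guarantee $\ell(Q')<\ell(Q)$ so that $Q'\cap Q\neq\emptyset$ together with item {\it (3)} of Proposition~\ref{Diad-cube} still forces $Q'\subseteq Q$; in your version this was automatic, but it no longer is. The remainder of your argument — producing a point $x_1\in Q'$ with $\rho(x_1,x_Q)\lesssim\varepsilon\ell(Q)$ and invoking \eqref{ha-GVV} to place it in $Q$, which is the paper's claim \eqref{jve-2} — is correct and coincides with the paper's route, so the proof is repaired by replacing your choice of generation for $Q'$ with the $2^{N}$-coarser one and collecting the resulting three smallness conditions on $\varepsilon$.
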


\begin{proof}
Fix $\varepsilon\in(0,1)$ to be specified later and let $N$ be as in \eqref{jaf-UU.7}.
Also take an arbitrary $Q\in{\mathbb{D}}(E)$ and fix 
$x\in B_{\rho_{\#}}\bigl(x_Q,\varepsilon\ell(Q)\bigr)\setminus E$. Then 
$\rho_{\#}(x_Q,x)<\varepsilon\ell(Q)$ and making the restriction 
\begin{eqnarray}\label{varep-1}
\varepsilon<2^{-N-1}
\end{eqnarray}
we have
\begin{eqnarray}\label{jve-1}
\delta_E(x)\leq\rho_{\#}(x_Q,x)<\varepsilon\ell(Q)\leq 2\varepsilon\,{\rm diam}_\rho(E)
<2^{-N}{\rm diam}_\rho(E).
\end{eqnarray}
Thus, all considerations in the first part of the proof of 
Lemma~\ref{Lem:CQinBQ-N} up to \eqref{jaf-UU.3} apply. In particular, it follows that
$\delta_E(x)<\min\,\{2^{-N-k},\varepsilon\ell(Q)\}$. Hence, there exists $x_0\in E$
such that $\rho_{\#}(x,x_0)<\min\,\{2^{-N-k},\varepsilon\ell(Q)\}$. Applying property
{\it (7)} in Proposition~\ref{Diad-cube}, we may choose $Q'\in{\mathbb{D}}_k(E)$ such
that $B_{\rho_{\#}}\bigl(x_0,\varepsilon\ell(Q)\bigr)\cap Q'\not=\emptyset$.
At this point we make the claim that 
\begin{eqnarray}\label{jve-2}
B_{\rho_{\#}}\bigl(x_0,\varepsilon\ell(Q)\bigr)\cap E\subseteq Q
\quad\mbox{ if $\varepsilon$ is sufficiently small.}
\end{eqnarray}
Indeed, first observe that 
\begin{eqnarray}\label{jve-3}
\rho_{\#}(x_0,x_Q)\leq C_\rho\max\,\{\rho_{\#}(x_0,x),\rho_{\#}(x,x_Q)\}
<\varepsilon C_\rho\ell(Q).
\end{eqnarray}
Consequently, if $y\in B_{\rho_{\#}}\bigl(x_0,\varepsilon\ell(Q)\bigr)\cap E$ is arbitrary, then
\begin{eqnarray}\label{jve-4}
\rho_{\#}(x_Q,y)\leq C_\rho\max\,\{\rho_{\#}(x_Q,x_0),\rho_{\#}(x_0,y)\}
<\varepsilon C_\rho^2\ell(Q).
\end{eqnarray}
Property \eqref{ha-GVV} ensures that 
$B_{\rho_{\#}}\bigl(x_Q,a_0C_\rho^{-2}\ell(Q)\bigr)\cap E\subseteq Q$, so 
\eqref{jve-4} implies that $y\in Q$ if 
\begin{eqnarray}\label{varep-2}
\varepsilon<a_0C_\rho^{-4},
\end{eqnarray}
proving the claim in \eqref{jve-2}. In turn, if we assume that $\varepsilon$ is
sufficiently small, the inclusion in \eqref{jve-2} implies
\begin{eqnarray}\label{jve-5}
Q'\cap Q\not=\emptyset.
\end{eqnarray}
On the other hand, using the reasoning in the proof of Lemma~\ref{Lem:CQinBQ-N}
that yielded \eqref{jaf-UU.4}-\eqref{jaf-UU.7}, this time with $Q'$ replacing $Q$,
we obtain that if $N$ is as in \eqref{jaf-UU.7} (recall that we are assuming 
that $C_\ast$ satisfies \eqref{NeD-67}), then 
\begin{eqnarray}\label{jve-6}
x\in I\subseteq {\mathcal{U}}_{Q'}. 
\end{eqnarray}
Thus, using also \eqref{UUU-rf}, we have
\begin{eqnarray}\label{jve-7}
\ell(Q')\leq C_o\delta_E(x)\leq C_o\rho_{\#}(x_Q,x)<C_o\varepsilon\ell(Q).
\end{eqnarray}
Hence, under the additional restriction $\varepsilon<C_o^{-1}$, we arrive at the conclusion
that $Q'\in {\mathbb{D}}_j(E)$ for some $j>k$, which when combined with \eqref{jve-5}
and {\it (3)} in Proposition~\ref{Diad-cube}, forces $Q'\subseteq Q$. This in concert
with \eqref{jve-6} and \eqref{gZSZ-3}, shows that $x\in T_E(Q)$ provided 
\begin{eqnarray}\label{varep-3}
0<\varepsilon<\min\{2^{-N-1}, a_0C_\rho^{-4},C_o^{-1}\}.
\end{eqnarray}
The proof of the lemma is now complete.
\end{proof}

Next we prove a finite overlap property for the sets in
$\{{\mathcal{U}}_Q\}_{Q\in\mathbb{D}(E)}$ from \eqref{gZSZb}. 
Throughout the manuscript, we agree that ${\mathbf{1}}_A$ stands 
for the characteristic (or indicator) function of the set $A$.

\begin{lemma}\label{Lem:CQinBQ}
Let $({\mathscr{X}},\rho)$ be a geometrically doubling quasi-metric space and suppose that 
$E$ is a nonempty, closed, proper subset of $({\mathscr{X}},\tau_\rho)$. 
Fix $a\in[1,\infty)$, a collection ${\mathbb{D}}(E)$ of dyadic cubes in $E$ as in Proposition~\ref{Diad-cube}, and $C_\ast\in[1,\infty)$. 

If $\lambda\in[a,\infty)$, and we fix a Whitney covering ${\mathbb{W}}_\lambda({\mathscr{X}}\setminus E)$ of ${\mathscr{X}}\setminus E$ as in Proposition~\ref{H-S-Z}, then there exists $N\in{\mathbb{N}}$, depending only on $\lambda$, $C_\ast$ and geometry, such that  
\begin{eqnarray}\label{doj}
\sum_{Q\in{\mathbb{D}}(E)}{\mathbf{1}}_{{\mathcal{U}}_{Q}^\ast}\leq N,
\end{eqnarray}
where $\{\mathcal{U}_Q\}_{Q\in\mathbb{D}(E)}$ is the collection associated with ${\mathbb{W}}_\lambda({\mathscr{X}}\setminus E)$ and $C_\ast$ as in \eqref{gZSZa}-\eqref{gZSZb}, and for each $Q\in{\mathbb{D}}(E)$, the set (compare with \eqref{gZSZb})
\begin{eqnarray}\label{doj.222}
{\mathcal{U}}_{Q}^\ast:=\bigcup_{I\in W_Q}aI.
\end{eqnarray}
\end{lemma}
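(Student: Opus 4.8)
The plan is to fix an arbitrary point $x\in{\mathscr{X}}$ and bound the cardinality of the set $\mathcal{Q}_x:=\{Q\in{\mathbb{D}}(E):x\in{\mathcal{U}}_Q^\ast\}$ by a constant $N$ depending only on $\lambda$, $C_\ast$ and geometry; since $\sum_{Q\in{\mathbb{D}}(E)}{\mathbf{1}}_{{\mathcal{U}}_Q^\ast}(x)=\#\,\mathcal{Q}_x$, this is precisely \eqref{doj}. First I would record that, because $\lambda\geq a$, for every $I\in{\mathbb{W}}_\lambda({\mathscr{X}}\setminus E)$ one has $aI=B_{\rho_{\#}}(x_I,a\ell(I))\subseteq B_{\rho_{\#}}(x_I,\lambda\ell(I))\subseteq{\mathscr{X}}\setminus E$ by item {\it (3)} of Proposition~\ref{H-S-Z} (applied, as in Convention~\ref{WWVc}, to $({\mathscr{X}},\rho_{\#})$). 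Consequently ${\mathcal{U}}_Q^\ast\subseteq{\mathscr{X}}\setminus E$ for every $Q\in{\mathbb{D}}(E)$, so $\#\,\mathcal{Q}_x=0$ when $x\in E$, and we may henceforth assume $x\in{\mathscr{X}}\setminus E$.

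The argument then splits into two steps. \emph{Step 1 (bounding the relevant Whitney cubes).} I claim that $W^x:=\{I\in{\mathbb{W}}_\lambda({\mathscr{X}}\setminus E):x\in aI\}$ has at most $N_1$ elements, where $N_1$ is the overlap constant furnished by item {\it (2)} of Proposition~\ref{H-S-Z}. Indeed, again using $a\leq\lambda$ we have ${\mathbf{1}}_{aI}\leq{\mathbf{1}}_{B_{\rho_{\#}}(x_I,\lambda\ell(I))}$ for each Whitney cube $I$, so $\#\,W^x=\sum_{I\in{\mathbb{W}}_\lambda({\mathscr{X}}\setminus E)}{\mathbf{1}}_{aI}(x)\leq\sum_{I}{\mathbf{1}}_{B_{\rho_{\#}}(x_I,\lambda\ell(I))}(x)\leq N_1$. \emph{Step 2 (bounding the cubes attached to a fixed Whitney cube).} Next I would fix $I\in{\mathbb{W}}_\lambda({\mathscr{X}}\setminus E)$ and show $\#\{Q\in{\mathbb{D}}(E):I\in W_Q\}\leq N_2$, with $N_2$ depending only on $C_\ast$ and geometry. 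If $I\in W_Q$ then, by \eqref{gZSZa}, $\ell(Q)\in[C_\ast^{-1}\ell(I),C_\ast\ell(I)]$, which confines the generation of $Q$ to an interval of at most $\lfloor 2\log_2 C_\ast\rfloor+1$ integers; moreover ${\rm dist}_\rho(I,Q)\leq\ell(Q)\leq C_\ast\ell(I)$, combined with $I=B_{\rho_{\#}}(x_I,\ell(I))$, ${\rm diam}_\rho(I)\lesssim\ell(I)$, and the comparability $\rho\approx\rho_{\#}$ from \eqref{DEQV1} (using near-infimizers of the set distance and the quasi-triangle inequality), shows that $Q$ meets the $\rho$-ball $B_\rho(x_I,c_1\ell(I))$ for a constant $c_1=c_1(C_\ast,C_\rho)$. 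Since the radius of this ball is comparable to $\ell(Q)=2^{-k}$, covering it by boundedly many $\rho$-balls of radius below $2^{-k}$ (geometric doubling of $({\mathscr{X}},\rho)$) and then invoking the counting bound in item {\it (6)} of Proposition~\ref{Diad-cube} shows that each admissible generation contains at most a fixed number of such cubes $Q$; summing over the admissible generations yields $N_2$.

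Combining the two steps finishes the proof: every $Q\in\mathcal{Q}_x$ comes equipped with some $I\in W_Q\cap W^x$, by Step~1 there are at most $N_1$ possibilities for such an $I$, and by Step~2 at most $N_2$ cubes $Q$ for each fixed $I$, so $\#\,\mathcal{Q}_x\leq N_1 N_2=:N$, which depends only on $\lambda$, $C_\ast$ and geometry, as required. The only mildly delicate point is the bookkeeping in Step~2 — passing cleanly from ${\rm dist}_\rho(I,Q)\leq C_\ast\ell(I)$ to a statement about a single $\rho$-ball centered at $x_I$, and then matching that ball's radius against $\ell(Q)=2^{-k}$ so that item {\it (6)} of Proposition~\ref{Diad-cube} applies after a fixed number of doubling subdivisions; Step~1, by contrast, is essentially immediate from the bounded overlap of the $\lambda$-dilated Whitney cubes.
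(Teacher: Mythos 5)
Your proposal is correct and follows essentially the same route as the paper: the paper likewise writes $\sum_Q{\mathbf{1}}_{{\mathcal{U}}_Q^\ast}\leq\sum_{I}(\#\{Q:I\in W_Q\}){\mathbf{1}}_{\lambda I}$ and bounds it by the bounded overlap constant of the dilated Whitney cubes (item {\it (2)} of Proposition~\ref{H-S-Z}) times a uniform bound on $\#\{Q:I\in W_Q\}$ obtained from \eqref{gZSZa} and the geometric doubling of $(E,\rho|_E)$. Your pointwise phrasing and the extra bookkeeping in Step~2 (confining the generations and invoking item {\it (6)} of Proposition~\ref{Diad-cube}) just spell out details the paper leaves implicit.
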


\begin{proof}
Let ${\mathbb{D}}(E)$ be the collection of dyadic cubes obtained by 
applying Proposition~\ref{Diad-cube}. Fix $a\in[1,\infty)$ and consider a Whitney covering
${\mathbb{W}}_\lambda({\mathscr{X}}\setminus E)$ as in Proposition~\ref{H-S-Z}
with $\lambda\in[a,\infty)$. In particular, 
\begin{eqnarray}\label{AAA-NNN}
\sum_{I\in{\mathbb{W}}_\lambda({\mathscr{X}}\setminus E)}{\mathbf{1}}_{\lambda I}
\leq N_1,\qquad\mbox{for some $N_1\in{\mathbb{N}}$.}
\end{eqnarray}
To proceed, define 
\begin{eqnarray}\label{3a-YU.1}
{\mathcal{I}}:=\bigcup_{Q\in{\mathbb{D}}(E)}W_Q
\subseteq{\mathbb{W}}_\lambda({\mathscr{X}}\setminus E)
\end{eqnarray}
and, for each $I\in{\mathcal{I}}$, 
\begin{eqnarray}\label{3a-YU.2}
q_I:=\{Q\in{\mathbb{D}}(E):\,I\in W_Q\}.
\end{eqnarray}
Then, using \eqref{AAA-NNN}, we estimate 
\begin{eqnarray}\label{3a-YU.3}
\sum_{Q\in{\mathbb{D}}(E)}{\mathbf{1}}_{{\mathcal{U}}_{Q}^\ast}
&\leq & \sum_{Q\in{\mathbb{D}}(E)}\sum_{I\in W_Q}{\mathbf{1}}_{\lambda I}
=\sum_{I\in{\mathcal{I}}}(\#\,q_I)\cdot{\mathbf{1}}_{\lambda I}
\nonumber\\[4pt]
&\leq & \Bigl(\sup_{I\in{\mathbb{W}}_\lambda({\mathscr{X}}\setminus E)}\#\,q_I\Bigr)
\sum_{I\in{\mathbb{W}}_\lambda({\mathscr{X}}\setminus E)}{\mathbf{1}}_{\lambda I}
\leq N_1\cdot\Bigl(\sup_{I\in{\mathbb{W}}_\lambda({\mathscr{X}}\setminus E)}\#\,q_I\Bigr).
\hskip 0.30in
\end{eqnarray}
Hence, once we show that there exists $N_2\in{\mathbb{N}}$ such that 
\begin{eqnarray}\label{3a-YU.4}
\#\,q_I\leq N_2,\qquad\forall\,I\in{\mathbb{W}}_\lambda({\mathscr{X}}\setminus E),
\end{eqnarray}
the desired estimate, \eqref{doj}, follows with $N:=N_1N_2$. To prove \eqref{3a-YU.4}, 
fix an arbitrary $I\in{\mathbb{W}}_\lambda({\mathscr{X}}\setminus E)$ and assume that
$Q\in{\mathbb{D}}(E)$ is such that $I\in W_Q$. Then, from \eqref{gZSZa} we deduce that 
\begin{eqnarray}\label{3a-YU.5}
C_\ast^{-1}\ell(I)\leq\ell(Q)\leq C_\ast\ell(I)\quad\mbox{and}\quad
{\rm dist}_\rho(I,Q)\leq C_\ast\ell(I).
\end{eqnarray}
Now, \eqref{3a-YU.4} follows from \eqref{3a-YU.5} and the fact that 
$\bigl(E,\rho\bigl|_{E}\bigr)$ is geometrically doubling.
\end{proof}

\section{$T(1)$ and local $T(b)$ Theorems for Square Functions}
\setcounter{equation}{0}
\label{Sect:3}

This section consists of two parts, dealing with a $T(1)$ Theorem and 
a local $T(b)$ Theorem for square functions on sets of arbitrary co-dimension,
relative to an ambient quasi-metric space (the notion of dimension refers
to the degree of Ahlfors-David regularity). The $T(1)$ Theorem generalizes the Euclidean co-dimension one result proved by M.~Christ and J.-L.~Journ\'e in~\cite{CJ}
(cf. also \cite[Theorem~20, p.\,69]{Ch}). The local $T(b)$ Theorem generalizes the Euclidean co-dimension one result that was implicit in the solution of the Kato problem in \cite{HMc,HLMc,AHLMcT}, and formulated explicitly in \cite{Au,Ho3,HMc2}.

We consider the following context. Fix two real numbers $d,m$ such that $0<d<m$, an $m$-dimensional {\rm ADR} space $({\mathscr{X}},\rho,\mu)$, 
a closed subset $E$ of $({\mathscr{X}},\tau_\rho)$, and 
a Borel measure $\sigma$ 
on $(E,\tau_{\rho|_{E}})$ with the property that $(E,\rho\bigl|_E,\sigma)$ 
is a $d$-dimensional {\rm ADR} space. In this context, suppose that 
\begin{eqnarray}\label{K234}
\begin{array}{c}
{\theta}:(\mathscr{X}\setminus E)\times E\longrightarrow{{\mathbb{R}}}
\quad\mbox{is Borel measurable with respect to}
\\[4pt]
\mbox{the relative topology induced
by the product topology $\tau_\rho\times\tau_\rho$ on $(\mathscr{X}\setminus E)\times E$},
\end{array}
\end{eqnarray}
and has the property that there exist finite positive constants
$C_{\theta},\,\alpha,\,\upsilon$, and $a\in[0,\upsilon)$ such that for all 
$x\in\mathscr{X}\setminus E$ and $y\in E$ the following hold:
\begin{eqnarray}\label{hszz}
&& \hskip -0.35in
|{\theta}(x,y)|\leq\frac{C_{\theta}}{\rho(x,y)^{d+\upsilon}}\,\Bigl(
\frac{{\rm dist}_\rho(x,E)}{\rho(x,y)}\Bigr)^{-a},
\\[4pt]
&& \hskip -0.40in
\begin{array}{l}
\displaystyle|{\theta}(x,y)-{\theta}(x,\widetilde{y})|\leq C_{\theta} \frac{\rho(y,\widetilde{y})^\alpha}{\rho(x,y)^{d+\upsilon+\alpha}}\,\Bigl(
\frac{{\rm dist}_\rho(x,E)}{\rho(x,y)}\Bigr)^{-a-\alpha}, 
\\[12pt]
\qquad\forall\,\widetilde{y}\in E\,\,\mbox{ with }\,\,
\rho(y,\widetilde{y})\leq\tfrac{1}{2}\rho(x,y).
\end{array}
\label{hszz-3}
\end{eqnarray}
Then define the integral operator $\Theta$ for all functions 
$f\in L^p(E,\sigma)$, ${1\leq p\leq\infty}$, by
\begin{eqnarray}\label{operator}
(\Theta f)(x):=\int_E {\theta}(x,y)f(y)\,d\sigma(y),\qquad\forall\,x\in\mathscr{X}\setminus E.
\end{eqnarray}
It follows from H\"older's inequality and Lemma~\ref{Gkwvr} that the integral 
in \eqref{operator} is absolutely convergent for each $x\in\mathscr{X}\setminus E$. 

\begin{remark}\label{rem:a}
The factors in parentheses in \eqref{hszz}-\eqref{hszz-3} are greater than or equal to $1$, since for every $x\in\mathscr{X}\setminus E$ and every $y\in E$ we have $\rho(x,y)\geq{\rm dist}_\rho(x,E)>0$, hence
\eqref{hszz}-\eqref{hszz-3} are less demanding than their respective versions in which these factors are omitted.
\end{remark}

We proceed to prove square function versions of the $T(1)$ Theorem and the local $T(b)$ Theorem for the integral operator $\Theta$. As usual, we prove the local $T(b)$ Theorem by verifying the hypotheses of the $T(1)$ Theorem, to which we now turn.

\subsection{An arbitrary codimension $T(1)$ theorem for square functions}
\label{SSect:3.1}
The main result in this subsection is a $T(1)$ theorem for square functions,
to the effect that {\it a square function estimate for the integral operator $\Theta$ 
holds if and only if $|\Theta(1)|^2$, appropriately weighted by a power of the 
distance to $E$, is the density (relative to $\mu$) of a Carleson measure on 
${\mathscr{X}}\setminus E$}. To state this formally, the reader is advised 
to recall the dyadic cube grid from Proposition~\ref{Diad-cube} and the 
regularized distance function to a set from \eqref{REG-DDD}.

\begin{theorem}\label{SChg}
Let $d,m$ be two real numbers such that $0<d<m$. Assume that $({\mathscr{X}},\rho,\mu)$
is an $m$-dimensional {\rm ADR} space, $E$ is a closed subset of
$({\mathscr{X}},\tau_\rho)$, and $\sigma$ is a Borel regular measure on
$(E,\tau_{\rho|_{E}})$ with the property that $(E,\rho\bigl|_E,\sigma)$ is a 
$d$-dimensional {\rm ADR} space.

Suppose that $\Theta$ is the integral operator defined in \eqref{operator} 
with a kernel ${\theta}$ as in \eqref{K234}, \eqref{hszz}, \eqref{hszz-3}. 
Furthermore, let ${\mathbb{D}}(E)$ denote a dyadic cube structure on $E$, 
consider a Whitney covering ${\mathbb{W}}_\lambda(\mathscr{X}\setminus E)$ of 
$\mathscr{X}\setminus E$ and a constant $C_\ast$ as in Lemma~\ref{Lem:CQinBQ-N} 
and, corresponding to these, recall the dyadic Carleson tents from \eqref{gZSZ-3}. 

In this context, if 
\begin{eqnarray}\label{UEHg}
\sup_{Q\in{\mathbb{D}}(E)}\left(\tfrac{1}{\sigma(Q)}\int_{T_E(Q)}
|\Theta 1(x)|^2\delta_E(x)^{2\upsilon-(m-d)}\,d\mu(x)\right)<\infty,
\end{eqnarray}
then there exists a finite constant $C>0$ depending only on the 
constants $C_{\theta}$, the {\rm ADR} constants of $E$ and ${\mathscr{X}}$, and
the value of the supremum in \eqref{UEHg},
such that for each function $f\in L^2(E,\sigma)$ one has
\begin{eqnarray}\label{G-UF.22}
\int\limits_{\mathscr{X}\setminus E}
|(\Theta f)(x)|^2\delta_E(x)^{2\upsilon-(m-d)}\,
d\mu(x)\leq C\int_E|f(x)|^2\,d\sigma(x),\qquad\forall\,f\in L^2(E,\sigma).
\end{eqnarray}
Finally, the converse of the implication discussed above is also true. 
In fact, the following stronger claim holds: under the original background 
assumptions, except that the regularity requirement \eqref{hszz-3} is now dropped, 
the fact that 
\begin{eqnarray}\label{G-UF}
\int\limits_{\stackrel{x\in\mathscr{X}}{0<\delta_E(x)<\eta\,{\rm diam}_\rho(E)}}
\!\!\!\!\!\!\!\!|(\Theta f)(x)|^2\delta_E(x)^{2\upsilon-(m-d)}\,
d\mu(x)\leq C\int_E|f(x)|^2\,d\sigma(x),\quad\forall\,f\in L^2(E,\sigma),
\end{eqnarray}
holds for some $\eta\in(0,\infty)$ implies that \eqref{UEHg} holds as well.
\end{theorem}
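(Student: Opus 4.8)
\textbf{Proof strategy for Theorem~\ref{SChg}.}
The plan is to follow the now-standard ``$T(1)$ for square functions'' architecture, adapted to the present arbitrary-codimension quasi-metric setting, proving the two implications separately.

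For the direction \eqref{UEHg}$\,\Rightarrow\,$\eqref{G-UF.22}, I would begin by introducing the approximation to the identity $\{{\mathcal S}_l\}_{l\geq\kappa_E}$ and the associated operators $D_l = {\mathcal S}_{l+1}-{\mathcal S}_l$ from Proposition~\ref{Besov-ST} and Proposition~\ref{HS-PP.3}, and insert the Calder\'on-type reproducing formula \eqref{PKc} into $\Theta f$. This reduces matters to controlling $\int_{{\mathscr X}\setminus E}|\Theta D_l\widetilde D_l f|^2\delta_E^{2\upsilon-(m-d)}\,d\mu$, summed over $l$; because of the square-function bound \eqref{PKc-2} for the $\widetilde D_l$'s, it suffices to prove a \emph{quasi-orthogonality} estimate of the form $\|\Theta D_l g\|_{L^2(\mu;\,\delta_E^{2\upsilon-(m-d)}d\mu)}\lesssim 2^{-\varepsilon|l-l'|}\|g\|_{L^2(E,\sigma)}$ when $g$ has frequency localization at scale $l'$, or more precisely to run an almost-orthogonality/Schur-test argument on the family $\{\delta_E^{\upsilon-(m-d)/2}\Theta D_l\}_l$. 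The two ingredients that make this work are (i) the size/regularity hypotheses \eqref{hszz}, \eqref{hszz-3} on $\theta$, which give decay when the scale $\delta_E(x)$ of the point $x$ is much larger than $2^{-l}$, and the cancellation $\int_E h_l(x,y)\,d\sigma(y)=0$ of the kernel of $D_l$ together with its H\"older regularity, which gives decay when $\delta_E(x)\ll 2^{-l}$; this is exactly the place where the ``extra'' parameter $a\in[0,\upsilon)$ in \eqref{hszz} is harmless (cf. Remark~\ref{rem:a}). The \emph{diagonal} term, where $\delta_E(x)\approx 2^{-l}$, cannot be handled by cancellation alone — here one must invoke the Carleson hypothesis \eqref{UEHg}. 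The device for this is the standard one: write $\Theta D_l = \big(\Theta D_l - (\Theta 1)\cdot A_l\big) + (\Theta 1)\cdot A_l$ for a suitable averaging operator $A_l$ (an ``expectation'' at scale $2^{-l}$ over the dyadic cubes of generation $l$), estimate the first, ``perturbed'' piece by cancellation/regularity as above, and for the second piece use that \eqref{UEHg} says precisely that $|\Theta 1|^2\delta_E^{2\upsilon-(m-d)}\,d\mu$ is a Carleson measure over the dyadic tents $T_E(Q)$, so Carleson's embedding theorem on the space of homogeneous type $(E,\rho|_E,\sigma)$ — combined with $\sum_l\|A_l f\|^2 \lesssim \|f\|^2$ (a Littlewood--Paley bound for the averaging operators, which follows from the doubling structure) — closes the estimate. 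Assembling the pieces via Cotlar--Knapp--Stein (Lemma~\ref{L-CKS}) yields \eqref{G-UF.22}.

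For the converse, the point is that \eqref{G-UF} for $f\equiv 1$ already contains \eqref{UEHg} up to handling the tent geometry. Fix $Q\in{\mathbb D}(E)$ and recall from \eqref{dFvK} that $T_E(Q)\subseteq B_\rho(x,C\ell(Q))\setminus E$ for any $x\in Q$; moreover, by \eqref{UUU-rf}, $\delta_E\approx\ell(Q')\le C\ell(Q)$ on each ${\mathcal U}_{Q'}$ with $Q'\subseteq Q$, so in particular $\delta_E(x)\lesssim\ell(Q)\lesssim{\rm diam}_\rho(E)$ on $T_E(Q)$, placing $T_E(Q)$ (after shrinking by a fixed factor, or splitting off finitely many top Whitney layers) inside the region $\{0<\delta_E<\eta\,{\rm diam}_\rho(E)\}$ appearing in \eqref{G-UF}. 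Then split $1 = \mathbf 1_{2C_\rho B}\cdot 1 + \mathbf 1_{{\mathscr X}\setminus 2C_\rho B}\cdot 1$ where $B$ is a $\rho_\#$-ball of radius $\approx\ell(Q)$ containing $Q$. For the local piece, $\Theta(\mathbf 1_{2C_\rho B})$ is controlled on $T_E(Q)$ by \eqref{G-UF} applied to $f=\mathbf 1_{2C_\rho B}$, since $\int_E|\mathbf 1_{2C_\rho B}|^2\,d\sigma=\sigma(E\cap 2C_\rho B)\lesssim\ell(Q)^d\approx\sigma(Q)$ by ADR. For the far piece, one estimates $\Theta(\mathbf 1_{{\mathscr X}\setminus 2C_\rho B})(x)$ pointwise on $T_E(Q)$ directly from the size bound \eqref{hszz}: with $x\in{\mathcal U}_{Q'}$, $\delta_E(x)\approx\ell(Q')$, and $\rho(x,y)\gtrsim\ell(Q)$ for $y\notin 2C_\rho B$, a dyadic decomposition of $E\setminus 2C_\rho B$ into annuli gives $|\Theta(\mathbf 1_{{\mathscr X}\setminus 2C_\rho B})(x)|\lesssim \ell(Q')^{-a}\ell(Q)^{a-\upsilon}$ (or a comparable bound), and then $\int_{T_E(Q)}|\,\cdot\,|^2\delta_E^{2\upsilon-(m-d)}\,d\mu$ is summed over the Whitney layers ${\mathcal U}_{Q'}$ using that $\mu({\mathcal U}_{Q'})\lesssim\ell(Q')^{m}$ (a consequence of $m$-ADR of ${\mathscr X}$ and $\delta_E\approx\ell(Q')$ on ${\mathcal U}_{Q'}$, cf. the geometry in \S\ref{SSect:2.4}) and $\sum_{Q'\subseteq Q}\ell(Q')^{d}\lesssim\ell(Q)^d\approx\sigma(Q)$; the exponent bookkeeping works out precisely because $a<\upsilon$. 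Dividing by $\sigma(Q)$ and taking the supremum over $Q$ yields \eqref{UEHg}; note that no regularity of $\theta$ in either variable is used here, consistent with the statement.

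\textbf{Main obstacle.} The serious work is in the first implication, and within it the treatment of the diagonal term: one must introduce the averaging operators $A_l$, prove the ``off-diagonal decays, diagonal is Carleson'' dichotomy with honest exponents in the weighted space $\delta_E^{2\upsilon-(m-d)}\,d\mu$ on the \emph{complement} ${\mathscr X}\setminus E$ (rather than on a product $E\times(0,\infty)$), and verify that the Carleson embedding input is correctly formulated over the dyadic tents $T_E(Q)$ — this requires the containments \eqref{dFvK} and \eqref{zjrh} and the finite-overlap property of Lemma~\ref{Lem:CQinBQ} to pass between ``$x$ lives at scale $2^{-l}$ in ${\mathscr X}\setminus E$'' and ``$x\in{\mathcal U}_{Q}$ for some $Q\in{\mathbb D}_l(E)$.'' Everything else is a routine, if lengthy, application of Cotlar--Knapp--Stein, the ADR structure, and the properties of the approximation to the identity established in \S\ref{SSect:2.3}.
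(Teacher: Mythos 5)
Your strategy is essentially the paper's own proof. For the forward implication the paper inserts the Calder\'on reproducing formula of Proposition~\ref{HS-PP.3} and proves precisely the almost-orthogonality estimate you describe, namely \eqref{Lvsn}: a bound for $\delta_E(x)^{\upsilon}\bigl(\Theta(D_lg)(x)-(\Theta 1)(x)A_Q(D_lg)\bigr)$ on ${\mathcal U}_Q$ with gain $2^{-|k-l|\beta}$, split into the regime $\delta_E(x)\gtrsim 2^{-l}$ (regularity of the kernel $h_l$ plus the size bound \eqref{hszz}) and the regime $\delta_E(x)\ll 2^{-l}$ (cancellation of $D_l$, H\"older regularity of $\theta$ in the second variable, and the thin-boundary property via the cutoff $\eta_Q$ for the averaged term); the remaining main term $(\Theta 1)A_Qf$ is then summed over the Whitney regions by the discrete Carleson lemma (Lemma~\ref{brFC}) with $A^{\ast}\le (M_Ef)^2$, exactly the Carleson-embedding step you indicate, and the converse is proved as you outline (reduce to large $\eta$, split $1$ into a near piece fed into \eqref{G-UF} and a far piece estimated pointwise from \eqref{hszz}, \eqref{WBA} and Lemma~\ref{geom-lem}, the bookkeeping working because $a<\upsilon$).

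One correction: the auxiliary claim that $\sum_l\|A_lf\|_{L^2(E,\sigma)}^2\lesssim\|f\|_{L^2(E,\sigma)}^2$ for the averaging operators is false (each $A_l$ has $L^2$ norm comparable to $1$ and $A_lf\to f$, so the sum diverges); fortunately it is not needed. What actually closes the main term is either the full Carleson embedding with the majorization $\sup_{Q\ni x}|A_Qf|\le M_Ef(x)$ and the $L^2$-boundedness of $M_E$ (the paper's route in Step~III), or, if you insist on keeping the averaging inside the sum over $l$ acting on $\widetilde D_lf$, the per-cube bound $B_Q\lesssim\sigma(Q)$ combined with the genuine square-function estimate \eqref{PKc-2} for the $\widetilde D_l$'s; in that second arrangement one must also prove almost-orthogonality for $A_lD_j$ when $j>l$, which is precisely what the paper's Case~II (the $\eta_Q$ cutoff and the thin-boundary estimate \eqref{AAzrhg}) accomplishes. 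Two smaller bookkeeping points the sketch glosses over, both handled easily in the paper: the reproducing formula carries the extra term ${\mathcal S}_{\kappa_E}Rf$ when $E$ is bounded, and both implications require a crude size estimate on the region $\delta_E\gtrsim{\rm diam}_\rho(E)$ (respectively, the reduction to large $\eta$) rather than any ``shrinking'' of the tents $T_E(Q)$.
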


Before presenting the actual proof of Theorem~\ref{SChg} we shall discuss a 
number of preliminary lemmas, starting with the following discrete Carleson estimate.

\begin{lemma}\label{brFC}
Assume $(E,\rho,\sigma)$ is a space of homogeneous type with the property that
$\sigma$ is Borel regular, and denote by ${\mathbb{D}}(E)$ a dyadic cube structure 
on $E$. If a sequence $\bigl\{B_Q\bigr\}_{Q\in{\mathbb{D}}(E)}\subseteq [0,\infty]$
satisfies the discrete Carleson condition
\begin{eqnarray}\label{TAkB}
C:=\sup_{R\in{\mathbb{D}}(E)}\Bigl[
\frac{1}{\sigma(R)}\sum\limits_{Q\in{\mathbb{D}}(E),\,Q\subseteq R} B_Q
\Bigr]<\infty,
\end{eqnarray}
then for every sequence $\bigl\{A_Q\bigr\}_{Q\in{\mathbb{D}}(E)}\subseteq{\mathbb{R}}$ 
one has
\begin{eqnarray}\label{TAkB-2}
\sum\limits_{Q\in{\mathbb{D}}(E)}A_Q B_Q\leq C\int_E A^\ast\,d\sigma,
\end{eqnarray}
where $A^\ast:E\rightarrow [0,\infty]$ is the function defined by 
\begin{eqnarray}\label{TAkB-2B}
A^\ast(x):=0\,\,\mbox{ if }\,\,x\in E\setminus\!\!\bigcup_{Q\in{\mathbb{D}}(E)}Q
\quad\mbox{ and }\quad
A^\ast(x):=\!\!\sup\limits_{Q\in{\mathbb{D}}(E),\,x\in Q}|A_Q|
\,\,\,\mbox{ if }\,\,\,x\in\!\!\!\bigcup_{Q\in{\mathbb{D}}(E)}\!\!Q.
\end{eqnarray}
\end{lemma}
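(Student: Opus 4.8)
\textbf{Plan for the proof of Lemma~\ref{brFC}.}
The statement is a standard ``discrete Carleson embedding'' fact, and the plan is to prove it by a level-set / distribution-function argument exploiting the dyadic structure. First I would dispense with trivialities: if some $B_Q=\infty$ then \eqref{TAkB} forces $\sigma(R)=0$ for every $R\supseteq Q$, and in particular $\sigma(Q)=0$; since $\bigcup_{Q'\subseteq Q}Q'\subseteq Q$ up to a $\sigma$-null set (by property \textit{(9)} in Proposition~\ref{Diad-cube}, using Borel regularity of $\sigma$), such cubes contribute nothing meaningful, so we may assume all $B_Q<\infty$, and also that the left-hand side of \eqref{TAkB-2} is being summed only over cubes with $B_Q>0$. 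We may further assume $C<\infty$ (else nothing to prove) and, by monotone convergence, reduce to finite subsums, i.e.\ it suffices to bound $\sum_{Q\in\mathcal{F}}A_QB_Q$ for an arbitrary finite $\mathcal{F}\subseteq{\mathbb{D}}(E)$, uniformly in $\mathcal{F}$.

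The heart of the argument is the following observation. For $\lambda>0$ let $E_\lambda:=\{x\in E: A^\ast(x)>\lambda\}$. By definition of $A^\ast$ in \eqref{TAkB-2B}, if $|A_Q|>\lambda$ then $Q\subseteq E_\lambda$ (since every point of $Q$ has $A^\ast$-value at least $|A_Q|>\lambda$), hence $Q$ is contained in the set $E_\lambda$, which is open because each dyadic cube is open (property \textit{(1)} of Proposition~\ref{Diad-cube}) and $E_\lambda$ is a union of dyadic cubes. The key step is then to decompose $E_\lambda$ into the maximal dyadic cubes it contains: writing $\{R_j^\lambda\}_j$ for the collection of those $Q\in{\mathbb{D}}(E)$ that are contained in $E_\lambda$ and are maximal with respect to inclusion among such cubes — maximal cubes exist because $|A_Q|>0$ only for finitely many relevant $Q$, or more robustly because side-lengths are bounded above by $\mathrm{diam}_\rho(E)$ — we get that these $R_j^\lambda$ are pairwise disjoint and every $Q$ with $|A_Q|>\lambda$ sits inside exactly one $R_j^\lambda$. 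Consequently
\begin{eqnarray}\label{TAkB-pf1}
\sum_{Q:\,|A_Q|>\lambda}B_Q=\sum_j\sum_{Q\subseteq R_j^\lambda}B_Q
\leq C\sum_j\sigma(R_j^\lambda)=C\,\sigma(E_\lambda),
\end{eqnarray}
where the inequality uses the Carleson condition \eqref{TAkB} applied to each $R_j^\lambda$, and the last equality uses disjointness together with $\sigma\bigl(E_\lambda\setminus\bigcup_jR_j^\lambda\bigr)=0$, which follows from the almost-everywhere covering property \textit{(9)} of Proposition~\ref{Diad-cube} (this is precisely where the Borel regularity hypothesis on $\sigma$ is used).

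Granting \eqref{TAkB-pf1}, the conclusion follows by the layer-cake formula. Setting $b_\lambda:=\sum_{Q:\,|A_Q|>\lambda}B_Q$ and using $A_Q\le|A_Q|=\int_0^{|A_Q|}d\lambda$, we write, with all sums finite (so Tonelli applies without fuss),
\begin{eqnarray}\label{TAkB-pf2}
\sum_{Q\in{\mathbb{D}}(E)}A_QB_Q\leq\sum_Q\Bigl(\int_0^\infty{\mathbf{1}}_{\{|A_Q|>\lambda\}}\,d\lambda\Bigr)B_Q
=\int_0^\infty b_\lambda\,d\lambda
\leq C\int_0^\infty\sigma(E_\lambda)\,d\lambda=C\int_E A^\ast\,d\sigma,
\end{eqnarray}
the last equality being the distribution-function representation of $\int_EA^\ast\,d\sigma$ (valid since $A^\ast\ge0$ and is $\sigma$-measurable, being a countable supremum of constants over open sets). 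Passing from finite $\mathcal{F}$ to the full sum by monotone convergence completes the proof. The only genuinely delicate point — and the one I would write out carefully — is the passage from ``$E_\lambda$ is a union of dyadic cubes'' to ``$E_\lambda$ is, up to $\sigma$-measure zero, a disjoint union of its maximal dyadic subcubes,'' since dyadic cubes on a space of homogeneous type need not partition $E$ exactly; this is exactly what property \textit{(9)} of Proposition~\ref{Diad-cube} is designed to handle, and invoking it is the crux.
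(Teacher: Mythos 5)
Your argument is correct and is essentially the paper's own proof: level sets of $A^\ast$, decomposition into maximal dyadic cubes, the Carleson condition applied on each maximal cube, and the layer-cake formula (the paper secures the existence of maximal cubes by assuming $A^\ast\in L^1(E,\sigma)$ and using Tschebyshev to get $\sigma(E_\lambda)<\infty$, rather than your reduction to finite subfamilies). The only cosmetic point is that your appeal to property \textit{(9)} of Proposition~\ref{Diad-cube} is unnecessary: $E_\lambda$ is exactly the union of the cubes with $|A_Q|>\lambda$, and in any case the argument only needs $\sum_j\sigma(R_j^\lambda)\leq\sigma(E_\lambda)$, which is immediate from the disjointness of the maximal cubes inside $E_\lambda$.
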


\begin{proof}
For each $t>0$ define ${\mathcal{O}}_t:=\{x\in E:\,A^\ast(x)>t\}$. Then it is 
immediate from definitions that 
${\mathcal{O}}_t=\!\!\!\bigcup\limits_{Q\in{\mathbb{D}}(E),\,A_Q>t}\!\!\!Q$
for every $t>0$. This shows that ${\mathcal{O}}_t$ is open in $(E,\tau_\rho)$
(cf. {\it (1)} in Proposition~\ref{Diad-cube}) and, hence, $A^\ast$ is 
$\sigma$-measurable. Note that if $A^\ast\in L^1(E,\sigma)$ (otherwise 
there is nothing to prove), then by Tschebyshev's inequality, 
\begin{eqnarray}\label{TAkB-1}
\sigma({\mathcal{O}}_t)\leq \frac{1}{t}\int_EA^\ast(x)\,d\sigma(x)<\infty,
\qquad\forall\,t>0.
\end{eqnarray}
This ensures that for each $t>0$ we may meaningfully define $D_t\subseteq{\mathbb{D}}(E)$, 
the collection of maximal dyadic cubes contained in ${\mathcal{O}}_t$, i.e., 
\begin{eqnarray}\label{TAkB-3}
D_t:=\bigl\{R\in{\mathbb{D}}(E):\,R\subseteq{\mathcal{O}}_t\mbox{ and }\not\!\exists\,
Q\in{\mathbb{D}}(E)\mbox{ such that 
$R\subseteq Q\subseteq{\mathcal{O}}_t$ and $R\not=Q$}\bigr\}.
\end{eqnarray}
The cubes in $D_t$ are pairwise disjoint, and
\begin{eqnarray}\label{TAkB-4}
{\mathcal{O}}_t=\bigcup\limits_{R\in D_t}R.
\end{eqnarray}
Now for each $Q\in{\mathbb{D}}(E)$ define  
\begin{eqnarray}\label{TAkB-5}
h_Q:(0,\infty)\longrightarrow{\mathbb{R}},\qquad
h_Q(t):=\left\{
\begin{array}{cc}
1,&\mbox{if }\,0<t<A_Q,
\\[4pt]
0,&\mbox{otherwise.}
\end{array}
\right.
\end{eqnarray}
Then, for each $t>0$ we have 
\begin{eqnarray}\label{TAkB-6}
\sum\limits_{Q\in{\mathbb{D}}(E)}h_Q(t)B_Q
& = & \sum\limits_{Q\in{\mathbb{D}}(E),\,Q\subseteq{\mathcal{O}}_t}B_Q
=\sum\limits_{R\in D_t}\left(\sum\limits_{Q\in{\mathbb{D}}(E),\,Q\subseteq R}
B_Q\right)
\nonumber\\[4pt]
& \leq & C \sum\limits_{R\in D_t}\sigma(R)
= C\sigma({\mathcal{O}}_t),
\end{eqnarray}
where for the first inequality in \eqref{TAkB-6} we have used \eqref{TAkB}, while
the last equality follows from \eqref{TAkB-4}. Hence,
\begin{eqnarray}\label{TAkB-7}
\sum\limits_{Q\in{\mathbb{D}}(E)}A_QB_Q
& = & \int_0^\infty\sum\limits_{Q\in{\mathbb{D}}(E)}h_Q(t)B_Q\,dt
\leq C\int_0^\infty\sigma({\mathcal{O}}_t)\,dt
\nonumber\\[4pt]
& = & C \int_0^\infty\int_E{\mathbf{1}}_{\{A^\ast>t\}}(x)\,d\sigma(x)\,dt
=C\int_E \int_0^\infty{\mathbf{1}}_{\{A^\ast>t\}}(x)\,dt\,d\sigma(x)
\nonumber\\[4pt]
& = & C\int_EA^\ast(x)\,d\sigma(x),
\end{eqnarray}
completing the proof of the lemma.
\end{proof}

We continue by recording a quantitative version of the classical Urysohn lemma
in the context of H\"older functions on quasi-metric spaces from \cite{MMMM-G}
(cf. also \cite{AMM} for a refinement).

\begin{lemma}\label{GVa2} 
Let $(E,\rho)$ be a quasi-metric space and assume that $\beta$ is a real 
number with the property that $0<\beta\leq\left[\log_2 C_{\rho}\right]^{-1}$.
Assume that $F_0,F_1\subseteq E$ are two nonempty sets with the property 
that ${\rm dist}_\rho(F_0,F_1)>0$. Then, there exists a function 
$\eta:E\to{\mathbb{R}}$ such that
\begin{eqnarray}\label{PMab5}
0\leq\eta\leq 1\,\,\,\mbox{ on }\,\,E,\quad
\eta\equiv 0\,\,\mbox{ on }\,\,F_0,\quad
\eta\equiv 1\,\,\mbox{ on }\,\,F_1,
\end{eqnarray}
and for which there exists a finite constant $C>0$, depending only on $\rho$, 
such that 
\begin{eqnarray}\label{PMab6}
\sup_{\stackrel{x,y\in E}{x\not=y}}\frac{|\eta(x)-\eta(y)|}{\rho(x,y)^\beta}
\leq C\bigl({\rm dist}_\rho(F_0,F_1)\bigr)^{-\beta}.
\end{eqnarray}
\end{lemma}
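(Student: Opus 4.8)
\textbf{Proof plan for the quantitative Urysohn lemma (Lemma~\ref{GVa2}).}
The plan is to reduce the statement to the metrization result in Theorem~\ref{JjEGh}, so that the construction of $\eta$ can be carried out using an honest distance rather than the quasi-distance $\rho$. First I would pick $\beta\in(0,[\log_2C_\rho]^{-1}]$ as in the hypothesis, apply Theorem~\ref{JjEGh}, and note that $d_{\rho,\beta}(x,y):=[\rho_{\#}(x,y)]^{\beta}$ is a genuine distance on $E$ which, by parts (1)--(2) of that theorem, satisfies $d_{\rho,\beta}^{1/\beta}\approx\rho$; equivalently $d_{\rho,\beta}(x,y)\approx\rho(x,y)^{\beta}$ with comparability constants depending only on $\rho$ (more precisely on $C_\rho$ and $\widetilde C_\rho$). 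In particular ${\rm dist}_{d_{\rho,\beta}}(F_0,F_1)\approx\bigl({\rm dist}_\rho(F_0,F_1)\bigr)^{\beta}$, and this quantity is strictly positive by hypothesis.

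Next I would define $\eta$ by the classical metric-space formula applied to the distance $d_{\rho,\beta}$, namely
\begin{eqnarray}\label{eta-def-plan}
\eta(x):=\min\left\{1,\,\frac{{\rm dist}_{d_{\rho,\beta}}(x,F_0)}{{\rm dist}_{d_{\rho,\beta}}(F_0,F_1)}\right\},
\qquad x\in E.
\end{eqnarray}
It is immediate from this that $0\le\eta\le1$ on $E$, that $\eta\equiv0$ on $F_0$, and that $\eta\equiv1$ on $F_1$ (using that $x\in F_1$ forces ${\rm dist}_{d_{\rho,\beta}}(x,F_0)\ge{\rm dist}_{d_{\rho,\beta}}(F_0,F_1)$), so \eqref{PMab5} holds. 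For the regularity estimate I would use that $x\mapsto{\rm dist}_{d_{\rho,\beta}}(x,F_0)$ is $1$-Lipschitz with respect to $d_{\rho,\beta}$ (the usual triangle-inequality argument, valid since $d_{\rho,\beta}$ is a true distance) and that truncation at $1$ does not increase the Lipschitz constant, so
\begin{eqnarray}\label{eta-lip-plan}
|\eta(x)-\eta(y)|\le\frac{d_{\rho,\beta}(x,y)}{{\rm dist}_{d_{\rho,\beta}}(F_0,F_1)}
\le C\,\frac{\rho(x,y)^{\beta}}{\bigl({\rm dist}_\rho(F_0,F_1)\bigr)^{\beta}},
\qquad x\ne y,
\end{eqnarray}
where the last step invokes the two comparabilities $d_{\rho,\beta}(x,y)\le C\rho(x,y)^{\beta}$ and ${\rm dist}_{d_{\rho,\beta}}(F_0,F_1)\ge C^{-1}\bigl({\rm dist}_\rho(F_0,F_1)\bigr)^{\beta}$. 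Dividing by $\rho(x,y)^{\beta}$ and taking the supremum yields \eqref{PMab6}.

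The only genuinely delicate point is bookkeeping of the constants: one must check that the comparability constants relating $d_{\rho,\beta}$ to $\rho^{\beta}$ coming out of \eqref{DEQV1} (which gives $(C_\rho)^{-2}\rho\le\rho_{\#}\le\widetilde C_\rho\,\rho$, hence after raising to the power $\beta$ the comparison between $d_{\rho,\beta}$ and $\rho^\beta$) depend only on $\rho$ — indeed only on $C_\rho,\widetilde C_\rho$ — and not on $F_0,F_1$; this is exactly what \eqref{DEQV1} supplies, so there is no real obstacle, just care. (An alternative, if one wished to avoid raising distances to a power, would be to observe directly that $\rho_\#^{\beta}$ already satisfies the quasi-triangle inequality with constant $1$ for $\beta\le\alpha_\rho$, and run the same construction; but going through $d_{\rho,\beta}$ is cleanest since Theorem~\ref{JjEGh} packages precisely what is needed.) This completes the plan.
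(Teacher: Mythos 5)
Your proof is correct. Note that the paper does not actually prove Lemma~\ref{GVa2} but records it from \cite{MMMM-G}, and your route — pass to the genuine distance $d_{\rho,\beta}=\rho_{\#}^{\beta}$ supplied by Theorem~\ref{JjEGh}, take the classical truncated distance-quotient $\eta$, use the $1$-Lipschitz character of $x\mapsto{\rm dist}_{d_{\rho,\beta}}(x,F_0)$, and transfer back via \eqref{DEQV1} — is exactly the natural (and standard) argument one would expect there. The only bookkeeping remark is that your final constant is of the form $\bigl(\widetilde{C}_\rho C_\rho^2\bigr)^{\beta}$, so it depends on $\beta$ as well; since $\beta\leq[\log_2C_\rho]^{-1}$ this is controlled by $\rho$ alone except in the degenerate case $C_\rho=1$ (where $\beta$ is unrestricted), a harmless point in all the paper's applications but worth stating if you want the constant literally independent of $\beta$.
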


In the proof of Theorem~\ref{SChg} we shall also need a couple of 
results of geometric measure theoretic nature, which we next discuss. 

\begin{lemma}\label{Gkwvr}
Let $({\mathscr{X}},\rho)$ be a quasi-metric space.
Suppose $E\subseteq{\mathscr{X}}$ is nonempty and $\sigma$ is a measure on $E$ such that
$(E,\rho\bigl|_E,\sigma)$ becomes a $d$-dimensional {\rm ADR} space, for some $d>0$. 
Fix a real number $m>d$. Then there exists $C\in(0,\infty)$ depending only on 
$m$, $\rho$, and the {\rm ADR} constant of $E$ such that 
\begin{eqnarray}\label{mMji}
\int_E\frac{1}{\rho_{\#}(x,y)^{m}}\,d\sigma(y)\leq C\delta_E(x)^{d-m},
\qquad\forall\,x\in {\mathscr{X}}\setminus E.
\end{eqnarray}
Also, for each $\varepsilon>0$ and $c>0$, there exists $C\in(0,\infty)$ depending 
only on $\varepsilon$, $c$, $\rho$, and the {\rm ADR} constant of $E$ such that 
for every $\sigma$-measurable function $f:E\to[0,\infty]$ one has
\begin{eqnarray}\label{WBA}
\int\limits_{y\in E,\,\rho_{\#}(y,x)>cr}
\frac{r^\varepsilon}{\rho_{\#}(y,x)^{d+\varepsilon}}f(y)\,d\sigma(y)
\leq C\,M_E(f)(x)\qquad\forall\,x\in E,\quad\forall\,r>0,
\end{eqnarray}
where $M_E$ is as in \eqref{HL-MAX}.
\end{lemma}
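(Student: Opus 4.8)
The plan is to prove the two estimates in Lemma~\ref{Gkwvr} by the standard dyadic/annular decomposition technique, exploiting the Ahlfors--David regularity of $E$ and the equivalence $\rho_{\#}\approx\rho$ from Theorem~\ref{JjEGh}. It is convenient to work throughout with $\rho_{\#}$ (so that all quasi-balls are open and $\bigl(E,\rho_{\#}|_E,{\mathscr{H}}^d\lfloor E\bigr)$ is again $d$-{\rm ADR} by \eqref{TASS.bis2}); since $\sigma\approx{\mathscr{H}}^d_{{\mathscr{X}},\rho_{\#}}\lfloor E$ by \eqref{P2-GFs}, passing between the two measures costs only a harmless constant.

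For \eqref{mMji}, fix $x\in{\mathscr{X}}\setminus E$ and set $\delta:=\delta_E(x)={\rm dist}_{\rho_{\#}}(x,E)>0$ (this is positive since $E$ is $\tau_\rho$-closed and $x\notin E$, using \eqref{topoQMS}). First I would split $E$ into the ``close'' piece $E\cap B_{\rho_{\#}}(x,2\delta)$ --- which is actually empty, because by definition of the distance there is no point of $E$ within $\rho_{\#}$-distance $\delta$ of $x$ (more precisely one uses annuli starting at radius $\sim\delta$) --- and then decompose the rest as a union of dyadic annuli $A_k:=\{y\in E:\,2^k\delta\le\rho_{\#}(x,y)<2^{k+1}\delta\}$ for $k\ge 0$. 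On $A_k$ one has $\rho_{\#}(x,y)^{-m}\le (2^k\delta)^{-m}$, and $\sigma(A_k)\le\sigma\bigl(E\cap B_{\rho_{\#}}(x,2^{k+1}\delta)\bigr)$. To bound the latter by $(2^{k+1}\delta)^d$ one picks any $y_0\in A_k$ (if $A_k=\emptyset$ there is nothing to estimate) and uses the inclusion $B_{\rho_{\#}}(x,2^{k+1}\delta)\subseteq B_{\rho_{\#}}(y_0,C_\rho 2^{k+1}\delta)$ together with the upper ADR bound for the ball centered at the point $y_0\in E$, giving $\sigma(A_k)\le C(2^k\delta)^d$. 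Summing,
\begin{eqnarray}\label{plan-geom-sum}
\int_E\frac{d\sigma(y)}{\rho_{\#}(x,y)^m}
\le\sum_{k\ge0}\frac{C(2^k\delta)^d}{(2^k\delta)^m}
=C\delta^{d-m}\sum_{k\ge0}2^{k(d-m)}=C'\delta^{d-m},
\end{eqnarray}
the geometric series converging precisely because $m>d$. (One must also handle the case ${\rm diam}_\rho(E)<\infty$, where the sum over $k$ is finite, and the case of radii exceeding ${\rm diam}_\rho(E)$ where the ADR bound is applied at radius ${\rm diam}_\rho(E)$; these only improve the estimate.)

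For \eqref{WBA}, fix $x\in E$ and $r>0$, and again decompose the region $\{y\in E:\rho_{\#}(y,x)>cr\}$ into dyadic annuli $A_j:=\{y\in E:\,2^j cr\le\rho_{\#}(y,x)<2^{j+1}cr\}$, $j\ge0$. On $A_j$ the weight satisfies $r^\varepsilon\rho_{\#}(y,x)^{-d-\varepsilon}\le r^\varepsilon(2^jcr)^{-d-\varepsilon}=c^{-d-\varepsilon}2^{-j(d+\varepsilon)}r^{-d}$, so
\begin{eqnarray}\label{plan-WBA}
\int_{A_j}\frac{r^\varepsilon}{\rho_{\#}(y,x)^{d+\varepsilon}}f(y)\,d\sigma(y)
\le\frac{c^{-d-\varepsilon}2^{-j(d+\varepsilon)}}{r^d}\int_{B_{\rho_{\#}}(x,2^{j+1}cr)}f\,d\sigma
\le C\,2^{-j\varepsilon}\,(M_Ef)(x),
\end{eqnarray}
where in the last step I use $\sigma\bigl(B_{\rho_{\#}}(x,2^{j+1}cr)\bigr)\le C(2^{j+1}cr)^d$ (lower ADR is not even needed here, only the upper bound, together with the definition \eqref{HL-MAX} of $M_E$, noting $x\in E$). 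Summing the geometric series $\sum_{j\ge0}2^{-j\varepsilon}<\infty$ yields the claim with $C$ depending only on $\varepsilon,c,\rho$, and the ADR constant. I do not anticipate a genuine obstacle here; the only points requiring a little care are (i) replacing balls centered at the exterior point $x$ by balls centered at a nearby point of $E$ so that the ADR estimate \eqref{Q3HF} applies, which costs only the quasi-triangle constant $C_\rho$, and (ii) bookkeeping the boundary cases when $E$ is bounded so that radii are capped at ${\rm diam}_\rho(E)$ --- in both estimates this truncation only shortens the sum and hence strengthens the bound.
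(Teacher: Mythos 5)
Your proof is correct and follows essentially the same route as the paper: both estimates are obtained by decomposing into dyadic annuli at scale $\delta_E(x)$ (resp. $cr$), invoking the upper ADR bound for the annular masses, and summing the geometric series, which converges since $m>d$ (resp. $\varepsilon>0$). The only cosmetic difference is that you explicitly recenter the balls at a point of $E$ before applying \eqref{Q3HF}, a routine adjustment the paper leaves implicit.
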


\begin{proof}
Fix $x\in{\mathscr{X}}\setminus E$. Then 
\begin{eqnarray}\label{jrap}
\int_E \frac{1}{\rho_{\#}(y,x)^{m}}\,d\sigma(y) 
&\leq & \int_E \mathbf{1}_{\{z:\,\rho_{\#}(z,x)\geq\delta_E(x)\}}(y) 
\frac{1}{\rho_{\#}(y,x)^{m}}\,d\sigma(y) 
\nonumber\\[4pt]
&= & C\sum_{j=0}^\infty\int_E 
\mathbf{1}_{\{z:\,\rho_{\#}(z,x)\in[2^j\delta_E(x),2^{j+1}\delta_E(x))\}}(y) 
\frac{1}{\rho_{\#}(y,x)^{m}}\,d\sigma(y) 
\nonumber\\[4pt]
&\leq& C\sum_{j=0}^\infty\frac{1}{[2^j\delta_E(x)]^{m}} 
\sigma\left(B_{\rho_{\#}}(x,2^{j+1}\delta_E(x))\cap E\right) 
\nonumber\\[4pt]
&\leq& C\sum_{j=0}^\infty\frac{1}{[2^j\delta_E(x)]^{m}} 
\bigl[2^{j+1}\delta_E(x)\bigr]^{d}
\nonumber\\[4pt]
&=& C\delta_E(x)^{d-m},
\end{eqnarray}
where for the last inequality in \eqref{jrap} we have used the fact that 
$(E,\rho_{\#}|_E,\sigma)$ is a $d$-dimensional {\rm ADR} space, while the last 
equality uses the condition $m>d$. This concludes the proof of \eqref{mMji}.
Finally, \eqref{WBA} is proved similarly, by decomposing the domain of 
integration in dyadic annuli centered at $x$, at scale $r$, and then using 
the fact that $(E,\rho_{\#}|_E,\sigma)$ is a $d$-dimensional {\rm ADR} space.
\end{proof}

For a proof of our second result of geometric measure theoretic nature
the interested reader is referred to \cite{MMMM-B}, where more general results 
of this type are established.

\begin{lemma}\label{geom-lem}
Assume that $({\mathscr{X}},\rho,\mu)$ is an $m$-dimensional {\rm ADR} space 
for some $m>0$ and let $E\subseteq{\mathscr{X}}$ be nonempty, closed. 
Suppose there exists a measure $\sigma$ on $E$ such that $(E,\rho\bigl|_E,\sigma)$
is a $d$-dimensional {\rm ADR} space with $0<d<m$. If $\gamma<m-d$, then there 
exists a finite positive constant $C_0$ which depends only on $\gamma$ and 
the {\rm ADR} constants of $E$ and ${\mathscr{X}}$, such that
\begin{eqnarray}\label{lbzF}
\int\limits_{x\in B_\rho(x_0,R),\,\delta_E(x)<r}
\delta_E(x)^{\,-\gamma}\,d\mu(x)\leq C_0\,r^{m-d-\gamma}R^d,
\end{eqnarray}
for every $x_0\in E$ and every $r,R>0$.
\end{lemma}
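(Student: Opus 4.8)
\textbf{Proof plan for Lemma~\ref{geom-lem}.}
The plan is to dyadically decompose the region of integration according to the size of $\delta_E$ and, on each dyadic layer, to cover the relevant portion of $B_\rho(x_0,R)$ by Whitney-type balls of radius comparable to $\delta_E$, then use the $m$-ADR property of $({\mathscr{X}},\rho,\mu)$ together with the $d$-ADR property of $(E,\rho|_E,\sigma)$ to count how many such balls can occur. Concretely, since $\delta_E \approx {\rm dist}_\rho(\cdot,E)$ (part {\it (4)} of Theorem~\ref{JjEGh}), it suffices to prove the estimate with $\delta_E$ replaced by ${\rm dist}_{\rho}(\cdot,E)$ up to a harmless constant; I will work with $\delta_E={\rm dist}_{\rho_{\#}}(\cdot,E)$ directly since $\rho_{\#}$ is a genuine quasi-distance equivalent to $\rho$. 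First I would write
\[
A_j:=\bigl\{x\in B_\rho(x_0,R):\,2^{-j-1}r\leq\delta_E(x)<2^{-j}r\bigr\},\qquad j\in{\mathbb{N}}_0,
\]
so that $\{x\in B_\rho(x_0,R):\delta_E(x)<r\}=\bigcup_{j\geq 0}A_j$ and, on $A_j$, $\delta_E(x)^{-\gamma}\approx (2^{-j}r)^{-\gamma}$ (here one uses $\gamma$ could be negative, but the equivalence still holds with constants depending only on its value). Thus the integral is bounded by $C\sum_{j\geq 0}(2^{-j}r)^{-\gamma}\mu(A_j)$, and everything reduces to the volume bound
\[
\mu(A_j)\leq C\,(2^{-j}r)^{m-d}R^d,\qquad j\in{\mathbb{N}}_0.
\]

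To get this volume bound, fix $j$ and set $s:=2^{-j}r$. For each $x\in A_j$ pick $\pi(x)\in E$ with $\rho_{\#}(x,\pi(x))<s$; then $\pi(x)\in B_{\rho_{\#}}(x_0,C R)\cap E$ for a geometric constant $C$ (using $\delta_E(x_0)=0$ and the quasi-triangle inequality), and moreover $B_{\rho_{\#}}(x,s)$ has $\mu$-measure $\approx s^m$ by the $m$-ADR property (valid for $s\lesssim{\rm diam}_\rho({\mathscr{X}})$; if $s$ exceeds the diameter the statement is either vacuous or trivial). Now choose a maximal $s$-separated subset $\{y_i\}_{i\in I}$ of $\pi(A_j)\subseteq B_{\rho_{\#}}(x_0,CR)\cap E$. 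The balls $B_{\rho_{\#}}(y_i,s/(2C_{\rho_{\#}}))$ are pairwise disjoint and contained in $B_{\rho_{\#}}(x_0,C'R)\cap E$, so by $d$-ADR of $E$ and disjointness,
\[
\#\,I\;\leq\;C\,\frac{\sigma\bigl(B_{\rho_{\#}}(x_0,C'R)\cap E\bigr)}{\min_i\sigma\bigl(B_{\rho_{\#}}(y_i,s/(2C_{\rho_{\#}}))\cap E\bigr)}\;\leq\;C\,\frac{R^d}{s^d},
\]
where one also needs $s\lesssim R$, which holds since $A_j$ nonempty forces $\delta_E(x)<r$ hence points of $A_j$ lie within $O(R)$ of $E$ — if $R\lesssim s$ then $\#I$ is $O(1)$ and the bound is even easier. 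By maximality, every point of $\pi(A_j)$ lies in some $B_{\rho_{\#}}(y_i,s)$, so $A_j\subseteq\bigcup_{i\in I}B_{\rho_{\#}}(y_i,C''s)$ for a suitable geometric $C''$ (since each $x\in A_j$ satisfies $\rho_{\#}(x,\pi(x))<s$ and $\rho_{\#}(\pi(x),y_i)<s$ for some $i$). Therefore
\[
\mu(A_j)\;\leq\;\sum_{i\in I}\mu\bigl(B_{\rho_{\#}}(y_i,C''s)\bigr)\;\leq\;C\,(\#I)\,s^m\;\leq\;C\,\frac{R^d}{s^d}\,s^m\;=\;C\,s^{m-d}R^d,
\]
which is exactly the desired bound with $s=2^{-j}r$.

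Finally I would sum the layers:
\[
\int\limits_{x\in B_\rho(x_0,R),\,\delta_E(x)<r}\delta_E(x)^{-\gamma}\,d\mu(x)
\;\leq\;C\sum_{j\geq 0}(2^{-j}r)^{-\gamma}\,(2^{-j}r)^{m-d}R^d
\;=\;C\,r^{m-d-\gamma}R^d\sum_{j\geq 0}2^{-j(m-d-\gamma)},
\]
and the geometric series converges precisely because of the hypothesis $\gamma<m-d$, giving the constant $C_0$ depending only on $\gamma$ and the ADR constants of $E$ and ${\mathscr{X}}$ (and on the quasi-metric constants $C_\rho,\widetilde C_\rho$, which are subsumed in "geometry"). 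The main obstacle I anticipate is bookkeeping the quasi-metric constants cleanly in the covering/counting argument — in particular making sure the projected points $\pi(A_j)$ stay inside a ball of radius comparable to $R$ and that the separated family gives disjoint $E$-balls — but this is routine manipulation with the quasi-triangle inequality for $\rho_{\#}$, which is a genuine quasi-distance by Theorem~\ref{JjEGh}; no deep new idea is needed beyond the standard ADR volume comparisons already used throughout Section~\ref{Sect:2}.
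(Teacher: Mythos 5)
The paper itself does not prove Lemma~\ref{geom-lem}: it refers the reader to \cite{MMMM-B}, so there is no in-text argument to compare yours against. On its own merits, your dyadic-layer argument is the standard route to such collar estimates and is essentially correct: the reduction to the volume bound $\mu(A_j)\lesssim (2^{-j}r)^{m-d}R^d$, the counting of a maximal $s$-separated family in $\pi(A_j)$ via disjoint $E$-balls and the $d$-ADR upper bound on $\sigma\bigl(B_{\rho_{\#}}(x_0,C'R)\cap E\bigr)$, the covering of $A_j$ by the dilated balls together with the $m$-ADR upper bound, and the final geometric series, which converges precisely because $\gamma<m-d$, all go through with only routine quasi-metric bookkeeping.

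Two points should be stated explicitly, though both are one-line repairs rather than structural defects. First, your identity $\{x\in B_\rho(x_0,R):\delta_E(x)<r\}=\bigcup_{j\ge 0}A_j$ is not literally true: it omits $\{\delta_E=0\}=E\cap B_\rho(x_0,R)$, which is nonempty (it contains $x_0$) and on which, for $\gamma>0$, the integrand is infinite. Here you must invoke $\mu(E)=0$, which the paper proves under exactly these hypotheses in Lemma~\ref{ME-ZZ}; with that, the layers exhaust the domain of integration up to a $\mu$-null set and the layer-by-layer summation is legitimate. Second, the $d$-ADR lower bound $\sigma\bigl(B_{\rho_{\#}}(y_i,s/(2C_{\rho_{\#}}))\cap E\bigr)\gtrsim s^d$ is only available for radii up to a multiple of ${\rm diam}_\rho(E)$; if $s$ exceeds that scale the maximal $s$-separated family consists of a single point and one concludes directly $\mu(A_j)\lesssim s^m\le s^{m-d}R^d$ using $s\lesssim R$ and $m>d$, so this regime should be folded into the case split you already sketch for $s$ versus $R$ (your remark about diameters addresses the ambient space ${\mathscr{X}}$ but not $E$). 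With these two clarifications the proof is complete and the constant depends only on $\gamma$, the ADR constants, and the quasi-metric constants, as claimed.
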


At this stage, we are ready to present the

\vskip 0.08in
\begin{proof}[Proof of Theorem~\ref{SChg}]
For notational simplicity, abbreviate $A_Qf:=\dmeanint_Qf\,d\sigma$, for every
cube $Q\in{\mathbb{D}}(E)$ whenever $f:E\to{\mathbb{C}}$ is locally integrable. 
Also, recall our convention that $\delta_E(x)$ stands for 
${\rm dist}_{\rho_{\#}}(x,E)$, for every $x\in\mathscr{X}\setminus E$, 
and recall the Hardy-Littlewood maximal operator $M_E$ from \eqref{HL-MAX}.
We break down the proof of the implication 
``\eqref{UEHg}$\Rightarrow$\eqref{G-UF.22}" into a number of steps.

\vskip 0.10in
\noindent{\tt Step~I.} {\it We claim that for every $r\in(1,\infty)$ there exist 
finite positive constants $C$ and $\beta$ such that for each $l,k\in{\mathbb{Z}}$
with $l,k\geq\kappa_E$ and every $Q\in{\mathbb{D}}_k(E)$, fixed thereafter, the 
following inequality holds}:
\begin{eqnarray}\label{Lvsn}
\sup\limits_{x\in{\mathcal{U}}_Q}\Bigl|
\delta_E(x)^\upsilon\bigl(\Theta(D_lg)(x)-(\Theta 1)(x)A_Q(D_lg)\bigr)\Bigr|
\leq C2^{-|k-l|\beta}\inf\limits_{{w}\in Q}\Bigl[
M_E^2(|g|^r)({w})\Bigr]^{\frac{1}{r}},
\end{eqnarray}
{\it for every $g:E\to{\mathbb{R}}$ locally integrable}. Here, $D_l$ is the operator
introduced in \eqref{opD}, whose integral kernel is denoted by $h_l(\cdot,\cdot)$ 
(cf. the discussion in the proof of Proposition~\ref{HS-PP.3}).

To justify \eqref{Lvsn}, fix $k\in{\mathbb{Z}}$ with $k\geq\kappa_E$, 
$Q\in{\mathbb{D}}_k(E)$, and pick a number $k_0\in{\mathbb{N}}_0$ to be specified 
later, purely in terms of geometrical constants. We distinguish two cases.

\vskip 0.10in
{\it Case I: $k+{k_0}\geq l$.}  
As a preamble, we remark that $k+{k_0}\geq l$ forces 
\begin{eqnarray}\label{RRT}
2^{-(k+{k_0}-l)}\approx 2^{-|k-l|},
\end{eqnarray}
where the comparability constants depend only on ${k_0}$. Indeed, 
observe that if $j\in{\mathbb{R}}$ is such that $j\geq -{k_0}$,
then $j\leq|j|\leq j+2{k_0}$, hence $2^{-j}\approx 2^{-|j|}$. Taking now $j:=k-l$,
\eqref{RRT} follows.

Turning now to the proof of \eqref{Lvsn} in earnest,
using Fubini's Theorem, for each $l\in{\mathbb{Z}}$, we write
\begin{eqnarray}\label{FDcs}
&&\hskip -0.60in
\delta_E(x)^\upsilon\bigl(\Theta(D_lg)(x)-(\Theta 1)(x)A_Q(D_lg)\bigr)
\nonumber\\[4pt]
&&\,\,=\delta_E(x)^\upsilon\int_E{\theta}(x,y)\int_Eh_l(y,z)g(z)\,d\sigma(z)\,d\sigma(y)
\nonumber\\[4pt]
&&\hskip 0.20in
-\delta_E(x)^\upsilon(\Theta 1)(x)\meanint_Q\int_E h_l(y,z)
g(z)\,d\sigma(z)\,d\sigma(y)
\nonumber\\[4pt]
&&\,\, = \int_E\left[\int_E\Phi(x,y)h_l(y,z)\,d\sigma(y)
\right]g(z)\,d\sigma(z),\qquad\forall\,x\in{\mathcal{U}}_Q,
\end{eqnarray} 
where
\begin{eqnarray}\label{FDcs-2}
\Phi(x,y):=\delta_E(x)^\upsilon\Bigl[{\theta}(x,y)-
\tfrac{1}{\sigma(Q)}(\Theta 1)(x){\mathbf{1}}_Q(y)\Bigr],\quad
\forall\,x\in\mathscr{X}\setminus E,\quad\forall\,y\in E.
\end{eqnarray}
Note that, by design, 
\begin{eqnarray}\label{FDcs-3}
\int_E\Phi(x,y)\,d\sigma(y)=0,\qquad\forall\,x\in\mathscr{X}\setminus E,
\end{eqnarray}
and we claim that 
\begin{eqnarray}\label{FDcs-4}
|\Phi(x,y)|\leq\frac{C}{\sigma(Q)},\qquad\forall\,x\in{\mathcal{U}}_Q,\quad\forall\,y\in E.
\end{eqnarray}
Indeed, if $x\in{\mathcal{U}}_Q$ then $\delta_E(x)\approx\ell(Q)$ 
(with constants independent of $x$) and making use of
\eqref{hszz} and the fact that $\delta_E(\cdot)\approx{\rm dist}_\rho(\cdot,E)$
(see {\it (4)} in Theorem~\ref{JjEGh}), we obtain
\begin{eqnarray}\label{FDcs-5}
\delta_E(x)^\upsilon|{\theta}(x,y)|
\leq\frac{C\delta_E(x)^{\upsilon-a}}{\rho(x,y)^{d+\upsilon-a}}
\leq\frac{C\delta_E(x)^{\upsilon-a}}{\delta_E(x)^{d+\upsilon-a}}
\leq\frac{C}{\ell(Q)^d}
\leq\frac{C}{\sigma(Q)},\qquad\forall\,y\in E.
\end{eqnarray}
In addition,
\begin{eqnarray}\label{FDcs-6}
\delta_E(x)^\upsilon|(\Theta1)(x)|\leq C\delta_E(x)^{\upsilon-a}
\int_{y\in E}\frac{d\sigma(y)}{\rho_{\#}(x,y)^{d+\upsilon-a}}
\leq C,\qquad\forall\,x\in{\mathcal{U}}_Q,
\end{eqnarray}
where for the last inequality in \eqref{FDcs-6} we made use of \eqref{mMji}. Now 
\eqref{FDcs-4} follows from \eqref{FDcs-2}, \eqref{FDcs-5} and \eqref{FDcs-6}.

Denote by $x_Q$ the center of $Q$ and let $\varepsilon\in(0,1)$ and $C_0>0$ be fixed, 
to be specified later. Then for every ${w}\in Q$ fixed, due to \eqref{FDcs-3}
for each $z\in E$ we may write
\begin{eqnarray}\label{FDcs-7}
&& \hskip -0.20in
\left|\int_E\Phi(x,y)h_l(y,z)\,d\sigma(y)\right|
=\left|\int_E\Phi(x,y)[h_l(y,z)-h_l({w},z)]\,d\sigma(y)\right|
\nonumber\\[4pt]
&& \hskip 0.70in
\leq\int\limits_{y\in E,\,\rho_{\#}(y,x_Q)\leq C_02^{(k+{k_0}-l)\varepsilon}\ell(Q)}
|\Phi(x,y)||h_l(y,z)-h_l({w},z)|\,d\sigma(y)
\nonumber\\[4pt]
&& \hskip 0.80in
+\int\limits_{y\in E,\,\rho_{\#}(y,x_Q)>C_02^{(k+{k_0}-l)\varepsilon}\ell(Q)}
|\Phi(x,y)||h_l(y,z)-h_l({w},z)|\,d\sigma(y)
\nonumber\\[4pt]
&& \hskip 0.70in
=:I_1+I_2.
\end{eqnarray}
In order to estimate $I_1$ we make the claim that if $\widetilde{C}$ is chosen
large enough (compared to finite positive background constants and $C_0$) then 
\begin{eqnarray}\label{vsKJb}
\begin{array}{l}
|h_l(y,z)-h_l({w},z)|\leq C\,2^{\,l(d+\gamma)}2^{(k+{k_0}-l)\varepsilon\gamma}
\ell(Q)^\gamma
{\mathbf{1}}_{\{\rho_{\#}({w},\cdot)\leq\widetilde{C}2^{-l}\}}(z)
\\[10pt]
\mbox{whenever }\,\,z\in E,\,y\in E,\,{w}\in Q\,\,\mbox{ and }\,\,
\rho_{\#}(y,x_Q)\leq C_02^{(k+{k_0}-l)\varepsilon}\ell(Q),
\end{array}
\end{eqnarray}
with $\gamma$ as in \eqref{condh-1}. To justify this claim, first note that 
if $C_0$ is large, then since $k+{k_0}-l\geq 0$, we have
\begin{eqnarray}\label{vsKJb-2}
\hskip -0.15in
y\in E,\,{w}\in Q\mbox{ and }\rho_{\#}(y,x_Q)\leq C_02^{(k+{k_0}-l)\varepsilon}\ell(Q)
\,\Rightarrow\,\rho_{\#}(y,{w})\leq CC_02^{(k+{k_0}-l)\varepsilon}\ell(Q).
\end{eqnarray}
From now on, assume that $C_0$ is large enough to ensure the validity of \eqref{vsKJb-2}.
Second, if $y,{w}$ are as in \eqref{vsKJb-2} and $z\in E$ is such that
$\rho_{\#}(z,{w})\geq\widetilde{C}2^{-l}$, then
\begin{eqnarray}\label{vsKJb-3}
\widetilde{C}2^{-l} &\leq &
\rho_{\#}(z,{w})\leq C(\rho_{\#}(z,y)+\rho_{\#}(y,{w}))
\leq C\rho_{\#}(z,y)+CC_02^{(k+{k_0}-l)\varepsilon}\ell(Q)
\nonumber\\[4pt]
& \leq & C\rho_{\#}(z,y)+2^{k_0}C_0C2^{-l},
\end{eqnarray}
for some finite geometric constant $C>0$. Now choosing $\widetilde{C}:=2^{{k_0}+1}C_0C$
(which is permissible since, in the end, the parameter ${k_0}\in{\mathbb{N}}_0$ is 
chosen to depend only on finite positive background geometrical constants) we may absorb 
$2^{k_0}C_0C2^{-l}$ into $\widetilde{C}2^{-l}$ yielding (with $C_1:=2^{k_0}C_0C$)
\begin{eqnarray}\label{vsKJb-4}
\left.
\begin{array}{r}
y\in E,\,{w}\in Q,\,\rho_{\#}(y,x_Q)\leq C_02^{(k+{k_0}-l)\varepsilon}\ell(Q)
\\[4pt]
\mbox{and }\,\,\rho_{\#}(z,{w})\geq\widetilde{C}2^{-l}
\end{array}
\right\}\,\Longrightarrow\,\rho_{\#}(z,y)>C_12^{-l}.
\end{eqnarray}
Moreover, we can further increase $C_0$ and, in turn, $\widetilde{C}$ to insure
that the constant $C_1$ in the last inequality in \eqref{vsKJb-4} is larger that
the constant $C$ in \eqref{condh-1}. Henceforth, assume that such a choice has 
been made. Then a combination of \eqref{vsKJb-2}, \eqref{vsKJb-4} and 
\eqref{condh-2} yields \eqref{vsKJb}. 

Next, we use \eqref{FDcs-4} and \eqref{vsKJb} in order to estimate
\begin{eqnarray}\label{AsQ}
I_1 & \leq & \frac{C}{\sigma(Q)}\,
2^{\,l(d+\gamma)}\,2^{(k+{k_0}-l)\varepsilon\gamma}\,
2^{-k\gamma}\,{\mathbf{1}}_{\{\rho_{\#}({w},\cdot)\leq\widetilde{C}2^{-l}\}}(z)
\!\!\!\!
\int\limits_{y\in E,\,\rho_{\#}(y,x_Q)\leq C_02^{(k+{k_0}-l)\varepsilon}\ell(Q)}
\!\!\!\!\!\!\!\!\!\!\!\!\!\!\!\!1\,d\sigma(y)
\nonumber\\[4pt]
& \leq & C \,2^{-(k+{k_0}-l)[\gamma-\varepsilon(d+\gamma)]}2^{\,dl}\,
{\mathbf{1}}_{\{\rho_{\#}({w},\cdot)\leq\widetilde{C}2^{-l}\}}(z),
\qquad\forall\,z\in E,
\end{eqnarray}
where for the last inequality in \eqref{AsQ} we have used the fact that 
$(E,\rho|_E,\sigma)$ is a $d$-dimensional {\rm ADR} space. At this point we choose 
$0<\varepsilon<\frac{\gamma}{d+\gamma}<1$ which ensures that 
$\beta_1:=\gamma-\varepsilon(d+\gamma)>0$, hence
\begin{eqnarray}\label{AsQ-2}
I_1\leq C\,2^{\,dl}\,2^{-(k+{k_0}-l)\beta_1}\,{\mathbf{1}}_{\{\rho_{\#}({w},\cdot)
\leq\widetilde{C}2^{-l}\}}(z),\qquad\forall\,z\in E.
\end{eqnarray}

To estimate the contribution from $I_1$ in the context of \eqref{FDcs}, 
based on \eqref{AsQ-2} and \eqref{RRT} we write 
(recall that $\beta_1=\gamma-\varepsilon(d+\gamma)>0$ is a fixed constant)
\begin{eqnarray}\label{Mxr}
\int_EI_1|g(z)|\,d\sigma(z)
& \leq & C\,2^{\,dl}\,2^{-(k+{k_0}-l)\beta_1}\,\int_E|g(z)|
{\mathbf{1}}_{\{\rho_{\#}({w},\cdot)\leq C2^{-l}\}}(z)\,d\sigma(z)
\nonumber\\[4pt]
& = & C2^{-(k+{k_0}-l)\beta_1}\meanint_{z\in E,\,\rho_{\#}(z,{w})\leq C2^{-l}}|g(z)|\,d\sigma(z)
\nonumber\\[4pt]
& = & C2^{-(k+{k_0}-l)\beta_1} (M_Eg)({w})
\nonumber\\[4pt]
& \approx & 2^{-|k-l|\beta_1} (M_Eg)({w}),
\quad\mbox{uniformly in }{w}\in Q.
\end{eqnarray}

Next, we turn our attention to $I_2$ from \eqref{FDcs-7}. Note that since 
we are currently assuming that $k+{k_0}\geq l$, the condition 
$\rho_{\#}(y,x_Q)\geq C_02^{(k+{k_0}-l)\varepsilon}\ell(Q)$ forces $y\not\in c_1Q$
for some finite positive constant $c_1$, which may be further increased as desired 
by suitably increasing the value of $C_0$. Thus, assuming that $C_0$ 
is sufficiently large to guarantee $c_1>1$, we obtain ${\mathbf{1}}_Q(y)=0$ 
if $y\in E$ and $\rho_{\#}(y,x_Q)\geq C_02^{(k+{k_0}-l)\varepsilon}\ell(Q)$.
In turn, this implies that $\Phi(x,y)=\delta_E(x)^\upsilon {\theta}(x,y)$ on the domain 
of integration in $I_2$. Thus, for each $z\in E$, we have
\begin{eqnarray}\label{AsQ-3-A}
I_2 & \leq & C2^{-k\upsilon}\int\limits_{y\in E,\,\rho_{\#}(y,x_Q)>C_02^{(k+{k_0}-l)\varepsilon}\ell(Q)}
|{\theta}(x,y)|\,|h_l(y,z)|\,d\sigma(y)
\\[4pt]
&&\,\,+C2^{-k\upsilon}|h_l({w},z)|
\int\limits_{y\in E,\,\rho_{\#}(y,x_Q)>C_02^{(k+{k_0}-l)\varepsilon}\ell(Q)}
|{\theta}(x,y)|\,d\sigma(y)=:I_3+I_4.
\nonumber
\end{eqnarray}
We also remark that the design of ${\mathcal{U}}_Q$ and the fact 
that $k+{k_0}-l\geq 0$ ensure that 
\begin{eqnarray}\label{AsQ-3}
y\in E,\,\,\rho_{\#}(y,x_Q)>C_02^{(k+{k_0}-l)\varepsilon}\ell(Q)\,\Longrightarrow\,
\left\{
\begin{array}{l}
\rho_{\#}(x,y)\approx\rho_{\#}({w},y)\approx\rho_{\#}(x_Q,y),
\\[4pt]
\mbox{uniformly for $x\in{\mathcal{U}}_Q$ and ${w}\in Q$}.
\end{array}\right.
\end{eqnarray}
Making first use of \eqref{hszz} combined with \eqref{AsQ-3} and the fact that since
$x\in{\mathcal{U}}_Q$ we have $\delta_E(x)\approx\ell(Q)\approx 2^{-k}$, and then of
\eqref{condh-1}, we may further estimate
\begin{eqnarray}\label{AsQ-4}
I_3 & \leq & C2^{-k\upsilon}\int\limits_{y\in E,\,
\rho_{\#}(y,{w})>C2^{(k+{k_0}-l)\varepsilon}\ell(Q)}
\frac{\delta_E(x)^{-a}}{\rho_{\#}(y,{w})^{d+\upsilon-a}}\,|h_l(y,z)|\,d\sigma(y)
\nonumber\\[4pt]
& \leq & C2^{-k\upsilon}2^{\,dl}\int\limits_{y\in E,\,
\rho_{\#}(y,{w})>C2^{(k+{k_0}-l)\varepsilon}\ell(Q)}
\frac{2^{\,ak}}{\rho_{\#}(y,{w})^{d+\upsilon-a}}\,{\mathbf{1}}_{\{\rho_{\#}(y,\cdot)
\leq C2^{-l}\}}(z)\,d\sigma(y)
\nonumber\\[4pt]
& = & C\,2^{-(k+{k_0}-l)\varepsilon(\upsilon-a)}2^{dl}
\int_{y\in E,\,\rho_{\#}(y,{w})\geq Cr}
\frac{r^{\upsilon-a}}{\rho_{\#}(y,{w})^{d+\upsilon-a}}\,
{\mathbf{1}}_{\{\rho_{\#}(y,\cdot)\leq C2^{-l}\}}(z)\,d\sigma(y),\qquad
\end{eqnarray}
for each $z\in E$, where we have set
\begin{eqnarray}\label{yfwz}
r:=2^{(k+{k_0}-l)\varepsilon-k}.
\end{eqnarray}
Consequently, by \eqref{AsQ-4}, Fubini's theorem, \eqref{WBA} and \eqref{RRT}, we obtain
\begin{eqnarray}\label{Mxr-2}
&& \hskip -0.40in
\int_EI_3|g(z)|\,d\sigma(z)
\leq C\,2^{-(k+{k_0}-l)\varepsilon(\upsilon-a)}\,\int_E|g(z)|\times
\nonumber\\[4pt]
&& \hskip 1.20in
\times \int_{y\in E,\,\rho_{\#}(y,{w})\geq Cr}
\frac{r^{\upsilon-a}}{\rho_{\#}(y,{w})^{d+\upsilon-a}}\,2^{\,dl}\,
{\mathbf{1}}_{\{\rho_{\#}(y,\cdot)\leq C2^{-l}\}}(z)\,d\sigma(y)\,d\sigma(z)
\nonumber\\[4pt]
&& \hskip 0.75in
\leq C\,2^{-(k+{k_0}-l)\varepsilon(\upsilon-a)}\,
\int_{y\in E,\,\rho_{\#}(y,{w})\geq Cr}
\frac{r^{\upsilon-a}}{\rho_{\#}(y,{w})^{d+\upsilon-a}}\,(M_Eg)(y)\,d\sigma(y)
\nonumber\\[4pt]
&& \hskip 0.75in
\leq C2^{-(k+{k_0}-l)\varepsilon(\upsilon-a)}(M_E^2g)({w})
\nonumber\\[4pt]
&& \hskip 0.75in
\approx 2^{-|k-l|\varepsilon(\upsilon-a)}(M_E^2g)({w}),
\quad\mbox{uniformly in }{w}\in Q.
\end{eqnarray}
As for $I_4$, invoking again \eqref{hszz}, \eqref{AsQ-3}, the fact that
$\delta_E(x)\approx2^{-k}$, and \eqref{condh-1} we write
\begin{eqnarray}\label{AsQ-5}
I_4 & \leq & C2^{-k\upsilon}2^{\,dl}
{\mathbf{1}}_{\{\rho_{\#}({w},\cdot)\leq C2^{-l}\}}(z)
\int\limits_{y\in E,\,\rho_{\#}(y,x_Q)>Cr}
\frac{2^{ak}}{\rho_{\#}(y,x_Q)^{d+\upsilon-a}}\,d\sigma(y)
\nonumber\\[4pt]
& = & C2^{-(k+{k_0}-l)\varepsilon(\upsilon-a)}\,2^{\,dl}
{\mathbf{1}}_{\{\rho_{\#}({w},\cdot)\leq C2^{-l}\}}(z)
\int\limits_{y\in E,\,\rho_{\#}(y,x_Q)>Cr}
\frac{r^{\upsilon-a}}{\rho_{\#}(y,x_Q)^{d+\upsilon-a}}\,d\sigma(y)
\nonumber\\[4pt]
& = & C2^{-(k+{k_0}-l)\varepsilon(\upsilon-a)}\,2^{\,dl}
{\mathbf{1}}_{\{\rho_{\#}({w},\cdot)\leq C2^{-l}\}}(z),\qquad\forall\,z\in E,
\end{eqnarray}
by \eqref{WBA} (used with $f\equiv 1$). Finally, based on \eqref{AsQ-5} 
and \eqref{RRT}, we obtain
\begin{eqnarray}\label{Mxr-3}
\int_EI_4|g(z)|\,d\sigma(z)
& \leq & C\,2^{-(k+{k_0}-l)\varepsilon(\upsilon-a)}\,2^{\,dl}\,
\int_E|g(z)|{\mathbf{1}}_{\{\rho_{\#}({w},\cdot)\leq C2^{-l}\}}(z)\,d\sigma(z)
\nonumber\\[4pt]
& \leq & C\,2^{-(k+{k_0}-l)\varepsilon(\upsilon-a)}\,(M_Eg)({w})
\nonumber\\[4pt]
& \approx & 2^{-|k-l|\varepsilon(\upsilon-a)}(M_Eg)({w}),
\quad\mbox{uniformly in }{w}\in Q.
\end{eqnarray}
Collectively, \eqref{AsQ-4} and \eqref{AsQ-5} yield an estimate for $I_2$,
in view of \eqref{AsQ-3-A}. In order to express this estimate as well as \eqref{Mxr}
in a manner consistent with \eqref{Lvsn} requires an extra adjustment. Concretely, 
as a consequence of Lebesgue's Differentiation Theorem (which holds in our context
given that $\sigma$ is Borel regular), the monotonicity 
of the Hardy-Littlewood maximal operator and H\"older's inequality, 
for every $r\in[1,\infty)$ and any $g$ locally integrable on $E$ we have 
\begin{eqnarray}\label{bvez}
M_E g\leq\bigl[M_E^2(|g|^r)\bigr]^{\frac{1}{r}}\quad\mbox{and}\quad
M_E^2 g\leq\bigl[M_E^2(|g|^r)\bigr]^{\frac{1}{r}}\qquad\mbox{pointwise in $E$}.
\end{eqnarray}
Thus, if we define $\beta_2:=\min\{\beta_1,\varepsilon(\upsilon-a)\}>0$, then a combination
of \eqref{FDcs}, \eqref{FDcs-7}, \eqref{Mxr}, \eqref{AsQ-3-A}, \eqref{Mxr-2}, \eqref{Mxr-3}, 
and \eqref{bvez} proves \eqref{Lvsn} with $\beta$ replaced by $\beta_2$ in {\it Case~I}.

\vskip 0.10in
{\it Case II: $k+k_0<l$.} In this scenario, in order to obtain estimate \eqref{Lvsn} 
we shall use the cancellation property of the operator $D_l$, and the H\"older 
regularity of ${\theta}(\cdot,\cdot)$ in the second variable. To get started, Fubini's 
Theorem allows us to write
\begin{eqnarray}\label{hgfAW}
\delta_E(x)^\upsilon\Theta(D_lg)(x)
& = &\int_E\delta_E(x)^\upsilon\int_E{\theta}(x,y)h_l(y,z)\,d\sigma(y)\,g(z)\,d\sigma(z)
\nonumber\\[4pt]
& = &\int_E\Psi(x,z)\,g(z)\,d\sigma(z),\qquad\forall\,x\in{\mathcal{U}}_Q,
\end{eqnarray}
where we have set 
\begin{eqnarray}\label{hgfAW-BB}
\Psi(x,z):=\delta_E(x)^\upsilon\int_E{\theta}(x,y)h_l(y,z)\,d\sigma(y),\qquad
\forall\,x\in\mathscr{X}\setminus E,\quad\forall\,z\in E.
\end{eqnarray}
To proceed, fix $x\in{\mathcal{U}}_Q$ arbitrary. 
Based on \eqref{condh-4} and \eqref{condh-1}, we have
\begin{eqnarray}\label{hgfAW-2}
\hskip -0.30in
|\Psi(x,z)| &=& \left|\delta_E(x)^\upsilon\int_E\bigl({\theta}(x,y)-{\theta}(x,z)\bigr)h_l(y,z)\,d\sigma(y)\right|
\nonumber\\[4pt]
& \leq & \delta_E(x)^\upsilon\!\!\!\!\!\!\!\!
\int\limits_{y\in E,\,\rho_{\#}(y,z)\leq C2^{-l}}\!\!\!\!\!\!\!\!
|{\theta}(x,y)-{\theta}(x,z)||h_l(y,z)|\,d\sigma(y),
\qquad\forall\,z\in E.
\end{eqnarray}
As a consequence of \eqref{hszz-3}, we have
\begin{eqnarray}\label{ghhzt}
|{\theta}(x,y)-{\theta}(x,z)|\leq C\,
\frac{\rho(y,z)^\alpha\delta_E(x)^{-a-\alpha}}{\rho(x,y)^{d+\upsilon-a}}
\qquad\mbox{if }\,\,y,z\in E,\,\,\rho(y,z)<\tfrac{1}{2}\rho(x,y).
\end{eqnarray}
Observe that, since we are currently assuming $k+{k_0}<l$, if the points 
$y,z\in E$ are such that $\rho_{\#}(y,z)\leq C2^{-l}$, then
\begin{eqnarray}\label{ghhzt-2}
\rho(y,z)\leq C\rho_{\#}(y,z)\leq C2^{-l}\leq C 2^{-{k_0}}2^{-k}
\leq C2^{-{k_0}}\delta_E(x)\leq C2^{-{k_0}}\rho(x,y)<\tfrac{1}{2}\rho(x,y),
\end{eqnarray}
where the last inequality follows by choosing ${k_0}$ large. For the remainder of
the proof fix such a ${k_0}\in{\mathbb{N}}_0$. Then \eqref{ghhzt} holds when
$y$ belongs to the domain of integration of the last integral in \eqref{hgfAW-2}. 
For ${w}\in Q$ arbitrary we claim that
\begin{eqnarray}\label{ghhzt-3}
\left.
\begin{array}{l}
\exists\,C'>0\mbox{ such that }\,\forall\,z,y\in E
\\[4pt]
\mbox{ satisfying }\rho(y,z)\leq C2^{-l}
\end{array}
\right\}
\,\,\mbox{one has}\,\,
\left\{
\begin{array}{l}
\rho(x,y)\geq C'[2^{-k}+\rho_{\#}({w},z)],
\\[4pt]
\mbox{ uniformly for }
x\in{\mathcal{U}}_Q,\,{w}\in Q.
\end{array}
\right.
\end{eqnarray}
Indeed, if $y,z$ are as in the left hand-side of \eqref{ghhzt-3}, then  
$\rho(x,y)\geq C\delta_E(x)\approx\ell(Q)\approx 2^{-k}$. In addition,
\begin{eqnarray}\label{dxgkr}
\rho_{\#}({w},z) & \leq & C\rho({w},z)\leq C(\rho({w},x)+\rho(x,y)+\rho(y,z))
\leq C(\ell(Q)+\rho(x,y)+2^{-l})
\nonumber\\[4pt]
& \leq & C(2^{-k}+\rho(x,y))\leq C\rho(x,y).
\end{eqnarray}
This proves \eqref{ghhzt-3}. Combining \eqref{ghhzt-3}, \eqref{ghhzt}, 
\eqref{condh-1} and \eqref{hgfAW-2}, we obtain
\begin{eqnarray}\label{hgfAW-4}
\hskip -0.30in
|\Psi(x,z)| & \leq & C2^{-k\upsilon}\int\limits_{y\in E,\,\rho_{\#}(y,z)\leq C2^{-l}}
\frac{2^{k(a+\alpha)}2^{-l\alpha}}{[2^{-k}+\rho_{\#}({w},z)]^{d+\upsilon-a}}
\,2^{\,dl}\,d\sigma(y)
\nonumber\\[4pt]
& \leq & C2^{-k\upsilon}\frac{2^{k(a+\alpha)}2^{-l\alpha}}
{[2^{-k}+\rho_{\#}({w},z)]^{d+\upsilon-a}}
\nonumber\\[4pt]
& = & C2^{-|k-l|\alpha}
\frac{2^{-k(\upsilon-a)}}{[2^{-k}+\rho_{\#}({w},z)]^{d+\upsilon-a}},
\qquad\forall\,z\in E.
\end{eqnarray}
For the second inequality in \eqref{hgfAW-4} we used the fact that $(E,\rho|_E,\sigma)$ 
is a $d$-dimensional {\rm ADR} space, while the last equality is a simple consequence 
of the fact that $k<l$. Thus, returning with \eqref{hgfAW-4} in \eqref{hgfAW}, then making
use of \eqref{WBA}, and then recalling \eqref{bvez}, we arrive at the conclusion that 
for each $r\in[1,\infty)$,
\begin{eqnarray}\label{hgfAW-5}
\bigl|\delta_E(x)^\upsilon\Theta(D_lg)(x)\bigr|
& \leq & C 2^{-|k-l|\alpha}\int_E
\frac{2^{-k(\upsilon-a)}}{[2^{-k}+\rho_{\#}({w},z)]^{d+\upsilon-a}}|g(z)|\,d\sigma(z)
\nonumber\\[4pt]
& \leq & C 2^{-|k-l|\alpha}(M_Eg)({w})
\nonumber\\[4pt]
& \leq & C2^{-|k-l|\alpha}\bigl[M_E^2(|g|^r)({w})\bigr]^\frac{1}{r},
\qquad\mbox{uniformly for }\,\,{w}\in Q.
\end{eqnarray}

In order to estimate $|\delta_E(x)^\upsilon (\Theta 1)(x)A_Q(D_lg)|$, first note that \eqref{FDcs-6} holds in this case, so by Fubini's Theorem we have
\begin{eqnarray}\label{hgfAW-6}
|\delta_E(x)^\upsilon (\Theta 1)(x)A_Q(D_lg)| \leq C\left|\int_E\Bigl\{\tfrac{1}{\sigma(Q)}\int_Qh_l(y,z)\,d\sigma(y)
\Bigr\}g(z)\,d\sigma(z)
\right|.
\end{eqnarray}
To continue, for some fixed $\varepsilon\in(0,1)$, define
\begin{eqnarray}\label{hgfAW-7}
S_Q:=\bigl\{x\in Q:\,{\rm dist}_{\rho_{\#}}(x,E\setminus Q)
\leq C 2^{-|k-l|\varepsilon}\ell(Q)\bigr\}
\quad\mbox{ and }\quad F_Q:=Q\setminus S_Q.
\end{eqnarray}
Also, consider a function 
\begin{eqnarray}\label{zhzM}
\begin{array}{c}
\eta_Q:E\to{\mathbb{R}}\,\,\mbox{ such that }\,\,{\rm supp}\,\eta_Q\subseteq Q,\quad
0\leq\eta_Q\leq 1,\quad\eta_Q=1\mbox{ on }F_Q\,\mbox{ and}
\\[4pt]
\Bigl|\eta_Q(x)-\eta_Q(y)\Bigr|\leq C
\Bigl(\frac{\rho(x,y)}{2^{-|k-l|\varepsilon}\ell(Q)}
\Bigr)^\gamma,\qquad\forall\,x\in E,\,\,\forall\,y\in E,
\end{array}
\end{eqnarray}
for some $\gamma\in(0,1)$. That such a function exists is a consequence of 
Lemma~\ref{GVa2}. Hence,
\begin{eqnarray}\label{zhzMB}
\hskip -0.30in
\left|\meanint_Qh_l(y,z)\,d\sigma(y)\right|
&\leq & \tfrac{1}{\sigma(Q)}\left|\int_E\bigl({\mathbf{1}}_Q-\eta_Q(y)\bigr)
h_l(y,z)\,d\sigma(y)\right|
\\[4pt]
&&\hskip 0.06in 
+\tfrac{1}{\sigma(Q)}\left|\int_E\eta_Q(y)h_l(y,z)\,d\sigma(y)\right|
=:II_1(z)+II_2(z),\quad\forall\,z\in E.
\nonumber
\end{eqnarray}
Fix $z\in E$. To estimate $II_2(z)$, recall \eqref{condh-4}, \eqref{condh-1},
\eqref{zhzM} and the fact that $E$ is Ahlfors-David regular. Based on these, we
may write
\begin{eqnarray}\label{zhzM-2}
II_2(z) &=& \tfrac{1}{\sigma(Q)}\left|\int_E\bigl(\eta_Q(y)-\eta_Q(z)\bigr)
h_l(y,z)\,d\sigma(y)\right|
\\[4pt]
&\leq &  \tfrac{1}{\sigma(Q)}\!\!\!\!
\int\limits_{y\in E,\,\rho_{\#}(y,z)\leq C2^{-l}}\!\!\!\!\!\!\!\!\!\!
\bigl|\eta_Q(y)-\eta_Q(z)\bigr||h_l(y,z)|\,d\sigma(y)
\leq \tfrac{1}{\sigma(Q)}\left[\frac{2^{-l}}{2^{-|k-l|\varepsilon}2^{-k}}
\right]^{\gamma}.
\nonumber
\end{eqnarray}
In addition, since whenever $y\in{\rm supp}\,\eta_Q\subseteq Q$ and 
$\rho_{\#}(y,z)\leq C2^{-l}\leq C\ell(Q)$ one necessarily has 
$\rho_{\#}({w},z)\leq C\ell(Q)$, for all ${w}\in Q$, it follows 
that one may strengthen \eqref{zhzM-2} to
\begin{eqnarray}\label{zhzM-3}
II_2(z) \leq C \tfrac{1}{\sigma(Q)}2^{-|k-l|(1-\varepsilon)\gamma}
{\mathbf{1}}_{\{\rho_{\#}({w},\cdot)\leq C\ell(Q)\}}(z),\qquad
\mbox{ for all ${w}\in Q$}.
\end{eqnarray}
Hence, by also recalling \eqref{bvez}, for each $r\in[1,\infty)$ we further obtain
\begin{eqnarray}\label{zhzM-4}
\hskip -0.40in
\int_E II_2(z)|g(z)|\,d\sigma(z) 
& \leq & C 2^{-|k-l|(1-\varepsilon)\gamma}\tfrac{1}{\sigma(Q)}
\int_{z\in E,\,\rho_{\#}(z,{w})\leq C\ell(Q)}|g(z)|\,d\sigma(z)
\nonumber\\[4pt]
& \leq & C 2^{-|k-l|(1-\varepsilon)\gamma}(M_Eg)({w})
\leq C 2^{-|k-l|(1-\varepsilon)\gamma}\bigl[M_E^2(|g|^r)({w})\bigr]^\frac{1}{r},
\end{eqnarray}
for all ${w}\in Q$. This bound suits our purposes. 

Next, we turn our attention to $II_1(z)$. Pick $r\in(1,\infty)$ and let $r'$ be such that
$\frac{1}{r}+\frac{1}{r'}=1$. Note that since 
${\rm supp}\,({\mathbf{1}}_Q-\eta_Q)\subseteq S_Q$ and 
$0\leq {\mathbf{1}}_Q-\eta_Q\leq 1$ we may write
\begin{eqnarray}\label{zhzM-5}
\hskip -0.40in
\int_E II_1(z)|g(z)|\,d\sigma(z) 
& \leq & \tfrac{1}{\sigma(Q)}\int_E\int_{S_Q}|h_l(y,z)|\,d\sigma(y)\,|g(z)|\,d\sigma(z) 
\nonumber\\[4pt]
&=& \tfrac{1}{\sigma(Q)}\int_{S_Q}\int_E|g(z)||h_l(y,z)|\,d\sigma(z)\,d\sigma(y)
\nonumber\\[4pt]
&\leq & C\tfrac{1}{\sigma(Q)}\int_{S_Q}\int_{z\in E,\,\rho_{\#}(y,z)\leq C2^{-l}}
2^{\,dl}|g(z)|\,d\sigma(z)\,d\sigma(y)
\nonumber\\[4pt]
&\leq & C\tfrac{1}{\sigma(Q)}\int_Q{\mathbf{1}}_{S_Q}(y)(M_Eg)(y)\,d\sigma(y)
\nonumber\\[4pt]
&\leq & C\left[\frac{\sigma(S_Q)}{\sigma(Q)}\right]^{\frac{1}{r'}}
\left[\meanint_Q(M_Eg)^r\,d\sigma\right]^{\frac{1}{r}}
\nonumber\\[4pt]
&\leq & C\left[\frac{\sigma(S_Q)}{\sigma(Q)}\right]^{\frac{1}{r'}}
\bigl[M_E^2(|g|^r)({w})\bigr]^{\frac{1}{r}},\qquad\mbox{ for all }{w}\in Q.
\end{eqnarray}
The second inequality in \eqref{zhzM-5} is based on \eqref{condh-1}, the third
is immediate, the fourth uses H\"older's inequality, while the last one is a consequence
of \eqref{bvez}. By virtue of the ``thin boundary" property described in item {\it (8)}
of Proposition~\ref{Diad-cube}, we have
\begin{eqnarray}\label{AAzrhg}
\exists\,c>0\,\,\mbox{ and }\,\,
\exists\,\tau\in(0,1)\,\,\mbox{ such that }\,\, 
\sigma(S_Q)\leq c2^{-|k-l|\varepsilon\tau}\sigma(Q),
\end{eqnarray}
which, when used in concert with \eqref{zhzM-5}, yields
\begin{eqnarray}\label{zhzM-6}
\int_E II_1(z)|g(z)|\,d\sigma(z) 
\leq C 2^{-|k-l|\varepsilon\tau/r'}
\bigl[M_E^2(|g|^r)({w})\bigr]^{\frac{1}{r}},
\qquad\mbox{for all }\,{w}\in Q.
\end{eqnarray}
Now choose $\beta_3:=\min\{(1-\varepsilon)\gamma,\frac{\varepsilon\tau}{r'},\alpha\}>0$.
Then \eqref{Lvsn} follows in the current case with $\beta$ replaced by $\beta_3$, 
by combining \eqref{hgfAW-5}, \eqref{hgfAW-6}, \eqref{zhzMB}, \eqref{zhzM-4} and 
\eqref{zhzM-6}.

Now the proof of the claim made in Step~I is completed by combining what we proved in 
{\it Case I} and {\it Case II} above and by taking $\beta:=\min\{\beta_2,\beta_3\}>0$.

\vskip 0.10in
\noindent{\tt Step~II.} {\it We claim that there exists a finite constant $C>0$ 
with the property that for every $f\in L^2(E,\sigma)$ there holds}
\begin{eqnarray}\label{N-NKy}
\sum\limits_{k\in{\mathbb{Z}},\,k\geq\kappa_E}\sum\limits_{Q\in{\mathbb{D}}_k(E)}
\int_{{\mathcal{U}}_Q}\bigl|\delta_E(x)^\upsilon
\bigl((\Theta f)(x)-(\Theta 1)(x)A_Qf\bigr)\bigr|^2
\frac{d\mu(x)}{\delta_E(x)^{m-d}}
\leq C\int_E|f|^2\,d\sigma.
\end{eqnarray}
To justify this claim, fix $r\in(1,2)$ and let $\beta>0$ be such that \eqref{Lvsn} holds. 
Then, for an arbitrary function $f\in L^2(E,\sigma)$, using \eqref{PKc}, we may write 
\begin{eqnarray}\label{N-NKy-2AA}
&&\hskip -0.20in
\sum\limits_{k\in{\mathbb{Z}},\,k\geq\kappa_E}\sum\limits_{Q\in{\mathbb{D}}_k(E)}
\int_{{\mathcal{U}}_Q}\bigl|\delta_E(x)^\upsilon
\bigl(\Theta-(\Theta 1)(x)A_Q\bigr)(f)(x)\bigr|^2\frac{d\mu(x)}{\delta_E(x)^{m-d}}
\\[4pt]
&& 
\leq 2\sum\limits_{k\in{\mathbb{Z}},\,k\geq\kappa_E}\sum\limits_{Q\in{\mathbb{D}}_k(E)}
\int_{{\mathcal{U}}_Q}\Bigl|\sum\limits_{l\in{\mathbb{Z}},\,l\geq \kappa_E}
\delta_E(x)^\upsilon\bigl(\Theta -(\Theta 1)(x)A_Q\bigr)(D_l\widetilde{D}_lf)(x)\Bigr|^2
\frac{d\mu(x)}{\delta_E(x)^{m-d}}
\nonumber\\[4pt]
&& \hskip 0.10in
+2\sum\limits_{k\in{\mathbb{Z}},\,k\geq\kappa_E}\sum\limits_{Q\in{\mathbb{D}}_k(E)}
\int_{{\mathcal{U}}_Q}\Bigl|\delta_E(x)^\upsilon
\bigl(\Theta -(\Theta 1)(x)A_Q\bigr)({\mathcal{S}}_{\kappa_E}(Rf))(x)\Bigr|^2
\frac{d\mu(x)}{\delta_E(x)^{m-d}}=:A_1+A_2.
\nonumber
\end{eqnarray}
Pick now $\varepsilon\in(0,\beta)$ arbitrary and proceed to estimate $A_1$ as follows:
\begin{eqnarray}\label{N-NKy-2}
A_1\!&=& \!2\sum\limits_{\stackrel{k\in{\mathbb{Z}}}{k\geq\kappa_E}}
\sum\limits_{Q\in{\mathbb{D}}_k(E)}
\int_{{\mathcal{U}}_Q}\Bigl|\sum\limits_{\stackrel{l\in{\mathbb{Z}}}{l\geq\kappa_E}}\!\!\!
2^{-|k-l|\varepsilon}2^{|k-l|\varepsilon}
\delta_E(x)^\upsilon\bigl(\Theta -(\Theta 1)(x)A_Q\bigr)(D_l\widetilde{D}_lf)(x)\Bigr|^2
\frac{d\mu(x)}{\delta_E(x)^{m-d}}
\nonumber\\[4pt]
&\leq& 2\sum\limits_{\stackrel{k\in{\mathbb{Z}}}{k\geq\kappa_E}}
\sum\limits_{Q\in{\mathbb{D}}_k(E)}
\int_{{\mathcal{U}}_Q}\Bigl(\sum\limits_{l\in{\mathbb{Z}}}2^{-2|k-l|\varepsilon}\Bigr)
\times
\nonumber\\[4pt]
&& \hskip 1.00in
\times\Bigl(\sum\limits_{\stackrel{l\in{\mathbb{Z}}}{l\geq\kappa_E}}2^{2|k-l|\varepsilon}
\bigl|\delta_E(x)^\upsilon\bigl(\Theta -(\Theta 1)(x)A_Q\bigr)
(D_l\widetilde{D}_lf)(x)\bigr|^2\Bigr)\frac{d\mu(x)}{\delta_E(x)^{m-d}}
\nonumber\\[4pt]
&\leq& C\sum\limits_{\stackrel{l\in{\mathbb{Z}}}{l\geq\kappa_E}}
\sum\limits_{\stackrel{k\in{\mathbb{Z}}}{k\geq\kappa_E}}
\sum\limits_{Q\in{\mathbb{D}}_k(E)}2^{2|k-l|\varepsilon}\int_{{\mathcal{U}}_Q}
\bigl|\delta_E(x)^\upsilon\bigl(\Theta -(\Theta 1)(x)A_Q\bigr)
(D_l\widetilde{D}_lf)(x)\bigr|^2\frac{d\mu(x)}{\delta_E(x)^{m-d}}
\nonumber\\[4pt]
&\leq& C\sum\limits_{\stackrel{l\in{\mathbb{Z}}}{l\geq\kappa_E}}
\sum\limits_{\stackrel{k\in{\mathbb{Z}}}{k\geq\kappa_E}}
\sum\limits_{Q\in{\mathbb{D}}_k(E)}2^{2|k-l|\varepsilon}2^{-2|k-l|\beta}
\inf_{{w}\in Q}\Bigl[M_E^2\bigr(|\widetilde{D}_lf|^r\bigr)({w})\Bigr]^\frac{2}{r}
\int_{{\mathcal{U}}_Q}2^{k(m-d)}\,d\mu
\nonumber\\[4pt]
&\leq& C\sum\limits_{\stackrel{l\in{\mathbb{Z}}}{l\geq\kappa_E}}
\sum\limits_{\stackrel{k\in{\mathbb{Z}}}{k\geq\kappa_E}}
\sum\limits_{Q\in{\mathbb{D}}_k(E)}2^{-2|k-l|(\beta-\varepsilon)}
\int_Q\Bigl[M_E^2\bigr(|\widetilde{D}_lf|^r\bigr)\Bigr]^\frac{2}{r}\,d\sigma
\nonumber\\[4pt]
&=& C\sum\limits_{\stackrel{l\in{\mathbb{Z}}}{l\geq\kappa_E}}
\sum\limits_{\stackrel{k\in{\mathbb{Z}}}{k\geq\kappa_E}}
2^{-2|k-l|(\beta-\varepsilon)}
\int_E\Bigl[M_E^2\bigr(|\widetilde{D}_lf|^r\bigr)\Bigr]^\frac{2}{r}\,d\sigma
\leq C\sum\limits_{\stackrel{l\in{\mathbb{Z}}}{l\geq\kappa_E}}
\int_E\Bigl[M_E^2\bigr(|\widetilde{D}_lf|^r\bigr)\Bigr]^\frac{2}{r}\,d\sigma
\nonumber\\[4pt]
&\leq& C\sum\limits_{\stackrel{l\in{\mathbb{Z}}}{l\geq\kappa_E}}
\int_E|\widetilde{D}_lf|^2\,d\sigma\leq C\int_E|f|^2\,d\sigma.
\end{eqnarray}
The first inequality in \eqref{N-NKy-2} uses the Cauchy-Schwarz inequality, 
the second inequality uses the fact that
$\sum\limits_{l\in{\mathbb{Z}}}2^{-2|k-l|\varepsilon}=C$,
for some finite positive constant independent of $k\in{\mathbb{Z}}$,
the third inequality employs \eqref{Lvsn}, while the forth inequality is based on 
the fact that $\mu(\mathcal{U}_Q)\leq C2^{-km}$ and $2^{-kd}\leq C\sigma(Q)$, 
for all $Q\in{\mathbb{D}}_k(E)$. Since $\varepsilon\in(0,\beta)$, we have 
$\sum\limits_{k\in{\mathbb{Z}}}2^{-2|k-l|(\beta-\varepsilon)}=C$, which is used in
the fifth inequality in \eqref{N-NKy-2}. The sixth inequality in 
\eqref{N-NKy-2} follows from the boundedness of the Hardy-Littlewood maximal operator
$M_E$ on $L^{\frac{2}{r}}(E,\sigma)$ (recall that $2/r>1$) and the fact that 
$|\widetilde{D}_lf|^r\in L^{\frac{2}{r}}(E,\sigma)$. Finally, the last inequality in 
\eqref{N-NKy-2} uses \eqref{PKc-2}. 

There remains to obtain a similar bound for $A_2$ introduced in \eqref{N-NKy-2AA}.
Note that if $E$ is unbounded then $\kappa_E=-\infty$ so actually $A_2=0$ since we
agreed that ${\mathcal{S}}_{-\infty}=0$. Consider therefore the case when $E$ is 
bounded, in which scenario we have $k\geq\kappa_E\in{\mathbb{Z}}$. An inspection of
the proof of \eqref{Lvsn} in {\it Case~I} (of Step~I) shows that the function 
$D_lg={\mathcal{S}}_{l+1}g-{\mathcal{S}}_l g$ may actually be decoupled, i.e., 
be replaced by, say, ${\mathcal{S}}_l g$. This is because in {\it Case~I} we have 
only made use of the regularity of the integral kernel of $D_l$ (as opposed to 
{\it Case~II} where the vanishing condition of the integral kernel of $D_l$ is used)
and the integral kernel of ${\mathcal{S}}_l$ exhibits the same type of regularity.
Consequently, the same proof as before gives that for every $r\in(1,\infty)$ there exist 
finite positive constants $C$ and $\beta$ such that for each $k,l\in{\mathbb{Z}}$
with $k,l\geq\kappa_E$ such that $k\geq l$ and every $Q\in{\mathbb{D}}_k(E)$, 
there holds
\begin{eqnarray}\label{Lvsn-Sl}
\sup\limits_{x\in{\mathcal{U}}_Q}\Bigl|
\delta_E(x)^\upsilon\bigl(\Theta({\mathcal{S}}_lg)(x)
-(\Theta 1)(x)A_Q({\mathcal{S}}_lg)\bigr)\Bigr|
\leq C2^{-|k-l|\beta}\inf\limits_{{w}\in Q}
\Bigl[M_E^2(|g|^r)({w})\Bigr]^{\frac{1}{r}},
\end{eqnarray}
for every $g:E\to{\mathbb{R}}$ locally integrable. Applying \eqref{Lvsn-Sl} with
$l:=\kappa_E$ and $g:=Rf$ then yields
\begin{eqnarray}\label{N-NKy-A2}
A_2
&\leq & C\sum\limits_{k\in{\mathbb{Z}},\,k\geq\kappa_E}\sum\limits_{Q\in{\mathbb{D}}_k(E)}
2^{-2|k-\kappa_E|\beta}
\inf_{{w}\in Q}\Bigl[M_E^2\bigr(|Rf|^r\bigr)({w})\Bigr]^\frac{2}{r}
\int_{{\mathcal{U}}_Q}2^{k(m-d)}\,d\mu
\nonumber\\[4pt]
&\leq & C\sum\limits_{\stackrel{k\in{\mathbb{Z}}}{k\geq\kappa_E}}
\sum\limits_{Q\in{\mathbb{D}}_k(E)}2^{-2|k-\kappa_E|\beta}
\int_Q\Bigl[M_E^2\bigr(|Rf|^r\bigr)\Bigr]^\frac{2}{r}\,d\sigma
\nonumber\\[4pt]
&=& C\sum\limits_{\stackrel{k\in{\mathbb{Z}}}{k\geq\kappa_E}}
2^{-2|k-\kappa_E|\beta}
\int_E\Bigl[M_E^2\bigr(|Rf|^r\bigr)\Bigr]^\frac{2}{r}\,d\sigma
\leq C\int_E\Bigl[M_E^2\bigr(|Rf|^r\bigr)\Bigr]^\frac{2}{r}\,d\sigma
\nonumber\\[4pt]
&\leq & C\int_E|Rf|^2\,d\sigma\leq C\int_E|f|^2\,d\sigma,
\end{eqnarray}
since $R$ is a bounded operator on $L^2(E,\sigma)$. Now \eqref{N-NKy-2AA}, 
\eqref{N-NKy-2} and \eqref{N-NKy-A2} imply \eqref{N-NKy} completing the proof 
of the claim made in Step~II.

\vskip 0.10in
\noindent{\tt Step~III.} {\it The end-game in the proof of the implication
``\eqref{UEHg}$\Rightarrow$\eqref{G-UF.22}"}. Fix $f\in L^2(E,\sigma)$ and 
recall $\epsilon\in(0,1)$ from Lemma~\ref{Lem:CQinBQ-N} (here is where we use 
that $C_\ast$ is as in \eqref{NeD-67}). Then
by \eqref{doj.cF} and \eqref{doj} we may write
\begin{eqnarray}\label{WLQ}
&& \hskip -0.40in
\int_{\bigl\{x\in{\mathscr{X}}\setminus E:\,\delta_E(x)<\epsilon\,
{\rm diam}_{\rho}(E)\bigr\}}
|(\Theta f)(x)|^2\delta_E(x)^{2\upsilon-(m-d)}\,d\mu(x) 
\nonumber\\[4pt]
&& \hskip 0.40in
\leq\int_{\bigcup\limits_{Q\in{\mathbb{D}}(E)}{\mathcal{U}}_Q}
|(\Theta f)(x)|^2\delta_E(x)^{2\upsilon-(m-d)}\,d\mu(x) 
\nonumber\\[4pt]
&& \hskip 0.40in
\leq C\sum\limits_{k\in{\mathbb{Z}},\,k\geq\kappa_E}\sum\limits_{Q\in{\mathbb{D}}_k(E)}
\int_{{\mathcal{U}}_Q}\bigl|(\Theta f)(x)-(\Theta 1)(x)A_Qf\bigr|^2
\delta_E(x)^{2\upsilon-(m-d)}\,d\mu(x)
\nonumber\\[4pt]
&& \hskip 0.50in
+C\sum\limits_{k\in{\mathbb{Z}},\,k\geq\kappa_E}\sum\limits_{Q\in{\mathbb{D}}_k(E)}
\int_{{\mathcal{U}}_Q}\bigl|(\Theta 1)(x)A_Qf\bigr|^2
\delta_E(x)^{2\upsilon-(m-d)}\,d\mu(x).
\end{eqnarray}
Observe that if we set 
$B_Q:=\int_{{\mathcal{U}}_Q}|(\Theta 1)(x)|^2\delta_E(x)^{2\upsilon-(m-d)}\,d\mu(x)$
for each $Q\in{\mathbb{D}}(E)$, then in view of \eqref{gZSZ-3}, \eqref{doj}, 
and \eqref{UEHg} there holds
\begin{eqnarray}\label{WLQ-5}
\sum\limits_{Q'\in{\mathbb{D}}(E),\,Q'\subseteq Q}B_Q
\leq C\int_{T_E(Q)}|(\Theta 1)(x)|^2\delta_E(x)^{2\upsilon-(m-d)}\,d\mu(x)\leq C\sigma(Q),
\quad\forall\,Q\in{\mathbb{D}}(E).
\end{eqnarray}
Thus, the numerical sequence $\bigl\{B_Q\bigr\}_{Q\in{\mathbb{D}}(E)}$ 
satisfies \eqref{TAkB}. Consequently, Lemma~\ref{brFC} applies and gives 
\begin{eqnarray}\label{WLQ-2}
&& \hskip -0.70in
\sum\limits_{Q\in{\mathbb{D}}(E)}
\int_{{\mathcal{U}}_Q}\bigl|(\Theta 1)(x)A_Qf\bigr|^2
\delta_E(x)^{2\upsilon-(m-d)}\,d\mu(x)
\nonumber\\[4pt]
&& \hskip 0.50in
\leq C\int_E\Bigl[\,\sup_{Q\in{\mathbb{D}}(E),\,x\in Q}\meanint_Q|f|\,d\sigma
\Bigr]^2\,d\sigma(x)
\nonumber\\[4pt]
&& \hskip 0.50in
\leq C\int_E(M_E f)^2\,d\sigma\leq C\int_E|f|^2\,d\sigma,
\end{eqnarray}
where for the last inequality in \eqref{WLQ-2} we have used the boundedness
of $M_E$ on $L^2(E,\sigma)$. By combining \eqref{WLQ}, \eqref{N-NKy} and \eqref{WLQ-2}
we therefore obtain
\begin{eqnarray}\label{WLD-1s}
\int_{\bigl\{x\in{\mathscr{X}}\setminus E:\,\delta_E(x)<\epsilon\,
{\rm diam}_{\rho}(E)\bigr\}}
|(\Theta f)(x)|^2\delta_E(x)^{2\upsilon-(m-d)}\,d\mu(x)\leq C\int_E|f|^2\,d\sigma.
\end{eqnarray}
Of course, this takes care of \eqref{G-UF.22} in the case when ${\rm diam}_\rho(E)=\infty$. 
To prove that \eqref{G-UF.22} continues to hold
in the case when $E$ is bounded, let $R:={\rm diam}_\rho(E)\in(0,\infty)$, 
fix $x_0\in E$ and set 
${\mathcal{O}}:=\bigl\{x\in{\mathscr{X}}\setminus E:\,\epsilon R\leq\delta_E(x)\bigr\}$.
Then for each $x\in{\mathcal{O}}$ there exists $y\in E$ such that
$\rho_{\#}(x,y)<2\delta_E(x)$, hence
\begin{eqnarray}\label{Bvvc}
\rho(x,x_0)\leq C_\rho^2\rho_{\#}(x,x_0)\leq C_\rho^2\max\{\rho_{\#}(x,y),
\rho_{\#}(y,x_0)\}\leq C_\rho^2\max\{2,\tfrac{1}{\epsilon}\}\delta_E(x).
\end{eqnarray}
Thus, $\rho(x,x_0)\approx\delta_E(x)$ uniformly for $x\in{\mathcal{O}}$. Based on this, 
estimate \eqref{hszz}, and H\"older's inequality we then obtain the pointwise estimate 
$|(\Theta f)(x)|^2\leq CR^{\,d}\|f\|_{L^2(E,\sigma)}^2\rho(x,x_0)^{-2(d+\upsilon)}$ 
for each $x\in{\mathcal{O}}$.  Consequently, for some sufficiently small $c>0$ and some 
$C\in(0,\infty)$ independent of $f$ and $R$, we may estimate
\begin{eqnarray}\label{WL-1B22}
&& \hskip -0.40in
\int_{\mathcal{O}}|(\Theta f)(x)|^2\delta_E(x)^{2\upsilon-(m-d)}\,d\mu(x)
\nonumber\\[4pt]
&& \hskip 0.40in
\leq CR^{\,d}\|f\|_{L^2(E,\sigma)}^2
\int_{{\mathscr{X}}\setminus B_{\rho_{\#}}(x_0,cR)}\rho_{\#}(x,x_0)^{-m-d}\,d\mu(x)
\nonumber\\[4pt]
&& \hskip 0.40in
\leq CR^{\,d}\|f\|_{L^2(E,\sigma)}^2\sum\limits_{j=1}^\infty
\int_{B_{\rho_{\#}}(x_0,c2^{j+1}R)\setminus B_{\rho_{\#}}(x_0,c2^j R)}
\rho_{\#}(x,x_0)^{-m-d}\,d\mu(x)
\nonumber\\[4pt]
&& \hskip 0.40in
\leq CR^{\,d}\|f\|_{L^2(E,\sigma)}^2\sum\limits_{j=1}^\infty(2^j R)^{-m-d}
\mu\bigl(B_{\rho_{\#}}(x_0,c2^{j+1}R)\bigr)
\nonumber\\[4pt]
&& \hskip 0.40in
\leq CR^{\,d}\|f\|_{L^2(E,\sigma)}^2\sum\limits_{j=1}^\infty(2^j R)^{-m-d}
(2^j R)^m\leq C\|f\|_{L^2(E,\sigma)}^2.
\end{eqnarray}
Now \eqref{G-UF.22} follows by combining \eqref{WLD-1s} and \eqref{WL-1B22}.

At this stage in the proof of the theorem, we are left with establishing 
the converse implication with the regularity assumption \eqref{hszz-3} on the kernel
now dropped. With the goal of proving \eqref{UEHg}, suppose that \eqref{G-UF} holds 
for some $\eta\in(0,\infty)$. Assume first that ${\rm diam}_\rho(E)<\infty$ 
and pick an arbitrary $\eta_o\in(\eta,\infty)$. We may then estimate 
\begin{eqnarray}\label{WLQ-AA}
&& \hskip -0.40in
\int_{\bigl\{x\in{\mathscr{X}}\setminus E:\,\delta_E(x)<\eta_o\,
{\rm diam}_{\rho}(E)\bigr\}}
|(\Theta f)(x)|^2\delta_E(x)^{2\upsilon-(m-d)}\,d\mu(x) 
\nonumber\\[4pt]
&& \hskip 0.30in
=\int_{\bigl\{x\in{\mathscr{X}}\setminus E:\,\delta_E(x)<\eta\,
{\rm diam}_{\rho}(E)\bigr\}}
|(\Theta f)(x)|^2\delta_E(x)^{2\upsilon-(m-d)}\,d\mu(x) 
\\[4pt]
&& \hskip 0.40in
+\int_{\bigl\{x\in{\mathscr{X}}\setminus E:\,
\eta\,{\rm diam}_{\rho}(E)\leq\delta_E(x)<\eta_o\,{\rm diam}_{\rho}(E)\bigr\}}
|(\Theta f)(x)|^2\delta_E(x)^{2\upsilon-(m-d)}\,d\mu(x),
\nonumber 
\end{eqnarray} 
and then observing that since by \eqref{hszz} and H\"older's inequality we have the
pointwise estimate 
$|(\Theta f)(x)|^2\leq C\|f\|_{L^2(E,\sigma)}^2[{\rm diam}_{\rho}(E)]^{-d-2\upsilon}$ 
whenever $\eta\,{\rm diam}_{\rho}(E)\leq\delta_E(x)<\eta_o\,{\rm diam}_{\rho}(E)$, 
the last integral in \eqref{WLQ-AA} may also be bounded by $C\int_E|f|^2\,d\sigma$, 
for some finite positive geometric constant independent of ${\rm diam}_\rho(E)$.
The bottom line is that there is no loss of generality in assuming that 
$\eta>0$ appearing in \eqref{G-UF} is as large as desired.

Assuming that this is the case, fix $Q\in{\mathbb{D}}(E)$ and, for some large finite
positive constant $C_o$, write 
\begin{eqnarray}\label{UEHg.CC}
&& \hskip -0.40in
\tfrac{1}{\sigma(Q)}\int_{T_E(Q)}
|\Theta 1(x)|^2\delta_E(x)^{2\upsilon-(m-d)}\,d\mu(x)
\nonumber\\[4pt]
&& \hskip 0.40in
\leq\tfrac{2}{\sigma(Q)}\int_{T_E(Q)}
\bigl|\bigl(\Theta {\mathbf{1}}_{E\cap B_{\rho_{\#}}(x_Q,C_o\ell(Q))}\bigr)(x)\bigr|^2
\delta_E(x)^{2\upsilon-(m-d)}\,d\mu(x)
\nonumber\\[4pt]
&& \hskip 0.50in
+\tfrac{2}{\sigma(Q)}\int_{T_E(Q)}
\bigl|\bigl(\Theta{\mathbf{1}}_{E\setminus B_{\rho_{\#}}(x_Q,C_o\ell(Q))}\bigr)(x)\bigr|^2
\delta_E(x)^{2\upsilon-(m-d)}\,d\mu(x)
\nonumber\\[4pt]
&& \hskip 0.40in
=:{\mathcal{I}}_1+{\mathcal{I}}_2.
\end{eqnarray}
Then, granted \eqref{G-UF} and keeping in mind \eqref{dFvK}
and the fact that $\eta$ is large, we may write 
\begin{eqnarray}\label{UEHg.CC2}
{\mathcal{I}}_1 &\leq &\tfrac{2}{\sigma(Q)}
\int_{\{x\in{\mathscr{X}}:\,0<\delta_E(x)<\eta\,{\rm diam}_{\rho}(E)\}}
\bigl|\bigl(\Theta {\mathbf{1}}_{E\cap B_{\rho_{\#}}(x_Q,C_o\ell(Q))}\bigr)(x)\bigr|^2
\delta_E(x)^{2\upsilon-(m-d)}\,d\mu(x)
\nonumber\\[4pt]
&\leq & \tfrac{C}{\sigma(Q)}\int_{E}
\bigl|{\mathbf{1}}_{E\cap B_{\rho_{\#}}(x_Q,C_o\ell(Q))}(x)\bigr|^2\,d\sigma(x)
\nonumber\\[4pt]
&\leq & C\frac{\sigma\bigl(E\cap B_{\rho_{\#}}(x_Q,C_o\ell(Q))\bigr)}{\sigma(Q)}\leq C,
\end{eqnarray}
given that $\sigma$ is doubling. To estimate ${\mathcal{I}}_2$, observe that 
there exists $C\in(0,\infty)$ with the property that  
\begin{eqnarray}\label{UEHg.CC3}
&& \hskip -0.30in
\bigl|\bigl(\Theta{\mathbf{1}}_{E\setminus B_{\rho_{\#}}(x_Q,C_o\ell(Q))}\bigr)(x)\bigr|
\\[4pt]
&& \hskip 0.20in
\leq C\!\!\!\int\limits_{E\setminus B_{\rho_{\#}}(x_Q,C_o\ell(Q))}
\frac{\delta_E(x)^{-a}}{\rho_{\#}(x,y)^{d+\upsilon-a}}\,d\sigma(y)
\leq C\ell(Q)^{-(\upsilon-a)}\delta_E(x)^{-a},\qquad\forall\,x\in T_E(Q).
\nonumber
\end{eqnarray}
This is based on \eqref{hszz}, \eqref{WBA} (used here with $f\equiv 1$ 
and $\varepsilon=\upsilon-a>0$) and on the fact that $C_o$ has been 
chosen sufficiently large (compare with \eqref{AsQ-3}). Consequently, 
from \eqref{UEHg.CC3}, \eqref{dFvK}, and \eqref{lbzF} in Lemma~\ref{geom-lem}
(for which we recall that $\upsilon-a>0$)
\begin{eqnarray}\label{UEHg.CC4}
{\mathcal{I}}_2 &\leq & \tfrac{C}{\sigma(Q)}\ell(Q)^{-2(\upsilon-a)}
\int_{T_E(Q)}\delta_E(x)^{2(\upsilon-a)-(m-d)}\,d\mu(x)
\nonumber\\[4pt]
&\leq & \tfrac{C}{\sigma(Q)}\ell(Q)^{-2(\upsilon-a)}
\int_{B_{\rho_{\#}}(x_Q,C\ell(Q))}\delta_E(x)^{2(\upsilon-a)-(m-d)}\,d\mu(x)
\nonumber\\[4pt]
&\leq & \tfrac{C}{\sigma(Q)}\ell(Q)^{-2(\upsilon-a)}
\ell(Q)^{m-d+2(\upsilon-a)-(m-d)}\ell(Q)^d\leq C<\infty,
\end{eqnarray}
given that $\sigma(Q)\approx\ell(Q)^d$.
In concert, \eqref{UEHg.CC}-\eqref{UEHg.CC4} prove \eqref{UEHg}, and this 
finishes the proof of the theorem. 
\end{proof}

\subsection{An arbitrary codimension local $T(b)$ theorem for square functions}
\label{SSect:3.2}

We continue to work in the context introduced at the beginning of Section~\ref{Sect:3}. The main result in this subsection is a local $T(b)$ theorem for square functions,
to the effect that {\it a square function estimate for the integral operator $\Theta$ 
holds if there exists a suitably nondegenerate family of functions $\{b_Q\}$, indexed by dyadic cubes $Q$ in $E$, for which there is local scale-invariant $L^2$ control of $\Theta b_Q$, appropriately weighted by a power of the distance to $E$}. To state this formally, the reader is again advised 
to recall the dyadic cube grid from Proposition~\ref{Diad-cube} and the 
regularized distance function to a set from \eqref{REG-DDD}.

\begin{theorem}\label{Thm:localTb}
Let $d,m$ be two real numbers such that $0<d<m$. Assume that $({\mathscr{X}},\rho,\mu)$
is an $m$-dimensional {\rm ADR} space, $E$ is a closed subset 
of $({\mathscr{X}},\tau_\rho)$, and $\sigma$ is a Borel regular measure 
on $(E,\tau_{\rho|_{E}})$ with the property that $(E,\rho\bigl|_E,\sigma)$ is a 
$d$-dimensional {\rm ADR} space.

Suppose that $\Theta$ is the integral operator defined in \eqref{operator} 
with a kernel ${\theta}$ as in \eqref{K234}, \eqref{hszz}, \eqref{hszz-3}. 
Furthermore, let ${\mathbb{D}}(E)$ denote a dyadic cube structure on $E$, 
consider a Whitney covering ${\mathbb{W}}_\lambda(\mathscr{X}\setminus E)$ 
of $\mathscr{X}\setminus E$ and a constant $C_\ast$ as in Lemma~\ref{Lem:CQinBQ-N}
and, corresponding to these, recall the dyadic Carleson tents from \eqref{gZSZ-3}. 

For these choices, assume that there exist finite constant $C_0\geq 1$, $c_0\in(0,1]$,
and a collection $\{b_Q\}_{Q\in{\mathbb{D}}(E)}$ of $\sigma$-measurable functions $b_Q:E\rightarrow{\mathbb{C}}$ such that for each $Q\in{\mathbb{D}}(E)$ the following
estimates hold:
\begin{enumerate}
\item $\int_E |b_Q|^2\,d\sigma\leq C_0\sigma(Q)$;
\item there exists $\widetilde{Q}\in{\mathbb{D}}(E)$, $\widetilde{Q}\subseteq Q$,
$\ell(\widetilde{Q})\geq c_0\ell(Q)$, and 
$\left|\int_{\widetilde{Q}}b_Q\,d\sigma\right|\geq\frac{1}{C_0}\,\sigma(\widetilde{Q})$;
\item $\int_{T_E(Q)}|(\Theta\,b_Q)(x)|^2
\delta_E(x)^{2\upsilon-(m-d)}\,d\mu(x)\leq C_0\sigma(Q)$.
\end{enumerate}

Then there exists a finite constant $C>0$ depending only on $C_0$, $C_{\theta}$, and the 
{\rm ADR} constants of $E$ and ${\mathscr{X}}$, as well as on ${\rm diam}_\rho(E)$
in the case when $E$ is bounded, such that for each function $f\in L^2(E,\sigma)$ 
one has
\begin{eqnarray}\label{G-UF-2}
\int_{\mathscr{X}\setminus E}|(\Theta f)(x)|^2\delta_E(x)^{2\upsilon-(m-d)}\,
d\mu(x)\leq C\int_E|f(x)|^2\,d\sigma(x).
\end{eqnarray}
\end{theorem}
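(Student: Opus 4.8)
The plan is to reduce the local $T(b)$ theorem, Theorem~\ref{Thm:localTb}, to the $T(1)$ theorem, Theorem~\ref{SChg}, by verifying the single Carleson measure hypothesis \eqref{UEHg}, namely that $|\Theta 1|^2\,\delta_E^{2\upsilon-(m-d)}\,d\mu$ is a Carleson measure on dyadic tents. The mechanism is the classical ``stopping-time'' or ``corona'' argument adapted to this quasi-metric, arbitrary-codimension setting: for a fixed reference cube $R\in{\mathbb{D}}(E)$ we must control $\int_{T_E(R)}|\Theta 1|^2\,\delta_E^{2\upsilon-(m-d)}\,d\mu$ by $C\sigma(R)$. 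The idea is to replace the bad constant $1$ by $b_R$ on the relevant scales; since $\int_E|b_R|^2\,d\sigma\le C_0\sigma(R)$ and, by hypothesis (2), $b_R$ has a nondegenerate average on a cube $\widetilde R$ comparable in size to $R$, the difference $\Theta(1) - (\text{average of }b_R)^{-1}\Theta(b_R)$ is, near $T_E(R)$, effectively a sum of $\Theta$ applied to mean-zero pieces, which is handled by the already-established $T(1)$ square function machinery (the key estimate is \eqref{Lvsn} from Step~I of the proof of Theorem~\ref{SChg}, together with the reproducing formula \eqref{PKc}).

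\textbf{Key steps, in order.} First, fix $R\in{\mathbb{D}}(E)$ and set up the stopping-time decomposition: define a collection of maximal ``bad'' subcubes $Q\subseteq R$ at which either (a) the average of $b_R$ over $Q$ is too small in absolute value, say $|\meanint_Q b_R\,d\sigma| < \varepsilon_0$, or (b) the $L^2$ average $\meanint_Q |b_R|^2\,d\sigma$ is too large, say $> K_0$, for thresholds $\varepsilon_0$ small and $K_0$ large to be chosen. Let $\mathcal{B}$ be this maximal family and let $F_R := R\setminus\bigcup_{Q\in\mathcal{B}}Q$ be the ``good'' set; a standard estimate using hypotheses (1)--(2) and the doubling/ADR structure shows $\sum_{Q\in\mathcal{B}}\sigma(Q) \le (1-c)\sigma(R)$ for some $c>0$, provided the thresholds are chosen appropriately — this is the quantitative heart of the stopping construction. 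Second, on the tent region ``above'' $F_R$ (i.e.\ those Whitney cubes $\mathcal{U}_{Q'}$ with $Q'\subseteq R$, $Q'\not\subseteq$ any $Q\in\mathcal{B}$), one writes $\Theta 1 = \frac{1}{m_R}\Theta b_R + \Theta\big(1 - \frac{1}{m_R}b_R\big)$, where $m_R := \meanint_{\widetilde R}b_R\,d\sigma$; by hypothesis (2), $|m_R|\geq 1/C_0$ when $\widetilde{Q}=\widetilde R$ is chosen (one first needs to promote hypothesis (2), which only asserts a nondegenerate average on \emph{some} $\widetilde R\subseteq R$ of comparable size, to a stopping-time statement — this requires running the corona argument over a suitable finite family of dilates/translates of $\widetilde R$, or restricting attention to the subcube $\widetilde R$ and using an induction/iteration on scales). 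The first term is controlled directly by hypothesis (3): $\int_{T_E(R)}|\Theta b_R|^2\,\delta_E^{2\upsilon-(m-d)}\,d\mu \le C_0\sigma(R)$. The second term, $g_R := 1 - m_R^{-1}b_R$, has average zero over $\widetilde R$ and $\|g_R\|_{L^2(E,\sigma)}^2 \le C\sigma(R)$; applying the $T(1)$-type bound \eqref{Lvsn} / \eqref{N-NKy} (which is \emph{available} even without knowing the Carleson condition, as it is the part of Theorem~\ref{SChg}'s proof that only uses the kernel estimates and the reproducing formula) gives that the square-function integral of $\Theta g_R - (\Theta 1)A_{Q'}g_R$ over the relevant tent is $\lesssim \sigma(R)$. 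Third, combine: on the good region one gets the bound $C\sigma(R)$; on the bad region one recurses into the cubes $Q\in\mathcal{B}$, and since $\sum\sigma(Q)\le(1-c)\sigma(R)$, summing a geometric series over the generations of the stopping-time tree yields $\int_{T_E(R)}|\Theta 1|^2\,\delta_E^{2\upsilon-(m-d)}\,d\mu \le \frac{C}{c}\,\sigma(R)$, uniformly in $R$. This establishes \eqref{UEHg}, whence Theorem~\ref{SChg} gives \eqref{G-UF-2}.

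\textbf{Main obstacle.} The delicate point is the passage from the \emph{local} average nondegeneracy in hypothesis (2) — a nonzero average on a single comparably-sized subcube $\widetilde Q$, whose location is not specified — to a usable stopping-time statement in which, on each cube of the good region, one has a definite lower bound on $|\meanint b_Q\,d\sigma|$ for a \emph{fixed} pseudo-accretive-type function. In the classical Euclidean local $T(b)$ arguments this is handled by the ``$\varepsilon$-accretivity'' device and a pigeonholing over the bounded number of children, combined with a John–Nirenberg / big-pieces iteration; here one must carry this out in the quasi-metric setting using Proposition~\ref{Diad-cube}(5)--(6) to control children and \eqref{hacc-76es} for thin boundaries. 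A second, more technical nuisance is that the tent $T_E(Q)$ of a stopping cube $Q$ does not sit cleanly inside $T_E(R)$ minus the tents of the \emph{other} stopping cubes — one needs the overlap bound \eqref{doj} of Lemma~\ref{Lem:CQinBQ} and the containments \eqref{dFvK}, \eqref{zjrh} to organize the decomposition of $T_E(R)$ into the good part and the disjointified bad tents, and to absorb the boundary Whitney layers. Once those bookkeeping issues are in place, the analytic estimates are exactly those already proved for Theorem~\ref{SChg}, so the real work is combinatorial/geometric rather than in new hard analysis.
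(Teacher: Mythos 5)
Your overall strategy is the paper's: reduce to the $T(1)$ theorem (Theorem~\ref{SChg}) by verifying the Carleson condition \eqref{UEHg}, via a stopping-time construction built on $b_Q$, with hypothesis (3) controlling $\Theta b_Q$ and the discrepancy estimate \eqref{N-NKy} (i.e.\ \eqref{Lvsn} summed via the reproducing formula) controlling $\Theta b_Q-(\Theta 1)A_{Q'}b_Q$ on the good region. The genuine gap is in your closing step. The absorption you propose, ``recurse into the bad cubes and sum a geometric series over the generations,'' implicitly requires that the quantities $F(Q):=\int_{T_E(Q)}|\Theta 1|^2\delta_E^{2\upsilon-(m-d)}\,d\mu$ be finite, or at least that the tail $\sum_{Q\ \mathrm{at\ depth}\ k}F(Q)$ tend to zero; neither is available a priori. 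The only pointwise information is $|\Theta 1|\leq C\delta_E^{-\upsilon}$ (Lemma~\ref{Gkwvr}), which yields $|\Theta 1|^2\delta_E^{2\upsilon-(m-d)}\leq C\delta_E^{-(m-d)}$, and the exponent $m-d$ is exactly the borderline case excluded from Lemma~\ref{geom-lem}: already for ${\mathscr{X}}={\mathbb{R}}^{n+1}$, $E={\mathbb{R}}^n$ the integral $\int_{T_E(Q)}\delta_E^{-(m-d)}\,d\mu$ diverges logarithmically. So each $F(Q)$ may be $+\infty$ and the iteration never closes as written. The paper repairs precisely this point by truncating: with $\Theta_i:={\mathbf{1}}_{\{1/i<\delta_E<i\}}\Theta$ the Carleson constants $c_i$ of \eqref{LIH-6} are finite ($c_i\leq Ci^{4\upsilon}$, via Lemma~\ref{geom-lem} with $\gamma=(m-d)-2\upsilon$), the stopping-time estimate gives $c_i\leq c_i(1-\eta C_1^{-1})+C$, hence $c_i\leq \eta^{-1}C_1C$ uniformly in $i$, and one concludes by monotone convergence as $i\to\infty$. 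Without this (or some equivalent a priori qualitative finiteness) your geometric-series step fails.

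Two further points. The ``main obstacle'' you flag, promoting hypothesis (2) to a stopping-time statement, is settled in the paper more simply than the devices you suggest: the stopping time (Lemma~\ref{brjo}) is run only inside $\widetilde{Q}$, after normalizing $\meanint_{\widetilde{Q}}b_Q\,d\sigma=1$ and stopping where ${\rm Re}\meanint_{Q'}b_Q\,d\sigma\leq\tfrac12$; the rest of $T_E(Q)$ is covered by the tents over the stopping cubes and over the same-generation cubes $Q''\subseteq Q$ with $Q''\neq\widetilde{Q}$, all of which are fed back into the (truncated) Carleson supremum, and absorption works because $\sigma(\widetilde{Q})\gtrsim\sigma(Q)$ (from $\ell(\widetilde{Q})\geq c_0\ell(Q)$ and ADR). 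No pigeonholing over children or John--Nirenberg iteration is needed, but note this absorption is again exactly where finiteness of the truncated constant is used, so the two issues are linked. Finally, your splitting $\Theta 1=m_R^{-1}\Theta b_R+\Theta g_R$ with $g_R=1-m_R^{-1}b_R$ is problematic as written: $g_R\notin L^2(E,\sigma)$ when $E$ is unbounded, and after writing $\Theta g_R=[\Theta g_R-(\Theta 1)A_{Q'}g_R]+(\Theta 1)A_{Q'}g_R$ the last term equals $(\Theta 1)(1-m_R^{-1}A_{Q'}b_R)$, whose coefficient is not small on good cubes, so it cannot be absorbed pointwise. The clean route, consistent with your stopping rule (a), is the paper's: on good cubes $|A_{Q'}b_R|\geq\tfrac12$, hence $|\Theta 1|\leq 2|\Theta b_R|+2|\Theta b_R-(\Theta 1)A_{Q'}b_R|$, with the first term handled by hypothesis (3) and the second by \eqref{N-NKy} applied to $f=b_R$ (Lemma~\ref{NIGz} organizing the good Whitney regions).
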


Before giving the proof of Theorem~\ref{Thm:localTb} we present a
stopping-time construction and elaborate on the way this is used. 

\begin{lemma}\label{brjo}
Assume $(E,\rho,\sigma)$ is a space of homogeneous type with the property that $\sigma$
is Borel regular, and denote by ${\mathbb{D}}(E)$ a dyadic cube structure on $E$. 
Suppose that there exist finite constants $C_0\geq 1$, $c_0\in(0,1)$, and a collection
$\{b_Q\}_{Q\in{\mathbb{D}}(E)}$ of $\sigma$-measurable functions
$b_Q:E\rightarrow{\mathbb{C}}$ such that 
\begin{eqnarray}\label{dbjpz}
&& \hskip -0.40in
\int_E |b_Q|^2\ d\sigma \leq C_0\sigma(Q)\quad
\mbox{for every $Q\in{\mathbb{D}}(E)$, and}
\\[4pt]
&& \hskip -0.40in
\forall\,Q\in{\mathbb{D}}(E)\quad\exists\,
\widetilde{Q}\in{\mathbb{D}}(E),\,\widetilde{Q}\subseteq Q, 
\,\ell(\widetilde{Q})\geq c_0\ell(Q)\,\,\,\mbox{with}\,\,\,
\left| \int_{\widetilde{Q}} b_Q\ d\sigma \right|\geq\frac{1}{C_0}\,\sigma(\widetilde{Q}).
\label{dbjpz-extra}
\end{eqnarray}
Then there exists a number $\eta\in(0,1)$ such that for every cube 
$Q\in{\mathbb{D}}(E)$, and each fixed $\widetilde{Q}$ as in \eqref{dbjpz-extra}, 
one can find a sequence $\bigl\{Q_j\bigr\}_{j\in J}\subseteq{\mathbb{D}}(E)$ of 
pairwise disjoint cubes satisfying the following properties:
\begin{enumerate}
\item[(i)] $Q_j\subseteq\widetilde{Q}$ for every $j\in J$ and 
$\sigma\Bigl(\widetilde{Q}\setminus\bigcup_{j\in J}Q_j\Bigr)\geq\eta\,\sigma(\widetilde{Q})$;
\item[(ii)] if
\begin{eqnarray}\label{lvB}
{\mathcal{F}}_Q:=\bigl\{Q'\in{\mathbb{D}}(E):\,Q'\subseteq\widetilde{Q}
\mbox{ and $Q'$ is not contained in $Q_j$ for every $j\in J$}\bigr\},
\end{eqnarray}
then $\left|\dmeanint_{Q'}b_Q\,d\sigma\right|\geq\frac{1}{2}$ for every
$Q'\in{\mathcal{F}}_Q$.
\end{enumerate}
\end{lemma}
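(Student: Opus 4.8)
\textbf{Proof strategy for Lemma~\ref{brjo}.} The plan is to run a standard Calder\'on--Zygmund-type stopping time argument on the cube $\widetilde{Q}$, using the averages of $b_Q$ as the stopping functional. Fix $Q\in{\mathbb{D}}(E)$ and a corresponding $\widetilde{Q}$ as in \eqref{dbjpz-extra}, and write $\beta:=\bigl|\dmeanint_{\widetilde{Q}}b_Q\,d\sigma\bigr|\geq\frac{1}{C_0}$ by construction. First I would perform the stopping-time selection: let $\{Q_j\}_{j\in J}$ be the collection of maximal dyadic subcubes $Q'\subseteq\widetilde{Q}$ for which $\bigl|\dmeanint_{Q'}b_Q\,d\sigma\bigr|<\frac{1}{2}$. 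Maximality guarantees the $Q_j$ are pairwise disjoint, and by the very definition of maximality every $Q'\in{\mathcal F}_Q$ (i.e.\ every dyadic $Q'\subseteq\widetilde{Q}$ not contained in any $Q_j$) satisfies $\bigl|\dmeanint_{Q'}b_Q\,d\sigma\bigr|\geq\frac12$, which is exactly property (ii). Note that $\widetilde{Q}$ itself cannot be one of the $Q_j$ precisely because its average has modulus $\geq\frac1{C_0}$ --- here one needs $\frac{1}{C_0}\geq\frac12$, so in fact I would first reduce to the case $C_0=2$ (or, more cleanly, replace the threshold $\frac12$ in (ii) by $\frac{1}{2C_0}$, which is harmless since the hypothesis $\ell(\widetilde{Q})\geq c_0\ell(Q)$ and $\bigl|\dmeanint_{\widetilde{Q}}b_Q\bigr|\geq\frac1{C_0}$ make $\widetilde{Q}\in{\mathcal F}_Q$; I should check whether the paper's later use demands exactly $\frac12$, and if so observe that one may always shrink $C_0$ by first passing to a child or by a harmless renormalization of $b_Q$).

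The heart of the matter is the measure estimate in (i), namely $\sigma\bigl(\widetilde{Q}\setminus\bigcup_j Q_j\bigr)\geq\eta\,\sigma(\widetilde{Q})$ for some $\eta=\eta(C_0)\in(0,1)$. The key step I would carry out is the chain of inequalities
\begin{eqnarray}\label{eq:stoppingchain}
\tfrac{1}{C_0}\,\sigma(\widetilde{Q})\leq\Bigl|\int_{\widetilde{Q}}b_Q\,d\sigma\Bigr|
\leq\int_{\widetilde{Q}\setminus\bigcup_j Q_j}|b_Q|\,d\sigma
+\sum_{j\in J}\Bigl|\int_{Q_j}b_Q\,d\sigma\Bigr|,
\end{eqnarray}
then bound the second term using the stopping condition: for each $j$, $\bigl|\int_{Q_j}b_Q\,d\sigma\bigr|=\bigl|\dmeanint_{Q_j}b_Q\,d\sigma\bigr|\,\sigma(Q_j)<\frac12\sigma(Q_j)$, so $\sum_j\bigl|\int_{Q_j}b_Q\,d\sigma\bigr|\leq\frac12\sum_j\sigma(Q_j)\leq\frac12\sigma(\widetilde{Q})$ by disjointness. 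For the first term, apply Cauchy--Schwarz together with hypothesis (1): writing $A:=\widetilde{Q}\setminus\bigcup_j Q_j$,
\begin{eqnarray}\label{eq:CSbound}
\int_A|b_Q|\,d\sigma\leq\Bigl(\int_E|b_Q|^2\,d\sigma\Bigr)^{1/2}\sigma(A)^{1/2}
\leq\bigl(C_0\,\sigma(Q)\bigr)^{1/2}\sigma(A)^{1/2}.
\end{eqnarray}
Here I would use ADR-doubling together with $\ell(\widetilde{Q})\geq c_0\ell(Q)$ and $\widetilde{Q}\subseteq Q$ to get $\sigma(Q)\leq C_1\sigma(\widetilde{Q})$ for a constant $C_1$ depending on $c_0$ and the doubling character of $\sigma$ (this is where property (5) of Proposition~\ref{Diad-cube} and the doubling hypothesis enter). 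Combining, $\frac{1}{C_0}\sigma(\widetilde{Q})\leq (C_0C_1)^{1/2}\sigma(\widetilde{Q})^{1/2}\sigma(A)^{1/2}+\frac12\sigma(\widetilde{Q})$, which after rearranging and squaring yields $\sigma(A)\geq\eta\,\sigma(\widetilde{Q})$ with $\eta:=\bigl(2C_0^3C_1\bigr)^{-1}$ (or a similar explicit expression); one just needs $\frac{1}{C_0}>\frac12$, again handled by the normalization above.

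The only genuine obstacle I anticipate is bookkeeping rather than conceptual: one must make sure the threshold used in the stopping time is compatible with both the lower bound $\frac{1}{C_0}$ on the average over $\widetilde{Q}$ (so that $\widetilde{Q}$ survives and the family ${\mathcal F}_Q$ is nonempty, containing at least $\widetilde{Q}$) and with the constant $\frac12$ demanded in conclusion (ii). I would resolve this at the outset by noting that it suffices to prove the lemma for $b_Q$ replaced by $C_0\,b_Q$ if necessary --- but more carefully, since the hypotheses scale consistently, one may simply assume without loss of generality that $C_0$ is large and use threshold $\frac{1}{2C_0}\leq\frac12$ in the selection; then (ii) holds with the stated constant $\frac12$ after a final trivial comparison, and the measure estimate goes through verbatim with $\eta$ depending only on $C_0$ and the doubling constant of $\sigma$. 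Finally I would record that all $Q_j\subseteq\widetilde{Q}$ by construction, completing (i).
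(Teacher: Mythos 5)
Your proposal is correct and follows essentially the same route as the paper: select the maximal dyadic subcubes of $\widetilde{Q}$ on which the average of $b_Q$ drops below the threshold, obtain (ii) from maximality, and obtain (i) from the exact decomposition of $\int_{\widetilde{Q}}b_Q\,d\sigma$ together with Cauchy--Schwarz against the $L^2$ bound, the doubling comparison $\sigma(Q)\leq C_1\sigma(\widetilde{Q})$ (via property (5) of the dyadic grid), and absorption of $\tfrac12\sigma(\widetilde{Q})$, yielding $\eta\approx(C_0^{3}C_1)^{-1}$. The paper handles the $\tfrac{1}{C_0}$-versus-$\tfrac12$ mismatch exactly by the renormalization you gesture at (dividing $b_Q$ by $\dmeanint_{\widetilde{Q}}b_Q\,d\sigma$, which makes the $\widetilde{Q}$-average equal to $1$ and turns the $L^2$ constant into $C_0^{3}$), and it stops on the real part rather than the modulus of the average, an immaterial difference; the only caveat is that, just as in the paper, conclusion (ii) with the constant $\tfrac12$ then refers to the renormalized family --- for the original $b_Q$ one only gets $\tfrac{1}{2C_0}$, so your ``final trivial comparison'' back to $\tfrac12$ is not actually available, though this is harmless in the subsequent applications, where only a lower bound depending on $C_0$ is used.
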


\begin{proof}
Granted the regularity of the measure $\sigma$, it follows from 
{\it (3)} and {\it (9)} in Proposition~\ref{Diad-cube} that
for each $k\in{\mathbb{Z}}$ and each $Q\in{\mathbb{D}}_k(E)$ we have 
\begin{eqnarray}\label{T-rcs-BV}
\sigma\Bigl(Q\setminus\bigcup_{Q'\subseteq Q,\,Q'\in{\mathbb{D}}_\ell(E)}Q'\Bigr)=0
\qquad\mbox{for every $\ell\in{\mathbb{Z}}$ with $\ell\geq k$}.
\end{eqnarray}
Thanks to \eqref{dbjpz}-\eqref{dbjpz-extra}, we may re-normalize the functions 
$\{b_Q\}_{Q\in{\mathbb{D}}(E)}$ so that $\dmeanint_{\widetilde{Q}} b_Q\,d\sigma=1$ for each
$Q\in{\mathbb{D}}(E)$, where $\widetilde{Q}$ is as in \eqref{dbjpz-extra}. In the process,
the first inequality in \eqref{dbjpz} becomes 
\begin{eqnarray}\label{Fs23EE}
\int_E |b_Q|^2\ d\sigma\leq C_0^3\sigma(Q),\qquad
\mbox{for each }\,\,\,Q\in{\mathbb{D}}(E). 
\end{eqnarray}
Fix $Q\in{\mathbb{D}}(E)$ and a corresponding $\widetilde{Q}$ as in \eqref{dbjpz-extra}. 
In particular we have
\begin{eqnarray}\label{meYY}
\sigma(Q)\leq C_1\sigma(\widetilde{Q})\quad\mbox{for some $C_1\in[1,\infty)$ 
independent of $Q,\widetilde{Q}$}.
\end{eqnarray}
Next, perform a stopping-time argument for $\widetilde{Q}$ by successively dividing 
it into dyadic sub-cubes $Q'\subseteq \widetilde{Q}$ and stopping whenever 
${\rm Re}\,\dmeanint_{Q'}b_Q\,d\sigma\leq\frac{1}{2}$. That this is doable is ensured 
by \eqref{T-rcs-BV} and the re-normalization of $b_Q$. This yields a family of cubes 
$\bigl\{Q_j\bigr\}_{j\in J}\subseteq{\mathbb{D}}(E)$ such that:
\begin{enumerate}
\item[(1)] $Q_j\subseteq \widetilde{Q}\subseteq Q$ for each $j\in J$ and 
$Q_j\cap Q_{j'}=\emptyset$ whenever $j,j'\in J$, $j\not=j'$;
\item[(2)] ${\rm Re}\,\dmeanint_{Q_j}b_Q\,d\sigma\leq\frac{1}{2}$ for each $j\in J$;
\item[(3)] the family $\bigl\{Q_j\bigr\}_{j\in J}$ is maximal with respect to 
$(1)$ and $(2)$ above, i.e., if $Q'\in{\mathbb{D}}(E)$ is such that 
$Q'\subseteq \widetilde{Q}$, then either there exists $j_0\in J$ such that 
$Q'\subseteq Q_{j_0}$, or ${\rm Re}\,\dmeanint_{Q'}b_Q\,d\sigma>\frac{1}{2}$.
\end{enumerate}

Then we may write
\begin{eqnarray}\label{kvnS}
\sigma(\widetilde{Q}) & = &\int_{\widetilde{Q}}b_Q\,d\sigma
={\rm Re}\int_{\widetilde{Q}\setminus(\bigcup_{j\in J}Q_j)}b_Q\,d\sigma
+\sum\limits_{j\in J}{\rm Re}\int_{Q_j}b_Q\,d\sigma
\nonumber\\[4pt]
&\leq & \Bigl(\int_E|b_Q|^2d\sigma\Bigr)^{\frac{1}{2}}
\sigma\bigl(\widetilde{Q}\setminus\cup_{j\in J}Q_j\bigr)^{\frac{1}{2}}
+\tfrac{1}{2}\sum\limits_{j\in J}\sigma(Q_j)
\nonumber\\[4pt]
&\leq & C_0^{\frac{3}{2}}\sigma(Q)^{\frac{1}{2}}
\sigma\bigl(\widetilde{Q}\setminus\cup_{j\in J}Q_j\bigr)^{\frac{1}{2}}
+\tfrac{1}{2}\sigma(\widetilde{Q})
\nonumber\\[4pt]
&\leq & C_1^{\frac{1}{2}}C_0^{\frac{3}{2}}\sigma(\widetilde{Q})^{\frac{1}{2}}
\sigma\bigl(\widetilde{Q}\setminus\cup_{j\in J}Q_j\bigr)^{\frac{1}{2}}
+\tfrac{1}{2}\sigma(\widetilde{Q}),
\end{eqnarray}
where the first inequality in \eqref{kvnS} is based on H\"older's inequality and condition
$(2)$ above, the second inequality uses \eqref{Fs23EE} and $(1)$ above, while the last
inequality is a consequence of \eqref{meYY}. After absorbing 
$\tfrac{1}{2}\sigma(\widetilde{Q})$ in the leftmost side of \eqref{kvnS} and setting
$\eta:=\frac{1}{4C_1C_0^3}\in(0,1)$, it follows that 
$\sigma\bigl(\widetilde{Q}\setminus\cup_{j\in J}Q_j\bigr)\geq\eta\sigma(\widetilde{Q})$,
thus condition $(i)$ holds for the family $\bigl\{Q_j\bigr\}_{j\in J}$ constructed 
above. In addition it is immediate from property 
$(3)$ that condition $(ii)$ is also satisfied.
\end{proof}
 
A typical application of Lemma~\eqref{brjo} is exemplified by our next result.

\begin{lemma}\label{NIGz}
Assume that $({\mathscr{X}},\rho)$ is a geometrically doubling quasi-metric space,
$\mu$ is a Borel measure on $({\mathscr{X}},\tau_\rho)$ and that $E$ is a nonempty,
closed, proper subset of $({\mathscr{X}},\tau_\rho)$. Also, suppose that $\sigma$ is a 
Borel regular measure on $E$ with the property that $(E,\rho\bigl|_E,\sigma)$ is a 
space of homogeneous type and denote by ${\mathbb{D}}(E)$ a dyadic cube structure 
on $E$. Next, assume that ${\mathbb{W}}_\lambda(\mathscr{X}\setminus E)$ is a
Whitney covering of $\mathscr{X}\setminus E$ as in Lemma~\ref{Lem:CQinBQ}
(for some fixed $a\geq 1$), and recall the regions $\{\mathcal{U}_Q\}_{Q\in\mathbb{D}(E)}$ from \eqref{gZSZb} relative to this cover. Finally, assume the hypotheses of Lemma~\ref{brjo}, and for each cube $Q\in{\mathbb{D}}(E)$, recall the collection ${\mathcal{F}}_Q$ from \eqref{lvB} and define
\begin{eqnarray}\label{tnjG}
E^{\ast}_Q:=\bigcup\limits_{Q'\in{\mathcal{F}}_Q}{\mathcal{U}}_{Q'}.
\end{eqnarray}

Then for every $\gamma\in{\mathbb{R}}$ and every $\mu$-measurable function
$u:\mathscr{X}\setminus E\to{\mathbb{R}}$ it follows that
\begin{eqnarray}\label{BNKa}
\int_{E^{\ast}_Q}|u(x)|^2\delta_E(x)^\gamma\,d\mu(x)
\approx\sum\limits_{Q'\in{\mathcal{F}}_Q}\int_{{\mathcal{U}}_{Q'}}
\bigl|u(x){\textstyle{\dmeanint_{Q'}b_Q\,d\sigma}}\bigr|^2\delta_E(x)^\gamma\,d\mu(x),
\end{eqnarray}
with finite positive equivalence constants, 
depending only on $C_0$ from \eqref{dbjpz}-\eqref{dbjpz-extra}.
\end{lemma}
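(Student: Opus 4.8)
\textbf{Proof plan for Lemma~\ref{NIGz}.}
The equivalence \eqref{BNKa} should follow from essentially two observations: a bounded-overlap statement for the regions $\{{\mathcal{U}}_{Q'}\}_{Q'\in{\mathcal{F}}_Q}$, and the fact that the averages $\dmeanint_{Q'}b_Q\,d\sigma$ are bounded above and below uniformly for $Q'\in{\mathcal{F}}_Q$. For the latter, the lower bound $\bigl|\dmeanint_{Q'}b_Q\,d\sigma\bigr|\geq\frac12$ is exactly conclusion {\it (ii)} of Lemma~\ref{brjo} (after the re-normalization performed there, under which $\dmeanint_{\widetilde{Q}}b_Q\,d\sigma=1$), while the upper bound comes from H\"older's inequality together with hypothesis \eqref{dbjpz} (in its re-normalized form \eqref{Fs23EE}): for $Q'\in{\mathcal{F}}_Q$ we have $Q'\subseteq\widetilde{Q}\subseteq Q$, so $\bigl|\dmeanint_{Q'}b_Q\,d\sigma\bigr|\leq\bigl(\dmeanint_{Q'}|b_Q|^2\,d\sigma\bigr)^{1/2}\leq\bigl(\tfrac{1}{\sigma(Q')}\int_E|b_Q|^2\,d\sigma\bigr)^{1/2}\leq\bigl(C_0^3\tfrac{\sigma(Q)}{\sigma(Q')}\bigr)^{1/2}$. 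The trouble is that $\sigma(Q)/\sigma(Q')$ is not uniformly bounded over all of ${\mathcal{F}}_Q$, so this crude bound is not enough; one must instead exploit that the stopping-time cubes $Q'$ in ${\mathcal{F}}_Q$ are, by construction, precisely those on which the average has not yet dropped below $\tfrac12$ in real part, hence in particular $\bigl|\dmeanint_{Q'}b_Q\,d\sigma\bigr|$ is comparable to $1$ directly — more carefully, for a cube $Q'\in{\mathcal{F}}_Q$ its dyadic parent $\widehat{Q'}$ (if still inside $\widetilde Q$) also lies in ${\mathcal{F}}_Q$, and one propagates the bound $\bigl|\dmeanint_{Q'}b_Q\,d\sigma\bigr|\lesssim 1$ up from $Q'$ to $\widetilde Q$ using that adjacent-generation averages change by a controlled amount; alternatively, and more cleanly, one observes that for $Q'\in{\mathcal{F}}_Q$ one has $\bigl|\dmeanint_{Q'}b_Q\,d\sigma\bigr|\le \bigl(\dmeanint_{Q'}|b_Q|^2\,d\sigma\bigr)^{1/2}$ and then uses the maximal-function/Carleson bound implicit in \eqref{Fs23EE} only on those $Q'$ one actually needs. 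So the first step is: establish $\tfrac12\le\bigl|\dmeanint_{Q'}b_Q\,d\sigma\bigr|\le C$ for all $Q'\in{\mathcal{F}}_Q$, with $C=C(C_0)$.

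Granting that two-sided bound on the averages, \eqref{BNKa} is almost immediate pointwise on each ${\mathcal{U}}_{Q'}$: we have $\tfrac14|u(x)|^2\le\bigl|u(x)\dmeanint_{Q'}b_Q\,d\sigma\bigr|^2\le C^2|u(x)|^2$ for $x\in{\mathcal{U}}_{Q'}$. Multiplying by $\delta_E(x)^\gamma$ and integrating over ${\mathcal{U}}_{Q'}$, then summing over $Q'\in{\mathcal{F}}_Q$, gives
\begin{eqnarray}\label{plan:sum}
\tfrac14\sum_{Q'\in{\mathcal{F}}_Q}\int_{{\mathcal{U}}_{Q'}}|u|^2\delta_E^\gamma\,d\mu
\le\sum_{Q'\in{\mathcal{F}}_Q}\int_{{\mathcal{U}}_{Q'}}\bigl|u\,{\textstyle\dmeanint_{Q'}b_Q\,d\sigma}\bigr|^2\delta_E^\gamma\,d\mu
\le C^2\sum_{Q'\in{\mathcal{F}}_Q}\int_{{\mathcal{U}}_{Q'}}|u|^2\delta_E^\gamma\,d\mu .
\end{eqnarray}
So it remains only to compare $\sum_{Q'\in{\mathcal{F}}_Q}\int_{{\mathcal{U}}_{Q'}}|u|^2\delta_E^\gamma\,d\mu$ with $\int_{E^{\ast}_Q}|u|^2\delta_E^\gamma\,d\mu$, where $E^{\ast}_Q=\bigcup_{Q'\in{\mathcal{F}}_Q}{\mathcal{U}}_{Q'}$. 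The inequality $\int_{E^{\ast}_Q}(\cdots)\le\sum_{Q'}\int_{{\mathcal{U}}_{Q'}}(\cdots)$ is trivial by subadditivity of the integral over a union; the reverse, $\sum_{Q'}\int_{{\mathcal{U}}_{Q'}}(\cdots)\le N\int_{E^{\ast}_Q}(\cdots)$, is exactly the finite-overlap property $\sum_{Q\in{\mathbb{D}}(E)}{\mathbf 1}_{{\mathcal{U}}_Q}\le N$ furnished by Lemma~\ref{Lem:CQinBQ} (taking $a=1$ there, so ${\mathcal{U}}_Q^\ast={\mathcal{U}}_Q$, since ${\mathbb{W}}_\lambda(\mathscr{X}\setminus E)$ is assumed to be as in Lemma~\ref{Lem:CQinBQ}). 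Chaining these two comparisons with \eqref{plan:sum} yields \eqref{BNKa} with equivalence constants depending only on $C_0$ and the geometric constant $N$ — and since the statement only claims dependence on $C_0$, one absorbs the geometric $N$ into the (already geometry-dependent) implicit constants, or simply notes $N$ is geometric.

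The main obstacle, as flagged above, is the uniform upper bound on $\bigl|\dmeanint_{Q'}b_Q\,d\sigma\bigr|$ over $Q'\in{\mathcal{F}}_Q$; everything else is bookkeeping. I would handle it by the telescoping/parent argument: let $Q'\in{\mathcal{F}}_Q$, and let $Q'=Q^{(0)}\subsetneq Q^{(1)}\subsetneq\cdots\subsetneq Q^{(L)}=\widetilde Q$ be the chain of dyadic ancestors of $Q'$ up to $\widetilde Q$; each $Q^{(i)}$ lies in ${\mathcal{F}}_Q$ by definition of the stopping time (no ancestor of a non-stopped cube can be a stopping cube), so ${\rm Re}\,\dmeanint_{Q^{(i)}}b_Q\,d\sigma>\tfrac12$ for all $i$. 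Writing $\dmeanint_{Q^{(i)}}b_Q\,d\sigma$ as a convex combination of $\dmeanint_{R}b_Q\,d\sigma$ over the children $R$ of $Q^{(i)}$ and using $\bigl|\dmeanint_{R}b_Q\,d\sigma\bigr|\le\bigl(C_0^3\sigma(Q)/\sigma(R)\bigr)^{1/2}$ with $\sigma(R)\gtrsim\sigma(Q^{(i)})$ (bounded number of children, cf. {\it (5)}--{\it (6)} of Proposition~\ref{Diad-cube}), one gets that consecutive averages differ by a bounded amount only when $\sigma(Q^{(i)})\gtrsim\sigma(Q)$; for the deep cubes one instead uses that $\bigl|\dmeanint_{Q'}b_Q\,d\sigma\bigr|^2\le\dmeanint_{Q'}|b_Q|^2\,d\sigma\le M_{\widetilde Q}(|b_Q|^2)$ evaluated along the chain, and the stopping condition ${\rm Re}\ge\tfrac12$ forces this local $L^2$ average to stay $\approx 1$ — more precisely, if it were large, then the tail of the martingale $\{\dmeanint_{Q^{(i)}}b_Q\,d\sigma\}$ would have to have oscillated below $\tfrac12$, contradicting membership in ${\mathcal{F}}_Q$. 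In practice the slickest route is: note $\bigl|\dmeanint_{Q'}b_Q\,d\sigma - \dmeanint_{\widehat{Q'}}b_Q\,d\sigma\bigr|\le \dmeanint_{Q'}|b_Q - \dmeanint_{\widehat{Q'}}b_Q\,d\sigma|\,d\sigma$, which need not be small, so one really does fall back on $\bigl|\dmeanint_{Q'}b_Q\,d\sigma\bigr|\le\bigl(\dmeanint_{Q'}|b_Q|^2\,d\sigma\bigr)^{1/2}$ and the observation that, by the maximality in the stopping-time construction, for $Q'\in{\mathcal F}_Q$ the value $\bigl(\dmeanint_{Q'}|b_Q|^2\,d\sigma\bigr)^{1/2}$ is in fact bounded by a constant multiple of $\bigl|{\rm Re}\,\dmeanint_{Q'}b_Q\,d\sigma\bigr|$ up to the $L^2$-normalization \eqref{Fs23EE} restricted to $Q'$ — I will need to verify this carefully, and it is the one place in the argument where the precise definition of the stopping time, rather than just its conclusions {\it (i)}--{\it (ii)}, is used.
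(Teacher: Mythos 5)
Your handling of the half of \eqref{BNKa} that the paper actually needs is correct and coincides with the paper's (one-line) argument: for $Q'\in{\mathcal{F}}_Q$ the lower bound $\bigl|\dmeanint_{Q'}b_Q\,d\sigma\bigr|\geq\tfrac12$ from item {\it (ii)} of Lemma~\ref{brjo} gives $|u|^2\leq 4\bigl|u\,\dmeanint_{Q'}b_Q\,d\sigma\bigr|^2$ on ${\mathcal{U}}_{Q'}$, and countable subadditivity over the union \eqref{tnjG} then yields $\int_{E^{\ast}_Q}|u|^2\delta_E^\gamma\,d\mu\leq 4\sum_{Q'\in{\mathcal{F}}_Q}\int_{{\mathcal{U}}_{Q'}}\bigl|u\,\dmeanint_{Q'}b_Q\,d\sigma\bigr|^2\delta_E^\gamma\,d\mu$; note this direction does not even require the finite-overlap property \eqref{doj}. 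This is also the only inequality from Lemma~\ref{NIGz} that is invoked later, namely in the first step of \eqref{AMpR} in the proof of Theorem~\ref{Thm:localTb}.

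The obstacle you flag for the reverse inequality is, however, a genuine gap, and none of the repairs you sketch can close it, because the uniform bound $\bigl|\dmeanint_{Q'}b_Q\,d\sigma\bigr|\leq C(C_0)$ for all $Q'\in{\mathcal{F}}_Q$ is false under the stated hypotheses: the stopping time in Lemma~\ref{brjo} prevents the real part of the averages from falling below $\tfrac12$, but gives no control from above. Concretely, take $E={\mathbb{R}}\times\{0\}\subseteq{\mathscr{X}}={\mathbb{R}}^2$ with the usual measures, $Q=\widetilde{Q}=[0,1)$, and $b_Q:={\mathbf{1}}_{[0,1)}+2^{k/2}{\mathbf{1}}_{[0,2^{-k})}$ (with $b_{Q''}:={\mathbf{1}}_{Q''}$ for all other cubes). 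Then \eqref{dbjpz}--\eqref{dbjpz-extra} hold with a constant $C_0$ independent of $k$, every dyadic subcube of $\widetilde{Q}$ has (renormalized) average with real part exceeding $\tfrac12$, so the stopping family is empty and ${\mathcal{F}}_Q$ consists of all dyadic subcubes of $\widetilde{Q}$, while $\bigl|\dmeanint_{[0,2^{-k})}b_Q\,d\sigma\bigr|\gtrsim 2^{k/2}$. Choosing $u:={\mathbf{1}}_{{\mathcal{U}}_{[0,2^{-k})}}$ (this Whitney region is nonempty and of positive measure for the standard choices of the Whitney parameters) makes the right-hand side of \eqref{BNKa} at least a constant multiple of $2^{k}$ times the left-hand side, so the inequality $\sum_{Q'\in{\mathcal{F}}_Q}\int_{{\mathcal{U}}_{Q'}}\bigl|u\,\dmeanint_{Q'}b_Q\,d\sigma\bigr|^2\delta_E^\gamma\,d\mu\lesssim\int_{E^{\ast}_Q}|u|^2\delta_E^\gamma\,d\mu$ cannot hold with constants depending only on $C_0$ (nor on $C_0$ and geometry). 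In particular your closing hope, that maximality of the stopping family forces $\bigl(\dmeanint_{Q'}|b_Q|^2\,d\sigma\bigr)^{1/2}\lesssim 1$ on ${\mathcal{F}}_Q$, is unverifiable, and the telescoping/parent-chain variants fail on the same example, since all ancestors of $[0,2^{-k})$ carry large averages without ever being stopped.

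The right conclusion is to prove, and claim, only the one-sided estimate above. The converse half of \eqref{BNKa} is an overstatement that the paper's one-line proof (``combine \eqref{doj}, \eqref{dbjpz}--\eqref{dbjpz-extra} and {\it (ii)}'') also glosses over; it would require an additional hypothesis, e.g.\ an $L^\infty$ bound on $b_Q$ or a stopping rule that also stops when the average becomes large. Since only the one-sided bound enters \eqref{AMpR}, nothing downstream in the paper is affected.
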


\begin{proof}
This readily follows by combining \eqref{doj}, \eqref{dbjpz}-\eqref{dbjpz-extra} 
and $(ii)$ in Lemma~\ref{brjo}.
\end{proof}

We are now ready to present the

\vskip 0.08in
\begin{proof}[Proof of Theorem~\ref{Thm:localTb}]
Based on Theorem~\ref{SChg}, it suffices to show that 
$|\Theta 1|^2\delta_E^{2\upsilon-(m-d)}\,d\mu$ is a Carleson measure in
$\mathscr{X}\setminus E$ relative to $E$, that is, that \eqref{UEHg} holds.
In a first stage, we shall show that \eqref{UEHg} holds for $\Theta$ replaced by 
some truncated operators $\Theta_i$, $i\in{\mathbb{N}}$. More precisely, 
for each $i\in{\mathbb{N}}$ consider the kernel
\begin{eqnarray}\label{LIH}
{\theta}_i(x,y):={\mathbf{1}}_{\{1/i<\delta_E<i\}}(x){\theta}(x,y),
\qquad\forall\,x\in\mathscr{X}\setminus E,\quad\forall\,y\in E,
\end{eqnarray}
and introduce the integral operator mapping $f:E\to{\mathbb{R}}$ into
\begin{eqnarray}\label{LIH-2}
(\Theta_if)(x):=\int_E{\theta}_i(x,y)f(y)\,d\sigma(y),\qquad
\forall\,x\in\mathscr{X}\setminus E.
\end{eqnarray}
Clearly, 
\begin{eqnarray}\label{LIH-3}
\Theta_i={\mathbf{1}}_{\{1/i<\delta_E<i\}}\Theta,\qquad\forall\,i\in{\mathbb{N}},
\end{eqnarray}
and, with $C_{\theta}$ as in \eqref{hszz}, for every $x\in\mathscr{X}\setminus E$ we have
\begin{eqnarray}\label{hszz-i}
&& \hskip -0.40in
|{\theta}_i(x,y)|\leq C_{\theta}\frac{\delta_E(x)^{-a}}{\rho(x,y)^{d+\upsilon-a}},\qquad\forall\,y\in E,
\\[4pt]
&& \hskip -0.40in
|{\theta}_i(x,y)-{\theta}_i(x,\widetilde{y})|\leq C_{\theta}\frac{\rho(y,\widetilde{y})^\alpha\delta_E(x)^{-a-\alpha}}
{\rho(x,y)^{d+\upsilon-a}}, 
\qquad\forall\,\widetilde{y},y\in E,\quad
\rho(y,\widetilde{y}) \leq \tfrac{1}{2}\rho(x,y).
\label{hszz-3i}
\end{eqnarray}
Then for each $i\in{\mathbb{N}}$ and each $x\in\mathscr{X}\setminus E$ 
by \eqref{LIH-2}, \eqref{hszz-i} and Lemma~\ref{Gkwvr} (given that $\upsilon-a>0$) we have
\begin{eqnarray}\label{LIH-4}
|(\Theta_i 1)(x)| &\leq & C{\mathbf{1}}_{\{1/i<\delta_E<i\}}(x)
\int_E\frac{\delta_E(x)^{-a}}{\rho_{\#}(x,y)^{d+\upsilon-a}}\,d\sigma(y)
\leq C {\mathbf{1}}_{\{1/i<\delta_E<i\}}(x)[\delta_E(x)]^{-\upsilon}
\nonumber\\[4pt]
& \leq & C i^\upsilon{\mathbf{1}}_{\{1/i<\delta_E<i\}}(x).
\end{eqnarray}
Recalling now \eqref{dFvK}, estimate \eqref{LIH-4} further yields
(with $x_Q$ denoting the center of $Q$)
\begin{eqnarray}\label{LIH-5}
&& \hskip -0.50in
\int_{T_E(Q)}|(\Theta_i 1)(x)|^2\delta_E(x)^{2\upsilon-(m-d)}\,d\mu(x)
\nonumber\\[4pt]
&& \hskip 0.40in
\leq Ci^{2\upsilon}\int_{x\in B_{\rho_{\#}}(x_Q,C\ell(Q)),\,\delta_E(x)<i}
\delta_E(x)^{2\upsilon-(m-d)}\,d\mu(x)
\nonumber\\[4pt]
&& \hskip 0.40in
\leq C i^{4\upsilon}\ell(Q)^d\leq Ci^{4\upsilon}\sigma(Q),\qquad
\forall\,Q\in{\mathbb{D}}(E),
\end{eqnarray}
for some constant $C\in(0,\infty)$ which does not depend on $Q$ and $i$, where
the second inequality in \eqref{LIH-5} is a consequence of Lemma~\ref{geom-lem}.
Hence, if we now define
\begin{eqnarray}\label{LIH-6}
c_i:=\sup_{Q\in{\mathbb{D}}(E)}
\Bigl[\tfrac{1}{\sigma(Q)}\int_{T_E(Q)}|(\Theta_i 1)(x)|^2\delta_E(x)^{2\upsilon-(m-d)}\,d\mu(x)\Bigr],
\qquad \forall\,i\in{\mathbb{N}},
\end{eqnarray}
then \eqref{LIH-5} implies $0\leq c_i\leq Ci^{4\upsilon}$ for each $i\in{\mathbb{N}}$. 
In particular, each $c_i$ is finite. Our goal is to show that actually 
\begin{eqnarray}\label{LIH-6B}
\sup\limits_{i\in{\mathbb{N}}}c_i<\infty.
\end{eqnarray}
To this end, fix $Q\in{\mathbb{D}}(E)$ and for this $Q$ consider some $\widetilde{Q}$
satisfying {\it 2.} in the hypothesis of Theorem~\ref{Thm:localTb} (recall the constant
$c_0$). In particular, we have that 
\begin{eqnarray}\label{nzvd}
\exists\,p\in{\mathbb{N}}\quad\mbox{satisfying}\quad
p\leq -\log_{2}(c_0)\quad\mbox{and such that}\quad
\widetilde{Q}\in{\mathbb{D}}_p(E).
\end{eqnarray}
Next recall Lemma~\ref{brjo} and Lemma~\ref{NIGz} and the notation therein. 
Then, from the definition of $E_Q^\ast$, \eqref{nzvd} and \eqref{gZSZ-3}, we have 
\begin{eqnarray}\label{nzvd-Dsf}
T_E(Q)\subseteq E_Q^\ast\cup\Bigl(\bigcup\limits_{j\in J}T_E(Q_j)\Bigr)
\cup\Bigl(\bigcup\limits_{Q''\in{\mathbb{D}}_p(E),\,
Q''\subseteq Q,\,Q''\not=\widetilde{Q}}T_E(Q'')\Bigr).
\end{eqnarray}
Consequently, for each $i\in{\mathbb{N}}$ we may write
\begin{eqnarray}\label{Vhx}
&& \hskip -0.30in
\int_{T_E(Q)}|(\Theta_i 1)(x)|^2\delta_E(x)^{2\upsilon-(m-d)}\,d\mu(x)
\\[4pt]
&& \hskip 0.20in
\leq\int\limits_{E_Q^\ast}|(\Theta_i 1)(x)|^2\delta_E(x)^{2\upsilon-(m-d)}\,d\mu(x)
+\sum\limits_{j\in J}\,\,\int\limits_{T_E(Q_j)}|(\Theta_i 1)(x)|^2
\delta_E(x)^{2\upsilon-(m-d)}\,d\mu(x)
\nonumber\\[4pt]
&& \hskip 0.25in
+\sum\limits_{\stackrel{Q''\in{\mathbb{D}}_p(E),\,Q''\subseteq Q}{Q''\not=\widetilde{Q}}}
\,\,\int\limits_{T_E(Q'')}|(\Theta_i 1)(x)|^2\delta_E(x)^{2\upsilon-(m-d)}\,d\mu(x).
\nonumber
\end{eqnarray}
To estimate the first integral in the right hand-side of \eqref{Vhx} start with
\eqref{BNKa} written for $u:=\Theta_i 1$. Keeping in mind that 
$|\Theta_i1|\leq|\Theta 1|$ for all $i\in{\mathbb{N}}$, we obtain
\begin{eqnarray}\label{AMpR}
&&\hskip -0.20in
\int_{E^{\ast}_Q}|\Theta_i 1(x)|^2\delta_E(x)^{2\upsilon-(m-d)}\,d\mu(x)
\leq C\sum\limits_{Q'\in{\mathcal{F}}_Q}\int_{{\mathcal{U}}_{Q'}}
|(\Theta 1)(x)A_{Q'}b_Q|^2\delta_E(x)^{2\upsilon-(m-d)}\,d\mu(x)
\nonumber\\[4pt]
&&\qquad \leq C\sum\limits_{Q'\in{\mathbb{D}}(E),\,Q'\subseteq Q}
\int_{{\mathcal{U}}_{Q'}}|(\Theta b_Q)(x)|^2\delta_E(x)^{2\upsilon-(m-d)}\,d\mu(x)
\nonumber\\[4pt]
&&\qquad \quad+C\!\!\!\sum\limits_{Q'\in{\mathbb{D}}(E)}\int_{{\mathcal{U}}_{Q'}}
\bigl|(\Theta b_Q)(x)-(\Theta 1)(x)A_{Q'}b_Q\bigr|^2\delta_E(x)^{2\upsilon-(m-d)}\,d\mu(x)
\nonumber\\[4pt]
&&\qquad \leq C\int_{T_E(Q)}|(\Theta b_Q)(x)|^2\delta_E(x)^{2\upsilon-(m-d)}\,d\mu(x)
+C\int_E|b_Q|^2\,d\sigma\leq C\sigma(Q).
\end{eqnarray}
The second inequality in \eqref{AMpR} is immediate, the third uses
\eqref{gZSZ-3} and \eqref{N-NKy} (the latter applied with $f:=b_Q$), while the fourth 
uses assumptions {\it 1} and {\it 3} of Theorem~\ref{Thm:localTb}.

Consider next the first sum in the right hand-side of \eqref{Vhx}. Upon recalling
\eqref{LIH-6} and the properties of $Q_j$'s in Lemma~\ref{brjo} we may write
\begin{eqnarray}\label{Vhx-2}
\hskip -0.30in
\sum\limits_{j\in J}\int\limits_{T_E(Q_j)}
|(\Theta_i 1)(x)|^2\delta_E(x)^{2\upsilon-(m-d)}\,d\mu(x)
&\leq & c_i \sum\limits_{j\in J}\sigma(Q_j)
=c_i\,\sigma\bigl(\cup_{j\in J}Q_j\bigr)
\nonumber\\[4pt]
&=& c_i\,\sigma(\widetilde{Q})
-c_i\,\sigma\bigl(\widetilde{Q}\setminus \cup_{j\in J}Q_j\bigr)
\nonumber\\[4pt]
& \leq & c_i\,(1-\eta)\,\sigma(\widetilde{Q}).
\end{eqnarray}
Upon recalling \eqref{LIH-6} we obtain
\begin{eqnarray}\label{Vh-Es}
\sum\limits_{\stackrel{Q''\in{\mathbb{D}}_p(E),\,Q''\subseteq Q}{Q''\not=\widetilde{Q}}}
\,\,\int\limits_{T_E(Q'')}|(\Theta_i 1)(x)|^2\delta_E(x)^{2\upsilon-(m-d)}\,d\mu(x)
\leq c_i\sigma(Q\setminus\widetilde{Q}).
\end{eqnarray}
In concert, \eqref{Vhx}, \eqref{AMpR}, \eqref{Vhx-2}, and \eqref{Vh-Es} imply that 
there exists a finite constant $C>0$ with the property that for every 
$i\in{\mathbb{N}}$ there holds
\begin{eqnarray}\label{Vhx-3}
\int_{T_E(Q)}|\Theta_i1|^2\delta_E^{2\upsilon-(m-d)}\,d\mu
& \leq & c_i\,(1-\eta)\,\sigma(\widetilde{Q})+c_i\sigma(Q\setminus\widetilde{Q})+C\sigma(Q)
\nonumber\\[4pt]
& \leq & c_i\,\sigma(Q)-c_i\eta\,\sigma(\widetilde{Q})+C\sigma(Q)
\nonumber\\[4pt]
& \leq & c_i\,\sigma(Q)-c_i\eta C_1^{-1}\,\sigma(Q)+C\sigma(Q)
\nonumber\\[4pt]
& = & c_i\,(1-\eta C_1^{-1})\,\sigma(Q)+C\sigma(Q),
\quad\forall\,Q\in{\mathbb{D}}(E),
\end{eqnarray}
where for the last inequality in \eqref{Vhx-3} we have used \eqref{meYY}.
Dividing both sides of \eqref{Vhx-3} by $\sigma(Q)$ and then taking the supremum over 
all $Q\in{\mathbb{D}}(E)$ we obtain $c_i\leq c_i\,(1-\eta C_1^{-1})+C$ and furthermore,
since $c_i\leq Ci^{4\upsilon}<\infty$, that $c_i\leq\eta^{-1}C_1C$ for all
$i\in{\mathbb{N}}$. This finishes the proof of \eqref{LIH-6B}. 

Having established this, for each $Q\in{\mathbb{D}}(E)$
we may then write, using \eqref{LIH-3} and Lebesgue's Monotone Convergence Theorem,
\begin{eqnarray}\label{Vhx-4} 
\int_{T_E(Q)}|(\Theta 1)(x)|^2\delta_E(x)^{2\upsilon-(m-d)}\,d\mu(x)
& = & \lim\limits_{i\to\infty}\int_{T_E(Q)}|(\Theta_i 1)(x)|^2\delta_E(x)^{2\upsilon-(m-d)}\,d\mu(x)
\nonumber\\[4pt]
& \leq &(\sup\limits_{i\in{\mathbb{N}}}c_i)\,\sigma(Q)\leq C\sigma(Q),
\end{eqnarray}
for some finite constant $C>0$ independent of $Q$. This completes the proof of 
\eqref{UEHg} and finishes the proof of Theorem~\ref{Thm:localTb}.
\end{proof}

\section{An Inductive Scheme for Square Function Estimates}
\setcounter{equation}{0}
\label{Sect:4}

We now apply the local $T(b)$ Theorem from the previous section to establish an inductive scheme for square function estimates. More specifically, we show that an integral operator $\Theta$, associated with an Ahlfors-David regular set $E$ as in~\eqref{operator}, satisfies square function estimates whenever the set $E$ contains (uniformly, at all scales and locations) so-called big pieces of sets on which square function estimates for $\Theta$ hold. In short, we say that big pieces of square function estimates (BPSFE) imply square function estimates (SFE). We emphasize that this 
``big pieces" functor is applied to square function estimates for an individual, 
fixed $\Theta$. Thus, the result to be proved in this section is not a consequence 
of the stability of UR sets under the big pieces functor, as our particular square function bounds may {\it not} be equivalent to the property that $E$ is UR. 

We continue to work in the context introduced at the beginning of Section~\ref{Sect:3}, except we must assume in addition that the integral kernel $\theta$ is not adapted to a fixed set~$E$. In particular, fix two real numbers $d,m$ such that $0<d<m$, and an $m$-dimensional {\rm ADR} space $({\mathscr{X}},\rho,\mu)$. In this context, suppose that 
\begin{eqnarray}\label{K234-A}
\begin{array}{c}
{\theta}:(\mathscr{X}\times\mathscr{X})
\setminus\{(x,x):\,x\in \mathscr{X}\}\longrightarrow{{\mathbb{R}}}
\\[4pt]
\mbox{is Borel measurable with respect to the product topology 
$\tau_\rho\times\tau_\rho$},
\end{array}
\end{eqnarray}
and has the property that there exist finite positive constants $C_{\theta}$, 
$\alpha$, $\upsilon$ such that for all $x,y\in\mathscr{X}$ with $x\neq y$ the 
following hold:
\begin{eqnarray}\label{hszz-A}
&& \hskip -0.40in
|{\theta}(x,y)|\leq\frac{C_{\theta}}{\rho(x,y)^{d+\upsilon}},
\\[4pt]
&& \hskip -0.40in
|{\theta}(x,y)-{\theta}(x,\widetilde{y})|\leq C_{\theta} \frac{\rho(y,\widetilde{y})^\alpha}{\rho(x,y)^{d+\upsilon+\alpha}}, 
\quad\forall\,\widetilde{y}\in\mathscr{X}\setminus\{x\}\,\,\mbox{ with }\,\,
\rho(y,\widetilde{y})\leq\tfrac{1}{2}\rho(x,y).\quad
\label{hszz-3-A}
\end{eqnarray}
Then for each closed subset $E$ of $({\mathscr{X}},\tau_\rho)$, 
and each Borel regular measure $\sigma$ 
on $(E,\tau_{\rho|_{E}})$ with the property that $(E,\rho\bigl|_E,\sigma)$ 
is a $d$-dimensional {\rm ADR} space, define the integral operator 
$\Theta_E$ for all functions $f\in L^p(E,\sigma)$, $1\leq p\leq\infty$, by
\begin{eqnarray}\label{operator-A}
(\Theta_E f)(x):=\int_E {\theta}(x,y)f(y)\,d\sigma(y),
\qquad\forall\,x\in\mathscr{X}\setminus E.
\end{eqnarray}

We begin by defining what it means for a set to have big pieces of square function estimates.

\begin{definition}\label{sjvs}
Consider two numbers $d,m\in(0,\infty)$ such that $m>d$, suppose that 
$({\mathscr{X}},\rho,\mu)$ is an $m$-dimensional {\rm ADR} space, and 
assume that ${\theta}$ is as in \eqref{K234-A}-\eqref{hszz-3-A}. In this context, 
a set $E\subseteq{\mathscr{X}}$ is said to have 
{\tt Big Pieces of Square Function Estimate} (or, simply {\tt BPSFE}) 
relative to the kernel $\theta$ provided the following conditions are satisfied:
\begin{enumerate}
\item[(i)] the set $E$ is closed in $({\mathscr{X}},\tau_\rho)$ and 
has the property that there exists a Borel regular measure $\sigma$ 
on $(E,\tau_{\rho|_{E}})$ such that $\bigl(E,\rho\bigl|_E,\sigma\bigr)$ 
is a $d$-dimensional {\rm ADR} space;
\item[(ii)] there exist finite positive constants $\eta$, $C_1$, and $C_2$ with the 
property that for each $x\in E$ and each real number $r\in(0,{\rm diam}_{\rho_{\#}}(E)]$ 
there exists a closed subset $E_{x,r}$ of $(\mathscr{X},\tau_\rho)$ such that if  
\begin{eqnarray}\label{HHH-57}
\sigma_{x,r}:={\mathscr{H}}_{{\mathscr{X}}\!,\,\rho_{\#}}^d\lfloor E_{x,r},
\quad\mbox{ where ${\mathscr{H}}_{{\mathscr{X}}\!,\,\rho_{\#}}^d$ 
is as in \eqref{HHH-56}},
\end{eqnarray}
then $\bigl(E_{x,r},\rho\big|_{E_{x,r}},\sigma_{x,r}\bigr)$ is a 
$d$-dimensional {\rm ADR} space, with {\rm ADR} constant $\leq C_1$, 
and which satisfies
\begin{eqnarray}\label{yvg}
\sigma\bigl(E_{x,r}\cap E\cap B_{\rho_{\#}}(x,r)\bigr)\geq\eta\,r^d,
\end{eqnarray}
as well as
\begin{eqnarray}\label{avhai}
\begin{array}{c}
\displaystyle
\int_{\mathscr{X}\setminus E_{x,r}}|\Theta_{E_{x,r}}f(z)|^2\,
{\rm dist}_{\rho_{\#}}(z,E_{x,r})^{2\upsilon-(m-d)}\,d\mu(z) 
\leq C_2\int_{E_{x,r}}|f|^2\,d\sigma_{x,r}\qquad
\\[8pt]
\mbox{for each function }\,\,f\in L^2(E_{x,r},\sigma_{x,r}),
\end{array}
\end{eqnarray}
where $\Theta_{E_{x,r}}$ is the operator associated with $E_{x,r}$ as 
in \eqref{operator-A}.
\end{enumerate}
\end{definition}

\noindent In the context of the above definition, the constants $\eta,C_1,C_2$ will 
collectively be referred to as the {\tt BPSFE character} of the set $E$.

The property of having BPSFE may be dyadically discretized as explained 
in the lemma below.

\begin{lemma}\label{hrsbrli}
Let $d,m\in(0,\infty)$ be such that $m>d$, assume that 
$({\mathscr{X}},\rho,\mu)$ is an $m$-dimensional {\rm ADR} space, and  
suppose that ${\theta}$ is as in \eqref{K234-A}-\eqref{hszz-3-A}. In addition, let 
$E\subseteq{\mathscr{X}}$ be such that {\it (i)} in Definition~\ref{sjvs} holds
and consider the dyadic cube structure ${\mathbb{D}}(E)$ on $E$ as in Proposition~\ref{Diad-cube}. 

Then the set $E$ has {\rm BPSFE} (relative to $\theta$) if and only if there 
exist finite positive constants $\eta,C_1,C_2$ with the property that for each ${Q\in{\mathbb{D}}(E)}$ there exists a closed set $E_Q\subseteq\mathscr{X}$ such 
that if 
\begin{eqnarray}\label{HHH-60}
\sigma_{Q}:={\mathscr{H}}_{{\mathscr{X}}\!,\,\rho_{\#}}^d\lfloor E_{Q},
\end{eqnarray}
then $\bigl(E_Q,\rho\bigl|_{E_Q},\sigma_Q\bigr)$ is a $d$-dimensional {\rm ADR} 
space with {\rm ADR} constant $\leq C_1$ which satisfies
\begin{eqnarray}\label{yvg2}
{\mathscr{H}}_{{\mathscr{X}}\!,\,\rho_{\#}}^d(E_Q\cap Q)
\geq\eta\,{\mathscr{H}}_{{\mathscr{X}}\!,\,\rho_{\#}}^d(Q)
\end{eqnarray}
as well as
\begin{eqnarray}\label{avhai2}
\begin{array}{c}
\displaystyle
\int_{\mathscr{X}\setminus E_Q}|\Theta_{E_Q} f(x)|^2\,
{\rm dist}_{\rho_{\#}}(x,E_Q)^{2\upsilon-(m-d)}\,d\mu(x)
\leq C_2\int_{E_Q} |f|^2\ d\sigma_Q,
\\[8pt]
\mbox{for each function }\,\,f\in L^2(E_Q,\sigma_Q).
\end{array}
\end{eqnarray}
\end{lemma}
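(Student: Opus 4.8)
\textbf{Proof strategy for Lemma~\ref{hrsbrli}.} The plan is to prove the two implications between the ``ball version'' of BPSFE (Definition~\ref{sjvs}, condition \textit{(ii)}) and the ``dyadic cube version'' stated in the lemma, by exploiting the comparability between dyadic cubes $Q\in{\mathbb{D}}(E)$ and surface balls $B_{\rho_{\#}}(x_Q,r)\cap E$ recorded in property \textit{(5)} of Proposition~\ref{Diad-cube} (in particular the two-sided inclusion $B_{\rho}(x_\alpha^k,a_0 2^{-k})\subseteq Q_\alpha^k\subseteq B_{\rho}(x_\alpha^k,a_1 2^{-k})$, together with the equivalence $\rho\approx\rho_\#$ from \eqref{DEQV1}) and, in the reverse direction, property \textit{(7)} of the same proposition, which lets one fit a dyadic cube inside a given ball with comparable side-length. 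Throughout, the measure bookkeeping is handled by \eqref{P2-GFs} and \eqref{jkan}, which give $\sigma(A)\approx{\mathscr{H}}^d_{{\mathscr{X}},\rho_\#}(A)$ for Borel $A\subseteq E$ since $\sigma$ is Borel regular $d$-ADR; note this is why \eqref{yvg} (stated with $\sigma$) and \eqref{yvg2} (stated with Hausdorff measure) are interchangeable up to constants.

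\textbf{BPSFE $\Rightarrow$ dyadic version.} Given $Q\in{\mathbb{D}}_k(E)$ with center $x_Q$ and $\ell(Q)=2^{-k}$, I would apply condition \textit{(ii)} of Definition~\ref{sjvs} at the point $x=x_Q$ and radius $r:=c\,\ell(Q)$ for a small dimensional constant $c$ to be chosen so that $B_{\rho_\#}(x_Q,r)\cap E\subseteq Q$ (possible by \eqref{ha-GVV} and \eqref{DEQV1}; if ${\rm diam}_{\rho_\#}(E)<\infty$ one also needs $r\le{\rm diam}_{\rho_\#}(E)$, which holds for $c$ small), and set $E_Q:=E_{x_Q,r}$, $\sigma_Q:=\sigma_{x_Q,r}={\mathscr{H}}^d_{{\mathscr{X}},\rho_\#}\lfloor E_Q$. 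Then $(E_Q,\rho|_{E_Q},\sigma_Q)$ is $d$-ADR with constant $\le C_1$ and \eqref{avhai2} is exactly \eqref{avhai}. For \eqref{yvg2}: by \eqref{yvg}, $\sigma(E_{x_Q,r}\cap E\cap B_{\rho_\#}(x_Q,r))\ge\eta r^d=\eta c^d\ell(Q)^d$, and since $B_{\rho_\#}(x_Q,r)\cap E\subseteq Q$ this gives $\sigma(E_Q\cap Q)\ge\eta c^d\ell(Q)^d\gtrsim\ell(Q)^d\approx\sigma(Q)$, hence ${\mathscr{H}}^d_{{\mathscr{X}},\rho_\#}(E_Q\cap Q)\gtrsim{\mathscr{H}}^d_{{\mathscr{X}},\rho_\#}(Q)$ after adjusting the constant $\eta$; here I use $\sigma\approx{\mathscr{H}}^d_{{\mathscr{X}},\rho_\#}$ on Borel subsets of $E$ and $\sigma(Q)\approx\ell(Q)^d$ from $d$-ADR.

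\textbf{Dyadic version $\Rightarrow$ BPSFE.} Conversely, given $x\in E$ and $r\in(0,{\rm diam}_{\rho_\#}(E)]$, invoke \eqref{ha-GL54} in property \textit{(7)} of Proposition~\ref{Diad-cube} to select $k\ge\kappa_E$ and $Q\in{\mathbb{D}}_k(E)$ with $Q\subseteq B_\rho(x,r)\subseteq B_\rho(x,C_\rho^2 r)$ — up to the harmless replacement of $\rho$ by $\rho_\#$ via \eqref{DEQV1} — and $b_0 r\le 2^{-k}=\ell(Q)\le b_1 r$, so that $\ell(Q)\approx r$. Set $E_{x,r}:=E_Q$ and $\sigma_{x,r}:=\sigma_Q$. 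The $d$-ADR property with constant $\le C_1$ and the square function bound \eqref{avhai} transfer verbatim from \eqref{avhai2}. For the lower density bound \eqref{yvg}: from \eqref{yvg2} and $\sigma\approx{\mathscr{H}}^d_{{\mathscr{X}},\rho_\#}$ we get $\sigma(E_Q\cap Q)\gtrsim\sigma(Q)\approx\ell(Q)^d\approx r^d$; since $Q\subseteq B_{\rho_\#}(x,C r)$ for a suitable dimensional constant $C$ (absorbing $C_\rho^2$ and the $\rho\leftrightarrow\rho_\#$ comparison constants), and since we may harmlessly pass to a slightly larger ball (after rescaling $r$, or simply by enlarging $C$ — note Definition~\ref{sjvs} only requires \eqref{yvg} for the ball $B_{\rho_\#}(x,r)$, so one should instead choose $Q$ inside $B_{\rho_\#}(x,r)$ itself with $\ell(Q)\approx r$, which \eqref{ha-GL54} permits), we conclude $\sigma(E_{x,r}\cap E\cap B_{\rho_\#}(x,r))\ge\sigma(E_Q\cap Q)\gtrsim r^d$, i.e.\ \eqref{yvg} holds after adjusting $\eta$.

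\textbf{Main obstacle.} The only real subtlety — and the step I would treat most carefully — is the precise matching of radii and the passage between $\rho$-balls, $\rho_\#$-balls, and dyadic cubes: Definition~\ref{sjvs} is phrased with $B_{\rho_\#}(x,r)$ and ${\rm dist}_{\rho_\#}$, Proposition~\ref{Diad-cube} produces cubes controlled by $\rho$-balls, and the Hausdorff measures in \eqref{HHH-57}, \eqref{HHH-60} are those of $({\mathscr{X}},\rho_\#)$; keeping the quantitative constants ($C_\rho$, $a_0$, $a_1$, $b_0$, $b_1$, the ADR and doubling constants) uniform in $x,r$ (resp.\ in $Q$) is what makes the BPSFE character in one formulation depend only on that in the other. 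Everything else is a routine transcription, since the nontrivial analytic content — the square function estimate itself — is simply carried along unchanged. One should also remark at the outset that if ${\rm diam}_{\rho_\#}(E)=\infty$ the restriction $r\le{\rm diam}_{\rho_\#}(E)$ is vacuous, and if ${\rm diam}_{\rho_\#}(E)<\infty$ then $E$ is bounded and $\kappa_E\in{\mathbb{Z}}$, so all invocations of Proposition~\ref{Diad-cube} are legitimate.
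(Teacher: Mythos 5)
Your proposal is correct and follows essentially the same route as the paper, whose (very terse) proof likewise obtains the forward implication from \eqref{ha-GVV} (shrinking to a ball $B_{\rho_{\#}}(x_Q,c\,\ell(Q))$ contained in $Q$ and taking $E_Q:=E_{x_Q,c\ell(Q)}$) and the converse from \eqref{ha-GL54} (fitting a dyadic cube of side comparable to $r$ inside the given ball). Your additional care with the $\rho$ versus $\rho_{\#}$ ball comparisons and the $\sigma\approx{\mathscr{H}}^d_{{\mathscr{X}},\rho_{\#}}$ bookkeeping simply fills in details the paper leaves implicit.
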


\begin{proof}
The left-to-right implication is a simple consequence of \eqref{ha-GVV},
while the opposite one follows with the help of \eqref{ha-GL54}.
\end{proof}

We now state and prove the main result in this section. 

\begin{theorem}\label{Thm:BPSFtoSF}
Consider two numbers $d,m\in(0,\infty)$ such that $m>d$, suppose that 
$({\mathscr{X}},\rho,\mu)$ is an $m$-dimensional {\rm ADR} space, and 
assume that ${\theta}$ is as in \eqref{K234-A}-\eqref{hszz-3-A}. 

If the set $E\subseteq{\mathscr{X}}$ has {\rm BPSFE} relative to $\theta$ then 
there exists a finite constant $C>0$, depending only on $\rho$, $m$, $d$, 
$\upsilon$, $C_{\theta}$ (from \eqref{hszz-A}-\eqref{hszz-3-A}), 
the {\rm BPSFE} character of $E$, and the {\rm ADR} constants of $E$ and 
${\mathscr{X}}$, such that 
\begin{eqnarray}\label{vlnGG}
\int_{\mathscr{X}\setminus E}|\Theta_E f(x)|^2\,
\delta_E(x)^{2\upsilon-(m-d)}\,d\mu(x) 
\leq C\int_E|f|^2\,d\sigma,\qquad\forall\,f\in L^2(E,\sigma),
\end{eqnarray}
where 
\begin{eqnarray}\label{HHH-58}
\sigma:={\mathscr{H}}_{{\mathscr{X}}\!,\,\rho_{\#}}^d\lfloor E,
\quad\mbox{ with ${\mathscr{H}}_{{\mathscr{X}}\!,\,\rho_{\#}}^d$ as in \eqref{HHH-56}}.
\end{eqnarray}
\end{theorem}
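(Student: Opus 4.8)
The strategy is to deduce \eqref{vlnGG} from the local $T(b)$ theorem (Theorem~\ref{Thm:localTb}) by constructing a suitable pseudo-accretive system $\{b_Q\}_{Q\in{\mathbb{D}}(E)}$ adapted to $E$ out of the big-piece data. Fix a dyadic cube structure ${\mathbb{D}}(E)$ on $E$ and, using the discretized form of {\rm BPSFE} from Lemma~\ref{hrsbrli}, for each $Q\in{\mathbb{D}}(E)$ let $E_Q\subseteq{\mathscr{X}}$ be the associated closed $d$-{\rm ADR} set with measure $\sigma_Q$ satisfying \eqref{yvg2}--\eqref{avhai2}. The candidate test function will be $b_Q:={\mathbf{1}}_{E_Q\cap Q}$, viewed as an element of $L^2(E,\sigma)$ (so that $\Theta_E b_Q$ makes sense). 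We must verify the three conditions in Theorem~\ref{Thm:localTb}. Condition~{\it (1)} is immediate: $\int_E|b_Q|^2\,d\sigma=\sigma(E_Q\cap Q)\leq\sigma(Q)$, using $\sigma\approx{\mathscr{H}}^d_{{\mathscr{X}},\rho_\#}\lfloor E$ from \eqref{P2-GFs}. Condition~{\it (2)} — the nondegeneracy $|\int_{\widetilde Q}b_Q\,d\sigma|\geq C_0^{-1}\sigma(\widetilde Q)$ — will \emph{not} hold for $\widetilde Q=Q$ in general, since $E_Q\cap Q$ need only be a big piece; instead one invokes \eqref{yvg2} together with the thin-boundary property (item {\it (8)} of Proposition~\ref{Diad-cube}) and a pigeonholing among the children-of-children of $Q$ down to a controlled generation, to locate a subcube $\widetilde Q\subseteq Q$ with $\ell(\widetilde Q)\geq c_0\ell(Q)$ on which $\sigma(E_Q\cap\widetilde Q)\geq\frac12\sigma(\widetilde Q)$; since $b_Q\geq 0$ this yields $\int_{\widetilde Q}b_Q\,d\sigma=\sigma(E_Q\cap\widetilde Q)\geq\frac12\sigma(\widetilde Q)$, as required. (Alternatively, if such a $\widetilde Q$ cannot be found at bounded scale, one may need to slightly enlarge the test function, e.g. take $b_Q$ to be a suitable average of such indicators over a bounded number of big pieces; I expect the pigeonhole argument to suffice.)

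\textbf{The core estimate.} The substantive point is condition~{\it (3)}: the Carleson bound
\begin{eqnarray}\label{plan-carleson}
\int_{T_E(Q)}|(\Theta_E\,b_Q)(x)|^2\,\delta_E(x)^{2\upsilon-(m-d)}\,d\mu(x)\leq C_0\,\sigma(Q).
\end{eqnarray}
Here the key geometric observation is that on the tent $T_E(Q)$ the weight $\delta_E(x)={\rm dist}_{\rho_\#}(x,E)$ and the weight ${\rm dist}_{\rho_\#}(x,E_Q)$ are comparable \emph{up to the harmless contribution of points of $E\setminus E_Q$}: more precisely, for $x\in T_E(Q)$ one has $\delta_E(x)\leq{\rm dist}_{\rho_\#}(x,E_Q)$ always, and one splits $b_Q$ writing $\Theta_E b_Q=\Theta_{E_Q}(b_Q\lfloor E_Q)+(\text{error})$, where the first term is literally the operator $\Theta_{E_Q}$ applied to a function supported on $E_Q\cap Q$. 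For that first term, since $\delta_E(x)\leq{\rm dist}_{\rho_\#}(x,E_Q)$ and the exponent $2\upsilon-(m-d)$ is negative (because $\upsilon<m-d$ is not assumed — actually one must be careful here: if $2\upsilon-(m-d)<0$ the inequality on weights goes the favorable way, and if it is $\geq 0$ one instead uses \eqref{dFvK} to confine $T_E(Q)$ to a bounded ball and uses ADR of $E_Q$ to control $\delta_E$ from below by a constant multiple of ${\rm dist}_{\rho_\#}(x,E_Q)$ on the relevant region), one dominates $\int_{T_E(Q)}|\Theta_{E_Q}(b_Q\lfloor E_Q)(x)|^2\,\delta_E(x)^{2\upsilon-(m-d)}\,d\mu(x)$ by $\int_{{\mathscr{X}}\setminus E_Q}|\Theta_{E_Q}(b_Q\lfloor E_Q)(x)|^2\,{\rm dist}_{\rho_\#}(x,E_Q)^{2\upsilon-(m-d)}\,d\mu(x)$, which by the {\rm BPSFE} square function estimate \eqref{avhai2} is at most $C_2\int_{E_Q}|b_Q\lfloor E_Q|^2\,d\sigma_Q\leq C_2\sigma(Q)$. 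The error term $\Theta_E b_Q-\Theta_{E_Q}(b_Q\lfloor E_Q)$ requires extra care: $b_Q\lfloor E_Q$ is a function on $E\cap E_Q$ but not a priori on $E_Q$; one handles this by noting that $\sigma$ and $\sigma_Q$ agree (as measures) on $E\cap E_Q$ — both being ${\mathscr{H}}^d_{{\mathscr{X}},\rho_\#}$ restricted — so that $\Theta_E(b_Q)=\int_{E_Q\cap Q}\theta(x,y)\,d{\mathscr{H}}^d_{{\mathscr{X}},\rho_\#}(y)=\Theta_{E_Q}({\mathbf{1}}_{E_Q\cap Q})$ \emph{identically}. Thus there is in fact no error term, and \eqref{plan-carleson} follows directly.

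\textbf{Conclusion and the main obstacle.} Granting conditions {\it (1)}--{\it (3)}, Theorem~\ref{Thm:localTb} applied with the set $E$, the measure $\sigma$ from \eqref{HHH-58}, the operator $\Theta=\Theta_E$, and the system $\{b_Q\}$ yields \eqref{vlnGG} with a constant $C$ depending only on the stated quantities (note the kernel $\theta$ in this section satisfies \eqref{hszz-A}--\eqref{hszz-3-A}, which are the $a=0$ instances of \eqref{hszz}--\eqref{hszz-3}, so the hypotheses of Theorem~\ref{Thm:localTb} are met). I expect the main obstacle to be the bookkeeping around the weight comparison on the tent $T_E(Q)$ versus ${\rm dist}_{\rho_\#}(\cdot,E_Q)$: because $E_Q$ can come arbitrarily close to $E$ away from the big piece $E_Q\cap Q$, one genuinely needs the one-sided inequality $\delta_E\leq{\rm dist}_{\rho_\#}(\cdot,E_Q)$ to run the argument cleanly when $2\upsilon-(m-d)<0$, and when $2\upsilon-(m-d)\geq 0$ one must additionally exploit the containment \eqref{dFvK} together with the observation that on $T_E(Q)$ the quantity $\delta_E(x)$ is bounded above by $C\ell(Q)$, so the weight is harmlessly bounded and one reduces to an $L^2$ bound on a bounded region via Lemma~\ref{geom-lem}. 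The secondary technical point — verifying the nondegeneracy condition {\it (2)} at a controlled scale from the big-piece lower bound \eqref{yvg2} and the thin-boundary estimate \eqref{hacc-76es} — is routine but must be done with care to keep $c_0$ and $C_0$ dependent only on geometry and the {\rm BPSFE} character.
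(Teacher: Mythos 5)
Your overall strategy is the same as the paper's: discretize BPSFE via Lemma~\ref{hrsbrli}, set $b_Q:={\mathbf{1}}_{Q\cap E_Q}$, and feed this system into Theorem~\ref{Thm:localTb}; and your observation that $\Theta_E b_Q=\Theta_{E_Q}b_Q$ identically off $E\cup E_Q$ (because $\sigma$ and $\sigma_Q$ both coincide with ${\mathscr{H}}^d_{{\mathscr{X}},\rho_\#}$ on $E\cap E_Q$) is correct and is indeed used in the paper. However, your verification of the testing condition \eqref{plan-carleson} rests on a false geometric claim: you assert that $\delta_E(x)\le{\rm dist}_{\rho_\#}(x,E_Q)$ for all $x\in T_E(Q)$. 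There is no containment between $E$ and $E_Q$ -- the set $E_Q$ is a separate closed $d$-ADR set which only meets $E$ in a big piece of $Q$ -- so points of the tent can lie arbitrarily close to $E_Q\setminus E$ (making $\delta_{E_Q}\ll\delta_E$) or arbitrarily close to $E\setminus E_Q$ (making $\delta_E\ll\delta_{E_Q}$). Hence the weight swap from $\delta_E^{2\upsilon-(m-d)}$ to $\delta_{E_Q}^{2\upsilon-(m-d)}$ needed to invoke \eqref{avhai2} over the whole tent is unjustified in either sign regime (and your sign discussion is in fact inverted: with a negative exponent the inequality you claim would go the \emph{wrong} way). This is precisely the crux that the paper's proof is built to handle: it splits $T_E(Q)$ into three regions according to whether $\delta_{E_Q}>C_1\delta_E$, $\delta_{E_Q}\approx\delta_E$, or $\delta_{E_Q}<C_1^{-1}\delta_E$; the BPSFE estimate \eqref{avhai2} is applied only on the comparable region (where the weight swap is legitimate), while the two non-comparable regions are controlled by direct size estimates -- the pointwise bounds $|\Theta_E b_Q|\lesssim\delta_{E_Q}^{-\upsilon}$ and the interpolated bound $|\Theta_E b_Q|\lesssim\delta_{E_Q}^{-\alpha}\delta_E^{-\beta}$ with $\alpha+\beta=\upsilon$, $0<\alpha<(m-d)/2$ -- combined with Fubini, Lemma~\ref{segj}, and Lemma~\ref{geom-lem} applied to \emph{both} ADR sets $E$ and $E_Q$. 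Without an argument of this type for the regions where $\delta_E$ and $\delta_{E_Q}$ are not comparable, your proof of condition {\it 3} does not go through.

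A secondary point: your treatment of the nondegeneracy condition {\it 2} is both unnecessary and, as stated, incorrect. Taking $\widetilde Q:=Q$ works directly, since $\int_Q b_Q\,d\sigma=\sigma(Q\cap E_Q)\ge\eta\,\sigma(Q)$ by \eqref{yvg2} and Theorem~\ref{Thm:localTb} allows $C_0$ to depend on the BPSFE character (so $C_0\ge 1/\eta$ suffices). By contrast, your pigeonhole claim -- that one can always find a subcube $\widetilde Q\subseteq Q$ with $\ell(\widetilde Q)\ge c_0\ell(Q)$ and $\sigma(E_Q\cap\widetilde Q)\ge\tfrac12\sigma(\widetilde Q)$ -- fails when $\eta$ is small and $E_Q\cap Q$ is spread at low density throughout $Q$: high-density subcubes are then only guaranteed at uncontrollably small scales, so $c_0$ cannot be kept geometric.
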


\begin{proof}
Since $E$ has BPSFE relative to $\theta$, by Lemma~\ref{hrsbrli}, for 
each $Q\in{\mathbb{D}}(E)$ there exists $E_Q$ satisfying \eqref{yvg2}-\eqref{avhai2}.
For each $Q\in{\mathbb{D}}(E)$, we then define the function $b_Q:E\rightarrow{\mathbb{R}}$ by setting
\begin{eqnarray}\label{BBss}
b_Q(y):={\mathbf{1}}_{Q\cap E_Q}(y),\qquad\forall\,y\in E.
\end{eqnarray}
The strategy for proving \eqref{vlnGG} is to invoke Theorem~\ref{Thm:localTb}
for the family $\{b_Q\}_{Q\in{\mathbb{D}}(E)}$. As such, matters are reduced 
to checking that conditions {\it 1.}--{\it 3.} in Theorem~\ref{Thm:localTb} 
hold for the collection $\{b_Q\}_{Q\in{\mathbb{D}}(E)}$ defined in \eqref{BBss}. 
Now, condition {\it 1.} is immediate, while the validity of condition {\it 2.} 
(with $\widetilde{Q}:=Q$)
is a consequence of \eqref{yvg2}. Thus, it remains to check that condition {\it 3.}
holds as well. To this end, fix $Q\in{\mathbb{D}}(E)$ and for some constant
$C_1\in(1,\infty)$ to be specified later write (employing notation introduced in
\eqref{REG-DDD} in relation to both $E$ and $E_Q$)
\begin{eqnarray}\label{KLGB}
&& \hskip -0.60in
\int_{T_E(Q)}|\Theta_E b_Q(x)|^2\delta_E(x)^{2\upsilon-(m-d)}\,d\mu(x) 
\nonumber\\[4pt]
&& \hskip 0.40in
=\int_{T_E(Q)}|\Theta_E b_Q(x)|^2 
{\mathbf{1}}_{\{z\in{\mathscr{X}}:\,\delta_{E_Q}(z)>C_1\delta_E(z)\}}(x)\,
\delta_E(x)^{2\upsilon-(m-d)}\,d\mu(x)
\nonumber\\[4pt]
&& \hskip 0.50in
+\int_{T_E(Q)}|\Theta_E b_Q(x)|^2 
{\mathbf{1}}_{\{z\in{\mathscr{X}}:\,
C_1^{-1}\delta_E(z)\leq\delta_{E_Q}(z)\leq C_1\delta_E(z)\}}(x)\,
\delta_E(x)^{2\upsilon-(m-d)}\,d\mu(x)
\nonumber\\[4pt]
&& \hskip 0.50in
+\int_{T_E(Q)}|\Theta_E b_Q(x)|^2 
{\mathbf{1}}_{\{z\in{\mathscr{X}}:\,\delta_{E_Q}(z)<C_1^{-1}\delta_E(z)\}}(x)\,
\delta_E(x)^{2\upsilon-(m-d)}\,d\mu(x) 
\nonumber\\[4pt]
&& \hskip 0.40in
=: I_1+I_2+I_3.
\end{eqnarray}

To proceed with estimating $I_1$ we first obtain a pointwise bound for 
$\Theta_E b_Q$. To this end, first observe that 
\begin{eqnarray}\label{setO}
{\mathcal{O}}:=\bigl\{z\in{\mathscr{X}}:\,\delta_{E_Q}(z)>C_1\delta_E(z)\bigr\}
\,\Longrightarrow\,{\mathcal{O}}\cap E_Q=\emptyset.
\end{eqnarray}
Hence, \eqref{BBss}, \eqref{hszz-A} and \eqref{mMji} in Lemma~\ref{Gkwvr} give 
that, for some finite $C>0$ independent of the dyadic cube $Q$, 
\begin{eqnarray}\label{Lz}
|\Theta_E b_Q(x)| &=& \left|\int_E {\theta}(x,y)\,b_Q(y)\,d\sigma(y)\right|
\nonumber\\[4pt]
&\leq & \int_{E_Q} |{\theta}(x,y)|\,d\sigma(y)\leq\frac{C}{\delta_{E_Q}(x)^\upsilon},
\qquad\forall\,x\in{\mathcal{O}}.
\end{eqnarray}
Also, \eqref{yvg2} guarantees that $Q\cap E_Q\not=\emptyset$, and we fix a point 
$x_0\in Q\cap E_Q$. By \eqref{dFvK}, there exists $c\in(0,\infty)$ such that
$T_E(Q)\subseteq B_{\rho_{\#}}(x_0,c\,\ell(Q))$ which when combined with 
\eqref{Lz} gives
\begin{eqnarray}\label{gwo}
I_1\leq C\int_{B_{\rho_{\#}}(x_0,c\ell(Q))\cap{\mathcal{O}}}
\delta_{E_Q}(x)^{-2\upsilon}\delta_E(x)^{2\upsilon-(m-d)}\,d\mu(x).
\end{eqnarray}

At this stage, select a constant $M\in(C_\rho^2,\infty]$, choose $C_1\in(M,\infty)$, and observe that if 
$x\in{\mathcal{O}}$ then $\frac{\delta_{E_Q}(x)}{M}>\delta_E(x)$, 
hence $B_{\rho_{\#}}\bigl(x,\delta_{E_Q}(x)/M\bigr)\cap E\not=\emptyset$. Recalling
Lemma~\ref{segj}, it follows that there exists $C\in(0,\infty)$ such that 
\begin{eqnarray}\label{dxb}
\frac{\delta_{E_Q}(x)^d}{M^d}\leq C
{\mathscr{H}}_{{\mathscr{X}}\!,\,\rho_{\#}}^d
\Bigl(B_{\rho_{\#}}\Bigl(x,C_\rho\frac{\delta_{E_Q}(x)}{M}\Bigr)\cap E\Bigr),
\qquad\forall\,x\in{\mathcal{O}}.
\end{eqnarray}
Using this in \eqref{gwo} we obtain
\begin{eqnarray}\label{gwo-2}
I_1\leq C\!\!\!
\int\limits_{B_{\rho_{\#}}(x_0,c\ell(Q))\cap{\mathcal{O}}}
\Bigl(\int\limits_{B_{\rho_{\#}}
\bigl(x,C_\rho\delta_{E_Q}(x)/M\bigr)\cap E}
\hskip -0.40in
1\,d{\mathscr{H}}_{{\mathscr{X}}\!,\,\rho_{\#}}^d(z)\Bigr)
\delta_{E_Q}(x)^{-2\upsilon-d}\delta_E(x)^{2\upsilon-(m-d)}\,d\mu(x).
\end{eqnarray}
We make the claim that for each $\vartheta\in(0,1)$
\begin{eqnarray}\label{gwo-3}
\left.
\begin{array}{r}
\mbox{if $x\in{\mathscr{X}}\setminus E_Q$ and $z\in {\mathscr{X}}$ are}
\\[4pt]
\mbox{such that }\,\rho_{\#}(x,z)<\frac{\vartheta}{C_\rho}\delta_{E_Q}(x)
\end{array}
\right\} 
\Longrightarrow \,\,
\frac{1-\vartheta}{C_\rho}\delta_{E_Q}(x)\leq\delta_{E_Q}(z)\leq C_\rho\delta_{E_Q}(x).
\end{eqnarray}
Indeed, for each $\eta>1$ close to $1$ we may take $y\in E_Q$ satisfying
$\rho_{\#}(y,x)<\eta\delta_{E_Q}(x)$ which implies that 
$\delta_{E_Q}(z)\leq\rho_{\#}(y,z)\leq C_\rho\max\{\rho_{\#}(y,x),\rho_{\#}(x,z)\}
\leq C_\rho \eta\delta_{E_Q}(x)$. Upon letting $\eta\searrow 1$ we therefore 
obtain $\delta_{E_Q}(z)\leq C_\rho\delta_{E_Q}(x)$.
On the other hand, for each $w\in E_Q$ we have 
$\delta_{E_Q}(x)\leq \rho_{\#}(x,w)\leq C_\rho\rho_{\#}(x,z)
+C_\rho\rho_{\#}(z,w)\leq\vartheta\delta_{E_Q}(x)+C_\rho\rho_{\#}(z,w)$,
which further yields $\delta_{E_Q}(x)\leq \frac{C_\rho}{1-\vartheta}\delta_{E_Q}(z)$. 
This concludes the proof of \eqref{gwo-3}. 

Going further, fix $x\in B_{\rho_{\#}}(x_0,c\ell(Q))\cap{\mathcal{O}}$ and
$z\in B_{\rho_{\#}}\bigl(x,C_\rho\delta_{E_Q}(x)/M\bigr)\cap E$ and make two observations.
First, an application of \eqref{gwo-3} with $\vartheta:=C_\rho^2/M\in(0,1)$ yields
\begin{eqnarray}\label{gwo-3a}
\hskip -0.20in
\frac{M-C_\rho^2}{MC_\rho}\delta_{E_Q}(x)\leq\delta_{E_Q}(z)\leq C_\rho\delta_{E_Q}(x)
\,\,\mbox{ and }\,\,
\rho_{\#}(z,x)<\frac{C_\rho}{M}\delta_{E_Q}(x)
\leq\frac{C_\rho^2}{M-C_\rho^2}\delta_{E_Q}(z),
\end{eqnarray}
hence $x\in B_{\rho_{\#}}\bigl(z,\tfrac{C_\rho^2}{M-C_\rho^2}\delta_{E_Q}(z)\bigr)$.
Second, recalling that $x_0\in E_Q$, $M>C_\rho^2$ and $C_\rho\geq 1$, we obtain
\begin{eqnarray}\label{gwo-3b}
\rho_{\#}(x_0,z) & \leq & C_\rho\max\bigl\{\rho_{\#}(x_0,x),\rho_{\#}(x,z)\bigr\}
<C_\rho\max\bigl\{c\,\ell(Q),\tfrac{C_\rho}{M}\delta_{E_Q}(x)\bigr\}
\\[4pt]
&\leq & C_\rho\max\bigl\{c\,\ell(Q),\tfrac{1}{C_\rho}\delta_{E_Q}(x)\bigr\}
\leq C_\rho\max\bigl\{c\,\ell(Q),\tfrac{1}{C_\rho}\rho_{\#}(x_0,x)\bigr\}
=C_\rho c\,\ell(Q),
\nonumber
\end{eqnarray}
which shows that $z\in B_{\rho_{\#}}(x_0,C_\rho c\ell(Q))$. Combining these 
observations with \eqref{gwo-3a}, keeping in mind \eqref{setO}, and using 
Fubini's Theorem in \eqref{gwo-2}, we may write
\begin{eqnarray}\label{gwo-5}
I_1 & \!\!\!\leq\!\!\! &  
C\int\limits_{B_{\rho_{\#}}\bigl(x_0,C_\rho c\ell(Q)\bigr)\cap (E\setminus E_Q)}
\hskip -0.30in
\delta_{E_Q}(z)^{-2\upsilon-d}
\Bigl(\int\limits_{
B_{\rho_{\#}}\bigl(z,\frac{C_\rho^2}{M-C_\rho^2}\delta_{E_Q}(z)\bigr)\setminus E_Q}
\hskip -0.30in\delta_E(x)^{2\upsilon-(m-d)}\,d\mu(x)\Bigr)
\,d{\mathscr{H}}_{{\mathscr{X}}\!,\,\rho_{\#}}^d(z)
\nonumber\\[4pt]
& \!\!\!\leq \!\!\! & C\int\limits_{B_{\rho_{\#}}\bigl(x_0,C_\rho c\ell(Q)\bigr)
\cap (E\setminus E_Q)}
\delta_{E_Q}(z)^{-2\upsilon-d}\delta_{E_Q}(z)^{2\upsilon+d}
\,d{\mathscr{H}}_{{\mathscr{X}}\!,\,\rho_{\#}}^d(z)
\nonumber\\[4pt]
& \!\!\!\leq\!\!\! & C{\mathscr{H}}_{{\mathscr{X}}\!,\,\rho_{\#}}^d
\Bigl(B_{\rho_{\#}}\bigl(x_0,C_\rho c\,\ell(Q)\bigr)\cap E\Bigr)
\leq C\ell(Q)^d\leq C\sigma(Q),
\end{eqnarray}
where for the second inequality in \eqref{gwo-5} we used Lemma~\ref{geom-lem}
(with $\gamma:=(m-d)-2\upsilon$ and 
$r:=R:=\frac{C_\rho^2}{M-C_\rho^2}\delta_{E_Q}(z)$), and for
the last two inequalities the fact that 
$\bigl(E,\rho\bigl|_{E},{\mathscr{H}}_{{\mathscr{X}}\!,\,\rho_{\#}}^d\lfloor E\bigr)$
is $d$-{\rm ADR} and $x_0\in E$.

To estimate $I_3$, we first note that since $T_E(Q)\cap E=\emptyset$ 
then \eqref{hszz-A} and \eqref{mMji} give that
\begin{eqnarray}\label{lbpz.33}
\hskip -0.20in
|\Theta_E b_Q(x)|=\left|\int_E {\theta}(x,y)\,b_Q(y)\,d\sigma(y)\right|
\leq\int_E |{\theta}(x,y)|\,d\sigma(y)\leq\frac{C}{\delta_{E}(x)^\upsilon},\quad
\forall\,x\in T_E(Q),
\end{eqnarray}
for some finite $C>0$ independent of $Q$. Also (compare with \eqref{Lz}), 
$|\Theta_E b_Q(x)|\leq C\delta_{E_Q}(x)^{-\upsilon}$ for each 
$x\in T_{E}(Q)\setminus E_Q$. Fix $\alpha,\beta>0$ such that $\alpha+\beta=\upsilon$.
A logarithmically convex combination of these inequalities then yields 
\begin{eqnarray}\label{lbpz.34}
|\Theta_E b_Q(x)|\leq C\delta_{E_Q}(x)^{-\alpha}
\delta_{E}(x)^{-\beta}\qquad\forall\,x\in T_E(Q)\setminus E_Q.
\end{eqnarray}
Observe that, by assumptions and Lemma~\ref{ME-ZZ}, we have $\mu(E_Q)=0$. 
Using this and \eqref{lbpz.34} in place of \eqref{Lz}, we obtain 
(compare with \eqref{gwo})
\begin{eqnarray}\label{gwo-N}
I_3\leq C\int_{B_{\rho_{\#}}(x_0,c\ell(Q))\cap({\widetilde{\mathcal{O}}}\setminus E_Q)}
\delta_E(x)^{-2\beta+2\upsilon-(m-d)}\delta_{E_Q}(x)^{-2\alpha}\,d\mu(x), 
\end{eqnarray}
where, this time, we have set 
\begin{eqnarray}\label{setO-N}
\widetilde{\mathcal{O}}:=\bigl\{z\in{\mathscr{X}}:\,
\delta_E(z)>C_1\delta_{E_Q}(z)\bigr\}.
\end{eqnarray}
Given the nature of \eqref{gwo-N}, \eqref{setO-N}, the same reasoning leading 
up to \eqref{gwo-5} used with $E$ and $E_Q$ interchanged this time gives 
\begin{eqnarray}\label{gwo-5-N}
I_3 & \!\!\!\leq\!\!\! &  
C\int\limits_{B_{\rho_{\#}}\bigl(x_0,C_\rho c\ell(Q)\bigr)\cap (E_Q\setminus E)}
\hskip -0.30in
\delta_{E}(z)^{-2\beta+2\upsilon-m}
\Bigl(\int\limits_{
B_{\rho_{\#}}\bigl(z,\frac{C_\rho^2}{M-C_\rho^2}\delta_{E}(z)\bigr)\setminus E}
\hskip -0.30in\delta_E(x)^{-2\alpha}\,d\mu(x)\Bigr)
\,d{\mathscr{H}}_{{\mathscr{X}}\!,\,\rho_{\#}}^d(z)
\nonumber\\[4pt]
& \!\!\!\leq \!\!\! & C\int\limits_{B_{\rho_{\#}}\bigl(x_0,C_\rho c\ell(Q)\bigr)
\cap (E_Q\setminus E)}
\delta_{E}(z)^{-2\beta+2\upsilon-m}\delta_{E}(z)^{-2\alpha+m}
\,d{\mathscr{H}}_{{\mathscr{X}}\!,\,\rho_{\#}}^d(z)
\nonumber\\[4pt]
& \!\!\!\leq\!\!\! & C{\mathscr{H}}_{{\mathscr{X}}\!,\,\rho_{\#}}^d
\Bigl(B_{\rho_{\#}}\bigl(x_0,C_\rho c\,\ell(Q)\bigr)\cap E_Q\Bigr)
\leq C\ell(Q)^d\leq C\sigma(Q).
\end{eqnarray}
Above, the second inequality follows from Lemma~\ref{geom-lem}
(used with  $r:=R:=\frac{C_\rho^2}{M-C_\rho^2}\delta_{E}(z)$) provided 
we choose $0<\alpha<(m-d)/2$ to begin with. Also, for the last two inequalities 
in \eqref{gwo-5-N} we have made use of the fact that both  
$\bigl(E,\rho\bigl|_{E},{\mathscr{H}}_{{\mathscr{X}}\!,\,\rho_{\#}}^d\lfloor E\bigr)$
and $\bigl(E_Q,\rho\bigl|_{E_Q},
{\mathscr{H}}_{{\mathscr{X}}\!,\,\rho_{\#}}^d\lfloor E_Q\bigr)$
are $d$-{\rm ADR} spaces and that $x_0\in E\cap E_Q$.

We are left with estimating $I_2$. With $C_1$ as above,
thanks to \eqref{BBss}, \eqref{operator-A}, and \eqref{haTT.2} we have 
\begin{eqnarray}\label{KLGB.3}
I_2=\int_{T_E(Q)\setminus E_Q}|\Theta_{E_Q} b_Q(x)|^2 
{\mathbf{1}}_{\{z\in{\mathscr{X}}:\,
C_1^{-1}\delta_E(z)\leq\delta_{E_Q}(z)\leq C_1\delta_E(z)\}}(x)\,
\delta_E(x)^{2\upsilon-(m-d)}\,d\mu(x).
\end{eqnarray}
Hence, we may further use \eqref{KLGB.3} and \eqref{avhai2} in order 
to write (with $\sigma_Q$ as in \eqref{HHH-60})
\begin{eqnarray}\label{KLGB-1}
I_2 &\leq & C\int_{\mathscr{X}\setminus E_Q}
|\Theta_{E_Q} b_Q(x)|^2\,\delta_{E_Q}(x)^{2\upsilon-(m-d)}\,d\mu(x)
\nonumber\\[4pt]
&\leq & C\int_{E_Q}|b_Q|^2\,d\sigma_Q 
\leq C{\mathscr{H}}_{{\mathscr{X}}\!,\,\rho_{\#}}^d\bigl(Q\cap E_Q\bigr)
\leq C\sigma(Q),
\end{eqnarray}
which is of the correct order. 

Now the fact that condition {\it 3} in Theorem~\ref{Thm:localTb} is satisfied for
our choice of $b_Q$'s follows by combining \eqref{KLGB}, \eqref{gwo-5},
\eqref{gwo-5-N} and \eqref{KLGB-1}. This finishes the proof of the theorem.
\end{proof}

We conclude this section by taking a closer look at a higher order version of 
the notion of ``big pieces of square function estimates." To set the stage,
in the context of Definition~\ref{sjvs} let us say that a closed subset $E$ of $({\mathscr{X}},\tau_\rho)$ with the property that there exists a Borel regular 
measure $\sigma$ on $(E,\tau_{\rho|_{E}})$ such that $\bigl(E,\rho\bigl|_E,\sigma\bigr)$ 
is a $d$-dimensional {\rm ADR} space has {\tt (BP)$^0$SFE} {\tt relative to}
$\theta$, or simply  {\tt SFE} {\tt relative to} $\theta$ 
(``Square Function Estimates relative to $\theta$"), 
provided there exists a finite positive constant $C$ such that
\begin{eqnarray}\label{avhai-TTFsa}
\begin{array}{c}
\displaystyle
\int_{\mathscr{X}\setminus E}|\Theta_{E}f(z)|^2\,
{\rm dist}_{\rho_{\#}}(z,E)^{2\upsilon-(m-d)}\,d\mu(z) 
\leq C\int_{E}|f|^2\,d\sigma\qquad
\\[8pt]
\mbox{for each function }\,\,f\in L^2(E,\sigma).
\end{array}
\end{eqnarray}
In addition, we shall say that $E$ has {\tt (BP)$^1$SFE} whenever $E$ has {\tt BPSFE}. 

We may then iteratively interpret ``$E$ has {\tt (BP)$^{k+1}$SFE}" as
the property that $E$ contains big pieces of sets having {\tt (BP)$^k$SFE},
in a uniform fashion. More specifically, we make the following definition. 

\begin{definition}\label{sjvs-DDD}
Consider two numbers $d,m\in(0,\infty)$ such that $m>d$, suppose that 
$({\mathscr{X}},\rho,\mu)$ is an $m$-dimensional {\rm ADR} space, and 
assume that ${\theta}$ is as in \eqref{K234-A}-\eqref{hszz-3-A}. Also, 
suppose that $k\in{\mathbb{N}}$. In this context, a set $E\subseteq{\mathscr{X}}$ 
is said to have {\tt (BP)$^{k+1}$SFE} relative to $\theta$
provided the following conditions are satisfied:
\begin{enumerate}
\item[(i)] the set $E$ is closed in $({\mathscr{X}},\tau_\rho)$ and 
has the property that there exists a Borel regular measure $\sigma$ 
on $(E,\tau_{\rho|_{E}})$ such that $\bigl(E,\rho\bigl|_E,\sigma\bigr)$ 
is a $d$-dimensional {\rm ADR} space;
\item[(ii)] there exist finite positive constants $\eta$, $C_1$, and $C_2$ with the 
property that for each $x\in E$ and each real number $r\in(0,{\rm diam}_{\rho_{\#}}(E)]$ 
there exists a closed subset $E_{x,r}$ of $(\mathscr{X},\tau_\rho)$ such that if  
\begin{eqnarray}\label{HHH-57-yyy}
\sigma_{x,r}:={\mathscr{H}}_{{\mathscr{X}}\!,\,\rho_{\#}}^d\lfloor E_{x,r},
\quad\mbox{ where ${\mathscr{H}}_{{\mathscr{X}}\!,\,\rho_{\#}}^d$ 
is as in \eqref{HHH-56}},
\end{eqnarray}
then $\bigl(E_{x,r},\rho\big|_{E_{x,r}},\sigma_{x,r}\bigr)$ is a 
$d$-dimensional {\rm ADR} space, with {\rm ADR} constant $\leq C_1$, 
and which satisfies
\begin{eqnarray}\label{yvg.Ifav}
\sigma\bigl(E_{x,r}\cap E\cap B_{\rho_{\#}}(x,r)\bigr)\geq\eta\,r^d,
\end{eqnarray}
as well as
\begin{eqnarray}\label{yvg-5tVV}
\mbox{$E_{x,r}$ has {\tt (BP)$^k$SFE} relative to $\theta$, with character 
controlled by $C_2$}.
\end{eqnarray}
\end{enumerate}
In this context, we shall refer to $\eta,C_1,C_2$ as the 
{\tt (BP)$^{k+1}$SFE} character of $E$.
\end{definition}

The following result may be regarded as a refinement of Theorem~\ref{Thm:BPSFtoSF}.

\begin{theorem}\label{Thm:BPSFtoSF.XXX}
Consider two numbers $d,m\in(0,\infty)$ such that $m>d$, suppose that 
$({\mathscr{X}},\rho,\mu)$ is an $m$-dimensional {\rm ADR} space, and 
assume that ${\theta}$ is as in \eqref{K234-A}-\eqref{hszz-3-A}. 
Also, suppose that the set $E$ is closed in $({\mathscr{X}},\tau_\rho)$ and 
has the property that there exists a Borel regular measure $\sigma$ 
on $(E,\tau_{\rho|_{E}})$ such that $\bigl(E,\rho\bigl|_E,\sigma\bigr)$ 
is a $d$-dimensional {\rm ADR} space. 

Then the following claims are equivalent:
\begin{enumerate}
\item[(i)] $E$ has {\tt (BP)$^k$SFE} relative to $\theta$ 
for some $k\in{\mathbb{N}}$;
\item[(ii)] $E$ has {\tt (BP)$^k$SFE} relative to $\theta$ 
for every $k\in{\mathbb{N}}$;
\item[(iii)] $E$ has {\tt (BP)$^0$SFE} relative to $\theta$. 
\end{enumerate}
\end{theorem}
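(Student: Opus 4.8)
\textbf{Proof plan for Theorem~\ref{Thm:BPSFtoSF.XXX}.}

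The plan is to establish the cycle of implications $(iii)\Rightarrow(ii)\Rightarrow(i)\Rightarrow(iii)$, noting that the middle implication $(ii)\Rightarrow(i)$ is trivial, and the content lies in the two outer ones. For the implication $(iii)\Rightarrow(ii)$, I would argue by induction on $k\in{\mathbb{N}}$. The base case $k=1$, i.e., ``$E$ has {\tt (BP)$^0$SFE}$\,\Rightarrow\,E$ has {\tt (BP)$^1$SFE}$=${\tt BPSFE}'', is immediate: given $(iii)$, for every $x\in E$ and every $r\in(0,{\rm diam}_{\rho_\#}(E)]$ one may simply take $E_{x,r}:=E$ itself. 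Then \eqref{yvg} holds with the lower {\rm ADR} bound for $\sigma=\sigma_{x,r}$ (choosing $\eta$ to be the reciprocal of the {\rm ADR} constant of $E$ and using $B_{\rho_\#}(x,r)\cap E=B_{\rho_\#}(x,r)\cap E\cap E$), and \eqref{avhai} is precisely \eqref{avhai-TTFsa}. Hence $E$ has {\tt BPSFE} with character depending only on the {\rm ADR} constant of $E$ and the constant $C$ in \eqref{avhai-TTFsa}. For the inductive step, suppose the implication ``{\tt (BP)$^0$SFE}$\,\Rightarrow\,${\tt (BP)$^j$SFE}'' has been shown for all $j\leq k$; I would show ``{\tt (BP)$^0$SFE}$\,\Rightarrow\,${\tt (BP)$^{k+1}$SFE}''. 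Given $E$ with {\tt (BP)$^0$SFE}, again take $E_{x,r}:=E$ for every $x,r$; then \eqref{yvg.Ifav} holds as above, and \eqref{yvg-5tVV} asks that $E_{x,r}=E$ have {\tt (BP)$^k$SFE}, which holds by the inductive hypothesis applied to $E$ (which has {\tt (BP)$^0$SFE} by assumption). This closes the induction, giving $(iii)\Rightarrow(ii)$.

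For the implication $(i)\Rightarrow(iii)$, I would again induct on $k$, the parameter for which $E$ is assumed to have {\tt (BP)$^k$SFE}. The base case $k=1$ is exactly Theorem~\ref{Thm:BPSFtoSF}: if $E$ has {\tt BPSFE} relative to $\theta$, then \eqref{vlnGG} holds, which is \eqref{avhai-TTFsa}, i.e., $E$ has {\tt (BP)$^0$SFE}. For the inductive step, assume the implication ``{\tt (BP)$^j$SFE}$\,\Rightarrow\,${\tt (BP)$^0$SFE}'' holds for all $j\leq k$, and suppose $E$ has {\tt (BP)$^{k+1}$SFE} relative to $\theta$, with character $\eta,C_1,C_2$. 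Unwinding Definition~\ref{sjvs-DDD}, for each $x\in E$ and each $r\in(0,{\rm diam}_{\rho_\#}(E)]$ there is a closed set $E_{x,r}\subseteq{\mathscr{X}}$ which is $d$-{\rm ADR} with constant $\leq C_1$, satisfies \eqref{yvg.Ifav}, and has {\tt (BP)$^k$SFE} relative to $\theta$ with character controlled by $C_2$. By the inductive hypothesis applied to each $E_{x,r}$, each such $E_{x,r}$ has {\tt (BP)$^0$SFE} relative to $\theta$, with the constant $C$ in \eqref{avhai-TTFsa} for $E_{x,r}$ controlled purely in terms of $C_2$, $C_1$, and the fixed ambient data $\rho,m,d,\upsilon,C_\theta$ (this uniformity is the crucial bookkeeping point, and it follows because the constant produced by Theorem~\ref{Thm:BPSFtoSF} depends only on the enumerated quantities, which propagate uniformly through the finitely many—well, the single—inductive step; one must simply track that the output character at each stage is a fixed function of the input character). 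But ``$E_{x,r}$ is $d$-{\rm ADR} with constant $\leq C_1$, satisfies \eqref{yvg.Ifav}, and satisfies \eqref{avhai-TTFsa} with a uniformly controlled constant'' is precisely the statement that $E$ has {\tt BPSFE}$=${\tt (BP)$^1$SFE} relative to $\theta$, with character controlled by $\eta,C_1$, and the uniform bound just obtained. Applying the base case ($k=1$, i.e., Theorem~\ref{Thm:BPSFtoSF}) to $E$, we conclude that $E$ has {\tt (BP)$^0$SFE} relative to $\theta$, completing the induction and the proof of $(i)\Rightarrow(iii)$.

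The main obstacle I anticipate is not conceptual but the uniform quantitative bookkeeping in the step $(i)\Rightarrow(iii)$: one must verify that the {\rm ADR} constant of $\sigma_{x,r}={\mathscr{H}}^d_{{\mathscr{X}},\rho_\#}\lfloor E_{x,r}$ on the regularized metric is indeed uniformly controlled (invoking \eqref{Rss-TASS}, \eqref{TASS.bis2} and Proposition~\ref{PWRS22} to pass between the intrinsic and ambient Hausdorff measures), that the constant $C_2$ controlling the {\tt (BP)$^k$SFE} character of $E_{x,r}$ really does feed into the inductive hypothesis to yield a constant in \eqref{avhai-TTFsa} for $E_{x,r}$ that is independent of $x$ and $r$, and that this in turn matches the definition of {\tt BPSFE} for $E$ with character independent of $x,r$. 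Since the constant in Theorem~\ref{Thm:BPSFtoSF} depends only on $\rho,m,d,\upsilon,C_\theta$, the {\rm BPSFE} character of $E$, and the {\rm ADR} constants of $E$ and ${\mathscr{X}}$—all of which are fixed or uniformly controlled at each stage—this tracking goes through, but it should be spelled out carefully. A minor point worth a sentence is that in Definition~\ref{sjvs-DDD} one may always replace $r\in(0,{\rm diam}_{\rho_\#}(E)]$ by the dyadic discretization of Lemma~\ref{hrsbrli} when convenient, though the continuous formulation suffices here.
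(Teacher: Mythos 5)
Your proposal is correct and follows essentially the same route as the paper: the implications $(iii)\Rightarrow(i),(ii)$ come from the observation that $E$ has big pieces of itself (so {\tt (BP)$^k$SFE} implies {\tt (BP)$^{k+1}$SFE}), while $(i)\Rightarrow(iii)$ is obtained from Theorem~\ref{Thm:BPSFtoSF} combined with an induction on $k$, exactly as the paper indicates. Your additional bookkeeping of the uniformity of constants through the induction is the detail the paper leaves implicit, and it is handled correctly.
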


\begin{proof}
It is clear that if $E$ has {\tt (BP)$^k$SFE} relative to $\theta$ 
for some $k\in{\mathbb{N}}_0$ then $E$ also has {\tt (BP)$^{k+1}$SFE} 
relative to $\theta$, since obviously $E$ has big pieces of itself.
This gives the implications {\it (iii)} $\Rightarrow$ {\it (i)} and 
{\it (iii)} $\Rightarrow$ {\it (ii)} in the statement of the theorem.
Also, the implication {\it (ii)} $\Rightarrow$ {\it (i)} is trivial. 
Finally, in light of Definition~\ref{sjvs-DDD}, Theorem~\ref{Thm:BPSFtoSF} 
combined with an induction argument gives that {\it (i)} $\Rightarrow$ {\it (iii)},
completing the proof.
\end{proof}

\section{Square Function Estimates on Uniformly Rectifiable Sets}\label{Sect:SFE}
\setcounter{equation}{0}

Given an $n$-dimensional Ahlfors-David regular set $\Sigma$ in $\mathbb{R}^{n+1}$ that
has so-called big pieces of Lipschitz graphs (BPLG), the inductive scheme established 
in the previous section  allows us to deduce square function estimates for an integral
operator $\Theta_\Sigma$, as in~\eqref{operator-A}, whenever square function estimates
are satisfied by $\Theta_\Gamma$ for all Lipschitz graphs $\Gamma$ in $\mathbb{R}^{n+1}$.
Furthermore, induction allows us to prove the same result when the set $\Sigma$ 
only has (BP)$^k$LG for any $k\in\mathbb{N}$. The definition of (BP)$^k$LG is given 
in Definition~\ref{Def-BPnLG}. A recent result by J.~Azzam and R.~Schul 
(cf. \cite[Corollary~1.7]{AS}) proves that uniformly rectifiable sets have 
(BP)$^2$LG (the converse implication also holds and can be found in~\cite[p.\,16]{DaSe93}), and this allows us to obtain square function 
estimates on uniformly rectifiable sets.

We work in the Euclidean codimension one setting throughout this section. 
In particular, fix $n\in\mathbb{N}$ and let $\mathbb{R}^{n+1}$ be the ambient 
space, so that in the notation of Section~\ref{Sect:4}, we would have $d=n$, 
$m=n+1$ and $(\mathscr{X},\rho,\mu)$ is $\mathbb{R}^{n+1}$ with the Euclidean 
metric and Lebesgue measure. We also restrict our attention to the following 
class of kernels in order to obtain square function estimates on Lipschitz graphs. 
Suppose that $K:{\mathbb{R}}^{n+1}\setminus\{0\} \rightarrow \mathbb{R}$ satisfies
\begin{eqnarray}\label{prop-K}
K\in C^2({\mathbb{R}}^{n+1}\setminus\{0\}),\quad
K(\lambda x)=\lambda^{-n}K(x)\,\mbox{ for all }\,\lambda>0,\,\,
x\in{\mathbb{R}}^{n+1}\setminus\{0\},\quad K\mbox{ is odd},\ \
\end{eqnarray}
and has the property that there exists a finite positive constant $C_K$ 
such that for all $j\in\{0,1,2\}$ the following holds:
\begin{eqnarray}\label{prop-K2}
|\nabla^j K(x)|\leq C_K|x|^{-n-j},\quad \forall\,x\in{\mathbb{R}}^{n+1}\setminus\{0\}.
\end{eqnarray}
Then for each closed subset $\Sigma$ of ${\mathbb{R}}^{n+1}$, denote 
by $\sigma:={\mathscr{H}}^n_{\mathbb{R}^{n+1}}\lfloor{\Sigma}$ the surface measure induced by 
the $n$-dimensional Hausdorff measure on $\Sigma$ from \eqref{HHH-56}, and define the integral operator ${\mathcal{T}}$ for all functions $f\in L^p(\Sigma,\sigma)$, $1\leq p\leq\infty$, by
\begin{eqnarray}\label{sbnvjb}
{\mathcal{T}}f(x):=\int_{\Sigma} K(x-y)f(y)\,d{\sigma}(y),\qquad 
\forall\,x\in{\mathbb{R}}^{n+1}\setminus\Sigma.
\end{eqnarray}

In the notation of Section~\ref{Sect:4}, we consider the set $E=\Sigma$ and the operator $\Theta_E=\nabla\mathcal{T}$ with integral kernel $\theta=\nabla K$. We begin by proving square function estimates for $\nabla\mathcal{T}$ in the case when $\Sigma$ is a Lipschitz graph. The inductive scheme from the previous section then allows us to extend that result to the case when $\Sigma$ has (BP)$^k$LG for any $k\in\mathbb{N}$, and hence when $\Sigma$ is uniformly rectifiable.

\subsection{Square function estimates on Lipschitz graphs}
\label{SSect:LG}

The main result in this subsection is the square function estimate for Lipschitz graphs contained 
in the theorem below.  A parabolic variant of this result appears in \cite{HL}, 
and the present proof is based on the arguments given there, and in \cite{Ho}.

\begin{theorem}\label{Theorem1.1}
Let $A:{\mathbb{R}}^n\to{\mathbb{R}}$ be a Lipschitz function and set 
$\Sigma:=\{(x,A(x)):\,x\in{\mathbb{R}}^n\}$. Moreover, assume that $K$ is as in 
\eqref{prop-K} and consider the operator ${\mathcal{T}}$ as in \eqref{sbnvjb}. 
Then there exists a finite constant $C>0$ depending only on 
$\|\partial^\alpha K\|_{L^\infty(S^{n})}$ for $|\alpha|\leq 2$, and the Lipschitz 
constant of $A$ such that for each function $f\in L^2(\Sigma,\sigma)$ 
one has
\begin{eqnarray}\label{hbraH}
\int_{{\mathbb{R}}^{n+1}\setminus\Sigma}
|(\nabla{\mathcal{T}}f)(x)|^2\,{\rm dist}(x,\Sigma)\,dx
\leq C\int_{\Sigma}|f|^2\,d\sigma.
\end{eqnarray}
\end{theorem}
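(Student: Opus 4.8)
\textbf{Proof strategy for Theorem~\ref{Theorem1.1}.}
The plan is to reduce the Lipschitz-graph square function bound \eqref{hbraH} to a $T(1)$-type statement and then verify the Carleson measure condition by a direct computation exploiting the odd, homogeneous, smooth structure of $K$. First I would flatten the graph: writing $\Sigma=\{(x,A(x)):x\in\mathbb{R}^n\}$, the map $\Phi(x,t):=(x,A(x)+t)$ is a bi-Lipschitz change of variables from $\mathbb{R}^{n+1}_+$ onto the upper component of $\mathbb{R}^{n+1}\setminus\Sigma$ (and similarly for the lower half-space), with $\mathrm{dist}(\Phi(x,t),\Sigma)\approx|t|$, so the left side of \eqref{hbraH} is comparable to $\int_{\mathbb{R}^n}\int_0^\infty|(\nabla\mathcal{T}f)(\Phi(x,t))|^2\,t\,dx\,dt$ plus the symmetric lower-half contribution. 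Parametrizing $\Sigma$ by $\mathbb{R}^n$ via $y\mapsto(y,A(y))$ with $d\sigma(y)\approx\sqrt{1+|\nabla A|^2}\,dy$, this casts $\nabla\mathcal{T}$ as an integral operator $\Theta_t$ on $\mathbb{R}^n$ of the form $(\Theta_tf)(x)=\int_{\mathbb{R}^n}\theta_t(x,y)f(y)\,dy$ with a variable (non-convolution) kernel $\theta_t(x,y)$ built from $(\nabla K)\big((x-y,\,A(x)-A(y)+t)\big)$; because $K$ satisfies \eqref{prop-K}--\eqref{prop-K2}, one checks the Littlewood--Paley size and smoothness bounds \eqref{TD-56R.3A}--\eqref{TD-56R.3B} (here with $\alpha=1$, $n$-dimensional, $t$-scaling) uniformly in the Lipschitz constant of $A$ — this is the routine but slightly tedious step of differentiating the composition and using $|A(x)-A(y)|\le \|\nabla A\|_\infty|x-y|$.

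Having arrived at a standard Littlewood--Paley family, I would invoke the Christ--Journé criterion (cf. \eqref{TD-5ii}, equivalently the $T(1)$ theorem for square functions of Section~\ref{Sect:3}, specialized to $E=\mathbb{R}^n$, $d=n$, $m=n+1$, $\upsilon=\tfrac12$): the bound \eqref{hbraH} follows once we show the Carleson measure estimate
\begin{eqnarray}\label{carleson-LG}
\sup_{Q\subseteq\mathbb{R}^n}\frac{1}{|Q|}\int_0^{\ell(Q)}\int_Q|(\Theta_t 1)(x)|^2\,\frac{dx\,dt}{t}<\infty,
\end{eqnarray}
and symmetrically for the lower half-space. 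So the heart of the matter is to estimate $\Theta_t 1$, i.e.\ $(\nabla\mathcal{T})\,1$ evaluated just above (or below) the graph. Here I would use the key algebraic feature: since $K$ is odd and homogeneous of degree $-n$, the quantity $\mathcal{T}1(x,s)=\int_{\mathbb{R}^n}K(x-y,\,A(x)-A(y)+s)\sqrt{1+|\nabla A(y)|^2}\,dy$ has, for points off the graph, a gradient that is essentially a smooth, compactly-supported-in-angle average; the oddness of $K$ makes the ``frozen-coefficient'' principal part (replacing $A(y)$ by its first-order Taylor polynomial at $x$, which turns the kernel into a convolution on a hyperplane) integrate to zero, so $\nabla\mathcal{T}1$ is controlled by the oscillation of $A$. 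Concretely, following the arguments of \cite{HL} and \cite{Ho}, one writes $\nabla\mathcal{T}1(\Phi(x,t))$ as a sum of a ``main term'' that vanishes by the odd symmetry and an ``error term'' bounded pointwise by
$$
|(\Theta_t1)(x)|\;\le\; C\int_{\mathbb{R}^n}\frac{t^{\,}}{(t+|x-y|)^{n+1}}\cdot\frac{|A(x)-A(y)-\nabla A(x)\cdot(x-y)|}{t+|x-y|}\,dy\;+\;(\text{similar smoother terms}),
$$
which after plugging in and using the Lipschitz bound is $\lesssim M(\nabla A)(x)$-type or, better, is directly a Carleson function: the standard estimate $\int_0^{\ell(Q)}\fint_Q|(\Theta_t1)|^2\,\tfrac{dt}{t}\lesssim \|\nabla A\|_\infty^2$ comes out by a square-function computation for $\nabla A$ itself (using that $t\partial_t$ applied to a mollification of $\nabla A$ has the Carleson property), exactly as in the Littlewood--Paley treatment of the Cauchy integral on Lipschitz graphs.

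The main obstacle I anticipate is precisely this estimate for $\Theta_t 1$: extracting the cancellation from the oddness of $K$ in the variable-coefficient (non-convolution) setting requires carefully splitting off the frozen-coefficient operator, controlling the difference by a first-order Taylor remainder of $A$, and then recognizing the resulting expression as a genuine Carleson measure density rather than merely an $L^\infty$ bound — this is the technical core, and it is where the references \cite{HL}, \cite{Ho} do the real work. A secondary, more bookkeeping-type obstacle is organizing the two half-spaces $\{(x,t):t>0\}$ and $\{t<0\}$ and checking that all constants depend only on $\|\partial^\alpha K\|_{L^\infty(S^n)}$ for $|\alpha|\le 2$ and on $\|\nabla A\|_\infty$; this is routine once the change of variables $\Phi$ is in place and the kernel bounds \eqref{TD-56R.3A}--\eqref{TD-56R.3B} have been verified with constants uniform in the Lipschitz character. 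Finally I would assemble the pieces: kernel bounds $+$ Carleson estimate \eqref{carleson-LG} $\Rightarrow$ (via the $T(1)$ theorem for square functions) the $L^2$ bound for $\Theta_t$ on $\mathbb{R}^n$ $\Rightarrow$ \eqref{hbraH} after undoing the bi-Lipschitz change of variables.
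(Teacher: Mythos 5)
Your proposal follows essentially the same route as the paper's proof: flatten the graph via $(x,t)\mapsto(x,A(x)+t)$, reduce through the square-function $T(1)$ theorem (Theorem~\ref{SChg}) to a Carleson estimate for $\Theta_t 1$, kill the linearized (frozen-coefficient) part using the oddness and homogeneity of $K$ (the paper's Lemma~\ref{Lemma1.3}), and control the remainder by a Jones-type quadratic estimate on the Taylor-type error $E_A(t,x,y)=A(x)-A(y)-\langle\nabla(\phi_t\ast A)(x),x-y\rangle$ (the paper's Lemma~\ref{lgih}), combined with a local truncation of $A$ turning $\|\nabla A\|_{L^2(\mathbb{R}^n)}^2$ into $Cr^n\|\nabla A\|_{L^\infty(\mathbb{R}^n)}^2$. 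The only cosmetic difference is that the paper linearizes with the mollified gradient $\nabla(\phi_t\ast A)(x)$ rather than $\nabla A(x)$ itself, which is precisely the mollification you invoke for the Carleson/Plancherel step.
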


As a preamble to the proof of Theorem~\ref{Theorem1.1}, we state and prove a couple 
of technical lemmas. The first has essentially appeared previously in \cite{Ch},
and is based upon ideas of \cite{J}. 

\begin{lemma}\label{lgih} Assume that $A:{\mathbb{R}}^n\to{\mathbb{R}}$ is a
locally integrable function such that $\nabla A\in L^2({\mathbb{R}}^n)$. 
Pick a smooth, real-valued, nonnegative, compactly supported function $\phi$ 
defined in ${\mathbb{R}}^n$ with $\int_{{\mathbb{R}}^n}\phi(x)\,dx=1$ and for 
each $t>0$ set $\phi_t(x):=t^{-n}\phi(x/t)$ for $x\in{\mathbb{R}}^n$. 
Finally, define 
\begin{eqnarray}\label{1.28}
E_A(t,x,y):=
A(x)-A(y)-{\langle}\nabla_{x}(\phi_t\ast A)(x),(x-y){\rangle},\qquad
\forall\,x,y\in{\mathbb{R}}^n,\,\,\,\forall\,t>0.
\end{eqnarray}
Then, for some finite positive constant $C=C(\phi,n)$,
\begin{eqnarray}\label{1.29}
\int_0^{\infty}t^{-n-2}\int_{{\mathbb{R}}^n}\int_{|x-y|\leq \lambda t}
|E_A(t,x,y)|^2\,dy\,dx\,\tfrac{dt}{t}
\leq C\lambda^{n+3}\|\nabla A\|^2_{L^2({\mathbb{R}}^n)},\qquad\forall\,\lambda\geq 1.
\end{eqnarray}
\end{lemma}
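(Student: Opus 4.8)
\textbf{Proof proposal for Lemma~\ref{lgih}.}

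The plan is to reduce the estimate \eqref{1.29} to a Littlewood--Paley estimate for the averaging operators $\phi_t\ast$. First I would record the elementary pointwise bound on $E_A(t,x,y)$ that follows from the fundamental theorem of calculus: writing $A(x)-A(y)=\int_0^1\langle\nabla A((1-s)y+sx),x-y\rangle\,ds$, we can express
\begin{equation}\label{eq:EA-split}
E_A(t,x,y)=\Big\langle\int_0^1\big[\nabla A\big((1-s)y+sx\big)-(\phi_t\ast\nabla A)(x)\big]\,ds,\;x-y\Big\rangle,
\end{equation}
where I have used that $\nabla_x(\phi_t\ast A)=\phi_t\ast\nabla A$. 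Hence, on the region $|x-y|\le\lambda t$,
\begin{equation}\label{eq:EA-bound}
|E_A(t,x,y)|\le\lambda t\int_0^1\big|\nabla A\big((1-s)y+sx\big)-(\phi_t\ast\nabla A)(x)\big|\,ds.
\end{equation}
Squaring, applying Cauchy--Schwarz in $s$, and integrating $dy$ over $|x-y|\le\lambda t$, then changing variables to $z=(1-s)y+sx$ (so that $z$ ranges over a ball of radius $\lesssim\lambda t$ about $x$, with Jacobian $(1-s)^{-n}$ — this is where a harmless but slightly delicate $s$-integral near $s=1$ must be handled, e.g. by splitting $s\in[0,1/2]$ and $s\in[1/2,1]$ and for the latter swapping the roles of $x$ and $y$ using the symmetry of the region), one is left with bounding
\begin{equation}\label{eq:reduced}
\int_0^\infty t^{-n-2}\cdot\lambda^2 t^2\int_{{\mathbb R}^n}\int_{|z-x|\le C\lambda t}\big|\nabla A(z)-(\phi_t\ast\nabla A)(x)\big|^2\,dz\,dx\,\frac{dt}{t}.
\end{equation}

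The next step is to split $\nabla A(z)-(\phi_t\ast\nabla A)(x)=\big(\nabla A(z)-(\phi_t\ast\nabla A)(z)\big)+\big((\phi_t\ast\nabla A)(z)-(\phi_t\ast\nabla A)(x)\big)$ and treat the two pieces separately. For the first piece, after performing the $dz$-integral over a ball of radius $C\lambda t$ (contributing a factor $(\lambda t)^n$), we arrive at $\lambda^{n+2}\int_0^\infty\int_{{\mathbb R}^n}|g_t(z)|^2\,dz\,\frac{dt}{t}$ with $g_t:=\nabla A-\phi_t\ast\nabla A=-\psi_t\ast\nabla A$ for $\psi:=\phi-\delta_0$ (more precisely $\psi_t\ast h=h-\phi_t\ast h$). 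Since $\widehat\psi(0)=\widehat\phi(0)-1=0$ and $\psi$ is smooth and compactly supported, $\{\psi_t\ast\}_{t>0}$ is a standard Littlewood--Paley family, so by Plancherel $\int_0^\infty\|\psi_t\ast\nabla A\|_{L^2}^2\,\frac{dt}{t}\le C\|\nabla A\|_{L^2}^2$; this yields the bound $\lambda^{n+2}\|\nabla A\|_{L^2}^2$, which is dominated by $\lambda^{n+3}\|\nabla A\|_{L^2}^2$ for $\lambda\ge1$. For the second piece, I would use the smoothness of $\phi_t\ast\nabla A$: on $|z-x|\le C\lambda t$ we have $|(\phi_t\ast\nabla A)(z)-(\phi_t\ast\nabla A)(x)|\le C\lambda t\,\|\nabla(\phi_t\ast\nabla A)\|_{L^\infty(B(x,C\lambda t))}$, and $\nabla\phi_t\ast\nabla A=(\nabla\phi)_t\ast\nabla A/t$ with $\widehat{\nabla\phi}(0)=0$, so again $\{t\,\nabla(\phi_t\ast\,\cdot\,)\}$ is (after a harmless rescaling) a Littlewood--Paley family; carrying out the $dz$-integral (factor $(\lambda t)^n$) and the extra $(\lambda t)^2$ from the modulus of continuity produces a total power $\lambda^{n+4}$ before the $dt/t$ integral, which then by Plancherel gives $C\lambda^{n+4}\|\nabla A\|_{L^2}^2$ — and here I would need to be a little more careful to recover $\lambda^{n+3}$ rather than $\lambda^{n+4}$, by noting that in the modulus-of-continuity estimate the relevant scale is $\min\{|z-x|,t\}$ rather than $C\lambda t$ when one is more precise, or alternatively by interpolating; the cleanest route is to keep $|z-x|$ (not $\lambda t$) as the displacement in the Lipschitz estimate for $\phi_t\ast\nabla A$, integrate $|z-x|^2$ against $dz$ over the ball to get $(\lambda t)^{n+2}$ but with one power of $\lambda$ ``used up'' by the averaging, matching the claimed $\lambda^{n+3}$.

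The main obstacle I anticipate is precisely this bookkeeping of powers of $\lambda$: the crude substitution $|x-y|\le\lambda t\Rightarrow|E_A|\le\lambda t\times(\text{modulus of continuity})$ and the crude enlargement of integration balls to radius $C\lambda t$ both tend to overshoot, so to land exactly on the exponent $n+3$ one must be somewhat frugal — keeping track of the true displacement $|z-x|$ in each Lipschitz/cancellation estimate and only invoking $|z-x|\le C\lambda t$ once (to fix the measure of the ball). Everything else is routine: the pointwise identity \eqref{eq:EA-split}, the change of variables (with the $s\to1$ endpoint handled by symmetry in $x\leftrightarrow y$), and two applications of Plancherel to the Littlewood--Paley families $\{\psi_t\ast\}$ and $\{t\nabla\phi_t\ast\}$. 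I would also double-check that $\nabla A\in L^2$ is all that is used (the lemma does not assume $A$ Lipschitz, only $\nabla A\in L^2$), so no $L^\infty$ bound on $\nabla A$ should enter — which is consistent with the fact that the right-hand side of \eqref{1.29} involves only $\|\nabla A\|_{L^2}$.
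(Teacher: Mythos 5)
There is a genuine gap, and it sits in the first piece of your splitting. After the FTC step and the decomposition $\nabla A(z)-(\phi_t\ast\nabla A)(x)=\bigl[\nabla A(z)-(\phi_t\ast\nabla A)(z)\bigr]+\bigl[(\phi_t\ast\nabla A)(z)-(\phi_t\ast\nabla A)(x)\bigr]$, your first piece reduces (as you say) to $\lambda^{n+2}\int_0^\infty\|\nabla A-\phi_t\ast\nabla A\|_{L^2}^2\,\frac{dt}{t}$, and you claim this is $\lesssim\lambda^{n+2}\|\nabla A\|_{L^2}^2$ because $\psi:=\delta_0-\phi$ has $\widehat\psi(0)=0$. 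But $\{h\mapsto h-\phi_t\ast h\}_{t>0}$ is \emph{not} a Littlewood--Paley family: the vanishing of the symbol at frequency zero controls the regime $t|\zeta|\to 0$, while a square-function bound also requires decay of the symbol as $t|\zeta|\to\infty$, and here $1-\widehat\phi(t\zeta)\to 1$ (since $\widehat\phi$ is Schwartz, hence decays). Concretely, $\int_0^\infty|1-\widehat\phi(t\zeta)|^2\,\frac{dt}{t}=\infty$ for every $\zeta\neq 0$, and $\|\nabla A-\phi_t\ast\nabla A\|_{L^2}\to\|\nabla A\|_{L^2}$ as $t\to\infty$, so your first piece is $+\infty$ whenever $\nabla A\not\equiv 0$. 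The loss is conceptual: by writing $A(x)-A(y)$ via the FTC you replace the bounded oscillatory factor $1-e^{i\langle\zeta,h\rangle}$ by something of size $t|\zeta|$, which destroys exactly the cancellation that makes the $t$-integral converge at infinity. The paper's proof keeps the three terms of $E_A$ together, applies Plancherel in $x$ (after the changes of variables $t=\lambda^{-1}\tau$, $y=x+h$, $h=\tau w$), and uses the two-case multiplier bound $\min\{\tau|\zeta|,\,\lambda/(\tau|\zeta|)\}$ — the second branch, available only because the full combination is kept and $|\widehat{A}|=|\widehat{\nabla A}|/|\zeta|$, is precisely what your decomposition forfeits.

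A secondary problem is the exponent. Even granting the second piece, the natural bookkeeping there gives $(\lambda t)^2$ from the FTC step, another $(\lambda t)^2$ from the modulus of continuity of $\phi_t\ast\nabla A$, and $(\lambda t)^n$ from the measure of the ball, i.e.\ $\lambda^{n+4}$ against the legitimate square function $\int_0^\infty\|t\,\nabla\phi_t\ast\nabla A\|_{L^2}^2\,\frac{dt}{t}$. Your proposed remedy (keeping $|z-x|$ instead of $\lambda t$) does not change the $\lambda$-count, since $|z-x|\le|x-y|\le\lambda t$ is the only bound available uniformly over the region, and "interpolating" is not specified. In the paper the exponent $n+3$ comes from balancing the two multiplier branches at $\tau|\zeta|\sim\sqrt{\lambda}$, so that the $\tau$-integral contributes a single factor $\lambda$; this is not cosmetic, because the statement asserts $\lambda^{n+3}$ and the application in the proof of Theorem~\ref{Theorem1.1} (summing over $\ell$ with $\lambda=2^{\ell+1}$ against the weight $2^{\ell(-n-4)}$) fails with $\lambda^{n+4}$. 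So both the divergence of the first piece and the $\lambda$-power in the second would need genuinely new input; as written, the proposal does not prove \eqref{1.29}.
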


\begin{proof}
Starting with the changes of variables $t=\lambda^{-1}\tau$, $y=x+h$ and then 
employing Plancherel's theorem in the variable $x$, we may write (with `hat' denoting 
the Fourier transform)
\begin{eqnarray}\label{ldbs}
&& \int_0^{\infty}t^{-n-2}\int_{{\mathbb{R}}^n}\int_{|x-y|\leq \lambda t}
|E_A(t,x,y)|^2\,dy\,dx\,\frac{dt}{t}
\\[4pt]
&& \hskip 0.50in
=\lambda^{n+2}\int_0^{\infty}\tau^{-n-2}\int_{{\mathbb{R}}^n}\int_{|h|\leq\tau}
\bigl|A(x)-A(x+h)+{\langle}\nabla_{x}(\phi_{\lambda^{-1}\tau}\ast A)(x),h{\rangle}\bigr|^2
\,dh\,dx\,\frac{d\tau}{\tau}
\nonumber\\[4pt]
&& \hskip 0.50in
=\lambda^{n+2}\int_0^{\infty}\tau^{-n-2}\int_{{\mathbb{R}}^n}\int_{|h|\leq\tau}
\bigl|1-e^{i\langle\zeta,h\rangle}+i\langle\zeta,h\rangle\,
\widehat{\phi}(\lambda^{-1}\tau\zeta)\bigr|^2
\tfrac{|\widehat{\nabla A}(\zeta)|^2}{|\zeta|^2}
\,dh\,d\zeta\,\frac{d\tau}{\tau}
\nonumber\\[4pt]
&& \hskip 0.50in
=\lambda^{n+2}\int_0^{\infty}\int_{{\mathbb{R}}^n}\int_{|w|\leq 1}
\frac{\bigl|1-e^{i\tau\langle\zeta,w\rangle}+i\tau\langle\zeta,w\rangle\,
\widehat{\phi}(\lambda^{-1}\tau\zeta)\bigr|^2}{\tau^2|\zeta|^2}
|\widehat{\nabla A}(\zeta)|^2\,dw\,d\zeta\,\frac{d\tau}{\tau},
\nonumber
\end{eqnarray}
where the last equality in \eqref{ldbs} is based on the change of variables $h=\tau w$.

Next we observe that for every $\zeta\in{\mathbb{R}}^n$ and 
$w\in{\mathbb{R}}^n$ with $|w|\leq 1$ there holds
\begin{eqnarray}\label{zawgi}
\frac{\bigl|1-e^{i\tau\langle\zeta,w\rangle}+i\tau\langle\zeta,w\rangle\,
\widehat{\phi}(\lambda^{-1}\tau\zeta)\bigr|}{\tau|\zeta|}
\leq C\,\min\Bigl\{\tau|\zeta|,\frac{\lambda}{\tau|\zeta|}\Bigr\}
\end{eqnarray}
for some $C>0$ depending only on $\phi$. To see why \eqref{zawgi} is true, 
analyze the following two cases.

\vskip 0.08in
\noindent {\it Case~1.} $\tau|\zeta|\leq\sqrt{\lambda}$\,:\\
In this situation the minimum in the right hand-side of \eqref{zawgi} is equal
to $\tau|\zeta|$. In addition, if we use Taylor expansions about zero for the 
complex exponential function and $\widehat{\phi}$, we obtain 
(keeping in mind that $\widehat{\phi}(0)=1$, $\lambda\geq 1$ and $|w|\leq 1$)
\begin{eqnarray}\label{zaw-cgi}
\bigl|1-e^{i\tau\langle\zeta,w\rangle}+i\tau\langle\zeta,w\rangle
+i\tau\langle\zeta,w\rangle\,(\widehat{\phi}(\lambda^{-1}\tau\zeta)-1)\bigr|
\leq C\tau^2|\zeta|^2,
\end{eqnarray}
which shows that \eqref{zawgi} holds in this case.

\vskip 0.08in
\noindent {\it Case~2.} $\tau|\zeta|>\sqrt{\lambda}$\,:\\
In this scenario the minimum in the right hand-side of \eqref{zawgi} is equal
to $\frac{\lambda}{\tau|\zeta|}$. Moreover, 
\begin{eqnarray}\label{pkbi}
\bigl|1-e^{i\tau\langle\zeta,w\rangle}
+\tau\langle\zeta,w\rangle\,\widehat{\phi}(\lambda^{-1}\tau\zeta)\bigr|
&\leq& 2+\tau|\zeta|\,\bigl|\widehat{\phi}(\lambda^{-1}\tau\zeta)\bigr|
\nonumber\\[4pt]
&\leq& 2+C\tau|\zeta|\,(1+\lambda^{-1}\tau|\zeta|)^{-1}
\leq C\lambda,
\end{eqnarray}
since the Schwartz function $\widehat{\phi}$ decays, $\lambda\geq 1$ 
and $|w|\leq 1$. Consequently, \eqref{zawgi} holds in this case as well.

With \eqref{zawgi} in hand, we proceed to integrate in $\tau\in(0,\infty)$ 
with respect to the Haar measure to further obtain
\begin{eqnarray}\label{knoB}
&& \int_0^{\infty}
\frac{\bigl|1-e^{i\tau\langle\zeta,w\rangle}+i\tau\langle\zeta,w\rangle\,
\widehat{\phi}(\lambda^{-1}\tau\zeta)\bigr|^2}{\tau^2|\zeta|^2}\,\frac{d\tau}{\tau}
\leq \int_0^{\infty}\min\Bigl\{\tau^2|\zeta|^2,\frac{\lambda^2}{\tau^2|\zeta|^2}\Bigr\}
\,\frac{d\tau}{\tau}
\nonumber\\[4pt]
&& \hskip 0.50in
=\int_0^{\sqrt{\lambda}/|\zeta|}\tau|\zeta|^2\,d\tau
+\int_{\sqrt{\lambda}/|\zeta|}^\infty\frac{\lambda^2}{\tau^3|\zeta|^2}\,d\tau
\leq C\lambda.
\end{eqnarray}
A combination of \eqref{ldbs}, \eqref{knoB} and Plancherel's theorem
now yields \eqref{1.29}, finishing the proof of Lemma~\ref{lgih}.
\end{proof}

The second lemma needed here has essentially appeared previously in \cite{MMT}.

\begin{lemma}\label{Lemma1.3}
Let $F:{\mathbb{R}}^{n+1}\setminus\{0\}\to{\mathbb{R}}$ be a continuous 
function which is even and positive homogeneous of degree $-n-1$. 
Then for any $a\in{\mathbb{R}}^n$ and any $t>0$ there holds
\begin{eqnarray}\label{1.31}
\int_{{\mathbb{R}}^n}F(y,a\cdot y+t)\,dy
&=& \frac{1}{2t}\int_{S^{n-1}}\int_{-\infty}^\infty F(\omega,s)\,ds\,d\omega 
\nonumber\\[4pt]
&=&\int_{{\mathbb{R}}^n}F(y,t)\,dy.
\end{eqnarray}
In particular, if $F$ is some first-order partial derivative, 
say $F=\partial_jG$, $j\in\{1,..,n+1\}$, of a function 
$G\in C^1({\mathbb{R}}^{n+1}\setminus\{0\})$ which is 
odd and homogeneous of degree $-n$, then 
\begin{eqnarray}\label{1.31ASBN}
\int_{{\mathbb{R}}^n} F(y,a\cdot y+t)\,dy=0
\qquad\mbox{ for any $a\in{\mathbb{R}}^n$ and $t>0$}.
\end{eqnarray}
\end{lemma}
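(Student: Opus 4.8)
\textbf{Proof plan for Lemma~\ref{Lemma1.3}.}

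The plan is to establish the first displayed identity in \eqref{1.31} by a direct change of variables, and then deduce \eqref{1.31ASBN} as a special case via a homogeneity/antisymmetry argument. First I would fix $a\in{\mathbb{R}}^n$ and $t>0$ and study the hyperplane $P_{a,t}:=\{(y,a\cdot y+t):\,y\in{\mathbb{R}}^n\}$, which is an affine hyperplane lying at Euclidean distance $t(1+|a|^2)^{-1/2}$ from the origin, and does not pass through $0$ since $t>0$. The integral $\int_{{\mathbb{R}}^n}F(y,a\cdot y+t)\,dy$ is, up to the Jacobian factor $(1+|a|^2)^{1/2}$ coming from parametrizing $P_{a,t}$ over ${\mathbb{R}}^n$, the surface integral $\int_{P_{a,t}}F\,d\mathcal{H}^n$ weighted appropriately. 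The key computational step is to pass to polar-type coordinates adapted to the cone structure: for $\omega\in S^{n-1}\subseteq{\mathbb{R}}^n$ and $s\in{\mathbb{R}}$, one parametrizes the ray through a point on the plane, using that $F$ is homogeneous of degree $-n-1$ so that $F(r\xi)=r^{-n-1}F(\xi)$. Writing the integral over $P_{a,t}$ by slicing along rays from the origin and using the homogeneity to collapse the radial integral against the surface measure of $S^n$, one obtains a factor $\frac{1}{t}$ (the distance scaling) together with the full spherical integral $\int_{S^n}F\,d\mathcal{H}^n$, which because $F$ is even can be written as $\frac{1}{2}\int_{S^{n-1}}\int_{-\infty}^{\infty}F(\omega,s)\,ds\,d\omega$ after splitting $S^n$ into upper and lower hemispheres and projecting. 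Crucially, this computation produces the \emph{same} answer regardless of $a$, because the homogeneity-plus-evenness reduces everything to an integral over the sphere that only sees the distance $t$ after the change of variables absorbs $(1+|a|^2)^{1/2}$ against the tilt of the plane. Taking $a=0$ then identifies this common value with $\int_{{\mathbb{R}}^n}F(y,t)\,dy$, giving both equalities in \eqref{1.31}.

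For the second assertion, I would apply the first part to $F=\partial_j G$. The hypotheses on $G$ — that $G\in C^1({\mathbb{R}}^{n+1}\setminus\{0\})$ is odd and positive homogeneous of degree $-n$ — force $\partial_j G$ to be even (differentiation of an odd function yields an even function) and positive homogeneous of degree $-n-1$, so $F$ is admissible for \eqref{1.31}. Hence $\int_{{\mathbb{R}}^n}F(y,a\cdot y+t)\,dy=\int_{{\mathbb{R}}^n}F(y,t)\,dy$ for every $a$ and $t$, and it suffices to show this common value is $0$. For $j\in\{1,\dots,n\}$ this is immediate from oddness of $\partial_j G$ in the $j$-th variable: for each fixed $t>0$, $\partial_jG(\,\cdot\,,t)$ is an odd function of $y_j$ (since $G$ is odd and the other $n$ variables are held fixed with $t\ne 0$, a reflection in the $j$-th coordinate negates $G$, hence its $j$-th partial), so the $y_j$-integral vanishes and, by Fubini (justified by the decay $|F(y,t)|\leq C(|y|+t)^{-n-1}$ which is integrable on ${\mathbb{R}}^n$ for fixed $t>0$), so does the full integral. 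For $j=n+1$ one instead uses the fundamental theorem of calculus in the last variable together with the homogeneity: $\int_{{\mathbb{R}}^n}\partial_{n+1}G(y,t)\,dy=\frac{d}{dt}\int_{{\mathbb{R}}^n}G(y,t)\,dy$, and the scaling $G(y,t)=t^{-n}G(y/t,1)$ gives $\int_{{\mathbb{R}}^n}G(y,t)\,dy=\int_{{\mathbb{R}}^n}G(z,1)\,dz$, a constant in $t$ (the $t^{-n}$ is cancelled by the $t^n$ from the change of variables $y=tz$), so its derivative in $t$ is zero; alternatively one can note that $\int_{{\mathbb{R}}^n}G(z,1)\,dz=0$ by oddness of $G$ in any of the first $n$ variables. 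Either route yields \eqref{1.31ASBN}.

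The main obstacle I anticipate is making the change of variables in the first part fully rigorous: one must carefully track the Jacobian relating the flat parametrization of the tilted plane $P_{a,t}$ to spherical coordinates on ${\mathbb{R}}^{n+1}$, and verify that the combination of the distance $t(1+|a|^2)^{-1/2}$, the tilt factor $(1+|a|^2)^{1/2}$, and the degree $-n-1$ homogeneity conspire to leave only $\frac{1}{2t}\int_{S^{n-1}}\int_{-\infty}^{\infty}F(\omega,s)\,ds\,d\omega$. This is essentially the statement that integrating a degree $-n-1$ homogeneous function over any affine hyperplane at distance $\delta$ from the origin gives a universal constant times $\delta^{-1}$; the cleanest way to see it is to first prove it for the horizontal plane $\{t=\text{const}\}$ by the substitution $y=tz$, then show invariance under the linear shear $(y,u)\mapsto(y,u+a\cdot y)$ — which has Jacobian $1$ and maps the horizontal plane to $P_{a,t}$ — but this shear does \emph{not} commute with the homogeneity, so one cannot argue by shear-invariance of $F$ directly. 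Instead the correct argument is that any rotation of ${\mathbb{R}}^{n+1}$ carrying $P_{a,t}$ to a horizontal plane at the same distance $t(1+|a|^2)^{-1/2}$ preserves the class of even degree $-n-1$ homogeneous functions only up to replacing $F$ by $F\circ(\text{rotation})$, which is again such a function; since the final answer $\frac{1}{2t}\int_{S^n}F$ (before invoking evenness to pass to $S^{n-1}$) is manifestly rotation-invariant — being an integral over the whole sphere — the value depends only on the distance, completing the argument. I would present this via the rotation reduction rather than the shear, as it avoids the non-commutativity issue cleanly.
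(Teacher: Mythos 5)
Your treatment of the first identity in \eqref{1.31} is correct in substance but takes a more geometric route than the paper, which simply writes $y=r\omega$, uses homogeneity to get $\int_{S^{n-1}}\int_0^\infty F(\omega,a\cdot\omega+t/r)\,r^{-2}\,dr\,d\omega$, substitutes $s=a\cdot\omega+t/r$ to arrive at $t^{-1}\int_{S^{n-1}}\int_{a\cdot\omega}^\infty F(\omega,s)\,ds\,d\omega$, and then uses the change $(\omega,s)\mapsto(-\omega,-s)$ plus evenness to replace the lower limit by $-\infty$ at the cost of a factor $\tfrac12$; independence of $a$ is then immediate, with no surface measures, Jacobians, or rotations needed. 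Your hyperplane/rotation reduction works, but it is heavier: you must supply the identity $\int_{S^n}F\,d{\mathcal{H}}^n=\int_{S^{n-1}}\int_{-\infty}^{\infty}F(\omega,s)\,ds\,d\omega$ for continuous functions homogeneous of degree $-n-1$ (note this holds with no factor $\tfrac12$ and without evenness, so your first paragraph, which asserts $\int_{S^n}F=\tfrac12\int_{S^{n-1}}\int_{\mathbb{R}}F$ and a prefactor $\tfrac1t$, is internally inconsistent by a factor of $2$ with your final paragraph, where $\tfrac{1}{2t}\int_{S^n}F$ is the correct value), and evenness is needed exactly to convert the upper-hemisphere contribution produced by a horizontal plane into half of the full-sphere integral before rotation invariance can be invoked.

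The genuine gap is in your argument for \eqref{1.31ASBN} when $j\in\{1,\dots,n\}$. Oddness of $G$ means $G(-x)=-G(x)$; it does \emph{not} mean $G$ is odd in each coordinate separately, so a reflection in the $j$-th coordinate alone need not negate $G$, and $\partial_jG(\cdot,t)$ need not be odd in $y_j$. For instance $G(x)=x_j|x|^{-n-1}$ is odd, $C^1$ away from $0$, and homogeneous of degree $-n$, yet $\partial_jG(y,t)=|(y,t)|^{-n-1}-(n+1)y_j^{2}|(y,t)|^{-n-3}$ is \emph{even} in $y_j$, so your symmetry argument fails. The inner integral $\int_{\mathbb{R}}\partial_jG(y,t)\,dy_j$ does vanish, but for a different reason: the fundamental theorem of calculus together with the decay $|G(x)|\leq C|x|^{-n}$ (this is precisely the paper's integration-by-parts argument for $j\neq n+1$, applied to the third expression in \eqref{1.31}). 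The same conflation makes your ``alternative'' remark for $j=n+1$ false: $\int_{\mathbb{R}^n}G(z,1)\,dz$ need not vanish, e.g. for $G(x)=x_{n+1}|x|^{-n-1}$ it is strictly positive. Your primary argument for $j=n+1$ — that $\int_{\mathbb{R}^n}G(y,t)\,dy$ is independent of $t$ by scaling, hence has vanishing $t$-derivative — is fine once differentiation under the integral sign is justified via $|\partial_{n+1}G(x)|\leq C|x|^{-n-1}$; the paper instead observes that $\int_{-\infty}^{\infty}\partial_sG(\omega,s)\,ds=0$ kills the second expression in \eqref{1.31}.
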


\begin{proof} 
Fix $a\in{\mathbb{R}}^n$ and $t>0$. By the homogeneity of $F$ we 
have $|F(x)|\leq\|F\|_{L^\infty(S^n)}|x|^{-n-1}$ for every 
$x\in{\mathbb{R}}^{n+1}\setminus\{0\}$. Also, $\{(y,a\cdot y+t):\,|y|\leq 1\}$ 
is a compact subset of ${\mathbb{R}}^{n+1}\setminus\{0\}$. Hence, given that
$F$ is continuous on ${\mathbb{R}}^{n+1}\setminus\{0\}$, it follows that 
$\int_{{\mathbb{R}}^n}|F(y,a\cdot y+t)|\,dy<\infty$.  

To proceed, by passing to polar coordinates $y=r\omega$, $r>0$, $\omega\in S^{n-1}$, 
and using the homogeneity of $F$ we may write
\begin{eqnarray}\label{1.32}
\int_{{\mathbb{R}}^n}F(y,a\cdot y+t)\,dy=
\int_{S^{n-1}}\int_0^\infty F(\omega,a\cdot\omega+t/r)\,r^{-2}dr\,d\omega.
\end{eqnarray}
Now, setting $s:=a\cdot\omega+t/r$ the last integral above becomes
$t^{-1}\int_{S^{n-1}}\int_{a\cdot\omega}^\infty F(\omega,s)\,ds\,d\omega$
which, by making the change of variables $(\omega,s)\mapsto(-\omega,-s)$ 
and using the fact that $F$ is even, may be written as 
\begin{eqnarray}\label{1.33}
\frac{1}{t}\int_{S^{n-1}}\int_{a\cdot\omega}^\infty F(\omega,s)\,ds\,d\omega
=\frac{1}{2t}\int_{S^{n-1}}\int_{-\infty}^\infty F(\omega,s)\,ds\,d\omega.
\end{eqnarray}
This analysis gives the first equality in \eqref{1.31}. Furthermore, the integral 
in the right side of \eqref{1.33} is independent of $a\in{\mathbb{R}}^n$
and, hence, so is the original one. In particular, its value does
not change if we take $a=0$ and this is precisely what the second 
equality in \eqref{1.31} says.

Finally, consider the claim made in \eqref{1.31ASBN} under the assumption that
$F=\partial_jG$ for some $j\in\{1,..,n+1\}$ and some   
$G\in C^1({\mathbb{R}}^{n+1}\setminus\{0\})$ which is 
odd and homogeneous of degree $-n$. In particular, there exists $C\in(0,\infty)$ 
such that $|G(x)|\leq C|x|^{-n}$ for all $x\in{\mathbb{R}}^{n+1}\setminus\{0\}$. 
Then, using the decay of $G$ and integration by parts, if $j\neq n+1$ the third 
integral in \eqref{1.31} vanishes whereas if $j=n+1$ the second one does so.
\end{proof}

After this preamble, we are ready to present the 

\vskip 0.08in
\begin{proof}[Proof of Theorem~\ref{Theorem1.1}]
A moment's reflection shows that it suffices to establish \eqref{hbraH} with the domain 
of integration ${\mathbb{R}}^{n+1}\setminus\Sigma$ in the left-hand side replaced by 
\begin{eqnarray}\label{Gbbab}
\Omega:=\{(x,t)\in{\mathbb{R}}^{n+1}:t>A(x)\}. 
\end{eqnarray}
Assume that this is the case and note that by making the bi-Lipschitz change of 
variables ${\mathbb{R}}^n\times(0,\infty)\ni (x,t)\mapsto (x,A(x)+t)\in\Omega$,
(whose Jacobian is equivalent to a finite constant) the estimate \eqref{hbraH} 
follows from the boundedness of
\begin{eqnarray}\label{1.20}
&& T^j:L^2({\mathbb{R}}^n,dx)\to
L^2({\mathbb{R}}^{n+1}_+,\tfrac{dt}{t}dx), 
\\
&& T^jf(x,t):=\int_{{\mathbb{R}}^n}K^j_t(x,y)f(y)\,dy
\end{eqnarray}
for $j=1,..,n+1$, where the family of kernels $\{K^j_t(x,y)\}_{t>0}$ is given by
\begin{eqnarray}\label{1.21}
K^j_t(x,y):=t\,(\partial_j K)(x-y,A(x)-A(y)+t),\qquad
x,y\in{\mathbb{R}}^n,\,t>0,\,j=1,\dots,n+1.
\end{eqnarray}
The approach we present utilizes ideas developed in \cite{CJ} and \cite{Ho}.
Based on \eqref{prop-K}-\eqref{prop-K2} it is not difficult to check that the family
$\{K^j_t(x,y)\}_{t>0}$ is standard, i.e., there hold
\begin{eqnarray}\label{1.22}
|K^j_t(x,y)|&\leq & C\,t(t+|x-y|)^{-(n+1)}
\\[4pt]
|\nabla_{x}K^j_t(x,y)|+|\nabla_{y}K^j_t(x,y)|
&\leq & C\,t(t+|x-y|)^{-(n+2)}.
\label{1.22b}
\end{eqnarray}
As such, a particular version of Theorem~\ref{SChg} gives 
that the operators in \eqref{1.20} are bounded as soon as we show that 
for each $j=1,\dots,n+1$,
\begin{eqnarray}\label{cfg-aa}
|T^j(1)(x,t)|^2\,\tfrac{dt}{t}dx\,\,
\mbox{is a Carleson measure in}\,\,{\mathbb{R}}^{n+1}_+.
\end{eqnarray}
To this end, fix $j\in\{1,\dots,n+1\}$ and select a real-valued, nonnegative function 
$\phi\in C^\infty_c({\mathbb{R}}^n)$, vanishing for $|x|\geq 1$,
with $\int_{{\mathbb{R}}^n}\phi(x)\,dx=1$ and, as usual, 
for every $t>0$, set $\phi_t(x):=t^{-n}\phi(x/t)$ for $x\in{\mathbb{R}}^n$.
We write $T^j(1)=(T^j(1)-\widetilde{T}^j(1))+\widetilde{T}^j(1)$ where
\begin{eqnarray}\label{1.24}
\widetilde{T}^jf(x,t):=\int_{{\mathbb{R}}^n}\widetilde{K}^j_t(x,y)f(y)\,dy,
\qquad x\in{\mathbb{R}}^n,\,t>0,
\end{eqnarray}
with
\begin{eqnarray}\label{1.25}
\widetilde{K}^j_t(x,y):=t\,(\partial_j K)
( x-y,\langle\nabla_{x}(\phi_t\ast A)(x),x-y\rangle+t),\qquad
x,y\in{\mathbb{R}}^n,\,t>0.
\end{eqnarray}
To prove that $|(T^j-\widetilde{T}^j)(1)(x,t)|^2\,dx\frac{dt}{t}$ is a Carleson 
measure, fix $x_0$ in ${\mathbb{R}}^n$, $r>0$, and split
\begin{eqnarray}\label{1.25b}
(T^j-\widetilde{T}^j)(1)=(T^j-\widetilde{T}^j)({\mathbf{1}}_{B(x_0,100r)})+
(T^j-\widetilde{T}^j)({\mathbf{1}}_{{\mathbb{R}}^n\setminus B(x_0,100r)}),
\end{eqnarray}
where, for any set $S$, ${\mathbf{1}}_S$ stands for the characteristic function of $S$. 
Using \eqref{1.22} and the fact that a similar estimate holds for 
$\widetilde{K}^j_t(x,y)$, we may write
\begin{eqnarray}\label{1.26}
&&\int_0^r\int_{B(x_0,r)}
|(T^j-\widetilde{T}^j)({\mathbf{1}}_{{\mathbb{R}}^n\setminus B(x_0,100r)})(x,t)|^2\,dx\,\tfrac{dt}{t}
\nonumber\\[4pt]
&&\hskip 0.50in
\leq C\,\int_0^r\int_{B(x_0,r)}\Bigl(
\int_{{\mathbb{R}}^n\setminus B(x_0,100r)}
\frac{t}{|x-y|^{n+1}}\,dy\Bigr)^2\,dx\,\tfrac{dt}{t}
\nonumber\\[4pt]
&&\hskip 0.50in
=C\,\int_0^r\int_{B(x_0,r)}\Bigl(
\int_{{\mathbb{R}}^n\setminus B(0,99r)}
\frac{t}{|z|^{n+1}}\,dz\Bigr)^2\,dx\,\tfrac{dt}{t}
=C\,r^n,
\end{eqnarray}
a bound which is of the right order. We are therefore left with proving 
an estimate similar to \eqref{1.26} with ${\mathbb{R}}^n\setminus B(x_0,100r)$ 
replaced by $B(x_0,100r)$. More precisely, the goal is to show that
\begin{eqnarray}\label{1.26-BB}
\int_0^r\int_{B(x_0,r)}
|(T^j-\widetilde{T}^j)({\mathbf{1}}_{B(x_0,100r)})(x,t)|^2\,dx\,\tfrac{dt}{t}\leq C\,r^n.
\end{eqnarray}
For this task we make use of the Lemma~\ref{lgih}. This requires an adjustment which
we now explain. Concretely, fix a function $\Phi\in C^\infty({\mathbb{R}})$ such that 
$0\leq\Phi\leq 1$, $\mbox{supp}\,\Phi\subseteq [-150r,150r]$, 
$\Phi\equiv 1$ on $[-125r,125r]$, and $\|\Phi'\|_{L^\infty({\mathbb{R}})}\leq c/r$. 
If we now set $\widetilde{A}(x):=\Phi(|x-x_0|)(A(x)-A(x_0))$ for every $x\in{\mathbb{R}}^n$,
it follows that 
\begin{eqnarray}\label{g5ya}
\begin{array}{c}
\widetilde{A}(x)-\widetilde{A}(y)=A(x)-A(y)\,\,\mbox{ and }\,\,
\nabla(\phi_t\ast\widetilde{A})(x)=\nabla(\phi_t\ast A)(x)
\\[4pt]
\mbox{whenever }\,\,x\in B(x_0,r),\,\,y\in B(x_0,100r),\,\,t\in(0,r).
\end{array}
\end{eqnarray}
Hence, the expression $(T^j-\widetilde{T}^j)({\mathbf{1}}_{B(x_0,100r)})(x,t)$ 
does not change for $x\in B(x_0,r)$ and $t\in(0,r)$ if we replace $A$ by $\widetilde{A}$.
In addition, since $\|\nabla\widetilde{A}\|_{L^\infty({\mathbb{R}}^n)}
\leq C\|\nabla A\|_{L^\infty({\mathbb{R}}^n)}$, taking into account the support of 
$\widetilde{A}$ we have 
\begin{eqnarray}\label{bhius}
\|\nabla\widetilde{A}\|_{L^2({\mathbb{R}}^n)}
\leq Cr^{n/2}\|\nabla A\|_{L^\infty({\mathbb{R}}^n)}
\end{eqnarray}
for some $C>0$ independent of $r$. Hence, there is no loss of generality in assuming 
that the original Lipschitz function $A$ has the additional property that
\begin{eqnarray}\label{bhius-CC}
\|\nabla A\|_{L^2({\mathbb{R}}^n)}\leq Cr^{n/2}\|\nabla A\|_{L^\infty({\mathbb{R}}^n)}.
\end{eqnarray}
Under this assumption we now return to the task of proving \eqref{1.26-BB}.
To get started, recall \eqref{1.28}. We claim that there exists $C=C(A,\phi)>0$ such that 
\begin{eqnarray}\label{1.27}
|K^j_t(x,y)-\widetilde{K}^j_t(x,y)|\leq C\,t(t+|x-y|)^{-(n+2)}|E_A(t,x,y)|,\quad
\forall\,x,y\in{\mathbb{R}}^n,\,\forall\,t>0.
\end{eqnarray}
Indeed, by making use of the Mean-Value Theorem and \eqref{prop-K2}, the claim 
will follow if we show that there exists $C=C(A,\phi)>0$ with the property that
\begin{eqnarray}\label{skgn}
\sup_{\xi\in I}\,[|\xi|+|x-y|]^{-(n+2)}\leq C[t+|x-y|]^{-(n+2)},
\end{eqnarray}
where $I$ denotes the interval with endpoints
$t+A(x)-A(y)$ and $t+{\langle}\nabla_{x}(\phi_t\ast A)(x),(x-y){\rangle}$. 
From the properties of $A$ and $\phi$ we see that 
$\xi=t+{{O}}(|x-y|)$, with constants depending only on $A$ and $\phi$.
In particular, there exists some small $\varepsilon=\varepsilon(A,\phi)>0$ 
such that if $|x-y|<\varepsilon t$ then $t\leq C|\xi|\leq C(|\xi|+|x-y|)$. 
On the other hand, if $|x-y|\geq\varepsilon t$ then clearly $t\leq C(|\xi|+|x-y|)$.
Thus, there exists $C>0$ such that $t\leq C(|\xi|+|x-y|)$ for $\xi\in I$, which
implies that for some $C=C(A,\phi)>0$ there holds $t+|x-y|\leq C(|\xi|+|x-y|)$ 
whenever $\xi\in I$, proving \eqref{skgn}. 

Next, making use of \eqref{1.27}, we may write 
\begin{eqnarray}\label{1.26-DD}
&& \int_0^r\int_{B(x_0,r)}
|(T^j-\widetilde{T}^j)({\mathbf{1}}_{B(x_0,100r)})(x,t)|^2\,dx\,\frac{dt}{t}
\nonumber\\[4pt]
&& \hskip 0.50in
\leq C\int_0^\infty\int_{{\mathbb{R}}^n}\Bigl(
\int_{{\mathbb{R}}^n}\frac{t}{(t+|x-y|)^{n+2}}\,
|E_A(t,x,y)|\,dy\Bigr)^2\,dx\,\frac{dt}{t}
\nonumber\\[4pt]
&& \hskip 0.50in
\leq C\int_0^\infty\int_{{\mathbb{R}}^n}\Bigl(t^{-n-1}
\int_{B(x,t)}|E_A(t,x,y)|\,dy\Bigr)^2\,dx\,\frac{dt}{t}
\nonumber\\[4pt]
&& \hskip 0.60in
+ C\int_0^\infty\int_{{\mathbb{R}}^n}\Bigl(\sum\limits_{\ell=0}^\infty
\int\limits_{B(x,2^{\ell+1}t)\setminus B(x,2^\ell t)}\frac{t}{|x-y|^{n+2}}\,
|E_A(t,x,y)|\,dy\Bigr)^2\,dx\,\frac{dt}{t}
\nonumber\\[4pt]
&& \hskip 0.50in
\leq C\int_0^\infty\int_{{\mathbb{R}}^n}\Bigl(\sum\limits_{\ell =0}^\infty
2^{-\ell }(2^\ell t)^{-n-1}\int\limits_{B(x,2^{\ell +1}t)}|E_A(t,x,y)|\,dy\Bigr)^2
\,dx\,\frac{dt}{t}.
\end{eqnarray}
Now, we apply Minkowski's inequality in order to obtain
\begin{eqnarray}\label{btua9}
&& \int_0^\infty\int_{{\mathbb{R}}^n}\Bigl(\sum\limits_{\ell =0}^\infty
2^{-\ell }(2^\ell t)^{-n-1}\int\limits_{B(x,2^{\ell +1}t)}|E_A(t,x,y)|\,dy\Bigr)^2
\,dx\,\frac{dt}{t}
\nonumber\\[4pt]
&& \hskip 0.40in 
\leq\left(\sum\limits_{\ell =0}^\infty\Bigl[\int_0^\infty\int_{{\mathbb{R}}^n}
2^{-2\ell }(2^\ell t)^{-2n-2}\Bigl(\int\limits_{B(x,2^{\ell +1}t)}|E_A(t,x,y)|\,dy\Bigr)^2
\,dx\,\frac{dt}{t}\Bigr]^{1/2}\right)^2.\qquad
\end{eqnarray}
By the Cauchy-Schwarz inequality, the last expression above is dominated by 
\begin{eqnarray}\label{btu56}
\left(\sum\limits_{\ell =0}^\infty\Bigl[2^{\ell (-n-4)}
\int_0^\infty\int_{{\mathbb{R}}^n}t^{-n-2}
\int\limits_{B(x,2^{\ell +1}t)}|E_A(t,x,y)|^2\,dy\,dx\,\frac{dt}{t}\Bigr]^{1/2}\right)^2.
\end{eqnarray}
Invoking now Lemma~\ref{lgih} with $\lambda:=2^{\ell+1}\geq 1$ for 
$\ell\in{\mathbb{N}}\cup\{0\}$, each inner triple integral in \eqref{btu56} is dominated
by $C2^{\ell(n+3)}\|\nabla A\|^2_{L^2({\mathbb{R}}^n)}$ with $C>0$ finite constant 
independent of $\ell$. Thus, the entire expression in \eqref{btu56} is 
\begin{eqnarray}\label{btu57}
\leq C\left(\sum\limits_{\ell =0}^\infty\bigl[2^{\ell(-n-4)}\cdot
2^{\ell(n+3)}\|\nabla A\|^2_{L^2({\mathbb{R}}^n)}\bigr]^{1/2}\right)^2
=C\|\nabla A\|^2_{L^2({\mathbb{R}}^n)}\leq Cr^n,
\end{eqnarray}
where for the last inequality in \eqref{btu57} we have used \eqref{bhius-CC}.
This finishes the proof of \eqref{1.26-BB}. In turn, when \eqref{1.26-BB} is
combined with \eqref{1.26}, we obtain 
\begin{eqnarray}\label{cfg-ab}
|(T^j-\widetilde{T}^j)(1)(x,t)|^2\,\tfrac{dt}{t}dx\,\,
\mbox{is a Carleson measure in}\,\,{\mathbb{R}}^{n+1}_+.
\end{eqnarray}
At this stage, there remains to observe that, thanks to Lemma~\ref{Lemma1.3},
we have 
\begin{eqnarray}\label{1.30}
\widetilde{T}^j(1)(x,t)=\int\limits_{{\mathbb{R}}^n}t\,(\partial_j K)
\bigl(x-y,{\langle}\nabla_{x}(\phi_t\ast A)(x),(x-y){\rangle}+t\bigr)\,dy\equiv 0
\quad\forall\,x\in{\mathbb{R}}^n,\,\forall\,t>0.
\end{eqnarray}
The proof of Theorem~\ref{Theorem1.1} is now completed.
\end{proof}

\subsection{Square function estimates on (BP)$^k$LG sets}
\label{SSect:BPmLG}

We continue to work in the context of $\mathbb{R}^{n+1}$ introduced at the beginning of Section~\ref{Sect:SFE}, and abbreviate the $n$-dimensional Hausdorff outer measure from Definition~\ref{SKJ38} as ${\mathcal{H}}^n:={\mathcal{H}}^n_{{\mathbb{R}}^{n+1}}$. We prove that square function
estimates are stable under the so-called big pieces functor. Square function estimates on
uniformly rectifiable sets then follow as a simple corollary. Let us begin by reviewing 
the concept of uniform rectifiability. In particular, following G.~David and S.~Semmes
\cite{DaSe91}, we make the following definition.

\begin{definition}\label{Def-unif.rect}
A closed set ${\Sigma}\subseteq{\mathbb{R}}^{n+1}$ is called {\tt uniformly rectifiable} provided
it is $n$-dimensional Ahlfors-David regular and the following property holds. There exist
$\varepsilon$, $M\in(0,\infty)$ (called the {\rm UR} constants of ${\Sigma}$) such that for each
$x\in {\Sigma}$ and $r>0$, there is a Lipschitz map 
$\varphi:B^{n}_r\rightarrow {\mathbb{R}}^{n+1}$ (where $B^{n}_r$ is a ball of radius $r$ 
in ${\mathbb{R}}^{n}$) with Lipschitz constant at most equal to $M$, such that
\begin{eqnarray}\label{3.1.9aS}
{\mathcal{H}}^{n}\bigl({\Sigma}\cap B(x,r)\cap\varphi(B^{n}_r)\bigr)\geq\varepsilon r^{n}.
\end{eqnarray}
If ${\Sigma}$ is compact, then this is only required for $r\in (0,{\rm diam}\,({\Sigma})]$.  
\end{definition}

There are a variety of equivalent characterizations of uniform rectifiability 
(cf., e.g., \cite[Theorem~I.1.5.7, p.\,22]{DaSe93}); the version above is often specified 
by saying that  ${\Sigma}$ has {\tt Big Pieces of Lipschitz Images} (or, simply {\tt BPLI}).
Another version, in which Lipschitz maps are replaced with Bi-Lipschitz maps, is specified
by saying that ${\Sigma}$ has {\tt Big Pieces of Bi-Lipschitz Images} (or, simply {\tt BPBI}). 
The equivalence between BPLI and BPBI can be found in~\cite[p.\,22]{DaSe93}. 
We also require the following notion of sets having big pieces of Lipschitz graphs.

\begin{definition}\label{Def-BPnLG}
A set ${\Sigma}\subseteq \mathbb{R}^{n+1}$ is said to have {\tt Big Pieces of Lipschitz Graphs}
(or, simply {\tt BPLG}) provided it is $n$-dimensional Ahlfors-David regular and the
following property holds. There exist $\varepsilon$, $M\in(0,\infty)$ (called the 
{\rm BPLG} constants of ${\Sigma}$) such that for each $x\in {\Sigma}$ and $r>0$, there is an
$n$-dimensional Lipschitz graph $\Gamma\subseteq\mathbb{R}^{n+1}$ with Lipschitz 
constant at most equal to $M$, such that
\begin{eqnarray}\label{3.1.9a}
{\mathcal{H}}^{n}\bigl({\Sigma}\cap B(x,r)\cap\Gamma)\geq\varepsilon r^{n}.
\end{eqnarray}
If ${\Sigma}$ is compact, then this is only required for 
$r\in (0,{\rm diam}\,({\Sigma})]$.

We also write {\tt (BP)$^1$LG} to mean {\tt BPLG}. For each $k\geq1$, a set 
${\Sigma}\subseteq \mathbb{R}^{n+1}$ is said to have {\tt Big Pieces of (BP)$^{k}$LG} 
(or, simply {\tt (BP)$^{k+1}$LG}) provided it is $n$-dimensional Ahlfors-David regular 
and the following property holds.  There exist $\delta$, $\varepsilon$, $M\in(0,\infty)$
(called the {\rm (BP)$^{k+1}$LG} constants of ${\Sigma}$) such that for each 
$x\in {\Sigma}$ and $r>0$, there is a set $\Omega\subseteq\mathbb{R}^{n+1}$ 
that has {\rm (BP)$^{k}$LG} with {\rm ADR} constant at most equal to $M$, and 
{\rm (BP)$^{k}$LG} constants $\varepsilon$, $M$, such that 
\begin{eqnarray}\label{3.1.9Z}
{\mathcal{H}}^{n}\bigl({\Sigma}\cap B(x,r)\cap \Omega)\geq\delta r^{n}.
\end{eqnarray}
If ${\Sigma}$ is compact, then this is only required for 
$r\in (0,{\rm diam}\,({\Sigma})]$.
\end{definition}

We now combine the inductive scheme from Section~\ref{Sect:4} with the square function estimates for Lipschitz graphs from Subsection~\ref{SSect:LG} to prove that square function estimates are stable under the so-called big pieces functor.

\begin{theorem}\label{thm:BPmLGimSFE}
Let $k\in\mathbb{N}$ and suppose that ${\Sigma}\subseteq \mathbb{R}^{n+1}$ has (BP)$^k$LG. 
Let $K$ be a real-valued kernel satisfying \eqref{prop-K}, and let ${\mathcal{T}}$ 
denote the integral operator associated with $\Sigma$ as in \eqref{sbnvjb}. 
Then there exists a constant $C\in(0,\infty)$ depending only on $n$, the (BP)$^k$LG constants of ${\Sigma}$, and $\|\partial^\alpha K\|_{L^\infty(S^{n})}$ for $|\alpha|\leq 2$, 
such that 
\begin{eqnarray}\label{vlnGG-2}
\int_{\mathbb{R}^{n+1}\setminus {\Sigma}}|\nabla{\mathcal{T}}f(x)|^2\,{\rm dist}(x,{\Sigma})\,dx 
\leq C\int_{\Sigma}|f|^2\,d\sigma,\qquad\forall\,f\in L^2({\Sigma},\sigma),
\end{eqnarray}
where $\sigma:={\mathscr{H}}^n\lfloor{{\Sigma}}$ is the measure induced by the $n$-dimensional
Hausdorff measure on ${\Sigma}$.
\end{theorem}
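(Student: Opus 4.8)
The strategy is to reduce Theorem~\ref{thm:BPmLGimSFE} to the inductive scheme of Section~\ref{Sect:4}, specifically Theorem~\ref{Thm:BPSFtoSF.XXX}. Recall that in the present Euclidean codimension one setting we take $(\mathscr{X},\rho,\mu)=(\mathbb{R}^{n+1},|\cdot-\cdot|,\mathcal{L}^{n+1})$, so $m=n+1$ and $d=n$; the kernel is $\theta:=\nabla K$ and the associated operator is $\Theta_E=\nabla\mathcal{T}_E$, acting on a closed $n$-dimensional ADR set $E\subseteq\mathbb{R}^{n+1}$ equipped with $\sigma={\mathscr{H}}^n\lfloor E$. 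One first checks that $\theta=\nabla K$ satisfies the hypotheses \eqref{K234-A}--\eqref{hszz-3-A} with $\upsilon=1$ and $\alpha=1$: indeed, \eqref{prop-K}--\eqref{prop-K2} give $|\nabla K(x)|\leq C_K|x|^{-n-1}$, which is \eqref{hszz-A} with $d+\upsilon=n+1$, and the Mean Value Theorem applied to $\nabla K$ together with the bound $|\nabla^2 K(x)|\leq C_K|x|^{-n-2}$ yields the Hölder-type estimate \eqref{hszz-3-A} (with the standard argument that $|y-\widetilde y|\leq\tfrac12|x-y|$ forces comparability of $|x-y|$ and $|x-\widetilde y|$ along the segment). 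Borel measurability is clear since $\nabla K$ is continuous off the diagonal. Since $2\upsilon-(m-d)=2-1=1$, the weight $\delta_E(x)^{2\upsilon-(m-d)}$ becomes ${\rm dist}_{\rho_\#}(\cdot,E)$, which by item~{\it (4)} of Theorem~\ref{JjEGh} is comparable to the Euclidean distance ${\rm dist}(\cdot,E)$; this is why the square function estimate \eqref{vlnGG-2} can be phrased with ${\rm dist}(x,\Sigma)\,dx$ rather than with $\delta_\Sigma(x)^{2\upsilon-(m-d)}\,d\mu(x)$.

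The heart of the argument is the base case of the induction, namely that every $n$-dimensional Lipschitz graph $\Gamma\subseteq\mathbb{R}^{n+1}$ has \texttt{(BP)$^0$SFE} (i.e., \texttt{SFE}) relative to $\theta=\nabla K$ — but this is precisely Theorem~\ref{Theorem1.1}, whose constant depends only on $n$, the Lipschitz constant of $\Gamma$, and $\|\partial^\alpha K\|_{L^\infty(S^n)}$ for $|\alpha|\leq 2$. Thus I would first verify that the notion ``$\Sigma$ has \texttt{(BP)$^k$LG}'' in Definition~\ref{Def-BPnLG} implies ``$\Sigma$ has \texttt{(BP)$^k$SFE} relative to $\theta$'' in the sense of Definition~\ref{sjvs-DDD}, by induction on $k$. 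For $k=1$: given $x\in\Sigma$ and $r\in(0,{\rm diam}_{\rho_\#}(\Sigma)]$, the \texttt{BPLG} property furnishes a Lipschitz graph $\Gamma=\Gamma_{x,r}$ with $\mathcal{H}^n(\Sigma\cap B(x,r)\cap\Gamma)\geq\varepsilon r^n$; setting $E_{x,r}:=\Gamma_{x,r}$ and $\sigma_{x,r}:={\mathscr{H}}^n_{\mathbb{R}^{n+1},\rho_\#}\lfloor E_{x,r}$ (which by \eqref{P2-GFs} is comparable to ordinary surface measure on the graph, hence ADR with uniformly controlled constant depending only on $M$ and $n$), condition \eqref{yvg-5tVV} for $k=0$ is exactly the \texttt{SFE} bound \eqref{avhai-TTFsa}, which Theorem~\ref{Theorem1.1} supplies with a constant controlled by $M$, $n$, and $\|\partial^\alpha K\|_{L^\infty(S^n)}$. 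Condition \eqref{yvg.Ifav} is the lower mass bound \eqref{3.1.9a}. For the inductive step $k\to k+1$: unravelling Definition~\ref{Def-BPnLG}, ``$\Sigma$ has \texttt{(BP)$^{k+1}$LG}'' means it contains big pieces of sets $\Omega$ having \texttt{(BP)$^k$LG} with uniformly controlled constants; by the induction hypothesis each such $\Omega$ has \texttt{(BP)$^k$SFE} relative to $\theta$ with uniformly controlled character, which is exactly condition \eqref{yvg-5tVV}. Hence ``\texttt{(BP)$^{k+1}$LG}'' $\Rightarrow$ ``\texttt{(BP)$^{k+1}$SFE} relative to $\theta$''.

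Once this implication is established, I invoke Theorem~\ref{Thm:BPSFtoSF.XXX}: since $\Sigma$ is closed, $n$-dimensional ADR, and (by the above) has \texttt{(BP)$^k$SFE} relative to $\theta$ for the given $k\in\mathbb{N}$, equivalence {\it (i)}$\Leftrightarrow${\it (iii)} in that theorem gives that $\Sigma$ has \texttt{(BP)$^0$SFE} relative to $\theta$, i.e.,
\begin{eqnarray}\label{sfe-final-step}
\int_{\mathbb{R}^{n+1}\setminus\Sigma}|\Theta_\Sigma f(x)|^2\,{\rm dist}_{\rho_\#}(x,\Sigma)^{2\upsilon-(m-d)}\,d\mu(x)\leq C\int_\Sigma|f|^2\,d\sigma
\end{eqnarray}
for all $f\in L^2(\Sigma,\sigma)$. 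Rewriting $\Theta_\Sigma=\nabla\mathcal{T}$, using $2\upsilon-(m-d)=1$, and replacing ${\rm dist}_{\rho_\#}(x,\Sigma)$ by the comparable Euclidean ${\rm dist}(x,\Sigma)$ yields \eqref{vlnGG-2}. Tracking the constants through Theorem~\ref{Thm:BPSFtoSF} (applied once at each of the $k$ induction levels) shows that $C$ depends only on $n$, the \texttt{(BP)$^k$LG} constants of $\Sigma$, and $\|\partial^\alpha K\|_{L^\infty(S^n)}$ for $|\alpha|\leq 2$, as claimed. The main obstacle is bookkeeping rather than conceptual: one must carefully verify that the ADR constants, \texttt{BPLG}/\texttt{BPSFE} characters, and square function constants introduced by the subsets $E_{x,r}$ at each stage of the recursion remain uniformly bounded — this is guaranteed because Definition~\ref{Def-BPnLG} builds in uniform control of the constants at each level, Theorem~\ref{Theorem1.1} produces a constant depending only on the Lipschitz constant, and each application of Theorem~\ref{Thm:BPSFtoSF} (via Theorem~\ref{Thm:localTb} and Theorem~\ref{SChg}) produces a constant depending only on the data just listed — but it is the one place where care is genuinely needed.
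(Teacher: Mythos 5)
Your proposal is correct and follows essentially the same route as the paper: the base case is Theorem~\ref{Theorem1.1} for Lipschitz graphs, the key engine is the big-pieces-implies-SFE machinery, and the argument is an induction on $k$ with uniform control of the constants at each level. The only (immaterial) difference is packaging: you first prove ``(BP)$^k$LG $\Rightarrow$ (BP)$^k$SFE relative to $\theta=\nabla K$'' and then invoke Theorem~\ref{Thm:BPSFtoSF.XXX} once, whereas the paper inducts directly on the statement of the theorem, applying Theorem~\ref{Thm:BPSFtoSF} at each stage — and since Theorem~\ref{Thm:BPSFtoSF.XXX} is itself deduced from Theorem~\ref{Thm:BPSFtoSF} by induction, the two arguments coincide.
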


\begin{proof}
The proof proceeds by induction on ${\mathbb{N}}$. For the case $k=1$, suppose that
${\Sigma}\subseteq \mathbb{R}^{n+1}$ has BPLG with BPLG constants $\varepsilon_0$,
$C_0\in(0,\infty)$. For each $x\in {\Sigma}$ and $r>0$, there is an $n$-dimensional Lipschitz
graph $\Gamma\subseteq\mathbb{R}^{n+1}$ with Lipschitz constant at most equal to $C_0$, 
such that 
\begin{eqnarray}\label{3.1.9aB}
{\mathcal{H}}^{n}\bigl({\Sigma}\cap B(x,r)\cap \Gamma)\geq \varepsilon_0 r^{n}.
\end{eqnarray}
It follows from Theorem~\ref{Theorem1.1} that ${\Sigma}$ has BPSFE with BPSFE character 
(cf. Definition~\ref{sjvs}) depending only on the BPLG constants of ${\Sigma}$, and
$\|\partial^\alpha K\|_{L^\infty(S^{n})}$ for $|\alpha|\leq 2$. It then follows from
Theorem~\ref{Thm:BPSFtoSF} that~\eqref{vlnGG-2} holds for some $C\in(0,\infty)$ depending
only on $n$, the BPLG constants of ${\Sigma}$, and $\|\partial^\alpha K\|_{L^\infty(S^{n})}$ 
for $|\alpha|\leq 2$.

Now let $j\in\mathbb{N}$ and assume that the statement of the theorem holds in the case
$k=j$. Suppose that ${\Sigma}\subseteq \mathbb{R}^{n+1}$ has (BP)$^{j+1}$LG with (BP)$^{j+1}$LG
constants $\varepsilon_1$, $\varepsilon_2$, $C_1\in(0,\infty)$. For each $x\in {\Sigma}$ and
$r>0$, there is a set $\Omega\subseteq\mathbb{R}^{n+1}$ that has {\rm (BP)$^{j}$LG} with 
{\rm ADR} constant at most equal to $C_1$, and {\rm (BP)$^{j}$LG} constants 
$\varepsilon_1$, $C_1$, such that 
\begin{eqnarray}\label{3.1.9aQ}
{\mathcal{H}}^{n}\bigl({\Sigma}\cap B(x,r)\cap \Omega)\geq \varepsilon_2 r^{n}.
\end{eqnarray}
It follows by the inductive assumption that ${\Sigma}$ has BPSFE with BPSFE character depending only on the constants specified in the theorem in the case $k=j$. Applying again Theorem~\ref{Thm:BPSFtoSF} we obtain that~\eqref{vlnGG-2} holds for some $C\in(0,\infty)$
depending only on $n$, the (BP)$^{j+1}$LG constants of ${\Sigma}$, and $\|\partial^\alpha K\|_{L^\infty(S^{n})}$ for $|\alpha|\leq 2$. This completes the proof.
\end{proof}

The recent result by J. Azzam and R. Schul (cf. \cite[Corollary~1.7]{AS}) that uniformly rectifiable sets have (BP)$^2$LG allows us to obtain the following as an immediate corollary of Theorem~\ref{thm:BPmLGimSFE}.

\begin{corollary}\label{cor:URimSFE}
Suppose that ${\Sigma}\subseteq \mathbb{R}^{n+1}$ is a uniformly rectifiable set. Let $K$ be a real-valued kernel satisfying \eqref{prop-K}, and let ${\mathcal{T}}$ denote the integral operator associated with $\Sigma$ as in \eqref{sbnvjb}. Then there exists a constant $C\in(0,\infty)$, depending only on $n$, the UR constants of ${\Sigma}$, and $\|\partial^\alpha K\|_{L^\infty(S^{n})}$ for $|\alpha|\leq 2$, such that 
\begin{eqnarray}\label{corvlnGG}
\int_{\mathbb{R}^{n+1}\setminus {\Sigma}}|\nabla{\mathcal{T}}f(x)|^2\,{\rm dist}(x,{\Sigma})\,d x 
\leq C\int_{\Sigma}|f|^2\,d\sigma,\qquad\forall\,f\in L^2({\Sigma},\sigma),
\end{eqnarray}
where $\sigma:={\mathscr{H}}^n\lfloor{{\Sigma}}$ is the measure induced by the $n$-dimensional Hausdorff measure on ${\Sigma}$.
\end{corollary}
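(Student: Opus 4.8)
The plan is to deduce Corollary~\ref{cor:URimSFE} directly from Theorem~\ref{thm:BPmLGimSFE} together with the Azzam--Schul result. First I would recall that, by \cite[Corollary~1.7]{AS}, every uniformly rectifiable set $\Sigma\subseteq\mathbb{R}^{n+1}$ has $(\mathrm{BP})^2\mathrm{LG}$, with $(\mathrm{BP})^2\mathrm{LG}$ constants that depend only on $n$ and the $\mathrm{UR}$ constants of $\Sigma$. This is the only external input needed; everything else is already in place.

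Given this, the argument is essentially one line: apply Theorem~\ref{thm:BPmLGimSFE} with $k=2$ to the set $\Sigma$ and the kernel $K$ (which satisfies \eqref{prop-K} and, automatically, the derivative bounds \eqref{prop-K2} with $C_K$ controlled by $\|\partial^\alpha K\|_{L^\infty(S^n)}$ for $|\alpha|\le 2$, by homogeneity of $K$ and its derivatives). Theorem~\ref{thm:BPmLGimSFE} then yields \eqref{corvlnGG} with a constant $C$ depending only on $n$, the $(\mathrm{BP})^2\mathrm{LG}$ constants of $\Sigma$, and $\|\partial^\alpha K\|_{L^\infty(S^n)}$ for $|\alpha|\le 2$; tracing back the dependence of the $(\mathrm{BP})^2\mathrm{LG}$ constants on the $\mathrm{UR}$ constants via the Azzam--Schul theorem gives precisely the asserted dependence. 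One should also note that the measure $\sigma:={\mathscr{H}}^n\lfloor\Sigma$ here coincides with the $\sigma$ of Section~\ref{Sect:4} in the Euclidean codimension-one setting (with $d=n$, $m=n+1$), so that the hypotheses of Theorem~\ref{thm:BPmLGimSFE} are literally satisfied and its conclusion is literally \eqref{corvlnGG}.

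There is really no substantive obstacle at this stage, since all the work has been done: Theorem~\ref{Theorem1.1} handles Lipschitz graphs, Theorem~\ref{Thm:BPSFtoSF} (via the local $T(b)$ theorem, Theorem~\ref{Thm:localTb}) provides the big-pieces stability, Theorem~\ref{thm:BPmLGimSFE} iterates this over $(\mathrm{BP})^k\mathrm{LG}$ sets, and \cite{AS} supplies $\mathrm{UR}\Rightarrow(\mathrm{BP})^2\mathrm{LG}$. The only point that requires a word of care is bookkeeping of constants: one must check that the $(\mathrm{BP})^2\mathrm{LG}$ character produced by the Azzam--Schul theorem depends quantitatively only on $n$ and the $\mathrm{UR}$ constants, so that the final constant $C$ in \eqref{corvlnGG} inherits the stated dependence. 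If one preferred a cleaner statement one could instead invoke $(\mathrm{BP})^2\mathrm{LI}$ and the $\mathrm{BPLI}\Leftrightarrow\mathrm{BPBI}$ equivalence from \cite[p.\,22]{DaSe93}, but the route through $(\mathrm{BP})^2\mathrm{LG}$ is the most direct given the formulation of Theorem~\ref{thm:BPmLGimSFE}.

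\begin{proof}
By the recent result of J.~Azzam and R.~Schul \cite[Corollary~1.7]{AS}, any uniformly
rectifiable set ${\Sigma}\subseteq{\mathbb{R}}^{n+1}$ has $(\mathrm{BP})^2\mathrm{LG}$, with
$(\mathrm{BP})^2\mathrm{LG}$ constants controlled solely in terms of $n$ and the $\mathrm{UR}$
constants of ${\Sigma}$. Applying Theorem~\ref{thm:BPmLGimSFE} with $k=2$ to the set
${\Sigma}$ and the kernel $K$ (which, being odd, $C^2$ away from the origin, and positive
homogeneous of degree $-n$, automatically satisfies \eqref{prop-K2} with $C_K$ bounded in
terms of $\|\partial^\alpha K\|_{L^\infty(S^{n})}$ for $|\alpha|\leq 2$) yields a constant
$C\in(0,\infty)$, depending only on $n$, the $(\mathrm{BP})^2\mathrm{LG}$ constants of ${\Sigma}$,
and $\|\partial^\alpha K\|_{L^\infty(S^{n})}$ for $|\alpha|\leq 2$, such that
\begin{eqnarray}\label{corproof}
\int_{\mathbb{R}^{n+1}\setminus {\Sigma}}|\nabla{\mathcal{T}}f(x)|^2\,
{\rm dist}(x,{\Sigma})\,dx\leq C\int_{\Sigma}|f|^2\,d\sigma,\qquad
\forall\,f\in L^2({\Sigma},\sigma),
\end{eqnarray}
where $\sigma:={\mathscr{H}}^n\lfloor{{\Sigma}}$. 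Since, as noted above, the
$(\mathrm{BP})^2\mathrm{LG}$ constants of ${\Sigma}$ depend only on $n$ and the $\mathrm{UR}$
constants of ${\Sigma}$, the constant $C$ in \eqref{corproof} depends only on $n$, the
$\mathrm{UR}$ constants of ${\Sigma}$, and $\|\partial^\alpha K\|_{L^\infty(S^{n})}$ for
$|\alpha|\leq 2$. This is precisely \eqref{corvlnGG}, completing the proof.
\end{proof}
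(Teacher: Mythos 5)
Your proof is correct and is essentially identical to the paper's own argument: invoke Azzam--Schul \cite[Corollary~1.7]{AS} to get that UR implies $(\mathrm{BP})^2\mathrm{LG}$ with constants controlled by $n$ and the UR constants, then apply Theorem~\ref{thm:BPmLGimSFE} with $k=2$. The extra remarks on constant bookkeeping and on \eqref{prop-K2} following from homogeneity are fine but add nothing beyond what the paper's one-line proof already uses.
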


\begin{proof}
The set ${\Sigma}$ has (BP)$^2$LG by J. Azzam and R. Schul's characterization 
of uniformly rectifiable sets in~\cite[Corollary~1.7]{AS}, so the result follows 
at once from Theorem~\ref{thm:BPmLGimSFE}.
\end{proof}

\subsection{Square function estimates for integral operators with 
variable kernels}
\label{SSect:PDO}

The square function estimates from Theorem~\ref{thm:BPmLGimSFE} 
and Corollary~\ref{cor:URimSFE} have been formulated for {\it convolution type} 
integral operators and our goal in this subsection is to prove some versions of 
these results which apply to integral operators with variable coefficient kernels.
A first result in this regard reads as follows.

\begin{theorem}\label{thm:BVAR}
Let $k\in\mathbb{N}$ and suppose that ${\Sigma}\subseteq{\mathbb{R}}^{n+1}$ is
compact and has (BP)$^k$LG. Then there exists a positive integer $M=M(n)$ with 
the following significance. Assume that ${\mathcal{U}}$ is a bounded, open 
neighborhood of $\Sigma$ in ${\mathbb{R}}^{n+1}$ and consider a function 
\begin{eqnarray}\label{B-pv5Avb}
{\mathcal{U}}\times\bigl({\mathbb{R}}^{n+1}\setminus\{0\}\bigr)\ni (x,z)
\mapsto b(x,z)\in{\mathbb{R}} 
\end{eqnarray}
which is odd and (positively) homogeneous of degree $-n$ in the 
variable $z\in{\mathbb{R}}^{n+1}\setminus\{0\}$, and which has the property that 
\begin{eqnarray}\label{B-pv5.LFD}
\mbox{$\partial_x^\beta\partial_z^\alpha b(x,z)$ is continuous and bounded on 
${\mathcal{U}}\times S^{n}$ for $|\alpha|\leq M$ and $|\beta|\leq 1$}. 
\end{eqnarray}
Finally, define the variable kernel integral operator 
\begin{eqnarray}\label{B-pv5}
{\mathcal{B}}f(x):=\int_{\Sigma}b(x,x-y)f(y)\,d\sigma(y),\qquad 
x\in{\mathcal{U}}\setminus\Sigma,
\end{eqnarray}
where $\sigma:={\mathscr{H}}^n\lfloor{\Sigma}$ is the measure induced by the $n$-dimensional Hausdorff measure on $\Sigma$. 

Then there exists a constant $C\in(0,\infty)$ depending only on $n$, the (BP)$^k$LG constants of ${\Sigma}$, the diameter of ${\mathcal{U}}$, and 
$\|\partial_x^\beta\partial_z^\alpha b\|_{L^\infty({\mathcal{U}}\times S^{n})}$ 
for $|\alpha|\leq 2$, $|\beta|\leq 1$, such that 
\begin{eqnarray}\label{vlnGG-2ASn}
\int_{{\mathcal{U}}\setminus\Sigma}|\nabla{\mathcal{B}}f(x)|^2\,{\rm dist}(x,\Sigma)\,dx 
\leq C\int_{\Sigma}|f|^2\,d\sigma,\qquad\forall\,f\in L^2(\Sigma,\sigma).
\end{eqnarray}

In particular, \eqref{vlnGG-2ASn} holds whenever $\Sigma$ is uniformly rectifiable
(while retaining the other background assumptions).
\end{theorem}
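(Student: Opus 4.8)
The plan is to reduce Theorem~\ref{thm:BVAR} to the convolution-kernel case already handled in Theorem~\ref{thm:BPmLGimSFE} by freezing the spatial variable and summing up local contributions. First I would fix a point $x_0 \in \mathcal{U}$ and compare the variable kernel $b(x,x-y)$ against the frozen kernel $b(x_0, x-y)$, which for each fixed $x_0$ is a convolution-type kernel of the sort covered by \eqref{prop-K}--\eqref{prop-K2} (odd, homogeneous of degree $-n$, $C^2$ in the spatial slot). The point of the hypothesis \eqref{B-pv5.LFD} with $|\beta|\leq 1$ is precisely that the $x$-derivative of $b$ is itself a kernel of size $|\partial_x b(x,z)| \leq C|z|^{-n}$, so the difference $b(x,x-y) - b(x_0, x-y)$ gains a factor $|x - x_0|$ and the operator $\mathcal{B} - \mathcal{B}_{x_0}$ (where $\mathcal{B}_{x_0}$ uses the frozen kernel) has an integral kernel controlled by $|x-x_0|\,|x-y|^{-n}$. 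A Schur-type / direct $L^2$ estimate, exploiting the compactness of $\Sigma$ (hence finite diameter, so $|x-x_0|$ is bounded on $\mathcal{U}$) and the decay of the weight $\mathrm{dist}(x,\Sigma)$, should show that this error term satisfies \eqref{vlnGG-2ASn} outright, without any square-function machinery.

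The second and main step is to run this comparison on a \emph{partition of unity at the right scale}. Because the gain from freezing is only $|x-x_0|$ (degree $1$, not a genuine Calderón--Zygmund smoothing), a single global freezing is too crude; instead I would cover $\mathcal{U}$ (or rather a Whitney-type decomposition of $\mathcal{U}\setminus\Sigma$, or a decomposition into balls $B_j$ of radius comparable to a small fixed $\delta$) and on each piece freeze $b$ at the center $x_j$. On $B_j$ the convolution operator $\mathcal{B}_{x_j}$ has the square-function bound from Theorem~\ref{thm:BPmLGimSFE} (applied with $K(z) = b(x_j,z)$, whose seminorms $\|\partial^\alpha K\|_{L^\infty(S^n)}$ for $|\alpha|\le 2$ are uniformly controlled by $\|\partial_z^\alpha b\|_{L^\infty(\mathcal{U}\times S^n)}$), with constant \emph{independent of $j$}. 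The local errors $b(x,x-y)-b(x_j,x-y)$ carry the factor $|x - x_j| \lesssim \delta$ on $B_j$; summing the squared $L^2$ contributions over $j$ with finite overlap gives a bound $C\sum_j \int_\Sigma |f\mathbf{1}_{(\text{neighborhood of }B_j)}|^2\,d\sigma \lesssim \|f\|_{L^2(\Sigma,\sigma)}^2$. One must be a little careful that the truncation of $\mathcal{B}_{x_j}$ to $B_j$ versus the global operator only costs a tail that is again a bounded (non-singular) operator because of the weight and the separation; this is the place where keeping $\Sigma$ compact and $\mathcal{U}$ bounded is used.

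The last step is bookkeeping: collect the main term $\sum_j \int_{B_j}|\nabla \mathcal{B}_{x_j} f|^2\,\mathrm{dist}(x,\Sigma)\,dx \leq C\|f\|_{L^2}^2$ and the error terms, note that $\nabla_x$ hitting $b(x,x-y)$ produces both the $z$-gradient (covered by $\|\partial_z^\alpha b\|$, $|\alpha|\le 2$) and the $x$-gradient term (covered by $\|\partial_x b\|$, $|\beta|=1$), both of which fit the same scheme, and thereby obtain \eqref{vlnGG-2ASn}. The final sentence of the theorem, that \eqref{vlnGG-2ASn} holds for uniformly rectifiable $\Sigma$, is then immediate from Corollary~\ref{cor:URimSFE}'s input, namely the Azzam--Schul result \cite[Corollary~1.7]{AS} that uniform rectifiability implies $(\mathrm{BP})^2$LG, so one simply invokes the already-proven variable-coefficient statement with $k=2$.

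I expect the main obstacle to be making the overlapping-ball decomposition interact cleanly with the square function estimate: Theorem~\ref{thm:BPmLGimSFE} is a \emph{global} estimate on all of $\mathbb{R}^{n+1}\setminus\Sigma$ for a single convolution kernel, so to use it locally on $B_j$ I must either localize the frozen operator (controlling the resulting commutator/tail terms via the weight $\mathrm{dist}(\cdot,\Sigma)$ and the geometric separation) or, more robustly, argue that $\|f\|_{L^2}$ on the relevant portion of $\Sigma$ near $B_j$ is what appears, and sum with bounded overlap. Getting the error from freezing to genuinely be summable — i.e.\ that the degree-one gain $|x-x_j|\lesssim\delta$ together with the Whitney relation $\mathrm{dist}(x,\Sigma)\approx \ell(B_j)$ actually beats the logarithmic pile-up over scales — is the delicate quantitative point, but it is exactly analogous to the freezing arguments in \cite{CJ} and the treatment of $(T^j - \widetilde T^j)$ in the proof of Theorem~\ref{Theorem1.1}, so the mechanism is available.
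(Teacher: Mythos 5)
There is a genuine gap, and it sits exactly at the point you flag as ``the delicate quantitative point.'' Your two variants of freezing fail for complementary reasons. If you freeze at a fixed scale (globally at $x_0$, or on finitely many balls of a fixed small radius $\delta$), the main term is fine, but the error you must control is $\nabla_x\bigl[b(x,x-y)-b(x_j,x-y)\bigr]$, whose dangerous piece is $(\nabla_z b)(x,x-y)-(\nabla_z b)(x_j,x-y)$; this is bounded only by $\delta\,|x-y|^{-n-1}$, i.e.\ it retains the full degree $-(n+1)$ singularity of the main kernel, with merely a small multiplicative constant. A size-only (Schur/direct $L^2$) bound cannot give \eqref{vlnGG-2ASn} for such a kernel: taking $f={\mathbf 1}_{\Delta}$ on a surface ball of radius $r$, the positive kernel $|x-y|^{-n-1}$ produces $\approx t^{-1}$ at height $t$, and $\int_0^r t^{-2}\,t\,dt$ diverges logarithmically, so $\delta\cdot\infty=\infty$; smallness of $\delta$ does not rescue the estimate, and controlling the error would itself require a square-function theorem for a variable kernel --- which is circular. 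The appeal to the treatment of $T^j-\widetilde{T}^j$ in the proof of Theorem~\ref{Theorem1.1} is not apt: there the comparison is \emph{not} closed by a pointwise gain, but through the Carleson-measure/$T(1)$ mechanism together with the square-summable-in-scale gain of Lemma~\ref{lgih} (the Dorronsoro-type bound on $E_A(t,x,y)$). If instead you freeze at Whitney scale, so that $|x-x_j|\lesssim{\rm dist}(x,\Sigma)\leq|x-y|$ and the error kernel becomes a harmless $|x-y|^{-n}$-size kernel (which indeed is Schur-bounded against the weight, cf.\ Lemmas~\ref{bvv55-DD} and \ref{Gkwvr.reV}), then the main term breaks: Theorem~\ref{thm:BPmLGimSFE} is a single global estimate, and summing $\int_{Q_j}|\nabla{\mathcal{B}}_{x_j}f|^2\,{\rm dist}(\cdot,\Sigma)\,dx\leq C\|f\|_{L^2(\Sigma,\sigma)}^2$ over the infinitely many Whitney cubes diverges; bounded overlap only controls cubes within one dyadic generation, and the contribution of the ``near'' part of $f$ repeats at every scale, so the sum over scales is unbounded. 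The global square-function estimate simply cannot be localized to Whitney cubes and re-summed without an extra cross-scale orthogonality/Carleson ingredient, which your plan does not supply.

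The paper avoids freezing altogether: it expands $b(x,z)=\sum_{\ell\in 2{\mathbb N}+1}\sum_i a_{i\ell}(x)\,k_{i\ell}(z)$ in spherical harmonics in $z$, where each $k_{i\ell}(z)=\Psi_{i\ell}(z/|z|)|z|^{-n}$ is a genuine odd convolution kernel with $C^2(S^n)$ seminorms growing only polynomially in $\ell$, while the hypothesis of $M$ $z$-derivatives (integrations by parts against powers of $\Delta_{S^n}$) forces $|a_{i\ell}|+|\nabla a_{i\ell}|\lesssim\ell^{-2N}$. Theorem~\ref{thm:BPmLGimSFE} is applied to each $\mathcal{B}_{i\ell}$ globally (constants $\lesssim\ell^d$), the terms $(\nabla a_{i\ell})\,\mathcal{B}_{i\ell}f$ are handled by Schur's lemma for the kernel ${\rm dist}(x,\Sigma)^{1/2}k_{i\ell}(x-y)$ (this is where boundedness of $\mathcal{U}$ and ${\rm diam}(\mathcal{U})$ enter), and the series is summed using the rapid decay. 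Note that this explains the asymmetric hypothesis --- a large number $M=M(n)$ of $z$-derivatives but only one $x$-derivative --- which your scheme neither uses nor accounts for; that asymmetry is a good signal that a separation-of-variables argument, rather than coefficient freezing, is the intended mechanism. Your final observation, that the uniformly rectifiable case follows from the Azzam--Schul result via $({\rm BP})^2$LG, is correct and matches the paper.
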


In preparation for presenting the proof of Theorem~\ref{thm:BVAR}, we state two 
lemmas, of geometric character, from \cite{MMMM-B}.

\begin{lemma}\label{bvv55-DD}
Let $({\mathscr{X}},\rho)$ be a geometrically doubling quasi-metric space 
and let $\Sigma\subseteq{\mathscr{X}}$ be a set with the property that 
$\bigl(\Sigma,\rho\bigl|_\Sigma,{\mathcal{H}}^d_{{\mathscr{X}},\rho}\lfloor\Sigma\bigr)$ 
becomes a $d$-dimensional {\rm ADR} space, for some $d>0$. Assume that $\mu$ 
is a Borel measure on ${\mathscr{X}}$ satisfying 
\begin{eqnarray}\label{bca84r.22.aT}
\sup_{x\in{\mathscr{X}},\,r>0}\frac{\mu\bigl(B_{\rho_{\#}}(x,r)\bigr)}{r^m}<+\infty,
\end{eqnarray}
for some $m\geq 0$. Also, fix a constant $c>0$ 
and select two real numbers $N,\alpha$ such that $\alpha<m-d$ 
and $N<m-\max\,\{\alpha,0\}$. 

Then there exists a constant $C\in(0,\infty)$
depending on the supremum in \eqref{bca84r.22.aT}, the geometric doubling 
constant of $({\mathscr{X}},\rho)$, the {\rm ADR} constant of $\Sigma$, 
as well as $N$, $\alpha$, and $c$, such that
\begin{eqnarray}\label{bca84r.22}
\begin{array}{c}
\displaystyle
\int\limits_{B_{\rho_{\#}}(x,r)\setminus\overline{\Sigma}}
\frac{{\rm dist}_{\rho_{\#}}(y,\Sigma)^{-\alpha}}{\rho_{\#}(x,y)^N}\,d\mu(y)
\leq C\,r^{m-\alpha-N},
\\[30pt]
\forall\,r>0,\quad\forall\,x\in{\mathscr{X}}
\,\,\mbox{ with }\,\,{\rm dist}_{\rho_{\#}}(x,\Sigma)<c\,r.
\end{array}
\end{eqnarray}
\end{lemma}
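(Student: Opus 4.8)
The plan is to reduce \eqref{bca84r.22} to a dyadic-annulus computation, exactly as suggested by the proof of Lemma~\ref{Gkwvr} and Lemma~\ref{geom-lem}. First I would fix $x\in{\mathscr{X}}$ with ${\rm dist}_{\rho_{\#}}(x,\Sigma)<c\,r$ and decompose the region of integration $B_{\rho_{\#}}(x,r)\setminus\overline{\Sigma}$ into the layers
\[
A_j:=\bigl\{y\in B_{\rho_{\#}}(x,r)\setminus\overline{\Sigma}:\,2^{-j-1}r<\rho_{\#}(x,y)\leq 2^{-j}r\bigr\},\qquad j\geq 0,
\]
together with the innermost piece where $\rho_{\#}(x,y)$ is comparable to ${\rm dist}_{\rho_{\#}}(x,\Sigma)$ or smaller; on each $A_j$ one has $\rho_{\#}(x,y)^{-N}\approx (2^{-j}r)^{-N}$, so the task is to bound
\[
\sum_{j\geq 0}(2^{-j}r)^{-N}\int_{A_j}{\rm dist}_{\rho_{\#}}(y,\Sigma)^{-\alpha}\,d\mu(y).
\]
The point of the hypothesis $N<m-\max\{\alpha,0\}$ (so in particular $N<m-\alpha$ when $\alpha\geq0$, ensuring the geometric series converges) together with $\alpha<m-d$ is precisely to make this sum over $j$ finite after the individual terms are estimated.

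Next I would estimate $\int_{A_j}{\rm dist}_{\rho_{\#}}(y,\Sigma)^{-\alpha}\,d\mu(y)$. Here there are two cases according to the sign of $\alpha$. If $\alpha\leq0$ the weight ${\rm dist}_{\rho_{\#}}(y,\Sigma)^{-\alpha}$ is itself bounded on $A_j$ by $C(2^{-j}r)^{-\alpha}$ (since on $A_j$ one has ${\rm dist}_{\rho_{\#}}(y,\Sigma)\leq\rho_{\#}(y,x)+{\rm dist}_{\rho_{\#}}(x,\Sigma)\leq C 2^{-j}r$ because ${\rm dist}_{\rho_{\#}}(x,\Sigma)<cr$ — here I use that $A_j$ is nonempty only for the relevant range of $j$), and then \eqref{bca84r.22.aT} gives $\mu(A_j)\leq\mu(B_{\rho_{\#}}(x,2^{-j}r))\leq C(2^{-j}r)^m$, so the layer contributes $C(2^{-j}r)^{m-\alpha}$. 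If $\alpha>0$ the weight is singular near $\Sigma$ and I would instead slice $A_j$ further into sub-layers where ${\rm dist}_{\rho_{\#}}(y,\Sigma)\approx 2^{-i}\cdot 2^{-j}r$, $i\geq0$; on such a sub-layer the weight is $\approx(2^{-i-j}r)^{-\alpha}$, and its $\mu$-measure is controlled by the measure of an $O(2^{-i-j}r)$-tube around a surface ball of radius $O(2^{-j}r)$ in $\Sigma$. The ADR property of $\Sigma$ (combined with \eqref{Rss-TASS} and the uniform covering of such a tube by $\rho_{\#}$-balls of radius $\sim 2^{-i-j}r$, of which there are at most $C\,2^{in}$, wait — of which there are at most $C(2^{-j}r/2^{-i-j}r)^d=C2^{id}$, by geometric doubling of $\Sigma$) together with \eqref{bca84r.22.aT} bounds that $\mu$-measure by $C\,2^{id}(2^{-i-j}r)^m$. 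Summing over $i\geq0$ the geometric series $\sum_i 2^{id}2^{-i(m-\alpha)}(2^{-j}r)^{m-\alpha}$ converges because $\alpha<m-d$, and again yields $C(2^{-j}r)^{m-\alpha}$ for the whole of $A_j$. This last tube-measure estimate is the technical heart of the argument; it is exactly the kind of statement quoted as coming from \cite{MMMM-B}, and in a fully self-contained write-up I would simply cite the relevant lemma there rather than redo it.

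Finally I would assemble the pieces: combining the two cases gives $\int_{A_j}{\rm dist}_{\rho_{\#}}(y,\Sigma)^{-\alpha}\,d\mu(y)\leq C(2^{-j}r)^{m-\alpha}$ uniformly in $j$ (and likewise for the innermost piece, handled in the same way with $2^{-j}r$ replaced by ${\rm dist}_{\rho_{\#}}(x,\Sigma)$ or $r$), whence
\[
\int_{B_{\rho_{\#}}(x,r)\setminus\overline{\Sigma}}\frac{{\rm dist}_{\rho_{\#}}(y,\Sigma)^{-\alpha}}{\rho_{\#}(x,y)^N}\,d\mu(y)\leq C\sum_{j\geq 0}(2^{-j}r)^{-N}(2^{-j}r)^{m-\alpha}=C\,r^{m-\alpha-N}\sum_{j\geq0}2^{-j(m-\alpha-N)},
\]
and the last series converges since $N<m-\alpha$ (equivalently $m-\alpha-N>0$), which is implied by $N<m-\max\{\alpha,0\}$. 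Tracking constants through the argument, the final constant depends only on the supremum in \eqref{bca84r.22.aT}, the geometric doubling constant of $({\mathscr{X}},\rho)$, the ADR constant of $\Sigma$, and on $N,\alpha,c$, as claimed. I expect the main obstacle to be the careful bookkeeping in the $\alpha>0$ case — ensuring that the double sum over $(i,j)$ is organized so that both geometric series genuinely converge under the stated two inequalities on $N$ and $\alpha$, and that the passage from $\rho$ to its regularization $\rho_{\#}$ (needed so that balls are open and distances behave, via Theorem~\ref{JjEGh}) does not introduce any hidden dependence on $x$ or $r$.
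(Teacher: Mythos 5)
You should first note that the paper itself contains no proof of this lemma: it is stated verbatim as a quotation from \cite{MMMM-B}, so there is no in-paper argument to compare against. Your scheme -- dyadic annuli in $\rho_{\#}(x,\cdot)$, a further dyadic slicing in ${\rm dist}_{\rho_{\#}}(\cdot,\Sigma)$ combined with a covering/tube estimate, and two geometric series governed respectively by $\alpha<m-d$ and $N<m-\max\{\alpha,0\}$ -- is the natural one and consciously parallels the in-paper proofs of Lemma~\ref{Gkwvr} and Lemma~\ref{geom-lem}; moreover the tube bound you sketch needs only the upper mass bound \eqref{bca84r.22.aT} for $\mu$ together with the ADR property of $\Sigma$ (for the $\lesssim 2^{id}$ covering count), so it can be re-derived rather than merely cited, and in outline the argument does yield \eqref{bca84r.22} with the stated dependence of constants.

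There is, however, one step whose justification as written would fail. The pointwise bound ${\rm dist}_{\rho_{\#}}(y,\Sigma)\leq C\,2^{-j}r$ on $A_j$ does not follow from ${\rm dist}_{\rho_{\#}}(x,\Sigma)<c\,r$, nor from any nonemptiness consideration: if ${\rm dist}_{\rho_{\#}}(x,\Sigma)$ is comparable to $r$, then on every annulus with $2^{-j}r\ll r$ one has ${\rm dist}_{\rho_{\#}}(y,\Sigma)\approx{\rm dist}_{\rho_{\#}}(x,\Sigma)\approx r$, and the per-annulus bound $C(2^{-j}r)^{m-\alpha}$ is simply false (take $\Sigma$ a hyperplane in $\mathbb{R}^{n+1}$, $\mu$ Lebesgue, $\alpha=-1$, and $x$ at height $r/2$). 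The bound is correct only on annuli at scales $2^{-j}r\gtrsim\delta:={\rm dist}_{\rho_{\#}}(x,\Sigma)$, and it is that scale restriction -- i.e.\ the decision to stop the annuli at scale $\delta$ -- which justifies it; the region $\rho_{\#}(x,y)\lesssim\delta$ must then be estimated separately. Your ``innermost piece'' is meant to do this but is left too vague: one has ${\rm dist}_{\rho_{\#}}(y,\Sigma)\approx\delta$ only well inside (say for $\rho_{\#}(x,y)\leq\delta/(2C_\rho)$), whereas at the transition scale $\rho_{\#}(x,y)\approx\delta$ points can still lie arbitrarily close to $\Sigma$, so for $\alpha>0$ the sub-slicing in ${\rm dist}_{\rho_{\#}}(\cdot,\Sigma)$ is needed there as well; carrying this out gives an inner contribution $\leq C\,\delta^{\,m-\alpha-N}\leq C\,(c\,r)^{m-\alpha-N}$, using $N<m$ and $m-\alpha-N>0$. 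Alternatively, when $\alpha\leq 0$ you can avoid all of this by bounding ${\rm dist}_{\rho_{\#}}(y,\Sigma)^{-\alpha}\leq C(1+c)^{-\alpha}r^{-\alpha}$ on the whole ball and summing $\sum_{j\geq 0}(2^{-j}r)^{m-N}$, which converges precisely because $N<m=m-\max\{\alpha,0\}$ -- this is exactly where the hypothesis involves $\max\{\alpha,0\}$ rather than $\alpha$. With these repairs (and the routine adjustment of the covering count when $2^{-i-j}r$ exceeds ${\rm diam}_{\rho}(\Sigma)$) your argument is complete.
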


\begin{lemma}\label{Gkwvr.reV}
Let $({\mathscr{X}},\rho)$ be a quasi-metric space. Suppose 
$E\subseteq{\mathscr{X}}$ is nonempty and $\sigma$ is a measure on $E$ such that
$(E,\rho\bigl|_E,\sigma)$ becomes a $d$-dimensional {\rm ADR} space, for some $d>0$. 
Fix a real number $0\leq N<d$. Then there exists $C\in(0,\infty)$ depending only on 
$N$, $\rho$, and the {\rm ADR} constant of $E$ such that 
\begin{eqnarray}\label{mMji.reV}
\int\limits_{E\cap B_{\rho_{\#}}(x,r)}
\frac{1}{\rho_{\#}(x,y)^N}\,d\sigma(y)\leq C\,r^{d-N},
\qquad\forall\,x\in{\mathscr{X}},\quad\forall\,r>{\rm dist}_{\rho_{\#}}(x,E).
\end{eqnarray}
\end{lemma}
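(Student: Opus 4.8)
The plan is to prove this by a dyadic annular decomposition of $E\cap B_{\rho_{\#}}(x,r)$ around $x$, running exactly parallel to the proof of \eqref{mMji} in Lemma~\ref{Gkwvr} but now with an \emph{upper} cutoff on the radius, the convergence of the resulting geometric series being guaranteed by the hypothesis $N<d$ instead of by $m>d$.

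\textbf{Step 1 (an auxiliary upper measure bound).} First I would record the elementary fact that, since $\rho_{\#}$ is a symmetric quasi-distance with $C_{\rho_{\#}}\leq C_\rho$ which is equivalent to $\rho|_E$ on $E$ (cf. part {\it (1)} of Theorem~\ref{JjEGh} and \eqref{RHo-evv}), the $d$-{\rm ADR} property of $(E,\rho|_E,\sigma)$ furnishes a constant $C_1\in(0,\infty)$, depending only on $\rho$ and the {\rm ADR} constant of $E$, such that
\begin{eqnarray}\label{auxA}
\sigma\bigl(B_{\rho_{\#}}(z,s)\cap E\bigr)\leq C_1\,s^d,\qquad\forall\,z\in E,\ \ \forall\,s>0.
\end{eqnarray}
For $s$ not exceeding the $\rho_{\#}$-diameter of $E$ this is the upper {\rm ADR} estimate transported from $\rho$-balls to $\rho_{\#}$-balls; for larger $s$ one has $B_{\rho_{\#}}(z,s)\cap E=E$ and the bound follows by letting the radius increase to the diameter. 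If, in addition, a ball $B_{\rho_{\#}}(x,s)$ with $x\in{\mathscr{X}}$ meets $E$, choosing $z\in B_{\rho_{\#}}(x,s)\cap E$ and using symmetry together with the quasi-triangle inequality gives $B_{\rho_{\#}}(x,s)\cap E\subseteq B_{\rho_{\#}}(z,C_\rho s)\cap E$, so \eqref{auxA} upgrades to
\begin{eqnarray}\label{auxB}
\sigma\bigl(B_{\rho_{\#}}(x,s)\cap E\bigr)\leq C_1 C_\rho^{\,d}\,s^d,\qquad\forall\,x\in{\mathscr{X}},\ \ \forall\,s>0.
\end{eqnarray}

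\textbf{Step 2 (annular decomposition and summation).} With $x\in{\mathscr{X}}$ and $r>{\rm dist}_{\rho_{\#}}(x,E)$ fixed, I would introduce the dyadic annuli $A_j:=\{y\in E:\,2^{-j-1}r\leq\rho_{\#}(x,y)<2^{-j}r\}$ for $j\in{\mathbb{N}}_0$, noting that $E\cap B_{\rho_{\#}}(x,r)$ differs from $\bigcup_{j\geq 0}A_j$ only by the single point $x$ (a case arising only when $x\in E$), which is $\sigma$-null since $d>0$. On $A_j$ one has $\rho_{\#}(x,y)^{-N}\leq 2^{(j+1)N}r^{-N}$ and $A_j\subseteq B_{\rho_{\#}}(x,2^{-j}r)\cap E$, so \eqref{auxB} yields
\begin{eqnarray}\label{annEst}
\int_{A_j}\frac{d\sigma(y)}{\rho_{\#}(x,y)^N}\leq C_1 C_\rho^{\,d}\,2^{N}\,r^{d-N}\,2^{-j(d-N)}.
\end{eqnarray}
Summing \eqref{annEst} over $j\in{\mathbb{N}}_0$ and using $d-N>0$ to sum the geometric series produces the asserted inequality with $C:=C_1 C_\rho^{\,d}\,2^{N}\bigl(1-2^{-(d-N)}\bigr)^{-1}$, a constant of the advertised dependence.

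\textbf{Main obstacle.} The only point that requires any care is Step 1 — specifically the passage from the upper {\rm ADR} estimate, available a priori only for $\rho$-balls of radius at most ${\rm diam}_\rho(E)$ centered \emph{on} $E$, to the clean bound \eqref{auxB} valid for $\rho_{\#}$-balls of \emph{arbitrary} radius centered \emph{anywhere} in ${\mathscr{X}}$. This is dispatched by combining the equivalence $\rho_{\#}\approx\rho|_E$ with the ball-inclusion manipulations indicated above (and, for radii exceeding the diameter, the monotone exhaustion of $E$ by $\rho_{\#}$-balls about a fixed point of $E$). Once \eqref{auxB} is in hand, Step 2 is entirely routine.
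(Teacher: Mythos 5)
Your proof is correct: the paper itself states Lemma~\ref{Gkwvr.reV} as a quotation from \cite{MMMM-B} without supplying a proof, but your argument is precisely the standard one, mirroring the paper's own proof of the companion estimate \eqref{mMji} in Lemma~\ref{Gkwvr} (cf.\ \eqref{jrap}), namely a dyadic annular decomposition about $x$ combined with the upper {\rm ADR} mass bound, with the geometric series now converging because $d-N>0$ rather than $m-d>0$. The only delicate point, transferring the upper {\rm ADR} bound to $\rho_{\#}$-balls of arbitrary radius centered anywhere in ${\mathscr{X}}$, is handled correctly in your Step~1 via $\rho_{\#}\approx\rho|_E$ and the quasi-triangle inequality.
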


We are now ready to discuss the

\vskip 0.08in
\begin{proof}[Proof of Theorem~\ref{thm:BVAR}]
Set 
\begin{eqnarray}\label{D-HarmK} 
H_0:=1,\quad
H_1:=n+1,\quad\mbox{ and }\quad
H_{\ell}:=\Bigl(\!\!
\begin{array}{c}
n+\ell
\\[-2pt]
\ell
\end{array}
\!\!\Bigr)
-
\Bigl(\!\!
\begin{array}{c}
n+\ell-2
\\[-2pt]
\ell-2
\end{array}
\!\!\Bigr)
\quad\mbox{ if }\,\,\ell\geq 2,
\end{eqnarray} 
and, for each $\ell\in{\mathbb{N}}_0$, let 
$\bigl\{\Psi_{i\ell}\bigr\}_{1\leq i\leq H_\ell}$ be 
an orthonormal basis for the space of spherical harmonics of degree $\ell$ 
on the $n$-dimensional sphere $S^n$. In particular, 
\begin{eqnarray}\label{D-Har-Nr} 
H_{\ell}\leq(\ell+1)\cdot(\ell+2)\cdots(n+\ell-1)\cdot
(n+\ell)\leq C_n\,\ell^n\quad\mbox{ for }\,\,\ell\geq 2
\end{eqnarray} 
and, if $\Delta_{S^{n}}$ denotes the Laplace-Beltrami operator on $S^n$,
then for each $\ell\in{\mathbb{N}}_0$ and $1\leq i\leq H_\ell$, 
\begin{eqnarray}\label{eihen-XS}
\Delta_{S^{n}}\Psi_{i\ell}=-\ell(n+\ell-1)\Psi_{i\ell}\,\,\mbox{ on }\,\,S^n,
\,\,\mbox{ and }\,\,\Psi_{i\ell}\Bigl(\frac{x}{|x|}\Bigr)=\frac{P_{i\ell}(x)}{|x|^{\ell}}
\end{eqnarray} 
for some homogeneous harmonic polynomial $P_{i\ell}$ of degree 
$\ell$ in ${\mathbb{R}}^{n+1}$. Also, 
\begin{eqnarray}\label{eihen-amm-ONB}
\bigl\{\Psi_{i\ell}\bigr\}_{\ell\in{\mathbb{N}}_0,\,1\leq i\leq H_\ell}
\,\,\mbox{ is an orthonormal basis for }\,\,L^2(S^n),
\end{eqnarray} 
hence, 
\begin{eqnarray}\label{eihen-amm}
\|\Psi_{i\ell}\|_{L^2(S^{n})}=1\,\,\mbox{ for each $\ell\in{\mathbb{N}}_0$ and 
$1\leq i\leq H_\ell$}.
\end{eqnarray} 
More details on these matters may be found in, e.g., 
\cite[pp.\,137--152]{STEIN-WEISS} and \cite[pp.\,68--75]{STEIN}.

Assume next that an even integer $d>(n/2)+2$ has been fixed. 
Sobolev's embedding theorem then gives that for each $\ell\in{\mathbb{N}}_0$ and 
$1\leq i\leq H_\ell$ (with $I$ standing for the identity on $S^n$)
\begin{eqnarray}\label{kl-dYU-11}
\|\Psi_{i\ell}\|_{C^2(S^{n})}
\leq C_n\bigl\|(I-\Delta_{S^{n}})^{d/2}\Psi_{i\ell}\bigr\|_{L^2(S^{n})}\leq C_n\ell^{d},
\end{eqnarray}
where the last inequality is a consequence of \eqref{eihen-XS}-\eqref{eihen-amm}.

Fix $\ell\in{\mathbb{N}}_0$ and $1\leq i\leq H_\ell$ arbitrary. If we now define 
\begin{eqnarray}\label{coef-sh}
a_{i\ell}(x):=\int_{S^{n}}b(x,\omega)\Psi_{i\ell}(\omega)\,d\omega,\,\,
\mbox{ for each }\,\,x\in{\mathcal{U}},
\end{eqnarray}
it follows from the last formula in \eqref{eihen-XS} and the assumptions on 
$b(x,z)$ that 
\begin{eqnarray}\label{coef-sh.aC}
\mbox{$a_{i\ell}$ is identically zero whenever $\ell$ is even}.
\end{eqnarray}
Also, for each number $N\in{\mathbb{N}}$ with $2N\leq M$ and 
each multiindex $\beta$ of length $\leq 1$ we have 
\begin{eqnarray}\label{coef-sh.AAA}
\sup_{x\in{\mathcal{U}}}
\bigl|[-\ell(n+\ell-1)]^N(\partial^\beta a_{i\ell})(x)\bigr|
&=& \sup_{x\in{\mathcal{U}}}
\Bigl|\int_{S^{n}}(\partial_x^\beta b)(x,\omega)
\bigl(\Delta_{S^{n}}^N\Psi_{i\ell}\bigr)(\omega)\,d\omega\Bigr|
\nonumber\\[4pt]
&=& \sup_{x\in{\mathcal{U}}}
\Bigl|\int_{S^{n}}\bigl(\partial_x^\beta\Delta_{S^{n}}^N b\bigr)(x,\omega)
\Psi_{i\ell}(\omega)\,d\omega\Bigr|
\nonumber\\[4pt]
&\leq & \sup_{x\in{\mathcal{U}}}
\bigl\|\bigl(\partial_x^\beta\Delta_{S^{n}}^N b\bigr)(x,\cdot)
\bigr\|_{L^2(S^{n})}
\nonumber\\[4pt]
&\leq & C_n\sup_{\stackrel{(x,z)\in{\mathcal{U}}\times S^n}{|\alpha|\leq M}}
\bigl|\bigl(\partial_x^\beta\partial_z^{\alpha}b\bigr)(x,z)\bigr|=:C_b,
\end{eqnarray}
where $C_b$ is a finite constant. 
Hence, for each number $N\in{\mathbb{N}}$ with $2N\leq M$ 
there exists a constant $C_{n,N}$ such that
\begin{eqnarray}\label{coef-HJ}
\sup_{x\in{\mathcal{U}},\,|\beta|\leq 1}\bigl|(\partial^\beta a_{i\ell})(x)\bigr|
\leq C_{n,N} C_b\,\ell^{-2N},\qquad\ell\in{\mathbb{N}}_0,\quad 1\leq i\leq H_\ell. 
\end{eqnarray}
For each fixed $x\in{\mathcal{U}}$, expand the function
$b(x,\cdot)\in L^2(S^n)$ with respect to the orthonormal basis
$\bigl\{\Psi_{i\ell}\bigr\}_{\ell\in{\mathbb{N}}_0,\,1\leq i\leq H_\ell}$ in order 
to obtain that (in the sense of $L^2(S^n)$ in the variable $z/|z|\in S^n$)
\begin{eqnarray}\label{tag{1.12}}
b(x,z) &=& b\Bigl(x,\frac{z}{|z|}\Bigr)|z|^{-n}
=\sum_{\ell\in{\mathbb{N}}}\sum_{i=1}^{H_{\ell}} 
a_{i\ell}(x)\Psi_{i\ell}\Bigl(\frac{z}{|z|}\Bigr)|z|^{-n}
\nonumber\\[4pt]
&=& \sum_{\ell\in 2{\mathbb{N}}+1}\sum_{i=1}^{H_{\ell}} 
a_{i\ell}(x)\Psi_{i\ell}\Bigl(\frac{z}{|z|}\Bigr)|z|^{-n},
\end{eqnarray}
where the last equality is a consequence of \eqref{coef-sh.aC}.
For each $\ell\in 2{\mathbb{N}}+1$ let us now set 
\begin{eqnarray}\label{1.15GH}
k_{i\ell}(z):=\Psi_{i\ell}\Bigl(\frac{z}{|z|}\Bigr)|z|^{-n},
\quad z\in{\mathbb{R}}^{n+1}\setminus\{0\},
\end{eqnarray}
so that, if $d$ is as in \eqref{kl-dYU-11},  
then for each $|\alpha|\leq 2$ we have
\begin{eqnarray}\label{kl-dYU}
\|\partial^\alpha k_{i\ell}\|_{L^\infty(S^{n})}
\leq C_n\|\Psi_{i\ell}\|_{C^2(S^{n})}\leq C_n\ell^{d}.
\end{eqnarray}
Also, given any $f\in L^2(\Sigma,\sigma)$, set 
\begin{eqnarray}\label{1.15ASD}
{\mathcal{B}}_{i\ell}f(x):=\int_{\Sigma}k_{i\ell}(x-y)f(y)\,d\sigma(y),\quad 
x\in{\mathcal{U}}\setminus\Sigma,
\end{eqnarray}
and note that for any compact subset ${\mathcal{O}}$ of ${\mathcal{U}}\setminus\Sigma$
and any multiindex $\alpha$ with $|\alpha|\leq 1$, 
\begin{eqnarray}\label{1.15ASD.u}
\sup_{x\in{\mathcal{O}}}\bigl|\bigl(\partial^\alpha
{\mathcal{B}}_{i\ell}f\bigr)(x)\bigr|\leq C(n,{\mathcal{O}},\Sigma)\ell^{d},
\end{eqnarray}
by \eqref{kl-dYU}. On the other hand, if $N>(d+1)/2$ 
(a condition which we shall assume from now on) then  
\eqref{kl-dYU-11} and \eqref{coef-HJ} imply that the last series
in \eqref{tag{1.12}} converges to $b(x,z)$ uniformly for $x\in{\mathcal{U}}$ 
and $z$ in compact subsets of ${\mathbb{R}}^{n+1}\setminus\{0\}$.
As such, it follows from \eqref{1.15GH} and \eqref{1.15ASD} that
\begin{eqnarray}\label{huBN}
{\mathcal{B}}f(x)=\sum_{\ell\in 2{\mathbb{N}}+1}\sum_{i=1}^{H_{\ell}} 
a_{i\ell}(x){\mathcal{B}}_{i\ell}f(x),\,\,\mbox{ uniformly on compact subsets of }\,\,
{\mathcal{U}}\setminus\Sigma.
\end{eqnarray}
Using this, \eqref{1.15ASD.u} and \eqref{coef-HJ}, the term-by-term differentiation 
theorem for series of functions may be invoked in order to obtain that
\begin{eqnarray}\label{huBN.e}
&& \hskip -0.20in
\bigl(\nabla{\mathcal{B}}f\bigr)(x)
=\sum_{\ell\in 2{\mathbb{N}}+1}\sum_{i=1}^{H_{\ell}} 
a_{i\ell}(x)\bigl(\nabla{\mathcal{B}}_{i\ell}f\bigr)(x)
+\sum_{\ell\in 2{\mathbb{N}}+1}\sum_{i=1}^{H_{\ell}} 
(\nabla a_{i\ell})(x){\mathcal{B}}_{i\ell}f(x),
\nonumber\\[4pt]
&& \hskip 0.80in
\mbox{uniformly for $x$ in compact subsets of }\,\,{\mathcal{U}}\setminus\Sigma.
\end{eqnarray}

Moving on, observe that for each $\ell\in 2{\mathbb{N}}+1$ and 
$1\leq i\leq H_{\ell}$, Theorem~\ref{thm:BPmLGimSFE} gives 
\begin{eqnarray}\label{vln-GFH}
\int_{{\mathcal{U}}\setminus\Sigma}|\nabla{\mathcal{B}}_{i\ell}f(x)|^2\,
{\rm dist}(x,{\Sigma})\,dx\leq C_{i\ell}\int_{\Sigma}|f|^2\,d\sigma,
\qquad\forall\,f\in L^2(\Sigma,\sigma),
\end{eqnarray}
where, with $C\in(0,\infty)$ depending only on the dimension $n$ and 
the (BP)$^k$LG constants of $\Sigma$.  
\begin{eqnarray}\label{vln-Gam}
C_{i\ell}=C\max_{|\alpha|\leq 2}\|\partial^\alpha k_{i\ell}\|_{L^\infty(S^{n})}
\leq C\ell^d,
\end{eqnarray}
thanks to \eqref{kl-dYU-11}. Thus, if 
\begin{eqnarray}\label{kl-dYU.af}
\mbox{$M\in{\mathbb{N}}$ is odd and satisfies $M>d+1$},  
\end{eqnarray}
one may choose $N\in{\mathbb{N}}$ such that $d+1<2N<M$. Such a choice ensures that
for every $f\in L^2(\Sigma,\sigma)$
\begin{eqnarray}\label{vln-GFH.ae}
&& \hskip -0.50in
\sum_{\ell\in 2{\mathbb{N}}+1}\sum_{i=1}^{H_{\ell}} 
\Bigl(\int_{{\mathcal{U}}\setminus\Sigma}|a_{i\ell}(x)|^2
|\nabla{\mathcal{B}}_{i\ell}f(x)|^2\,{\rm dist}(x,\Sigma)\,dx\Bigr)^{1/2}
\nonumber\\[4pt]
&& \hskip 0.60in
\leq C_{n,N} C_b\sum_{\ell\in 2{\mathbb{N}}+1}\sum_{i=1}^{H_{\ell}}\ell^{-2N}
\Bigl(\int_{{\mathcal{U}}\setminus\Sigma}
|\nabla{\mathcal{B}}_{i\ell}f(x)|^2\,{\rm dist}(x,\Sigma)\,dx\Bigr)^{1/2}
\nonumber\\[4pt]
&& \hskip 0.60in
\leq C_{n,N} C_b
\Bigl(\sum_{\ell\in 2{\mathbb{N}}+1}\sum_{i=1}^{H_{\ell}}
\ell^{-2N}C_{i\ell}^{1/2}\Bigr)\Bigl(\int_{\Sigma}|f|^2\,d\sigma\Bigr)^{1/2}
\nonumber\\[4pt]
&& \hskip 0.60in
\leq C_{n,N} C_b
\Bigl(\sum_{\ell\in 2{\mathbb{N}}+1}\sum_{i=1}^{H_{\ell}}
\ell^{d/2-2N}\Bigr)\Bigl(\int_{\Sigma}|f|^2\,d\sigma\Bigr)^{1/2}
\nonumber\\[4pt]
&& \hskip 0.60in
=C\Bigl(\int_{\Sigma}|f|^2\,d\sigma\Bigr)^{1/2},
\end{eqnarray}
by \eqref{coef-HJ}, \eqref{vln-Gam}, and our choice of $N$.

To proceed, let $\ell\in{\mathbb{N}}$ and $1\leq i\leq H_{\ell}$ be arbitrary. 
Also, fix an arbitrary $f\in L^2(\Sigma,\sigma)$. Then 
\begin{eqnarray}\label{vln-GFH.234}
&& \hskip -0.40in
\Bigl(\int_{{\mathcal{U}}\setminus\Sigma}|\nabla a_{i\ell}|^2
|{\mathcal{B}}_{i\ell}f(x)|^2\,{\rm dist}(x,{\Sigma})\,dx\Bigr)^{1/2}
\nonumber\\[4pt]
&& \hskip 0.60in
\leq C_{n,N} C_b\,\ell^{-2N}\Bigl(\int_{{\mathcal{U}}\setminus\Sigma}
\bigl|{\rm dist}(x,\Sigma)^{1/2}{\mathcal{B}}_{i\ell}f(x)\bigr|^2\,dx\Bigr)^{1/2}
\nonumber\\[4pt]
&& \hskip 0.60in
=C_{n,N} C_b\,\ell^{-2N}\Bigl(\int_{{\mathcal{U}}\setminus\Sigma}
\bigl|{\mathcal{T}}_{i\ell}f(x)\bigr|^2\,dx\Bigr)^{1/2}
\end{eqnarray}
where 
\begin{eqnarray}\label{vaj-TBB}
{\mathcal{T}}_{i\ell}:L^2(\Sigma,\sigma)\longrightarrow 
L^2({\mathcal{U}}\setminus\Sigma)
\end{eqnarray}
is the integral operator whose integral kernel is given by 
\begin{eqnarray}\label{vaj-TBB.2i}
K_{i\ell}(x,y):={\rm dist}(x,\Sigma)^{1/2}k_{i\ell}(x-y),\qquad
x\in{\mathcal{U}}\setminus\Sigma,\,\,\,y\in\Sigma.
\end{eqnarray}
Note that 
\begin{eqnarray}\label{vaj-TBB.2}
\sup_{x\in{\mathcal{U}}\setminus\Sigma}
\int_{\Sigma}\bigl|K_{i\ell}(x,y)\bigr|\,d\sigma(y)
&\leq & \|\Psi_{i\ell}\|_{L^\infty(S^n)}\sup_{x\in{\mathcal{U}}\setminus\Sigma}
\int_{\Sigma}\frac{{\rm dist}(x,\Sigma)^{1/2}}{|x-y|^n}\,d\sigma(y)
\nonumber\\[4pt]
&\leq & C\ell^d\sup_{x\in{\mathcal{U}}\setminus\Sigma}
\int_{\Sigma}\frac{1}{|x-y|^{n-1/2}}\,d\sigma(y)
\nonumber\\[4pt]
&\leq & C\ell^d\,{\rm diam}({\mathcal{U}})^{1/2}, 
\end{eqnarray}
by \eqref{kl-dYU-11} and Lemma~\ref{Gkwvr.reV}, and that
\begin{eqnarray}\label{vaj-TBB.3}
\sup_{y\in\Sigma}\int_{{\mathcal{U}}\setminus\Sigma}\bigl|K_{i\ell}(x,y)\bigr|\,dx
&\leq & \|\Psi_{i\ell}\|_{L^\infty(S^n)}\sup_{y\in\Sigma}
\int_{{\mathcal{U}}\setminus\Sigma}\frac{{\rm dist}(x,\Sigma)^{1/2}}{|x-y|^n}\,dx
\nonumber\\[4pt]
&\leq & C\ell^d\,{\rm diam}({\mathcal{U}})^{3/2}, 
\end{eqnarray}
by \eqref{kl-dYU-11} and Lemma~\ref{bvv55-DD}. 
From \eqref{vaj-TBB.2}-\eqref{vaj-TBB.3} and 
Schur's Lemma we then deduce that the operator ${\mathcal{T}}_{i\ell}$
is bounded in the context of \eqref{vaj-TBB}, with norm 
\begin{eqnarray}\label{vaj-TBB.4}
\bigl\|{\mathcal{T}}_{i\ell}\bigr\|_{L^2(\Sigma,\sigma)\rightarrow 
L^2({\mathcal{U}}\setminus\Sigma)}\leq C\ell^d\,{\rm diam}({\mathcal{U}}).
\end{eqnarray}
Combining \eqref{vaj-TBB.4} and \eqref{vln-GFH.234} we therefore arrive at 
the conclusion that, for each $f\in L^2(\Sigma,\sigma)$,
\begin{eqnarray}\label{vln-GFH.234-AG}
\Bigl(\int_{{\mathcal{U}}\setminus\Sigma}|\nabla a_{i\ell}|^2
|{\mathcal{B}}_{i\ell}f(x)|^2\,{\rm dist}(x,{\Sigma})\,dx\Bigr)^{1/2}
\leq C({\mathcal{U}})\ell^d\Bigl(\int_{\Sigma}|f|^2\,d\sigma\Bigr)^{1/2},
\end{eqnarray}
whenever $\ell\in{\mathbb{N}}$ and $1\leq i\leq H_\ell$. As a result,
there exists $C\in(0,\infty)$ such that
\begin{eqnarray}\label{vln-GFH.234.u}
\sum_{\ell\in 2{\mathbb{N}}+1}\sum_{i=1}^{H_{\ell}} 
\Bigl(\int_{{\mathcal{U}}\setminus\Sigma}|\nabla a_{i\ell}|^2
|{\mathcal{B}}_{i\ell}f(x)|^2\,{\rm dist}(x,{\Sigma})\,dx\Bigr)^{1/2}
\leq C\Bigl(\int_{\Sigma}|f|^2\,d\sigma\Bigr)^{1/2},
\end{eqnarray}
for every $f\in L^2(\Sigma,\sigma)$.

Fix now an arbitrary compact subset ${\mathcal{O}}$ 
of ${\mathcal{U}}\setminus\Sigma$. Then \eqref{huBN.e}, \eqref{vln-GFH.ae}
and \eqref{vln-GFH.234.u} allow us to estimate 
\begin{eqnarray}\label{huBN.ef}
\Bigl(\int_{{\mathcal{O}}}
|\nabla{\mathcal{B}}f(x)|^2\,{\rm dist}(x,{\Sigma})\,dx\Bigr)^{1/2}
\leq C\Bigl(\int_{\Sigma}|f|^2\,d\sigma\Bigr)^{1/2},
\end{eqnarray}
where the constant $C$ is independent of ${\mathcal{O}}$ and $f\in L^2(\Sigma,\sigma)$.
Upon letting ${\mathcal{O}}\nearrow{\mathcal{U}}\setminus\Sigma$ in \eqref{huBN.ef},
Lebesgue's Monotone Convergence Theorem then yields \eqref{vlnGG-2ASn}.
Finally, the last claim in the statement of Theorem~\ref{thm:BVAR} is justified in 
a similar manner, based on Corollary~\ref{cor:URimSFE}.
\end{proof}

It is also useful to treat the following variant of \eqref{B-pv5}:
\begin{eqnarray}\label{S5.52}
\widetilde{\mathcal{B}}f(x):=\int\limits_{\Sigma}b(y,x-y)f(y)\,d\sigma(y),
\qquad x\in{\mathcal{U}}\setminus\Sigma.
\end{eqnarray}
The same sort of analysis works, with $x$ replaced by $y$ in 
the spherical harmonic expansion \eqref{tag{1.12}}
(in fact, the argument is simpler since the $a_{i\ell}$'s act this time 
as multipliers in the $y$ variable). Specifically, we have the following.

\begin{theorem}\label{p5.4}
In the setting of Theorem~\ref{thm:BVAR}, with $\widetilde{\mathcal{B}}$ given
by \eqref{S5.52} where, this time, in place of \eqref{B-pv5.LFD} one assumes
\begin{eqnarray}\label{B-pv5.LFD.XXX}
\mbox{$\partial_z^\alpha b(x,z)$ is continuous and bounded on 
${\mathcal{U}}\times S^{n}$ for $|\alpha|\leq M$}, 
\end{eqnarray}
there holds
\begin{eqnarray}\label{vva-UHab}
\int_{{\mathcal{U}}\setminus\Sigma}
|\nabla\widetilde{\mathcal{B}}f(x)|^2\,{\rm dist}(x,\Sigma)\,dx 
\leq C\int_{\Sigma}|f|^2\,d\sigma,\qquad\forall\,f\in L^2(\Sigma,\sigma).
\end{eqnarray}
\end{theorem}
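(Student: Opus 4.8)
The plan is to mimic the proof of Theorem~\ref{thm:BVAR}, using a spherical harmonic expansion of the kernel $b(\cdot,\cdot)$ in its second variable, but carried out with respect to the point $y\in\Sigma$ rather than the point $x\in{\mathcal{U}}\setminus\Sigma$. Concretely, I would expand, for each fixed $y\in{\mathcal{U}}$, the function $b(y,\cdot)\in L^2(S^n)$ with respect to the orthonormal basis $\{\Psi_{i\ell}\}_{\ell\in{\mathbb{N}}_0,\,1\leq i\leq H_\ell}$ of spherical harmonics, thereby writing
\begin{eqnarray}\label{S5.52exp}
b(y,z)=\sum_{\ell\in 2{\mathbb{N}}+1}\sum_{i=1}^{H_\ell}
a_{i\ell}(y)\,k_{i\ell}(z),\qquad y\in{\mathcal{U}},\,\,z\in{\mathbb{R}}^{n+1}\setminus\{0\},
\end{eqnarray}
where $a_{i\ell}(y):=\int_{S^n}b(y,\omega)\Psi_{i\ell}(\omega)\,d\omega$ and $k_{i\ell}$ is as in \eqref{1.15GH}. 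As in \eqref{coef-sh.aC}, only odd $\ell$ contribute because $b$ is odd in $z$; and as in \eqref{coef-HJ}, the smoothness hypothesis \eqref{B-pv5.LFD.XXX} (now needing only $z$-derivatives, no $x$-derivatives) yields $\sup_{y\in{\mathcal{U}}}|a_{i\ell}(y)|\leq C_{n,N}C_b\,\ell^{-2N}$ for $2N\leq M$, while \eqref{kl-dYU} gives $\|\partial^\alpha k_{i\ell}\|_{L^\infty(S^n)}\leq C_n\ell^d$ for $|\alpha|\leq 2$. Choosing $M$ odd with $M>d+1$ and then $N$ with $d+1<2N<M$, the series converges uniformly for $y\in{\mathcal{U}}$ and $z$ in compact subsets of ${\mathbb{R}}^{n+1}\setminus\{0\}$.

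The key simplification compared with Theorem~\ref{thm:BVAR} is that the coefficients $a_{i\ell}$ now depend on the integration variable $y$, so they act as bounded multipliers \emph{inside} the integral and do not get differentiated by $\nabla_x$. Thus, setting
\begin{eqnarray}\label{S5.52Bil}
\widetilde{\mathcal{B}}_{i\ell}f(x):=\int_{\Sigma}k_{i\ell}(x-y)\,[a_{i\ell}(y)f(y)]\,d\sigma(y),
\qquad x\in{\mathcal{U}}\setminus\Sigma,
\end{eqnarray}
one has $\widetilde{\mathcal{B}}f=\sum_{\ell\in 2{\mathbb{N}}+1}\sum_{i=1}^{H_\ell}\widetilde{\mathcal{B}}_{i\ell}f$ uniformly on compact subsets of ${\mathcal{U}}\setminus\Sigma$, and the same holds for $\nabla_x\widetilde{\mathcal{B}}f$ after term-by-term differentiation (justified by \eqref{1.15ASD.u}-type bounds together with the decay of the $a_{i\ell}$). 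Since $k_{i\ell}$ satisfies \eqref{prop-K}, Theorem~\ref{thm:BPmLGimSFE} applies to $\widetilde{\mathcal{B}}_{i\ell}$ in exactly the same way as to ${\mathcal{B}}_{i\ell}$ in the proof of Theorem~\ref{thm:BVAR}, giving
\begin{eqnarray}\label{S5.52SFest}
\int_{{\mathcal{U}}\setminus\Sigma}|\nabla\widetilde{\mathcal{B}}_{i\ell}f(x)|^2\,
{\rm dist}(x,\Sigma)\,dx\leq C\ell^d\int_{\Sigma}|a_{i\ell}f|^2\,d\sigma
\leq C\ell^d\|a_{i\ell}\|_{L^\infty({\mathcal{U}})}^2\int_{\Sigma}|f|^2\,d\sigma,
\end{eqnarray}
with $C$ depending only on $n$ and the (BP)$^k$LG constants of $\Sigma$. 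Summing the square roots and using $\|a_{i\ell}\|_{L^\infty}\leq C_{n,N}C_b\ell^{-2N}$, $H_\ell\leq C_n\ell^n$, and $d+1<2N$, the series $\sum_{\ell\in 2{\mathbb{N}}+1}\sum_{i=1}^{H_\ell}\ell^{d/2-2N}$ converges, which yields \eqref{vva-UHab} after first restricting to a compact exhaustion ${\mathcal{O}}\nearrow{\mathcal{U}}\setminus\Sigma$ and invoking Lebesgue's Monotone Convergence Theorem, just as in \eqref{huBN.ef}.

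The main obstacle — though it is a mild one — is the justification of term-by-term differentiation of the series $\sum\widetilde{\mathcal{B}}_{i\ell}f$ and the accompanying uniform-on-compacta convergence: one needs the analogue of the estimate \eqref{1.15ASD.u} for $\nabla_x\widetilde{\mathcal{B}}_{i\ell}f$ on compact subsets of ${\mathcal{U}}\setminus\Sigma$, which follows from the pointwise bound $\|\partial^\alpha k_{i\ell}\|_{L^\infty(S^n)}\leq C_n\ell^d$ for $|\alpha|\leq 1$ combined with the polynomial decay $\|a_{i\ell}\|_{L^\infty}\leq C_{n,N}C_b\ell^{-2N}$ and $H_\ell\leq C_n\ell^n$, the total sum of $\ell^{d+n-2N}$ being convergent for $2N>d+n$ (enlarging $M$ if necessary, which only strengthens the hypothesis). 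Everything else is a routine transcription of the argument already carried out for Theorem~\ref{thm:BVAR}, with the bookkeeping slightly easier because no product rule for $\nabla a_{i\ell}$ intervenes; in particular one does not even need the auxiliary operators ${\mathcal{T}}_{i\ell}$ of \eqref{vaj-TBB} or the Schur-lemma bounds \eqref{vaj-TBB.2}-\eqref{vaj-TBB.4}.
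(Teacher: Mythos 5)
Your proposal is correct and follows exactly the route the paper intends: expand $b(y,z)$ in spherical harmonics in $z$, so that the coefficients $a_{i\ell}(y)$ act as bounded multipliers on $f$ inside the integral (never differentiated by $\nabla_x$), apply Theorem~\ref{thm:BPmLGimSFE} to each piece $\mathcal{B}_{i\ell}(a_{i\ell}f)$, and sum using the decay of $a_{i\ell}$ furnished by the $z$-smoothness alone; this is precisely the simplification the paper alludes to in its one-line remark, including the observation that the operators ${\mathcal{T}}_{i\ell}$ and the Schur-lemma bounds become unnecessary. Your bookkeeping of the exponents (choosing $N$, and enlarging $M=M(n)$ if needed so the series with multiplicity $H_\ell\lesssim\ell^n$ converges) matches the paper's own treatment in the proof of Theorem~\ref{thm:BVAR}.
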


In turn, Theorem~\ref{thm:BVAR} and Theorem~\ref{p5.4} apply to the Schwartz 
kernels of certain pseudodifferential operators. Recall that a pseudodifferential
operator $Q(x,D)$ with symbol $q(x,\xi)$ in H\"ormander's class $S^m_{1,0}$ 
is given by the oscillatory integral
\begin{eqnarray}\label{1.6}
Q(x,D)u &=& (2\pi)^{-(n+1)/2}\int q(x,\xi)\hat{u}(\xi) 
e^{i\langle x,\,\xi\rangle}\,d\xi
\nonumber\\[4pt] 
&=& (2\pi)^{-(n+1)}\int\!\!\int q(x,\xi) e^{i\langle x-y,\,\xi\rangle}u(y)\,dy\,d\xi.
\end{eqnarray}
Here, we are concerned with a smaller class of symbols, $S^m_{\rm cl}$, 
defined by requiring that the (matrix-valued) function $q(x,\xi)$ has an 
asymptotic expansion of the form 
\begin{eqnarray}\label{1.8-B}
q(x,\xi)\sim q_m(x,\xi)+q_{m-1}(x,\xi)+\cdots, 
\end{eqnarray}
with $q_j$ smooth in $x$ and $\xi$ and homogeneous of degree $j$ 
in $\xi$ (for $|\xi|\geq 1$). Call $q_m(x,\xi)$, i.e. the leading term in 
\eqref{1.8-B}, the {\it principal symbol} of $q(x,D)$. In fact, we shall 
find it convenient to work with classes of symbols which only exhibit a 
limited amount of regularity in the spatial variable (while still $C^\infty$
in the Fourier variable). Specifically, for each $r\geq 0$ we define 
\begin{eqnarray}\label{1.2}
C^rS^m_{1,0}:=\bigl\{q(X,\xi):\,\|D^\alpha_\xi q(\cdot,\xi)\|_{C^r}\leq C_\alpha
(1+|\xi|)^{m-|\alpha|},\quad\forall\,\alpha\bigr\}.
\end{eqnarray}
Denote by ${\rm OP}{C^r}S^m_{1,0}$ the class of pseudodifferential operators
associated with such symbols. As before, we write ${\rm OP}{C^r}S^m_{\rm cl}$ 
for the subclass of {\it classical} pseudodifferential operators in 
${\rm OP}{C^r}S^m_{1,0}$ whose symbols can be expanded as in \eqref{1.8-B}, 
where $q_j(x,\xi)\in C^rS^{m-j}_{1,0}$ is homogeneous of degree $j$ in $\xi$ 
for $|\xi|\geq 1$, $j=m,m-1,\dots$. Finally, we set 
$\mbox{\it \O}{\rm P}{C^r}S^m_{\rm cl}$ 
for the space of all formal adjoints of operators in ${\rm OP}{C^r}S^m_{\rm cl}$.

Given a classical pseudodifferential operator 
$Q(x,D)\in{\rm OP}C^rS^{-1}_{\rm cl}$, we denote by $k_Q(x,y)$ and 
${\rm Sym}_Q(x,\xi)$ its Schwartz kernel and its principal symbol, 
respectively. Next, if the sets $\Sigma\subseteq{\mathcal{U}}\subseteq{\mathbb{R}}^{n+1}$ 
are as in Theorem~\ref{thm:BVAR}, we can introduce the integral operator
\begin{eqnarray}\label{1.5-B}
{\mathcal{B}}_Qf(x):=\int_{\Sigma}k_Q(x,y)f(y)\,d\sigma(y),
\qquad x\in{\mathcal{U}}\setminus\Sigma.
\end{eqnarray}
In this context, Theorem~\ref{thm:BVAR} and Theorem~\ref{p5.4} yield the 
following result.

\begin{theorem}\label{T-mmt} 
Let ${\Sigma}\subseteq{\mathbb{R}}^{n+1}$ be compact and uniformly rectifiable,
and assume that ${\mathcal{U}}$ is a bounded, open neighborhood of $\Sigma$ 
in ${\mathbb{R}}^{n+1}$. Let $Q(x,D)\in{\rm OP}C^1S^{-1}_{\rm cl}$ be such that 
${\rm Sym}_Q(x,\xi)$ is odd in $\xi$. Then the operator \eqref{1.5-B} satisfies
\begin{eqnarray}\label{hkah-iyT}
\int_{{\mathcal{U}}\setminus\Sigma}
|\nabla{\mathcal{B}}_Qf(x)|^2\,{\rm dist}(x,\Sigma)\,dx 
\leq C\int_{\Sigma}|f|^2\,d\sigma,\qquad\forall\,f\in L^2(\Sigma,\sigma).
\end{eqnarray}

Moreover, a similar result is valid for a pseudodifferential operator 
$Q(x,D)\in\mbox{\O}{\rm P}C^0S^{-1}_{\rm cl}$.   
\end{theorem}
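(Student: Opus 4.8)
The plan is to reduce Theorem~\ref{T-mmt} to Theorem~\ref{thm:BVAR} and Theorem~\ref{p5.4} by decomposing the Schwartz kernel $k_Q(x,y)$ of the pseudodifferential operator $Q(x,D)\in{\rm OP}C^1S^{-1}_{\rm cl}$ into a leading singular piece, whose behavior near the diagonal is governed by the principal symbol, plus a remainder which is handled by more elementary means. First I would recall the standard fact (see, e.g., the references on pseudodifferential operators such as \cite{STEIN}) that if $q(x,\xi)\sim q_{-1}(x,\xi)+q_{-2}(x,\xi)+\cdots$ with $q_{-1}$ homogeneous of degree $-1$ in $\xi$ for $|\xi|\geq1$ and odd in $\xi$, then the Schwartz kernel decomposes as $k_Q(x,y)=b(x,x-y)+r(x,y)$, where $b(x,z)$ is, modulo smooth terms, the inverse Fourier transform (in $\xi$) of $q_{-1}(x,\xi)$. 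Owing to the homogeneity of degree $-1$ of $q_{-1}$ in $\xi\in\mathbb{R}^{n+1}$, the resulting $b(x,z)$ is (positively) homogeneous of degree $-n$ in $z\in\mathbb{R}^{n+1}\setminus\{0\}$, and the oddness of $q_{-1}$ in $\xi$ transfers to oddness of $b(x,\cdot)$; the $C^1$-regularity in the spatial variable of the symbol yields that $\partial_x^\beta\partial_z^\alpha b(x,z)$ is continuous and bounded on ${\mathcal{U}}\times S^n$ for $|\beta|\leq1$ and all $|\alpha|$ (here one takes the symbol expansion to enough orders, which is possible since $q(x,D)$ is classical, producing the integer $M=M(n)$ required in Theorem~\ref{thm:BVAR}). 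The point is that this $b$ is precisely of the type treated in \eqref{B-pv5.LFD}, so ${\mathcal{B}}_b f(x):=\int_\Sigma b(x,x-y)f(y)\,d\sigma(y)$ satisfies \eqref{vlnGG-2ASn} by Theorem~\ref{thm:BVAR}.

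Next I would deal with the remainder kernel $r(x,y)$. After subtracting finitely many homogeneous terms, $r(x,y)$ and $\nabla_x r(x,y)$ satisfy size bounds of the form $|r(x,y)|\leq C|x-y|^{-n+\epsilon}$ and $|\nabla_x r(x,y)|\leq C|x-y|^{-n-1+\epsilon}$ on the compact neighborhood ${\mathcal{U}}$ for some $\epsilon>0$ (one subtracts enough terms of the classical expansion and uses that the tail symbol lies in $C^1S^{-N}_{1,0}$ for $N$ large, whose kernel is correspondingly smoothing). For such a kernel, the associated integral operator ${\mathcal{B}}_r f(x):=\int_\Sigma r(x,y)f(y)\,d\sigma(y)$ has $\nabla{\mathcal{B}}_r f$ satisfying the pointwise bound $|\nabla{\mathcal{B}}_r f(x)|\leq C\int_\Sigma |x-y|^{-n-1+\epsilon}|f(y)|\,d\sigma(y)$, and the square function estimate \eqref{hkah-iyT} for this piece follows directly: one integrates in $x$ over ${\mathcal{U}}\setminus\Sigma$ against the weight ${\rm dist}(x,\Sigma)$, applies Schur's lemma using Lemma~\ref{bvv55-DD} (to bound $\int_{{\mathcal{U}}\setminus\Sigma}{\rm dist}(x,\Sigma)|x-y|^{-2n-2+2\epsilon}\,dx$ by a constant depending on ${\rm diam}({\mathcal{U}})$) and Lemma~\ref{Gkwvr.reV} (to bound the dual integral $\int_\Sigma |x-y|^{-n-1+\epsilon}\,d\sigma(y)$ against ${\rm dist}(x,\Sigma)^{-1/2+\epsilon'}$, say), exactly as in the proof of Theorem~\ref{thm:BVAR} where the operators ${\mathcal{T}}_{i\ell}$ were controlled. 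Combining the two pieces via the triangle inequality in $L^2({\mathcal{U}}\setminus\Sigma,{\rm dist}(\cdot,\Sigma)\,dx)$ gives \eqref{hkah-iyT}.

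For the last assertion, concerning $Q(x,D)\in\mbox{\it \O}{\rm P}C^0S^{-1}_{\rm cl}$, I would note that such an operator is the formal adjoint of a classical pseudodifferential operator $\widetilde{Q}(x,D)\in{\rm OP}C^0S^{-1}_{\rm cl}$, so its Schwartz kernel has the form $k_Q(x,y)=\overline{k_{\widetilde{Q}}(y,x)}$, i.e.\ (after taking the principal symbol and subtracting smoothing terms as above) $k_Q(x,y)=b(y,x-y)+\widetilde{r}(x,y)$ with $b(y,\cdot)$ odd and homogeneous of degree $-n$ in its second slot and $\partial_z^\alpha b(x,z)$ continuous and bounded on ${\mathcal{U}}\times S^n$ — this is precisely the hypothesis \eqref{B-pv5.LFD.XXX} needed for Theorem~\ref{p5.4}, which is why only $C^0$ regularity in the spatial variable is demanded here (the coefficients $a_{i\ell}$ of the spherical harmonic expansion act as multipliers in the variable of integration $y$, so no derivatives in the spatial slot of $b$ are needed). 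Thus the leading part $\widetilde{\mathcal{B}}f(x)=\int_\Sigma b(y,x-y)f(y)\,d\sigma(y)$ is controlled by \eqref{vva-UHab}, and the remainder $\widetilde{r}$ is handled by the same Schur-lemma argument as before. The main obstacle I anticipate is the careful bookkeeping in the symbol-to-kernel reduction: one must verify that the principal part of the Schwartz kernel of a $C^1$ (respectively $C^0$) classical symbol of order $-1$ genuinely splits off a term $b(x,x-y)$ (respectively $b(y,x-y)$) with exactly the homogeneity, oddness, and joint regularity required by \eqref{B-pv5.LFD} and \eqref{B-pv5.LFD.XXX}, and that the number $M$ of symbol terms one must retain to make the remainder sufficiently smoothing depends only on $n$. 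This is standard but somewhat delicate, and is where most of the technical work lies; the square function estimates themselves are then immediate consequences of the already-established Theorem~\ref{thm:BVAR} and Theorem~\ref{p5.4}.
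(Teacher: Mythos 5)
Your route is exactly the one the paper intends: split the Schwartz kernel of $Q(x,D)$ into the piece generated by the odd, degree $-1$ homogeneous principal symbol, whose inverse Fourier transform in $\xi$ produces a kernel $b(x,x-y)$ with $b(x,\cdot)$ odd, homogeneous of degree $-n$, and with the regularity \eqref{B-pv5.LFD} (respectively $b(y,x-y)$ with the regularity \eqref{B-pv5.LFD.XXX} for the adjoint class $\mbox{\O}{\rm P}C^0S^{-1}_{\rm cl}$), plus a less singular remainder; the principal pieces are then covered by Theorem~\ref{thm:BVAR} and Theorem~\ref{p5.4}, and the remainder by direct kernel estimates. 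This is precisely how the paper deduces Theorem~\ref{T-mmt} (it simply invokes those two theorems, the symbol-to-kernel reduction being standard), and your explanation of why only $C^0$ spatial regularity is needed in the adjoint case is the correct one.

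There is, however, a step in your remainder argument that fails as written. Starting from the bound $|\nabla_x r(x,y)|\leq C|x-y|^{-n-1+\epsilon}$ with a small $\epsilon>0$, you propose to run Schur's lemma by bounding $\int_{{\mathcal{U}}\setminus\Sigma}{\rm dist}(x,\Sigma)\,|x-y|^{-2n-2+2\epsilon}\,dx$ by a constant; this integral diverges, since in corkscrew-type regions near $y\in\Sigma$ one has ${\rm dist}(x,\Sigma)\approx|x-y|$, so the integral is at least of the size $\int_0 r^{-2n-1+2\epsilon}\,r^{n}\,dr=\int_0 r^{-n-1+2\epsilon}\,dr=\infty$ for $\epsilon\leq n/2$. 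Likewise, the dual bound you attribute to Lemma~\ref{Gkwvr.reV} is out of that lemma's range (the exponent $n+1-\epsilon$ exceeds $d=n$); the applicable estimate is \eqref{mMji}, which yields ${\rm dist}(x,\Sigma)^{-1+\epsilon}$, not ${\rm dist}(x,\Sigma)^{-1/2+\epsilon'}$. The repair is routine, in either of two ways. First, after subtracting only the principal homogeneous term, the remainder is the kernel of a symbol of order $-2$ plus a smoothing error, so in fact $|\nabla_x r(x,y)|\leq C|x-y|^{-n}$ on ${\mathcal{U}}\times\Sigma$; with this bound the operator $f\mapsto{\rm dist}(\cdot,\Sigma)^{1/2}\int_{\Sigma}\nabla_x r(\cdot,y)f(y)\,d\sigma(y)$ maps $L^2(\Sigma,\sigma)$ into $L^2({\mathcal{U}}\setminus\Sigma)$ by the very same Schur test used for the operators ${\mathcal{T}}_{i\ell}$ in \eqref{vaj-TBB.2}--\eqref{vaj-TBB.4}, using ${\rm dist}(x,\Sigma)^{1/2}\leq|x-y|^{1/2}$ together with Lemma~\ref{Gkwvr.reV} (exponent $n-1/2<n$) for the $\sigma$-integral and Lemma~\ref{bvv55-DD} for the $dx$-integral. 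Alternatively, if you keep the weaker bound $|x-y|^{-n-1+\epsilon}$, apply Cauchy--Schwarz with respect to the measure $|x-y|^{-n-1+\epsilon}\,d\sigma(y)$, use \eqref{mMji} to bound $\int_{\Sigma}|x-y|^{-n-1+\epsilon}\,d\sigma(y)\leq C\,{\rm dist}(x,\Sigma)^{-1+\epsilon}$, and then Fubini plus Lemma~\ref{bvv55-DD} to bound $\int_{{\mathcal{U}}\setminus\Sigma}{\rm dist}(x,\Sigma)^{\epsilon}|x-y|^{-n-1+\epsilon}\,dx\leq C\,{\rm diam}({\mathcal{U}})^{2\epsilon}$. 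With either correction the remainder obeys \eqref{hkah-iyT}, and the rest of your argument stands.
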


In fact, since the main claims in Theorem~\ref{T-mmt} are local in nature 
and given the invariance of the class of domains and pseudodifferential 
operators (along with their Schwartz kernels and principal symbols) 
under smooth diffeomorphisms, these results can be naturally extended 
to the setting of domains on manifolds and pseudodifferential operators 
acting between vector bundles. Formulated as such, these in turn extend 
results proved in \cite{MMT} for Lipschitz subdomains of Riemannian manifolds.

\section{$L^p$ Square Function Estimates}
\setcounter{equation}{0}
\label{Sect:5}

We have so far only considered $L^2$ square function estimates. We now consider 
$L^p$ versions for $p\in(0,\infty]$. The natural setting for the consideration 
of these estimates is in term of mixed norm spaces $L^{(p,q)}(\mathscr{X},E)$, 
originally introduced in \cite{MMM} (cf. also \cite{BMMM} for related matters). 
We begin by using the tools developed in Section~\ref{Sect:2} to analyze these 
spaces in the context of an ambient quasi-metric space $\mathscr{X}$ and a 
closed subset $E$. In the case $\mathscr{X}=\mathbb{R}^{n+1}$ and $E=\partial\mathbb{R}^{n+1}_+\eqsim\mathbb{R}^n$, the mixed norm spaces correspond 
to the tent spaces introduced by R.~Coifman, Y.~Meyer and E.M.~Stein in \cite{CoMeSt}.
The preliminary analysis in Subsections \ref{SSect:5.1} and \ref{SSect:5.2} is based 
on the techniques developed in that paper, although we need to overcome a variety of geometric obstructions that arise outside of the Euclidean setting. We build on this 
in Subsection~\ref{SSect:5.3}, where we prove that $L^2$ square function estimates associated with integral operators $\Theta_E$, as defined in Section~\ref{Sect:3}, 
follow from weak $L^p$ square function estimates for any $p\in(0,\infty)$. 
This is achieved by combining the $T(1)$ theorem from Subsection~\ref{SSect:3.1} 
with a weak type John-Nirenberg lemma for Carleson measures, the Euclidean version 
of which appears in~\cite{AHLT}. The theory culminates in Subsection~\ref{SSect:5.4},
where we prove two extrapolation theorems for estimates associated with integral
operators $\Theta_E$, as defined in Section~\ref{Sect:3}. In particular, we prove 
that a weak $L^q$ square function estimate for any $q\in(0,\infty)$ implies that square
functions are bounded from $H^p$ into $L^p$ for all $p\in(\frac{d}{d+\gamma},\infty)$,
where $H^p$ is a Hardy space, $d$ is the the dimension of $E$, and $\gamma$ is a 
finite positive constant depending on the ambient space $\mathscr{X}$ and the 
operator $\Theta_E$.

\subsection{Mixed norm spaces}
\label{SSect:5.1}

We begin by considering the mixed norm spaces $L^{(p,q)}$ from \cite{MMM}
(cf. also \cite{BMMM}) and then, following the theory 
of tent spaces in~\cite{CoMeSt}, record some extensive preliminaries that are used throughout Section~\ref{Sect:5}. In particular, Theorem~\ref{appert} contains an
equivalence for the quasi-norms of the mixed norm spaces that is essential in the 
next subsection.

Let $({\mathscr{X}},\rho)$ be a quasi-metric space, $E$ a nonempty subset of 
${\mathscr{X}}$, and $\mu$ a Borel measure on $({\mathscr{X}},\tau_\rho)$. 
Recall the regularized version $\rho_{\#}$ of the quasi-distance $\rho$ 
discussed in Theorem~\ref{JjEGh}, and recall that we employ the 
notation $\delta_E(y)={\rm dist}_{\rho_{\#}}(y,E)$ for each $y\in {\mathscr{X}}$.
Next, let $\kappa>0$ be arbitrary, fixed, and consider the 
{\tt nontangential approach regions} 
\begin{eqnarray}\label{TLjb}
\Gamma_\kappa(x):=\bigl\{y\in{\mathscr{X}}\setminus E:\,
\rho_{\#}(x,y)<(1+\kappa)\,\delta_E(y)\bigr\},
\qquad\forall\,x\in E.
\end{eqnarray}
Occasionally, we shall refer to $\kappa$ as the {\tt aperture} of 
the nontangential approach region $\Gamma_\kappa(x)$. 
Since both $\rho_{\#}(\cdot,\cdot)$ and $\delta_E(\cdot)$ are continuous
(cf. Theorem~\ref{JjEGh}) it follows that $\Gamma_\kappa(x)$ is an open subset of 
$({\mathscr{X}},\tau_\rho)$, for each $x\in E$. Furthermore, it may be 
readily verified that 
\begin{eqnarray}\label{Tfs23}
{\mathscr{X}}\setminus\overline{E}=\bigcup\limits_{x\in E}\Gamma_\kappa(x),\qquad
\forall\,\kappa>0,
\end{eqnarray}
where $\overline{E}$ denotes the closure of $E$ in the topology $\tau_\rho$.

\begin{lemma}\label{semi-cont}
Let $({\mathscr{X}},\rho)$ be a quasi-metric space, $E$ a proper, nonempty, 
closed subset of $({\mathscr{X}},\tau_\rho)$, and $\mu$ a Borel measure 
on $({\mathscr{X}},\tau_\rho)$. Let $u:{\mathscr{X}}\setminus E\to[0,\infty]$ 
be a $\mu$-measurable function, fix $\kappa>0$ and recall the regions from 
\eqref{TLjb}. Then the function
\begin{eqnarray}\label{Mixed-8}
F:E\longrightarrow[0,\infty],\qquad
F(x):=\int_{\Gamma_\kappa(x)}u(y)\,d\mu(y),\quad\forall\,x\in E,
\end{eqnarray}
is lower semi-continuous (relative to the topology induced by $\tau_\rho$ on $E$).
\end{lemma}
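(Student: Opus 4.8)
The plan is to show lower semi-continuity by verifying that for every $\lambda\in[0,\infty)$ the superlevel set $\{x\in E:\,F(x)>\lambda\}$ is open in $(E,\tau_{\rho|_E})$. Fix such a $\lambda$ and a point $x_0\in E$ with $F(x_0)>\lambda$. Since $F(x_0)=\int_{\Gamma_\kappa(x_0)}u\,d\mu>\lambda$, by the definition of the Lebesgue integral (or by inner regularity via monotone convergence applied to $u\,\mathbf{1}_{K_j}$ for an exhausting sequence of sets) I would first select a $\mu$-measurable subset $A\subseteq\Gamma_\kappa(x_0)$ on which matters are better controlled, for instance a set of finite measure with $\int_{A}u\,d\mu>\lambda$; in fact it suffices to work directly with $\Gamma_\kappa(x_0)$ itself provided I can produce, for each nearby $x$, a pointwise containment of a large portion of $\Gamma_\kappa(x_0)$ inside $\Gamma_\kappa(x)$.

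The key geometric step is the following semicontinuity-type inclusion: for each $y\in\Gamma_\kappa(x_0)$ there is a neighborhood $V_y$ of $x_0$ in $E$ such that $y\in\Gamma_\kappa(x)$ for all $x\in V_y$. This is immediate from the strict inequality $\rho_{\#}(x_0,y)<(1+\kappa)\delta_E(y)$ in \eqref{TLjb} together with the continuity of $\rho_{\#}(\cdot,y)$ on $(\mathscr{X},\tau_\rho)$ guaranteed by \eqref{CC-NNu} in Theorem~\ref{JjEGh}; one simply shrinks $x$ toward $x_0$ until $\rho_{\#}(x,y)$ still lies below $(1+\kappa)\delta_E(y)$. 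To upgrade this pointwise statement to a statement about the integral, I would argue by contradiction: suppose $x_0$ is not an interior point of $\{F>\lambda\}$, so there is a sequence $x_k\to x_0$ in $(E,\tau_{\rho|_E})$ with $F(x_k)\le\lambda$ for all $k$. Then
\begin{eqnarray}\label{fatou-step}
\int_{\mathscr{X}\setminus E}u(y)\,\mathbf{1}_{\Gamma_\kappa(x_k)}(y)\,d\mu(y)\le\lambda
\qquad\mbox{for every }k,
\end{eqnarray}
and I would like to pass to the limit inferior. For this I need $\liminf_{k\to\infty}\mathbf{1}_{\Gamma_\kappa(x_k)}(y)\ge\mathbf{1}_{\Gamma_\kappa(x_0)}(y)$ for $\mu$-a.e.\ $y$; but the pointwise inclusion just established says precisely that if $y\in\Gamma_\kappa(x_0)$ then $y\in\Gamma_\kappa(x_k)$ for all $k$ large enough, hence $\mathbf{1}_{\Gamma_\kappa(x_k)}(y)\to 1=\mathbf{1}_{\Gamma_\kappa(x_0)}(y)$. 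Applying Fatou's lemma to \eqref{fatou-step} then gives
\begin{eqnarray}\label{fatou-conclusion}
F(x_0)=\int_{\Gamma_\kappa(x_0)}u\,d\mu
\le\liminf_{k\to\infty}\int_{\Gamma_\kappa(x_k)}u\,d\mu\le\lambda,
\end{eqnarray}
contradicting $F(x_0)>\lambda$. Therefore $\{F>\lambda\}$ is open, and since $\lambda$ was arbitrary, $F$ is lower semi-continuous.

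I expect the main (minor) obstacle to be bookkeeping about the topology on $E$: one must be careful that $x_k\to x_0$ in $(E,\tau_{\rho|_E})$ is the same as $\rho_{\#}(x_k,x_0)\to 0$ (which follows from $\rho_{\#}\approx\rho|_E$ via \eqref{RHo-evv} and the fact that $\rho_{\#}$ is a genuine quasi-distance, so its balls form a neighborhood basis), and that convergence of sequences suffices to detect openness — here I would instead phrase the argument directly with neighborhoods rather than sequences to sidestep any first-countability worry, though in a quasi-metric space first countability holds anyway. A second small point is measurability of $y\mapsto\mathbf{1}_{\Gamma_\kappa(x)}(y)$, which is clear since $\Gamma_\kappa(x)$ is open (hence Borel, hence $\mu$-measurable) by the continuity of $\rho_{\#}$ and $\delta_E$ noted after \eqref{TLjb}; this also makes $F$ well defined as an element of $[0,\infty]$. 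No deep input beyond Theorem~\ref{JjEGh} and Fatou's lemma is needed.
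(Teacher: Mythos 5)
Your proposal is correct and takes essentially the same route as the paper: both rest on the pointwise inequality $\liminf_{k\to\infty}\mathbf{1}_{\Gamma_\kappa(x_k)}(y)\ge\mathbf{1}_{\Gamma_\kappa(x_0)}(y)$ (coming from the strict inequality in \eqref{TLjb} and the continuity of $\rho_{\#}$), followed by Fatou's lemma. The paper just verifies the sequential characterization $\liminf_{j\to\infty}F(x_j)\ge F(x_0)$ directly, whereas you argue by contradiction via openness of the superlevel sets; this is an immaterial repackaging, and your side remarks on measurability and first countability are fine.
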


\begin{proof}
Let $x_0\in E$ be arbitrary, fixed, and consider a sequence $\{x_j\}_{j\in{\mathbb{N}}}$
of points in $E$ with the property that 
\begin{eqnarray}\label{Mixed-A}
\lim\limits_{j\to\infty}\rho_{\#}(x_j,x_0)=0.
\end{eqnarray}
We claim that 
\begin{eqnarray}\label{Mixed-9}
\liminf\limits_{j\to\infty}{\mathbf{1}}_{\Gamma_\kappa(x_j)}(x)
\geq{\mathbf{1}}_{\Gamma_\kappa(x_0)}(x),\qquad\forall\,x\in{\mathscr{X}}\setminus E.
\end{eqnarray}
Clearly \eqref{Mixed-9} is true if $x\not\in\Gamma_\kappa(x_0)$. If 
${\mathbf{1}}_{\Gamma_\kappa(x_0)}(x)=1$, then $x\in\Gamma_\kappa(x_0)$, 
thus by definition $\rho_{\#}(x,x_0)<(1+\kappa)\delta_E(x)$. 
Based on the continuity of $\rho_{\#}(x,\cdot)$ and \eqref{Mixed-A}, 
it follows that there exists $j_0\in{\mathbb{N}}$ such that 
$\rho_{\#}(x,x_j)<(1+\kappa)\delta_E(x)$
for $j\geq j_0$. Hence, $x\in\Gamma_\kappa(x_j)$ for $j\geq j_0$ or, equivalently,
${\mathbf{1}}_{\Gamma_\kappa(x_j)}(x)=1$ for $j\geq j_0$. This completes 
the proof of the claim.

Returning to the actual task at hand, Fatou's lemma and \eqref{Mixed-9} then imply
\begin{eqnarray}\label{Mixed-10}
\liminf_{j\to\infty}F(x_j) 
&=& \liminf_{j\to\infty}\int_{{\mathscr{X}}\setminus E}
{\mathbf{1}}_{\Gamma_\kappa(x_j)}u\,d\mu
\geq\int_{{\mathscr{X}}\setminus E}
\liminf_{j\to\infty}\bigl({\mathbf{1}}_{\Gamma_\kappa(x_j)}u\bigr)\,d\mu
\nonumber\\[4pt]
&=&\int_{{\mathscr{X}}\setminus E}
\bigl(\liminf_{j\to\infty}{\mathbf{1}}_{\Gamma_\kappa(x_j)}\bigr)u\,d\mu
\geq\int_{{\mathscr{X}}\setminus E}{\mathbf{1}}_{\Gamma_\kappa(x_0)}u\,d\mu
\nonumber\\[4pt]
&=& F(x_0).
\end{eqnarray}
This shows that $F$ is lower semi-continuous.
\end{proof}

We retain the context of Lemma~\ref{semi-cont}. For each index $q\in(0,\infty)$ and 
constant $\kappa\in(0,\infty)$, define the $L^q$-based {\tt Lusin operator}, or 
{\tt area operator}, ${\mathscr{A}}_{q,\kappa}$ 
for all $\mu$-measurable functions 
$u:{\mathscr{X}}\setminus E\to\overline{\mathbb{R}}:=[-\infty,+\infty]$ by 
\begin{eqnarray}\label{sp-sq}
({\mathscr{A}}_{q,\kappa}u)(x):=\Bigl(\int_{\Gamma_\kappa(x)}|u(y)|^q\,d\mu(y)\Bigr)^{\frac{1}{q}},
\qquad\forall\,x\in E.
\end{eqnarray}
As a consequence of Lemma~\ref{semi-cont}, we have that ${\mathscr{A}}_{q,\kappa}u$ 
is lower semi-continuous, hence 
\begin{eqnarray}\label{Mixed-7A}
\bigl\{x\in E:\,({\mathscr{A}}_{q,\kappa}u)(x)>\lambda\bigr\}\quad
\mbox{ is an open subset of $(E,\tau_\rho)$ for each $\lambda>0$}.
\end{eqnarray}

To proceed, fix a Borel measure $\sigma$ on $(E,\tau_{\rho|_{E}})$. The above considerations then allow us to conclude that
\begin{eqnarray}\label{Mixed-3}
\begin{array}{c}
\mbox{for any $\mu$-measurable function 
$u:{\mathscr{X}}\setminus E\rightarrow\overline{\mathbb{R}}$},
\\[4pt]
\mbox{the mapping }\,{\mathscr{A}}_{q,\kappa}u:E\rightarrow[0,\infty]
\,\mbox{ is well-defined and $\sigma$-measurable.}
\end{array}
\end{eqnarray}
Consequently, given $\kappa>0$ and a pair of integrability indices $p,q$, 
following \cite{MMM} and \cite{BMMM} we may now introduce the {\tt mixed norm 
space of type} $(p,q)$, denoted $L^{(p,q)}({\mathscr{X}},E,\mu,\sigma;\kappa)$, 
or $L^{(p,q)}({\mathscr{X}},E)$ for short, in a meaningful manner as follows. 
If $q\in(0,\infty)$ and $p\in(0,\infty]$ we set 
\begin{eqnarray}\label{Mixed-FF7}
L^{(p,q)}({\mathscr{X}},E,\mu,\sigma;\kappa):=\Bigl\{
u:{\mathscr{X}}\setminus E\to\overline{\mathbb{R}}:\,u\,
\mbox{ $\mu$-measurable and }\,{\mathscr{A}}_{q,\kappa}u\in L^p(E,\sigma)\Bigr\},
\end{eqnarray}
equipped with the quasi-norm 
\begin{eqnarray}\label{Mixed-EEW}
\|u\|_{L^{(p,q)}({\mathscr{X}},E,\mu,\sigma;\kappa)}
:=\|{\mathscr{A}}_{q,\kappa}u\|_{L^p(E,\sigma)}
=\left\{
\begin{array}{l}
\Bigl(\int_{E}\Bigl[\int_{\Gamma_{\kappa}(x)}|u|^q\,d\mu\Bigr]^{p/q}d\sigma(x)
\Bigr)^{1/p}\,\mbox{ if }\,p<\infty,
\\[8pt]
\sigma\mbox{-}{\rm ess}\sup\limits_{x\in E}\,({\mathscr{A}}_{q,\kappa}u)(x)
\quad\mbox{ if }\,\,p=\infty.
\end{array}
\right.
\end{eqnarray}
Also, corresponding to $p\in(0,\infty)$ and $q=\infty$, we set
\begin{eqnarray}\label{Mixed-I}
L^{(p,\infty)}({\mathscr{X}},E,\mu,\sigma;\kappa):=\Bigl\{
u:{\mathscr{X}}\setminus E\to\overline{\mathbb{R}}:\,
\|{\mathcal{N}}_\kappa u\|_{L^{p}(E,\sigma)}<\infty\Bigr\},
\end{eqnarray}
where ${\mathcal{N}}_\kappa$ is the nontangential maximal operator defined by 
\begin{eqnarray}\label{Mixed-N}
({\mathcal{N}}_\kappa u)(x):=\sup_{y\in\Gamma_\kappa(x)}|u(y)|,\qquad
\forall\,x\in E,
\end{eqnarray}
and equip this space with the quasi-norm 
$\|u\|_{L^{(p,\infty)}({\mathscr{X}},E,\mu,\sigma;\kappa)}:=
\|{\mathcal{N}}_{\kappa}u\|_{L^p(E,\sigma)}$. 
Finally, corresponding to $p=q=\infty$, set 
\begin{eqnarray}\label{Mixed-IDF}
L^{(\infty,\infty)}({\mathscr{X}},E,\mu,\sigma;\kappa)
:=L^\infty({\mathscr{X}}\setminus E,\mu).
\end{eqnarray}

We note that the connection of our mixed norm spaces with the Coifman-Meyer-Stein 
tent spaces $T^p_q$ in ${\mathbb{R}}^{n+1}_+$ is as follows
\begin{eqnarray}\label{Phgf}
T^p_q=L^{(p,q)}\Bigl({\mathbb{R}}^{n+1},\partial{\mathbb{R}}^{n+1}_+,
{\mathbf{1}}_{{\mathbb{R}}^{n+1}_+}\frac{dx\,dt}{t^{n+1}},dx\Bigr),
\quad\mbox{for }\,\,p,q\in(0,\infty).
\end{eqnarray}
Thus, results for mixed normed spaces imply results for classical tent spaces.

The next goal is to clarify to what extent the quasi-norm
$\|\cdot\|_{L^{(p,q)}({\mathscr{X}},E,\mu,\sigma;\kappa)}$ depends on the 
parameter $\kappa>0$ associated with the nontangential approach 
regions $\Gamma_k$ defined in \eqref{TLjb} and utilized in \eqref{Mixed-EEW}, \eqref{Mixed-N}. 
This is done in Theorem~\ref{appert} below, but the proof requires a number of
preliminary results and definitions which we now present. 

To set the stage, for each $A\subseteq E$ and $\kappa>0$, define 
the {\tt fan} (or {\tt saw-tooth}) {\tt region} ${\mathcal{F}}_\kappa(A)$ {\tt above} $A$, and the {\tt tent region} ${\mathcal{T}}_\kappa(A)$ {\tt above} $A$, as
\begin{eqnarray}\label{reg-A1}
{\mathcal{F}}_\kappa(A):=\bigcup\limits_{x\in A}\Gamma_\kappa(x)
\quad\mbox{ and }\quad
{\mathcal{T}}_\kappa(A):=\bigl({\mathscr{X}}\setminus E\bigr)\setminus
\Bigl({\mathcal{F}}_\kappa(E\setminus A)\Bigr).
\end{eqnarray}
Also, for each point $y\in{\mathscr{X}}\setminus E$, define the 
``(reverse) conical projection" of $y$ onto $E$ by
\begin{eqnarray}\label{reg-A2}
\pi_y^\kappa:=\bigl\{x\in E:\,y\in\Gamma_\kappa(x)\bigr\}.
\end{eqnarray}

\begin{lemma}\label{T-LL.2}
Let $({\mathscr{X}},\rho)$ be a quasi-metric space, $E$ a proper, nonempty, 
closed subset of $({\mathscr{X}},\tau_\rho)$. For every $A\subseteq E$, denote by 
$\overline{A}$ and $A^\circ$, respectively, the closure and interior of 
$A$ in the topological space $(E,\tau_{\rho|_{E}})$. 
Then for each fixed $\kappa\in(0,\infty)$ the following properties hold.
\begin{enumerate}
\item[(i)] For each $A\subseteq E$ one has 
${\mathcal{F}}_\kappa(A)={\mathcal{F}}_\kappa(\overline{A})$ 
and ${\mathcal{T}}_\kappa(A^\circ)={\mathcal{T}}_\kappa(A)$. 
\item[(ii)] For each $A\subseteq E$ one has 
${\mathcal{T}}_\kappa(A)\subseteq {\mathcal{F}}_\kappa(A)$.
\item[(iii)] For each nonempty proper subset $A$ of $E$ one has
\begin{eqnarray}\label{3.2.64}
&& \hskip -0.40in
{\mathcal{T}}_\kappa(A)=\bigl\{x\in{\mathscr{X}}\setminus E:\,
{\rm dist}_{\rho_{\#}}(x,A)\leq (1+\kappa)^{-1}\,
{\rm dist}_{\rho_{\#}}(x,E\setminus A)\bigr\},
\\[4pt]
&& \hskip 0.60in
{\mathcal{T}}_\kappa(A)=\bigl\{y\in{\mathscr{X}}\setminus E:\,
\pi_y^\kappa\subseteq A\bigr\}.
\label{3.2.BN}
\end{eqnarray}
Moreover, for each nonempty subset $A$ of $E$ one has
\begin{eqnarray}\label{3.2.TTF}
{\mathcal{F}}_\kappa(A)=\bigl\{y\in{\mathscr{X}}\setminus E:\,
{\rm dist}_{\rho_{\#}}(y,A)<(1+\kappa)\,\delta_E(y)\bigr\}.
\end{eqnarray}
\item[(iv)] One has ${\mathcal{F}}_{\kappa}(E)=
{\mathcal{T}}_{\kappa}(E)={\mathscr{X}}\setminus E$. Also, for any family 
$(A_j)_{j\in J}$ of subsets of $E$, 
\begin{eqnarray}\label{Fv-fCC49}
\bigcup_{j\in J}{\mathcal{F}}_\kappa(A_j)
={\mathcal{F}}_\kappa\bigl(\cup_{j\in J}A_j\bigr),\qquad
\bigcap_{j\in J}{\mathcal{T}}_\kappa(A_j)
={\mathcal{T}}_\kappa\bigl(\cap_{j\in J}A_j\bigr),
\end{eqnarray}
and 
\begin{eqnarray}\label{Fv-fCC50}
A_1\subseteq A_2\subseteq E\,\,\Longrightarrow\,\,
{\mathcal{F}}_\kappa(A_1)\subseteq{\mathcal{F}}_\kappa(A_2)\,\,\,\mbox{ and }\,\,\,
{\mathcal{T}}_\kappa(A_1)\subseteq{\mathcal{T}}_\kappa(A_2).
\end{eqnarray}
\item[(v)] Given $A\subseteq E$, it follows that ${\mathcal{F}}_\kappa(A)$ 
is an open subset of $({\mathscr{X}},\tau_\rho)$, while ${\mathcal{T}}_\kappa(A)$ 
is a relatively closed subset of ${\mathscr{X}}\setminus E$ equipped with the 
topology induced by $\tau_\rho$ on this set. 
\item[(vi)] For each $y\in{\mathscr{X}}\setminus E$ it follows that
$\pi_y^\kappa$ is a relatively open set in the topology 
induced by $\tau_\rho$ on $E$. 
\item[(vii)] One has
\begin{eqnarray}\label{Fv-UU45}
B_{\rho_{\#}}\bigl(x,C_\rho^{-1}r\bigr)\setminus E\subseteq
{\mathcal{T}}_\kappa\bigl(E\cap B_{\rho_{\#}}(x,r)\bigr),\qquad
\forall\,r\in(0,\infty),\quad\forall\,x\in E.
\end{eqnarray}
\item[(viii)] Assume that $(E,\rho\bigl|_{E})$ is geometrically doubling.
Then for every $\kappa>0$ there exists a constant $C_o\in(0,\infty)$ with 
the property that if $\mathcal{O}$ is a nonempty, open, proper subset 
of $(E,\tau_{\rho|_{E}})$ and if $\{\Delta_j\}_{j\in J}$, where $x_j\in E$ 
and $\Delta_j:=E\cap B_\rho(x_j,r_j)$ for each $j\in J$, is a Whitney 
decomposition of $\mathcal{O}$ as in Proposition~\ref{H-S-Z}, then 
\begin{eqnarray}\label{3.2.63}
{\mathcal{T}}_{\kappa}(\mathcal{O})
\subseteq\bigcup\limits_{j\in J}B_\rho(x_j,C_or_j).
\end{eqnarray}
In particular, there exists $C\in(0,\infty)$ with the property that 
\begin{eqnarray}\label{3.2.63WS}
\begin{array}{c}
{\mathcal{T}}_{\kappa}\bigl(E\cap B_{\rho}(x,r)\bigr)
\subseteq B_\rho(x,C r)\setminus E\quad\mbox{ whenever}
\\[4pt]
\mbox{$x\in E$ and $r>0$ are such that 
$E\setminus B_{\rho}(x,r)\not=\emptyset$}.
\end{array}
\end{eqnarray}
\item[(ix)] In the case when $E$ is bounded, there exists $C\in(0,\infty)$ 
with the property that 
\begin{eqnarray}\label{3.UGH}
{\mathscr{X}}\setminus B_{\rho_{\#}}\bigl(x_0,C\,{\rm diam}_{\rho}(E)\bigr)\subseteq
\Gamma_\kappa(x),\qquad\forall\,x_0,x\in E.
\end{eqnarray}
Consequently, whenever $E$ is bounded there exists $C\in(0,\infty)$ such that
for each $x_0\in E$ one has
\begin{eqnarray}\label{3.UGH2}
{\mathcal{T}}_{\kappa}(A)\subseteq
B_{\rho_{\#}}\bigl(x_0,C\,{\rm diam}_{\rho}(E)\bigr),
\qquad\forall\,A\,\,\mbox{ proper subset of }\,\,E.
\end{eqnarray}
\end{enumerate}
\end{lemma}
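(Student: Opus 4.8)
The plan is to establish the nine items of Lemma~\ref{T-LL.2} essentially in the order listed, since several later parts lean on earlier ones. For \emph{(i)}, the key point is that $\delta_E$ and $\rho_\#(\cdot,\cdot)$ are continuous (Theorem~\ref{JjEGh}); from the characterization that $y\in\Gamma_\kappa(x)$ iff $\rho_\#(x,y)<(1+\kappa)\delta_E(y)$ one checks that $y$ belongs to $\Gamma_\kappa(x)$ for some $x$ in $A$ iff it does so for some $x$ in $\overline A$, because the strict inequality is an open condition and can be perturbed along a sequence in $A$ converging to a point of $\overline A$. The statement for $\mathcal T_\kappa$ then follows by taking complements and using $\overline{E\setminus A^\circ}=E\setminus A^\circ$ together with $E\setminus A^\circ=\overline{E\setminus A}$. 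For \emph{(ii)}, observe that if $x\in\mathcal T_\kappa(A)=({\mathscr X}\setminus E)\setminus\mathcal F_\kappa(E\setminus A)$ then $x\notin\mathcal F_\kappa(E\setminus A)$, so the (reverse) conical projection $\pi_x^\kappa$ is disjoint from $E\setminus A$; since $\pi_x^\kappa\neq\emptyset$ by \eqref{Tfs23}, we get $\pi_x^\kappa\subseteq A$, hence $x\in\Gamma_\kappa(x')$ for some $x'\in A$, i.e. $x\in\mathcal F_\kappa(A)$. This argument simultaneously proves the identity \eqref{3.2.BN} in \emph{(iii)}.

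For the rest of \emph{(iii)}: formula \eqref{3.2.TTF} is just the definition of $\mathcal F_\kappa(A)$ rewritten using ${\rm dist}_{\rho_\#}(y,A)=\inf_{x\in A}\rho_\#(x,y)$ and the fact that the defining inequality for $\Gamma_\kappa(x)$ is strict. Formula \eqref{3.2.64} is obtained from \eqref{3.2.BN} by unwinding: $\pi_y^\kappa\subseteq A$ means every $x\in E$ with $\rho_\#(x,y)<(1+\kappa)\delta_E(y)$ lies in $A$, equivalently every $x\in E\setminus A$ satisfies $\rho_\#(x,y)\geq(1+\kappa)\delta_E(y)$; taking infima and noting $\delta_E(y)={\rm dist}_{\rho_\#}(y,E)={\rm dist}_{\rho_\#}(y,A)$ when the latter is the smaller quantity gives the stated inequality ${\rm dist}_{\rho_\#}(y,A)\le(1+\kappa)^{-1}{\rm dist}_{\rho_\#}(y,E\setminus A)$; the reverse inclusion is similar. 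Item \emph{(iv)} is routine: $\mathcal F_\kappa(E)={\mathscr X}\setminus E$ by \eqref{Tfs23}, $\mathcal T_\kappa(E)={\mathscr X}\setminus E$ since $\mathcal F_\kappa(\emptyset)=\emptyset$; the union/intersection identities follow because $\mathcal F_\kappa$ is a union over points and $\mathcal T_\kappa$ is its complemented version, and monotonicity is immediate. Item \emph{(v)}: $\mathcal F_\kappa(A)=\bigcup_{x\in A}\Gamma_\kappa(x)$ is a union of open sets (each $\Gamma_\kappa(x)$ is open by continuity of $\rho_\#,\delta_E$), hence open; $\mathcal T_\kappa(A)$ is its complement within ${\mathscr X}\setminus E$ of the open set $\mathcal F_\kappa(E\setminus A)$, hence relatively closed. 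Item \emph{(vi)}: fixing $y$, $\pi_y^\kappa=\{x\in E:\rho_\#(x,y)<(1+\kappa)\delta_E(y)\}=E\cap B_{\rho_\#}(y,(1+\kappa)\delta_E(y))$, which is relatively open in $E$.

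For \emph{(vii)}: given $x\in E$, $r>0$ and $z\in B_{\rho_\#}(x,C_\rho^{-1}r)\setminus E$, one must show $\pi_z^\kappa\subseteq E\cap B_{\rho_\#}(x,r)$, i.e. that any $x'\in E$ with $z\in\Gamma_\kappa(x')$ lies in $B_{\rho_\#}(x,r)$; but $\delta_E(z)\le\rho_\#(z,x)<C_\rho^{-1}r$, and then the quasi-triangle inequality for $\rho_\#$ (with constant $C_{\rho_\#}\le C_\rho$) combined with $\rho_\#(x',z)<(1+\kappa)\delta_E(z)$ — wait, this needs $\kappa$ small; in general one uses $\rho_\#(x',z)\ge\delta_E(z)$ only for the lower bound, so here I would instead argue directly from \eqref{3.2.64}: ${\rm dist}_{\rho_\#}(z,E\cap B_{\rho_\#}(x,r))\le\rho_\#(z,x)<C_\rho^{-1}r$ while ${\rm dist}_{\rho_\#}(z,E\setminus B_{\rho_\#}(x,r))\ge C_\rho^{-1}r-\rho_\#(z,x)$... this is the one place requiring a careful quasi-metric computation, and I would feed it through the characterization \eqref{3.2.64} choosing constants so that the ratio condition holds. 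Items \emph{(viii)} and \emph{(ix)} are the substantive geometric ones. For \emph{(viii)}, given $y\in\mathcal T_\kappa(\mathcal O)$, pick any $x\in\pi_y^\kappa\subseteq\mathcal O$ (nonempty by \emph{(ii)}/\eqref{Tfs23}); then $x$ lies in some Whitney ball $\Delta_j$, and one estimates $\rho(y,x_j)$ in terms of $\rho_\#(y,x)\approx\delta_E(y)$, $\rho(x,x_j)<r_j$, and the Whitney property $\delta_E$-of-points-near-$\Delta_j$ $\approx r_j$ (Proposition~\ref{H-S-Z}(3): $B_\rho(x_j,\lambda r_j)\subseteq\mathcal O$ and $B_\rho(x_j,\Lambda r_j)$ meets $E\setminus\mathcal O$, which forces $\delta_E(y)\lesssim r_j$ when $y\in\Gamma_\kappa(x)$ with $x\in\Delta_j$); combining gives $\rho(y,x_j)\le C_o r_j$. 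The special case \eqref{3.2.63WS} follows by applying \eqref{3.2.63} to $\mathcal O=E\cap B_\rho(x,r)$ with a single Whitney-type ball and using geometric doubling, or more simply directly from \eqref{3.2.64} and a quasi-triangle estimate. For \emph{(ix)}, when $E$ is bounded with $D:={\rm diam}_\rho(E)$: if $\rho_\#(x_0,w)$ is large compared to $D$ then for any $x\in E$ we have $\delta_E(w)\ge$ (something comparable to $\rho_\#(x_0,w)$) minus $D$, which is $\gtrsim\rho_\#(x_0,w)\gtrsim\rho_\#(x,w)$, so the defining inequality $\rho_\#(x,w)<(1+\kappa)\delta_E(w)$ holds once the constant $C$ in $B_{\rho_\#}(x_0,C D)$ is chosen large enough (depending on $\kappa$, $C_\rho$); this gives \eqref{3.UGH}, and \eqref{3.UGH2} is its contrapositive-flavored consequence: if $A$ is a proper subset, any $y\in\mathcal T_\kappa(A)$ has $\pi_y^\kappa\subseteq A\subsetneq E$, so $y$ cannot lie outside $B_{\rho_\#}(x_0,CD)$ by \eqref{3.UGH}. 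The main obstacle I anticipate is \emph{(viii)}: keeping track of the interaction between the \emph{regularized} distance $\rho_\#$ (used in $\Gamma_\kappa$ and $\delta_E$) and the \emph{original} $\rho$ (used in the Whitney balls $B_\rho(x_j,r_j)$), since Proposition~\ref{H-S-Z} as applied in Convention~\ref{WWVc} is stated for $\rho_\#$-balls — so one must be slightly careful about which quasi-distance is in play and invoke \eqref{DEQV1} to pass between them with controlled constants.
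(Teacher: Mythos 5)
Your treatment of items \emph{(i)}--\emph{(vi)} and \emph{(ix)} is consistent with the paper, which indeed dismisses everything except the first part of \emph{(viii)} as a direct consequence of the definitions and the continuity of $\rho_{\#}$ and $\delta_E$. The genuine gap is in \emph{(viii)}, which is precisely the one item the paper proves in detail, and your sketch of it would fail. You propose: given $y\in{\mathcal{T}}_\kappa(\mathcal{O})$, pick \emph{any} $x\in\pi_y^\kappa\subseteq\mathcal{O}$, let $\Delta_j$ be the Whitney ball containing it, and argue that the Whitney property forces $\delta_E(y)\lesssim r_j$, whence $\rho(y,x_j)\leq C_o r_j$. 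That intermediate claim is false for an arbitrary choice of $x\in\pi_y^\kappa$: points of $\pi_y^\kappa$ can lie arbitrarily close to the relative boundary of $\mathcal{O}$, where the Whitney balls are arbitrarily small compared to $\delta_E(y)$. Concretely, take ${\mathscr{X}}=\mathbb{R}^2$, $E=\mathbb{R}\times\{0\}$, $\mathcal{O}=(0,\infty)$, and $y=(c_\kappa t,t)$ with $c_\kappa:=\sqrt{(1+\kappa)^2-1}$; then $y\in{\mathcal{T}}_\kappa(\mathcal{O})$ by \eqref{3.2.64}, $\delta_E(y)=t$, and $\pi_y^\kappa=(0,2c_\kappa t)$, so you may pick $x=\epsilon$ with $\epsilon\ll t$, whose Whitney interval has radius $r_j\lesssim\epsilon$, and neither $\delta_E(y)\lesssim r_j$ nor $y\in B_\rho(x_j,C_o r_j)$ holds for that $j$. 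The quantitative reason no such argument can work is that the cone inequality $\rho_{\#}(x,y)<(1+\kappa)\delta_E(y)$ cancels \emph{exactly} against the factor $(1+\kappa)^{-1}$ in the tent characterization \eqref{3.2.64}: even after snowflaking, the chain of estimates only yields $\rho_{\#}(x,y)^\beta<\rho_{\#}(x,y)^\beta+Cr_j^\beta$, which carries no information.

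The missing ideas are exactly the two the paper supplies. First, one must not take an arbitrary point of $\pi_y^\kappa$ but a point $x'\in\mathcal{O}$ that \emph{nearly realizes} ${\rm dist}_{\rho_{\#}}(y,\mathcal{O})$, say $\rho_{\#}(y,x')<(1+\varepsilon)\,{\rm dist}_{\rho_{\#}}(y,\mathcal{O})$ with $\varepsilon:=\kappa/2<\kappa$ (possible since ${\rm dist}_{\rho_{\#}}(y,\mathcal{O})>0$, $E$ being closed and $y\notin E$). Second, one works in the genuine metric $(\rho_{\#})^\beta$, $\beta\in(0,(\log_2 C_\rho)^{-1}]$, from Theorem~\ref{JjEGh}: combining \eqref{3.2.64} with Proposition~\ref{H-S-Z}, part \emph{(3)}, applied to the Whitney ball $\Delta_j\ni x'$, gives $\rho_{\#}(y,x')^\beta\leq\bigl(\tfrac{1+\varepsilon}{1+\kappa}\bigr)^\beta\bigl(\rho_{\#}(y,x')^\beta+Cr_j^\beta\bigr)$, and since the prefactor is now strictly less than one the first term can be absorbed, yielding $\rho(y,x')\leq C_{\kappa,\beta}r_j$ and hence $y\in B_\rho(x_j,C_or_j)$. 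Both the near-optimal choice with strict gain $\varepsilon<\kappa$ and this absorption step are hidden in your phrase ``combining gives $\rho(y,x_j)\leq C_or_j$,'' whereas the $\rho$ versus $\rho_{\#}$ bookkeeping you singled out as the main obstacle is handled harmlessly by \eqref{DEQV1}. A smaller remark: you leave \emph{(vii)} unresolved, and your hesitation there is warranted — the direct route through \eqref{3.2.64} only produces a radius that degenerates as $\kappa$ grows (in the Euclidean plane with $E=\mathbb{R}\times\{0\}$, $x=0$, $r=1$, $\kappa=2$, the point $(0.3,0.35)$ lies in $B(0,C_\rho^{-1})$ but its projection $\pi^\kappa$ is not contained in $(-1,1)$), so that inclusion as stated deserves a $\kappa$-dependent constant rather than a wave of the hand.
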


\begin{proof}
With the exception of the first part of {\it (viii)}, these are direct 
consequences of definitions and the fact that both $\rho_{\#}(\cdot,\cdot)$ 
and $\delta_E(\cdot)$ are continuous functions. The remaining portion of the 
proof consists of a verification of \eqref{3.2.63}. To get started, let $x$ 
be an arbitrary point in ${\mathcal{T}}_{\kappa}(\mathcal{O})$. 
This places $x$ in ${\mathscr{X}}\setminus E$ which, given that $E$ 
is closed in $({\mathscr{X}},\tau_\rho)$, means that $x$ does not 
belong to $\overline{{\mathcal{O}}}\subseteq E$. In particular, 
${\rm dist}_{\rho_{\#}}(x,\mathcal{O})>0$. Going further, 
assume that some small $\varepsilon>0$ has been fixed. The above discussion 
then shows that it is possible to pick a point $y\in\mathcal{O}$ with the 
property that 
\begin{eqnarray}\label{3.2.65}
\rho_{\#}(x,y)<(1+\varepsilon)\,{\rm dist}_{\rho_{\#}}(x,\mathcal{O}).
\end{eqnarray}
Then there exists an index $j\in J$ for which $y\in\Delta_j$ and we shall 
show that $\varepsilon$ and $C_o$ can be chosen so as to guarantee that
\begin{eqnarray}\label{3.2.66}
x\in B_{\rho}(x_j,C_or_j).
\end{eqnarray}
Indeed, selecting a real number $\beta\in(0,(\log_2 C_\rho)^{-1}]$ and 
invoking \eqref{3.2.64} we may write
\begin{eqnarray}\label{3.2.67}
\bigl[\rho_{\#}(x,y)\bigr]^\beta 
& < & (1+\varepsilon)^\beta\bigl[{\rm dist}_{\rho_{\#}}(x,\mathcal{O})\bigr]^\beta 
\leq\Bigl(\frac{1+\varepsilon}{1+\kappa}\Bigr)^\beta
\bigl[{\rm dist}_{\rho_{\#}}(x,E\setminus{\mathcal{O}})\bigr]^\beta
\nonumber \\[4pt]
& = & \Bigl(\frac{1+\varepsilon}{1+\kappa}\Bigr)^\beta
{\rm dist}_{(\rho_{\#})^\beta}(x,E\setminus{\mathcal{O}})
\nonumber \\[4pt]
&\leq & \Bigl(\frac{1+\varepsilon}{1+\kappa}\Bigr)^\beta
\Bigl(\bigl[\rho_{\#}(x,y)\bigr]^\beta 
+{\rm dist}_{(\rho_{\#})^\beta}(y,E\setminus{\mathcal{O}})\Bigr)
\nonumber\\[4pt]
& \leq & \Bigl(\frac{1+\varepsilon}{1+\kappa}\Bigr)^\beta
\Bigl(\bigl[\rho_{\#}(x,y)\bigr]^\beta +Cr_j^\beta\Bigr),
\end{eqnarray}
where $C\in(0,\infty)$ depends only on the geometrically doubling 
character of $E$. The last step above uses Theorem~\ref{JjEGh} and the fact that 
$y$ belongs to $\Delta_j=B_\rho(x_j,r_j)\cap E$, which is a 
Whitney ball for ${\mathcal{O}}$. Choosing $\varepsilon=\kappa/2$, this now yields
(on account of the first inequality in \eqref{DEQV1})
\begin{eqnarray}\label{3.2.68}
\rho(x,y)\leq C_\rho^2\,\rho_{\#}(x,y)
<C_\rho^2 C^{1/\beta}\Bigl(\tfrac{1+\kappa/2}
{\bigl[(1+\kappa)^\beta-(1+\kappa/2)^\beta\bigr]^{1/\beta}}\Bigr)r_j
=:C_{\kappa,\beta}\,r_j.
\end{eqnarray}
Hence, since 
$\rho(x_j,x)\leq C_\rho\,\max\{\rho(x_j,y),\rho(y,x)\}
<C_\rho C_{\kappa,\beta}r_j$, the membership in \eqref{3.2.66} holds provided 
we take $C_o:=C_\rho C_{\kappa,\beta}$ to begin with. This finishes the proof of 
\eqref{3.2.63}. 
\end{proof}

\begin{lemma}\label{lbDV}
Let $({\mathscr{X}},\rho)$ be a quasi-metric space, $E$ a proper, nonempty, 
closed subset of $({\mathscr{X}},\tau_\rho)$, $\mu$ a Borel measure on 
$({\mathscr{X}},\tau_\rho)$ and $\sigma$ a Borel measure on $(E,\tau_{\rho|_{E}})$. 
Let $\rho_{\#}$ be associated with $\rho$ as in Theorem~\ref{JjEGh} and recall the 
constant $C_{\rho}\geq 1$ defined in \eqref{C-RHO.111}. Then for each 
real number $\kappa>0$ there holds
\begin{eqnarray}\label{sgbr}
E\cap B_{\rho_{\#}}\bigl(y_\ast,\epsilon\delta_E(y)\bigr)\subseteq\pi^\kappa_y
\subseteq E\cap B_{\rho_{\#}}\bigl(y_\ast,C_{\rho}(1+\kappa)\delta_E(y)\bigr),
\qquad\forall\,y\in{\mathscr{X}}\setminus E,
\end{eqnarray}
where the point $y_\ast$ and the number $\epsilon$ satisfy
\begin{eqnarray}\label{Equ-1}
\begin{array}{l}
\mbox{$y_\ast\in E$ and $\rho_{\#}(y,y_\ast)<(1+\eta)\delta_E(y)$
for some $\eta\in(0,\kappa)$, and }
\\[4pt]
0<\epsilon<\bigl[(1+\kappa)^\beta-(1+\eta)^\beta\bigr]^{1/\beta}
\,\,\mbox{ for some finite $\beta\in(0,(\log_2 C_\rho)^{-1}]$.}
\end{array}
\end{eqnarray}
 
In particular, if $(E,\rho\bigl|_E,\sigma)$ is a space of homogeneous type and
if $\kappa,\kappa'>0$ are two arbitrary real numbers, then  
\begin{eqnarray}\label{equiv11}
c_o^{-1}\,\sigma(\pi^{\kappa}_y)\leq\sigma(\pi^{\kappa'}_y)
\leq c_o\,\sigma(\pi^{\kappa}_y),\qquad\forall\,y\in{\mathscr{X}}\setminus E,
\end{eqnarray}
where $c_o:=C_{\sigma}(C^2_\rho/\epsilon)^{D_\sigma}
(1+\min\{\kappa,\kappa'\})^{D_\sigma}$, with $C_{\sigma}$ and $D_\sigma$ 
the doubling constant and doubling order of $\sigma$.
\end{lemma}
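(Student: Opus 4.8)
\textbf{Plan of proof for Lemma~\ref{lbDV}.} The argument proceeds in two stages: first establishing the two-sided containment \eqref{sgbr}, then deducing the comparability \eqref{equiv11} from it by invoking the doubling property of $\sigma$. For the first stage, fix $y\in{\mathscr{X}}\setminus E$ and a point $y_\ast\in E$ together with a number $\epsilon$ as in \eqref{Equ-1}; such a $y_\ast$ exists because $\delta_E(y)={\rm dist}_{\rho_{\#}}(y,E)$ and one can always find an approximating point in $E$ at distance less than $(1+\eta)\delta_E(y)$ for any $\eta>0$. The plan for the \emph{upper} containment $\pi_y^\kappa\subseteq E\cap B_{\rho_{\#}}(y_\ast,C_\rho(1+\kappa)\delta_E(y))$ is routine: if $x\in\pi_y^\kappa$ then by \eqref{reg-A2} we have $y\in\Gamma_\kappa(x)$, so $\rho_{\#}(x,y)<(1+\kappa)\delta_E(y)$; combining this with $\rho_{\#}(y,y_\ast)<(1+\eta)\delta_E(y)\le(1+\kappa)\delta_E(y)$ via the quasi-triangle inequality for $\rho_{\#}$ (recall $C_{\rho_{\#}}\le C_\rho$ from Theorem~\ref{JjEGh}) yields $\rho_{\#}(y_\ast,x)\le C_\rho\max\{\rho_{\#}(y_\ast,y),\rho_{\#}(y,x)\}<C_\rho(1+\kappa)\delta_E(y)$, as desired.

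For the \emph{lower} containment $E\cap B_{\rho_{\#}}(y_\ast,\epsilon\delta_E(y))\subseteq\pi_y^\kappa$, I would use the H\"older-type structure of $\rho_{\#}$ via the distance $(\rho_{\#})^\beta$ from part~{\it (2)} of Theorem~\ref{JjEGh}, exactly as in the estimate \eqref{3.2.67}. Fix a finite $\beta\in(0,(\log_2 C_\rho)^{-1}]$ as in \eqref{Equ-1}, and let $x\in E$ satisfy $\rho_{\#}(y_\ast,x)<\epsilon\delta_E(y)$. Since $d_{\rho,\beta}=(\rho_{\#})^\beta$ is a genuine distance, the ordinary triangle inequality gives
\begin{eqnarray}\label{eq:lbDV-tri}
\bigl[\rho_{\#}(x,y)\bigr]^\beta\leq\bigl[\rho_{\#}(x,y_\ast)\bigr]^\beta
+\bigl[\rho_{\#}(y_\ast,y)\bigr]^\beta
<\epsilon^\beta\delta_E(y)^\beta+(1+\eta)^\beta\delta_E(y)^\beta.
\end{eqnarray}
By the choice of $\epsilon$ in \eqref{Equ-1}, namely $\epsilon^\beta<(1+\kappa)^\beta-(1+\eta)^\beta$, the right-hand side of \eqref{eq:lbDV-tri} is strictly less than $(1+\kappa)^\beta\delta_E(y)^\beta$, hence $\rho_{\#}(x,y)<(1+\kappa)\delta_E(y)$, which says precisely that $y\in\Gamma_\kappa(x)$, i.e.\ $x\in\pi_y^\kappa$. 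This completes \eqref{sgbr}.

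For the second stage, assume $(E,\rho|_E,\sigma)$ is a space of homogeneous type and let $\kappa,\kappa'>0$ be given. Apply \eqref{sgbr} with $\kappa$ replaced by $\min\{\kappa,\kappa'\}$ for the lower bound and by $\max\{\kappa,\kappa'\}$ for the upper bound, using the \emph{same} auxiliary point $y_\ast$ and a fixed $\eta,\epsilon,\beta$ valid for both apertures (this is possible since one may take $\eta$ as small as desired and then $\epsilon$ accordingly). Then both $\pi_y^\kappa$ and $\pi_y^{\kappa'}$ are trapped between $E\cap B_{\rho_{\#}}(y_\ast,\epsilon\delta_E(y))$ and $E\cap B_{\rho_{\#}}(y_\ast,C_\rho(1+\max\{\kappa,\kappa'\})\delta_E(y))$; actually, to get the asymmetric constant $c_o$ claimed, I would more carefully sandwich $\sigma(\pi_y^{\kappa'})$ between $\sigma$ of the inner ball (radius $\ge\epsilon\delta_E(y)$, coming from aperture $\min\{\kappa,\kappa'\}$) and $\sigma$ of the outer ball (radius $\le C_\rho(1+\min\{\kappa,\kappa'\})\delta_E(y)$ is \emph{not} what the upper bound gives, so instead pair the $\kappa'$-upper bound with the $\kappa$-lower bound). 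Concretely: $\sigma(\pi_y^{\kappa'})\le\sigma(E\cap B_{\rho_{\#}}(y_\ast,C_\rho(1+\kappa')\delta_E(y)))$ and $\sigma(\pi_y^{\kappa})\ge\sigma(E\cap B_{\rho_{\#}}(y_\ast,\epsilon\delta_E(y)))$, and the ratio of the radii of these two balls is at most $C_\rho(1+\kappa')/\epsilon\le C_\rho^2(1+\min\{\kappa,\kappa'\})/\epsilon$ after also handling the symmetric case; then \eqref{Doub-2} (iterating the doubling condition, with doubling constant $C_\sigma$ and order $D_\sigma$) bounds the ratio $\sigma(\text{outer})/\sigma(\text{inner})$ by $C_\sigma\bigl(C_\rho^2(1+\min\{\kappa,\kappa'\})/\epsilon\bigr)^{D_\sigma}=c_o$. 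Swapping the roles of $\kappa$ and $\kappa'$ gives the reverse inequality, establishing \eqref{equiv11}.

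\textbf{Main obstacle.} The genuinely delicate point is bookkeeping the constant $c_o$ so that it comes out in the stated form $C_\sigma(C_\rho^2/\epsilon)^{D_\sigma}(1+\min\{\kappa,\kappa'\})^{D_\sigma}$: one must be careful to pair the \emph{larger}-aperture upper containment of one $\pi_y$ with the \emph{smaller}-aperture lower containment of the other, rather than naively comparing each $\pi_y$ to a single ball, and to verify that a single choice of $(\eta,\epsilon,\beta)$ works simultaneously for both apertures. The analytic content — the two containments \eqref{sgbr} — is straightforward once one has the metrization Theorem~\ref{JjEGh} in hand, since the whole point is that $(\rho_{\#})^\beta$ satisfies the ordinary triangle inequality, which turns the otherwise awkward quasi-metric juggling into a one-line estimate as in \eqref{eq:lbDV-tri}.
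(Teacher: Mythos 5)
Your treatment of \eqref{sgbr} is exactly the paper's: the lower containment is the one-line triangle-inequality estimate for $(\rho_{\#})^\beta$ (the paper's display \eqref{H+gz-2}), and the upper containment is the quasi-triangle inequality for $\rho_{\#}$ together with $\eta<\kappa$ and $C_{\rho_{\#}}\leq C_\rho$ (the paper's \eqref{H+gz}). For \eqref{equiv11} you also follow the paper's route: take $y_\ast,\eta,\epsilon$ satisfying \eqref{Equ-1} for $\min\{\kappa,\kappa'\}$ so that \eqref{sgbr} holds for both apertures, sandwich $\pi^{\kappa}_y$ and $\pi^{\kappa'}_y$ between the surface ball of radius $\epsilon\delta_E(y)$ and one of radius $C_\rho(1+\kappa)\delta_E(y)$, resp.\ $C_\rho(1+\kappa')\delta_E(y)$, and invoke \eqref{Doub-2}. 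The one concrete misstep is the inequality $C_\rho(1+\kappa')/\epsilon\leq C_\rho^{2}(1+\min\{\kappa,\kappa'\})/\epsilon$ that you insert to force the constant into the stated form: it is false whenever $\kappa'$ is much larger than $\kappa$. Moreover, no argument can give $\sigma(\pi^{\kappa'}_y)\leq c_o\,\sigma(\pi^{\kappa}_y)$ with $c_o$ depending only on $\epsilon$ and $\min\{\kappa,\kappa'\}$: for $E={\mathbb{R}}^n\times\{0\}\subset{\mathbb{R}}^{n+1}$ and $y=(0,t)$ one has $\sigma(\pi^{\kappa}_y)\approx t^n\bigl((1+\kappa)^2-1\bigr)^{n/2}$, so the ratio grows without bound in $\kappa'$ for fixed $\kappa$, while \eqref{Equ-1} caps $\epsilon$ in terms of the smaller aperture. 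What the sandwich-plus-doubling argument honestly yields -- and this is also all the paper's own proof yields, since its displayed constant $c_1$ with the factor $1+\kappa$ really only covers the easy direction $\sigma(\pi^{\kappa}_y)\leq c_1\sigma(\pi^{\kappa'}_y)$ when $\kappa\leq\kappa'$ -- is \eqref{equiv11} with $1+\min\{\kappa,\kappa'\}$ replaced by $1+\max\{\kappa,\kappa'\}$. So your method is sound and coincides with the paper's; simply state the constant with the maximum (which is all that is needed anywhere the lemma is used, the comparability constants there being allowed to depend on both apertures and the geometry) instead of manufacturing the false intermediate inequality.
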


\begin{proof}
Fix an arbitrary point $y\in{\mathscr{X}}\setminus E$ and let 
$y_\ast\in E$ and $\epsilon>0$ be as in \eqref{Equ-1}. 
If $x\in E\cap B_{\rho_{\#}}(y_\ast,\epsilon\delta_E(y))$ then 
$\rho_{\#}(y_\ast,x)<\epsilon\delta_E(y)$ forcing (recall from Theorem~\ref{JjEGh}
that $(\rho_{\#})^\beta$ is a genuine distance)
\begin{eqnarray}\label{H+gz-2}
\rho_{\#}(x,y)^\beta\leq\rho_{\#}(x,y_\ast)^\beta+\rho_{\#}(y_\ast,y)^\beta
<\epsilon^\beta\delta_E(y)^\beta+(1+\eta)^\beta\delta_E(y)^\beta
<(1+\kappa)^\beta\delta_E(y)^\beta.
\end{eqnarray}
Thus $x\in\pi^\kappa_y$, which proves the first inclusion in \eqref{sgbr}. 
Going further, given a point $x\in\pi^\kappa_y$ it follows that
$\rho_{\#}(x,y)<(1+\kappa)\delta_E(y)$, hence 
\begin{eqnarray}\label{H+gz}
\rho_{\#}(x,y_\ast)\leq C_{\rho_{\#}}\max\{\rho_{\#}(x,y),\rho_{\#}(y,y_\ast)\}
<C_{\rho_{\#}}(1+\kappa)\delta_E(y)\leq C_{\rho}(1+\kappa)\delta_E(y), 
\end{eqnarray}
proving the second inclusion in \eqref{sgbr}. 

Suppose now that $(E,\rho\bigl|_E,\sigma)$ is a space of homogeneous type and 
let $\kappa,\kappa'>0$ be given. Assume first that $\kappa\leq\kappa'$. 
Choose $y_\ast$ and $\epsilon$ as in \eqref{Equ-1}. Then \eqref{sgbr} holds 
both as written and with $\kappa$ replaced by $\kappa'$. When combined 
with \eqref{Doub-2}, this yields 
\begin{eqnarray}\label{eqDc}
c_1^{-1}\,\sigma(\pi^{\kappa}_y)\leq\sigma(\pi^{\kappa'}_y)
\leq c_1\,\sigma(\pi^{\kappa}_y),\qquad\forall\,y\in{\mathscr{X}}\setminus E,
\end{eqnarray}
where 
$c_1:=C_{\sigma,\rho_{\#}}\Bigl(\frac{C_\rho(1+\kappa)}{\epsilon}\Bigr)^{D_\sigma}$
with $C_{\sigma,\rho_{\#}}$ and $D_\sigma$ being the constants associated with 
$\sigma$ and $\rho_{\#}$ as in \eqref{Doub-2}. In particular, since 
$C_{\sigma,\rho_{\#}}=C_\sigma(C_{\rho_{\#}}\widetilde{C}_{\rho_{\#}})^{D_\sigma}
\leq C_\sigma(C_{\rho})^{D_\sigma}$, it follows that 
$c_1\leq C_{\sigma}(C^2_\rho/\epsilon)^{D_\sigma}(1+\kappa)^{D_\sigma}$.
If $\kappa'<\kappa$ the same reasoning yields inequalities similar 
to \eqref{eqDc}, this time with $c_1$ replaced by the constant 
$c_2:=C_{\sigma,\rho_{\#}}\Bigl(\frac{C_\rho(1+\kappa')}{\epsilon}\Bigr)^{D_\sigma}
\leq C_{\sigma}(C^2_\rho/\epsilon)^{D_\sigma}(1+\kappa')^{D_\sigma}$.
All these now immediately yield \eqref{equiv11}.
\end{proof}

Moving on, assume now that $(E,\rho,\sigma)$ is a space of homogeneous type and 
let $\rho_{\#}$ be associated with $\rho$ as in Theorem~\ref{JjEGh} in this context. 
Then for each $\sigma$-measurable set $A\subseteq E$ and each $\gamma\in(0,1)$, define 
the set of $\gamma$-{\tt density points}, relative to $A$, as
\begin{eqnarray}\label{Mixed-12}
A^\ast_\gamma:=\Bigl\{x\in E:\,\inf\limits_{r>0}
\Bigl[\frac{\sigma\bigl(B_{\rho_{\#}}(x,r)\cap A\bigr)}
{\sigma\bigl(B_{\rho_{\#}}(x,r)\bigr)}\Bigr]\geq\gamma\Bigr\}.
\end{eqnarray}
In particular, from this definition it follows that 
\begin{eqnarray}\label{Mixed-13}
\inf\limits_{x\in A^\ast_\gamma}\,\Bigl[\,
\inf\limits_{r>0}\frac{\sigma\bigl(B_{\rho_{\#}}(x,r)\cap A\bigr)}
{\sigma\bigl(B_{\rho_{\#}}(x,r)\bigr)}\Bigr]\geq\gamma.
\end{eqnarray}

Some basic properties of the sets of density points in the setting of 
spaces of homogeneous type are collected below. 

\begin{proposition}\label{DJrt}
Let $(E,\rho,\sigma)$ be a space of homogeneous type, $\rho_{\#}$ the 
regularization of $\rho$ as in Theorem~\ref{JjEGh}, $\gamma\in(0,1)$ and 
$A\subseteq E$ a $\sigma$-measurable set. Then the following properties hold:
\begin{enumerate}
\item[(1)] $E\setminus A^\ast_\gamma=\bigl\{x\in E:\,
M_{E}\bigl({\mathbf{1}}_{E\setminus A}\bigr)(x)>1-\gamma\bigr\}$, where $M_E$ 
is the Hardy-Littlewood maximal operator on $E$ (cf. \eqref{HL-MAX}).
\item[(2)] $A^\ast_\gamma$ is closed subset of $(E,\tau_\rho)$.
\item[(3)] $\sigma\bigl(E\setminus A^\ast_\gamma\bigr)
\leq\frac{C}{1-\gamma}\sigma(E\setminus A)$.
\item[(4)] If $A$ is closed (in $\tau_\rho$), then $A^\ast_\gamma\subseteq A$. 
In particular, in this case, 
$\sigma(E\setminus A^\ast_\gamma)\approx\sigma(E\setminus A)$.
\item[(5)] For each $\lambda>0$ there exist $\gamma(\lambda)\in(0,1)$ and $c(\lambda)>0$
such that if $\gamma(\lambda)\leq\gamma<1$ then 
\begin{eqnarray}\label{Mixed-14}
\inf\limits_{x\in E}\,\Bigl[\,
\inf\limits_{r>{\rm dist}_{\rho_{\#}}(x,A^\ast_\gamma)}
\frac{\sigma\bigl(B_{\rho_{\#}}(x,\lambda r)\cap A\bigr)}
{\sigma\bigl(B_{\rho_{\#}}(x,r)\bigr)}\Bigr]\geq c(\lambda).
\end{eqnarray}
\item[(6)] If the measure $\sigma$ is Borel regular, 
then $\sigma(A^\ast_\gamma\setminus A)=0$.
\item[(7)] If $\widetilde{A}$ is $\sigma$-measurable set 
such that $A\subseteq\widetilde{A}\subseteq E$, then 
$A^\ast_\gamma\subseteq(\widetilde{A})^\ast_\gamma$.
\end{enumerate}
\end{proposition}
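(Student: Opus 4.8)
\textbf{Proof proposal for Proposition~\ref{DJrt}.} Several of these items are quick consequences of definitions, so the plan is to dispatch those first and then concentrate effort on the two substantial assertions, namely {\it (5)} and {\it (6)}. For {\it (1)}, one simply observes that $x\notin A^\ast_\gamma$ means there exists $r>0$ with $\sigma\bigl(B_{\rho_{\#}}(x,r)\cap A\bigr)<\gamma\,\sigma\bigl(B_{\rho_{\#}}(x,r)\bigr)$, which after subtracting from $\sigma\bigl(B_{\rho_{\#}}(x,r)\bigr)$ is precisely $\meanint_{B_{\rho_{\#}}(x,r)}{\mathbf{1}}_{E\setminus A}\,d\sigma>1-\gamma$ for that $r$, i.e. $M_E\bigl({\mathbf{1}}_{E\setminus A}\bigr)(x)>1-\gamma$. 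Item {\it (2)} follows from {\it (1)} together with the lower semicontinuity of the Hardy--Littlewood maximal function (the $\rho_{\#}$-balls are open by Theorem~\ref{JjEGh}, so for fixed $r$ the average $x\mapsto\sigma\bigl(B_{\rho_{\#}}(x,r)\cap(E\setminus A)\bigr)$ is lower semicontinuous, hence the supremum over $r$ is too, making the superlevel set open and its complement $A^\ast_\gamma$ closed). Item {\it (3)} is the weak-$(1,1)$ bound for $M_E$ applied to ${\mathbf{1}}_{E\setminus A}$ at height $1-\gamma$, using {\it (1)}: $\sigma\bigl(E\setminus A^\ast_\gamma\bigr)=\sigma\bigl(\{M_E({\mathbf{1}}_{E\setminus A})>1-\gamma\}\bigr)\leq\frac{C}{1-\gamma}\,\sigma(E\setminus A)$. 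For {\it (4)}, if $A$ is closed and $x\in E\setminus A$, then since $E\setminus A$ is open there is $r_0>0$ with $B_{\rho_{\#}}(x,r_0)\subseteq E\setminus A$, so the density ratio at scale $r_0$ is $0<\gamma$, placing $x$ outside $A^\ast_\gamma$; the ``in particular'' clause combines this containment $A^\ast_\gamma\subseteq A$ with {\it (3)} and the trivial $\sigma(E\setminus A)\leq\sigma(E\setminus A^\ast_\gamma)$. Item {\it (7)} is immediate from monotonicity of $B\mapsto\sigma(B\cap A)$ in $A$ inside the infimum defining $A^\ast_\gamma$.

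The core of the argument is {\it (5)}. I would fix $\lambda>0$, take $x\in E$ and $r>{\rm dist}_{\rho_{\#}}(x,A^\ast_\gamma)$, and pick $z\in A^\ast_\gamma$ with $\rho_{\#}(x,z)<r$. Then by the quasi-triangle inequality for $\rho_{\#}$ (with constant $C_{\rho_{\#}}\leq C_\rho$) one has $B_{\rho_{\#}}(z,r')\subseteq B_{\rho_{\#}}(x,\lambda r)$ for a suitable $r'\approx r$ once $\lambda$ is chosen large relative to $C_\rho$; concretely $r':=(\lambda/C_\rho-1)r$ works provided $\lambda>C_\rho$, but for small $\lambda$ one instead uses $r':=r$ and pays a factor when comparing $B_{\rho_{\#}}(x,\lambda r)$ to $B_{\rho_{\#}}(z,C_\rho(\lambda+1)r)$, and in any case the doubling inequality \eqref{Doub-2} bounds $\sigma\bigl(B_{\rho_{\#}}(x,r)\bigr)$ by a constant multiple of $\sigma\bigl(B_{\rho_{\#}}(z,r')\bigr)$. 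Since $z\in A^\ast_\gamma$, the defining property \eqref{Mixed-13} gives $\sigma\bigl(B_{\rho_{\#}}(z,r')\cap A\bigr)\geq\gamma\,\sigma\bigl(B_{\rho_{\#}}(z,r')\bigr)$. Chaining these and using $B_{\rho_{\#}}(z,r')\cap A\subseteq B_{\rho_{\#}}(x,\lambda r)\cap A$ yields $\sigma\bigl(B_{\rho_{\#}}(x,\lambda r)\cap A\bigr)\geq c(\lambda)\,\sigma\bigl(B_{\rho_{\#}}(x,r)\bigr)$ with $c(\lambda)$ depending only on $\lambda$, $\gamma$, and the doubling constant; taking $\gamma(\lambda)$ close to $1$ (any fixed $\gamma\in(0,1)$ in fact suffices, but stating a threshold is harmless) completes it. The one place requiring care is the regime $\lambda$ small, where $\lambda r$ may be smaller than $r$: here the infimum is over $r$ exceeding the distance to $A^\ast_\gamma$, and the ball $B_{\rho_{\#}}(x,\lambda r)$ still contains a $\rho_{\#}$-ball about $z$ of radius comparable to $r$ after shrinking, so the same doubling estimate applies with a $\lambda$-dependent constant. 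This bookkeeping with the quasi-metric constants is the main technical nuisance, though it is entirely routine.

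Finally, for {\it (6)} I would argue that if $\sigma$ is Borel regular then the Lebesgue differentiation theorem holds on $(E,\rho_{\#},\sigma)$ — this is exactly the density result invoked in the proof of Proposition~\ref{Diad-cube} (item {\it (9)}), resting on the density of compactly supported continuous functions in $L^1(E,\sigma)$, for which one may cite \cite[Theorem~7.10]{MMMM-G}. Applying it to ${\mathbf{1}}_A$, for $\sigma$-a.e. $x\in A$ one has $\lim_{r\to0^+}\meanint_{B_{\rho_{\#}}(x,r)}{\mathbf{1}}_A\,d\sigma=1$; in particular $\limsup_{r\to0^+}$ of the density ratio equals $1\geq\gamma$, but this only controls small scales. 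However, for $x\in A^\ast_\gamma\setminus A$, the differentiation theorem gives instead $\lim_{r\to0^+}\meanint_{B_{\rho_{\#}}(x,r)}{\mathbf{1}}_A\,d\sigma=0$ for $\sigma$-a.e. such $x$ (since $x$ is a density point of $E\setminus A$), which forces $\inf_{r>0}$ of the density ratio to equal $0<\gamma$, contradicting membership in $A^\ast_\gamma$. Hence $\sigma(A^\ast_\gamma\setminus A)=0$. Assembling items {\it (1)}--{\it (7)} as above finishes the proof.
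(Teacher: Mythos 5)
Items {\it (1)}--{\it (4)}, {\it (6)} and {\it (7)} are fine and essentially the paper's own arguments (for {\it (6)} your closing contradiction, via the Lebesgue differentiation theorem applied at points of $A^\ast_\gamma\setminus A$, is exactly the intended proof; the preliminary discussion of density points of $A$ is superfluous). The genuine gap is in {\it (5)}, and it sits precisely in the regime of small $\lambda$, which is the only regime where the statement says more than doubling. Your strategy is to place a ball \emph{centered at} a point $z\in A^\ast_\gamma$ with $\rho_{\#}(x,z)<r$ \emph{inside} $B_{\rho_{\#}}(x,\lambda r)$ and then invoke \eqref{Mixed-13} at $z$. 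When $\lambda\leq C_{\rho_{\#}}$ this is impossible: $z$ is only known to lie within $\rho_{\#}$-distance $r$ of $x$, while $\lambda r$ may be smaller than ${\rm dist}_{\rho_{\#}}(x,A^\ast_\gamma)$, so $B_{\rho_{\#}}(x,\lambda r)$ need not contain $z$ --- it may be disjoint from $A^\ast_\gamma$ and even from $A$. Hence your fallback ("$B_{\rho_{\#}}(x,\lambda r)$ still contains a $\rho_{\#}$-ball about $z$ of radius comparable to $r$ after shrinking") is false, and so is the parenthetical claim that any fixed $\gamma\in(0,1)$ suffices. A concrete counterexample: $E={\mathbb{R}}$ with Lebesgue measure, $A=[0,\infty)$ (so $A^\ast_\gamma=A$ whenever $\gamma\leq 1/2$), $\gamma=1/10$, $\lambda=1/100$, $x=-1$, $r=1+\varepsilon$; then $B(x,\lambda r)\cap A=\emptyset$ while $\sigma\bigl(B(x,r)\bigr)>0$, so no constant $c(\lambda)>0$ can work. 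This shows the threshold $\gamma(\lambda)$ in the statement is essential, not cosmetic, and any proof must use that $\gamma$ is close to $1$ in a $\lambda$-dependent way.

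The missing idea is an absorption argument that exploits the density of $A$ at $x_0\in A^\ast_\gamma$ on a ball \emph{centered at $x_0$ that contains} $B_{\rho_{\#}}(x,\lambda r)$, rather than a ball centered at $x_0$ contained in it. Concretely, from $\rho_{\#}(x,x_0)<r$ one gets $B_{\rho_{\#}}(x,\lambda r)\subseteq B_{\rho_{\#}}(x_0,C_{\rho_{\#}}(1+\lambda)r)\subseteq B_{\rho_{\#}}(x,C_{\rho_{\#}}^2(1+\lambda)r)$, and then $\gamma\,\sigma\bigl(B_{\rho_{\#}}(x_0,C_{\rho_{\#}}(1+\lambda)r)\bigr)\leq\sigma\bigl(B_{\rho_{\#}}(x_0,C_{\rho_{\#}}(1+\lambda)r)\bigr)-\sigma\bigl(B_{\rho_{\#}}(x,\lambda r)\bigr)+\sigma\bigl(B_{\rho_{\#}}(x,\lambda r)\cap A\bigr)$. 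Using the doubling inequality \eqref{Doub-2} twice, once to dominate $\sigma\bigl(B_{\rho_{\#}}(x_0,C_{\rho_{\#}}(1+\lambda)r)\bigr)$ by a multiple of $\sigma\bigl(B_{\rho_{\#}}(x,\lambda r)\bigr)$ and once to dominate $\sigma\bigl(B_{\rho_{\#}}(x,r)\bigr)$ by a multiple of $\sigma\bigl(B_{\rho_{\#}}(x,\lambda r)\bigr)$, the term $(1-\gamma)\sigma\bigl(B_{\rho_{\#}}(x_0,C_{\rho_{\#}}(1+\lambda)r)\bigr)$ can be absorbed provided $\gamma\geq\gamma(\lambda):=1-\tfrac{1}{2C_{\sigma,\rho_{\#}}}\bigl(\lambda/(C_{\rho_{\#}}^2(1+\lambda))\bigr)^{D_\sigma}$, which yields \eqref{Mixed-14} with $c(\lambda)=\tfrac12\min\{1,\lambda^{D_\sigma}/C_{\sigma,\rho_{\#}}\}$. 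Your computation for $\lambda>C_\rho$ is correct as far as it goes (and shows no $\gamma$-threshold is needed there), but it does not extend downward, and {\it (5)} as stated requires all $\lambda>0$.
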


The remarkable aspect of {\it (3)-(4)} above is that whenever $A$ is a closed 
subset of $(E,\tau_\rho)$ then in a measure-theoretic sense the size of both sets, $A^\ast_\gamma$ and $E\setminus A^\ast_\gamma$, may be controlled in terms of 
sets $A$ and $E\setminus A$, respectively (as opposed to point-set theory). 
The typical application of Proposition~\ref{DJrt} is in estimating the measure 
of a $\sigma$-measurable set $F\subseteq E$ by writing 
\begin{eqnarray}\label{Irh}
\sigma(F)=\sigma(F\cap A^\ast_\gamma)
+\sigma\bigl(F\cap(E\setminus A^\ast_\gamma)\bigr)
\leq \sigma(F\cap A^\ast_\gamma)+\frac{C}{1-\gamma}\sigma(E\setminus A).
\end{eqnarray}

\vskip 0.08in
\begin{proof}[Proof of Proposition~\ref{DJrt}]
Starting with \eqref{Mixed-12} we may write 
\begin{eqnarray}\label{Mixed-15}
E\setminus A^\ast_\gamma 
&=& \Bigl\{x\in E:\,\exists\,r>0\mbox{ such that }
\frac{\sigma\bigl(B_{\rho_{\#}}(x,r)\cap A\bigr)}
{\sigma\bigl(B_{\rho_{\#}}(x,r)\bigr)}<\gamma\Bigr\}
\nonumber\\[4pt]
&=& \Bigl\{x\in E:\,\exists\,r>0\mbox{ such that }
\frac{\sigma\bigl(B_{\rho_{\#}}(x,r)\cap (E\setminus A)\bigr)}
{\sigma\bigl(B_{\rho_{\#}}(x,r)\bigr)}>1-\gamma\Bigr\}
\nonumber\\[4pt]
&=& \Bigl\{x\in E:\,\sup\limits_{r>0}\Bigl(
\meanint_{B_{\rho_{\#}}(x,r)}{\mathbf{1}}_{E\setminus A}\,d\sigma\Bigr)>1-\gamma\Bigr\}
\nonumber\\[4pt]
&=& \Bigl\{x\in E:\,\sup\limits_{0<r\leq{\rm diam}\,_{\rho_{\#}}(E)}\Bigl(
\meanint_{B_{\rho_{\#}}(x,r)}{\mathbf{1}}_{E\setminus A}\,d\sigma\Bigr)>1-\gamma\Bigr\}
\nonumber\\[4pt]
&=& \Bigl\{x\in E:\,
M_{E}\bigl({\mathbf{1}}_{E\setminus A}\bigr)(x)>1-\gamma\Bigr\},
\end{eqnarray}
proving {\it (1)}. We now make the claim that 
\begin{eqnarray}\label{Mixed-SCC}
\mbox{the function
$M_{E}\bigl({\mathbf{1}}_{E\setminus A}\bigr):(E,\tau_\rho)\rightarrow[0,\infty]$ is lower semi-continuous}.
\end{eqnarray}
To prove this claim, we note that since the pointwise supremum of an arbitrary 
family of real-valued, lower semi-continuous functions defined on $E$ 
is itself lower semi-continuous, it suffices to show that 
\begin{eqnarray}\label{MEAS-11.aPP}
\begin{array}{l}
\mbox{for every $\sigma$-measurable set $F\subseteq E$, the function }\,\,
f:(E,\tau_\rho)\to[0,\infty)
\\[4pt]
\mbox{given by }\,f(x):=\sigma\bigl(B_{\rho_{\#}}(x,r)\cap F\bigr)\quad
\forall\,x\in E,\,\mbox{ is lower semi-continuous}.
\end{array}
\end{eqnarray}
To this end, fix $x_o\in E$ arbitrary. The crux of the matter is the fact that
our choice of the quasi-distance ensures that if $\{x_j\}_{j\in{\mathbb{N}}}$ 
is a sequence of points in $E$ with the property that $x_j\to x_o$ as
$j\to\infty$, with convergence understood in the (metrizable) 
topology $\tau_\rho$, then 
\begin{eqnarray}\label{MEAS-12.aPP}
\liminf_{j\to\infty}{\mathbf 1}_{B_{\rho_{\#}}(x_j,r)}(y)
\geq{\mathbf 1}_{B_{\rho_{\#}}(x_o,r)}(y),\qquad\forall\,y\in E,
\end{eqnarray}
as is easily verified by analyzing the cases $y\in B_{\rho_{\#}}(x_o,r)$ 
and $y\in E\setminus B_{\rho_{\#}}(x_o,r)$. In turn, based on this and 
Fatou's lemma we may then estimate 
\begin{eqnarray}\label{MEAS-13.aPP}
f(x_o) &=& \sigma\bigl(B_{\rho_{\#}}(x_o,r)\cap F\bigr)
=\int_{F}{\mathbf 1}_{B_{\rho_{\#}}(x_o,r)}(y)\,d\sigma(y)
\nonumber\\[4pt]
&\leq & \int_{F}\liminf_{j\to\infty}{\mathbf 1}_{B_{\rho_{\#}}(x_j,r)}(y)\,d\sigma(y)
\leq\liminf_{j\to\infty}\int_{F}{\mathbf 1}_{B_{\rho_{\#}}(x_j,r)}(y)\,d\sigma(y)
\nonumber\\[4pt]
&=& \liminf_{j\to\infty}\sigma\bigl(B_{\rho_{\#}}(x_j,r)\cap F\bigr)
=\liminf_{j\to\infty}f(x_j).
\end{eqnarray}
This establishes \eqref{MEAS-11.aPP}, thus finishing the proof of \eqref{Mixed-SCC}.

Moving on, \eqref{Mixed-SCC} implies that the last set in \eqref{Mixed-15} is open
(in $\tau_\rho$), hence {\it (2)} holds true. Also, by combining {\it (1)} with 
the weak-$(1,1)$ boundedness of $M_{E}$ (recall that we are assuming that
$(E,\rho,\sigma)$ is a space of homogeneous type), we obtain
\begin{eqnarray}\label{Mixed-15B}
\sigma\bigl(E\setminus A^\ast_\gamma\bigr)
\leq \frac{C}{1-\gamma}\|{\mathbf{1}}_{E\setminus A}\|_{L^1(E,\sigma)}
=\frac{C}{1-\gamma}\sigma(E\setminus A).
\end{eqnarray}
Hence the inequality in {\it (3)} is proved.

Suppose now that $A$ is a closed subset of $(E,\tau_\rho)$. Then $E\setminus A$ is 
open, so if $x\in E\setminus A$ then there exists $r>0$ such that 
$B_{\rho_{\#}}(x,r)\subseteq E\setminus A$. Consequently, 
$\frac{\sigma\bigl(B_{\rho_{\#}}(x,r)\cap A)\bigr)}
{\sigma\bigl(B_{\rho_{\#}}(x,r)\bigr)}=0<\gamma$, thus $x\not\in A^\ast_\gamma$.
This shows that $A^\ast_\gamma\subseteq A$, hence 
$\sigma(E\setminus A)\leq \sigma(E\setminus A^\ast_\gamma)$. Combining these with 
what we proved in {\it (3)} finishes the proof of {\it (4)}. 

Turning to the proof of {\it (5)}, fix some $\lambda>0$ and $x\in E$, arbitrary, 
and select $r>0$ such that 
\begin{eqnarray}\label{Mixed-16}
{\rm dist}_{\rho_{\#}}(x,A^\ast_\gamma)<r.
\end{eqnarray}
Then there exists $x_0\in A^\ast_\gamma$ such that $\rho_{\#}(x,x_0)<r$, which forces 
\begin{eqnarray}\label{Mixed-17}
B_{\rho_{\#}}(x,\lambda r)\subseteq B_{\rho_{\#}}(x_0,C_{\rho_{\#}}(1+\lambda)r)
\subseteq B_{\rho_{\#}}(x,C_{\rho_{\#}}^2(1+\lambda)r).
\end{eqnarray}
Consequently, since $x_0\in A^\ast_\gamma$ we obtain 
\begin{eqnarray}\label{Mixed-18}
&&\hskip -0.30in
\gamma\sigma\bigl(B_{\rho_{\#}}(x_0,C_{\rho_{\#}}(1+\lambda)r)\bigr)
\leq\sigma\bigl(B_{\rho_{\#}}(x_0,C_{\rho_{\#}}(1+\lambda)r)\cap A\bigr)
\nonumber\\[4pt]
&&\hskip 0.30in
\leq\sigma\bigl(B_{\rho_{\#}}(x_0,C_{\rho_{\#}}(1+\lambda)r)\setminus 
B_{\rho_{\#}}(x,\lambda r)\bigr)+\sigma\bigl(B_{\rho_{\#}}(x,\lambda r)\cap A\bigr)
\nonumber\\[4pt]
&&\hskip 0.30in
=\sigma\bigl(B_{\rho_{\#}}(x_0,C_{\rho_{\#}}(1+\lambda)r)\bigr)
-\sigma\bigl(B_{\rho_{\#}}(x,\lambda r)\bigr)
+\sigma\bigl(B_{\rho_{\#}}(x,\lambda r)\cap A\bigr),
\end{eqnarray}
which further implies that
\begin{eqnarray}\label{Mixed-19}
\sigma\bigl(B_{\rho_{\#}}(x,\lambda r)\bigr)
-(1-\gamma)\sigma\bigl(B_{\rho_{\#}}(x_0,C_{\rho_{\#}}(1+\lambda)r)\bigr)
\leq\sigma\bigl(B_{\rho_{\#}}(x,\lambda r)\cap A\bigr).
\end{eqnarray}
Recalling the second inclusion in \eqref{Mixed-17} and \eqref{Doub-2}, we obtain
\begin{eqnarray}\label{Mixed-20}
\sigma\bigl(B_{\rho_{\#}}(x_0,C_{\rho_{\#}}(1+\lambda)r)\bigr)
&\leq & \sigma\bigl(B_{\rho_{\#}}(x_0,C_{\rho_{\#}}^2(1+\lambda)r)\bigr)
\nonumber\\[4pt]
&\leq & C_{\sigma,\rho_{\#}}\bigl(\tfrac{C_{\rho_{\#}}^2(1+\lambda)}
{\lambda}\bigr)^{D_\sigma}\sigma\bigl(B_{\rho_{\#}}(x,\lambda r)\bigr),
\end{eqnarray}
where $C_{\sigma,\rho_{\#}}$, $D_\sigma$ are associated with $\sigma$, $\rho_{\#}$ 
as in \eqref{Doub-2}. Together, \eqref{Mixed-19} and \eqref{Mixed-20} yield
\begin{eqnarray}\label{Mixed-21}
\sigma\bigl(B_{\rho_{\#}}(x,\lambda r)\bigr)
\Bigl[1-C_{\sigma,\rho_{\#}}(1-\gamma)
\Bigl(\tfrac{C_{\rho_{\#}}^2(1+\lambda)}{\lambda}\Bigr)^{D_{\sigma}}\Bigr]
\leq\sigma\bigl(B_{\rho_{\#}}(x,\lambda r)\cap A\bigr).
\end{eqnarray}
Also, from \eqref{Doub-2} we have that if $\lambda\in(0,1)$ then 
$\sigma\bigl(B_{\rho_{\#}}(x,r)\bigr)\leq C_{\sigma,\rho_{\#}}\lambda^{-D_{\sigma}}
\sigma\bigl(B_{\rho_{\#}}(x,\lambda r)\bigr)$, thus
\begin{eqnarray}\label{Mixed-22}
\sigma\bigl(B_{\rho_{\#}}(x,\lambda r)\bigr)
\geq\min\Bigl\{1,\frac{\lambda^{D_{\sigma}}}{C_{\sigma,\rho_{\#}}}\Bigr\}
\sigma\bigl(B_{\rho_{\#}}(x,r)\bigr),\qquad\forall\,\lambda>0.
\end{eqnarray}
If we now we choose 
\begin{eqnarray}\label{Mixed-22EE}
\gamma(\lambda):=1-\frac{1}{2C_{\sigma,\rho_{\#}}}
\Bigl(\frac{\lambda}{C_{\rho_{\#}}^2(1+\lambda)}\Bigr)^{D_{\sigma}}\in(0,1)
\quad\mbox{ and }\quad
c(\lambda):=\frac{1}{2}\min\Bigl\{1,\frac{\lambda^{D_{\sigma}}}
{C_{\sigma,\rho_{\#}}}\Bigr\}>0,
\end{eqnarray}
then \eqref{Mixed-21} and \eqref{Mixed-22} imply
\begin{eqnarray}\label{Mixed-23}
\sigma\bigl(B_{\rho_{\#}}(x,\lambda r)\cap A\bigr)
\geq\frac{1}{2}\sigma\bigl(B_{\rho_{\#}}(x,\lambda r)\bigr)
\geq c(\lambda)\sigma\bigl(B_{\rho_{\#}}(x,r)\bigr),
\qquad\forall\,\gamma\in\bigl[\gamma(\lambda),1\bigr).
\end{eqnarray}
This proves {\it (5)}. 

If $\sigma$ is Borel-regular, then Lebesgue's Differentiation Theorem holds 
in the current setting. Hence, there exists a set $F\subseteq E$ with 
$\sigma(F)=0$ and such that
\begin{eqnarray}\label{Mixed-24}
\lim\limits_{r\to 0^+}\Bigl(\meanint_{B_{\rho_{\#}}(x,r)}{\mathbf{1}}_A\,d\sigma\Bigr)
={\mathbf{1}}_A(x),\qquad\forall\,x\in E\setminus F.
\end{eqnarray}
In particular, for every $x\in A^\ast_\gamma\setminus F$ we have 
${\mathbf{1}}_A(x)=\lim\limits_{r\to 0^+}\Bigl[
\frac{\sigma\bigl(B_{\rho_{\#}}(x,r)\cap A\bigr)}
{\sigma\bigl(B_{\rho_{\#}}(x,r)\bigr)}\Bigr]\geq\gamma>0$, which implies that
$A^\ast_\gamma\setminus F\subseteq A$, thus $A^\ast_\gamma\setminus A\subseteq F$.
Consequently, since $A^\ast_\gamma\setminus A$ is $\sigma$-measurable, we obtain that
$\sigma(A^\ast_\gamma\setminus A)=0$, proving {\it (6)}. Finally, the statement in 
{\it (7)} is an immediate consequence of \eqref{Mixed-12}. This concludes the proof
of the proposition.
\end{proof}

We continue to state and prove auxiliary lemmas
in preparation for dealing with Theorem~\ref{appert}, advertised earlier. 
To state the lemma below, recall the region 
${\mathcal{F}}_\kappa(A)$ from \eqref{reg-A1}.

\begin{lemma}\label{ap+YH}
Let $({\mathscr{X}},\rho)$ be a quasi-metric space, $\mu$ a Borel measure 
on $({\mathscr{X}},\tau_\rho)$, $E$ a proper, nonempty, closed subset of 
$({\mathscr{X}},\tau_\rho)$ and $\sigma$ a Borel measure on 
$(E,\tau_{\rho|_{E}})$ such that $(E,\rho\bigl|_E,\sigma)$ is a space of 
homogeneous type. If $u:{\mathscr{X}}\setminus E\to[0,\infty]$ is 
$\mu$-measurable, then for every $\kappa>0$ and every $\sigma$-measurable 
set $A\subseteq E$, one has 
\begin{eqnarray}\label{Mix+FR}
\int_A\Bigl(\int_{\Gamma_\kappa(x)}u(y)\,d\mu(y)\Bigr)\,d\sigma(x)
&=& \int_{{\mathscr{X}}\setminus E}u(y)\sigma\bigl(A\cap\pi_y^\kappa\bigr)\,d\mu(y)
\nonumber\\[4pt]
&=& \int_{{\mathcal{F}}_\kappa(A)}u(y)\sigma\bigl(A\cap\pi_y^\kappa\bigr)\,d\mu(y).
\end{eqnarray}
\end{lemma}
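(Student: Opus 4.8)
\textbf{Proof plan for Lemma~\ref{ap+YH}.}

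The statement is an instance of Fubini--Tonelli combined with a symmetry observation about the nontangential regions, so the plan is to set things up as an iterated integral over the product space $E \times (\mathscr{X}\setminus E)$ and then swap the order of integration. First I would introduce the function
\begin{eqnarray}\label{pf:ap-1}
\Phi:E\times(\mathscr{X}\setminus E)\longrightarrow[0,\infty],\qquad
\Phi(x,y):={\mathbf 1}_{\Gamma_\kappa(x)}(y)\,u(y),
\end{eqnarray}
and check that it is measurable with respect to $\sigma\otimes\mu$ on $E\times(\mathscr{X}\setminus E)$. This is the first step that requires a little care: one needs $(x,y)\mapsto{\mathbf 1}_{\Gamma_\kappa(x)}(y)$ to be jointly measurable. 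That follows because $\Gamma_\kappa(x)=\{y:\rho_\#(x,y)<(1+\kappa)\delta_E(y)\}$ and both $\rho_\#(\cdot,\cdot)$ and $\delta_E(\cdot)$ are continuous on $\mathscr{X}\times\mathscr{X}$, respectively $\mathscr{X}$ (by parts (3) and (4) of Theorem~\ref{JjEGh}); hence the set $\{(x,y)\in\mathscr{X}\times\mathscr{X}:\rho_\#(x,y)<(1+\kappa)\delta_E(y)\}$ is open in $\tau_\rho\times\tau_\rho$, so its indicator is Borel, and the restriction to $E\times(\mathscr{X}\setminus E)$ is Borel there. Since $u$ is assumed $\mu$-measurable and nonnegative, $\Phi$ is $\sigma\otimes\mu$-measurable and nonnegative.

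With joint measurability in hand, the Tonelli theorem applies (both $\sigma$ and $\mu$ are $\sigma$-finite, or at least the statement only concerns nonnegative integrands so one may invoke Tonelli in the form that allows $+\infty$ values without a finiteness hypothesis). The key symmetry is the tautology
\begin{eqnarray}\label{pf:ap-2}
{\mathbf 1}_{\Gamma_\kappa(x)}(y)={\mathbf 1}_{\pi_y^\kappa}(x),\qquad
\forall\,x\in E,\quad\forall\,y\in\mathscr{X}\setminus E,
\end{eqnarray}
which is immediate from the definition \eqref{reg-A2} of $\pi_y^\kappa$: both sides equal $1$ precisely when $y\in\Gamma_\kappa(x)$. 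Therefore, for a $\sigma$-measurable set $A\subseteq E$, integrating $\Phi(x,y){\mathbf 1}_A(x)$ first in $x$ and then in $y$ gives
\begin{eqnarray}\label{pf:ap-3}
\int_A\Bigl(\int_{\Gamma_\kappa(x)}u(y)\,d\mu(y)\Bigr)\,d\sigma(x)
&=& \int_E\int_{\mathscr{X}\setminus E}{\mathbf 1}_A(x)\,{\mathbf 1}_{\Gamma_\kappa(x)}(y)\,u(y)\,d\mu(y)\,d\sigma(x)
\nonumber\\[4pt]
&=& \int_{\mathscr{X}\setminus E}\int_E {\mathbf 1}_A(x)\,{\mathbf 1}_{\pi_y^\kappa}(x)\,d\sigma(x)\,u(y)\,d\mu(y)
\nonumber\\[4pt]
&=& \int_{\mathscr{X}\setminus E}u(y)\,\sigma\bigl(A\cap\pi_y^\kappa\bigr)\,d\mu(y),
\end{eqnarray}
where the inner $\sigma$-integral is legitimate because $\pi_y^\kappa$ is $\sigma$-measurable (indeed it is relatively open in $E$ by part (vi) of Lemma~\ref{T-LL.2}, hence Borel). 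This proves the first equality in \eqref{Mix+FR}.

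For the second equality, observe that the integrand $u(y)\,\sigma(A\cap\pi_y^\kappa)$ in the last expression of \eqref{pf:ap-3} vanishes whenever $\sigma(A\cap\pi_y^\kappa)=0$, and in particular whenever $A\cap\pi_y^\kappa=\emptyset$. But $A\cap\pi_y^\kappa\ne\emptyset$ means there exists $x\in A$ with $y\in\Gamma_\kappa(x)$, i.e.\ $y\in\bigcup_{x\in A}\Gamma_\kappa(x)={\mathcal F}_\kappa(A)$ by \eqref{reg-A1}. Hence the domain of integration may be restricted from $\mathscr{X}\setminus E$ to ${\mathcal F}_\kappa(A)$ without changing the value of the integral, which yields the second equality in \eqref{Mix+FR} and completes the proof. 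The only genuine obstacle is the joint-measurability verification in the first step; everything after that is a bookkeeping application of Tonelli together with the definitional identity \eqref{pf:ap-2}.
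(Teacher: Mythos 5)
Your proposal is correct and follows essentially the same route as the paper: the paper also proves the first equality by Fubini/Tonelli using the identity ${\mathbf 1}_{\Gamma_\kappa(x)}(y)={\mathbf 1}_{\pi_y^\kappa}(x)$ (with measurability handled via the earlier observation \eqref{Mixed-3}), and obtains the second equality by noting that $A\cap\pi^\kappa_y\neq\emptyset$ forces $y\in{\mathcal{F}}_\kappa(A)$. Your explicit joint-measurability check via the continuity of $\rho_{\#}$ and $\delta_E$ is a slightly more detailed justification of the same step.
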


\begin{proof}
By Fubini's Theorem (and \eqref{Mixed-3}), we have
\begin{eqnarray}\label{Mix+FR-1}
\int_A\Bigl(\int_{\Gamma_\kappa(x)}u(y)\,d\mu(y)\Bigr)\,d\sigma(x)
&=& \int_{{\mathscr{X}}\setminus E}u(y)\Bigl(
\int_A{\mathbf{1}}_{\pi^\kappa_y}(x)\,d\sigma(x)\Bigr)\,d\mu(y)
\nonumber\\[4pt]
&=& \int_{{\mathscr{X}}\setminus E}u(y)\sigma\bigl(A\cap\pi_y^\kappa\bigr)\,d\mu(y),
\end{eqnarray}
proving the first equality in \eqref{Mix+FR}. The second equality in \eqref{Mix+FR}
follows from \eqref{Mix+FR-1} and the fact that if $y\in{\mathscr{X}}\setminus E$
and $A\cap\pi^\kappa_y\not=\emptyset$ then $y\in{\mathcal{F}}_\kappa(A)$.
\end{proof}

\begin{lemma}\label{ap+YH-2}
Let $({\mathscr{X}},\rho)$ be a quasi-metric space, $\mu$ a Borel measure on $({\mathscr{X}},\tau_\rho)$, $E$ a proper, nonempty, closed subset of 
$({\mathscr{X}},\tau_\rho)$, and $\sigma$ a Borel measure on $(E,\tau_{\rho|_{E}})$ 
such that $(E,\rho\bigl|_E,\sigma)$ is a space of homogeneous type. 
Fix two arbitrary numbers $\kappa,\kappa'>0$. Then there 
exist $\gamma\in(0,1)$ and a finite constant $C>0$ such that for every 
$\sigma$-measurable set $A\subseteq E$ there holds
\begin{eqnarray}\label{Mi+LV}
\int_{A^\ast_\gamma}\Bigl(\int_{\Gamma_\kappa(x)}u(y)\,d\mu(y)\Bigr)\,d\sigma(x)
\leq C\int_A\Bigl(\int_{\Gamma_{\kappa'}(x)}u(y)\,d\mu(y)\Bigr)\,d\sigma(x)
\end{eqnarray}
for every function $u:{\mathscr{X}}\setminus E\to[0,\infty]$ which is 
$\mu$-measurable.
\end{lemma}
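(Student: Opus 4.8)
\textbf{Proof plan for Lemma~\ref{ap+YH-2}.}
The plan is to reduce the inequality \eqref{Mi+LV} to a pointwise comparison of conical projections via the Fubini-type identity in Lemma~\ref{ap+YH}. First I would apply Lemma~\ref{ap+YH} to both sides of \eqref{Mi+LV}, with the set $A^\ast_\gamma$ (resp.\ $A$) playing the role of the measurable subset of $E$, obtaining
\begin{eqnarray}\label{plan-one}
\int_{A^\ast_\gamma}\Bigl(\int_{\Gamma_\kappa(x)}u\,d\mu\Bigr)d\sigma(x)
=\int_{\mathscr{X}\setminus E}u(y)\,\sigma\bigl(A^\ast_\gamma\cap\pi_y^\kappa\bigr)\,d\mu(y),
\end{eqnarray}
and similarly with the right-hand side equal to $\int_{\mathscr{X}\setminus E}u(y)\,\sigma(A\cap\pi_y^{\kappa'})\,d\mu(y)$. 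Since $u\geq0$, it then suffices to prove the pointwise bound
\begin{eqnarray}\label{plan-two}
\sigma\bigl(A^\ast_\gamma\cap\pi_y^\kappa\bigr)\leq C\,\sigma\bigl(A\cap\pi_y^{\kappa'}\bigr),
\qquad\forall\,y\in\mathscr{X}\setminus E,
\end{eqnarray}
for a suitable $\gamma\in(0,1)$ and $C\in(0,\infty)$ depending only on $\kappa,\kappa'$ and the geometry.

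To establish \eqref{plan-two}, fix $y\in\mathscr{X}\setminus E$. If $A^\ast_\gamma\cap\pi_y^\kappa=\emptyset$ there is nothing to prove, so assume it is nonempty and pick $y_\ast\in E$, $\eta\in(0,\kappa)$ and $\epsilon>0$ as in Lemma~\ref{lbDV}, so that \eqref{sgbr} gives
$\pi_y^\kappa\subseteq E\cap B_{\rho_\#}\bigl(y_\ast,C_\rho(1+\kappa)\delta_E(y)\bigr)$.
Choose also a point $x_0\in A^\ast_\gamma\cap\pi_y^\kappa$; since $x_0\in\pi_y^\kappa$, Lemma~\ref{lbDV} shows $\rho_\#(x_0,y)<(1+\kappa)\delta_E(y)$, hence $x_0$ lies in a $\rho_\#$-ball centered at $y_\ast$ of radius $\approx\delta_E(y)$, and therefore $B_{\rho_\#}\bigl(y_\ast,C_\rho(1+\kappa)\delta_E(y)\bigr)$ is contained in a ball $B_{\rho_\#}(x_0,R)$ with $R\approx\delta_E(y)$ (comparability constants depending only on $C_\rho$ and $\kappa$). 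Thus $A^\ast_\gamma\cap\pi_y^\kappa\subseteq B_{\rho_\#}(x_0,R)$ with $R\geq{\rm dist}_{\rho_\#}(x_0,A^\ast_\gamma)=0$. Applying part {\it (5)} of Proposition~\ref{DJrt} with the dilation parameter $\lambda$ chosen below, there is $\gamma(\lambda)\in(0,1)$ such that for all $\gamma\in[\gamma(\lambda),1)$ one has
\begin{eqnarray}\label{plan-three}
\sigma\bigl(B_{\rho_\#}(x_0,\lambda R)\cap A\bigr)\geq c(\lambda)\,\sigma\bigl(B_{\rho_\#}(x_0,R)\bigr).
\end{eqnarray}

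The final step is to arrange that $B_{\rho_\#}(x_0,\lambda R)\cap A\subseteq A\cap\pi_y^{\kappa'}$, after which \eqref{plan-three} together with the doubling of $\sigma$ and the reverse inclusion $\pi_y^\kappa\subseteq B_{\rho_\#}(x_0,R)$ (up to a fixed dilation) yields \eqref{plan-two}. To see which $\lambda$ works, I would use the first inclusion in \eqref{sgbr}: any point $x$ with $\rho_\#(x,y_\ast)<\epsilon'\,\delta_E(y)$ lies in $\pi_y^{\kappa'}$, where $\epsilon'$ is determined by $\kappa'$ (and $\eta$) via \eqref{Equ-1}, so I must choose $\lambda$ small enough that $B_{\rho_\#}(x_0,\lambda R)$ is contained in $B_{\rho_\#}(y_\ast,\epsilon'\delta_E(y))$; since $\rho_\#(x_0,y_\ast)\lesssim\delta_E(y)$ this is possible only after noting that $x_0$ itself must be close to $y_\ast$ — the cleanest route is to instead center everything at $y_\ast$ rather than $x_0$, replacing $x_0$ in Proposition~\ref{DJrt}{\it (5)} by $y_\ast$ (legitimate since ${\rm dist}_{\rho_\#}(y_\ast,A^\ast_\gamma)\leq\rho_\#(y_\ast,x_0)<R$) and taking $\lambda$ so that $\lambda\cdot(C_\rho(1+\kappa)\delta_E(y))$ on one hand captures all of $A^\ast_\gamma\cap\pi_y^\kappa$ and on the other hand stays below $\epsilon'\delta_E(y)$. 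The main obstacle is precisely this bookkeeping: choosing $\lambda$ (hence $\gamma$) purely in terms of $\kappa,\kappa'$ and the structural constants $C_\rho,\widetilde C_\rho,C_\sigma,D_\sigma$, while keeping the chain of ball inclusions consistent when $\kappa$ and $\kappa'$ are in either order; handling $\kappa'\leq\kappa$ versus $\kappa'>\kappa$ may require treating the two cases separately as in the proof of \eqref{equiv11}. Once $\lambda$ and $\gamma$ are pinned down, combining \eqref{plan-one}, \eqref{plan-two}, \eqref{plan-three}, the doubling inequality \eqref{Doub-2}, and nonnegativity of $u$ finishes the proof.
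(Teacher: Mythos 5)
Your reduction via Lemma~\ref{ap+YH} to a pointwise bound of the form $\sigma(A^\ast_\gamma\cap\pi_y^\kappa)\leq C\,\sigma(A\cap\pi_y^{\kappa'})$, and your eventual decision to center the density-point argument at the quasi-nearest boundary point $y_\ast$ rather than at $x_0\in A^\ast_\gamma\cap\pi_y^\kappa$, are exactly the route the paper takes. The genuine gap is in how you specify what the dilation parameter $\lambda$ must accomplish in the last step: you ask that the small ball $B_{\rho_{\#}}\bigl(y_\ast,\lambda\,C_\rho(1+\kappa)\delta_E(y)\bigr)$ simultaneously ``capture all of $A^\ast_\gamma\cap\pi_y^\kappa$'' and have radius below $\epsilon'\delta_E(y)$. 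These two demands are incompatible in general: by the second inclusion in \eqref{sgbr} the set $\pi_y^\kappa$, and hence possibly $A^\ast_\gamma\cap\pi_y^\kappa$, reaches out to distance comparable to $(1+\kappa)\delta_E(y)$ from $y_\ast$, so no ball of radius $<\epsilon'\delta_E(y)$ centered at $y_\ast$ can contain it. As written, this step fails, and it is not mere bookkeeping; likewise the anticipated case split $\kappa'\leq\kappa$ versus $\kappa'>\kappa$ is a symptom of the same confusion and is not needed.

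The missing observation is that the small ball never needs to cover $A^\ast_\gamma\cap\pi_y^\kappa$; the two inclusions in \eqref{sgbr} are used on opposite sides of the inequality. Set $r:=C_\rho(1+\kappa)\delta_E(y)$. The second inclusion in \eqref{sgbr} gives directly $\sigma(A^\ast_\gamma\cap\pi_y^\kappa)\leq\sigma(\pi_y^\kappa)\leq\sigma\bigl(E\cap B_{\rho_{\#}}(y_\ast,r)\bigr)$, with no density consideration at all. Since $A^\ast_\gamma\cap\pi_y^\kappa\not=\emptyset$, the same inclusion yields ${\rm dist}_{\rho_{\#}}(y_\ast,A^\ast_\gamma)<r$, so Proposition~\ref{DJrt}\,{\it (5)} applies at $x:=y_\ast$ with the fixed dilation $\lambda:=\epsilon/(C_\rho(1+\kappa))$, giving, once $\gamma$ is close enough to $1$, $\sigma\bigl(A\cap B_{\rho_{\#}}(y_\ast,\epsilon\delta_E(y))\bigr)\geq c\,\sigma\bigl(B_{\rho_{\#}}(y_\ast,r)\bigr)$. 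Finally, choosing $\eta\in(0,\min\{\kappa,\kappa'\})$ and $\epsilon$ small enough that \eqref{Equ-1} holds for both apertures, the first inclusion in \eqref{sgbr} written for $\kappa'$ gives $E\cap B_{\rho_{\#}}(y_\ast,\epsilon\delta_E(y))\subseteq\pi_y^{\kappa'}$, whence $\sigma(A\cap\pi_y^{\kappa'})\geq\sigma\bigl(A\cap B_{\rho_{\#}}(y_\ast,\epsilon\delta_E(y))\bigr)\geq c\,\sigma(A^\ast_\gamma\cap\pi_y^\kappa)$. With this chain, $\lambda$, and therefore $\gamma$ and $C$, depend only on $\kappa,\kappa'$ and the structural constants, and integrating against $u\,d\mu$ through the Fubini identities of Lemma~\ref{ap+YH} completes \eqref{Mi+LV} exactly as you intended.
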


\begin{proof}
Recall the notation introduced in \eqref{REG-DDD}. We claim that
\begin{eqnarray}\label{Clsb}
\begin{array}{c}
\mbox{for every }\,\,\kappa,\kappa'>0\quad\mbox{there exist}\,\,\gamma\in(0,1)\,
\mbox{ and }\,\,c>0\,\,\mbox{ such that }
\\[4pt]
\sigma\bigl(A\cap\pi_y^{\kappa'}\bigr)\geq c\,\sigma\bigl(A^\ast_\gamma\cap\pi_y^\kappa\bigr)
\quad\forall\,A\subseteq E\mbox{ $\sigma$-measurable and }
\forall\,y\in{\mathcal{F}}_\kappa(A^\ast_\gamma).
\end{array}
\end{eqnarray}
Assuming this claim for now, let $\kappa,\kappa'>0$ be arbitrary and let $\gamma$ 
and $c>0$ be as in \eqref{Clsb}. Then, if $A$ and $u$ satisfy the hypotheses 
of the proposition, starting with \eqref{Mix+FR} and using the fact that 
${\mathcal{F}}_\kappa(A^\ast_\gamma)\subseteq{\mathscr{X}}\setminus E$
(itself a trivial consequence of \eqref{Tfs23}), we may write
\begin{eqnarray}\label{Mix+FR-5}
\int_A\Bigl(\int_{\Gamma_{\kappa'}(x)}u(y)\,d\mu(y)\Bigr)\,d\sigma(x)
&=& \int_{{\mathscr{X}}\setminus E}u(y)\sigma\bigl(A\cap\pi_y^{\kappa'}\bigr)\,d\mu(y)
\nonumber\\[4pt]
&\geq& \int_{{\mathcal{F}}_\kappa(A^\ast_\gamma)}u(y)
\sigma\bigl(A\cap\pi_y^{\kappa'}\bigr)\,d\mu(y)
\nonumber\\[4pt]
&\geq& c\int_{{\mathcal{F}}_\kappa(A^\ast_\gamma)}u(y)
\sigma\bigl(A^\ast_\gamma\cap\pi_y^\kappa\bigr)\,d\mu(y)
\nonumber\\[4pt]
&=& c\int_{A^\ast_\gamma}\Bigl(\int_{\Gamma_\kappa(x)}u(y)\,d\mu(y)\Bigr)\,d\sigma(x),
\end{eqnarray}
where for the last equality in \eqref{Mix+FR-5} we applied Lemma~\ref{ap+YH}
with $A^\ast_\gamma$ in place of $A$. Hence, to finish the proof of the proposition
we are left with showing \eqref{Clsb}.

Suppose $\kappa,\kappa'>0$ are fixed and pick some $\gamma\in(0,1)$, to be 
made precise later. Also, fix $\eta\in(0,\min\,\{\kappa,\kappa'\})$ and
for each $y\in{\mathcal{F}}_\kappa(A^\ast_\gamma)$ choose $y_\ast\in E$ and 
$\epsilon>0$ as in \eqref{Equ-1} (for $\eta$ as just indicated). Then $y_\ast$
satisfies the conditions in \eqref{Equ-1} corresponding to both $\kappa$ and $\kappa'$.
As such, Lemma~\ref{lbDV} implies that the inclusions in \eqref{sgbr} hold for 
both $\kappa$ and $\kappa'$. 

The fact that $y\in{\mathcal{F}}_\kappa(A^\ast_\gamma)$ entails 
$\pi^\kappa_y\cap A^\ast_\gamma\not=\emptyset$ which, when combined with \eqref{sgbr},
implies $B_{\rho_{\#}}\bigl(y_\ast,C_{\rho}(1+\kappa)\delta_E(y)\bigr)\cap
A^\ast_\gamma\not=\emptyset$ hence, further, 
${\rm dist}_{\rho_{\#}}(y_\ast,A^\ast_\gamma)<C_{\rho}(1+\kappa)\delta_E(y)$. 
Now, {\it (5)} in Proposition~\ref{DJrt} invoked with
$\lambda:=\frac{\epsilon}{C_{\rho}(1+\kappa)}$, $x:=y_\ast$
and $r:=C_{\rho}(1+\kappa)\delta_E(y)$, guarantees the existence of some $\gamma_0=\gamma_0(\lambda)\in(0,1)$ with the property that
\begin{eqnarray}\label{sgbr-2}
\frac{\sigma\bigl(B_{\rho_{\#}}(y_\ast,\epsilon\delta_E(y))\cap A\bigr)}
{\sigma\bigl(B_{\rho_{\#}}(y_\ast,C_{\rho}(1+\kappa)\delta_E(y))\bigr)}
\geq c=c(\kappa)>0\quad\mbox{ if }\gamma\in(\gamma_0,1).
\end{eqnarray}
Hence, if we select $\gamma\in(\gamma_0,1)$ to begin with, 
the estimate in \eqref{sgbr-2} in concert with \eqref{sgbr} implies
\begin{eqnarray}\label{sgbr-3}
\sigma\bigl(B_{\rho_{\#}}(y_\ast,\epsilon\delta_E(y))\cap A\bigr)
\geq c\,\sigma\bigl(B_{\rho_{\#}}(y_\ast,C_{\rho}(1+\kappa)\delta_E(y))\bigr)
\geq c\,\sigma\bigl(\pi^\kappa_y\bigr)
\geq c\,\sigma\bigl(A^\ast_\gamma\cap \pi^\kappa_y\bigr).
\end{eqnarray}
Since \eqref{sgbr} also holds with $\kappa$ replaced by $\kappa'$, we obtain
from this and \eqref{sgbr-3} that
\begin{eqnarray}\label{sgbr-5}
\sigma\bigl(A\cap \pi^{\kappa'}_y\bigr)
\geq \sigma\bigl(B_{\rho_{\#}}(y_\ast,\epsilon\delta_E(y))\cap A\bigr)
\geq c\,\sigma\bigl(A^\ast_\gamma\cap \pi^\kappa_y\bigr).
\end{eqnarray}
This completes the proof of \eqref{Clsb} and, with it, the proof of the lemma.
\end{proof}

\begin{lemma}\label{Biu+F}
Let $({\mathscr{X}},\rho)$ be a quasi-metric space, $\mu$ a Borel measure on $({\mathscr{X}},\tau_\rho)$, $E$ a proper, nonempty, closed subset of 
$({\mathscr{X}},\tau_\rho)$, and $\sigma$ a Borel measure on 
$(E,\tau_{\rho|_{E}})$ such that $(E,\rho\bigl|_E,\sigma)$ is a space of 
homogeneous type. Then for every $\kappa,\kappa'>0$ there exists a constant 
$C\in(0,\infty)$ such that
\begin{eqnarray}\label{Biu+F2}
\int_E\Bigl(\int_{\Gamma_\kappa(x)}u(y)\,d\mu(y)\Bigr)f(x)\,d\sigma(x)
\leq C\int_E\Bigl(\int_{\Gamma_{\kappa'}(x)}u(y)\,d\mu(y)\Bigr)(M_Ef)(x)\,d\sigma(x)
\end{eqnarray}
for every function $u:{\mathscr{X}}\setminus E\to[0,\infty]$ that is $\mu$-measurable,
and every function $f:E\to[0,\infty]$ that is $\sigma$-measurable.
\end{lemma}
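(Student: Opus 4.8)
The plan is to prove Lemma~\ref{Biu+F} by a good-$\lambda$–type decomposition of $E$ relative to the super-level sets of $M_E f$, combined with the aperture-change inequality from Lemma~\ref{ap+YH-2}. First I would observe that it suffices to treat the case when $f$ is bounded with bounded support (so that all integrals involved are finite) and then pass to the general case by monotone convergence, since both sides of \eqref{Biu+F2} are monotone in $f$. Write $F(x):=\int_{\Gamma_\kappa(x)}u(y)\,d\mu(y)$ and $F'(x):=\int_{\Gamma_{\kappa'}(x)}u(y)\,d\mu(y)$; both are $\sigma$-measurable and lower semicontinuous by Lemma~\ref{semi-cont}. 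The left-hand side of \eqref{Biu+F2} is $\int_E F f\,d\sigma$, which by the layer-cake formula equals $\int_0^\infty\bigl(\int_{\{f>\lambda\}}F\,d\sigma\bigr)\,d\lambda$.

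The key step is to dominate $\int_{\{f>\lambda\}}F\,d\sigma$ in terms of the set $\mathcal{O}_\lambda:=\{x\in E:\,(M_E f)(x)>\lambda\}$, which is open (by the lower semicontinuity argument already used in the proof of Proposition~\ref{DJrt}, see \eqref{Mixed-SCC}). Since $\{f>\lambda\}\subseteq\mathcal{O}_\lambda$ up to a $\sigma$-null set (this is immediate when $\sigma$ is Borel regular via Lebesgue differentiation, and in general one reduces to that case by the density-point machinery of Proposition~\ref{DJrt}), and since $\mathcal{O}_\lambda$ is open, I would apply Lemma~\ref{ap+YH} with $A:=\{f>\lambda\}$ to write $\int_{\{f>\lambda\}}F\,d\sigma=\int_{\mathcal{F}_\kappa(\{f>\lambda\})}u(y)\,\sigma\bigl(\{f>\lambda\}\cap\pi_y^\kappa\bigr)\,d\mu(y)$. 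The plan is then to bound $\sigma(\{f>\lambda\}\cap\pi_y^\kappa)\le\sigma(\mathcal{O}_\lambda\cap\pi_y^\kappa)$ and to observe that every $y\in\mathcal{F}_\kappa(\{f>\lambda\})$ has $\pi_y^\kappa\cap\{f>\lambda\}\ne\emptyset$, hence $\pi_y^\kappa$ meets $\mathcal{O}_\lambda$; using \eqref{sgbr} from Lemma~\ref{lbDV}, $\pi_y^\kappa$ is comparable to a $\rho_\#$-ball centered near $y$, and since that ball meets $\mathcal{O}_\lambda$ the doubling property of $\sigma$ forces $\sigma(\mathcal{O}_\lambda\cap\pi_y^\kappa)\approx\sigma(\pi_y^\kappa)$ as soon as a fixed dilate of $\pi_y^\kappa$ is contained in $\mathcal{O}_\lambda$. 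The delicate point, and what I expect to be the main obstacle, is precisely this containment: $\pi_y^\kappa$ itself need not lie in $\mathcal{O}_\lambda$, so one must either enlarge the aperture (replacing $\kappa$ by a slightly larger $\kappa''$ and exploiting ${\mathcal T}_{\kappa}(\mathcal{O}_\lambda)$ versus ${\mathcal F}_{\kappa}(\mathcal{O}_\lambda)$) or work with the density-point set $(\mathcal{O}_\lambda)^\ast_\gamma$ and invoke item {\it (5)} of Proposition~\ref{DJrt} to recover a uniform lower density bound on a dilate; this is exactly the mechanism already encapsulated in Lemma~\ref{ap+YH-2}.

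Rather than redo that analysis, I would invoke Lemma~\ref{ap+YH-2} directly: applied with $A:=\mathcal{O}_\lambda$ it yields $\gamma\in(0,1)$ and $C<\infty$ with $\int_{(\mathcal{O}_\lambda)^\ast_\gamma}F\,d\sigma\le C\int_{\mathcal{O}_\lambda}F'\,d\sigma$, the constants depending only on $\kappa,\kappa'$ and geometry. Since $\{f>\lambda\}\subseteq(\mathcal{O}_\lambda)^\ast_\gamma$ up to a null set — one checks that $\{f>\lambda\}$ consists of density points of $\mathcal{O}_\lambda$, because for $x$ with $f(x)>\lambda$ one has $\sup_r\meanint_{B_{\rho_\#}(x,r)}f\,d\sigma\ge f(x)>\lambda$ so in fact every point of $\{f>\lambda\}$ lies in $\mathcal{O}_\lambda$, and density follows from $\mathcal{O}_\lambda$ being open together with item {\it (4)} of Proposition~\ref{DJrt} after passing to a closed subset — one gets $\int_{\{f>\lambda\}}F\,d\sigma\le C\int_{\mathcal{O}_\lambda}F'\,d\sigma$. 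Finally I would integrate in $\lambda\in(0,\infty)$: by the layer-cake formula and Tonelli,
\begin{eqnarray}\label{Biu-end}
\int_E Ff\,d\sigma=\int_0^\infty\!\!\int_{\{f>\lambda\}}\!F\,d\sigma\,d\lambda
\le C\int_0^\infty\!\!\int_{\mathcal{O}_\lambda}\!F'\,d\sigma\,d\lambda
=C\int_E F'(x)\Bigl(\int_0^\infty{\mathbf 1}_{\{M_Ef>\lambda\}}(x)\,d\lambda\Bigr)d\sigma(x),
\end{eqnarray}
and the inner integral equals $(M_Ef)(x)$, giving $\int_E Ff\,d\sigma\le C\int_E F'(x)(M_Ef)(x)\,d\sigma(x)$, which is exactly \eqref{Biu+F2}. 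The constant $C$ depends only on the doubling constant of $\sigma$, on $\rho$, and on $\kappa,\kappa'$, as required.
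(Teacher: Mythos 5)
There is a genuine gap, and it is located exactly where you flagged the ``delicate point.'' The set $(\mathcal{O}_\lambda)^\ast_\gamma$ from \eqref{Mixed-12} requires density $\geq\gamma$ with respect to $\mathcal{O}_\lambda:=\{M_Ef>\lambda\}$ at \emph{every} scale $r>0$, not just at small scales; so it is simply false that $\{f>\lambda\}\subseteq(\mathcal{O}_\lambda)^\ast_\gamma$ (even modulo null sets). If $E$ is unbounded and $\mathcal{O}_\lambda$ is bounded, then for the value of $\gamma$ close to $1$ produced by Lemma~\ref{ap+YH-2} one typically has $(\mathcal{O}_\lambda)^\ast_\gamma=\emptyset$, since the density ratio over huge balls tends to $0$. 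Your appeal to openness of $\mathcal{O}_\lambda$ and to item {\it (4)} of Proposition~\ref{DJrt} does not help: that item gives $A^\ast_\gamma\subseteq A$ for closed $A$, which is the wrong direction, and the pointwise claim $f(x)\leq (M_Ef)(x)$ used to place $\{f>\lambda\}$ inside $\mathcal{O}_\lambda$ is itself only an a.e.\ statement requiring Lebesgue differentiation (hence some regularity of $\sigma$ not assumed in the lemma).

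Worse, the intermediate inequality your scheme needs, namely $\int_{\{f>\lambda\}}F\,d\sigma\leq C\int_{\{M_Ef>\lambda\}}F'\,d\sigma$ with a constant independent of $\lambda$, is false when $\kappa>\kappa'$, so no repair of the density-point step can save the level-set-at-matching-heights structure. Take $E=\mathbb{R}\subseteq\mathbb{R}^2$, $\kappa$ large, $\kappa'$ small, $u$ a unit bump concentrated near the point $(0,1)$, $f={\mathbf 1}_{[5,6]}$ and $\lambda=1/2$: then $F\approx{\mathbf 1}_{(-(1+\kappa),(1+\kappa))}$ and $F'\approx{\mathbf 1}_{(-(1+\kappa'),(1+\kappa'))}$, so the left side is about $1$, while $\{M_Ef>1/2\}$ is contained in a unit neighborhood of $[5,6]$, which is disjoint from $\{F'>0\}$, making the right side essentially $0$. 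The final estimate \eqref{Biu+F2} survives only because $M_Ef$, though below the level $\lambda$ on $\pi_y^{\kappa'}$, is still bounded below there by the \emph{average} of $f$ over the comparable ball containing $\pi_y^\kappa$; this quantitative information is destroyed once you pass to super-level sets at a common height. This is precisely the mechanism the paper exploits: by Lemma~\ref{ap+YH} (Fubini) both sides of \eqref{Biu+F2} are rewritten as integrals of $u(y)\sigma(\pi_y^{\kappa})$ against conical averages, and the matter is reduced, via the ball sandwich \eqref{sgbr} of Lemma~\ref{lbDV} together with \eqref{equiv11} and doubling, to the pointwise bound $\meanint_{\pi_y^\kappa}f\,d\sigma\leq C\,\inf_{z\in\pi_y^{\kappa'}}(M_Ef)(z)\leq C\meanint_{\pi_y^{\kappa'}}M_Ef\,d\sigma$, with no level-set decomposition at all. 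You should redo the argument along those lines (or, if you insist on a good-$\lambda$ route, localize via a Whitney/stopping-time decomposition of $\mathcal{O}_\lambda$ rather than comparing global super-level sets).
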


\begin{proof}
Based on Fubini's Theorem (and \eqref{Mixed-3}), we may write 
\begin{eqnarray}\label{Biu+F3}
\int_E\Bigl(\int_{\Gamma_\kappa(x)}u(y)\,d\mu(y)\Bigr)f(x)\,d\sigma(x)&=& \int_{{\mathscr{X}}\setminus E}u(y)\Bigl(
\int_E{\mathbf{1}}_{\pi^\kappa_y}(x)f(x)\,d\sigma(x)\Bigr)\,d\mu(y)
\nonumber\\[4pt]
&=& \int_{{\mathscr{X}}\setminus E}u(y)\sigma\bigl(\pi_y^\kappa\bigr)
\Bigl(\meanint_{\pi_y^\kappa}f\,d\sigma\Bigr)\,d\mu(y),
\end{eqnarray}
as well as 
\begin{eqnarray}\label{Biu+F4A}
\int_E\Bigl(\int_{\Gamma_{\kappa'}(x)}u(y)\,d\mu(y)\Bigr)
(M_Ef)(x)\,d\sigma(x)
= \int_{{\mathscr{X}}\setminus E}u(y)\sigma\bigl(\pi_y^\kappa\bigr)
\Bigl(\meanint_{\pi_y^{\kappa'}}M_Ef\,d\sigma\Bigr)\,d\mu(y).
\end{eqnarray}
Hence, in order to conclude \eqref{Biu+F2}, in light of \eqref{Biu+F3}, 
\eqref{Biu+F4A}, and \eqref{equiv11}, it suffices to show that there exists 
a constant $C_1\in(0,\infty)$ such that 
\begin{eqnarray}\label{Biu+F4}
\meanint_{\pi_y^\kappa}f\,d\sigma
\leq C_1\meanint_{\pi_y^{\kappa'}}M_Ef\,d\sigma
\qquad\mbox{for every $y\in{\mathscr{X}}\setminus E$}.
\end{eqnarray}
To this end, fix some $y\in{\mathscr{X}}\setminus E$ and let $y_\ast\in E$, 
$\epsilon>0$ be such that \eqref{Equ-1} holds for some 
$\eta\in(0,\min\{\kappa,\kappa'\})$.
Then \eqref{sgbr} holds when written both for $\kappa$ and $\kappa'$. 
In particular, for each $z\in\pi_y^{\kappa'}$ we have 
$\rho_{\#}(z,y_\ast)<C_{\rho}(1+\kappa')\delta_E(y)$ and, consequently,
\begin{eqnarray}\label{Biu+F5}
\hskip -0.50in
B_{\rho_{\#}}\bigl(y_\ast,C_{\rho}(1+\kappa)\delta_E(y)\bigr)
&\subseteq &
B_{\rho_{\#}}\bigl(z,C^2_{\rho}(1+\max\{\kappa,\kappa'\})\delta_E(y)\bigr)
\nonumber\\[4pt]
&\subseteq &
B_{\rho_{\#}}\bigl(y_\ast,C^3_{\rho}(1+{\max\{\kappa,\kappa'\})\delta_E}(y)\bigr),
\qquad\forall\,z\in\pi_y^{\kappa'}.
\end{eqnarray}
Making now use of \eqref{sgbr}, \eqref{Biu+F5} and \eqref{Doub-2}, we obtain
\begin{eqnarray}\label{Biu+F6}
\meanint_{\pi_y^\kappa}f\,d\sigma
&\leq & \frac{1}{\sigma(B_{\rho_{\#}}(y_\ast,\epsilon\delta_E(y)))}
\int_{B_{\rho_{\#}}\bigl(y_\ast,C_{\rho}(1+\kappa)\delta_E(y)\bigr)}f\,d\sigma
\nonumber\\[4pt]
&\leq & C_{\sigma,\rho_{\#}}\bigl(C^3_{\rho}\epsilon^{-1}(1+\max\{\kappa,\kappa'\})
\bigr)^{D_\sigma}
\meanint_{B_{\rho_{\#}}(z,C^2_{\rho}(1+\max\{\kappa,\kappa'\})\delta_E(y))}
f\,d\sigma
\nonumber\\[4pt]
&\leq & C_{\sigma,\rho_{\#}}\bigl(C^3_{\rho}\epsilon^{-1}
(1+\max\{\kappa,\kappa'\})\bigr)^{D_\sigma}
M_Ef(z),\qquad\forall\,z\in\pi_y^{\kappa'},
\end{eqnarray}
where $C_{\sigma,\rho_{\#}}$, $D_\sigma$ are the constants associated with 
$\sigma$, $\rho_{\#}$ as in \eqref{Doub-2}. Thus, if we now set 
$C_1:=C_{\sigma,\rho_{\#}}
\bigl(C^3_{\rho}\epsilon^{-1}(1+\max\{\kappa,\kappa'\})\bigr)^{D_\sigma}$ then
\begin{eqnarray}\label{Biu+F7}
\meanint_{\pi_y^\kappa}f\,d\sigma
\leq C_1\inf\limits_{z\in\pi_y^{\kappa'}}[M_Ef(z)]
\leq C_1\meanint_{\pi_y^{\kappa'}}M_Ef\,d\sigma,
\end{eqnarray}
proving \eqref{Biu+F4}, and finishing the proof of the lemma.
\end{proof}

We are now prepared to state and prove the following equivalence result
for the quasi-norms of the mixed norm spaces associated with different apertures (of the 
nontangential approach regions). 

\begin{theorem}\label{appert}
Let $({\mathscr{X}},\rho)$ be a quasi-metric space, $\mu$ a Borel measure on $({\mathscr{X}},\tau_\rho)$, $E$ a proper, nonempty, closed subset of 
$({\mathscr{X}},\tau_\rho)$, and $\sigma$ a Borel measure on $(E,\tau_{\rho|_{E}})$ 
such that $(E,\rho\bigl|_E,\sigma)$ is a space of homogeneous type. 
Also, fix two indices $p,q\in(0,\infty]$ with the convention that 
$q=\infty$ if $p=\infty$. Then for each $\kappa,\kappa'>0$ there holds
\begin{eqnarray}\label{Mixed-11CC}
\|u\|_{L^{(p,q)}({\mathscr{X}},E,\mu,\sigma;\kappa)}\approx
\|u\|_{L^{(p,q)}({\mathscr{X}},E,\mu,\sigma;\kappa')},
\end{eqnarray}
uniformly for $\mu$-measurable functions 
$u:{\mathscr{X}}\setminus E\to\overline{\mathbb{R}}$. 

Hence, in particular, for each $p,q\in(0,\infty)$, there holds
\begin{eqnarray}\label{Mixed-11}
\Bigl(\int_E\Bigl[\int_{\Gamma_\kappa(x)}|u(y)|^q\,d\mu(y)\Bigr]^{p/q}
d\sigma(x)\Bigr)^{1/p}\approx\Bigl(\int_E\Bigl[\int_{\Gamma_{\kappa'}(x)}
|u(y)|^q\,d\mu(y)\Bigr]^{p/q}d\sigma(x)\Bigr)^{1/p},
\end{eqnarray}
uniformly for $\mu$-measurable functions 
$u:{\mathscr{X}}\setminus E\to\overline{\mathbb{R}}$.
\end{theorem}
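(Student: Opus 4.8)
�The plan is to prove \eqref{Mixed-11CC} by establishing both inequalities, each as an application of the two ``change of aperture'' lemmas developed above, and then derive \eqref{Mixed-11} as the special case $q=p\in(0,\infty)$ — or, more precisely, \eqref{Mixed-11} follows from \eqref{Mixed-11CC} with $q\in(0,\infty)$ and $p\in(0,\infty)$, so no separate argument is needed. By symmetry in $\kappa,\kappa'$ it suffices to prove one direction, say $\|u\|_{L^{(p,q)}(\kappa)}\le C\|u\|_{L^{(p,q)}(\kappa')}$; the reverse is obtained by interchanging the roles of $\kappa$ and $\kappa'$. Throughout, replacing $u$ by $|u|$ we may assume $u\ge 0$, and I would set $v:=|u|^q$ when $q<\infty$ so that $({\mathscr A}_{q,\kappa}u)(x)^q=\int_{\Gamma_\kappa(x)}v\,d\mu$.

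The first main step handles the case $p\le q$ (and both finite). Here I would raise everything to the power $p/q\le 1$ and use subadditivity of $t\mapsto t^{p/q}$ together with Lemma~\ref{ap+YH-2}: fix $\gamma\in(0,1)$ and $C$ as provided by that lemma for the pair $\kappa,\kappa'$. Using Proposition~\ref{DJrt}, part \emph{(3)}, together with the fact that for a superlevel set $A=\{x\in E:({\mathscr A}_{q,\kappa}u)(x)>\lambda\}$ of the lower semi-continuous function ${\mathscr A}_{q,\kappa}u$ one has $A^\ast_\gamma\subseteq A$ when $A$ is closed — but here $A$ is open, so instead I would argue via the layer-cake / distribution-function formula. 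Concretely, the cleanest route is: by Lemma~\ref{ap+YH-2} applied with the measurable function $v$, for any $\sigma$-measurable $A\subseteq E$,
\begin{eqnarray}\label{plan-1}
\int_{A^\ast_\gamma}({\mathscr A}_{q,\kappa}u)^q\,d\sigma
=\int_{A^\ast_\gamma}\Bigl(\int_{\Gamma_\kappa(x)}v\,d\mu\Bigr)d\sigma(x)
\le C\int_A\Bigl(\int_{\Gamma_{\kappa'}(x)}v\,d\mu\Bigr)d\sigma(x)
=C\int_A({\mathscr A}_{q,\kappa'}u)^q\,d\sigma.
\end{eqnarray}
Then I would choose $A=\{{\mathscr A}_{q,\kappa'}u>\lambda\}$, use that $A^\ast_\gamma\supseteq$ (a large portion of) $\{{\mathscr A}_{q,\kappa}u>C'\lambda\}$ after invoking the good-$\lambda$ / density-point bookkeeping in \eqref{Irh}, and integrate against $\lambda^{p-1}\,d\lambda$ to pass from the $L^q$-type inequality \eqref{plan-1} to the $L^p$-quasinorm comparison. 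Alternatively, when $p\le q$ one can bypass density points entirely: by Lemma~\ref{ap+YH-2} with $A=E$ (so $A^\ast_\gamma=E$) one gets $\|{\mathscr A}_{q,\kappa}u\|_{L^q(E,\sigma)}\le C\|{\mathscr A}_{q,\kappa'}u\|_{L^q(E,\sigma)}$, and interpolating this $L^q$ bound with the trivial pointwise bound is \emph{not} available; so the density-point superlevel-set argument is the one I would actually carry out, and it works uniformly for all $p\in(0,\infty)$.

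The second main step handles $p>q$. Here I would use Lemma~\ref{Biu+F} by duality: write $\|{\mathscr A}_{q,\kappa}u\|_{L^p}^q=\|({\mathscr A}_{q,\kappa}u)^q\|_{L^{p/q}}$ and pair $({\mathscr A}_{q,\kappa}u)^q$ against a nonnegative $f\in L^{(p/q)'}(E,\sigma)$ with $\|f\|_{(p/q)'}\le 1$; Lemma~\ref{Biu+F} then gives
\begin{eqnarray}\label{plan-2}
\int_E ({\mathscr A}_{q,\kappa}u)^q\, f\,d\sigma
\le C\int_E ({\mathscr A}_{q,\kappa'}u)^q\,(M_Ef)\,d\sigma
\le C\,\|({\mathscr A}_{q,\kappa'}u)^q\|_{L^{p/q}}\,\|M_Ef\|_{L^{(p/q)'}},
\end{eqnarray}
and since $(p/q)'\in(1,\infty]$ the Hardy--Littlewood maximal operator $M_E$ (bounded on $L^{(p/q)'}(E,\sigma)$ because $(E,\rho,\sigma)$ is a space of homogeneous type) controls the last factor by $C\|f\|_{(p/q)'}\le C$. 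Taking the supremum over such $f$ yields $\|{\mathscr A}_{q,\kappa}u\|_{L^p}\le C\|{\mathscr A}_{q,\kappa'}u\|_{L^p}$. The remaining degenerate cases are routine: if $q=\infty$ (hence $p=\infty$ by hypothesis) the space is $L^\infty({\mathscr X}\setminus E,\mu)$ by definition \eqref{Mixed-IDF}, which does not depend on $\kappa$ at all; if $p=\infty$ and $q<\infty$, \eqref{Mixed-11CC} is the essential-supremum version of \eqref{plan-1}, obtained from Lemma~\ref{ap+YH-2} localized to small balls via Proposition~\ref{DJrt}; and the case $q<p<\infty$ is \eqref{plan-2}. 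I expect the main obstacle to be the bookkeeping in the $p\le q$ range: one must carefully track how the superlevel sets of ${\mathscr A}_{q,\kappa}u$ and ${\mathscr A}_{q,\kappa'}u$ interlace through the density-point operation $A\mapsto A^\ast_\gamma$, using parts \emph{(3)}, \emph{(5)}, \emph{(7)} of Proposition~\ref{DJrt} and the decomposition \eqref{Irh}, so that the constant $C$ comes out independent of $\lambda$ and of $u$. Once that is in place, the passage to the $L^p$ quasinorm via the distribution function is a standard computation that I would not spell out in full.
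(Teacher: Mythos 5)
Your Case $p>q$ (the duality argument pairing $({\mathscr A}_{q,\kappa}u)^q$ against $f\in L^{(p/q)'}$ and using Lemma~\ref{Biu+F} plus the boundedness of $M_E$) is correct and is exactly the paper's argument. The gap is in your first step, the range $p\le q$. Having correctly observed that the superlevel set $\{{\mathscr A}_{q,\kappa'}u>\lambda\}$ is open, so that $A^\ast_\gamma\subseteq A$ is unavailable for it, you replace the missing containment by the claim that, for $A=\{{\mathscr A}_{q,\kappa'}u>\lambda\}$, one has $A^\ast_\gamma\supseteq$ ``a large portion of'' $\{{\mathscr A}_{q,\kappa}u>C'\lambda\}$. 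Nothing in Proposition~\ref{DJrt} or Lemmas~\ref{ap+YH}--\ref{ap+YH-2} gives such a pointwise (or almost-pointwise) inclusion between superlevel sets at \emph{different} apertures, and it is doubtful as stated: if the mass of $|u|^q$ sits in a small region near the lateral boundary of $\Gamma_\kappa(x)$ far from $E$, then $x$ need not be a $\gamma$-density point of $\{{\mathscr A}_{q,\kappa'}u>\lambda\}$ for $\gamma$ close to $1$ when $\kappa'<\kappa$ --- and $\gamma$ close to $1$ is precisely what Lemma~\ref{ap+YH-2} forces you to take. The correct move is the opposite one: take $A:=\{x\in E:\,({\mathscr A}_{q,\kappa}u)(x)\le\lambda\}$, which is \emph{closed} by lower semicontinuity, so $A^\ast_\gamma\subseteq A$ by Proposition~\ref{DJrt}\,(4); then bound $\sigma(\{{\mathscr A}_{q,\kappa'}u>\lambda\})$ by $\sigma(E\setminus A^\ast_\gamma)\le\frac{C}{1-\gamma}\sigma(\{{\mathscr A}_{q,\kappa}u>\lambda\})$ plus a Tschebyshev term on $A^\ast_\gamma$, which Lemma~\ref{ap+YH-2} (with the roles of $\kappa,\kappa'$ interchanged) converts into $\frac{C}{\lambda^q}\int_A({\mathscr A}_{q,\kappa}u)^q\,d\sigma$. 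Integrating against $p\lambda^{p-1}d\lambda$ then produces the inner integral $\int_{({\mathscr A}_{q,\kappa}u)(x)}^{\infty}\lambda^{p-q-1}\,d\lambda$, which converges only when $p<q$ \emph{strictly}. So your assertion that this route ``works uniformly for all $p\in(0,\infty)$'' is false: $p=q$ requires a separate (and easier) direct argument, via Lemma~\ref{ap+YH} with $A=E$ together with the comparability $\sigma(\pi_y^{\kappa})\approx\sigma(\pi_y^{\kappa'})$ from \eqref{equiv11}, which your proposal omits.

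There is a second genuine omission at the endpoints, caused by reading the convention backwards. The hypothesis is that $q=\infty$ whenever $p=\infty$; it does \emph{not} say $p=\infty$ whenever $q=\infty$. Hence the case $p<\infty$, $q=\infty$ --- the space $L^{(p,\infty)}$ of \eqref{Mixed-I}, i.e.\ the nontangential maximal operator ${\mathcal N}_\kappa$ --- is a bona fide case of the theorem and needs an argument; in the paper it is handled by proving the inclusion ${\mathcal O}_{\kappa'}\subseteq E\setminus(E\setminus{\mathcal O}_{\kappa})^\ast_\gamma$ for the superlevel sets of ${\mathcal N}_{\kappa'}u$ and ${\mathcal N}_{\kappa}u$ and then invoking Proposition~\ref{DJrt}\,(3). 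Your proposal skips this case entirely and instead sketches a proof of the case $p=\infty$, $q<\infty$, which is excluded by the hypotheses and is in fact \emph{false} in general: Remark~\ref{Fv99-T1} exhibits a function on ${\mathbb R}^2_+$ for which $\sup_{x}{\mathscr A}_{1,\kappa}u=\infty$ while $\sup_{x}{\mathscr A}_{1,\kappa'}u<\infty$. Any attempt to ``localize Lemma~\ref{ap+YH-2} to small balls'' to get an essential-supremum comparison must therefore fail, and this part of the proposal should be deleted and replaced by the genuine $q=\infty$, $p<\infty$ case.
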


Before presenting the proof of this theorem we shall comment 
on the nature of the limiting case $p=\infty$, $q\in(0,\infty)$ of \eqref{Mixed-11}.
This clarifies the comment at the bottom of page 183 in \cite{STEIN}.

\begin{remark}\label{Fv99-T1}
In the context of Theorem~\ref{appert}, if $q\in(0,\infty)$, in general 
it is not true that 
\begin{eqnarray}\label{Mixed-11-N}
\sup_{x\in E}\Bigl(\int_{\Gamma_\kappa(x)}|u(y)|^q\,d\mu(y)\Bigr)^{1/q}
\approx
\sup_{x\in E}\Bigl(\int_{\Gamma_{\kappa'}(x)}|u(y)|^q\,d\mu(y)\Bigr)^{1/q}.
\end{eqnarray}
To see that this equivalence might fail, consider the case when 
${\mathscr{X}}:={\mathbb{R}}^2$, $E:={\mathbb{R}}\equiv\partial{\mathbb{R}}^2_{+}$, 
and take $\kappa:=\sqrt{2}$, $\kappa'\in(0,\sqrt{2})$. Also, without loss of generality, 
assume that $q=1$ and consider $u:{\mathscr{X}}\setminus E\to\overline{\mathbb{R}}$ 
given by 
\begin{eqnarray}\label{U-Rg89}
u(x,y):=\left\{
\begin{array}{l}
x^{-1}\,\,\mbox{ if $x>0$ and $x<y<x+1$}, 
\\[4pt]
0\,\,\mbox{ otherwise}.
\end{array}
\right.
\end{eqnarray}
Then 
\begin{eqnarray}\label{Mix-Tg.5}
\sup_{z\in{\mathbb{R}}}\Bigl(\int_{\Gamma_\kappa(z)}|u(x,y)|\,dxdy\Bigr)
=\int_{|x|<y}|u(x,y)|\,dxdy=\int_{0<x<y<x+1}x^{-1}\,dxdy=\infty,
\end{eqnarray}
whereas for each $z\in(0,\infty)$, elementary geometry gives that
\begin{eqnarray}\label{Mix-Tg.6}
\int_{\Gamma_{\kappa'}(z)}|u(x,y)|\,dxdy\leq Cz^{-1}\cdot
{\rm Area}\{(x,y)\in\Gamma_{\kappa'}(z):\,0<x<y<x+1\}\leq C,
\end{eqnarray}
for some $C=C(\kappa')\in(0,\infty)$. This shows that 
$\sup_{z\in{\mathbb{R}}}\Bigl(\int_{\Gamma_{\kappa'}(z)}|u(x,y)|\,dxdy\Bigr)<\infty$, 
hence \eqref{Mixed-11-N} fails in this case.
\end{remark}

We now turn to the 

\vskip 0.08in
\begin{proof}[Proof of Theorem~\ref{appert}]
Let the real numbers $\kappa,\kappa'>0$ be arbitrary and fixed. 
Then, recalling \eqref{sp-sq}, it follows that the equivalence in \eqref{Mixed-11CC} 
is proved  once we show that there exists a finite constant $C=C(\kappa,\kappa')>0$ 
such that for every $\mu$-measurable function 
$u:{\mathscr{X}}\setminus E\to\overline{\mathbb{R}}$ we have 
\begin{eqnarray}\label{Mixed-11BB}
\|{\mathscr{A}}_{q,\kappa'}u\|_{L^p(E,\sigma)}
\leq C\|{\mathscr{A}}_{q,\kappa}u\|_{L^p(E,\sigma)},
\end{eqnarray}
with the understanding that, when $q=\infty$, the $q$-th power integral of $u$ 
over nontangential approach regions is replaced by the nontangential maximal 
operator of $u$ (cf. \eqref{Mixed-N}). We proceed by dividing up the proof 
of \eqref{Mixed-11BB} into a number of cases.

\vskip 0.08in
\noindent{\it Case~I: $0<p<q<\infty$.} For $\lambda>0$ arbitrary define 
\begin{eqnarray}\label{Shcd}
A:=\bigl\{x\in E:\,({\mathscr{A}}_{q,\kappa}u)(x)\leq\lambda\bigr\}.
\end{eqnarray}
By \eqref{Mixed-7A} we have that $A$ is closed in $(E,\tau_{\rho|_{E}})$, 
hence $A^\ast_\gamma\subseteq A$ for every $\gamma\in(0,1)$ by virtue of {\it (4)} 
in Proposition~\ref{DJrt}. Let $\gamma=\gamma(\kappa,\kappa')\in(0,1)$ be such 
that \eqref{Mi+LV} holds. Then
\begin{eqnarray}\label{Shcd-2}
&&\hskip -0.40in
\sigma\bigl(\{x\in E:\,({\mathscr{A}}_{q,\kappa'}u)(x)>\lambda\}\bigr)
\leq\sigma\bigl(E\setminus A^\ast_\gamma)
+\sigma\bigl(\{x\in A^\ast_\gamma:\,({\mathscr{A}}_{q,\kappa'}u)(x)>\lambda\}\bigr)
\nonumber\\[4pt]
&&\hskip 0.60in
\leq\frac{C}{1-\gamma}\sigma(E\setminus A)
+\frac{1}{\lambda^q}\int_{A^\ast_\gamma}({\mathscr{A}}_{q,\kappa'}u)(x)^q\,d\sigma(x)
\nonumber\\[4pt]
&&\hskip 0.60in
\leq \frac{C}{1-\gamma}\sigma\bigl(\{x\in E:\,({\mathscr{A}}_{q,\kappa}u)(x)>\lambda\}\bigr)
+\frac{C}{\lambda^q}\int_A({\mathscr{A}}_{q,\kappa}u)(x)^q\,d\sigma(x).
\end{eqnarray}
For the second inequality in \eqref{Shcd-2} we used {\it (3)} in Proposition~\ref{DJrt}
and Tschebyshev's inequality, while for the last inequality we used \eqref{Shcd} and 
\eqref{Mi+LV} (with $\kappa$ and $\kappa'$ interchanged). Thus, if we multiply the
inequality resulting from \eqref{Shcd-2} by $p\lambda^{p-1}$ and then integrate in
$\lambda\in(0,\infty)$, we obtain
\begin{eqnarray}\label{Shcd-3}
\|{\mathscr{A}}_{q,\kappa'}u\|^p_{L^p(E,\sigma)}
\leq \frac{C}{1-\gamma}\|{\mathscr{A}}_{q,\kappa}u\|^p_{L^p(E,\sigma)}
+C\int_0^\infty\lambda^{p-q-1}\Bigl(\int_{\{{\mathscr{A}}_{q,\kappa}u\leq\lambda\}}
({\mathscr{A}}_{q,\kappa}u)^q\,d\sigma\Bigr)\,d\lambda.
\end{eqnarray}
By Fubini's Theorem, we further write 
\begin{eqnarray}\label{Shcd-4}
\int_0^\infty\lambda^{p-q-1}\Bigl(\int_{\{{\mathscr{A}}_{q,\kappa}u\leq\lambda\}}
({\mathscr{A}}_{q,\kappa}u)^q\,d\sigma\Bigr)\,d\lambda
&=& \int_E\Bigl(\int_{({\mathscr{A}}_{q,\kappa}u)(x)}^\infty\lambda^{p-q-1}\,d\lambda\Bigr)
({\mathscr{A}}_{q,\kappa}u)(x)^q\,d\sigma(x)
\nonumber\\[4pt]
&=& (q-p)^{-1}\|{\mathscr{A}}_{q,\kappa}u\|^p_{L^p(E,\sigma)},
\end{eqnarray}
given that we are currently assuming that $p<q$. In concert, \eqref{Shcd-3} 
and \eqref{Shcd-4} now yield \eqref{Mixed-11BB} in the case when $q\in(0,\infty)$ 
and $0<p<q$.

\vskip 0.08in
\noindent{\it Case~II: $p=q\in(0,\infty)$.} Combining \eqref{Mix+FR} 
(corresponding to $A=E$ and applied twice) with \eqref{equiv11} (applied for every
$y\in{\mathscr{X}}\setminus E$), we obtain that
\begin{eqnarray}\label{Shcd-5}
\int_E\Bigl(\int_{\Gamma_{\kappa'}(x)}|u(y)|^p\,d\mu(y)\Bigr)\,d\sigma(x)
&=&\int_{{\mathscr{X}}\setminus E} |u(y)|^p\sigma(\pi^{\kappa'}_y)\,d\mu(y)
\nonumber\\[4pt]
&\approx&\int_{{\mathscr{X}}\setminus E} |u(y)|^p\sigma(\pi^{\kappa}_y)\,d\mu(y)
\nonumber\\[4pt]
&=&\int_E\Bigl(\int_{\Gamma_\kappa(x)}|u(y)|^p\,d\mu(y)\Bigr)\,d\sigma(x),
\end{eqnarray}
and the desired conclusion follows.
\vskip 0.08in

\noindent{\it Case~III: $0<q<p<\infty$.} Let $(p/q)'$ denote the H\"older conjugate 
of $p/q\in(1,\infty)$. By using Riesz's duality theorem for Lebesgue spaces, then 
Lemma~\ref{Biu+F} (with $u$ replaced by $|u|^q$), and then H\"older's 
inequality, we may write 
\begin{eqnarray}\label{Shcd-6}
&&\hskip -0.50in
\left(\int_E\Bigl(\int_{\Gamma_\kappa(x)}|u(y)|^q\,d\mu(y)\Bigr)^{p/q}\,d\sigma(x)
\right)^{q/p}=\Bigl\|\int_{\Gamma_\kappa(x)}|u|^q\,d\mu\Bigr\|_{L^{p/q}_x(E,\sigma)}
\nonumber\\[4pt]
&&\hskip 0.50in
=\sup\limits_{\overset{f\in L^{(p/q)'}(E,\sigma)}
{f\geq 0,\,\,\|f\|_{L^{(p/q)'}(E,\sigma)}\leq 1}}
\left[\int_E\Bigl(\int_{\Gamma_\kappa(x)}|u(y)|^q\,d\mu(y)\Bigr)f(x)\,d\sigma(x)\right]
\nonumber\\[4pt]
&&\hskip 0.50in
\leq C\sup\limits_{\overset{f\in L^{(p/q)'}(E,\sigma)}
{f\geq 0,\,\|f\|_{L^{(p/q)'}(E,\sigma)}\leq 1}}
\left[\int_E\Bigl(\int_{\Gamma_{\kappa'}(x)}|u(y)|^q\,d\mu(y)\Bigr)(M_Ef)(x)
\,d\sigma(x)\right]
\nonumber\\[4pt]
&&\hskip 0.50in
\leq C\sup\limits_{\overset{f\in L^{(p/q)'}(E,\sigma)}
{f\geq 0,\,\|f\|_{L^{(p/q)'}(E,\sigma)}\leq 1}}\!\!\!\Bigl[\,
\left(\int_E\Bigl(\int_{\Gamma_{\kappa'}(x)}|u(y)|^q\,d\mu(y)\Bigr)^{p/q}\,d\sigma(x)
\right)^{q/p}\times
\nonumber\\[4pt]
&&\hskip 2.50in
\times\Bigl(\int_E(M_Ef)(x)^{(p/q)'}\,d\sigma(x)\Bigr)^{1/(p/q)'}\Bigr]
\nonumber\\[4pt]
&&\hskip 0.50in
\leq C\left(\int_E\Bigl(\int_{\Gamma_{\kappa'}(x)}|u(y)|^q\,d\mu(y)\Bigr)^{p/q}\,
d\sigma(x)\right)^{q/p},
\end{eqnarray}
where for the last inequality in \eqref{Shcd-6} we used the boundedness of 
the maximal operator $M_E$ on $L^r(E,\sigma)$ for $r:=(p/q)'\in(1,\infty)$.
This completes the proof of \eqref{Mixed-11BB} when $0<q<p<\infty$.

\vskip 0.08in
\noindent{\it Case~IV: $0<p<\infty$, $q=\infty$.} 
Fix $\lambda>0$ and introduce (recall \eqref{Mixed-N})
\begin{eqnarray}\label{cal-O}
{\mathcal{O}}_\kappa:=\bigl\{x\in E:\,({\mathcal{N}}_\kappa u)(x)>\lambda\bigr\},
\qquad
{\mathcal{O}}_{\kappa'}:=\bigl\{x\in E:\,({\mathcal{N}}_{\kappa'}u)(x)>\lambda\bigr\}.
\end{eqnarray}
Hence, the desired conclusion follows as soon as we show that there exists 
$C\in(0,\infty)$, independent of $u$ and $\lambda$, with the property that 
$\sigma({\mathcal{O}}_{\kappa'})\leq C\sigma({\mathcal{O}}_{\kappa})$. In turn,
by virtue of {\it (3)} in Proposition~\ref{DJrt}, this follows once we prove that
that there exists $\gamma\in(0,1)$ such that 
\begin{eqnarray}\label{incl-Cl}
{\mathcal{O}}_{\kappa'}\subseteq 
E\setminus(E\setminus{\mathcal{O}}_{\kappa})^*_\gamma.
\end{eqnarray}

To justify this inclusion, fix $\eta\in(0,\min\,\{\kappa,\kappa'\})$ and
assume that $x\in{\mathcal{O}}_{\kappa'}$ is an arbitrary point.  
Then there exists $y\in\Gamma_{\kappa'}(x)$ for which $|u(y)|>\lambda$ and 
we select $y_\ast\in E$ and $\epsilon\in(0,1)$ as in \eqref{Equ-1} (for $\eta$ as 
specified above). In particular, $\rho_{\#}(y,y_\ast)<(1+\eta)\delta_E(y)$.
Observe from \eqref{sgbr} and \eqref{cal-O} that in this scenario we have 
\begin{eqnarray}\label{ZDel}
E\cap B_{\rho_{\#}}\bigl(y_\ast,\epsilon\delta_E(y)\bigr)\subseteq\pi^{\kappa}_y
\subseteq{\mathcal{O}}_{\kappa}
\end{eqnarray}
and we also claim that
\begin{eqnarray}\label{ZDel-2}
E\cap B_{\rho_{\#}}\bigl(y_\ast,\epsilon\delta_E(y)\bigr)\subseteq
E\cap B_{\rho_{\#}}\bigl(x,C_{\rho}(1+\kappa')\delta_E(y)\bigr).
\end{eqnarray}
To see this, recall that $\epsilon\in(0,1)$ and note that if $z\in E$ 
satisfies $\rho_{\#}(z,y_\ast)<\delta_E(y)$ then 
\begin{eqnarray}\label{XX-YZ}
\rho_{\#}(x,z) &\leq & C_{\rho}\,\max\,\{\rho_{\#}(x,y),\rho_{\#}(y,z)\}
\nonumber\\[4pt]
& \leq & C_{\rho}\,\max\,\Bigl\{(1+\kappa')\delta_E(y),\,
C_{\rho}\,\max\,\{\rho_{\#}(y,y_\ast),\rho_{\#}(y_\ast,z)\}\Bigr\}
\nonumber\\[4pt]
& \leq & C_{\rho}\,\max\,\Bigl\{(1+\kappa')\delta_E(y),\,
C_{\rho}\,\max\,\{(1+\alpha)\delta_E(y),\delta_E(y)\}\Bigr\}
\nonumber\\[4pt]
& = & C_{\rho}(1+\kappa')\delta_E(y),
\end{eqnarray}
proving \eqref{ZDel-2}. In concert, \eqref{ZDel} and \eqref{ZDel-2} yield
\begin{eqnarray}\label{inCL}
E\cap B_{\rho_{\#}}\bigl(y_\ast,\delta_E(y)\bigr)\subseteq
{\mathcal{O}}_\kappa\cap B_{\rho_{\#}}\bigl(x,C_{\rho}(1+\kappa')\delta_E(y)\bigr).
\end{eqnarray}
Let us also observe that 
\begin{eqnarray}\label{XX-YZ342}
\rho_{\#}(x,y_\ast) &\leq & C_{\rho}\,\max\,\{\rho_{\#}(x,y),\rho_{\#}(y,y_\ast)\}
\nonumber\\[4pt]
& \leq & C_{\rho}\,\max\,\Bigl\{(1+\kappa')\delta_E(y),
(1+\eta)\delta_E(y)\Bigr\}=C_{\rho}(1+\kappa')\delta_E(y).
\end{eqnarray}
Then, for some sufficiently small $c\in(0,1)$ which depends only on
$\kappa,\kappa'$ and background geometrical characteristics, we may write 
\begin{eqnarray}\label{fst-E}
\frac{\sigma\Bigl({\mathcal{O}}_\kappa\cap 
B_{\rho_{\#}}\bigl(x,C_{\rho}(1+\kappa')\delta_E(y)\bigr)\Bigr)}
{\sigma\Bigl(E\cap B_{\rho_{\#}}\bigl(x,C_{\rho}(1+\kappa')\delta_E(y)\bigr)\Bigr)}
\geq 
\frac{\sigma\Bigl(E\cap B_{\rho_{\#}}\bigl(y_\ast,\epsilon\delta_E(y)\bigr)\Bigr)}
{\sigma\Bigl(E\cap B_{\rho_{\#}}\bigl(x,C_{\rho}(1+\kappa')\delta_E(y)\bigr)\Bigr)}
\geq c,
\end{eqnarray}
where the first inequality follows from \eqref{inCL}, while the second
inequality is a consequence of \eqref{XX-YZ342} and the fact that 
$(E,\rho\bigl|_{E},\sigma)$ is a space of homogeneous type (cf. \eqref{Doub-2}).
In particular, if we set $r:=C_{\rho}(1+\kappa')\delta_E(y)$, then 
\begin{eqnarray}\label{O-c}
\frac{\sigma\bigl((E\setminus{\mathcal{O}}_\kappa)\cap
B_{\rho_{\#}}(x,r)\bigr)}{\sigma\bigl(E\cap B_{\rho_{\#}}(x,r)\bigr)}\leq 1-c.
\end{eqnarray}
Thus, if we select $\gamma$ such that $1-c<\gamma<1$, then \eqref{O-c} entails 
$x\notin(E\setminus{\mathcal{O}}_\kappa)^*_\gamma$. This proves the claim
\eqref{incl-Cl}, and finishes the treatment of the current case.

\vskip 0.08in
\noindent{\it Case~V: $p=q=\infty$.} In this case, the desired conclusion 
follows upon observing that if $u:{\mathscr{X}}\setminus E\to\overline{\mathbb{R}}$
is a $\mu$-measurable function then 
\begin{eqnarray}\label{Gf-5tFF}
\Bigl\|E\ni x\mapsto
\|u\|_{L^\infty(\Gamma_\kappa(x),\mu)}\Bigr\|_{L^\infty(E,\sigma)}
=\|u\|_{L^\infty({\mathscr{X}}\setminus E,\mu)}.
\end{eqnarray}
Indeed, the inequality $\bigl\|\|u\|_{L^\infty(\Gamma_\kappa(x),\mu)}\bigr\|_{L_x^\infty(E,\sigma)}
\leq\|u\|_{L^\infty({\mathscr{X}}\setminus E,\mu)}$ is a simple consequence
of the fact that $\Gamma_\kappa(x)\subseteq{\mathscr{X}}\setminus E$ for each 
$x\in E$. In the opposite direction, if $M$ denotes the left-hand side of 
\eqref{Gf-5tFF}, then there exists a $\sigma$-measurable set $F\subseteq E$ satisfying 
$\sigma(F)=0$ and $\|u\|_{L^\infty(\Gamma_\kappa(x),\mu)}\leq M$ for every 
$x\in E\setminus F$. Since $(E,\rho|_{E})$ is geometrically doubling, so is 
$E\setminus F$ when equipped with $\rho|_{E\setminus F}$, hence separable as a
topological space. Consequently, given that $E\setminus F$ is dense in $E$, 
it follows that there exists a countable subset $A:=\{x_j\}_{j\in{\mathbb{N}}}$ of 
$E\setminus F$ which is dense in $E$. Now, for each $j\in{\mathbb{N}}$ there 
exists $N_j\subseteq\Gamma_\kappa(x_j)$, null-set for $\mu$, such that 
$|u(x)|\leq M$ for every $x\in\Gamma_\kappa(x_j)$. 
Thus, $N:=\cup_{j\in{\mathbb{N}}}N_j\subseteq{\mathscr{X}}\setminus E$ 
is a null-set for $\mu$ and $|u(x)|\leq M$ for every point $x$ belonging to 
\begin{eqnarray}\label{Hdsss-YU}
\Bigl(\bigcup\limits_{j\in{\mathbb{N}}}\Gamma_\kappa(x_j)\Bigr)
\setminus N={\mathcal{F}}_\kappa(A)\setminus N
={\mathcal{F}}_\kappa(\overline{A})\setminus N
={\mathcal{F}}_\kappa(E)\setminus N=({\mathscr{X}}\setminus E)\setminus N,
\end{eqnarray}
where the second equality follows from $(i)$ in Lemma~\ref{T-LL.2}, and the 
last equality is a consequence of \eqref{Tfs23} and the fact that $E$ 
is a closed subset of $({\mathscr{X}},\tau_\rho)$.
Hence, $\|u\|_{L^\infty({\mathscr{X}}\setminus E,\mu)}\leq M$, as desired. 
This finishes the justification of \eqref{Gf-5tFF} and finishes the proof 
of the theorem.
\end{proof}

\subsection{Estimates relating the Lusin and Carleson operators}
\label{SSect:5.2}

We now introduce a Carleson operator $\mathfrak{C}$ and show how it can be used instead of the area operator $\mathscr{A}$ to provide an equivalent quasi-norm for the mixed norm spaces. This is essential in Subsection~\ref{SSect:5.4}, and it is achieved by combining Theorem~\ref{appert} with a good $\lambda$ inequality, as in Theorem~3 of~\cite{CoMeSt}.

Let $({\mathscr{X}},\rho)$ be a quasi-metric space, 
$\mu$ a Borel measure on $({\mathscr{X}},\tau_\rho)$, 
$E$ a nonempty, proper, closed subset of $({\mathscr{X}},\tau_\rho)$, and 
$\sigma$ a measure on $E$ such that $(E,\rho\bigl|_E,\sigma)$ is a space 
of homogeneous type. For each index $q\in(0,\infty)$ and constant $\kappa\in(0,\infty)$,
recall the $L^q$-based Lusin (or area) operator ${\mathscr{A}}_{q,\kappa}$ 
from \eqref{sp-sq}, and now define the $L^q$-based {\tt Carleson operator} ${\mathfrak{C}}_{q,\kappa}$ for all $\mu$-measurable functions 
$u:{\mathscr{X}}\setminus E\to\overline{\mathbb{R}}:=[-\infty,+\infty]$ by
\begin{eqnarray}\label{ktEW-7}
({\mathfrak{C}}_{q,\kappa}u)(x):=\sup\limits_{\Delta\subseteq E,\,x\in\Delta}
\Bigl(\tfrac{1}{\sigma(\Delta)}\int_{{\mathcal{T}}_\kappa(\Delta)}|u(y)|^q
\sigma(\pi^{\kappa}_y)\,d\mu(y)\Bigr)^{\frac{1}{q}},\qquad\forall\,x\in E,
\end{eqnarray}
where $\pi^{\kappa}_y$ is from \eqref{reg-A2}, the supremum is taken over {\tt surface balls}, i.e., sets of the form 
\begin{eqnarray}\label{ktEW-7UU}
\Delta:=\Delta(y,r):=E\cap B_{\rho_{\#}}(y,r),\qquad y\in E,\quad r>0
\end{eqnarray}
containing $x$, and ${\mathcal{T}}_\kappa(\Delta)$ is the 
tent region over $\Delta$ from \eqref{reg-A1}.

The following theorem extends the result on ${\mathbb{R}}^{n+1}_+$ from \cite[Theorem~3, p.\,318]{CoMeSt}. To state it, consider a measure space $(E,\sigma)$, and for each $p\in(0,\infty)$ and $r\in(0,\infty]$, let 
$L^{p,r}(E,\sigma)$ denote the Lorentz space equipped with the quasi-norm
\begin{eqnarray}\label{TgEE+56}
&& \|f\|_{L^{p,r}(E,\sigma)}:=\left(\int_0^\infty\lambda^r\sigma\bigl(
\left\{x\in E:\,|f(x)|>\lambda\right\}\bigr)^{r/p}\,\frac{d\lambda}{\lambda}\right)^{1/r},
\quad\mbox{ if }\,\,r<\infty,
\\[4pt] 
&& \|f\|_{L^{p,\infty}(E,\sigma)}:=\sup_{\lambda>0}\Bigl[\lambda\,\sigma\left(
\left\{x\in E:\,|f(x)|>\lambda\right\}\right)^{1/p}\Bigr]\quad\mbox{ if }\,\,r=\infty.
\label{TgEE+56BBB}
\end{eqnarray}
Note that $L^{p,p}(E,\sigma)=L^p(E,\sigma)$ for each $p\in(0,\infty)$.
Also, given a quasi-metric space $({\mathscr{X}},\rho)$, call a Borel measure 
$\mu$ on $({\mathscr{X}},\tau_\rho)$ {\tt locally finite} when 
$\mu\bigl(B_\rho(x,r)^\circ\bigr)<\infty$ for all $x\in{\mathscr{X}}$ and $r>0$, where the interior is taken in the topology 
$\tau_\rho$.

\begin{theorem}\label{AsiC}
Let $({\mathscr{X}},\rho)$ be a quasi-metric space, 
$\mu$ be a locally finite Borel measure on $({\mathscr{X}},\tau_\rho)$, and assume 
that $E$ is a proper, nonempty, closed subset of $({\mathscr{X}},\tau_\rho)$, 
and $\sigma$ a measure on $E$ such that $(E,\rho\bigl|_E,\sigma)$ is a space 
of homogeneous type. Fix $q\in(0,\infty)$ and $\kappa>0$. 
Then the following estimates hold.
\begin{enumerate}
\item[(1)] For each $p\in(0,\infty)$ there exists $C\in(0,\infty)$ such that
$\|{\mathscr{A}}_{q,\kappa}u\|_{L^p(E,\sigma)}\leq C
\|{\mathfrak{C}}_{q,\kappa}u\|_{L^p(E,\sigma)}$ for every 
$\mu$-measurable function $u:{\mathscr{X}}\setminus E\to\overline{\mathbb{R}}$.
\item[(2)] For each $p\in(q,\infty)$ and each $r\in(0,\infty]$ there exists 
a constant $C\in(0,\infty)$ 
such that $\|{\mathfrak{C}}_{q,\kappa}u\|_{L^{p,r}(E,\sigma)}
\leq C\|{\mathscr{A}}_{q,\kappa}u\|_{L^{p,r}(E,\sigma)}$ for every 
$\mu$-measurable function $u:{\mathscr{X}}\setminus E\to\overline{\mathbb{R}}$. 
\item[(3)] Corresponding to the end-point cases $p=q$ and $p=\infty$ in $(2)$, 
there exists $C\in(0,\infty)$ such that for every $\mu$-measurable
function $u:{\mathscr{X}}\setminus E\to\overline{\mathbb{R}}$ 
the following estimates hold:
\begin{eqnarray}\label{Tgbcc-7Y}
\hskip -0.20in
\|{\mathfrak{C}}_{q,\kappa}u\|_{L^{q,\infty}(E,\sigma)}
\leq C\|{\mathscr{A}}_{q,\kappa}u\|_{L^q(E,\sigma)}\,\,\mbox{ and }\,\,
\|{\mathfrak{C}}_{q,\kappa}u\|_{L^{\infty}(E,\sigma)}
\leq C\|{\mathscr{A}}_{q,\kappa}u\|_{L^\infty(E,\sigma)}.
\end{eqnarray}
\end{enumerate}
In particular, 
\begin{eqnarray}\label{sbrn}
\|{\mathscr{A}}_{q,\kappa}u\|_{L^p(E,\sigma)}
\approx\|{\mathfrak{C}}_{q,\kappa}u\|_{L^p(E,\sigma)}
\quad\mbox{ for each }\,p\in(q,\infty), 
\end{eqnarray}
uniformly in $u:{\mathscr{X}}\setminus E\to\overline{\mathbb{R}}$, 
$\mu$-measurable function.
\end{theorem}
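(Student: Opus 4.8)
\textbf{Plan of proof for Theorem~\ref{AsiC}.} The theorem consists of three parts, together with the equivalence \eqref{sbrn} which is an immediate corollary of parts (1) and (2) (taking $r=p$, so that $L^{p,p}=L^p$, and chaining the two inequalities). Thus the work is to establish the pointwise/quasi-norm comparisons between the Lusin operator ${\mathscr{A}}_{q,\kappa}$ and the Carleson operator ${\mathfrak{C}}_{q,\kappa}$. Throughout, $\kappa>0$ and $q\in(0,\infty)$ are fixed, and by Theorem~\ref{appert} we are free to change the aperture $\kappa$ whenever convenient (at the cost of harmless constants); I will use this repeatedly without further comment. Two recurring tools will be Lemma~\ref{ap+YH} (the Fubini identity $\int_A{\mathscr{A}}_{q,\kappa}(u)^q\,d\sigma=\int_{{\mathcal{F}}_\kappa(A)}|u|^q\sigma(A\cap\pi_y^\kappa)\,d\mu$, which is exactly the mechanism relating conical integrals to the ``$\sigma(\pi_y^\kappa)$-weighted'' integrals appearing in \eqref{ktEW-7}) and the geometry of tents/fans from Lemma~\ref{T-LL.2}, in particular the containments \eqref{Fv-UU45} and \eqref{3.2.63WS} which say that tents over surface balls are comparable to truncated $\rho_{\#}$-balls.

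\emph{Part (1): ${\mathscr{A}}\lesssim{\mathfrak{C}}$ in $L^p$, all $p\in(0,\infty)$.} This is the ``easy'' direction. First I would observe the pointwise-in-a-ball estimate: for any surface ball $\Delta$ and $\sigma$-a.e.\ $x\in\Delta$ one cannot hope for a pointwise bound of ${\mathscr{A}}_{q,\kappa}u$ by ${\mathfrak{C}}_{q,\kappa}u$, so instead one argues by a distributional/stopping-time decomposition exactly as in \cite{CoMeSt}. Fix $\lambda>0$ and let $\mathcal{O}:=\{{\mathscr{A}}_{q,\kappa}u>\lambda\}$, an open set by \eqref{Mixed-7A}; take a Whitney decomposition $\{\Delta_j\}$ of $\mathcal{O}$ via Proposition~\ref{H-S-Z}. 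Using Lemma~\ref{ap+YH} on $A=\Delta_j$ (after fattening apertures so that \eqref{3.2.63WS} places ${\mathcal{T}}_\kappa(\Delta_j)$ inside a fixed dilate of the Whitney ball) and the definition \eqref{ktEW-7} of ${\mathfrak{C}}_{q,\kappa}$, one bounds $\int_{\Delta_j}{\mathscr{A}}_{q,\kappa}(u)^q$ by a constant times $\sigma(\Delta_j)\,\inf_{x\in \widehat\Delta_j}({\mathfrak{C}}_{q,\kappa}u)(x)^q$ on an enlarged ball; summing over $j$ and using the bounded overlap of the Whitney balls together with a good-$\lambda$ / layer-cake integration in $\lambda$ gives $\|{\mathscr{A}}_{q,\kappa}u\|_{L^p}\leq C\|{\mathfrak{C}}_{q,\kappa}u\|_{L^p}$. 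The mechanics here are those of the proof of \cite[Theorem~3]{CoMeSt}; the only genuinely new point is replacing Euclidean cubes and cones by the quasi-metric tents/fans, which is exactly what Lemma~\ref{T-LL.2} and Lemma~\ref{ap+YH} are designed to absorb.

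\emph{Part (2): ${\mathfrak{C}}\lesssim{\mathscr{A}}$ in $L^{p,r}$ for $p>q$.} This is the main obstacle and follows the good-$\lambda$ scheme of \cite{CoMeSt}. The crux is a distributional inequality of the form: there exist $C,\gamma_0$ (with $\gamma_0\in(0,1)$ from the density-point machinery of Proposition~\ref{DJrt}) so that for all $\lambda>0$ and all small $\beta>0$,
\begin{eqnarray}\label{planGL}
\sigma\bigl(\{{\mathfrak{C}}_{q,\kappa}u>\lambda,\ {\mathscr{A}}_{q,\kappa'}u\leq\beta\lambda\}\bigr)
\leq C\,\beta^{q}\,\sigma\bigl(\{{\mathfrak{C}}_{q,\kappa}u>\lambda/2\}\bigr),
\end{eqnarray}
for a suitable aperture $\kappa'>\kappa$. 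To prove \eqref{planGL} I would fix $\lambda$, decompose the open set $\{{\mathfrak{C}}_{q,\kappa}u>\lambda\}$ (openness: the Carleson functional is a sup of averages over surface balls, hence lower semicontinuous by the same argument as in Proposition~\ref{DJrt}(2)) into Whitney surface balls $\Delta_j$, and show that on the portion of $\Delta_j$ where ${\mathscr{A}}_{q,\kappa'}u$ is small one controls the local Carleson average by the local area integral. This is where Lemma~\ref{ap+YH} (to pass from $\int_{{\mathcal{T}}_\kappa(\Delta)}|u|^q\sigma(\pi_y^\kappa)\,d\mu$ to $\int_{\Delta}{\mathscr{A}}_{q,\kappa}(u)^q\,d\sigma$, up to aperture change) and Lemma~\ref{ap+YH-2} (to compare apertures inside a density-point subset) do the heavy lifting, and where the set of $\gamma$-density points $(\Delta_j)^*_\gamma$ from Proposition~\ref{DJrt} enters exactly as in \cite{CoMeSt}: on $\Delta_j\setminus(\Delta_j)^*_\gamma$ one loses only $\tfrac{C}{1-\gamma}\sigma(\Delta_j)$ by Proposition~\ref{DJrt}(3), which can be absorbed into the right-hand side of \eqref{planGL} by choosing $\gamma$ close to $1$; on $(\Delta_j)^*_\gamma$ the density property lets one dominate the Carleson mass over the tent by $\beta^q\lambda^q\sigma(\Delta_j)$. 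Once \eqref{planGL} is in hand, the passage to $L^{p,r}$ bounds for every $p>q$ and $r\in(0,\infty]$ is the standard good-$\lambda$ integration argument (choose $\beta$ small depending on $p/q>1$), and a final application of Theorem~\ref{appert} removes the aperture mismatch between $\kappa$ and $\kappa'$. The two endpoint bounds in part (3), namely the weak-type $(q,q)$ estimate and the $L^\infty$ estimate in \eqref{Tgbcc-7Y}, come out of the same good-$\lambda$ inequality: the weak-$(q,q)$ bound is the case $p=q$ read off directly from \eqref{planGL} (taking $\beta\to 0$ gives $\sigma(\{{\mathfrak{C}}_{q,\kappa}u>\lambda\})\lesssim\lambda^{-q}\|{\mathscr{A}}_{q,\kappa}u\|_{L^q}^q$), and the $L^\infty$ bound is immediate from Lemma~\ref{ap+YH} together with the definition of ${\mathfrak{C}}_{q,\kappa}$, since if ${\mathscr{A}}_{q,\kappa'}u\leq M$ $\sigma$-a.e.\ then for every surface ball $\Delta$ one has $\int_{{\mathcal{T}}_\kappa(\Delta)}|u|^q\sigma(\pi_y^\kappa)\,d\mu\leq\int_\Delta{\mathscr{A}}_{q,\kappa}(u)^q\,d\sigma\leq M^q\sigma(\Delta)$ after the aperture adjustment. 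Finally, \eqref{sbrn} follows by combining parts (1) and (2) at $r=p$.

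\emph{Anticipated difficulty.} The real work is \eqref{planGL}: in the Euclidean tent-space setting the geometry of cones and cubes makes the comparison of the Carleson average on a tent with the area integral on its base almost tautological, whereas here every such comparison must be routed through the aperture-change theorem (Theorem~\ref{appert}), the tent/fan containments of Lemma~\ref{T-LL.2}, and the ``conical projection'' identities of Lemma~\ref{ap+YH}--\ref{ap+YH-2}, with the density-point set $(\Delta)^*_\gamma$ inserted to convert a pointwise statement into a measure-theoretic one. Keeping track of which aperture is in force at each stage, and verifying that all the constants produced by Proposition~\ref{H-S-Z}, Proposition~\ref{DJrt} and the doubling inequality \eqref{Doub-2} depend only on the ambient geometry (and not on $u$ or $\lambda$), is the bookkeeping that will occupy most of the actual proof.
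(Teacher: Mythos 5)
There are two genuine gaps. First, in your part (1) the local claim that $\int_{\Delta_j}({\mathscr{A}}_{q,\kappa}u)^q\,d\sigma\leq C\,\sigma(\Delta_j)\inf_{\widehat\Delta_j}({\mathfrak{C}}_{q,\kappa}u)^q$ is false: the cones $\Gamma_\kappa(x)$, $x\in\Delta_j$, reach all heights, and only their truncation at height $\approx r_j$ is captured by tents over balls comparable to $\Delta_j$. Concretely, take $|u|^q=\sum_{k=1}^N a_k{\mathbf{1}}_{S_k}$ with $S_k\approx\{y:\delta_E(y)\approx 2^k,\ \rho_{\#}(y,x_0)\lesssim 2^k\}$ and $a_k$ chosen so each shell contributes unit mass to the cone integral; then $\int_{\Delta_j}({\mathscr{A}}_{q,\kappa}u)^q\approx N\sigma(\Delta_j)$ for a small ball $\Delta_j$ near $x_0$, while ${\mathfrak{C}}_{q,\kappa}u$ stays bounded uniformly in $N$ because the weight $\sigma(\pi_y^\kappa)/\sigma(\Delta)$ decays geometrically below the top scale of each competing ball $\Delta$. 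So no such bound holds with a constant independent of $u$, and summing over Whitney balls cannot start. The paper's actual mechanism is the good-$\lambda$ inequality \eqref{kbFF}: one Whitney-decomposes the level set ${\mathcal{O}}_\lambda=\{{\mathscr{A}}_{q,\kappa'}u>\lambda\}$ at a strictly larger aperture $\kappa'>\kappa$, splits $u=u{\mathbf{1}}_{\{\delta_E\geq r_j\}}+u{\mathbf{1}}_{\{\delta_E<r_j\}}$ on each Whitney ball $B_j$, controls the high part pointwise by ${\mathscr{A}}_{q,\kappa'}u(z_j)\leq\lambda$ at a point $z_j\in E\setminus{\mathcal{O}}_\lambda$ close to $B_j$ (the containment \eqref{kbFF-7} is exactly where $\kappa'>\kappa$ is indispensable), and only the low part by the Carleson functional via the tent containment \eqref{kbFF-11} and Tschebyshev; one further needs the truncations \eqref{kbFF-3CC} and the properness assumption \eqref{NON-ZE78} (removed at the end via the bounded/unbounded dichotomy) to justify the absorption. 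None of this structure is present in your sketch, and without it the argument fails.

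Second, your key inequality for part (2), $\sigma(\{{\mathfrak{C}}_{q,\kappa}u>\lambda,\ {\mathscr{A}}_{q,\kappa'}u\leq\beta\lambda\})\leq C\beta^q\,\sigma(\{{\mathfrak{C}}_{q,\kappa}u>\lambda/2\})$, cannot hold in general: inserted into the standard layer-cake/absorption argument it would yield $\|{\mathfrak{C}}_{q,\kappa}u\|_{L^p(E,\sigma)}\leq C\|{\mathscr{A}}_{q,\kappa}u\|_{L^p(E,\sigma)}$ for \emph{every} $p\in(0,\infty)$, in particular $p=q$, which is exactly the estimate known to fail (Remark~\ref{yafg-76r}, the Coifman--Meyer--Stein counterexample for $p=q=2$). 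The restriction $p>q$ must enter structurally, and in the paper it does so through the pointwise bound \eqref{KFf-1}, ${\mathfrak{C}}_{q,\kappa}u(x_0)\leq C\bigl[M_E\bigl(({\mathscr{A}}_{q,\kappa}u)^q\bigr)(x_0)\bigr]^{1/q}$, an elementary consequence of the Fubini identity of Lemma~\ref{ap+YH} together with \eqref{3.2.BN} and Lemma~\ref{T-LL.2}$(ii)$; parts (2) and (3) then follow from the boundedness of $M_E$ on $L^{p/q,r/q}(E,\sigma)$ for $p/q>1$, its weak $(1,1)$ bound at $p=q$, and its $L^\infty$ bound at $p=\infty$. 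Your $L^\infty$ argument is in effect this pointwise estimate and is fine, but the Lorentz bounds and the weak-$(q,q)$ endpoint in your plan rest on the false good-$\lambda$ inequality and need to be rerouted through \eqref{KFf-1}.
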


\begin{proof}
Fix $q\in(0,\infty)$ and define 
\begin{eqnarray}\label{CPP-77}
c_q:=\left\{
\begin{array}{l}
2^{(1/q)-1}\,\,\mbox{ if }\,\,q<1,
\\[4pt]
1\,\,\mbox{ if }\,\,q\geq 1.
\end{array}
\right.
\end{eqnarray}
Also, fix an arbitrary $\mu$-measurable 
function $u:{\mathscr{X}}\setminus E\to\overline{\mathbb{R}}$. 
We claim that the following good-$\lambda$ inequality is valid:
\begin{eqnarray}\label{kbFF}
\begin{array}{c}
\forall\,\kappa>0,\,\,\,\exists\,\kappa'>\kappa,\,\,\,
\exists\,c\in(0,\infty)\,
\mbox{ such that $\forall\,\gamma\in(0,1]$, $\forall\,\lambda\in(0,\infty)$, there holds}
\\[8pt]
\sigma\bigl(\{x\in E:({\mathscr{A}}_{q,\kappa} u)(x)>2c_q\lambda,
({\mathfrak{C}}_{q,\kappa}u)(x)\leq\!\gamma\lambda\}\bigr)
\leq c\,\gamma^q\sigma\bigl(\{x\in E:\,
({\mathscr{A}}_{q,\kappa'}u)(x)>\lambda\}\bigr),
\end{array}
\end{eqnarray}
where the constant $c\in(0,\infty)$ is independent of $u$.
Suppose for now that the above claim is true. Then, if $\kappa>0$ is fixed, 
let $\kappa',c$ be as in \eqref{kbFF}. Hence, for each fixed $\gamma\in(0,1]$
and every $\lambda>0$ we have 
\begin{eqnarray}\label{kbFF-2}
&&\hskip -0.50in
\sigma\bigl(\{x\in E:\,({\mathscr{A}}_{q,\kappa}u)(x)>2c_q\lambda\}\bigr)
\\[4pt]
&& \hskip 0.30in
\leq\sigma\bigl(\{x\in E:\,({\mathfrak{C}}_{q,\kappa}u)(x)>\gamma\lambda\}\bigr)
+c\,\gamma^q\sigma\bigl(\{x\in E:\,({\mathscr{A}}_{q,\kappa'}u)(x)>\lambda\}\bigr).
\nonumber
\end{eqnarray}
Thus, if we multiply the inequality in \eqref{kbFF-2} by $p\lambda^{p-1}$ and 
then integrate in $\lambda\in(0,\infty)$, we obtain
\begin{eqnarray}\label{kbFF-3}
(2c_q)^{-p}\|{\mathscr{A}}_{q,\kappa}u\|_{L^p(E,\sigma)}^p
\leq\gamma^{-p}\|{\mathfrak{C}}_{q,\kappa}u\|_{L^p(E,\sigma)}^p
+c\,\gamma^q\|{\mathscr{A}}_{q,\kappa'}u\|_{L^p(E,\sigma)}^p,
\qquad\forall\,\gamma\in(0,1].
\end{eqnarray}
Since from Theorem~\ref{appert} we know that there exists a finite constant $C>0$
depending only on $\kappa,\kappa',p,q$ and geometry, 
such that $\|{\mathscr{A}}_{q,\kappa'}u\|_{L^p(E,\sigma)}
\leq C\|{\mathscr{A}}_{q,\kappa}u\|_{L^p(E,\sigma)}$, we arrive at 
\begin{eqnarray}\label{kbFF-3BB}
(2c_q)^{-p}\|{\mathscr{A}}_{q,\kappa}u\|_{L^p(E,\sigma)}^p
\leq\gamma^{-p}\|{\mathfrak{C}}_{q,\kappa}u\|_{L^p(E,\sigma)}^p
+c\,C^p\,\gamma^q\|{\mathscr{A}}_{q,\kappa}u\|_{L^p(E,\sigma)}^p,
\quad\forall\,\gamma\in(0,1].
\end{eqnarray}
In order to hide the last term in the right-hand side into the left-hand side, 
fix a point $x_0\in E$ and, for each $j\in{\mathbb{N}}$, consider 
\begin{eqnarray}\label{kbFF-3CC}
u_j:=\min\,\{|u|,j\}\cdot{\mathbf{1}}_{B_{\rho_{\#}}(x_0,j)\setminus E}\qquad
\mbox{on }\,\,{\mathscr{X}}\setminus E. 
\end{eqnarray}
Observe that 
\begin{eqnarray}\label{kbFF-3DD-1}
{\rm supp}\,\bigl({\mathscr{A}}_{q,\kappa}u_j\bigr)\subseteq 
B_{\rho_{\#}}\bigl(x_0,C_\rho(1+\kappa)j\bigr),\qquad
0\leq{\mathscr{A}}_{q,\kappa}u_j
\leq j\cdot\mu\bigl(B_{\rho_{\#}}(x_0,j)\bigr)^{1/q}<\infty,
\end{eqnarray}
where the last inequality uses the fact that $\mu$ is locally finite.
In turn, this implies that 
\begin{eqnarray}\label{kbFF-3DD2}
\|{\mathscr{A}}_{q,\kappa}u_j\|_{L^p(E,\sigma)}
\leq j\cdot\mu\bigl(B_{\rho_{\#}}(x_0,j)\bigr)^{1/q}
\sigma\bigl(E\cap B_{\rho_{\#}}(x_0,C_\rho(1+\kappa)j)\bigr)^{1/p}<\infty,
\end{eqnarray}
since $\sigma$ is locally finite. Hence, if we choose $\gamma\in(0,1]$ 
so that $(2c_q)^{-p}>2cC^p\,\gamma^q$, then we obtain from \eqref{kbFF-3BB} 
written with $u$ replaced by $u_j$
\begin{eqnarray}\label{kbFF-3DD}
\|{\mathscr{A}}_{q,\kappa}u_j\|_{L^p(E,\sigma)}^p
\leq C\|{\mathfrak{C}}_{q,\kappa}u_j\|_{L^p(E,\sigma)}^p,\qquad\forall\,j\in{\mathbb{N}},
\end{eqnarray}
for some $C\in(0,\infty)$ independent of $j$. Note that 
$0\leq {\mathfrak{C}}_{q,\kappa}u_j\leq{\mathfrak{C}}_{q,\kappa}u$ pointwise in $E$ 
and that $u_j\nearrow |u|$ pointwise $\mu$-a.e. on ${\mathscr{X}}\setminus E$
implies ${\mathscr{A}}_{q,\kappa}u_j\nearrow {\mathscr{A}}_{q,\kappa}u$ everywhere on $E$ 
by Lebesgue's Monotone Convergence Theorem. Based on these observations, 
\eqref{kbFF-3DD} and Fatou's lemma we may then conclude that 
\begin{eqnarray}\label{kbFF-3EE}
\|{\mathscr{A}}_{q,\kappa}u\|_{L^p(E,\sigma)}^p
&=& \int_{E}\liminf_{j\to\infty}\bigl[{\mathscr{A}}_{q,\kappa}u_j\bigr]^p\,d\sigma
\leq\liminf_{j\to\infty}\int_{E}\bigl[{\mathscr{A}}_{q,\kappa}u_j\bigr]^p\,d\sigma
\nonumber\\[4pt]
&=& \liminf_{j\to\infty}\|{\mathscr{A}}_{q,\kappa}u_j\|_{L^p(E,\sigma)}^p
\leq C\|{\mathfrak{C}}_{q,\kappa}u\|_{L^p(E,\sigma)}^p.
\end{eqnarray}
That is, granted \eqref{kbFF}, we have 
\begin{eqnarray}\label{kbFF-3UU}
\|{\mathscr{A}}_{q,\kappa}u\|_{L^p(E,\sigma)}^p
\leq C\|{\mathfrak{C}}_{q,\kappa}u\|_{L^p(E,\sigma)}^p.
\end{eqnarray}
Thus, to finish the proof of part {\it (1)} of the statement of the theorem, we 
are left with proving \eqref{kbFF}. Fix $\kappa'>\kappa>0$ along with $\gamma\in(0,1]$, 
then for an arbitrary $\lambda>0$ define the set 
\begin{eqnarray}\label{xfbl}
{\mathcal{O}}_{\lambda}:=
\bigl\{x\in E:\,({\mathscr{A}}_{q,\kappa'}u)(x)>\lambda\bigr\}.
\end{eqnarray}
By Lemma~\ref{semi-cont}, ${\mathcal{O}}_{\lambda}$ is an open 
subset of $(E,\tau_{\rho|_{E}})$. Also, since ${\mathscr{A}}_{q,\kappa'}u
\geq{\mathscr{A}}_{q,\kappa}u$ pointwise in $E$, we conclude that 
\begin{eqnarray}\label{yrTYY97}
\{x\in E:\,({\mathscr{A}}_{q,\kappa}u)(x)>2c_q\lambda\}
\subseteq{\mathcal{O}}_{\lambda}.
\end{eqnarray}
If ${\mathcal{O}}_{\lambda}=\emptyset$, then by \eqref{yrTYY97} the 
inequality in the second line of \eqref{kbFF} is trivially satisfied. 
Therefore, assume that ${\mathcal{O}}_{\lambda}\not=\emptyset$ in what follows. 
Let us also make the assumption (which will be eliminated {\it a posteriori}) that 
\begin{eqnarray}\label{NON-ZE78}
\begin{array}{c}
\mbox{the $\mu$-measurable function $u:{\mathscr{X}}\setminus E\to\overline{\mathbb{R}}$ 
is such that}
\\[4pt]
\mbox{${\mathcal{O}}_\lambda$ from \eqref{xfbl} is a proper subset of $E$ 
for each $\lambda>0$}.
\end{array}
\end{eqnarray}
In such a scenario, for a fixed, suitably chosen $\lambda_o>1$ we may 
apply Proposition~\ref{H-S-Z} (with $\lambda$ there replaced by $\lambda_o$) 
to obtain a Whitney covering of ${\mathcal{O}}_{\lambda}$ by balls, relative 
to $(E,\rho|_{E})$, which we may assume (given the freedom of choosing 
the parameter $\lambda_o$, and \eqref{RHo-evv}) 
to be of the form $B_j:=E\cap B_{\rho_{\#}}(x_j,r_j)$,
$j\in{\mathbb{N}}$, satisfying properties {\it (1)-(4)} in Proposition~\ref{H-S-Z} 
for some $\Lambda>\lambda_o$. If we now prove that 
\begin{eqnarray}\label{kbFF-5}
\begin{array}{c}
\mbox{$\exists\,\kappa'>\kappa$ and $\exists\,c\in(0,\infty)$
such that $\forall\,\gamma\in(0,1]$, $\forall\,\lambda\in(0,\infty)$, there holds}
\\[6pt]
\sigma\bigl(\{x\in B_j:({\mathscr{A}}_{q,\kappa}u)(x)>2c_q\lambda,\,
({\mathfrak{C}}_{q,\kappa}u)(x)\leq\!\gamma\lambda\}\bigr)
\leq c\,\gamma^q\sigma(B_j)\,\mbox{ for every }\,j\in{\mathbb{N}},
\end{array}
\end{eqnarray}
then combining \eqref{kbFF-5} with \eqref{yrTYY97} and properties 
{\it (1)-(2)} from Proposition~\ref{H-S-Z}, we may estimate
\begin{eqnarray}\label{kbFF-6}
&& \hskip -0.40in
\sigma\bigl(\{x\in E:({\mathscr{A}}_{q,\kappa}u)(x)>2c_q\lambda,
({\mathfrak{C}}_{q,\kappa}u)(x)\leq\!\gamma\lambda\}\bigr)
\nonumber\\[4pt]
&&\hskip 0.20in
=\sigma\bigl(\{x\in {\mathcal{O}}_\lambda:\,
({\mathscr{A}}_{q,\kappa}u)(x)>2c_q\lambda,\,
({\mathfrak{C}}_{q,\kappa}u)(x)\leq\!\gamma\lambda\}\bigr)
\nonumber\\[4pt]
&&\hskip 0.20in
\leq\sum\limits_{j=1}^\infty
\sigma\bigl(\{x\in B_j:({\mathscr{A}}_{q,\kappa}u)(x)>2c_q\lambda,
({\mathfrak{C}}_{q,\kappa}u)(x)\leq\!\gamma\lambda\}\bigr)
\leq c\,\gamma^q\sum\limits_{j=1}^\infty\sigma(B_j)
\nonumber\\[4pt]
&&\hskip 0.20in
\leq C\gamma^q\sigma({\mathcal{O}}_\lambda).
\end{eqnarray}
Hence, \eqref{kbFF} follows. 

Turning now to the proof of \eqref{kbFF-5}, fix $j\in{\mathbb{N}}$, 
and note that without loss of generality we may assume that 
\begin{eqnarray}\label{G.bbn-88}
\bigl\{x\in B_j:({\mathscr{A}}_{q,\kappa}u)(x)>2c_q\lambda,\,\,
({\mathfrak{C}}_{q,\kappa}u)(x)\leq\!\gamma\lambda\bigr\}\not=\emptyset,
\end{eqnarray}
since otherwise there is nothing to prove. Decompose 
$u=u{\mathbf{1}}_{\{\delta_E\geq r_j\}}+u{\mathbf{1}}_{\{\delta_E<r_j\}}=:u_1+u_2$
and let $z_j\in E\setminus{\mathcal{O}}_\lambda$ be such that 
$\rho_{\#}(x_j,z_j)\leq\Lambda r_j$ (the existence of $z_j$ is guaranteed 
by property {\it (3)} in Proposition~\ref{H-S-Z}). We claim that
\begin{eqnarray}\label{kbFF-7}
\begin{array}{c}
\mbox{there exists }\,\,\kappa'>\kappa\,\,\mbox{ independent of $j\in{\mathbb{N}}$ 
with the property that }
\\[4pt]
\mbox{if }\,x\in B_j\,\mbox{ and }\,y\in\Gamma_\kappa(x)\,
\mbox{ is such that }\,\delta_E(y)\geq r_j\,
\mbox{ then }\,y\in\Gamma_{\kappa'}(z_j).
\end{array}
\end{eqnarray}
Indeed, if $x\in B_j$, we have 
$\rho_{\#}(x,z_j)\leq C_{\rho_{\#}}\max\{\rho_{\#}(x,x_j),\rho_{\#}(x_j,z_j)\}
\leq C_{\rho}\Lambda r_j$. Hence, if $y\in\Gamma_\kappa(x)$ is such that 
$\delta_E(y)\geq r_j$ then 
\begin{eqnarray}\label{kbFF-8}
\rho_{\#}(y,z_j) &\leq & C_{\rho_{\#}}\max\{\rho_{\#}(y,x),\rho_{\#}(x,z_j)\}
\leq C_{\rho}\max\{(1+\kappa)\delta_E(y),C_{\rho}\Lambda r_j\}
\nonumber\\[4pt]
&\leq& C_{\rho}\max\{(1+\kappa),C_{\rho}\Lambda\}\delta_E(y).
\end{eqnarray}
Now we choose $\kappa'>C_{\rho}\max\{(1+\kappa),C_{\rho}\Lambda\}-1$, so then 
$\kappa'>\kappa$, and $\kappa'$ depends only on finite positive geometrical constants 
(hence, in particular, it is independent of $j\in{\mathbb{N}}$). 
Based on \eqref{kbFF-8} we obtain that \eqref{kbFF-7} holds true for this 
choice of $\kappa'$. Then, using \eqref{kbFF-7} and recalling that 
$z_j\in E\setminus{\mathcal{O}}_{\lambda}$, we may write 
\begin{eqnarray}\label{kbFF-9}
({\mathscr{A}}_{q,\kappa}u_1)(x)^q &=&
\int\limits_{\stackrel{y\in\Gamma_\kappa(x)}{\delta_E(y)\geq r_j}}|u(y)|^q\,d\mu(y)
\leq\int\limits_{y\in\Gamma_{\kappa'}(z_j)}|u(y)|^q\,d\mu(y)
\nonumber\\[6pt]
&=& ({\mathscr{A}}_{q,\kappa'}u)(z_j)^q\leq\lambda^q,
\qquad\forall\,x\in B_j.
\end{eqnarray}
Next, we make use of \eqref{Mix+FR} to write
\begin{eqnarray}\label{kbFF-10}
\int_{B_j}({\mathscr{A}}_{q,\kappa}u_2)(x)^q\,d\sigma(x)
&=& \int\limits_{y\in{\mathcal{F}}_\kappa(B_j),\,\delta_E(y)<r_j}|u(y)|^q
\sigma\bigl(B_j\cap\pi^\kappa_y\bigr)\,d\mu(y)
\nonumber\\[4pt]
&\leq & \int\limits_{y\in{\mathcal{F}}_\kappa(B_j),\,\delta_E(y)<r_j}|u(y)|^q
\sigma(\pi^\kappa_y)\,d\mu(y).
\end{eqnarray}

In order to proceed further, first make a geometrical observation to 
the effect that (using notation introduced in \eqref{ktEW-7UU})
\begin{eqnarray}\label{kbFF-11}
\begin{array}{c}
\mbox{there exists a finite constant $c_o>0$ such that for every $r>0$ 
and every $x_0\in E$}
\\[4pt]
\mbox{if }\,y\in{\mathcal{F}}_\kappa(\Delta(x_0,r))\,\mbox{ and }\,\delta_E(y)<r
\,\mbox{ then }\,y\in {\mathcal{T}}_\kappa\bigl(E\cap B_{\rho_{\#}}({w},c_or)\bigr)
\,\,\forall\,{w}\in\Delta(x_0,r).
\end{array}
\end{eqnarray}
To see why this is true, consider a point $y\in{\mathcal{F}}_\kappa(\Delta(x_0,r))$ 
with the property that $\delta_E(y)<r$. Then there exists $x\in\Delta(x_0,r)$ 
such that $\rho_{\#}(y,x)<(1+\kappa)\delta_E(y)<(1+\kappa)r$.
Let ${w}\in\Delta(x_0,r)$ be arbitrary and note that
\begin{eqnarray}\label{k-RF5FF}
\rho_{\#}(x,{w})\leq C_{\rho_{\#}}\max\{\rho_{\#}(x,x_0),\rho_{\#}(x_0,{w})\}
<C_\rho r.
\end{eqnarray}
Accordingly, choosing $c_o>C_\rho$ forces $x\in E\cap B_{\rho_{\#}}({w},c_or)$ 
hence, further, 
\begin{eqnarray}\label{k-R446}
{\rm dist}_{\rho_{\#}}\bigl(y, E\cap B_{\rho_{\#}}({w},c_or)\bigr)\leq
\rho_{\#}(y,x)<(1+\kappa)\delta_E(y).
\end{eqnarray}
Let us also observe that 
\begin{eqnarray}\label{kbFF-12}
\rho_{\#}({w},y) &\leq & C_{\rho_{\#}}\max\{\rho_{\#}({w},x),\rho_{\#}(x,y)\}
\nonumber\\[4pt]
&\leq & C_{\rho_{\#}}\max\Bigl\{
C_{\rho_{\#}}\max\{\rho_{\#}({w},x_0),\rho_{\#}(x_0,x)\},\,\rho_{\#}(x,y)\Bigr\}
\nonumber\\[4pt]
&\leq & C_{\rho}\,\max\{C_\rho,1+\kappa\}r.
\end{eqnarray}
Thus, if $z\in E\setminus B_{\rho_{\#}}({w},c_or)$, making use 
of \eqref{kbFF-12} we obtain
\begin{eqnarray}\label{kbFF-14}
c_o r &\leq & \rho_{\#}(z,{w})
\leq C_{\rho_{\#}}\max\{\rho_{\#}(z,y),\rho_{\#}(y,{w})\}
\nonumber\\[4pt]
&\leq & C_{\rho}\max\Bigl\{\rho_{\#}(z,y),\,C_{\rho}\,\max\{C_\rho,1+\kappa\}r\Bigr\}
=C_{\rho}\,\rho_{\#}(z,y),
\end{eqnarray}
where the last equality is necessarily true if we take 
$c_o>C_\rho^2\max\{C_\rho,1+\kappa\}$
(given the nature of the left-most side of \eqref{kbFF-14}).
Consequently, for this choice of $c_o$, \eqref{kbFF-14} gives that 
\begin{eqnarray}\label{kbD-5}
\rho_{\#}(y,z)\geq\frac{c_o}{C_\rho}r>\frac{c_o}{C_\rho}\delta_E(y),\qquad\forall\, 
z\in E\setminus B_{\rho_{\#}}({w},c_or)
\end{eqnarray}
hence, if we also assume $c_o\geq C_\rho(1+\kappa)^2$, then 
\begin{eqnarray}\label{kbD-5BB2}
{\rm dist}_{\rho_{\#}}\bigl(y,E\setminus B_{\rho_{\#}}({w},c_or)\bigr)
\geq(1+\kappa)^2\delta_E(y).
\end{eqnarray}
Together, \eqref{k-R446} and \eqref{kbD-5BB2} allow us to conclude that 
if $c_o>\max\bigl\{ C_\rho(1+\kappa)^2,C_\rho^3,C_\rho^2(1+\kappa)\bigr\}$ then
\begin{eqnarray}\label{kbD-5BB3}
{\rm dist}_{\rho_{\#}}\bigl(y,E\cap B_{\rho_{\#}}({w},c_or)\bigr)
\leq(1+\kappa)^{-1}
{\rm dist}_{\rho_{\#}}\bigl(y,E\setminus B_{\rho_{\#}}({w},c_or)\bigr).
\end{eqnarray}
In light of \eqref{3.2.64}, we deduce from \eqref{kbD-5BB3} that
$y\in {\mathcal{T}}_\kappa\bigl(E\cap B_{\rho_{\#}}({w},c_or)\bigr)$ when $c_o$ is chosen as above. This completes the proof of \eqref{kbFF-11}.

Combining  \eqref{kbFF-10} with \eqref{kbFF-11} (the latter applied with 
$B_j$ in place of $\Delta(x_0,r)$), and keeping in mind that
\begin{eqnarray}\label{kbFF-SA+ii}
\mbox{$\sigma(B_j)\approx\sigma\bigl(E\cap B_{\rho_{\#}}({w},c_or_j)\bigr)$, 
uniformly in $j\in{\mathbb{N}}$ and ${w}\in B_j$},
\end{eqnarray}
which is a consequence of \eqref{Doub-2}, we may then estimate 
\begin{eqnarray}\label{kbFF-15}
\tfrac{1}{\sigma(B_j)}\int_{B_j}
({\mathscr{A}}_{q,\kappa}u_2)(x)^q\,d\sigma(x)
&\leq & \tfrac{C}{\sigma(B_j)}
\int\limits_{y\in{\mathcal{F}}_\kappa(B_j),\,\delta_E(y)<r_j}|u(y)|^q
\sigma(\pi^\kappa_y)\,d\mu(y)
\nonumber\\[4pt]
&\leq & \tfrac{C}{\sigma\bigl(E\cap B_{\rho_{\#}}({w},c_or_j)\bigr)}
\int_{{\mathcal{T}}_\kappa\bigl(E\cap B_{\rho_{\#}}({w},c_or_j)\bigr)}
|u(y)|^q\sigma(\pi^\kappa_y)\,d\mu(y)
\nonumber\\[4pt]
&\leq & C\,\inf_{{w}\in B_j}
\bigl[({\mathfrak{C}}_{q,\kappa}u)({w})\bigr]^q
\leq C\gamma^q\lambda^q,
\end{eqnarray}
where for the last inequality in \eqref{kbFF-15} we have used the assumption
\eqref{G.bbn-88}. In concert with Tschebyshev's inequality, 
\eqref{kbFF-15} gives that
\begin{eqnarray}\label{kbFF-16}
\sigma\bigl(\{x\in B_j:\,({\mathscr{A}}_{q,\kappa}u_2)(x)>\lambda\}\bigr)
\leq C\gamma^q\sigma(B_j),
\end{eqnarray}
for some $C\in(0,\infty)$ independent of $\gamma\in(0,1]$ and $j\in{\mathbb{N}}$.
Also, in view of \eqref{kbFF-9}, we obtain
\begin{eqnarray}\label{ygfgf-654}
\bigl\{x\in B_j:\,({\mathscr{A}}_{q,\kappa}u)(x)>2c_q\lambda\bigr\}
\subseteq\bigl\{x\in B_j:\,({\mathscr{A}}_{q,\kappa}u_2)(x)>\lambda\bigr\},
\end{eqnarray}
since pointwise on $E$ we have ${\mathscr{A}}_{q,\kappa}u\leq c_q\bigl({\mathscr{A}}_{q,\kappa}u_1
+{\mathscr{A}}_{q,\kappa}u_2\bigr)$, where $c_q$ is as in \eqref{CPP-77}. Combined with \eqref{kbFF-16}, this yields the inequality in \eqref{kbFF-5}. The proof of part {\it (1)} of the theorem is then complete, provided we dispense with 
the additional hypothesis in \eqref{NON-ZE78}. To do this, we distinguish two cases. 

\vskip 0.08in
\noindent{\tt Case~I:} {\it Assume that ${\rm diam}_{\rho}(E)=\infty$}. 
An inspection of the proof reveals that estimate \eqref{kbFF-3BB} has only been 
utilized with $u_j$ (from \eqref{kbFF-3CC}) in place of $u$. As such, we only need 
to know that $\bigl\{x\in E:\,({\mathscr{A}}_{q,\kappa'}u_j)(x)>\lambda\bigr\}$ 
is a proper subset of $E$ for each $j\in{\mathbb{N}}$ and each $\lambda>0$.
However, in the case we are currently considering, this follows by observing that,
on the one hand, $\sigma(E)=\infty$ by \eqref{DIA-MEA}, while on the other hand
$\sigma\bigl(\bigl\{x\in E:\,({\mathscr{A}}_{q,\kappa'}u_j)(x)>\lambda\bigr\}\bigr)
<\infty$ by \eqref{kbFF-3DD2} and Tschebyshev's inequality.

\vskip 0.08in
\noindent{\tt Case~II:} {\it Assume that ${\rm diam}_{\rho}(E)<\infty$}. 
Recall from \eqref{DIA-MEA} that this entails $\sigma(E)<\infty$, and set 
$R:={\rm diam}_{\rho_{\#}}(E)\in(0,\infty)$. For some positive, small number
$\varepsilon_o$, to be specified later, decompose
$|u|=u'+u'':=|u|{\mathbf{1}}_{\{\delta_E(\cdot)<\varepsilon_o R\}}
+|u|{\mathbf{1}}_{\{\delta_E(\cdot)\geq\varepsilon_o R\}}$.
Hence, $u',u''$ are $\mu$-measurable and $0\leq u',u''\leq |u|$. 
Note that for each $x\in E$, \eqref{ktEW-7} gives 
\begin{eqnarray}\label{kjvc+yh}
({\mathfrak{C}}_{q,\kappa}u'')(x) &\geq & \Bigl(\tfrac{1}{\sigma(E)}
\int_{{\mathscr{X}}\setminus E}u''(y)^q\sigma(\pi^\kappa_y)\,d\mu(y)\Bigr)^{1/q}
\nonumber\\[4pt]
&\geq & c\Bigl(\int\limits_{\stackrel
{y\in{\mathscr{X}}\setminus E}{\delta_E(y)\geq\varepsilon_o R}}
u''(y)^q\,d\mu(y)\Bigr)^{1/q}\geq c({\mathscr{A}}_{q,\kappa}u'')(x),
\end{eqnarray}
by taking $r>R$ in \eqref{ktEW-7UU} and recalling $(iv)$ in Lemma~\ref{T-LL.2},
and observing that there exists a constant $C\in(0,\infty)$ with the property that 
for each $y\in{\mathscr{X}}\setminus E$ we have (with $y_\ast$ 
and $\epsilon$ as in Lemma~\ref{lbDV})
\begin{eqnarray}\label{kjvc+yhZZ}
\sigma(\pi^\kappa_y)\geq
\sigma\bigl(E\cap B_{\rho_{\#}}(y_\ast,\epsilon\delta_E(y))\bigr)
\geq \sigma\bigl(E\cap B_{\rho_{\#}}(y_\ast,\epsilon\varepsilon_o R)\bigr)
\geq C\sigma(E),
\end{eqnarray}
where the last inequality is a consequence of the doubling condition on $\sigma$.
In turn, \eqref{kjvc+yh} and the monotonicity of the Carleson operator allow us to write 
\begin{eqnarray}\label{kjvc+yh2}
\|{\mathscr{A}}_{q,\kappa}u''\|_{L^p(E,\sigma)}
\leq C\|{\mathfrak{C}}_{q,\kappa}u''\|_{L^p(E,\sigma)}
\leq C\|{\mathfrak{C}}_{q,\kappa}u\|_{L^p(E,\sigma)}.
\end{eqnarray}
To proceed, set $\varepsilon_o:=\tfrac{1}{4C_\rho(1+\kappa')}$ and fix $x_1,x_2\in E$
satisfying $\rho_{\#}(x_1,x_2)>R/2$. We claim that these choices guarantee that 
\begin{eqnarray}\label{yFg-H}
\Gamma_{\kappa'}(x_1)\cap\Gamma_{\kappa'}(x_2)\subseteq
\{x\in{\mathscr{X}}\setminus E:\,\delta_E(x)>\varepsilon_0 R\}.
\end{eqnarray}
Indeed, if $y\in \Gamma_{\kappa'}(x_1)\cap\Gamma_{\kappa'}(x_2)$ then 
$\rho_{\#}(y,x_j)<(1+\kappa')\delta_E(y)$ for $j=1,2$ and we have
$R/2<\rho_{\#}(x_1,x_2)\leq C_\rho\max\{\rho_{\#}(y,x_1),\rho_{\#}(y,x_2)\}
<C_\rho(1+\kappa')\delta_E(y)=\tfrac{1}{4\varepsilon_o}\delta_E(y)$, which shows
that the inclusion in \eqref{yFg-H} is true. If we now further decompose 
\begin{eqnarray}\label{yFg-Hii}
u'=u'_1+u'_2:=u'{\mathbf{1}}_{\Gamma_{\kappa'}(x_1)}
+u'\bigl(1-{\mathbf{1}}_{\Gamma_{\kappa'}(x_1)}\bigr)
\end{eqnarray}
then $0\leq u'_1,u'_2\leq u'$, and both $u'_1,u'_2$ are $\mu$-measurable.
Moreover, due to \eqref{yFg-H} and the fact that $u_1$ has support contained in the 
set $\{\delta_E(\cdot)<\varepsilon_o R\}$, we also obtain that 
$({\mathscr{A}}_{q,\kappa'}u'_1)(x_2)=0$ and $({\mathscr{A}}_{q,\kappa'}u'_2)(x_1)=0$.
The latter imply that the sets constructed according to the same recipe as  
${\mathcal{O}}_\lambda$ in \eqref{xfbl} but with $u$ replaced by either $u'_1$ 
or $u'_2$, are proper subsets of $E$ for every $\lambda>0$. 
Hence, hypothesis \eqref{NON-ZE78} holds for each of the functions $u'_1$, $u'_2$.
As such, the first part of the proof gives that \eqref{kbFF-3UU} holds with $u$ 
replaced by either $u'_1$ or $u'_2$. In concert, these give 
\begin{eqnarray}\label{kjvc+yh2ii}
\|{\mathscr{A}}_{q,\kappa}u'\|_{L^p(E,\sigma)}
&\leq & C\|{\mathscr{A}}_{q,\kappa}u'_1\|_{L^p(E,\sigma)}
+C\|{\mathscr{A}}_{q,\kappa}u'_2\|_{L^p(E,\sigma)}
\\[4pt]
&\leq & C\|{\mathfrak{C}}_{q,\kappa}u'_1\|_{L^p(E,\sigma)}
+C\|{\mathfrak{C}}_{q,\kappa}u'_2\|_{L^p(E,\sigma)}
\leq C\|{\mathfrak{C}}_{q,\kappa}u\|_{L^p(E,\sigma)}.
\nonumber
\end{eqnarray}
Together with \eqref{kjvc+yh2}, this then yields \eqref{kbFF-3UU} for the original 
function $u$. This finishes the treatment of Case~II and completes the proof of 
the estimate in part {\it (1)} of the theorem. 

Moving on to the proof of part {\it (2)}, the key step is establishing 
the pointwise estimate 
\begin{eqnarray}\label{KFf-1}
({\mathfrak{C}}_{q,\kappa}u)(x_0)
\leq C\bigl[M_E({\mathscr{A}}_{q,\kappa}u)^q(x_0)\bigr]^{\frac{1}{q}},
\qquad\forall\,x_0\in E,
\end{eqnarray}
for some $C\in(0,\infty)$ depending only on $\kappa,p,q$ and geometrical characteristics 
of the ambient space. To justify this, fix $r>0$, and let $\Delta$ be a ball of radius 
$r$ in $(E,(\rho|_{E})_{\#})$. Then, upon recalling \eqref{Mix+FR}, 
$(ii)$ in Lemma~\ref{T-LL.2}, and \eqref{3.2.BN}, we may write 
\begin{eqnarray}\label{kbFF-1B}
\int_{\Delta}({\mathscr{A}}_{q,\kappa}u)(x)^q\,d\sigma(x)
&=& \int_{{\mathcal{F}}_\kappa(\Delta)}|u(y)|^q
\sigma\bigl(\Delta\cap\pi^\kappa_y\bigr)\,d\mu(y)
\nonumber\\[4pt]
&\geq & \int_{{\mathcal{T}}_\kappa(\Delta)}|u(y)|^q
\sigma\bigl(\Delta\cap \pi^\kappa_y\bigr)\,d\mu(y)
\nonumber\\[4pt]
&=& \int_{{\mathcal{T}}_\kappa(\Delta)}|u(y)|^q
\sigma\bigl(\pi^\kappa_y\bigr)\,d\mu(y).
\end{eqnarray}
Now \eqref{KFf-1} follows from \eqref{kbFF-1B} by dividing the latter inequality by $\sigma(\Delta)$ and taking the supremum 
over all $\Delta$'s containing an arbitrary given point $x_0\in E$. 

With the pointwise estimate \eqref{KFf-1} in hand, whenever $p\in(q,\infty)$ 
and $r\in(0,\infty]$ we may use the boundedness of $M_E$ on the Lorentz space
$L^{p/q,r/q}(E,\sigma)$, which holds since $p/q>1$, and the general fact that 
for each $\alpha>0$ we have 
\begin{eqnarray}\label{Td-sUU}
\||f|^\alpha\|_{L^{p,r}(E,\sigma)}=C(p,r,\alpha)
\|f\|^{\alpha}_{L^{p\alpha,r\alpha}(E,\sigma)},
\end{eqnarray}
in order to write 
\begin{eqnarray}\label{KFf-1A}
\|{\mathfrak{C}}_{q,\kappa}u\|_{L^{p,r}(E,\sigma)}
&\leq & C\bigl\|[M_E({\mathscr{A}}_{q,\kappa}u)^q]^\frac{1}{q}\bigr\|_{L^{p,r}(E,\sigma)}
=C\|M_E({\mathscr{A}}_{q,\kappa}u)^q\|_{L^{p/q,r/q}(E,\sigma)}^\frac{1}{q}
\nonumber\\[4pt]
&\leq & C\|({\mathscr{A}}_{q,\kappa}u)^q\|_{L^{p/q,r/q}(E,\sigma)}^\frac{1}{q}
=C\|{\mathscr{A}}_{q,\kappa}u\|_{L^{p,r}(E,\sigma)},
\end{eqnarray}
as required.

There remains to observe that the two estimates in {\it (3)} are obtained
by a computation similar to \eqref{KFf-1A} that is based on \eqref{KFf-1}, 
the weak-$(1,1)$ boundedness of $M_E$, and the boundedness of $M_E$ on 
$L^\infty(E,\sigma)$. This finishes the proof of the theorem.
\end{proof}

\begin{remark}\label{yafg-76r}
The case $p=q=r$ of part {\it (2)} of Theorem~\ref{AsiC}, which corresponds 
to the estimate $\|{\mathfrak{C}}_{p,\kappa}u\|_{L^p(E,\sigma)}
\leq C\|{\mathscr{A}}_{p,\kappa}u\|_{L^p(E,\sigma)}$, fails in general. 
A counterexample in Euclidean space when $p=2$ is given in the remarks 
stated below Theorem~3 of~\cite{CoMeSt}.
\end{remark}

\subsection{Weak $L^p$ square function estimates imply $L^2$ square function estimates}
\label{SSect:5.3}

We are now in a position to consider $L^p$ versions of the $L^2$ square function estimates considered in Section~\ref{Sect:3} for integral operators $\Theta_E$. 
The main result is that $L^2$ square function estimates follow automatically from 
weak $L^p$ square function estimates for any $p\in(0,\infty)$. This is stated in Theorem~\ref{VGds-L2XXX} below. The result is achieved by combining the $T(1)$ 
theorem in Theorem~\ref{SChg} with a weak type John-Nirenberg lemma for Carleson 
measures based on Lemma~2.14 in \cite{AHLT} (see also \cite[Lemma IV.1.12]{DaSe93} 
for a similar result). 

\begin{theorem}\label{VGds-L2XXX} 
Let $0<d<m<\infty$. Assume that $({\mathscr{X}},\rho,\mu)$
is an $m$-dimensional {\rm ADR} space, $E$ is a closed subset 
of $({\mathscr{X}},\tau_\rho)$, and $\sigma$ is a Borel regular measure 
on $(E,\tau_{\rho|_{E}})$ with the property that $(E,\rho\bigl|_E,\sigma)$ is a 
$d$-dimensional {\rm ADR} space. Finally, suppose that $\Theta$ is the integral 
operator defined in \eqref{operator} with a kernel $\theta$ as in \eqref{K234}, 
\eqref{hszz}, \eqref{hszz-3}. 

Then whenever $\kappa,p,C_o\in(0,\infty)$ are such that for every 
surface ball $\Delta\subseteq E$ (cf. \eqref{ktEW-7UU}) 
\begin{eqnarray}\label{dtbh-L2iii}
\hskip -0.20in
\sigma\left(\Bigl\{x\in E:\,
\int_{\Gamma_{\kappa}(x)}|(\Theta{\mathbf{1}}_{\Delta})(y)|^2
\,\delta_E(y)^{2\upsilon-m}\,d\mu(y)>\lambda^2\Bigr\}\right)
\leq C_o\lambda^{-p}\sigma(\Delta),\quad\forall\,\lambda>0,
\end{eqnarray}
there exists some $C\in(0,\infty)$ which depends only on $\kappa,p,C_o$ 
and finite positive background constants (including ${\rm diam}_\rho(E)$ in the case when 
$E$ is bounded) with the property that 
\begin{eqnarray}\label{k-tSSiii}
\int\limits_{\mathscr{X}\setminus E}
|(\Theta f)(x)|^2\delta_E(x)^{2\upsilon-(m-d)}\,d\mu(x)
\leq C\int_E|f(x)|^2\,d\sigma(x),\qquad\forall\,f\in L^2(E,\sigma).
\end{eqnarray}
\end{theorem}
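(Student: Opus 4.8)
The plan is to deduce the Carleson measure condition \eqref{UEHg} from the weak-$L^p$ hypothesis \eqref{dtbh-L2iii}, after which the $T(1)$ Theorem (Theorem~\ref{SChg}) immediately yields \eqref{k-tSSiii}. The basic dictionary relating the ``vertical'' tent integrals in \eqref{UEHg} to the ``conical'' area integrals in \eqref{dtbh-L2iii} is that, since $(E,\rho|_E,\sigma)$ is $d$-ADR, $\sigma(\pi_y^\kappa)\approx\delta_E(y)^d$ (Lemma~\ref{lbDV} together with ADR), so by Fubini (Lemma~\ref{ap+YH}), for any surface ball $\Delta'$ and any $g$,
$$\int_{\mathscr{X}\setminus E}|(\Theta g)(y)|^2\delta_E(y)^{2\upsilon-m}\,\sigma(\Delta'\cap\pi_y^\kappa)\,d\mu(y)=\int_{\Delta'}\Bigl(\int_{\Gamma_\kappa(x)}|(\Theta g)(y)|^2\delta_E(y)^{2\upsilon-m}\,d\mu(y)\Bigr)d\sigma(x);$$
by \eqref{dFvK} and Lemma~\ref{T-LL.2}(vii), if $\Delta'$ is a surface ball of radius a large multiple of $\ell(Q)$ centered at $x_Q$, then $\pi_y^\kappa\subseteq\Delta'$ for $y\in T_E(Q)$, and hence $\int_{T_E(Q)}|(\Theta g)(y)|^2\delta_E(y)^{2\upsilon-(m-d)}\,d\mu(y)\lesssim\int_{\Delta'}\bigl(\int_{\Gamma_\kappa(x)}|(\Theta g)(y)|^2\delta_E(y)^{2\upsilon-m}d\mu(y)\bigr)d\sigma(x)$.

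Next I would localize. Fixing $Q_0\in\mathbb{D}(E)$ and a surface ball $\Delta_0$ of radius a sufficiently large multiple of $\ell(Q_0)$ centered at $x_{Q_0}$, split $1=\mathbf{1}_{\Delta_0}+\mathbf{1}_{E\setminus\Delta_0}$. For the far piece, \eqref{hszz} and Lemma~\ref{Gkwvr} give $|\Theta\mathbf{1}_{E\setminus\Delta_0}(x)|\lesssim\ell(Q_0)^{-(\upsilon-a)}\delta_E(x)^{-a}$ on $T_E(Q_0)$, which together with Lemma~\ref{geom-lem} contributes at most $C\sigma(Q_0)$ to \eqref{UEHg}, exactly as in the converse half of Theorem~\ref{SChg}; the same estimate handles the difference between $\Theta\mathbf{1}_{\Delta_0}$ and $\Theta\mathbf{1}_{\Delta_Q}$ for $Q\subseteq Q_0$, where $\Delta_Q:=\Delta(x_Q,C\ell(Q))$. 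Thus it suffices to bound $\int_{T_E(Q_0)}|\Theta\mathbf{1}_{\Delta_{Q_0}}|^2\delta_E^{2\upsilon-(m-d)}\,d\mu$ by $C\sigma(Q_0)$ uniformly in $Q_0$. To make everything a priori finite I would first replace $\Theta$ by the truncations $\Theta_i:=\mathbf{1}_{\{1/i<\delta_E<i\}}\Theta$ as in \eqref{LIH}; then $|\Theta_i\mathbf{1}_\Delta|\le|\Theta\mathbf{1}_\Delta|$ (so \eqref{dtbh-L2iii} persists with $\Theta_i$), $|\Theta_i\mathbf{1}_\Delta(x)|\lesssim i^{\upsilon}\mathbf{1}_{\{1/i<\delta_E<i\}}(x)$, and by Lemma~\ref{geom-lem} (as in \eqref{LIH-5}) the quantity $c_i:=\sup_{Q\in\mathbb{D}(E)}\sigma(Q)^{-1}\int_{T_E(Q)}|\Theta_i\mathbf{1}_{\Delta_Q}|^2\delta_E^{2\upsilon-(m-d)}d\mu$ is finite. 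The goal becomes $\sup_i c_i<\infty$, after which $|\Theta_i\mathbf{1}_\Delta|\nearrow|\Theta\mathbf{1}_\Delta|$ pointwise and Lebesgue's Monotone Convergence Theorem return the estimate for $\Theta$.

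The heart of the matter is a weak-type John--Nirenberg iteration for Carleson measures in the spirit of \cite[Lemma~2.14]{AHLT}. Since on a surface ball of finite measure the normalization $\|\cdot\|_{L^{p,\infty}(\Delta)}\lesssim\sigma(\Delta)^{1/p}$ self-improves to $\|\cdot\|_{L^{p',\infty}(\Delta)}\lesssim\sigma(\Delta)^{1/p'}$ for every $p'<p$, I may assume $p<q=2$; the argument underlying Theorem~\ref{appert} (Case~I) then also upgrades \eqref{dtbh-L2iii} to the same bound for an enlarged aperture $\kappa'>\kappa$, which the geometry below needs. Fix $Q_0$ and choose a large threshold $\lambda_0$ (depending only on $C_o,p,\kappa$ and geometry); applying \eqref{dtbh-L2iii} to a bounded dilate of $\Delta_0$ and using the kernel tails, the set $\mathcal{O}:=\{x\in E:\mathscr{A}_{2,\kappa'}(\delta_E^{\upsilon-m/2}\Theta_i\mathbf{1}_{\Delta_0})(x)>\lambda_0\}$ lies in a fixed dilate of $\Delta_0$ and satisfies $\sigma(\mathcal{O})\le CC_o\lambda_0^{-p}\sigma(Q_0)$. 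Converting to the conical side as above, $\int_{T_E(Q_0)}|\Theta_i\mathbf{1}_{\Delta_0}|^2\delta_E^{2\upsilon-(m-d)}d\mu\lesssim\int_E\mathscr{A}_{2,\kappa}(\delta_E^{\upsilon-m/2}\Theta_i\mathbf{1}_{\Delta_0})^2d\sigma$, which splits into $\int_{E\setminus\mathcal{O}}\le\lambda_0^2\sigma(\Delta_0')\lesssim\lambda_0^2\sigma(Q_0)$ plus $\int_{\mathcal{O}}$. Decomposing the open set determined by $M_E\mathbf{1}_{\mathcal{O}}$ into a Whitney/Calder\'on--Zygmund family $\{Q_j\}$ of dyadic cubes with $\sum_j\sigma(Q_j)\lesssim\sigma(\mathcal{O})$, on each $Q_j$ one separates the cone into its ``top'' $\{\delta_E(y)\ge\ell(Q_j)\}$, where a point $z_j$ near $Q_j$ lying outside $\mathcal{O}$ gives $\int_{\Gamma_\kappa(x)\cap\{\delta_E\ge\ell(Q_j)\}}(\cdots)\le\mathscr{A}_{2,\kappa'}(\cdots)(z_j)^2\le\lambda_0^2$ for $x\in Q_j$ (the good-$\lambda$ mechanism \eqref{kbFF-7}), and its ``bottom'' $\{\delta_E(y)<\ell(Q_j)\}$, which lies in $T_E(\widehat Q_j)$ for a bounded-overlap family of cubes $\widehat Q_j$ comparable to $Q_j$ and thus contributes $\le C(1+c_i)\sigma(Q_j)$ by the very definition of $c_i$ (after peeling off the far piece relating $\Delta_{\widehat Q_j}$ to $\Delta_0$). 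Summing, relating $\Delta_0$ back to $\Delta_{Q_0}$ by one more far-piece estimate, and taking the supremum over $Q_0$ yields $c_i\le C\lambda_0^2+C(1+c_i)\lambda_0^{-p}$; choosing $\lambda_0$ so large that $C\lambda_0^{-p}<\tfrac12$ lets one absorb the $c_i$ term (legitimately, since $c_i<\infty$) and conclude $\sup_i c_i<\infty$.

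I expect the John--Nirenberg iteration to be the main obstacle: a weak-$L^p$ (indeed even $L^1$) bound on the area function does not by itself control the Carleson measure, so the entire point is to exploit that \eqref{dtbh-L2iii} holds uniformly over all surface balls, to run the stopping-time/good-$\lambda$ scheme above, and to track the self-referential constant $c_i$ carefully while keeping it finite via the truncation; the subsidiary points — the conical/vertical dictionary, the kernel-tail estimates, the weak-$L^p$ aperture change, and the covering of the ``bottoms'' of the Whitney cones by tents over comparable cubes — are routine but must be assembled correctly.
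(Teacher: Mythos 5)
Your overall strategy is the one the paper actually follows: reduce, via the $T(1)$ theorem (Theorem~\ref{SChg}), to the Carleson condition \eqref{UEHg}; make everything a priori finite with the truncations $\Theta_i$; introduce a self-referential Carleson-type supremum; run a stopping-time/good-$\lambda$ (weak John--Nirenberg) iteration that exploits the hypothesis uniformly over surface balls together with kernel-tail estimates; absorb; and conclude by monotone convergence. The paper merely factors this into Lemma~\ref{SQ-lema} (the John--Nirenberg step, with hypothesis \eqref{SQ-2}) plus a Tschebyshev verification of \eqref{SQ-2} in Proposition~\ref{VGds-L2}, whereas you fuse the two into a single good-$\lambda$ pass.

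There is, however, a concrete gap at the one point where your fused argument differs structurally from the paper's: your good-$\lambda$ step needs the weak-type hypothesis at an \emph{enlarged} aperture $\kappa'>\kappa$ (the good point $z_j$ captures the top of a $\kappa$-cone over $Q_j$ only through a $\kappa'$-cone), and your justification --- ``on a surface ball of finite measure the normalization self-improves to any $p'<p$, so I may assume $p<2$, and then Case~I of Theorem~\ref{appert} transfers \eqref{dtbh-L2iii} to aperture $\kappa'$'' --- fails as stated. The level sets in \eqref{dtbh-L2iii} range over all of $E$, not over $\Delta$, so for small $\lambda$ there is no trivial bound $\sigma(\{\cdots>\lambda^2\})\leq C\sigma(\Delta)$ and lowering the exponent is not free; moreover the good-$\lambda$ inequality \eqref{Shcd-2} behind the aperture change involves $\lambda^{-2}\int_{\{{\mathscr{A}}_{2,\kappa}\leq\lambda\}}{\mathscr{A}}_{2,\kappa}^2\,d\sigma$, whose layer-cake estimate genuinely needs a global weak-type exponent strictly below $2$. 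The step is repairable: the size condition \eqref{hszz} together with Lemma~\ref{bzrg1} yields the far-field decay ${\mathscr{A}}_{2,\kappa}\bigl(\delta_E^{\upsilon-m/2}\Theta{\mathbf{1}}_\Delta\bigr)(x)\lesssim\bigl(r/\rho_{\#}(x,x_\Delta)\bigr)^{d}$ away from $\Delta$, hence the global bound $\sigma(\{\cdots>\lambda^2\})\lesssim\max\{1,\lambda^{-1}\}\,\sigma(\Delta)$, which combined with \eqref{dtbh-L2iii} gives the weak-type bound with exponent $\min\{p,1\}<2$, after which the aperture transfer goes through. Alternatively --- and this is how the paper avoids the issue altogether --- run the iteration with an auxiliary \emph{smaller} aperture $\widetilde\kappa<\kappa$: define the self-referential quantity with $\widetilde\kappa$-cones and let the good-$\lambda$ comparison send $\widetilde\kappa$-cones at Whitney points into $\kappa$-cones at the good points (as in \eqref{SQ-83ii}), so that the hypothesis is only ever invoked at its original aperture $\kappa$, by Tschebyshev, as in \eqref{SQ-3B}. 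With either repair your outline closes; as written, the aperture/exponent step is the missing piece.
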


The requirement in \eqref{dtbh-L2iii} is actually less restrictive than a weak $L^p$ square function estimate. In particular, it is satisfied whenever the following weak $L^p$ square function estimate holds for every $f\in L^p(E,\sigma)$:
\begin{eqnarray}\label{eqrem}
\hskip -0.30in
\sup_{\lambda>0}\left[\lambda\cdot
\sigma\Bigl(\Bigl\{x\in E:\int_{\Gamma_{\kappa}(x)}|(\Theta f)(y)|^2
\delta_E(y)^{2\upsilon-m}\,d\mu(y)>\lambda^{2}\Bigr\}\Bigr)^{1/p}\right]
\leq C_o\|f\|_{L^{p}(E,\sigma)}.
\end{eqnarray}
Indeed, \eqref{dtbh-L2iii} follows by specializing \eqref{eqrem} to the case when 
$f={\mathbf{1}}_{\Delta}$ for an arbitrary surface ball $\Delta\subseteq E$.

To prove Theorem~\ref{VGds-L2XXX}, we need only set $q=2$ in Proposition~\ref{VGds-L2} below to obtain a Carleson measure estimate, and then apply the $T(1)$ theorem for square functions in Theorem~\ref{SChg}. Therefore, the remainder of this subsection is dedicated to the proof of the following proposition.

\begin{proposition}\label{VGds-L2} 
Retain the same background hypotheses as in the statement of Theorem~\ref{VGds-L2XXX}.
In this context, let ${\mathbb{D}}(E)$ denote a dyadic cube structure on $E$, 
consider a Whitney covering ${\mathbb{W}}_\lambda(\mathscr{X}\setminus E)$ 
of $\mathscr{X}\setminus E$ as in Lemma~\ref{Lem:CQinBQ-N} and, corresponding 
to these, recall the dyadic Carleson tents from \eqref{gZSZ-3}.
Then whenever $\kappa,p,q,C_o\in(0,\infty)$ are such that for every 
surface ball $\Delta\subseteq E$ there holds
\begin{eqnarray}\label{dtbh-L2}
\hskip -0.20in
\sigma\left(\Bigl\{x\in E:\,
\int_{\Gamma_{\kappa}(x)}|(\Theta{\mathbf{1}}_{\Delta})(y)|^q
\,\delta_E(y)^{q\upsilon-m}\,d\mu(y)>\lambda^q\Bigr\}\right)
\leq C_o\lambda^{-p}\sigma(\Delta),\quad\forall\,\lambda>0,
\end{eqnarray}
there exists some $C\in(0,\infty)$ which depends only on $\kappa,p,q,C_o$ 
and finite positive background constants with the property that 
\begin{eqnarray}\label{k-tSS}
\sup_{Q\in{\mathbb{D}}(E)}
\Bigl(\tfrac{1}{\sigma(Q)}\int_{T_E(Q)}|(\Theta 1)(x)|^q
\delta_E(x)^{q\upsilon-(m-d)}\,d\mu(x)\Bigr)\leq C.
\end{eqnarray}
\end{proposition}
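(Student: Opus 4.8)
The plan is to prove the Carleson measure bound \eqref{k-tSS} directly, via a far/near decomposition of $\Theta 1$ combined with a John--Nirenberg-type self-improvement argument for Carleson measures in the spirit of \cite[Lemma~2.14]{AHLT}. I would first dispose of a priori convergence issues by truncating: for $i\in{\mathbb N}$ set $\Theta_i:=\mathbf 1_{\{1/i<\delta_E<i\}}\Theta$ as in \eqref{LIH}--\eqref{LIH-3}, and let $\mathcal N_i:=\sup_{Q\in{\mathbb D}(E)}\sigma(Q)^{-1}\int_{T_E(Q)}|\Theta_i 1|^q\delta_E^{q\upsilon-(m-d)}\,d\mu$. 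Exactly as in \eqref{LIH-4}--\eqref{LIH-6}, the pointwise bound $|\Theta_i 1|\le Ci^\upsilon\mathbf 1_{\{\delta_E<i\}}$ together with Lemma~\ref{geom-lem} gives $\mathcal N_i<\infty$ for each $i$; since $|\Theta_i1|\le|\Theta1|$ and $\Theta_i1\to\Theta1$ pointwise, \eqref{k-tSS} will follow from $\sup_i\mathcal N_i<\infty$ by the Monotone Convergence Theorem. Fixing $Q$, choose a surface ball $\Delta_Q:=E\cap B_{\rho_{\#}}(x_Q,C_\natural\ell(Q))$ with $C_\natural$ large enough that $T_E(Q)\subseteq B_{\rho_{\#}}(x,C\ell(Q))$ for $x\in Q$ (cf. \eqref{dFvK}) and split $1=\mathbf 1_{\Delta_Q}+\mathbf 1_{E\setminus\Delta_Q}$. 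The far term is handled exactly as in the converse half of Theorem~\ref{SChg}: by \eqref{hszz} and \eqref{WBA} (with $f\equiv 1$, $\varepsilon=\upsilon-a$) one has $|\Theta\mathbf 1_{E\setminus\Delta_Q}(x)|\le C\ell(Q)^{-(\upsilon-a)}\delta_E(x)^{-a}$ on $T_E(Q)$, and then Lemma~\ref{geom-lem} (with $\gamma=(m-d)-q(\upsilon-a)<m-d$) yields $\int_{T_E(Q)}|\Theta\mathbf 1_{E\setminus\Delta_Q}|^q\delta_E^{q\upsilon-(m-d)}\,d\mu\le C\sigma(Q)$, uniformly in $i$.

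For the near term the key preliminary reduction is to pass from a tent integral to an $L^q(\sigma)$ integral of an area function. Using Lemma~\ref{ap+YH} (Fubini for fan regions), the containment $T_E(Q)\subseteq\mathcal F_{\kappa_1}(Q)$ for a geometric aperture $\kappa_1$ (a consequence of \eqref{UUU-rf} and \eqref{gZSZa}), and the comparison $\sigma(\pi^{\kappa_1}_y)\approx\delta_E(y)^d\lesssim\sigma(Q\cap\pi^{\kappa_1}_y)$ valid for $y\in T_E(Q)$ (which follows from Lemma~\ref{lbDV}, the $d$-ADR property of $E$, and the fact that $y\in\mathcal U_{Q'}$ with $Q'\subseteq Q$ forces $Q'\subseteq\pi^{\kappa_1}_y$), one obtains
\[
\int_{T_E(Q)}|\Theta_i\mathbf 1_{\Delta_Q}(y)|^q\delta_E(y)^{q\upsilon-(m-d)}\,d\mu(y)\le C\int_Q\Bigl(\int_{\Gamma_{\kappa_1}(x)}|\Theta_i\mathbf 1_{\Delta_Q}(y)|^q\delta_E(y)^{q\upsilon-m}\,d\mu(y)\Bigr)\,d\sigma(x).
\]
Setting $\mathcal S_Q(x):=\bigl(\int_{\Gamma_{\kappa_1}(x)}|\Theta\mathbf 1_{\Delta_Q}(y)|^q\delta_E(y)^{q\upsilon-m}\,d\mu(y)\bigr)^{1/q}$, the hypothesis \eqref{dtbh-L2} — after adjusting the aperture from $\kappa$ to $\kappa_1$ via Theorem~\ref{appert} and the distribution-function estimates in its proof, and using the doubling of $\sigma$ — reads $\sigma(\{x\in E:\mathcal S_Q(x)>\lambda\})\le C\lambda^{-p}\sigma(Q)$ for all $\lambda>0$; and since $|\Theta_i(\cdot)|\le|\Theta(\cdot)|$, the integrand in the display above is dominated by $\mathcal S_Q(x)^q$.

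The heart of the matter is to bound $\int_Q\mathcal S_Q^q\,d\sigma$, and this is where I expect the main difficulty. When $p>q$ a plain application of H\"older's inequality to the weak-$L^p$ bound suffices, but for $p\le q$ this integral may diverge and one must exploit that \eqref{dtbh-L2} holds over \emph{all} surface balls. I would therefore run the stopping-time scheme: fix a large $\Lambda$, let $B:=\{x\in Q:\mathcal S_Q(x)>\Lambda\}$, which is open by Lemma~\ref{semi-cont} and satisfies $\sigma(B)\le C\Lambda^{-p}\sigma(Q)$, and let $\{Q_j\}$ be the maximal dyadic subcubes of $Q$ contained in $B$, so $\bigsqcup_jQ_j=B$ up to a $\sigma$-null set and $\sum_j\sigma(Q_j)=\sigma(B)\le\theta\sigma(Q)$ with $\theta:=C\Lambda^{-p}$. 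One has the decomposition $T_E(Q)\subseteq\mathcal F_{\kappa_1}(Q\setminus B)\cup\bigcup_jT_E(Q_j)$: indeed, for $Q'\subseteq Q$ with $Q'\not\subseteq B$ one has $\mathcal U_{Q'}\subseteq\Gamma_{\kappa_1}(x')$ for a good point $x'\in Q'\setminus B$, while $Q'\subseteq B$ forces $Q'\subseteq Q_j$ for some $j$, so $\mathcal U_{Q'}\subseteq T_E(Q_j)$. Consequently $\int_{T_E(Q)\setminus\bigcup_jT_E(Q_j)}|\Theta_i\mathbf 1_{\Delta_Q}|^q\delta_E^{q\upsilon-(m-d)}\,d\mu$ is, by the same Fubini reduction restricted to $Q\setminus B$, controlled by $C\int_{Q\setminus B}\mathcal S_Q^q\,d\sigma\le C\Lambda^q\sigma(Q)$, whereas $\sum_j\int_{T_E(Q_j)}|\Theta_i1|^q\delta_E^{q\upsilon-(m-d)}\,d\mu\le\mathcal N_i\sum_j\sigma(Q_j)\le\theta\mathcal N_i\sigma(Q)$, the passage from $\Theta_i\mathbf 1_{\Delta_Q}$ back to $\Theta_i1$ on each $T_E(Q_j)$ costing a further $O(\sigma(Q_j))$ far-type term controlled by Lemma~\ref{geom-lem}. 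Adding in the far-term estimate yields $\int_{T_E(Q)}|\Theta_i1|^q\delta_E^{q\upsilon-(m-d)}\,d\mu\le\theta\,\mathcal N_i\sigma(Q)+C\sigma(Q)$; choosing $\Lambda$ large so that $\theta<1$, taking the supremum over $Q$, and using the a priori finiteness $\mathcal N_i<\infty$ to absorb, we get $\mathcal N_i\le C/(1-\theta)$ uniformly in $i$, and letting $i\to\infty$ proves \eqref{k-tSS}.

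The principal obstacle throughout is making this stopping-time bookkeeping precise — in particular ensuring that the ``good'' cubes carry a uniformly positive proportion of low-density mass for the Fubini reduction over $Q\setminus B$ to close with an $i$-independent constant (for which one stops not only on $\{x:\mathcal S_Q(x)>\Lambda\}$ but also on cubes where the local density of that set exceeds $\tfrac12$), and keeping careful track of the interaction between the tents of the stopping cubes and the area integral, together with the attendant aperture changes through Theorem~\ref{appert}. This is exactly the content imported from \cite[Lemma~2.14]{AHLT} (cf. also \cite[Lemma~IV.1.12]{DaSe93}), and I would organize the write-up so that the quasi-metric geometry is isolated in the three auxiliary facts used above (the fan-region Fubini identity, the conical-projection comparison, and the tent/fan containments), leaving the combinatorial core to run as in the Euclidean case.
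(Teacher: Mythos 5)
Your overall architecture (truncation to $\Theta_i$ with the a priori bound \eqref{LIH-4}--\eqref{LIH-6}, a near/far splitting with the far piece controlled by \eqref{hszz}, \eqref{WBA} and Lemma~\ref{geom-lem}, and an AHLT-style stopping-time self-improvement with absorption using $\mathcal N_i<\infty$) is the same as the paper's, which organizes these steps as Lemma~\ref{SQ-lema} plus a verification of its level-set hypothesis. However, there is a concrete gap in the step where you invoke Theorem~\ref{appert}. Your Fubini reduction of tent integrals to cone integrals over the base requires the shadow $\pi_y^{\kappa_1}$ of a point $y\in{\mathcal U}_{Q'}$ to contain $Q'$ (and, in the stopping-time stage, to meet the good set in measure $\gtrsim\delta_E(y)^d$), which forces the aperture $\kappa_1$ to be a \emph{large} geometric constant, independent of the $\kappa$ appearing in \eqref{dtbh-L2}. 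You must therefore upgrade the weak-type hypothesis from aperture $\kappa$ to $\kappa_1>\kappa$. Theorem~\ref{appert} does not give this: it compares $L^p$ quasi-norms of ${\mathscr A}_{q,\kappa}u$, not distribution functions, and the distributional inequality \eqref{Shcd-2} in its proof transfers a weak-$L^p$ bound to a larger aperture only when $p<q$, since the sublevel-set term $\lambda^{-q}\int_{\{{\mathscr A}_{q,\kappa}u\le\lambda\}}({\mathscr A}_{q,\kappa}u)^q\,d\sigma$ is controlled by the weak-$L^p$ information alone only if $q>p$ (and the global $L^q$ norm of ${\mathscr A}_{q,\kappa}\bigl(\delta_E^{\upsilon-m/q}\Theta{\mathbf 1}_\Delta\bigr)$ is in general infinite, so no crude substitute is available). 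Since the proposition is claimed for all $p,q\in(0,\infty)$, in particular $p\ge q$, this aperture upgrade is unjustified as written, and it is the load-bearing step of your argument.

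The paper's proof is arranged precisely so that this issue never arises: the John--Nirenberg quantity $S_Q$ in \eqref{SQ-1} is defined with the \emph{same} aperture $\kappa$ as in \eqref{dtbh-L2} (so the hypothesis is used verbatim in \eqref{SQ-3B}), and inside Lemma~\ref{SQ-lema} the only aperture changes are \emph{decreases} to an auxiliary $\widetilde\kappa<\kappa$, chosen so that the transferred cones in \eqref{SQ-83ii} land inside $\kappa$-cones at nearby good points. The dyadic Carleson tent is then recaptured from small-aperture cone integrals not by asking shadows to contain Whitney-associated cubes, but via the complement-of-fan tents ${\mathcal T}_{\widetilde\kappa}(\Delta)$ over surface balls, where $\pi_y^{\widetilde\kappa}\subseteq\Delta$ automatically by \eqref{3.2.BN}, combined with \eqref{Fv-UU45}, \eqref{dFvK} and a large truncation parameter $\eta$ in \eqref{SQ-1}; smallness of the aperture is then harmless. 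Your proposal can likely be repaired along these lines (e.g.\ by running the shadow/Fubini comparison at aperture $\kappa$ against dilated surface balls, and importing the paper's tent trick for the reverse containments), but the mechanism you actually propose -- upgrading \eqref{dtbh-L2} to a large aperture through Theorem~\ref{appert} -- does not close for $p\ge q$ and needs to be replaced.
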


In preparation for presenting the proof of Proposition~\ref{VGds-L2} we discuss a 
couple of auxiliary results. The first such result is a variation on the 
theme of Whitney decomposition discussed in Proposition~\ref{H-S-Z}. 

\begin{lemma}\label{PropW-2}
Let $(E,\rho,\sigma)$ be a space of homogeneous type with the property that the
measure $\sigma$ is Borel regular, and let ${\mathbb{D}}(E)$ be a collection 
of dyadic cubes as in Proposition~\ref{Diad-cube}. Also, suppose that 
${\mathcal{O}}$ is an open subset of $(E,\tau_\rho)$ with the property that 
$\bigl({\mathcal{O}},\rho\bigl|_{{\mathcal{O}}},\sigma\lfloor{\mathcal{O}}\bigr)$ 
is a space of homogeneous type. Fix $\lambda\in(1,\infty)$ and suppose $\Omega$ 
is an open, proper, non-empty subset of ${\mathcal{O}}$. Then there exist
$\varepsilon\in(0,1)$, $N\in{\mathbb{N}}$, $\Lambda\in(\lambda,\infty)$ and a subset 
${\mathcal{W}}\subseteq{\mathbb{D}}(E)$ such that the following properties are satisfied:
\begin{enumerate}
\item[(1)] $Q\subseteq\Omega$ for every $Q\in{\mathcal{W}}$ and 
$\sigma\bigl(\Omega\setminus\bigcup_{Q\in{\mathcal{W}}}Q\bigr)=0$;
\item[(2)] $Q\cap Q'=\emptyset$ for every $Q,Q'\in{\mathcal{W}}$ with $Q\not=Q'$;
\item[(3)] for every $x\in\Omega$, the cardinality of the set 
$\bigl\{Q\in{\mathcal{W}}:\,
B_\rho\bigl(x,\varepsilon\,{\rm dist}_\rho(x,{\mathcal{O}}\setminus\Omega)\bigr)
\cap Q\not=\emptyset\bigr\}$ is at most $N$;
\item[(4)] $\lambda Q\subseteq\Omega$ and 
$\Lambda Q\cap[{\mathcal{O}}\setminus\Omega]\not=\emptyset$ 
for every $Q\in{\mathcal{W}}$;
\item[(5)] $\ell(Q)\approx\ell(Q')$ uniformly for  $Q,Q'\in{\mathcal{W}}$ such
that $\lambda Q\cap\lambda Q'\not=\emptyset$;
\item[(6)] $\sum\limits_{Q\in{\mathcal{W}}}{\mathbf{1}}_{\lambda Q}\leq N$.
\end{enumerate}
\end{lemma}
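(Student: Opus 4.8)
\textbf{Proof strategy for Lemma~\ref{PropW-2}.}
The plan is to obtain the collection $\mathcal{W}$ by running the dyadic analogue of the Whitney decomposition inside the ambient space of homogeneous type $\bigl(\mathcal{O},\rho|_{\mathcal{O}},\sigma\lfloor\mathcal{O}\bigr)$, using the dyadic cube structure on $E$ restricted to $\mathcal{O}$. Concretely, for each $x\in\Omega$ set $\delta(x):=\mathrm{dist}_\rho(x,\mathcal{O}\setminus\Omega)>0$ (this is positive precisely because $\Omega$ is open in $\mathcal{O}$ and proper, so $\mathcal{O}\setminus\Omega\ne\emptyset$), and select, among all dyadic cubes $Q\in\mathbb{D}(E)$ containing $x$ with $\lambda\,a_1\ell(Q)\le c_0\delta(x)$ (for a geometric constant $c_0$ chosen below, with $a_1$ as in \eqref{ha-GVV}), the maximal such cube $Q_x$. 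The existence of a maximal cube follows because side-lengths are bounded above by $C\,\mathrm{diam}_\rho(\mathcal{O})$ (property \emph{(7)} in Proposition~\ref{Diad-cube}, via \eqref{ha-GL54}) while the constraint forces $\ell(Q)\le c\,\delta(x)$, so the relevant generations form a bounded-below set of integers. Let $\mathcal{W}$ be the set of cubes arising this way as $x$ ranges over $\Omega$; by the nested structure of dyadic cubes (properties \emph{(2)}--\emph{(4)} of Proposition~\ref{Diad-cube}) distinct cubes in $\mathcal{W}$ are disjoint, giving \emph{(2)}, and each is contained in $\Omega$ since, by \eqref{ha-GVV}, $Q_x\subseteq B_\rho(x_{Q_x},a_1\ell(Q_x))$ while $a_1\ell(Q_x)\le c_0\delta(x)/\lambda<\delta(x)$ for $c_0<\lambda$, and $B_\rho(x,\delta(x))\subseteq\Omega$.

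For the covering statement in \emph{(1)}, every point of $\Omega$ lies in some $Q_x$ by construction, so $\Omega=\bigcup_{Q\in\mathcal{W}}Q$ as a set; the measure-zero clause is then automatic (indeed the exceptional set is empty), but if one prefers to invoke the ``thin boundary'' formalism one can instead run the argument on a fixed generation and use \eqref{T-rcs} from item \emph{(9)} of Proposition~\ref{Diad-cube}, which is applicable because $\sigma$ is doubling and Borel regular. For \emph{(4)}: the inclusion $\lambda Q\subseteq\Omega$ holds since for $y\in\lambda Q$ we have $\rho(y,x_Q)<\lambda a_1\ell(Q)\le c_0\delta(x)<\delta(x)$ for any $x\in Q$ (using the quasi-triangle inequality and adjusting $c_0$ by a factor $C_\rho$), hence $y\in\Omega$; and $\Lambda Q\cap(\mathcal{O}\setminus\Omega)\ne\emptyset$ for a suitable $\Lambda=\Lambda(\lambda,c_0,C_\rho,a_0,a_1)$ follows from maximality of $Q=Q_x$: the dyadic parent $\widehat{Q}$ of $Q$ fails the defining constraint, so $\lambda a_1\ell(\widehat{Q})>c_0\delta(x')$ for every $x'\in\widehat{Q}$, in particular $\delta(x_Q)<2\lambda a_1 c_0^{-1}\ell(Q)$ (using $\ell(\widehat{Q})=2\ell(Q)$), which places a point of $\mathcal{O}\setminus\Omega$ within $\rho$-distance $C\ell(Q)$ of $x_Q$, hence inside $\Lambda Q$ once $\Lambda$ is large.

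Properties \emph{(5)}, \emph{(3)} and \emph{(6)} are then the standard ``bounded overlap/comparable size'' consequences of the Whitney construction. For \emph{(5)}: if $\lambda Q\cap\lambda Q'\ne\emptyset$ then by \emph{(4)} a point of $\mathcal{O}\setminus\Omega$ near $Q'$ is within $\rho$-distance $C\max\{\ell(Q),\ell(Q')\}$ of $x_Q$, forcing $\delta(x_Q)\le C\max\{\ell(Q),\ell(Q')\}$; combined with the lower bound $\delta(x_Q)\ge c\,\ell(Q)$ coming from maximality as above (applied to the parent), and symmetrically with $Q$ and $Q'$ exchanged, one gets $\ell(Q)\approx\ell(Q')$. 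Then \emph{(6)}, $\sum_{Q\in\mathcal{W}}\mathbf{1}_{\lambda Q}\le N$, follows from \emph{(5)} together with the geometric doubling of $(E,\rho|_E)$ (hence of $(\mathcal{O},\rho|_{\mathcal{O}})$, which is hereditary as noted after Definition~\ref{Gd_ZZ}): cubes of comparable size with $\lambda$-dilates meeting a fixed point are uniformly bounded in number, exactly as in item \emph{(6)} of Proposition~\ref{Diad-cube} and in Proposition~\ref{H-S-Z}\emph{(2)}. Finally \emph{(3)} is a localized refinement: for $x\in\Omega$, any $Q\in\mathcal{W}$ meeting $B_\rho(x,\varepsilon\delta(x))$ must, by \emph{(4)} and the estimates above, satisfy $\ell(Q)\approx\delta(x)$ once $\varepsilon$ is small, and then the doubling bound limits how many such cubes can occur — this is the analogue of \eqref{Lay-ff.u-c} in Proposition~\ref{H-S-Z}, and I would simply transcribe that argument. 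The main obstacle I anticipate is purely bookkeeping: threading the quasi-metric constants $C_\rho$, $\widetilde{C}_\rho$, $a_0$, $a_1$ through the maximality/parent comparisons so that a single choice of $c_0$ (and then $\varepsilon$, $N$, $\Lambda$) works uniformly; there is no conceptual difficulty, since everything reduces to Proposition~\ref{Diad-cube} and the geometric doubling property, but the inequalities must be chained carefully because $\rho$ need not satisfy the triangle inequality.
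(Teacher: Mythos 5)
Your construction is a genuinely different route from the paper's. The paper proceeds in two steps: it applies the metric Whitney decomposition of Proposition~\ref{H-S-Z} to $\Omega$ viewed inside the space of homogeneous type $\bigl(\mathcal{O},\rho|_{\mathcal{O}},\sigma\lfloor\mathcal{O}\bigr)$, obtaining Whitney balls $B_\rho(x_j,r_j)$, and then replaces each ball by the dyadic cubes of comparable sidelength meeting it, finally thinning the resulting family to make it pairwise disjoint; properties (1)--(6) are then inherited from Propositions~\ref{H-S-Z} and~\ref{Diad-cube}. You instead run the classical dyadic stopping-time construction directly, selecting maximal dyadic cubes subject to a sidelength-versus-distance constraint. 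This is a viable and arguably more self-contained alternative (it bypasses Proposition~\ref{H-S-Z} altogether), at the price of having to re-derive the Whitney properties from maximality, which is exactly where the constant bookkeeping you anticipate lives.

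Two steps as written need repair. First, disjointness: your stopping condition $\lambda a_1\ell(Q)\le c_0\delta(x)$ depends on the point $x$, not only on the cube, and the maximality of $Q_x$ is maximality among cubes containing $x$ subject to $x$'s own constraint. If $Q_x\subsetneq Q_y$ with both cubes selected (for points $x$ and $y$ respectively), the quasi-triangle inequality only yields $\delta(y)\le C_\rho\,\delta(x)$ up to harmless constants, hence $\lambda a_1\ell(Q_y)\le c_0C_\rho\,\delta(x)$, which does not contradict the maximality of $Q_x$ when $C_\rho>1$; so nothing in the construction rules out strictly nested selected cubes (they can occur within boundedly many generations of one another), and the claim that distinct cubes in $\mathcal{W}$ are disjoint ``by the nested structure'' is not justified. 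The standard fix is either to make the stopping condition intrinsic to the cube (e.g.\ $\lambda a_1\ell(Q)\le c_0\,{\rm dist}_\rho\bigl(x_Q,\mathcal{O}\setminus\Omega\bigr)$) and take the maximal cubes satisfying it, in which case distinct maximal cubes are automatically disjoint, or to keep your selection and afterwards discard every selected cube strictly contained in another selected cube; all your subsequent estimates survive either modification. Second, the covering clause: it is not true that every point of $\Omega$ lies in some $Q_x$, since for each generation $k$ the cubes $Q^k_\alpha$ cover $E$ only up to a $\sigma$-null set (their union is merely dense), so your parenthetical ``the exceptional set is empty'' is wrong. This is precisely why the lemma asserts only $\sigma\bigl(\Omega\setminus\bigcup_{Q\in\mathcal{W}}Q\bigr)=0$; item (9) of Proposition~\ref{Diad-cube}, i.e.\ \eqref{T-rcs}, applicable because $\sigma$ is doubling and Borel regular, gives that $\sigma$-a.e.\ $x\in\Omega$ lies in cubes of every generation, which is all you need to run the selection and obtain (1). (A minor point: your parent-based maximality argument presupposes the selected cube is not of the top generation when $E$ is bounded; this is automatic once $c_0$ is small, since $\delta(x)\le{\rm diam}_\rho(E)\le 2\ell(Q)$ prevents a top-generation cube from satisfying the constraint.) With these repairs, your treatment of (3)--(6) goes through as sketched.
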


\begin{proof}
Given $\lambda\in(1,\infty)$, apply Proposition~\ref{H-S-Z} to the 
open, proper, non-empty subset $\Omega$ of the space of homogeneous type
$\bigl({\mathcal{O}},\rho\bigl|_{{\mathcal{O}}},\sigma\lfloor{\mathcal{O}}\bigr)$.
This guarantees the existence of parameters $\varepsilon\in(0,1)$, $N\in{\mathbb{N}}$,
$\Lambda\in(\lambda,\infty)$, as well as a covering of $\Omega$ with balls
$\Omega=\bigcup_{j\in{\mathbb{N}}}\bigl({\mathcal{O}}\cap B_\rho(x_j,r_j)\bigr)$ 
such that the analogues of the properties {\it (1)-(4)} in Proposition~\ref{H-S-Z} 
hold in the current setting. Next, for each $j\in{\mathbb{N}}$ consider
\begin{eqnarray}\label{eEE}
I_j:=\bigl\{Q\in{\mathbb{D}}(E):\,\ell(Q)\approx r_j\mbox{ and }
Q\cap B_\rho(x_j,r_j)\not=\emptyset\bigr\},
\end{eqnarray}
and define ${\mathcal{W}}:=\bigcup_{j\in{\mathbb{N}}}I_j$ thinned out, so that 
$Q\cap Q'=\emptyset$ for every $Q,Q'\in{\mathcal{W}}$, $Q\not=Q'$. Granted the 
properties the families ${\mathbb{D}}(E)$ and $\{B_\rho(x_j,r_j)\}_{j\in{\mathbb{N}}}$
satisfy (as listed in Proposition~\ref{Diad-cube} and Proposition~\ref{H-S-Z}) and 
given the nature of the construction of the family ${\mathcal{W}}$, it follows 
that properties {\it (1)-(6)} in the statement of the current lemma hold 
for the family ${\mathcal{W}}$.
\end{proof}

We now state the aforementioned weak type John-Nirenberg lemma for Carleson measures, cf. \cite[Lemma 2.14]{AHLT} for a result similar in spirit in the Euclidean setting.

\begin{lemma}\label{SQ-lema}
Retain the same background hypotheses as in the statement of Theorem~\ref{VGds-L2XXX}. 
In this context, fix two finite numbers $\kappa,\eta>0$, an index $q\in(0,\infty)$ 
and, for each $Q\in{\mathbb{D}}(E)$, define 
\begin{eqnarray}\label{SQ-1}
S_Q(x):=\Bigl(\int\limits_{\stackrel{y\in\Gamma_\kappa(x)}
{\rho_{\#}(x,y)<\eta\ell(Q)}}|(\Theta 1)(y)|^q\delta_E(y)^{q\upsilon-m}
\,d\mu(y)\Bigr)^{\frac{1}{q}},
\qquad\forall\,x\in E.
\end{eqnarray}
Assuming that $\eta$ is sufficiently large (depending only on geometry) and granted 
that there exist two parameters $N\in(0,\infty)$ and $\beta\in(0,1)$ such that 
\begin{eqnarray}\label{SQ-2}
\sigma\Bigl(\bigl\{x\in Q:\,S_Q(x)>N\bigr\}\Bigr)<(1-\beta)\sigma(Q),
\qquad\forall\,Q\in{\mathbb{D}}(E),
\end{eqnarray}
then one may find $C\in(0,\infty)$ depending only on geometry, the estimates 
satisfied by the kernel $\theta$, and $\kappa,\eta$, with the property that 
\begin{eqnarray}\label{k-tSS.22}
\sup_{Q\in{\mathbb{D}}(E)}
\Bigl(\tfrac{1}{\sigma(Q)}\int_{T_E(Q)}|(\Theta 1)(x)|^q
\delta_E(x)^{q\upsilon-(m-d)}\,d\mu(x)\Bigr)\leq\beta^{-1}(C+N^q).
\end{eqnarray}
\end{lemma}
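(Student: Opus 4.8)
\textbf{Proof proposal for Lemma~\ref{SQ-lema}.} The plan is to run the standard ``self-improvement'' (or bootstrapping) argument for Carleson measures, exploiting the stopping-time decomposition from Lemma~\ref{PropW-2} together with the hypothesis \eqref{SQ-2}. First I would observe that, by Lemma~\ref{ME-ZZ} and the containment \eqref{dFvK}, it suffices to bound the Carleson norm associated with the truncated square-function densities $S_Q$; more precisely, for each fixed $R\in{\mathbb{D}}(E)$ one wants to control $\frac{1}{\sigma(R)}\int_{T_E(R)}|(\Theta 1)(x)|^q\delta_E(x)^{q\upsilon-(m-d)}\,d\mu(x)$. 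Writing the tent $T_E(R)$ as a union of Whitney pieces ${\mathcal U}_{Q'}$ over $Q'\subseteq R$ and using \eqref{UUU-rf} together with the finite-overlap property \eqref{doj}, this integral is comparable to $\sum_{Q'\subseteq R,\,Q'\in{\mathbb{D}}(E)}\int_{{\mathcal U}_{Q'}}|(\Theta 1)(x)|^q\delta_E(x)^{q\upsilon-m}\,d\sigma$-weighted mass, which in turn can be dominated by $\int_R S_R(x)^q\,d\sigma(x)$ provided $\eta$ is chosen large enough (depending only on $C_\ast$, $\lambda$ and the quasi-metric constants) that every point of ${\mathcal U}_{Q'}$ with $Q'\subseteq R$ falls inside the truncated cone $\{y\in\Gamma_\kappa(x):\rho_\#(x,y)<\eta\ell(R)\}$ for appropriate $x\in R$. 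This reduction is the bookkeeping part and follows from the geometry of tents already developed in Subsection~\ref{SSect:2.4}.

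Next, set $\displaystyle {\mathcal C}:=\sup_{R\in{\mathbb{D}}(E)}\frac{1}{\sigma(R)}\int_R S_R(x)^q\,d\sigma(x)$, which is finite for each \emph{fixed} truncation and, crucially, which I would first show is finite uniformly — this is where the a priori finiteness coming from the truncation built into $S_Q$ (only $\rho_\#(x,y)<\eta\ell(Q)$, hence $\delta_E(y)\lesssim\ell(Q)$) and the hypothesis on $\theta$ are used, exactly as in the analogous a priori step inside the proof of Theorem~\ref{Thm:localTb}. Once ${\mathcal C}<\infty$ is known, fix $R\in{\mathbb{D}}(E)$ and apply Lemma~\ref{PropW-2} to the open set $\Omega:=\{x\in R: S_R(x)>N\}$ (relative to ${\mathcal O}:=R$), obtaining a pairwise-disjoint family ${\mathcal W}\subseteq{\mathbb{D}}(E)$ of sub-cubes with $\sigma(\Omega\setminus\bigcup_{Q\in{\mathcal W}}Q)=0$ and, by \eqref{SQ-2}, $\sum_{Q\in{\mathcal W}}\sigma(Q)=\sigma(\Omega)<(1-\beta)\sigma(R)$. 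One then splits $\int_R S_R^q\,d\sigma=\int_{R\setminus\Omega}S_R^q\,d\sigma+\sum_{Q\in{\mathcal W}}\int_Q S_R^q\,d\sigma$. On $R\setminus\Omega$ one has $S_R\le N$ pointwise, contributing at most $N^q\sigma(R)$. On each $Q\in{\mathcal W}$, the key geometric observation is that for $x\in Q$ the cone piece $\{y\in\Gamma_\kappa(x):\rho_\#(x,y)<\eta\ell(R)\}$ splits into the part at scales $\lesssim\ell(Q)$ — which is controlled by $S_Q(x)^q$ — plus a part at intermediate scales between $\ell(Q)$ and $\ell(R)$; the latter piece lives inside $T_E(R)\setminus T_E(Q)$ near the ``top'' of $Q$, and by \eqref{hszz}, Lemma~\ref{Gkwvr} and Lemma~\ref{geom-lem} its contribution is bounded by $C\ell(Q)^d\le C\sigma(Q)$ (this is precisely the kind of estimate carried out in \eqref{UEHg.CC3}--\eqref{UEHg.CC4}). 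Hence $\int_Q S_R^q\,d\sigma\le\int_Q S_Q^q\,d\sigma+C\sigma(Q)\le({\mathcal C}+C)\sigma(Q)$.

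Summing over $Q\in{\mathcal W}$ and using $\sum_{Q\in{\mathcal W}}\sigma(Q)<(1-\beta)\sigma(R)$ gives $\int_R S_R^q\,d\sigma\le N^q\sigma(R)+({\mathcal C}+C)(1-\beta)\sigma(R)$, and dividing by $\sigma(R)$ and taking the supremum over $R$ yields ${\mathcal C}\le N^q+({\mathcal C}+C)(1-\beta)$. Since ${\mathcal C}<\infty$, the term $(1-\beta){\mathcal C}$ can be absorbed on the left, giving $\beta{\mathcal C}\le N^q+C(1-\beta)\le N^q+C$, i.e. ${\mathcal C}\le\beta^{-1}(C+N^q)$. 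Feeding this back through the first reduction (from $S_R$-densities to the actual Carleson integral over $T_E(R)$) produces \eqref{k-tSS.22} with the stated dependence of constants. The main obstacle, and the step requiring the most care, is the scale-decomposition estimate on each stopping cube $Q$: one must verify that the ``intermediate annuli'' contribution is genuinely $O(\sigma(Q))$ rather than accumulating logarithmically in $\ell(R)/\ell(Q)$, which forces one to exploit the decay $\upsilon>a$ in the kernel bound \eqref{hszz} and the sharp form of Lemma~\ref{geom-lem} exactly as in the converse half of Theorem~\ref{SChg}; a secondary technical point is choosing $\eta$ large enough, uniformly in $Q$, so that the truncated cone for $x\in Q$ really does capture all Whitney pieces ${\mathcal U}_{Q'}$ with $Q'\subseteq Q$.
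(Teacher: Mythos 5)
Your overall skeleton (stopping time on each cube, self-improvement plus absorption, a priori finiteness, and a final Fubini-type passage from cone averages to the tent Carleson integral) matches the paper's strategy, but the step you yourself flag as the main obstacle is a genuine gap, and your proposed fix does not work. On a stopping cube $Q\in\mathcal W$ inside $R$, the ``intermediate annuli'' part of the cone, i.e. $\{y\in\Gamma_\kappa(x):\,C\ell(Q)\lesssim\rho_{\#}(x,y)<\eta\ell(R)\}$, cannot be bounded by $C\sigma(Q)$ using \eqref{hszz}, Lemma~\ref{Gkwvr}, Lemma~\ref{geom-lem}, or the converse half of Theorem~\ref{SChg}: those estimates control $\Theta$ applied to a function supported \emph{away} from a ball (as in \eqref{UEHg.CC3}--\eqref{UEHg.CC4}), whereas here the integrand involves the full $\Theta 1$, for which the only pointwise bound available is $|\Theta 1(y)|\le C\delta_E(y)^{-\upsilon}$, so the integrand is only $\lesssim\delta_E(y)^{-m}$ and each dyadic height slab of the cone contributes a fixed amount; the intermediate region then produces $\sim\sigma(Q)\log\bigl(\ell(R)/\ell(Q)\bigr)$, exactly the logarithmic accumulation you worry about, and no decay in $\upsilon-a$ rescues it (indeed, if kernel estimates alone bounded this piece, hypothesis \eqref{SQ-2} would be superfluous). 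The paper's resolution is a two-aperture trick you do not have: it integrates the square function with a \emph{smaller} aperture $\widetilde\kappa<\kappa$, while the stopping set is defined with the full aperture $\kappa$; then for $x\in Q_k$ and $y\in\Gamma_{\widetilde\kappa}(x)$ with $\rho_{\#}(x,y)>C_0\ell(Q_k)$ one shows $y\in\Gamma_{\kappa}(x_k)$ for a Whitney-nearby non-stopped point $x_k\in F_Q$ (see \eqref{SQ-83ii}--\eqref{SQ-83ii-B}), so the whole upper part of the cone is controlled by $S_Q(x_k)^q\le N^q$, giving $N^q\sigma(Q_k)$; only the boundedly many scales between $\eta\ell(Q_k)$ and $C_0\ell(Q_k)$ are handled by the crude bound \eqref{SQ-80}.

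A second, more fixable, gap concerns the a priori finiteness needed for the absorption: the spatial truncation $\rho_{\#}(x,y)<\eta\ell(Q)$ built into $S_Q$ bounds the scales from above but not from below, so it does \emph{not} make $\mathcal C$ finite (the integrand behaves like $\delta_E^{-m}$ near $E$ and the small-scale part of the cone can diverge). The paper instead works with the truncated operators $\Theta_i=\mathbf 1_{\{1/i<\delta_E<i\}}\Theta$ from the proof of Theorem~\ref{Thm:localTb}, proves the quantities $A^i$ are finite for each $i$ (cf. \eqref{SQ-57}), runs the stopping-time absorption uniformly in $i$, and only then lets $i\to\infty$ by monotone convergence; you allude to that precedent but attribute the finiteness to the wrong mechanism, and without the $\delta_E$-truncation your inequality $\mathcal C\le N^q+(\mathcal C+C)(1-\beta)$ cannot be legitimately absorbed.
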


\begin{proof}
For each $i\in{\mathbb{N}}$, let $\Theta_i$ be as in \eqref{LIH-3} and associate
to $\Theta_i$ the function $S^i_Q$, much as $S_Q$ is associated to $\Theta$. 
Note that $S^i_Q$ and $S_Q$ depend on the constant $\kappa$ defining 
$\Gamma_\kappa$. We fix $\widetilde{\kappa}\in(0,\kappa)$ to be specified later 
and we use the notation $S^i_{Q,\widetilde{\kappa}}$ for the function defined 
similarly to $S^i_Q$ but with $\widetilde{\kappa}$ in place of $\kappa$. 
Also, with $N\in(0,\infty)$ and $\beta\in(0,1)$ satisfying \eqref{SQ-2}, define 
\begin{eqnarray}\label{SQ-55}
\Omega_Q^{N,i}:=\bigl\{x\in Q:\,S^i_{Q}(x)>N\bigr\},
\qquad\forall\,Q\in{\mathbb{D}}(E),\quad\forall\,i\in{\mathbb{N}}.
\end{eqnarray}
Since, thanks to Lemma~\ref{semi-cont}, for each $Q\in{\mathbb{D}}(E)$ and 
$i\in{\mathbb{N}}$ the function $S^i_Q$ is lower semi-continuous,  
from \eqref{SQ-55}, \eqref{SQ-2} and the fact that $S^i_Q\leq S_Q$ pointwise in $Q$ 
we deduce that
\begin{eqnarray}\label{SQ-56MM}
\forall\,Q\in{\mathbb{D}}(E),\quad\forall\,i\in{\mathbb{N}},\quad
\mbox{ $\Omega_Q^{N,i}$ is an open, proper subset of $Q$}.
\end{eqnarray}
To proceed, consider 
\begin{eqnarray}\label{SQ-56}
A^i:=\sup_{Q\in{\mathbb{D}}(E)}
\Bigl(\tfrac{1}{\sigma(Q)}\int_Q(S^i_{Q,\widetilde{\kappa}}(x))^q\,d\sigma(x)\Bigr),
\qquad\forall\,i\in{\mathbb{N}}.
\end{eqnarray}
Then, based on \eqref{Mix+FR} (applied to the function 
$u:=|(\Theta_i 1)|^q\delta_E^{q\upsilon-m}
{\mathbf{1}}_{\{{\rm dist}_{\rho_{\#}}(\cdot,Q)\leq C\ell(Q)\}}$) we may write, 
with $x_Q$ denoting the center of $Q\in{\mathbb{D}}(E)$, 
\begin{eqnarray}\label{SQ-57A}
\hskip -0.30in
\int_Q(S^i_{Q,\widetilde{\kappa}}(x))^q\,d\sigma(x)
& \leq &
\int_Q\Bigl(
\int\limits_{\stackrel{y\in\Gamma_{\widetilde{\kappa}}(x)}
{{\rm dist}_{\rho_{\#}}(y,Q)\leq C\ell(Q)}}
|(\Theta_i 1)(y)|^q\delta_E(y)^{q\upsilon-m}\,d\mu(y)\Bigr)\,d\sigma(x)
\nonumber\\[4pt]
& \leq & \int\limits_{\stackrel{y\in{\mathcal{F}}_{\widetilde{\kappa}}(Q)}
{{\rm dist}_{\rho_{\#}}(y,Q)\leq C\ell(Q)}}
|(\Theta_i 1)(y)|^q\delta_E(y)^{q\upsilon-m}
\sigma\bigl(Q\cap\pi_y^{\widetilde{\kappa}}\bigr)\,d\mu(y)
\nonumber\\[4pt]
& \leq & C\int\limits_{B_{\rho_{\#}}(x_Q,C\ell(Q))}
|(\Theta_i 1)(y)|^q\delta_E(y)^{q\upsilon-(m-d)}\,d\mu(y),
\end{eqnarray}
where the last step in \eqref{SQ-57A} uses the inequality 
$\sigma(Q\cap\pi_y^{\widetilde{\kappa}})\leq C\delta_E(y)^d$ which, in turn, 
is a consequence of Lemma~\ref{lbDV}, the fact that $(E,\rho\bigl|_E,\sigma)$ 
is a $d$-dimensional {\rm ADR} space, and the observation that 
$\delta_E(y)\leq{\rm dist}_{\rho_{\#}}(y,Q)\leq C\,\ell(Q)\leq C\,{\rm diam}_{\rho}(E)$
on the domain of integration (of the third integral in \eqref{SQ-57A}). 
Moreover, reasoning as in \eqref{LIH-5} we obtain 
\begin{eqnarray}\label{LIH-5B}
\int_{B_{\rho_{\#}}(x_Q,C\ell(Q))}
|(\Theta_i 1)(y)|^q\delta_E(y)^{q\upsilon-(m-d)}\,d\mu(y)
\leq Ci^{2q\upsilon}\ell(Q)^d\leq Ci^{2q\upsilon}\sigma(Q)
\end{eqnarray}
for every $Q\in{\mathbb{D}}(E)$, so by combining \eqref{SQ-57A} 
and \eqref{LIH-5B} we arrive at the conclusion that 
\begin{eqnarray}\label{SQ-57}
\tfrac{1}{\sigma(Q)}\int_Q(S^i_{Q,\widetilde{\kappa}}(x))^q\,d\sigma(x)\leq C(i)<\infty,
\end{eqnarray}
for each cube $Q\in{\mathbb{D}}(E)$ and each $i\in{\mathbb{N}}$. 
Thus, in particular, $A^i<\infty$ for every $i\in{\mathbb{N}}$. 

At this stage in the proof, the incisive step is the claim that, in fact, 
\begin{eqnarray}\label{SQ-58}
\begin{array}{c}
\exists\,A\in(0,\infty)\quad\mbox{independent of $i$ such that }
\\[6pt]
\tfrac{1}{\sigma(Q)}\int_Q(S^i_{Q,\widetilde{\kappa}}(x))^q\,d\sigma(x)\leq A,
\quad\forall\,Q\in{\mathbb{D}}(E).
\end{array}
\end{eqnarray}
In the process of proving this claim we shall show that one can take 
$A:=\beta^{-1}(C+N^q)$ where $C\in(0,\infty)$ is a constant which depends 
only on geometry, the estimates satisfied by $\theta$, and $\kappa$.
To get started, fix $i\in{\mathbb{N}}$ and first observe that if 
$Q\in{\mathbb{D}}(E)$ is such that $\Omega_Q^{N,i}=\emptyset$, then 
$S^i_{Q,\widetilde{\kappa}}\leq S^i_Q\leq N$ on $Q$, hence for such $Q$'s 
\eqref{SQ-58} will hold if we impose the condition that $A\geq N^q$. 
Next, let $Q\in{\mathbb{D}}(E)$ be such that $\Omega_Q^{N,i}\not=\emptyset$. 
Then, thanks to \eqref{SQ-56MM}, it follows that $\Omega_Q^{N,i}$ is 
an open, nonempty, proper subset of $Q$. Recall from \eqref{ihgc} that 
$\bigl(Q,\rho|_{Q},\sigma\lfloor{Q}\bigr)$ is a space of homogeneous 
type and the doubling constant of the measure $\sigma\lfloor{Q}$ is 
independent of $Q$. Then there exists a Whitney decomposition of 
$\Omega_Q^{N,i}$ relative to $Q$ via dyadic cubes $\{Q_k\}_{k\in I^{N,i}_Q}$ 
as described in Lemma~\ref{PropW-2} (used with ${\mathcal{O}}:=Q$ and 
$\Omega:=\Omega_Q^{N,i}$). Introducing
$F^{N,i}_Q:=Q\setminus\Omega_Q^{N,i}$ we may then write 
\begin{eqnarray}\label{SQ-59}
\hskip -0.20in
\int_Q(S^i_{Q,\widetilde{\kappa}}(x))^q\,d\sigma(x)
=\int_{F_Q^{N,i}}(S^i_{Q,\widetilde{\kappa}}(x))^q\,d\sigma(x)
+\sum\limits_{k\in I_Q^{N,i}}\int_{Q_k}(S^i_{Q,\widetilde{\kappa}}(x))^q\,d\sigma(x)
=:I+II.
\end{eqnarray}
Since $\widetilde{\kappa}<\kappa$ forces  
$S^i_{Q,\widetilde{\kappa}}\leq S^i_Q\leq N$ on 
$F^{N,i}_Q$, we further have 
\begin{eqnarray}\label{SQ-f77}
I\leq\int_{F_Q^{N,i}}(S^i_Q(x))^q\,d\sigma(x)\leq N^q\sigma(Q).
\end{eqnarray}
To estimate $II$, we write
\begin{eqnarray}\label{SQ-78}
&& \hskip -0.20in
II =\sum\limits_{k\in I_Q^{N,i}}\int_{Q_k}(S^i_{Q_k,\widetilde{\kappa}}(x))^q\,
d\sigma(x)
\nonumber\\[4pt]
&& \hskip 0.20in
+\sum\limits_{k\in I_Q^{N,i}}\int_{Q_k}\Bigl(\hskip -0.05in
\int\limits_{\stackrel{y\in\Gamma_{\widetilde{\kappa}}(x)}
{\eta\ell(Q_k)\leq\rho_{\#}(y,x)<\eta\ell(Q)}}
\hskip -0.30in
|(\Theta_i 1)(y)|^q\delta_E(y)^{q\upsilon-m}d\mu(y)\Bigr)\,d\sigma(x)
\nonumber\\[4pt]
&& \hskip 0.08in
=:III+IV.
\end{eqnarray}
By recalling \eqref{SQ-56}, the fact that the family $\{Q_k\}_{k\in I^{N,i}_Q}$ 
consists of pairwise disjoint cubes from ${\mathbb{D}}(E)$ contained in 
$\Omega_Q^{N,i}$, as well as assumption \eqref{SQ-2}, we have 
\begin{eqnarray}\label{SQ-79}
III\leq \sum\limits_{k\in I_Q^{N,i}}A^i\sigma(Q_k)
\leq A^i\sigma\bigl(\Omega_Q^{N,i}\bigr)\leq A^i(1-\beta)\sigma(Q).
\end{eqnarray}
Moving on, from \eqref{hszz} and \eqref{mMji} (given that $\upsilon-a>0$) we see that 
$|(\Theta_i 1)(y)|\leq\frac{C}{\delta_E(y)^\upsilon}$ for every
$y\in{\mathscr{X}}\setminus E$. Thus, if $C_0>0$ is some large finite fixed constant
which will be specified later (just below \eqref{SQ-83ii-B}, to be precise),  
and if $k\in I^{N,i}_Q$, then for each $x\in Q_k$ there holds 
\begin{eqnarray}\label{SQ-80}
&& \hskip -0.60in
\int\limits_{\stackrel{y\in\Gamma_{\widetilde{\kappa}}(x)}
{\eta\ell(Q_k)\leq\rho_{\#}(x,y)\leq C_0\ell(Q_k)}}
\hskip -0.40in
|(\Theta_i 1)(y)|^q\delta_E(y)^{q\upsilon-m}\,d\mu(y)
\leq C\int\limits_{\stackrel{y\in\Gamma_{\widetilde{\kappa}}(x)}
{\eta\ell(Q_k)\leq\rho_{\#}(x,y)\leq C_0\ell(Q_k)}}\hskip -0.40in
\frac{d\mu(y)}{\delta_E(y)^{m}}
\\[4pt]
&&\hskip 0.10in
\leq C \ell(Q_k)^{-m}\mu\Bigl(\bigl\{y\in\Gamma_{\widetilde{\kappa}}(x):\,
\eta\ell(Q_k)\leq\rho_{\#}(x,y)\leq C_0\ell(Q_k)\bigr\}\Bigr)
\leq C<\infty,
\nonumber
\end{eqnarray}
for some $C>0$ independent of $x$, $k$, $Q$ and $i$. In turn, \eqref{SQ-80} entails 
\begin{eqnarray}\label{SQ-81}
&& \hskip -0.50in
\sum\limits_{k\in I_Q^{N,i}}\int_{Q_k}\Bigl(
\int\limits_{\stackrel{y\in\Gamma_{\widetilde{\kappa}}(x)}
{\eta\ell(Q_k)\leq\rho_{\#}(x,y)\leq C_0\ell(Q_k)}}
\hskip -0.40in
|(\Theta_i 1)(y)|^q\delta_E(y)^{q\upsilon-m}\,d\mu(y)\Bigr)\,d\sigma(x)
\nonumber\\[4pt]
&&\hskip 1.50in
\leq C\sum\limits_{k\in I_Q^{N,i}}\sigma(Q_k)
\leq C\sigma\bigl(\Omega_Q^{N,i}\bigr)\leq C\sigma(Q),
\end{eqnarray}
which once again suits our purposes.
Next, since $\{Q_k\}_{k\in I^{N,i}_Q}$ is a Whitney decomposition of $\Omega_Q^{N,i}$ 
relative to $Q$, for each $k\in I^{N,i}_Q$ there exists $x_k\in F^{N,i}_Q$ such that
\begin{eqnarray}\label{SQ-MMh}
{\rm dist}_{\rho_{\#}}(x_k,Q_k)\leq c\,\ell(Q_k), 
\end{eqnarray}
for some finite $c>0$ independent of $k$, $Q$ and $i$. We now claim that there 
exits $\widetilde{\kappa}\in(0,\kappa)$ depending on the constants 
associated with the Whitney decomposition of $\Omega_Q^{N,i}$
(hence, ultimately, on finite positive geometric constants associated with $(E,\rho\bigl|_{E},\sigma)$),
as well as on $\kappa$ and the constant $C_0$, but independent of $k$, $Q$ and $i$, 
such that
\begin{eqnarray}\label{SQ-83ii}
x\in Q_k,\,\,\,y\in\Gamma_{\widetilde{\kappa}}(x)\,\,\mbox{ and }\,\,
C_0\ell(Q_k)<\rho_{\#}(x,y)\,\Longrightarrow\,
y\in\Gamma_\kappa(x_k).
\end{eqnarray}
To justify this claim, suppose that $\widetilde{\kappa}\in(0,\kappa)$ and 
fix $x\in Q_k$ along with $y\in\Gamma_{\widetilde{\kappa}}(x)$ such that 
$C_0\ell(Q_k)<{\rm dist}_{\rho_{\#}}(y,Q)$. Then 
\begin{eqnarray}\label{SQ-83ii-A}
C_0\ell(Q_k)<\rho_{\#}(y,x)<(1+\widetilde{\kappa})\delta_E(y)
<(1+\kappa)\delta_E(y).
\end{eqnarray}
Also, if we choose a finite number $\vartheta\in\bigl(0,(\log_2C_\rho)^{-1}\bigr]$, 
Theorem~\ref{JjEGh} gives that $(\rho_{\#})^\vartheta$ is a genuine distance. As such, 
we may estimate based on \eqref{SQ-MMh}, \eqref{SQ-83ii-A} and hypotheses 
\begin{eqnarray}\label{SQ-83ii-B}
\rho_{\#}(y,x_k)^\vartheta &\leq & \rho_{\#}(y,x)^\vartheta+\rho_{\#}(x,x_k)^\vartheta
<(1+\widetilde{\kappa})^\vartheta\delta_E(y)^\vartheta+c^\vartheta\ell(Q_k)^\vartheta
\nonumber\\[4pt]
&\leq & (1+\widetilde{\kappa})^\vartheta\delta_E(y)^\vartheta
+c^\vartheta\frac{(1+\kappa)^\vartheta}{C_0^\vartheta}\delta_E(y)^\vartheta
\nonumber\\[4pt]
&\leq & (1+\kappa)^\vartheta\delta_E(y)^\vartheta,
\end{eqnarray}
provided $C_0>c\Bigl[1-\Bigl(\frac{1}{1+\kappa}\Bigr)^\vartheta\Bigr]^{-1/\vartheta}$ 
and $0<\widetilde{\kappa}<(1+\kappa)
\Bigl[1-\Bigl(\frac{c}{C_0}\Bigr)^\vartheta\Bigr]^{1/\vartheta}-1$. Assuming that 
this is the case, \eqref{SQ-83ii} now follows from \eqref{SQ-83ii-B}. 

Going further, with \eqref{SQ-83ii} in hand and upon recalling that 
$S^i_{Q}(x_k)\leq N$, we may estimate 
\begin{eqnarray}\label{SQ-83}
&&\hskip -0.60in
\sum\limits_{k\in I_Q^{N,i}}\int_{Q_k}\Bigl(
\int\limits_{\stackrel{y\in\Gamma_{\widetilde{\kappa}}(x)}
{C_0\ell(Q_k)<\rho_{\#}(x,y)<\eta\ell(Q)}}
\hskip -0.40in
|(\Theta_i 1)(y)|^q\delta_E(y)^{q\upsilon-m}\,d\mu(y)\Bigr)\,d\sigma(x)
\nonumber\\[4pt]
&&\hskip 0.20in
\leq \sum\limits_{k\in I_Q^{N,i}}\int_{Q_k}\Bigl(\hskip -0.08in
\int\limits_{\stackrel{y\in\Gamma_{\kappa}(x_k)}
{\rho_{\#}(x,y)<\eta\ell(Q)}}
\hskip -0.10in
|(\Theta_i 1)(y)|^q\delta_E(y)^{q\upsilon-m}\,d\mu(y)\Bigr)\,d\sigma(x)
\nonumber\\[4pt]
&&\hskip 0.20in
=\sum\limits_{k\in I_Q^{N,i}}\int_{x\in Q_k}
(S^i_{Q}(x_k))^q \,d\sigma(x)
\leq N^q\sum\limits_{k\in I_Q^{N,i}}\sigma(Q_k)\leq N^q\sigma(Q),
\end{eqnarray}
which is of the right order.
In concert, \eqref{SQ-81}-\eqref{SQ-83} prove that there exists 
$C\in(0,\infty)$ depending only on geometry, the estimates satisfied by the 
kernel $\theta$, and $\kappa$, with the property that $IV\leq (C+N^q)\sigma(Q)$. 
In combination with \eqref{SQ-59}-\eqref{SQ-79}, this then allows us to 
conclude that
\begin{eqnarray}\label{SQ-84}
\int_Q(S^i_{Q,\widetilde{\kappa}}(x))^q d\sigma
\leq A^i(1-\beta)\sigma(Q)+(C+N^q)\sigma(Q),\qquad\forall\,Q\in{\mathbb{D}}(E).
\end{eqnarray}
In particular, if we divide \eqref{SQ-84} by $\sigma(Q)$, then take the supremum 
over $Q\in{\mathbb{D}}(E)$ we arrive at the conclusion that 
$A^i\leq A^i(1-\beta)+C$ for each $i\in{\mathbb{N}}$. 
Upon recalling that $A^i\in(0,\infty)$ for each $i\in{\mathbb{N}}$ and that 
$\beta\in(0,1)$, it follows from this that 
$\sup_{i\in{\mathbb{N}}}A^i\leq\beta^{-1}(C+N^q)<\infty$.
Hence, \eqref{SQ-58} is true. 

Consider now the function $S_{Q,\widetilde{\kappa}}$ defined analogously to 
$S_Q$ but with $\widetilde{\kappa}$ in place of $\kappa$. Given that $\lim\limits_{i\to\infty}S^i_{Q,\widetilde{\kappa}}=S_{Q,\widetilde{\kappa}}$ 
pointwise in $E$, from \eqref{SQ-58} and Lebesgue's Monotone Convergence Theorem
we may conclude that
\begin{eqnarray}\label{SQ-85}
\exists\,C\in(0,\infty)\quad\mbox{such that }\,\,
\tfrac{1}{\sigma(Q)}\int_Q(S_{Q,\widetilde{\kappa}}(x))^q\,d\sigma(x)\leq C,
\qquad\forall\,Q\in{\mathbb{D}}(E).
\end{eqnarray}
Next, observe that
\begin{eqnarray}\label{SQ-8vv}
x,y\in B_{\rho_{\#}}\bigl(x_Q,\eta C_\rho^{-1}\ell(Q)\bigr)
\,\Longrightarrow\,\rho_{\#}(x,y)\leq\eta\ell(Q).
\end{eqnarray}
Then, based on \eqref{SQ-1}, \eqref{SQ-8vv}, \eqref{Mix+FR}, 
$(ii)$ in Lemma~\ref{T-LL.2}, \eqref{3.2.BN}, and the fact that 
$\bigl(E,\rho\bigl|_{E},\sigma\bigr)$ is a $d$-dimensional {\rm ADR} space, 
we may estimate (using notation introduced in \eqref{ktEW-7UU}):
\begin{eqnarray}\label{kbF-YH.3}
&& \hskip -0.50in
\int\limits_{\Delta(x_Q,\eta C_\rho^{-1}\ell(Q))}
(S_{Q,\widetilde{\kappa}}(x))^q\,d\sigma(x)
\nonumber\\[4pt]
&& \hskip 0.25in
=\int\limits_{\Delta(x_Q,\eta C_\rho^{-1}\ell(Q))}
\Bigl(\int\limits_{\stackrel{y\in\Gamma_{\widetilde{\kappa}}(x)}
{\rho_{\#}(x,y)<\eta\ell(Q)}}
|(\Theta 1)(y)|^q\delta_E(y)^{q\upsilon-m}\,d\mu(y)\Bigr)d\sigma(x)
\nonumber\\[4pt]
&& \hskip 0.25in
\geq\int\limits_{\Delta(x_Q,\eta C_\rho^{-1}\ell(Q))}
\Bigl(\int\limits_{\stackrel{y\in\Gamma_{\widetilde{\kappa}}(x)}
{\rho_{\#}(y,x_Q)<\eta C_\rho^{-1}\ell(Q)}}
|(\Theta 1)(y)|^q\delta_E(y)^{q\upsilon-m}\,d\mu(y)\Bigr)d\sigma(x)
\nonumber\\[4pt]
&& \hskip 0.25in
=\int\limits_{\stackrel{y\in{\mathcal{F}}_{\widetilde{\kappa}}
(\Delta(x_Q,\eta C_\rho^{-1}\ell(Q)))}
{\rho_{\#}(y,x_Q)<\eta C_\rho^{-1}\ell(Q)}}
|(\Theta 1)(y)|^q\delta_E(y)^{q\upsilon-m}
\sigma\bigl(\Delta(x_Q,\eta C_\rho^{-1}\ell(Q))
\cap\pi^{\widetilde{\kappa}}_y\bigr)\,d\mu(y)
\nonumber\\[4pt]
&& \hskip 0.25in
\geq\int\limits_{\stackrel{y\in{\mathcal{T}}_{\widetilde{\kappa}}
(\Delta(x_Q,\eta C_\rho^{-1}\ell(Q)))}
{\rho_{\#}(y,x_Q)<\eta C_\rho^{-1}\ell(Q)}}
|(\Theta 1)(y)|^q\delta_E(y)^{q\upsilon-m}
\sigma\bigl(\Delta(x_Q,\eta C_\rho^{-1}\ell(Q))
\cap\pi^{\widetilde{\kappa}}_y\bigr)\,d\mu(y)
\nonumber\\[4pt]
&& \hskip 0.25in
=\int\limits_{\stackrel{y\in{\mathcal{T}}_{\widetilde{\kappa}}
(\Delta(x_Q,\eta C_\rho^{-1}\ell(Q)))}
{\rho_{\#}(y,x_Q)<\eta C_\rho^{-1}\ell(Q)}}
|(\Theta 1)(y)|^q\delta_E(y)^{q\upsilon-m}
\sigma\bigl(\pi^{\widetilde{\kappa}}_y\bigr)\,d\mu(y)
\nonumber\\[4pt]
&& \hskip 0.25in
\approx\int\limits_{\stackrel{y\in{\mathcal{T}}_{\widetilde{\kappa}}
(\Delta(x_Q,\eta C_\rho^{-1}\ell(Q)))}
{\rho_{\#}(y,x_Q)<\eta C_\rho^{-1}\ell(Q)}}
|(\Theta 1)(y)|^q\delta_E(y)^{q\upsilon-(m-d)}\,d\mu(y),
\end{eqnarray}
uniformly for $Q\in{\mathbb{D}}(E)$. Let us also observe that there exists an 
integer $M_o\in{\mathbb{N}}$ (depending only on geometry) with the property that for 
every $Q\in{\mathbb{D}}(E)$ the ball $\Delta(x_Q,\eta C_\rho^{-1}\ell(Q))$ 
may be covered by at most $M_o$ dyadic cubes of the same generation as $Q$, 
and that for every such cube $\widetilde{Q}$ there holds $S_{\widetilde{Q}}=S_Q$.
Having noticed this, we then deduce from \eqref{SQ-85}, \eqref{kbF-YH.3}, 
and \eqref{Fv-UU45} that there exists $C\in(0,\infty)$ satisfying  
\begin{eqnarray}\label{k-tSS.ps}
\frac{1}{\sigma(Q)}\int
\limits_{B_{\rho_{\#}}(x_Q,\,\eta C_\rho^{-2}\ell(Q))\setminus E}
|(\Theta 1)(x)|^q\delta_E(x)^{q\upsilon-(m-d)}\,d\mu(x)
\leq C,\qquad\forall\,Q\in{\mathbb{D}}(E).
\end{eqnarray}
With this in hand, \eqref{k-tSS.22} now follows with the help of \eqref{dFvK},
if $\eta$ is sufficiently large to begin with (depending only on geometry). 
\end{proof}

Our last auxiliary result is an estimate of geometrical nature, on 
a nontangential approach region. For a proof (and for more general 
results of this type) see \cite{MMMM-B}.

\begin{lemma}\label{bzrg1}
Let $({\mathscr{X}},\rho,\mu)$ be an $m$-dimensional {\rm ADR} space 
for some $m>0$. Assume that $E$ is a closed subset of $({\mathscr{X}},\tau_{\rho})$
with the property that there exists a Borel measure $\sigma$ on $(E,\tau_{\rho|_{E}})$
such that $\bigl(E,\rho\bigl|_{E},\sigma\bigr)$ is a $d$-dimensional {\rm ADR} 
space for some $d\geq 0$. Then for each $\kappa>0$, $\beta<m$, $M>m-\beta$, there 
exists a finite constant $C>0$ depending on $\kappa$, $M$, $\beta$, and the 
{\rm ADR} constants of ${\mathscr{X}}$ and $E$, such that
\begin{eqnarray}\label{ae4t}
\hskip -0.20in
\int_{\Gamma_{\kappa}(z)}
\frac{\delta_E(x)^{-\beta}}{\rho_{\#}(x,y)^M}\,d\mu(x)\leq C \rho(y,z)^{m-\beta-M},\quad\mbox{for all }\,\,z,y\in E\mbox{ with }z\not=y.
\end{eqnarray}
\end{lemma}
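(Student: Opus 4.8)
Statement to prove (Lemma~\ref{bzrg1}): for $({\mathscr{X}},\rho,\mu)$ an $m$-dimensional ADR space, $E\subseteq{\mathscr{X}}$ closed carrying a $d$-dimensional ADR measure $\sigma$, and for $\kappa>0$, $\beta<m$, $M>m-\beta$, one has
$$\int_{\Gamma_\kappa(z)}\frac{\delta_E(x)^{-\beta}}{\rho_\#(x,y)^M}\,d\mu(x)\le C\,\rho(y,z)^{m-\beta-M},\qquad z\ne y\in E.$$

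Let me sketch my approach.

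The plan is to dyadically decompose the cone $\Gamma_\kappa(z)$ according to the size of $\delta_E(x)$, and then, within each such "slab", to further split according to the size of $\rho_\#(x,y)$. Throughout I will freely replace $\rho$ by the regularized quasi-distance $\rho_\#$ (which is equivalent to $\rho$ by Theorem~\ref{JjEGh}, so constants only change) and use that $\delta_E\approx{\rm dist}_\rho(\cdot,E)$. Fix $z,y\in E$ with $z\ne y$ and set $r:=\rho_\#(y,z)>0$. First I would observe the elementary geometric fact that on $\Gamma_\kappa(z)$ one has $\delta_E(x)\le\rho_\#(x,z)<(1+\kappa)\delta_E(x)$, and hence by the quasi-triangle inequality $\rho_\#(x,z)\le C_\rho\max\{\rho_\#(x,y),r\}$ and $r\le C_\rho\max\{\rho_\#(x,y),\rho_\#(x,z)\}$; these bound $\delta_E(x)$ from above by $C\max\{\rho_\#(x,y),r\}$ and also show that the integrand is only nonnegligible in a controlled range. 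Concretely, I would write $\Gamma_\kappa(z)=\bigcup_{j\in{\mathbb Z}}A_j$ where $A_j:=\{x\in\Gamma_\kappa(z):2^{-j-1}r\le\delta_E(x)<2^{-j}r\}\cup(\text{the piece with }\delta_E(x)\ge r)$ — more precisely the annuli are indexed so that $\delta_E(x)\approx 2^{-j}r$, and only $j\ge -c_0$ (some fixed constant) contribute because $\delta_E(x)\le\rho_\#(x,z)$ forces $\delta_E(x)\lesssim r$ once $\rho_\#(x,y)\lesssim r$, while for $\rho_\#(x,y)\gg r$ a separate tail estimate is needed.

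Next, for each slab where $\delta_E(x)\approx t$ (with $t=2^{-j}r\le Cr$), I would estimate $\int_{\Gamma_\kappa(z),\,\delta_E(x)\approx t}\rho_\#(x,y)^{-M}\,d\mu(x)$. This is where I invoke a covering argument: the set $\{x\in\Gamma_\kappa(z):\delta_E(x)\approx t\}$ is contained in $B_{\rho_\#}(z,Ct)\setminus E$ (using $\rho_\#(x,z)<(1+\kappa)\delta_E(x)\approx t$), and on it $\rho_\#(x,y)\gtrsim\delta_E(x)\approx t$ as well as $\rho_\#(x,y)\gtrsim r$ when... — actually the cleanest route is to note $\rho_\#(x,y)\ge{\rm dist}_\rho(x,E)=\delta_E(x)$ always, and split the $x$-integral over dyadic annuli $\rho_\#(x,y)\approx 2^k$ intersected with $\delta_E(x)\approx t$; the $\mu$-measure of such a set is $\lesssim\min\{t^{\,?},\,2^{km}\}$. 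The key input is the ADR-type volume bound for the region near $E$: by the $m$-ADR property of ${\mathscr{X}}$, $\mu(B_{\rho_\#}(w,s))\le Cs^m$, and the set $\{x:\delta_E(x)\approx t\}\cap B_{\rho_\#}(y,2^{k})$ has $\mu$-measure $\lesssim 2^{k d}\,t^{\,m-d}$ — this is exactly the content of Lemma~\ref{geom-lem} (with the roles arranged so that $\delta_E(x)<r'\approx t$ and $x\in B_\rho(x_0,R)$ with $R\approx 2^k$). Thus I can sum a geometric series in $k$ provided the exponent $m-M<0$ on the large annuli and the remaining power of $2^{kd}$ is controlled — this forces the condition $M>m-\beta$ to enter, after also inserting the factor $t^{-\beta}$ and summing over $j$ (equivalently over $t\le Cr$).

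Carrying this out: the slab contribution is $\lesssim t^{-\beta}\cdot\sum_{2^k\gtrsim\max\{t,?\}}2^{-kM}2^{kd}t^{m-d}$; since we need $M>d$ is not assumed but $M>m-\beta$ and $\beta<m$ give $M>0$, and in fact one organizes the $k$-sum to start at $2^k\approx\rho_\#(x,y)_{\min}$, obtaining a bound $\lesssim t^{-\beta}t^{m-d}\cdot(\text{scale})^{d-M}$; choosing the scale comparably to $\max\{t,r\}$ and then summing the resulting geometric series in $t=2^{-j}r$ over $j\ge-c_0$ yields the claimed $r^{m-\beta-M}$, using crucially $m-\beta>0$ (so $\sum_j 2^{-j(m-\beta)}$ converges from below, i.e. the sum over $t\le Cr$ is dominated by its largest term $t\approx r$) together with $M>m-\beta$ wherever a tail in the opposite direction appears. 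I expect the main obstacle to be bookkeeping the two-parameter decomposition cleanly — in particular verifying that for $x\in\Gamma_\kappa(z)$ with $\rho_\#(x,y)\gg r$ one automatically has $\delta_E(x)\approx\rho_\#(x,y)$ (so that slab and annulus are coupled and the double sum collapses to a single geometric series), and checking that the constant depends only on $\kappa,M,\beta$ and the ADR constants. Once that coupling is in place the estimate reduces to Lemma~\ref{geom-lem} plus a routine summation, so the real work is the geometry of the cone near and far from the vertex.
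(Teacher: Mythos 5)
The paper contains no proof of Lemma~\ref{bzrg1} to compare against (it is quoted from \cite{MMMM-B}), so I judge your argument on its own terms: the general plan (dyadic decomposition of the cone, exploiting $\delta_E(x)\approx\rho_{\#}(x,z)$ on $\Gamma_\kappa(x)$, separate treatment of the regions $\delta_E(x)\lesssim r$ and $\delta_E(x)\gtrsim r$ with $r:=\rho_{\#}(y,z)$) is the right one, but as written there are two genuine gaps, both in the near region $\delta_E(x)\lesssim r$. The first is that the lower bound $\rho_{\#}(x,y)\geq c_\kappa\,r$ for \emph{every} $x\in\Gamma_\kappa(z)$ --- which is the crux of the lemma, and which you begin to assert (``as well as $\rho_{\#}(x,y)\gtrsim r$ when\dots'') but then abandon in favour of the weaker $\rho_{\#}(x,y)\geq\delta_E(x)$ --- is never established. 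It does hold, and is easy: $r\leq C_\rho\max\{\rho_{\#}(y,x),\rho_{\#}(x,z)\}$, while $x\in\Gamma_\kappa(z)$ gives $\rho_{\#}(x,z)<(1+\kappa)\delta_E(x)\leq(1+\kappa)\rho_{\#}(x,y)$, so $r\leq C_\rho(1+\kappa)\rho_{\#}(x,y)$. Without it, if the $k$-sum in your slab estimate starts at scale $\approx t$ (all that $\rho_{\#}(x,y)\geq\delta_E(x)$ gives), the slab with $\delta_E\approx t$ contributes at best $\approx t^{m-\beta-M}$, and since $M+\beta>m$ the sum over $t\leq Cr$ is dominated by the \emph{smallest} $t$ and diverges; your parenthetical claim that $m-\beta>0$ makes it ``dominated by its largest term $t\approx r$'' is backwards for that exponent.

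The second gap is the reliance on Lemma~\ref{geom-lem}, which is both a bookkeeping error and an unnecessary loss of generality. Even granting the scale $\max\{t,r\}=r$ in the $k$-sum, the measure bound $\mu\bigl(\{\delta_E\approx t\}\cap B_{\rho_{\#}}(y,R)\bigr)\lesssim R^{d}t^{m-d}$ with $R\approx r$ makes the slab contribution $t^{-\beta}t^{m-d}r^{d-M}$, so the $t$-exponent is $m-d-\beta$, not the $m-\beta$ you quote; convergence of the $t$-sum then requires $\beta<m-d$, which is strictly stronger than the hypothesis $\beta<m$ and is in fact violated in the paper's own applications of the lemma (e.g.\ in \eqref{bmKK-7}, \eqref{SQ-6}, \eqref{jnbj-3} one has $\beta=m-q(\upsilon-a)$, which need not be below $m-d$). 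The repair is to drop Lemma~\ref{geom-lem} altogether: on the cone, $\delta_E(x)\approx\rho_{\#}(x,z)$ and, by the two quasi-triangle estimates above, $\rho_{\#}(x,y)\approx\max\{\rho_{\#}(x,z),r\}$, so the integrand is pointwise comparable to $\rho_{\#}(x,z)^{-\beta}\max\{\rho_{\#}(x,z),r\}^{-M}$. Decomposing into dyadic shells $\rho_{\#}(x,z)\approx 2^{j}r$ and using only the upper $m$-ADR bound $\mu\bigl(B_{\rho_{\#}}(z,s)\bigr)\leq Cs^{m}$, the shells with $j\leq 0$ sum to $C\,r^{-M}r^{m-\beta}$ (this is where $\beta<m$ enters) and those with $j\geq 0$ sum to $C\,r^{m-\beta-M}$ (this is where $M>m-\beta$ enters), which is the claimed bound; note that the $d$-ADR property of $E$ plays no quantitative role beyond making $\delta_E$ meaningful.
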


Now we are ready to proceed with the 

\vskip 0.08in
\begin{proof}[Proof of Proposition~\ref{VGds-L2}]
Based on Lemma~\ref{SQ-lema}, it suffices to prove that if the hypotheses of
Proposition~\ref{VGds-L2} are satisfied, then there exist $N<\infty$ and $\beta\in(0,1)$ 
such that \eqref{SQ-2} holds. To this end, let $N>0$ be a large finite constant, 
to be specified later, and fix an arbitrary $Q\in{\mathbb{D}}(E)$. 
Also, recall $c_q$ from \eqref{CPP-77} and fix an arbitrary number $\eta>0$. 
Then, with $S_Q$ as in \eqref{SQ-1} and some finite constant $c>0$ to be specified 
later, we may write  
\begin{eqnarray}\label{SQ-3}
&&
\hskip -0.30in
\sigma\Bigl(\bigl\{x\in Q:\,S_Q(x)>N\bigr\}\Bigr)
\nonumber\\[4pt]
&& \hskip 0.10in
\leq \sigma\Bigl(\bigl\{x\in Q:\,\Bigl(
\int\limits_{y\in\Gamma_\kappa(x),\,\rho_{\#}(x,y)<\eta\ell(Q)}
|(\Theta{\mathbf{1}}_{c\,Q})(y)|^q
\delta_E(y)^{q\upsilon-m}\,d\mu(y)\Bigr)^{\frac{1}{q}}>N/2\bigr\}\Bigr)
\nonumber\\[4pt]
&&\hskip 0.20in
+\sigma\Bigl(\bigl\{x\in Q:\,\Bigl(\int\limits_{y\in\Gamma_\kappa(x),
\,\rho_{\#}(x,y)<\eta\ell(Q)}
|(\Theta{\mathbf{1}}_{E\setminus c\,Q})(y)|^q
\delta_E(y)^{q\upsilon-m}\,d\mu(y)\Bigr)^{\frac{1}{q}}>N/2\bigr\}\Bigr)
\nonumber\\[4pt]
&& \hskip 0.10in
=:I+II,
\end{eqnarray}
where we have used the notation $c\,Q:=E\cap B_{\rho_{\#}}\bigl(x_Q,c\,\ell(Q)\bigr)$.
Note that under the assumption \eqref{dtbh-L2} (and the fact that $\sigma$ is 
doubling) we may estimate
\begin{eqnarray}\label{SQ-3B}
\hskip -0.20in
I\leq\sigma\Bigl(\bigl\{x\in Q:\,\Bigl(\int_{\Gamma_\kappa(x)}
|(\Theta{\mathbf{1}}_{c\,Q})(y)|^q\delta_E(y)^{q\upsilon-m}\,d\mu(y)\Bigr)^{\frac{1}{q}}
>N/2\bigr\}\Bigr)\leq\tfrac{C}{N^p}\sigma(Q),
\end{eqnarray}
which suits our purposes. 

Going further, select some finite constant $c_o\geq\sup_{Q'\in{\mathbb{D}}(E)}
\bigl({\rm diam}_{\rho_{\#}}(Q')/\ell(Q')\bigr)$. Given $x\in Q$ fixed, 
note that for each $y\in B_{\rho_{\#}}(x,\eta\ell(Q))$ we have
\begin{eqnarray}\label{SQ-ZZ}
\rho_{\#}(y,x_Q) &\leq & C_{\rho_{\#}}\max\,\{\rho_{\#}(y,x),\rho_{\#}(x,x_Q)\}
\\[4pt]
&\leq & C_\rho\max\,\{\eta,c_o\}\,\ell(Q)\leq c^{-1}C_\rho\max\,\{\eta,c_o\}\,
\rho_{\#}(z,x_Q),\qquad\forall\,z\in E\setminus{\mathbf{1}}_{c\,Q}.
\nonumber
\end{eqnarray}
Consequently, if $z\in E\setminus{\mathbf{1}}_{c\,Q}$, then 
\begin{eqnarray}\label{SQ-Zs}
\rho_{\#}(z,x_Q)&\leq& C_{\rho_{\#}}\max\{\rho_{\#}(z,y),\rho_{\#}(y,x_Q)\}
\\[4pt]
&\leq & C_\rho\rho_{\#}(z,y)+c^{-1}C^2_\rho\max\,\{\eta,c_o\}\,\rho_{\#}(z,x_Q),
\quad\forall\,y\in B_{\rho_{\#}}(x,\eta\ell(Q)).
\nonumber
\end{eqnarray}
Hence, choosing the finite constant $c>0$ sufficiently large so that 
$c^{-1}C^2_\rho\max\,\{\eta,c_o\}<\frac{1}{2}$ forces 
$\rho_{\#}(z,x_Q)\leq 2C_\rho\rho_{\#}(z,y)$ for all 
$y\in B_{\rho_{\#}}(x,\eta\ell(Q))$.
Making use of this, \eqref{hszz}, and \eqref{WBA} we may then write 
\begin{eqnarray}\label{SQ-4}
|(\Theta{\mathbf{1}}_{E\setminus c\,Q})(y)|
&\leq & C\int\limits_{E\setminus c\,Q}\frac{\delta_E(y)^{-a}}{\rho_{\#}(z,y)^{d+\upsilon-a}}\,d\sigma(z)
\leq C\delta_E(y)^{-a}\!\!\!\!
\int\limits_{z\in E,\,\rho_{\#}(z,x_Q)>c\,\ell(Q)}
\frac{d\sigma(z)}{\rho_{\#}(z,x_Q)^{d+\upsilon-a}}
\nonumber\\[4pt]
&\leq & C\frac{\delta_E(y)^{-a}}{\ell(Q)^{\upsilon-a}},
\qquad\forall\,y\in B_{\rho_{\#}}(x,\eta\ell(Q)).
\end{eqnarray}
Pick now $1<c_1<c_2<c$ such that there exists ${w}\in c_2Q\setminus c_1Q$
(which may be assured by further increasing $c$ if needed, given that
$(E,\rho|_{E},\sigma)$ is a $d$-dimensional {\rm ADR} space). 
Then clearly $\rho_{\#}(x,{w})\approx\ell(Q)$ and we claim that also 
\begin{eqnarray}\label{SQ-5}
\rho_{\#}(y,{w})\approx\ell(Q),\quad\mbox{ uniformly for }\,\,
y\in\Gamma_\kappa(x)\cap B_{\rho_{\#}}(x,\eta\ell(Q)).
\end{eqnarray}
Indeed, on the one hand, if the point $y\in{\mathscr{X}}$ is 
such that $\rho_{\#}(y,x)<\eta\ell(Q)$ then we obtain 
$\rho_{\#}(y,{w})\leq C_{\rho}\max\{\rho_{\#}(y,x),\rho_{\#}(x,{w})\}
\leq C\ell(Q)$. On the other hand, if we additionally know that $y\in\Gamma_\kappa(x)$, 
then $\rho_{\#}(y,x)<(1+\kappa)\delta_E(y)\leq(1+\kappa)\rho_{\#}(y,{w})$, hence
\begin{eqnarray}\label{BHh}
C\ell(Q) &\leq & \rho_{\#}(x,{w})
\leq C_{\rho_{\#}}\max\{\rho_{\#}(x,y),\rho_{\#}(y,{w})\}
\nonumber\\[4pt]
& \leq & C_\rho(1+\kappa)\rho_{\#}(y,{w})\leq C\ell(Q),
\end{eqnarray}
proving \eqref{SQ-5}. 

Select now a real number $M>q(\upsilon-a)$. 
Combining \eqref{SQ-4} and \eqref{SQ-5} we then obtain
\begin{eqnarray}\label{SQ-6}
&& \hskip -0.80in
\int\limits_{\stackrel{y\in\Gamma_\kappa(x)}{\rho_{\#}(x,y)<\eta\ell(Q)}}
\!\!\!\!\!
|(\Theta{\mathbf{1}}_{E\setminus c\,Q})(y)|^q\delta_E(y)^{q\upsilon-m}\,d\mu(y)
\\
&& \hskip 0.80in
\leq C\int\limits_{\stackrel{y\in\Gamma_\kappa(x)}{\rho_{\#}(x,y)<\eta\ell(Q)}}
\!\!\!\!\!
\frac{1}{\ell(Q)^{q(\upsilon-a)}}\cdot\delta_E(y)^{q(\upsilon-a)-m}\,d\mu(y)
\nonumber\\[4pt]
&& \hskip 0.80in
\leq C\ell(Q)^{M-q(\upsilon-a)}\int\limits_{\Gamma_\kappa(x)}
\frac{\delta_E(y)^{-[m-q(\upsilon-a)]}}{\rho_{\#}(y,{w})^{M}}\,d\mu(y)
\nonumber\\[4pt]
&& \hskip 0.80in
\leq C\ell(Q)^{M-q(\upsilon-a)}\rho_{\#}(x,{w})^{-M+q(\upsilon-a)}\leq C,
\qquad\forall\,x\in Q,
\nonumber
\end{eqnarray}
where for the penultimate inequality in \eqref{SQ-6} we have relied on 
Lemma~\ref{bzrg1} (used here with $\beta:=m-q(\upsilon-a)$).

With this in hand, we are now ready to estimate the term $II$ (appearing in 
\eqref{SQ-3}). Concretely, applying first Tschebyshev's inequality and then invoking  
\eqref{SQ-6} we obtain 
\begin{eqnarray}\label{SQ-7}
II \leq \tfrac{C}{N}\int_Q\Bigl(
\int\limits_{\stackrel{y\in\Gamma_\kappa(x)}{\rho_{\#}(x,y)<\eta\ell(Q)}}
\!\!\!\!\!
|(\Theta{\mathbf{1}}_{E\setminus c\,Q})(y)|^q\frac{d\mu(y)}{\delta_E(y)^{m-q\upsilon}}
\Bigr)^\frac{1}{q}\,d\sigma(x)\leq\tfrac{C}{N}\sigma(Q).
\end{eqnarray}
Combining \eqref{SQ-3}, \eqref{SQ-3B} and \eqref{SQ-7} we see that, for each 
$\beta\in(0,1)$, if we choose $N>0$ sufficiently large, then 
\begin{eqnarray}\label{SQ-8}
\sigma\Bigl(\bigl\{x\in Q:\,S_Q(x)>N\bigr\}\Bigr)
\leq\tfrac{C}{N^{\min\{1,p\}}}\sigma(Q)<(1-\beta)\sigma(Q),
\qquad\forall\,Q\in{\mathbb{D}}(E).
\end{eqnarray}
Hence, \eqref{SQ-2} holds and the proof of the proposition is complete.
\end{proof}

\subsection{Extrapolating square function estimates}
\label{SSect:5.4}

We now combine our results to prove two extrapolation theorems for square 
function estimates associated with integral operators $\Theta_E$, as defined in Section~\ref{Sect:3}. First, we use Theorem~\ref{AsiC} to prove the extrapolation 
result in Theorem~\ref{VGds-2}, and then we combine this with Theorem~\ref{VGds-L2XXX} 
to obtain another extrapolation result in Theorem~\ref{VGds-2.33}.

In the first part of this subsection we digress to clarify terminology and background results concerning the scale of Hardy spaces $H^p$ for $p\in(0,\infty)$ in the context 
of a $d$-dimensional Ahlfors-David Regular space. In particular, we consider an atomic characterization for these spaces based on the work of R.R.~Coifman and G.~Weiss in \cite{CoWe77}, as well as a maximal function characterization based on the work of R.A.~Mac\'{i}as and C.~Segovia in~\cite{MaSe79II}. The theory of Hardy spaces in the context considered here has also been developed by D.~Mitrea, I.~Mitrea, M.~Mitrea 
and S.~Monniaux in~\cite{MMMM-G}.

Consider a $d$-dimensional {\rm ADR} space $(E,\rho,\sigma)$ and let
$\beta\in(0,\infty)$. Given a real-valued function $f$ on $E$, define its 
{\tt H\"older\! semi-norm} (of order $\beta$, relative to the quasi-distance $\rho$) 
by setting 
\begin{eqnarray}\label{Hol.T2}
\|f\|_{\dot{\mathscr{C}}^\beta(E,\rho)}:=
\sup_{x,y\in E,\,x\not=y}\frac{|f(x)-f(y)|}{\rho(x,y)^\beta}, 
\end{eqnarray}
and define the homogeneous H\"older space $\dot{\mathscr{C}}^\beta(E)$ as 
\begin{eqnarray}\label{Hol.T2Li}
\hskip -0.30in
\dot{\mathscr{C}}^\beta(E,\rho):=
\bigl\{f:E\to{\mathbb{R}}:\,\|f\|_{\dot{\mathscr{C}}^\beta(E,\rho)}<\infty\bigr\}.
\end{eqnarray}
Going further, set $\dot{\mathscr{C}}^\beta_c(E,\rho)$ for the subspace of 
$\dot{\mathscr{C}}^\beta(E,\rho)$ consisting of functions which vanish identically 
outside a bounded set. Then define the class of {\tt test\! functions} on $E$ as 
\begin{eqnarray}\label{TTT-E.1}
{\mathscr{D}}(E,\rho):=\bigcap\limits_{0<\beta<[\log_2C_\rho]^{-1}}
\dot{\mathscr{C}}^\beta_c(E,\rho), 
\end{eqnarray}
equipped with a certain topology, $\tau_{\mathscr{D}}$, which we shall 
describe next. Specifically, fix a nested family $\{K_n\}_{n\in{\mathbb{N}}}$ 
of $\rho$-bounded subsets of $E$ with the property that any $\rho$-ball is 
contained in one of the $K_n$'s. Then, for each $n\in{\mathbb{N}}$, denote 
by ${\mathscr{D}}_n(E,\rho)$ the collection of functions 
from ${\mathscr{D}}(E,\rho)$ which vanish in $E\setminus K_n$. 
With $\|\cdot\|_{\infty}$ standing for the supremum norm on $E$, 
this becomes a Frech\'et space when equipped with the topology 
$\tau_n$ induced by the family of norms 
\begin{eqnarray}\label{TFF-E.a2}
\bigl\{\|\cdot\|_{\infty}+
\|\cdot\|_{\dot{\mathscr{C}}^\beta(E,\rho)}:\,\beta\mbox{ rational number such that }
0<\beta<[\log_2C_\rho]^{-1}\bigr\}.
\end{eqnarray}
That is, ${\mathscr{D}}_n(E,\rho)$ is a Hausdorff topological space, 
whose topology is induced by a countable family of semi-norms, and which 
is complete (as a uniform space with the uniformity canonically induced 
by the aforementioned family of semi-norms or,
equivalently, as a metric space when endowed with a metric yielding the 
same topology as $\tau_{n}$). 
Since for any $n\in{\mathbb{N}}$ the topology induced by $\tau_{n+1}$ 
on ${\mathscr{D}}_n(X,\rho)$ coincides with $\tau_n$, we may turn 
${\mathscr{D}}(X,\rho)$ into a topological space, 
$({\mathscr{D}}(X,\rho),\tau_{\mathscr{D}})$, by regarding it as 
the strict inductive limit of the family of topological spaces 
$\bigl\{({\mathscr{D}}_n(X,\rho),\tau_n)\bigr\}_{n\in{\mathbb{N}}}$. 
Having accomplished this, we then define the {\tt space\! of\! distributions}
${\mathscr{D}}'(E,\rho)$ on $E$ as the (topological) dual of 
${\mathscr{D}}(E,\rho)$, and denote by $\langle\cdot,\cdot\rangle$ 
the natural duality pairing between distributions in ${\mathscr{D}}'(E,\rho)$ 
and test functions in ${\mathscr{D}}(E,\rho)$.

To proceed, for each number $\gamma\in\bigl(0,\bigl[\log_2 C_\rho\bigr]^{-1}\bigr)$
and each point $x\in E$ define the class ${\mathcal{B}}^{\,\gamma}_\rho(x)$ 
of $(\rho,\gamma)$-{\tt normalized bump-functions supported near} $x$ by 
\begin{eqnarray}\label{ASW43}
&& \hskip -1.00in
{\mathcal{B}}^{\,\gamma}_\rho(x):=\Bigl\{\psi\in{\mathscr{D}}(E,\rho):\,
\exists\,r>0\mbox{ such that $\psi=0$ on $E\setminus B_\rho(x,r)$}  
\mbox{ and}
\nonumber\\[4pt] 
&& \hskip 2.20in
\|\psi\|_{\infty}+r^\gamma\|\psi\|_{\dot{\mathscr{C}}^\gamma(E,\rho)}
\leq r^{-d}\Bigr\}.
\end{eqnarray}
In this setting, define the {\tt grand maximal function} of a distribution 
$f\in{\mathscr{D}}'(E,\rho)$ by setting (with the duality paring understood as before) 
\begin{eqnarray}\label{NMC22-1}
f^*_{\rho,\gamma}(x):=
\sup_{\psi\in{\mathcal{B}}^{\,\gamma}_\rho(x)}\bigl|\langle f,\psi\rangle\bigr|,
\qquad\forall\,x\in E.
\end{eqnarray}

Given an exponent $p$ satisfying
\begin{eqnarray}\label{range-p}
\frac{d}{d+[\log_2C_\rho]^{-1}}<p<\infty,
\end{eqnarray}
define the {\tt Hardy\! space} $H^p(E,\rho,\sigma)$ by setting 
\begin{eqnarray}\label{NMC22-2BB}
&&\hskip -0.80in
H^p(E,\rho,\sigma):=
\Bigl\{f\in{\mathscr{D}}'(E,\rho):\,\forall\,\gamma\in{\mathbb{R}}\mbox{ so that }
d\bigl({\textstyle{\frac{1}{p}}}-1\bigr)<\gamma<[\log_2 C_\rho]^{-1}
\\[4pt]
&& \hskip 2.20in
\mbox{ it follows that }f^*_{\rho_{\#},\gamma}\in L^p(E,\sigma)\Bigr\}.
\nonumber
\end{eqnarray}
A closely related version of the above Hardy space is 
$\widetilde{H}^p(E,\rho,\sigma)$, with $p$ as before, defined as 
\begin{eqnarray}\label{NMC22-2}
&& \hskip -0.30in
\widetilde{H}^p(E,\rho,\sigma):=
\Bigl\{f\in{\mathscr{D}}'(E,\rho):\,\exists\,\gamma\in{\mathbb{R}}\mbox{ so that }
d\bigl({\textstyle{\frac{1}{p}}}-1\bigr)<\gamma<[\log_2 C_\rho]^{-1}
\\[4pt]
&& \hskip 1.90in
\mbox{ and with the property that }f^*_{\rho_{\#},\gamma}\in L^p(E,\sigma)\Bigr\}.
\nonumber
\end{eqnarray}
Moving on, given an index 
\begin{eqnarray}\label{range-p-1}
\frac{d}{d+[\log_2C_\rho]^{-1}}<p\leq 1,
\end{eqnarray}
call a function $a\in L^\infty(E,\sigma)$ a $p$-{\tt atom} provided 
there exist $x_0\in E$ and a real number $r>0$ with the property that 
\begin{eqnarray}\label{jk-AM}
{\rm supp}\,a\subseteq E\cap B_{\rho}(x_0,r),\quad 
\|a\|_{L^\infty(E,\sigma)}\leq r^{-d/p},\quad\int_Ea\,d\sigma=0.
\end{eqnarray}
In the case when $E$ is bounded we also agree to consider the constant 
function $\sigma(E)^{-1/p}$ as a $p$-atom. 
Then, for each $p$ as in \eqref{range-p-1}, define the {\tt atomic Hardy space} 
$H^p_{at}(E,\rho,\sigma)$ as 
\begin{eqnarray}\label{hp-at33}
&&\hskip -0.40in
H^p_{at}(E,\rho,\sigma):=
\Bigl\{f\in\bigl(\dot{\mathscr{C}}^{d(1/p-1)}(E,\rho)\bigr)^\ast:\,
\exists\,\{\lambda_j\}_{j\in{\mathbb{N}}}\in\ell^p({\mathbb{N}})
\mbox{ and $p$-atoms $\{a_j\}_{j\in{\mathbb{N}}}$}
\nonumber\\[4pt]
&&\hskip 1.20in
\mbox{ such that }
f=\sum_{j\in{\mathbb{N}}}\lambda_ja_j\,\,\mbox{ in }\,\,
\bigl(\dot{\mathscr{C}}^{d(1/p-1)}(E,\rho)\bigr)^\ast\Bigr\},
\end{eqnarray}
and equip this space with the quasi-norm $\|\cdot\|_{H^p_{at}(E,\rho,\sigma)}$ defined 
for each $f\in H^p_{at}(E,\rho,\sigma)$ by 
\begin{eqnarray}\label{Mac-12AAA}
\|f\|_{H^p_{at}(E,\rho,\sigma)}:=\inf\,\Bigl\{
\Bigl(\sum_{j\in{\mathbb{N}}}|\lambda_j|^p\Bigr)^{1/p}:\,
f=\sum_{j\in{\mathbb{N}}}\lambda_ja_j\,\,\mbox{ as in \eqref{hp-at33}}\Bigr\}. 
\end{eqnarray}
The following atomic decomposition theorem, extending work in \cite{MaSe79II}, 
has been established in \cite{MMMM-G}.

\begin{theorem}\label{MacSeg-2}
Assume that $(E,\rho,\sigma)$ is a $d$-dimensional {\rm ADR}. Then 
\begin{eqnarray}\label{p-good.LL}
H^p(E,\rho,\sigma)=\widetilde{H}^p(E,\rho,\sigma)=L^p(E,\sigma)
\quad\mbox{for each }\,p\in(1,\infty).
\end{eqnarray}

Suppose now that $p$ is as in \eqref{range-p-1}
and, for every functional $f\in H^p_{at}(E,\rho,\sigma)$, denote by $\widetilde{f}$
the distribution in ${\mathscr{D}}'(E,\rho)$ defined as the restriction of $f$ to 
${\mathscr{D}}(E,\rho)$. Then the assignment $f\mapsto\widetilde{f}$ induces a 
well-defined, injective linear mapping from $H^p_{at}(E,\rho,\sigma)$ onto the space 
$\widetilde{H}^p(E,\rho,\sigma)$. Moreover, for each 
\begin{eqnarray}\label{Ugv-888}
\gamma\in{\mathbb{R}}\,\,\mbox{ with }\,\, 
d\bigl({\textstyle{\frac{1}{p}}}-1\bigr)<\gamma<[\log_2 C_\rho]^{-1}
\end{eqnarray}
there exist two finite constants $c_1,c_2>0$ such that 
\begin{eqnarray}\label{MacSeg-21}
c_1\|f\|_{H^p_{at}(E,\rho,\sigma)}\leq
\|(\widetilde{f}\,)^*_{\rho_{\#},\gamma}\|_{L^p(E,\sigma)}
\leq c_2\|f\|_{H^p_{at}(E,\rho,\sigma)}
\quad\mbox{for all }\,f\in H^p_{at}(E,\rho,\sigma).
\end{eqnarray}
Consequently, the spaces $H^p(E,\rho,\sigma)$, $\widetilde{H}^p(E,\rho,\sigma)$ 
are naturally identified with $H^p_{at}(E,\rho,\sigma)$. In particular, they do 
not depend on the particular choice of the index $\gamma$ as in \eqref{Ugv-888}.

As a corollary, whenever \eqref{Ugv-888} holds one can find a finite 
constant $c=c(p,\rho,\gamma)>0$ such that for every distribution
$f\in{\mathscr{D}}'(E,\rho)$ with the property that its grand maximal 
function $f^*_{\rho_{\#},\gamma}$ belongs to $L^p(E,\sigma)$ there exist 
a sequence of $p$-atoms $\{a_j\}_{j\in{\mathbb{N}}}$ on $X$ and a numerical 
sequence $\{\lambda_j\}_{j\in{\mathbb{N}}}\in\ell^p({\mathbb{N}})$ for which 
\begin{eqnarray}\label{MacSeg-11}
f=\sum_{j\in{\mathbb{N}}}\lambda_ja_j\quad\mbox{ in }\,\,{\mathscr{D}}'(E,\rho)
\end{eqnarray}
and
\begin{eqnarray}\label{MacSeg-12}
\Bigl(\sum_{j\in{\mathbb{N}}}|\lambda_j|^p\Bigr)^{1/p}
\leq c\|f^*_{\rho_{\#},\gamma}\|_{L^p(E,\sigma)}.
\end{eqnarray}
Finally,  whenever \eqref{Ugv-888} holds one can find a finite 
constant $c'=c'(p,\rho,\gamma)>0$ such that, given a distribution
$f\in{\mathscr{D}}'(E,\rho)$, a sequence of $p$-atoms 
$\{a_j\}_{j\in{\mathbb{N}}}$, and a numerical sequence
$\{\lambda_j\}_{j\in{\mathbb{N}}}\in\ell^p({\mathbb{N}})$ 
such that \eqref{MacSeg-11} holds, then 
\begin{eqnarray}\label{MacSeg-12B}
\|f^*_{\rho_{\#},\gamma}\|_{L^p(E,\sigma)}\leq
c'\Bigl(\sum_{j\in{\mathbb{N}}}|\lambda_j|^p\Bigr)^{1/p}.
\end{eqnarray}
\end{theorem}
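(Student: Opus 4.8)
\textbf{Proof proposal for the last display \eqref{MacSeg-12B}.} The plan is to show that the grand maximal function of a series of $p$-atoms has $L^p$ quasi-norm controlled by the $\ell^p$ norm of the coefficients, by first establishing the single-atom bound and then summing with the aid of the $p$-subadditivity of $\|\cdot\|_{L^p}^p$ (valid since $p\le 1$). More precisely, I would proceed as follows.

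First, I would record the elementary observation that for $p\in\bigl(\tfrac{d}{d+[\log_2C_\rho]^{-1}},1\bigr]$ and any countable family $\{g_j\}_j$ of nonnegative $\sigma$-measurable functions on $E$ one has $\bigl\|\sum_j g_j\bigr\|_{L^p(E,\sigma)}^p\le\sum_j\|g_j\|_{L^p(E,\sigma)}^p$. Applied to $f=\sum_j\lambda_ja_j$ (convergence in ${\mathscr{D}}'(E,\rho)$, hence the grand maximal function is subadditive: $f^*_{\rho_{\#},\gamma}\le\sum_j|\lambda_j|(a_j)^*_{\rho_{\#},\gamma}$ pointwise on $E$, by the definition \eqref{NMC22-1} and linearity of the duality pairing), this reduces matters to the uniform single-atom estimate
\begin{eqnarray}\label{PROP-atom}
\|(a)^*_{\rho_{\#},\gamma}\|_{L^p(E,\sigma)}\le c',
\qquad\mbox{for every $p$-atom $a$ on $E$,}
\end{eqnarray}
with $c'$ depending only on $p,\rho,\gamma$ and the {\rm ADR} constant of $E$; then $\|f^*_{\rho_{\#},\gamma}\|_{L^p(E,\sigma)}^p\le\sum_j|\lambda_j|^p\|(a_j)^*_{\rho_{\#},\gamma}\|_{L^p(E,\sigma)}^p\le(c')^p\sum_j|\lambda_j|^p$, which is \eqref{MacSeg-12B}.

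To prove \eqref{PROP-atom}, fix a $p$-atom $a$ with ${\rm supp}\,a\subseteq\Delta:=E\cap B_\rho(x_0,r)$, $\|a\|_{L^\infty}\le r^{-d/p}$, $\int_E a\,d\sigma=0$. I would split $E=\widetilde{\Delta}\cup(E\setminus\widetilde{\Delta})$, where $\widetilde{\Delta}:=E\cap B_{\rho_{\#}}(x_0,Cr)$ is a fixed dilate (the constant $C$ chosen from the quasi-triangle constants so as to comfortably contain $\Delta$ and the support of any bump function tested against $a$ that can meet $\Delta$). On $\widetilde{\Delta}$ one uses the trivial bound $f^*_{\rho_{\#},\gamma}\le C M_E(|a|)$ pointwise (each bump in ${\mathcal{B}}^\gamma_{\rho_{\#}}(x)$ is bounded by a normalized average, essentially by definition \eqref{ASW43}), followed by H\"older's inequality with exponent $1/p>1$ and the $L^{1/p'}$-boundedness, or rather the $L^2$-boundedness, of $M_E$: concretely $\|(a)^*_{\rho_{\#},\gamma}\|_{L^p(\widetilde{\Delta},\sigma)}\le\sigma(\widetilde{\Delta})^{1/p-1/2}\|M_E a\|_{L^2(E,\sigma)}\le C\sigma(\widetilde\Delta)^{1/p-1/2}\|a\|_{L^2(E,\sigma)}\le C(r^d)^{1/p-1/2}(r^d r^{-2d/p})^{1/2}=C$, using $\sigma(\widetilde\Delta)\approx r^d$ from $d$-{\rm ADR}. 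On $E\setminus\widetilde{\Delta}$ one exploits the moment condition $\int_E a\,d\sigma=0$: for $x\notin\widetilde\Delta$ and any $\psi\in{\mathcal{B}}^\gamma_{\rho_{\#}}(x)$ supported in $B_{\rho_{\#}}(x,s)$, if $B_{\rho_{\#}}(x,s)$ does not meet $\Delta$ then $\langle a,\psi\rangle=0$; otherwise $s\gtrsim\rho_{\#}(x,x_0)$, and writing $\langle a,\psi\rangle=\int_E a(y)(\psi(y)-\psi(y_0))\,d\sigma(y)$ for a fixed reference point $y_0\in\Delta$ and using the H\"older estimate $\|\psi\|_{\dot{\mathscr{C}}^\gamma(E,\rho)}\le s^{-d-\gamma}$ from \eqref{ASW43} together with ${\rm diam}_\rho(\Delta)\lesssim r$, one gets $|\langle a,\psi\rangle|\le C\,\|a\|_{L^1}\,s^{-d-\gamma}r^\gamma\le C r^{d-d/p}\rho_{\#}(x,x_0)^{-d-\gamma}r^\gamma$. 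Hence $f^*_{\rho_{\#},\gamma}(x)\le C r^{d+\gamma-d/p}\rho_{\#}(x,x_0)^{-d-\gamma}$ for $x\in E\setminus\widetilde\Delta$, and then
\begin{eqnarray}\label{PROP-tail}
\int_{E\setminus\widetilde\Delta}\bigl(f^*_{\rho_{\#},\gamma}\bigr)^p\,d\sigma
\le C r^{p(d+\gamma)-d}\int_{E\setminus B_{\rho_{\#}}(x_0,Cr)}\rho_{\#}(x,x_0)^{-p(d+\gamma)}\,d\sigma(x)
\le C,
\end{eqnarray}
where the last integral is handled by decomposing $E\setminus B_{\rho_{\#}}(x_0,Cr)$ into dyadic $\rho_{\#}$-annuli and summing the geometric series $\sum_{k\ge0}(2^kr)^{-p(d+\gamma)}(2^kr)^d=C r^{d-p(d+\gamma)}$, which converges precisely because $p>\tfrac{d}{d+\gamma}$ (guaranteed by \eqref{Ugv-888}, i.e. $\gamma>d(\tfrac1p-1)$). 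Finally, in the case $E$ bounded one must also treat the constant atom $\sigma(E)^{-1/p}$; here there is no cancellation, but the grand maximal function is bounded by $C\sigma(E)^{-1/p}$ everywhere (again by the normalization in \eqref{ASW43} and ${\rm diam}_\rho(E)<\infty$), so its $L^p$ norm is $\le C$ directly.

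The main obstacle I anticipate is purely bookkeeping: verifying that the pointwise bound $f^*_{\rho_{\#},\gamma}\le C M_E(|a|)$ on the near region, and the Lipschitz-type control of bump functions used in the far-region estimate, follow cleanly from the definition \eqref{NMC22-1}--\eqref{ASW43} with constants depending only on $\rho$ and $\gamma$ — in particular keeping track of the quasi-triangle constant $C_\rho$ and its regularized version $C_{\rho_{\#}}\le C_\rho$ from Theorem~\ref{JjEGh} when passing between $\rho$ and $\rho_{\#}$ in the support conditions (the atom is normalized relative to $\rho$-balls via \eqref{jk-AM}, while the grand maximal function is built from $\rho_{\#}$-bumps). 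All of this is routine once the two regimes are separated as above, and no genuinely new idea beyond the standard Coifman--Weiss atomic argument is needed; the convergence condition in the tail estimate is exactly the constraint $\gamma>d(\tfrac1p-1)$ that is already imposed in \eqref{Ugv-888}.
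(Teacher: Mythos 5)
Your argument addresses only the final assertion of the theorem, namely \eqref{MacSeg-12B}, and for that piece it is essentially sound: the pointwise bound $f^*_{\rho_{\#},\gamma}\leq\sum_j|\lambda_j|(a_j)^*_{\rho_{\#},\gamma}$, the $p$-subadditivity of $\|\cdot\|_{L^p}^p$ for $p\leq 1$, the near-region estimate via $M_E$ and $L^2$-boundedness, the far-region estimate via the vanishing moment and the H\"older normalization in \eqref{ASW43}, and the dyadic-annuli summation that uses precisely $\gamma>d(\tfrac1p-1)$ from \eqref{Ugv-888} are all the standard Coifman--Weiss ingredients, and the treatment of the constant atom in the bounded case is fine. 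This is the routine (``easy'') direction of the atomic theory.

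The genuine gap is that everything else in the statement --- which is its actual content --- is left unproved: the identification $H^p=\widetilde H^p=L^p$ for $p\in(1,\infty)$ in \eqref{p-good.LL}; the fact that $f\mapsto\widetilde f$ is a well-defined, injective map from $H^p_{at}$ \emph{onto} $\widetilde H^p$; the left-hand inequality in \eqref{MacSeg-21}; and the decomposition \eqref{MacSeg-11}--\eqref{MacSeg-12}, i.e.\ that \emph{every} distribution with $f^*_{\rho_{\#},\gamma}\in L^p$ admits an atomic decomposition with $\ell^p$ coefficient control. That direction cannot be obtained by the estimates you give; it requires the Mac\'{\i}as--Segovia-type machinery (Calder\'on--Zygmund decomposition of a distribution at the level sets of the grand maximal function, Whitney decompositions of those open sets, associated partitions of unity, and the construction of atoms with the correct support, size and cancellation, together with the verification that the resulting series converges back to $f$ in ${\mathscr{D}}'(E,\rho)$ and that the construction is compatible with the quasi-metric regularization $\rho_{\#}$). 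Note also that the paper itself does not prove this theorem: it is quoted from \cite{MMMM-G}, extending \cite{MaSe79II}, so a complete blind proof would have to reproduce that argument; your proposal establishes only the converse, and comparatively minor, half of the equivalence.
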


Consider now the setting of Section \ref{SSect:3.1} and suppose that $\theta$ is a function
as in \eqref{K234} which satisfies \eqref{hszz} and such that there exists 
$\alpha\in(0,\infty)$ with the property that for all 
$x\in\mathscr{X}\setminus E$ and $y\in E$ there holds
\begin{eqnarray}\label{hszz-3noalpha}
\begin{array}{l}
\displaystyle|\theta(x,y)-\theta(x,\widetilde{y})|\leq C_\theta \frac{\rho(y,\widetilde{y})^\alpha}{\rho(x,y)^{d+\upsilon+\alpha}}\,\Bigl(
\frac{{\rm dist}_\rho(x,E)}{\rho(x,y)}\Bigr)^{-a}, 
\\[12pt]
\qquad\forall\,\widetilde{y}\in E\,\,\mbox{ with }\,\,
\rho(y,\widetilde{y})\leq\tfrac{1}{2}\rho(x,y).
\end{array}
\end{eqnarray}

We are now ready to present the first main result in this subsection.

\begin{theorem}\label{VGds-2}
Let $d,m$ be two real numbers such that $0<d<m$. Assume that $({\mathscr{X}},\rho,\mu)$
is an $m$-dimensional {\rm ADR} space, $E$ is a closed subset 
of $({\mathscr{X}},\tau_\rho)$, and $\sigma$ is a Borel measure 
on $(E,\tau_{\rho|_{E}})$ with the property that $(E,\rho\bigl|_E,\sigma)$ is a 
$d$-dimensional {\rm ADR} space. 

Furthermore, suppose that $\Theta$ is the integral operator defined in \eqref{operator} 
with a kernel $\theta$ as in \eqref{K234}, \eqref{hszz}, \eqref{hszz-3noalpha}. 
Finally, fix $\kappa>0$ and, with $\alpha_\rho$ as in \eqref{Cro} 
and $\alpha$ as in \eqref{hszz-3noalpha}, set 
\begin{eqnarray}\label{WQ-tDD}
\gamma:=\min\,\bigl\{\alpha_\rho,\alpha\bigr\}.
\end{eqnarray}

Given $q\in(1,\infty)$ and $p\in\bigl(\frac{d}{d+\gamma},\infty\bigr)$ 
consider the estimate 
\begin{eqnarray}\label{kt-Dc}
\hskip -0.20in
\left\|\Bigl(\int_{\Gamma_{\kappa}(x)}|(\Theta f)(y)|^q\,
\frac{d\mu(y)}{\delta_E(y)^{m-q\upsilon}}\Bigr)^{\frac{1}{q}}
\right\|_{L^p_x(E,\sigma)}\!\!\!
\leq C\|f\|_{H^p(E,\rho|_{E},\sigma)},\quad\forall\,f\in H^p(E,\rho|_{E},\sigma),
\end{eqnarray}
where $C>0$ is a finite constant. 

\begin{enumerate}
\item[(I)] Assume that $q\in(1,\infty)$ has the property that, for some 
finite constant $C>0$, either 
\begin{eqnarray}\label{kt-Dc-BIS}
\hskip -0.20in
\left\|\Bigl(\int_{\Gamma_{\kappa}(x)}|(\Theta f)(y)|^q\,
\frac{d\mu(y)}{\delta_E(y)^{m-q\upsilon}}\Bigr)^{\frac{1}{q}}
\right\|_{L^q_x(E,\sigma)}\!\!\!
\leq C\|f\|_{L^q(E,\sigma)},\quad\forall\,f\in L^q(E,\sigma),
\end{eqnarray}
or there exists $p_o\in(q,\infty)$ such that for every 
$f\in L^{p_o}(E,\sigma)$ there holds
\begin{eqnarray}\label{dtbjHT}
\hskip -0.30in
\sup_{\lambda>0}\left[\lambda\cdot
\sigma\Bigl(\Bigl\{x\in E:\int_{\Gamma_{\kappa}(x)}|(\Theta f)(y)|^q\,
\frac{d\mu(y)}{\delta_E(y)^{m-q\upsilon}}>\lambda^{q}\Bigr\}\Bigr)^{1/p_o}\right]
\leq C\|f\|_{L^{p_o}(E,\sigma)}.
\end{eqnarray}
Then \eqref{kt-Dc} holds for every $p\in\bigl(\frac{d}{d+\gamma},\infty\bigr)$.
\item[(II)] Assume that $q\in(1,\infty)$ is such that there exist $p_o\in(1,\infty)$ 
and a finite constant $C>0$ such that \eqref{dtbjHT} holds for every 
$f\in L^{p_o}(E,\sigma)$. Then \eqref{kt-Dc} holds for every $p\in(1,p_o)$ and, 
in addition, for every $f\in L^1(E,\sigma)$ one has
\begin{eqnarray}\label{d-YD23}
\hskip -0.30in
\sup_{\lambda>0}\left[\lambda\cdot
\sigma\Bigl(\Bigl\{x\in E:\int_{\Gamma_{\kappa}(x)}|(\Theta f)(y)|^q\,
\frac{d\mu(y)}{\delta_E(y)^{m-q\upsilon}}>\lambda^{q}\Bigr\}\Bigr)\right]
\leq C\|f\|_{L^1(E,\sigma)}.
\end{eqnarray}
\end{enumerate}
\end{theorem}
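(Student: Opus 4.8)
\textbf{Proof proposal for Theorem~\ref{VGds-2}.}

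The plan is to treat this as an extrapolation result in the spirit of the Coifman--Meyer--Stein tent space machinery. The key realization is that the quantity on the left side of \eqref{kt-Dc}, \eqref{kt-Dc-BIS} and \eqref{dtbjHT} is precisely the mixed-norm quasi-norm $\|\Theta f\|_{L^{(p,q)}({\mathscr{X}},E,\mu,\sigma;\kappa)}$ with the weight absorbed into the measure (compare \eqref{Phgf}), and that by Theorem~\ref{appert} the value of $\kappa>0$ is irrelevant up to equivalence of quasi-norms. So the problem splits into two independent tasks: a \emph{duality/interpolation step} that upgrades the hypothesis to the desired $L^p$ (or weak-$L^p$) bound at the level of the mixed-norm space, and an \emph{atomic step} that converts the $L^p$ mapping property into the Hardy-space mapping property when $p\le 1$.

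For part (I), I would first reduce to showing \eqref{dtbjHT} (or \eqref{kt-Dc-BIS}) implies the $L^p\to L^{(p,q)}$ bound for all $p\in(1,\infty)$. Starting from \eqref{kt-Dc-BIS} (the case $q=q$), I would use the good-$\lambda$ comparison between the Lusin operator ${\mathscr{A}}_{q,\kappa}$ and the Carleson operator ${\mathfrak{C}}_{q,\kappa}$ from Theorem~\ref{AsiC}: part (2) of that theorem gives $\|{\mathfrak{C}}_{q,\kappa}(\Theta f)\|_{L^{p,r}}\lesssim\|{\mathscr{A}}_{q,\kappa}(\Theta f)\|_{L^{p,r}}$ for $p>q$, and part (1) gives the reverse bound $\|{\mathscr{A}}_{q,\kappa}(\Theta f)\|_{L^p}\lesssim\|{\mathfrak{C}}_{q,\kappa}(\Theta f)\|_{L^p}$ for all $p$; combined with the end-point estimates in part (3) and real interpolation (Marcinkiewicz), one propagates the bound from a single exponent to the full range $p\in(1,\infty)$. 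The alternative hypothesis \eqref{dtbjHT} with some $p_o>q$ is handled the same way, since a weak-$L^{p_o}$ Lusin bound gives a weak-$L^{p_o}$ Carleson bound by Theorem~\ref{AsiC}(2) with $r=\infty$, and then one interpolates between this and a trivial $L^\infty$-type endpoint. Once the $L^p\to L^{(p,q)}$ bound is in hand for $p\in(1,\infty)$, the passage to $p\in\bigl(\frac{d}{d+\gamma},1\bigr]$ with $H^p$ on the right is the atomic step: by Theorem~\ref{MacSeg-2} it suffices to test on a single $p$-atom $a$ supported in a surface ball $\Delta(x_0,r)$ with $\|a\|_\infty\le r^{-d/p}$ and $\int a\,d\sigma=0$, and to prove $\|{\mathscr{A}}_{q,\kappa}(\Theta a)\|_{L^p(E,\sigma)}\le C$ uniformly. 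This is done by splitting $E$ into the doubled ball $2\Delta$ and its complement: on $2\Delta$ one uses H\"older's inequality together with the $L^2\to L^{(2,q)}$ bound already established (which is available since $2\in(1,\infty)$, and in fact one may use any exponent $>1$) and the size condition on $a$; on $E\setminus 2\Delta$ one exploits the cancellation $\int a\,d\sigma=0$ together with the H\"older regularity \eqref{hszz-3noalpha} of $\theta$ in the second variable to gain a factor $\rho(y,x_0)^\alpha/\rho(x,y)^{\alpha}$, producing a tail that is summable over dyadic annuli precisely when $p>\frac{d}{d+\gamma}$ with $\gamma=\min\{\alpha_\rho,\alpha\}$ — here $\alpha_\rho$ enters through the cost of regularizing $\rho$ to $\rho_{\#}$ and the H\"older order available for the regularized distance in Theorem~\ref{JjEGh}. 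The geometric tail estimates needed for the annular sum are furnished by Lemma~\ref{Gkwvr}, Lemma~\ref{bzrg1} and Lemma~\ref{geom-lem}.

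For part (II), the hypothesis is only a weak-$L^{p_o}$ bound with $p_o\in(1,\infty)$ arbitrary (possibly $\le q$), so the Carleson comparison in the strong form is not directly available. Instead I would argue as follows: the weak-$L^{p_o}$ bound \eqref{dtbjHT}, specialized to $f={\mathbf 1}_\Delta$ for surface balls $\Delta$, is exactly hypothesis \eqref{dtbh-L2iii}/\eqref{dtbh-L2} of Proposition~\ref{VGds-L2} (with the roles of $2$ and $q$ adjusted — one uses the version with exponent $q$), which yields the dyadic Carleson measure bound \eqref{k-tSS} for $|\Theta 1|^q$. Feeding this into the $T(1)$ Theorem~\ref{SChg} (in its $q$-exponent incarnation, or by first deducing the $L^2$ estimate \eqref{k-tSSiii} via Theorem~\ref{VGds-L2XXX} when $q=2$ and then re-running the tent-space interpolation) produces the strong $L^r\to L^{(r,q)}$ bound for $r$ in a nontrivial range around $2$, in particular for some $r>1$. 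From that single strong bound, the Coifman--Meyer--Stein interpolation of part (I) again propagates to all $p\in(1,p_o)$, and the Marcinkiewicz endpoint at $p_o$ degenerates to the weak-type estimate, while at $p=1$ one obtains \eqref{d-YD23} by the standard Calder\'on--Zygmund decomposition argument applied to the sublinear operator $f\mapsto{\mathscr{A}}_{q,\kappa}(\Theta f)$: decompose $f=g+\sum_j b_j$ at height $\lambda$, handle $g$ by the $L^2$ (or $L^r$) bound and Tschebyshev, and handle the bad part using the cancellation of the $b_j$'s together with \eqref{hszz-3noalpha} and the geometric tail lemmas, exactly as in the atomic estimate.

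The main obstacle I anticipate is the atomic tail estimate in the passage to $p\le 1$, specifically tracking how the exponent $\gamma=\min\{\alpha_\rho,\alpha\}$ arises: one must carefully quantify the interplay between the H\"older exponent $\alpha$ in the kernel regularity \eqref{hszz-3noalpha}, the H\"older exponent $\alpha_\rho=(\log_2 C_\rho)^{-1}$ available for the regularized quasi-distance $\rho_{\#}$ from Theorem~\ref{JjEGh}, and the fact that the mixed-norm integration is over the nontangential regions $\Gamma_\kappa$ rather than vertical cones — the region $\Gamma_\kappa(x)$ over points $x$ far from the atom's support must be controlled using Lemma~\ref{bzrg1}, and the exponent bookkeeping there is delicate. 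A secondary technical point is making the Coifman--Meyer--Stein interpolation rigorous in the quasi-metric setting for $q\le 1$, where ${\mathscr{A}}_{q,\kappa}$ is only a quasi-sublinear operator; this is where Theorem~\ref{appert} (aperture independence) and the good-$\lambda$ inequalities in the proof of Theorem~\ref{AsiC} do the heavy lifting, so the argument should go through verbatim once those are invoked.
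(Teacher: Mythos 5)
Your proposal gets several ingredients right (aperture independence via Theorem~\ref{appert}, the Calder\'on--Zygmund decomposition as the tool for the weak $(1,1)$ bound, and the near/far atom splitting using the cancellation of $a$ together with \eqref{hszz-3noalpha} and Lemma~\ref{bzrg1}), but there are two genuine gaps in the extrapolation mechanism. First, the passage to exponents $p$ \emph{above} the hypothesis exponent cannot be done by the Lusin/Carleson comparisons of Theorem~\ref{AsiC} plus interpolation: those comparisons relate $\mathscr{A}_{q,\kappa}u$ and $\mathfrak{C}_{q,\kappa}u$ for the \emph{same} function $u$ at the \emph{same} exponent, so they never move the operator bound for $f\mapsto\mathscr{A}_{q,\kappa}(\delta_E^{\upsilon-m/q}\Theta f)$ from one $p$ to another; and there is no ``trivial $L^\infty$-type endpoint'' to interpolate against --- an $L^\infty\to L^\infty$ bound for this operator is precisely a Carleson-measure estimate for $\Theta$, i.e.\ one of the substantive conclusions of the theory, not a triviality. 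The missing idea is a pointwise domination of the Carleson operator by maximal functions, of the form $\mathfrak{C}_{q,\kappa}\bigl(\delta_E^{\upsilon-m/q}(\Theta f)\bigr)(x_0)\leq C\bigl[\bigl(M_E(|f|^q)(x_0)\bigr)^{1/q}+M_E(M_Ef)(x_0)\bigr]$, proved by splitting $f$ relative to each surface ball into a local piece (handled by the assumed $L^q$ bound \eqref{kt-Dc-BIS}, or by the $L^q$ bound obtained from \eqref{dtbjHT} via the weak $(1,1)$ estimate and Marcinkiewicz) and a far piece handled by the kernel decay and Lemmas~\ref{Gkwvr}, \ref{geom-lem}; only then does \eqref{sbrn} convert this into \eqref{kt-Dc} for $p>q$. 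Second, your route for part (II) through Proposition~\ref{VGds-L2} and ``the $T(1)$ Theorem~\ref{SChg} in its $q$-exponent incarnation'' is not available: Theorem~\ref{SChg} and Theorem~\ref{VGds-L2XXX} are $L^2$ statements ($q=2$), and no $q$-version is proved in the paper, so for general $q$ this detour does not produce the strong $L^r$ bound you rely on to control the good part $g$ in the Calder\'on--Zygmund argument. The correct (and simpler) route is to handle $g$ directly with the weak $(p_o,p_o)$ hypothesis \eqref{dtbjHT}, using $\|g\|_{L^\infty}\lesssim\lambda$ and $\|g\|_{L^1}\lesssim\|f\|_{L^1}$, so that the weak $(1,1)$ bound \eqref{d-YD23} is self-contained and Marcinkiewicz interpolation then gives $p\in(1,p_o)$.

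A further point, secondary but real: in the Hardy-space range you assert that by Theorem~\ref{MacSeg-2} ``it suffices to test on a single $p$-atom.'' For a quasi-Banach target such as $L^{(p,q)}$ with $p\leq1$ this is not automatic; a uniform bound on atoms must be upgraded to boundedness on $H^p$ by an argument like the paper's: density of finite spans of H\"older bump functions in $H^p$, an atomic decomposition converging simultaneously in $H^p$ and in $L^q$, the already-established $L^q$ bound to pass the operator through the series, extraction of an a.e.\ convergent subsequence, Fatou's lemma in the mixed-norm space, and the $p$-subadditivity of $\|\cdot\|_{L^{(p,q)}}^p$. Also, the exponent bookkeeping is slightly off in your sketch: $\alpha_\rho$ enters not so much as ``the cost of regularizing $\rho$'' in the tail estimate, but because the Hardy space scale $H^p(E,\rho|_E,\sigma)$ and its atomic theory only exist for $p>d/(d+\alpha_\rho)$, so $\gamma=\min\{\alpha_\rho,\alpha\}$ is the largest exponent simultaneously compatible with the atomic decomposition and with the kernel smoothness $\alpha$ used in the far-field estimate.
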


It is worth mentioning that the conclusion \eqref{kt-Dc} in Theorem~\ref{VGds-2}
may be re-phrased as saying that the operator 
\begin{eqnarray}\label{ki-DUD}
\delta_E^{\upsilon-m/q}\Theta:H^p(E,\rho|_{E},\sigma)
\longrightarrow L^{(p,q)}({\mathscr{X}},E)
\end{eqnarray}
is well-defined, linear and bounded.

To set the stage for presenting the proof of Theorem~\ref{VGds-2}, 
we state a lemma containing an estimate for a Marcinkiewicz-type
integral (cf. \cite{MMMM-B} for a proof).

\begin{lemma}\label{P-Marc}
Assume that $(E,\rho,\sigma)$ is a $d$-dimensional {\rm ADR} space for some $d>0$.
Then for each $\alpha>0$ there exists $C\in(0,\infty)$ such that whenever 
$F$ is a nonempty closed subset of $(E,\tau_\rho)$ one has  
\begin{eqnarray}\label{VG+ds}
\int_{F}\int_{E}\frac{{\rm dist}_{\rho_{\#}}\,(y,F)^\alpha}
{\rho_{\#}(x,y)^{d+\alpha}}\,d\sigma(y)\,d\sigma(x)\leq C\sigma(E\setminus F).
\end{eqnarray}
\end{lemma}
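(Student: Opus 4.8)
The plan is to deduce \eqref{VG+ds} from Tonelli's theorem followed by the standard dyadic annular estimate, using only the \emph{upper} Ahlfors--David regularity bound. First I would dispense with the trivial case: if $F=E$ then $E\setminus F=\emptyset$ and, under the convention $0^\alpha=0$, the integrand of \eqref{VG+ds} vanishes identically, so both sides are $0$; hence assume $F$ is a proper, nonempty, closed subset of $(E,\tau_\rho)=(E,\tau_{\rho_{\#}})$. The function $(x,y)\mapsto\rho_{\#}(x,y)$ is continuous and $y\mapsto{\rm dist}_{\rho_{\#}}(y,F)$ is continuous (Theorem~\ref{JjEGh}), so the integrand is Borel measurable off the (closed) diagonal, and it is nonnegative; the diagonal is a $\sigma$-null set in $E$ (an ADR measure with $d>0$ has no atoms), so Tonelli's theorem applies and yields
\[
\int_{F}\int_{E}\frac{{\rm dist}_{\rho_{\#}}(y,F)^\alpha}{\rho_{\#}(x,y)^{d+\alpha}}\,d\sigma(y)\,d\sigma(x)
=\int_{E\setminus F}{\rm dist}_{\rho_{\#}}(y,F)^\alpha\Bigl(\int_{F}\frac{d\sigma(x)}{\rho_{\#}(x,y)^{d+\alpha}}\Bigr)d\sigma(y),
\]
where we used that ${\rm dist}_{\rho_{\#}}(y,F)>0$ precisely for $y\in E\setminus F$ (since $F$ is $\tau_{\rho_{\#}}$-closed) and that the integrand vanishes for $y\in F$. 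Thus matters reduce to the pointwise bound
\[
\int_{F}\frac{d\sigma(x)}{\rho_{\#}(x,y)^{d+\alpha}}\leq C\,{\rm dist}_{\rho_{\#}}(y,F)^{-\alpha},\qquad\forall\,y\in E\setminus F,
\]
with $C$ independent of $y$ and $F$; inserting this into the display above gives $\int_F\int_E(\cdots)\leq C\,\sigma(E\setminus F)$, which is \eqref{VG+ds}.

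To prove the pointwise bound, set $\delta:={\rm dist}_{\rho_{\#}}(y,F)\in(0,\infty)$ and note $\rho_{\#}(x,y)\geq\delta$ for every $x\in F$. Decompose $F=\bigcup_{j\geq 0}A_j$ with $A_j:=\{x\in F:\,2^j\delta\leq\rho_{\#}(x,y)<2^{j+1}\delta\}$. On $A_j$ one has $\rho_{\#}(x,y)^{-(d+\alpha)}\leq(2^j\delta)^{-(d+\alpha)}$, and $A_j\subseteq B_{\rho_{\#}}(y,2^{j+1}\delta)$. Since $(E,\rho|_E,\sigma)$ is $d$-ADR and $\rho_{\#}|_E\approx\rho|_E$ by \eqref{RHo-evv}, the $\rho_{\#}$-balls and $\rho$-balls are mutually comparable, so $(E,\rho_{\#}|_E,\sigma)$ is also $d$-ADR; in particular there is a finite $C>0$, depending only on $\alpha$, $C_\rho$, $\widetilde{C}_\rho$ and the ADR constant of $E$, with $\sigma\bigl(B_{\rho_{\#}}(y,r)\bigr)\leq C r^d$ for \emph{all} $r>0$ (for $r$ exceeding ${\rm diam}_{\rho_{\#}}(E)$ the left side equals $\sigma(E)\leq C\,{\rm diam}_{\rho}(E)^d\leq C r^d$). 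Therefore
\[
\int_{F}\frac{d\sigma(x)}{\rho_{\#}(x,y)^{d+\alpha}}\leq\sum_{j=0}^{\infty}(2^j\delta)^{-(d+\alpha)}\sigma\bigl(B_{\rho_{\#}}(y,2^{j+1}\delta)\bigr)\leq C\sum_{j=0}^{\infty}(2^j\delta)^{-(d+\alpha)}(2^{j+1}\delta)^{d}=C\,\delta^{-\alpha}\sum_{j=0}^{\infty}2^{-j\alpha},
\]
and the geometric series converges because $\alpha>0$, giving the claimed bound.

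The argument is essentially self-contained, and I do not anticipate a genuine obstacle: the computation above is the classical Marcinkiewicz-integral estimate. The only point deserving a line of care is the transfer of the (upper) ADR bound from $\rho$ to the regularization $\rho_{\#}$, which follows from \eqref{RHo-evv} together with the elementary fact that equivalent quasi-distances have mutually comparable balls; everything else is bookkeeping.
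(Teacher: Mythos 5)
Your proof is correct. The paper itself does not prove Lemma~\ref{P-Marc} (it defers to \cite{MMMM-B}), so there is no in-text argument to compare against, but what you give is the standard Marcinkiewicz-integral computation: Tonelli, then the dyadic-annulus bound $\int_F\rho_{\#}(x,y)^{-(d+\alpha)}\,d\sigma(x)\leq C\,{\rm dist}_{\rho_{\#}}(y,F)^{-\alpha}$ using only upper ADR, and your care about transferring the ADR bound to $\rho_{\#}$-balls and about radii exceeding ${\rm diam}_\rho(E)$ is exactly right. As a small remark, the pointwise bound is also an immediate consequence of the paper's own estimate \eqref{WBA} applied with $f\equiv 1$, $r:={\rm dist}_{\rho_{\#}}(y,F)$ and $\varepsilon:=\alpha$, since $F\subseteq\{x\in E:\rho_{\#}(x,y)\geq r\}$; citing that would shorten your argument but changes nothing of substance.
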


We are now prepared to present the 

\vskip 0.08in
\begin{proof}[Proof of Theorem~\ref{VGds-2}]
We divide the proof into a number of cases.

\vskip 0.10in
{\tt Case~1}: {\it Let $q\in[1,\infty)$, $p_o\in(1,\infty)$ be 
such that \eqref{dtbjHT} holds for each $f\in L^{p_o}(E,\sigma)$}.
The main step in this scenario is proving that the operator
${\mathcal{A}}_{q,\kappa}\circ(\delta_E^{\upsilon-m/q}\Theta)$ 
is of weak type $(1,1)$, that is, that there exists $C>0$ such that 
for every $\lambda>0$ there holds
\begin{eqnarray}\label{DCvh}
\sigma\Bigl(\bigl\{x\in E:\,
{\mathcal{A}}_{q,\kappa}\bigl(\delta_E^{\upsilon-m/q}(\Theta f)\bigr)(x)
>\lambda\bigr\}\Bigr)\leq C\frac{\|f\|_{L^1(E,\sigma)}}{\lambda},
\qquad\forall\,f\in L^1(E,\sigma).
\end{eqnarray}
Assuming \eqref{DCvh} for the moment, we proceed as follows. 
The operator ${\mathcal{A}}_{q,\kappa}\circ(\delta_E^{\upsilon-m/q}\Theta)$ 
is subadditive, of weak type $(1,1)$ by \eqref{DCvh}, and of weak 
type $(p_o,p_o)$ by \eqref{dtbjHT}. Hence, by the Marcinkiewicz interpolation theorem,
${\mathcal{A}}_{q,\kappa}\circ(\delta_E^{\upsilon-m/q}\Theta)$ is of strong 
type $(p,p)$ for every $p\in(1,p_o)$, yielding \eqref{kt-Dc} 
(after unraveling notation), for the specified range of $q,p_o,p$. 
As such, this takes care of the claim made in the first part of $(II)$ in the 
statement of the theorem. Moreover, \eqref{d-YD23} corresponds to 
\eqref{DCvh}, whose proof we now consider.  

To get started, assume that $f\in L^1(E,\sigma)$ has been fixed. 
When $0<\lambda\leq\|f\|_{L^1(E,\sigma)}/\sigma(E)$ (which may only happen 
in the case when $E$ is bounded), we have 
\begin{eqnarray}\label{DCvh-BB}
\sigma\Bigl(\bigl\{x\in E:\,
{\mathcal{A}}_{q,\kappa}\bigl(\delta_E^{\upsilon-m/q}(\Theta f)\bigr)(x)
>\lambda\bigr\}\Bigr)\leq\sigma(E)\leq\frac{\|f\|_{L^1(E,\sigma)}}{\lambda},
\end{eqnarray}
so \eqref{DCvh} holds in this case if we choose $C\geq 1$.

Consider now the case when $\lambda>\|f\|_{L^1(E,\sigma)}/\sigma(E)$. 
There is no loss of generality in assuming that $f$ has bounded support, and 
we shall perform a Calder\'on-Zygmund decomposition of $f$ at level $\lambda$. 
More precisely, there exist two finite constants $C>0$, $N\in{\mathbb{N}}$ 
(depending only on geometry), along with an at most countable family of balls 
$(Q_j)_{j\in J}$, say $Q_j:=B_{\rho}(x_j,r_j)$ for each $j\in J$, and two functions 
$g,b:E\to{\mathbb{R}}$ satisfying the following properties (cf., e.g., \cite{CoWe71}):
\begin{eqnarray}\label{PRO-1}
&& \hskip -0.40in
f=g+b\mbox{ on $E$},
\\[4pt]
&& \hskip -0.40in
g\in L^1(E,\sigma)\cap L^\infty(E,\sigma),\quad
\|g\|_{L^1(E,\sigma)}\leq C\|f\|_{L^1(E,\sigma)},\quad
|g(x)|\leq C\lambda,\,\,\,\forall\,x\in E,
\label{PRO-2}
\\[4pt]
&& \hskip -0.40in
b=\sum_{j\in J}b_j\,\,\mbox{ with }\,\,
{\rm supp}\,b_j\subseteq Q_j,\,\,\,\int_{E}b_j\,d\sigma=0,\,\,\mbox{ and }\,\,
\meanint_{Q_j}|b_j|\,d\sigma\leq C\lambda,\,\,\,\,\forall\,j\in J,
\label{PRO-3}
\\[4pt]
&& \hskip -0.40in
\begin{array}{l}
\mbox{if }{\mathcal{O}}:=\bigcup_{j\in{\mathbb{N}}}Q_j\subseteq E
\,\,\mbox{ and }\,\,F:=E\setminus{\mathcal{O}},\,\,\mbox{ then }\,\,
\sum_{j\in J}{\mathbf{1}}_{Q_j}\leq N,
\\[8pt]
\sigma({\mathcal{O}})\leq\frac{C}{\lambda}\|f\|_{L^1(E,\sigma)}\,\,\mbox{ and }\,\,
{\rm dist}_{\rho}(Q_j,F)\approx r_j\,\,\mbox{ uniformly in }j\in J.
\end{array}
\label{PRO-4}
\end{eqnarray}
Note that the above properties also entail 
$\sum_{j\in J}\|b_j\|_{L^1(E,\sigma)}\leq C\|f\|_{L^1(E,\sigma)}$, so the series
in \eqref{PRO-3} converges absolutely in $L^1(E,\sigma)$.

By the quasi-subadditivity of
${\mathcal{A}}_{q,\kappa}\circ(\delta_E^{\upsilon-m/q}\Theta)$ 
and \eqref{PRO-1} we have 
\begin{eqnarray}\label{DCvh-gUUU}
{\mathcal{A}}_{q,\kappa}\bigl(\delta_E^{\upsilon-m/q}(\Theta f)\bigr)\leq
{\mathcal{A}}_{q,\kappa}\bigl(\delta_E^{\upsilon-m/q}(\Theta g)\bigr)
+{\mathcal{A}}_{q,\kappa}\bigl(\delta_E^{\upsilon-m/q}(\Theta b)\bigr)
\end{eqnarray}
so, as far as \eqref{DCvh} is concerned, it suffices to prove that
\begin{eqnarray}\label{DCvh-g}
\sigma\Bigl(\bigl\{x\in E:\,
{\mathcal{A}}_{q,\kappa}\bigl(\delta_E^{\upsilon-m/q}(\Theta g)\bigr)(x)>
\lambda/2\bigr\}\Bigr)\leq C\frac{\|f\|_{L^1(E,\sigma)}}{\lambda},
\end{eqnarray}
and 
\begin{eqnarray}\label{DCvh-b}
\sigma\Bigl(\bigl\{x\in E:\,
{\mathcal{A}}_{q,\kappa}\bigl(\delta_E^{\upsilon-m/q}(\Theta b)\bigr)(x)>
\lambda/2\bigr\}\Bigr)\leq C\frac{\|f\|_{L^1(E,\sigma)}}{\lambda}.
\end{eqnarray}
Making use of \eqref{dtbjHT} (with $f$ replaced by $g$), 
\eqref{PRO-2} and keeping in mind that $p_o>1$ we obtain 
\begin{eqnarray}\label{bmKK}
&& \hskip -0.80in
\sigma\Bigl(\bigl\{x\in E:\,
{\mathcal{A}}_{q,\kappa}\bigl(\delta_E^{\upsilon-m/q}(\Theta g)\bigr)(x)
>\lambda/2\bigr\}\Bigr)
\leq C\Bigl(\frac{\|g\|_{L^{p_o}(E,\sigma)}}{\lambda}\Bigr)^{p_o}
\nonumber\\[4pt]
&& \hskip 0.80in
\leq C\frac{\|g\|_{L^{\infty}(E,\sigma)}^{p_o-1}\|g\|_{L^1(E,\sigma)}}{\lambda^{p_o}}
\leq C\frac{\|f\|_{L^1(E,\sigma)}}{\lambda},
\end{eqnarray}
thus \eqref{DCvh-g} is proved. We are therefore left with proving \eqref{DCvh-b}. 
To justify this, first note that by \eqref{PRO-4} we have 
\begin{eqnarray}\label{bmKK-2}
\sigma\Bigl(\bigl\{x\in{\mathcal{O}}:\,
{\mathcal{A}}_{q,\kappa}\bigl(\delta_E^{\upsilon-m/q}
(\Theta b)\bigr)(x)>\lambda/2\bigr\}\Bigr)\leq\sigma({\mathcal{O}})
\leq C\frac{\|f\|_{L^1(E,\sigma)}}{\lambda}.
\end{eqnarray}
Second, it is immediate that 
\begin{eqnarray}\label{bmKK-3}
\sigma\Bigl(\bigl\{x\in F:\,
{\mathcal{A}}_{q,\kappa}\bigl(\delta_E^{\upsilon-m/q}
(\Theta b)\bigr)(x)>\lambda/2\bigr\}\Bigr)
\leq \frac{1}{\lambda}\int_F{\mathcal{A}}_{q,\kappa}\bigl(\delta_E^{\upsilon-m/q}
(\Theta b)\bigr)\,d\sigma.
\end{eqnarray}
Therefore, since $E={\mathcal{O}}\cup F$, in view of \eqref{bmKK-2} and 
\eqref{bmKK-3}, estimate \eqref{DCvh-b} will follow as soon as we prove that
\begin{eqnarray}\label{bmKK-3B}
\int_F{\mathcal{A}}_{q,\kappa}\bigl(\delta_E^{\upsilon-m/q}
(\Theta b)\bigr)\,d\sigma\leq C\|f\|_{L^1(E,\sigma)}
\end{eqnarray}
for some $C>0$ independent of $f$. With this goal in mind, we fix $j\in J$ 
and $x\in F$ arbitrary and look for a pointwise estimate for 
\begin{eqnarray}\label{bmKK-3A}
{\mathcal{A}}_{q,\kappa}\bigl(\delta_E^{\upsilon-m/q}(\Theta b_j)\bigr)(x)
=\Bigl(\int_{\Gamma_{\kappa}(x)}|(\Theta b_j)(y)|^q\delta_E(y)^{q\upsilon-m}\,d\mu(y)
\Bigr)^{\frac{1}{q}}.
\end{eqnarray}
With $x_j$ and $r_j$ denoting, respectively, the center and radius of $Q_j$, 
based on the third condition in \eqref{PRO-3}, for each $y\in\Gamma_{\kappa}(x)$, 
we may write
\begin{eqnarray}\label{bmKK-4}
\bigl|(\Theta b_j)(y)\bigr| 
& = & \Bigl|\int_{E}\theta(y,z)b_j(z)\,d\sigma(z)\Bigr|
=\Bigl|\int_{E}\bigl[\theta(y,z)-\theta(y,x_j)\bigr]
b_j(z)\,d\sigma(z)\Bigr|
\nonumber\\[4pt]
& = & \Bigl|\int_{Q_j}\bigl[\theta(y,z)-\theta(y,x_j)\bigr]
b_j(z)\,d\sigma(z)\Bigr|\leq I_1+I_2,
\end{eqnarray}
where, for some small $\epsilon>0$ to be determined momentarily, we have set 
\begin{eqnarray}\label{bmKK-4B}
I_1:=\!\!\!\!
\int\limits_{\stackrel{z\in Q_j}{\rho_{\#}(z,x_j)<\epsilon\rho_{\#}(y,x_j)}}
\!\!\!\!\bigl|\theta(y,z)-\theta(y,x_j)\bigr||b_j(z)|\,d\sigma(z),
\\[4pt]
I_2:=\!\!\!\!
\int\limits_{\stackrel{z\in Q_j}{\rho_{\#}(z,x_j)\geq\epsilon\rho_{\#}(y,x_j)}}
\!\!\!\!\bigl|\theta(y,z)-\theta(y,x_j)\bigr||b_j(z)|\,d\sigma(z).
\label{bmKK-4B.2}
\end{eqnarray}
Note that, by \eqref{DEQV1}, 
if $\rho_{\#}(z,x_j)<\epsilon\rho_{\#}(y,x_j)$ then
\begin{eqnarray}\label{b-YH53}
\rho(z,x_j)\leq C_\rho^2\rho_{\#}(z,x_j)<\epsilon C_\rho^2\rho_{\#}(y,x_j)
\leq\epsilon\widetilde{C}_{\rho}C_\rho^2\rho_{\#}(y,x_j)
<\tfrac{1}{2}\rho(y,x_j)
\end{eqnarray}
if $0<\epsilon<2^{-1}\widetilde{C}_{\rho}^{-1}C_\rho^{-2}$.
Hence, for this choice of $\epsilon$, we have $\rho(z,x_j)<\frac{1}{2}\rho(y,x_j)$ 
on the domain of integration in $I_1$. Based on this, \eqref{hszz-3} and 
\eqref{PRO-3}, we may then estimate this term as follows
\begin{eqnarray}\label{bmKK-4B.3}
I_1 & \leq & C\int_{Q_j}\frac{\rho_{\#}(z,x_j)^\alpha\delta_E(y)^{-a}}
{\rho_{\#}(y,x_j)^{d+\upsilon+\alpha-a}}|b_j(z)|\,d\sigma(z)
\nonumber\\[4pt]
& \leq & C\frac{r_j^\alpha\delta_E(y)^{-a}}{\rho_{\#}(y,x_j)^{d+\upsilon+\alpha-a}}
\int_{Q_j}|b_j(z)|\,d\sigma(z)
\leq C\lambda\,\frac{r_j^\alpha\delta_E(y)^{-a}\sigma(Q_j)}
{\rho_{\#}(y,x_j)^{d+\upsilon+\alpha-a}}.
\end{eqnarray}
Estimating $I_2$ requires a few geometrical preliminaries. 
Recall that $x\in F$, $y\in\Gamma_\kappa(x)$ and fix an arbitrary point $z\in Q_j$ 
such that $\rho(z,x_j)\geq\epsilon\rho(y,x_j)$, where $\epsilon>0$ is as above. 
Since, on the one hand, 
\begin{eqnarray}\label{bmKK-4b}
r_j & \approx & {\rm dist}_\rho(Q_j,F)\leq\widetilde{C}_{\rho}
\rho(x,x_j)\leq C\rho(x,y)+C\rho(y,x_j)
\nonumber\\[4pt]
& \leq & C(1+\kappa)\delta_E(y)+C\rho(y,x_j)\leq C\rho(y,x_j),
\end{eqnarray}
while, on the other hand, the fact that $z\in Q_j$ forces 
$\rho(x_j,z)<r_j$ which in turn allow us to estimate 
$\rho(y,x_j)\leq\epsilon^{-1}\rho(z,x_j)\leq\epsilon^{-1}
\widetilde{C}_{\rho}\rho(x_j,z)<\epsilon^{-1}\widetilde{C}_{\rho}r_j$. 
Hence, ultimately, 
\begin{eqnarray}\label{bmKK-4b.L}
r_j\approx\rho(y,x_j),\quad
\mbox{uniformly in $j\in J$ and $y\in\Gamma_{\kappa}(x)$ with $x\in F$}.
\end{eqnarray}
In addition, the same type of estimate as in \eqref{bmKK-4b} written with $x_j$ 
replaced by $z$ yields $r_j\leq C\rho(y,z)$, which further implies 
\begin{eqnarray}\label{bmKK-4b.F}
\rho(y,x_j)\leq C\rho(y,z)+C\rho(z,x_j)\leq C\rho(y,z)+Cr_j\leq C\rho(y,z),
\end{eqnarray}
for some constant $C\in(0,\infty)$ independent of $j,x,y,z$. Hence, from 
\eqref{bmKK-4b.L} and \eqref{bmKK-4b.F}, we obtain 
\begin{eqnarray}\label{bmKK-4b.H}
\frac{1}{\rho(y,z)^{d+\upsilon}}\leq\frac{C}{\rho(y,x_j)^{d+\upsilon}}
\leq\frac{Cr_j^\alpha}{\rho(y,x_j)^{d+\upsilon+\alpha}}.
\end{eqnarray}
Consequently, on the domain of integration in $I_2$ we have thanks to 
\eqref{hszz} and \eqref{bmKK-4b.H}
\begin{eqnarray}\label{4b.H2A}
\bigl|\theta(y,z)-\theta(y,x_j)\bigr|\leq\frac{C\delta_E(y)^{-a}}{\rho_{\#}(y,z)^{d+\upsilon-a}}
+\frac{C\delta_E(y)^{-a}}{\rho_{\#}(y,x_j)^{d+\upsilon-a}}
\leq\frac{Cr_j^\alpha\delta_E(y)^{-a}}{\rho_{\#}(y,x_j)^{d+\upsilon+\alpha-a}}.
\end{eqnarray}
Together with \eqref{PRO-3}, this allows us to estimate (recall that $I_2$ has been 
defined in \eqref{bmKK-4B.2})
\begin{eqnarray}\label{b-4b.H2}
I_2 &\leq & \frac{Cr_j^\alpha\delta_E(y)^{-a}}{\rho_{\#}(y,x_j)^{d+\upsilon+\alpha-a}}
\!\!\!\!
\int\limits_{\stackrel{z\in Q_j\,\,\mbox{\tiny{such that}}}
{\rho_{\#}(z,x_j)\geq\epsilon\rho_{\#}(y,x_j)}}
\!\!\!\!|b_j(z)|\,d\sigma(z)
\leq C\lambda\,\frac{r_j^\alpha\delta_E(y)^{-a}\sigma(Q_j)}
{\rho_{\#}(y,x_j)^{d+\upsilon+\alpha-a}}.
\end{eqnarray}
Cumulatively, \eqref{bmKK-4}, \eqref{bmKK-4B.3} and \eqref{b-4b.H2} prove that 
there exists $C\in(0,\infty)$ with the property that, for every $j\in J$,
\begin{eqnarray}\label{bmKK-4i}
x\in F\,\Longrightarrow\,
\bigl|(\Theta b_j)(y)\bigr| 
\leq C\lambda\,\frac{r_j^\alpha\delta_E(y)^{-a}\sigma(Q_j)}
{\rho_{\#}(y,x_j)^{d+\upsilon+\alpha-a}},
\qquad\forall\,y\in\Gamma_\kappa(x).
\end{eqnarray}
Utilizing \eqref{bmKK-4i} in \eqref{bmKK-3A}, it follows that for every $j\in J$ 
and $x\in F$
\begin{eqnarray}\label{bmKK-5}
{\mathcal{A}}_{q,\kappa}\bigl(\delta_E^{\upsilon-m/q}(\Theta b_j)\bigr)(x)
\leq C\lambda\,r_j^\alpha\sigma(Q_j)\Bigl(\int_{\Gamma_{\kappa}(x)}
\frac{\delta_E(y)^{q(\upsilon-a)-m}}{\rho_{\#}(y,x_j)^{q(d+\upsilon+\alpha-a)}}
\,d\mu(y)\Bigr)^{\frac{1}{q}}.
\end{eqnarray}
At this point we make use of Lemma~\ref{bzrg1} (recall that $\nu-a>0$) to further 
bound the last integral in \eqref{bmKK-5} and obtain that for every $j\in J$ and $x\in F$
\begin{eqnarray}\label{bmKK-7}
{\mathcal{A}}_{q,\kappa}\bigl(\delta_E^{\upsilon-m/q}(\Theta b_j)\bigr)(x)
\leq C\lambda\,r_j^\alpha\sigma(Q_j)\rho(x,x_j)^{-d-\alpha}
\leq C\lambda\int_{Q_j}\frac{{\rm dist}_{\rho_{\#}}(z,F)^\alpha}
{\rho_{\#}(x,z)^{d+\alpha}}\,d\sigma(z).
\end{eqnarray}
Two geometrical inequalities that have been used in the last step in \eqref{bmKK-7}
are as follows. First, ${\rm dist}_{\rho_{\#}}(z,F)\approx r_j$, uniformly for 
$z\in Q_j$ and, second, for every $z\in Q_j$ we have
\begin{eqnarray}\label{QLVp}
\rho(x,z) &\leq & C\rho(x,x_j)+C\rho(x_j,z)\leq C\rho(x,x_j)+Cr_j
\nonumber\\[4pt]
&\leq & C\rho(x,x_j)+C\,{\rm dist}_{\rho}(Q_j,F)\leq C\rho(x,x_j).
\end{eqnarray}

Summing up inequalities of the form \eqref{bmKK-7} over $j\in{\mathbb{N}}$ 
and using the sublinearity of the operator
${\mathcal{A}}_{q,\kappa}\bigl(\delta_E^{\upsilon-m/q}\Theta(\cdot)\bigr)$ 
(recall that $q\geq 1$), as well as the finite overlap property in \eqref{PRO-4}, 
we obtain
\begin{eqnarray}\label{bmKK-8}
{\mathcal{A}}_{q,\kappa}\bigl(\delta_E^{\upsilon-m/q}(\Theta b)\bigr)(x)
\leq C\lambda\int_{{\mathcal{O}}}\frac{{\rm dist}_{\rho_{\#}}(z,F)^\alpha}
{\rho_{\#}(x,z)^{d+\alpha}}\,d\sigma(z),\qquad\forall\,x\in F.
\end{eqnarray}
Consequently, from \eqref{bmKK-8}, Lemma~\ref{P-Marc} and \eqref{PRO-4}, we deduce 
that
\begin{eqnarray}\label{bmKK-9}
&& \hskip -0.20in
\int_F{\mathcal{A}}_{q,\kappa}\bigl(\delta_E^{\upsilon-m/q}(\Theta b)\bigr)(x)\,dx
\leq C\lambda\int_F \int_{{\mathcal{O}}}
\frac{{\rm dist}_{\rho_{\#}}(z,F)^\alpha}{\rho_{\#}(x,z)^{d+\alpha}}\,d\sigma(z)
\,d\sigma(x)
\\[4pt]
&& \hskip 0.30in
\leq C\lambda\int_F\int_{E}
\frac{{\rm dist}_{\rho_{\#}}(z,F)^\alpha}{\rho_{\#}(x,z)^{d+\alpha}}\,d\sigma(z)
\,d\sigma(x)
\leq C\lambda\,\sigma(E\setminus F)=C\lambda\,\sigma({\mathcal{O}})
\leq C\|f\|_{L^1(E,\sigma)}.
\nonumber
\end{eqnarray}
This proves \eqref{bmKK-3B}, thus completing the proof of \eqref{DCvh}. 
In summary, the analysis so far proves part $(II)$ in the statement of the theorem.

\vskip 0.10in
{\tt Case~2}: {\it Assume that \eqref{dtbjHT} holds for some $1<q<p_o<\infty$.} 
As a preliminary step, we make the observation that, in this scenario,
granted \eqref{dtbjHT} and the conclusion in the Case~1,  
\begin{eqnarray}\label{Area-p2XXX}
{\mathscr{A}}_{q,\kappa}\circ\bigl(\delta_E^{\upsilon-m/q}\Theta\bigr):
L^r(E,\sigma)\to L^r(E,\sigma)\quad\mbox{is bounded whenever $r\in(1,p_o)$}.
\end{eqnarray}
Because of the equivalence \eqref{sbrn} in Theorem~\ref{AsiC}, estimate \eqref{kt-Dc} for the range 
$p\in(q,\infty)$ will follow once we show that 
\begin{eqnarray}\label{PaSeD}
{\mathfrak{C}}_{q,\kappa}\circ(\delta_E^{\upsilon-m/q}\Theta):L^p(E,\sigma)\to
L^p(E,\sigma)\quad\mbox{is bounded for $q<p<\infty$}. 
\end{eqnarray}
Fix $p\in(q,\infty)$. The proof of the boundedness of the operator in 
\eqref{PaSeD} relies on the following pointwise estimate 
\begin{eqnarray}\label{PaSeD-2}
&& \hskip -0.40in
{\mathfrak{C}}_{q,\kappa}\bigl(\delta_E^{\upsilon-m/q}(\Theta f)\bigr)(x_0)
\\[4pt]
&& \hskip 0.30in
\leq C\bigl[
\bigl(M_E(|f|^q)(x_0)\bigr)^{\frac{1}{q}}+(M_E(M_E(f)))(x_0)\bigr],\qquad
\forall\,x_0\in E,
\nonumber
\end{eqnarray}
for each $f\in L^p(E,\sigma)$. Indeed, fix such a function $f$. After raising the 
inequality in \eqref{PaSeD-2} to the $p$-th power and then integrating over $E$, 
we obtain
\begin{eqnarray}\label{PaSeD-3}
&& \hskip -0.40in
\int_{E}\bigl[{\mathfrak{C}}_{q,\kappa}\bigl(\delta_E^{\upsilon-m/q}
(\Theta f)\bigr)(x)\bigr]^p\,d\sigma(x)
\\[4pt]
&& \hskip 0.30in
\leq C\int_{E}\bigl[M_E(|f|^q)(x)\bigr]^{\frac{p}{q}}\,d\sigma(x)
+C\int_{E}\bigl[(M_E^2f)(x)\bigr]^p\,d\sigma(x)
\leq C\int_{E}|f|^p\,d\sigma, 
\nonumber
\end{eqnarray}
where the last inequality in \eqref{PaSeD-3} uses the boundedness on $L^p(E,\sigma)$ 
and $L^{p/q}(E,\sigma)$ of the Hardy-Littlewood maximal operator $M_E$
(here we make use of the fact that in the current case $p>\max\{q,1\}$). 
This shows that \eqref{PaSeD} holds assuming \eqref{PaSeD-2}. 

Returning to the proof of \eqref{PaSeD-2}, fix $f\in L^p(E,\sigma)$
along with $r>0$ and $x_0\in E$. For some finite constant $c>0$ to be specified later, set
$\Delta:=E\cap B_{\rho_{\#}}(x_0,r)$ and $c\Delta:=E\cap B_{\rho_{\#}}(x_0,cr)$, 
then write $f=f_1+f_2$, where $f_1:=f{\mathbf{1}}_{c\Delta}$ 
and $f_2:=f{\mathbf{1}}_{E\setminus c\,\Delta}$. 
First we estimate the contribution from $f_1$ by writing
\begin{eqnarray}\label{PaSeD-4}
&&\hskip -0.50in 
\tfrac{1}{\sigma(\Delta)}\int_{{\mathcal{T}}_\kappa(\Delta)}
|(\Theta f_1)(x)|^q\delta_E(x)^{q\upsilon-(m-d)}\,d\mu(x)
\nonumber\\[4pt]
&&\hskip 0.50in
 \leq \tfrac{1}{\sigma(\Delta)}\int_{{\mathscr{X}}\setminus E}
|(\Theta f_1)(x)|^q\delta_E(x)^{q\upsilon-(m-d)}\,d\mu(x)
\nonumber\\[4pt]
&&\hskip 0.50in
\leq \tfrac{C}{\sigma(\Delta)}
\int_E\Bigl(\int_{\Gamma_{\kappa}(x)}|(\Theta f_1)(y)|^q\,
\delta_E(y)^{q\upsilon-m}\,d\mu(y)\Bigr)\,d\sigma(x)
\nonumber\\[4pt]
&&\hskip 0.50in
=C\bigl\|{\mathscr{A}}_{q,\kappa}\bigl(\delta_E^{\upsilon-m/q}(\Theta f_1)\bigr)
\bigr\|_{L^q(E,\sigma)}^q\leq\tfrac{C}{\sigma(\Delta)}\int_{E}|f_1|^q\,d\sigma
\nonumber\\[4pt]
&&\hskip 0.50in
=\tfrac{C}{\sigma(\Delta)}\int_{c\,\Delta}|f|^q\,d\sigma\leq C\,M_E(|f|^q)(x_0).
\end{eqnarray}
For the second inequality in \eqref{PaSeD-4} we have used \eqref{Mix+FR}, 
Lemma~\ref{lbDV}, and the fact that $(E,\rho|_{E},\sigma)$ is a 
$d$-dimensional {\rm ADR} space, while the third inequality 
follows from \eqref{Area-p2XXX} used with $r:=q\in(1,p_o)$.

To treat the term corresponding to $f_2$, observe that if $c>C_\rho$, then 
for every $y\in E\setminus c\,\Delta$ we have 
$cr<\rho_{\#}(y,x_0)\leq C_\rho\max\{\rho_{\#}(y,{w}),\rho_{\#}({w},x_0)\}
\leq C_\rho\rho_{\#}(y,{w})$ for every ${w}\in\Delta$.
Hence, $E\setminus c\,\Delta\subseteq\{y\in E:\,\rho_{\#}(y,{w})>r\}$ and
$\rho_{\#}(y,x_0)\approx\rho_{\#}(y,{w})$, uniformly for 
$y\in E\setminus c\,\Delta$ and ${w}\in\Delta$. Furthermore, for every 
$z\in {\mathcal{T}}_\kappa(\Delta)$, we have
\begin{eqnarray}\label{brz}
\rho_{\#}(y,x_0) &\leq & C_\rho\max\{\rho_{\#}(y,z),\rho_{\#}(z,x_0)\}
\leq C_\rho\max\{\rho_{\#}(y,z),(1+\kappa)\delta_E(z)\}
\nonumber\\[4pt]
&\leq & C\rho_{\#}(y,z).
\end{eqnarray}
Based on these considerations as well as \eqref{hszz} and \eqref{WBA}, 
if $z\in {\mathcal{T}}_\kappa(\Delta)$ we may write
\begin{eqnarray}\label{PaSeD-5}
|(\Theta f_2)(z)|
& \leq & C\int\limits_{E\setminus c\,\Delta}
\frac{\delta_E(z)^{-a}}{\rho_{\#}(z,y)^{d+\upsilon-a}}|f(y)|\,d\sigma(y)
\nonumber\\[4pt]
& \leq & \frac{C\delta_E(z)^{-a}}{r^{\upsilon-a}}\int\limits_{y\in E,\,\rho_{\#}(y,{w})>r}
\frac{r^{\upsilon-a}}{\rho(y,{w})^{d+\upsilon-a}}|f(y)|\,d\sigma(y)
\nonumber\\[4pt]
& \leq & \frac{C\delta_E(z)^{-a}}{r^{\upsilon-a}}(M_Ef)({w}), 
\quad\mbox{uniformly for }{w}\in\Delta.
\end{eqnarray}
Thus, \eqref{PaSeD-5} implies
\begin{eqnarray}\label{PaSeD-6}
|(\Theta f_2)(z)|
\leq \frac{C\delta_E(z)^{-a}}{r^{\upsilon-a}}\inf_{{w}\in\Delta}(M_Ef)({w}),\qquad
\forall\,z\in {\mathcal{T}}_\kappa(\Delta).
\end{eqnarray}
In concert with \eqref{3.2.63WS} and Lemma~\ref{geom-lem} (which uses $\upsilon-a>0$),
estimate \eqref{PaSeD-6} further yields
\begin{eqnarray}\label{PaSeD-7}
&&\hskip -0.50in
\Bigl[\tfrac{1}{\sigma(\Delta)}\int_{{\mathcal{T}}_\kappa(\Delta)}
|(\Theta f_2)(z)|^q\delta_E(z)^{q\upsilon-(m-d)}\,d\mu(z)\Bigr]^{\frac{1}{q}}
\nonumber\\[4pt]
&& \hskip 0.50in
\leq \frac{C}{r^{\upsilon-a}}\inf_{{w}\in\Delta}(M_Ef)({w})
\Bigl[\tfrac{1}{\sigma(\Delta)}\int_{B_{\rho_{\#}}(x_0,Cr)\setminus E}
\delta_E(z)^{q(\upsilon-a)-(m-d)}\,d\mu(z)\Bigr]^{\frac{1}{q}}
\nonumber\\[4pt]
&& \hskip 0.50in
\leq C\inf_{{w}\in\Delta}(M_Ef)({w})
\leq C\meanint_{\Delta}M_Ef\,d\sigma\leq CM_E(M_Ef)(x_0).
\end{eqnarray}
Now \eqref{PaSeD-2} follows from \eqref{PaSeD-4} and \eqref{PaSeD-5} 
in view of \eqref{ktEW-7} and the fact that
${\mathfrak{C}}_{q,\kappa}\circ\bigl(\delta_E^{\upsilon-m/q}\Theta\bigr)$ 
is, in the current case, sub-linear. 

In summary, the analysis in this case proves that, under the assumption \eqref{dtbjHT}, 
estimate \eqref{kt-Dc} holds whenever $1<q<p_o<\infty$ and $q<p<\infty$.

\vskip 0.10in
{\tt Case~3}: {\it Assume that $q\in(1,\infty)$ is such that \eqref{kt-Dc-BIS} holds.}
We claim that 
\begin{eqnarray}\label{PaSeD-BIS}
{\mathfrak{C}}_{q,\kappa}\circ(\delta_E^{\upsilon-m/q}\Theta):L^p(E,\sigma)\to
L^p(E,\sigma)\quad\mbox{is bounded for $q<p<\infty$}. 
\end{eqnarray}
The proof of \eqref{PaSeD-BIS} largely parallels that of \eqref{PaSeD}.
More specifically, the only significant difference occurs in the third inequality 
in \eqref{PaSeD-4} which, this time, follows directly from \eqref{kt-Dc-BIS}.
Once this has been established, the equivalence \eqref{sbrn} in Theorem~\ref{AsiC}, and the current 
assumption yield \eqref{kt-Dc} for the range $p\in[q,\infty)$.

\vskip 0.10in
{\tt Case~4}: {\it Assume $\frac{d}{d+\gamma}<p\leq1$ and $q\in[p,\infty)$, 
and suppose that}
\begin{eqnarray}\label{Ar-EEE}
{\mathscr{A}}_{q,\kappa}\circ\bigl(\delta_E^{\upsilon-m/q}\Theta\bigr):
L^q(E,\sigma)\to L^q(E,\sigma)\quad\mbox{is bounded}.
\end{eqnarray}
In this case, we shall prove that there exists $C\in(0,\infty)$ such that
\begin{eqnarray}\label{hnv}
\bigl\|{\mathcal{A}}_{q,\kappa}\bigl(\delta_E^{\upsilon-m/q}\Theta(a)\bigr)
\bigr\|_{L^p(E,\sigma)}^p\leq C,\qquad\mbox{for every $p$-atom $a$}.
\end{eqnarray}
With this goal in mind, fix a $p$-atom $a$ and let $x_0\in E$ and $r>0$ be 
such that the conditions in \eqref{jk-AM} hold. In particular, 
\begin{eqnarray}\label{hnv-Uj}
{\rm supp}\,a\subseteq B_{\rho_{\#}}(x_0,\widetilde{C}_\rho r).
\end{eqnarray}
Then, for some finite constant $c>1$ to be specified later, and with
$\Delta:=E\cap B_{\rho_{\#}}(x_0,cr)$, we have
\begin{eqnarray}\label{hnv-2}
&& \hskip -0.50in
\bigl\|{\mathcal{A}}_{q,\kappa}\bigl(\delta_E^{\upsilon-m/q}\Theta(a)\bigr)
\bigr\|_{L^p(E,\sigma)}^p=\int_{\Delta}\Bigl(\int_{\Gamma_\kappa(x)}
|(\Theta a)(y)|^q\delta_E(y)^{q\upsilon-m}\,d\mu(y)\Bigr)^{\frac{p}{q}}d\sigma(x)
\\[4pt]
&& \hskip 1.00in 
+\int_{E\setminus \Delta}\Bigl(\int_{\Gamma_\kappa(x)}
|(\Theta a)(y)|^q\delta_E(y)^{q\upsilon-m}\,d\mu(y)\Bigr)^{\frac{p}{q}}d\sigma(x)
=:I_1+I_2.
\nonumber
\end{eqnarray}
Using H\"older's inequality (with exponent $q/p\geq 1$), the fact 
that $(E,\rho|_{E},\sigma)$ is a $d$-dimensional {\rm ADR} space, 
\eqref{Ar-EEE}, and \eqref{jk-AM}, we may write
\begin{eqnarray}\label{jnbj}
I_1 &\leq & C\left[\int_{\Delta}\Bigl(\int_{\Gamma_\kappa(x)}
|(\Theta a)(y)|^q\delta_E(y)^{q\upsilon-m}\,d\mu(y)\Bigr)\,d\sigma(x)
\right]^{\frac{p}{q}}\,r^{d\bigl(1-\frac{p}{q}\bigr)}
\nonumber\\[4pt]
& \leq & C\bigl\|{\mathcal{A}}_{q,\kappa}\bigl(\delta_E^{\upsilon-m/q}\Theta(a)\bigr)
\bigr\|_{L^q(E,\sigma)}^p\,r^{d\bigl(1-\frac{p}{q}\bigr)}
\leq C\|a\|^p_{L^q(E,\sigma)}\,r^{d\bigl(1-\frac{p}{q}\bigr)}\leq C,
\end{eqnarray}
for some finite $C>0$ independent of $a$. We are left with estimating $I_2$. 
First, we look for a pointwise estimate for $\Theta a$. 
Fix $x\in E\setminus\Delta$ and $y\in\Gamma_\kappa(x)$. Then
for every $z\in E\cap B_{\rho_{\#}}(x_0,\widetilde{C}_\rho r)$ we have
\begin{eqnarray}\label{Jnb-H}
\rho_{\#}(x_0,z) &\leq & \widetilde{C}_\rho r\leq\tfrac{1}{c}\,\rho_{\#}(x,x_0)
\leq \tfrac{1}{c}\,\widetilde{C}_\rho 
C_{\rho_{\#}}\max\{\rho_{\#}(x,y),\rho_{\#}(y,x_0)\}
\nonumber\\[4pt]
&\leq & \tfrac{1}{c}\,\widetilde{C}_\rho
C_{\rho}\max\{(1+\kappa)\delta_E(y),\rho_{\#}(y,x_0)\}
\nonumber\\[4pt]
&\leq & \tfrac{1}{c}\,\widetilde{C}_\rho C_{\rho}(1+\kappa)\rho_{\#}(y,x_0).
\end{eqnarray}
Now, based on this and \eqref{DEQV1}, by choosing $c$ sufficiently large 
we conclude that
\begin{eqnarray}\label{Jnb-HsA}
\rho(z,x_0)\leq\tfrac{1}{2}\rho(y,x_0)\quad\mbox{ for every }\,\,
z\in E\cap B_{\rho_{\#}}(x_0,\widetilde{C}_\rho r). 
\end{eqnarray}
At this point, we may use the last condition in \eqref{jk-AM}, \eqref{hnv-Uj},
\eqref{Jnb-HsA}, \eqref{hszz-3noalpha}, the second condition in \eqref{jk-AM} 
and the fact that $(E,\rho|_E,\sigma)$ is a $d$-dimensional {\rm ADR} 
space in order to obtain
\begin{eqnarray}\label{jnbj-2}
|(\Theta a)(y)| & = & \left|\int_E[\theta(y,z)-\theta(y,x_0)]a(z)\,d\sigma(z)\right|
\nonumber\\[4pt]
& = & \Bigl|\int\limits_{E\cap B_{\rho_{\#}}(x_0,\widetilde{C}_\rho r)}
[\theta(y,z)-\theta(y,x_0)]a(z)\,d\sigma(z)\Bigr|
\nonumber\\[4pt]
& \leq & C \int\limits_{E\cap B_{\rho_{\#}}(x_0,\widetilde{C}_\rho r)}
\frac{\rho_{\#}(z,x_0)^\alpha\delta_E(y)^{-a}}
{\rho_{\#}(y,x_0)^{d+\upsilon+\alpha-a}}\,|a(z)|\,d\sigma(z)
\nonumber\\[4pt]
& \leq & C 2^{\gamma-\alpha}\delta_E(y)^{-a}
\int\limits_{E\cap B_{\rho_{\#}}(x_0,\widetilde{C}_\rho r)}
\frac{\rho_{\#}(z,x_0)^\gamma}{\rho_{\#}(y,x_0)^{d+\upsilon-a+\gamma}}\,|a(z)|\,d\sigma(z)
\nonumber\\[4pt]
& \leq &C \frac{\delta_E(y)^{-a}r^{\gamma+d\bigl(1-\frac{1}{p}\bigr)}}
{\rho_{\#}(y,x_0)^{d+\upsilon-a+\gamma}},
\qquad\forall\,y\in \Gamma_\kappa(x).
\end{eqnarray}
In turn, \eqref{jnbj-2} yields 
\begin{eqnarray}\label{jnbj-3}
\int_{\Gamma_\kappa(x)}|(\Theta a)(y)|^q\frac{d\mu(y)}{\delta_E(y)^{m-q\upsilon}}
 & \leq & C r^{q\gamma+qd\bigl(1-\frac{1}{p}\bigr)}
\int_{\Gamma_\kappa(x)}\frac{\delta_E(y)^{q(\upsilon-a)-m}}
{\rho_{\#}(y,x_0)^{q(d+\upsilon-a+\gamma)}}\,d\mu(y)
\nonumber\\[4pt]
& \leq & C \frac{r^{q\gamma+qd\bigl(1-\frac{1}{p}\bigr)}}
{\rho_{\#}(x,x_0)^{qd+q\gamma}},\qquad\forall\,x\in E\setminus\Delta,
\end{eqnarray}
where for the last inequality in \eqref{jnbj-3} we applied Lemma~\ref{bzrg1}.
Estimate \eqref{jnbj-3} used in $I_2$ further implies
\begin{eqnarray}\label{jnbj-4}
I_2 & \leq & C\,r^{p\gamma+pd\bigl(1-\frac{1}{p}\bigr)}
\int_{E\setminus\Delta}\frac{d\sigma(x)}{\rho_{\#}(x,x_0)^{pd+p\gamma}}
\nonumber\\[4pt]
& \leq & C \frac{r^{p\gamma+pd\bigl(1-\frac{1}{p}\bigr)}}
{r^{pd+p\gamma-d}}=C,
\end{eqnarray}
where the last inequality in \eqref{jnbj-4} is a consequence of 
\eqref{WBA} (used with $f\equiv 1$) and the fact that $p(d+\gamma)>d$. 
Now \eqref{hnv} follows from \eqref{hnv-2}, \eqref{jnbj} and \eqref{jnbj-4}.

\vskip 0.10in
{\tt Case~5}: {\it Assume $\frac{d}{d+\gamma}<p\leq1\leq q<\infty$, 
and suppose that \eqref{Ar-EEE} holds.} Then we claim that
\begin{eqnarray}\label{Ar-EEE.2+}
\begin{array}{c}
\delta_E^{\upsilon-m/q}\Theta:
H^p(E,\rho|_E,\sigma)\to L^{(p,q)}({\mathscr{X}},E,\mu,\sigma;\kappa)\quad\mbox{is bounded}
\\[4pt]
\mbox{whenever $\tfrac{d}{d+\gamma}<p\leq 1\leq q<+\infty$}.
\end{array}
\end{eqnarray}
To proceed with the proof of this claim, fix $p$ and $q$ as in \eqref{Ar-EEE.2+} and 
define the sets 
\begin{eqnarray}\label{AKp-DD}
\dot{\mathscr{C}}_{b,0}^\gamma(E,\rho|_E):=\Bigl\{f\in \dot{\mathscr{C}}^\gamma(E,\rho|_E)
:\,f\mbox{ has bounded support and }
{\textstyle\int_Ef\,d\sigma=0}\Bigr\}
\end{eqnarray}
and 
\begin{eqnarray}\label{AKp-RR}
{\mathcal{F}}(E):=&\left\{
\begin{array}{l}
\dot{\mathscr{C}}_{b,0}^\gamma(E,\rho|_E)\,\mbox{ if $E$ is unbounded},
\\[8pt]
\dot{\mathscr{C}}_{b,0}^\gamma(E,\rho|_E)\cup\{{\mathbf{1}}_E\}\,\mbox{ if $E$ is bounded}.
\end{array}
\right.
\end{eqnarray}
Then letting 
\begin{eqnarray}\label{AKp-2Z}
{\mathcal{D}}_0(E):=\mbox{the finite linear span of functions in ${\mathcal{F}}(E)$},
\end{eqnarray}
we shall show that
\begin{eqnarray}\label{AKp-2}
{\mathcal{D}}_0(E)\,\,\mbox{ is dense in $H^p(E,\rho|_E,\sigma)$.}
\end{eqnarray}
Indeed, since finite linear spans of $p$-atoms are dense in $H^p(E,\rho|_E,\sigma)$, 
the density result formulated in 
\eqref{AKp-2} will follow once we show that individual $p$-atoms may be approximated  
in $H^p(E,\rho|_E,\sigma)$ with functions from ${\mathcal{D}}_0(E)$. To prove the 
latter, recall the approximation to the identity of order $\gamma$ as given in
Proposition~\ref{Besov-ST} and observe that from the properties of the integral 
kernels from Definition~\ref{Besov-S} we have 
that ${\mathcal{S}}_l\,a\in{\mathcal{D}}_0(E)$ for every $p$-atom $a$ and  each 
$l\in{\mathbb{N}}$. This and \cite[Lemma~3.2, (iii), p.\,108]{HYZ}, which gives that
\begin{eqnarray}\label{BG-IF}
\begin{array}{c}
\{{\mathcal{S}}_l\}_{l\in{\mathbb{N}}}\,\,\mbox{is uniformly bounded from $H^p(E,\rho|_E,\sigma)$ to $H^p(E,\rho|_E,\sigma)$}\,\,\mbox{ and}
\\[4pt]
{\mathcal{S}}_l f\to f\,\,\mbox{ in $H^p(E,\rho|_E,\sigma)$ as $l\to +\infty$},
\quad\forall\,f\in H^p(E,\rho|_E,\sigma),
\end{array}
\end{eqnarray}
now yield the desired conclusion, finishing the proof of \eqref{AKp-2}.

The next task is to prove that there exists $C\in(0,\infty)$ such that 
\begin{eqnarray}\label{BB-ZZ}
\|\delta_E^{\upsilon-m/q}\Theta f\|_{L^{(p,q)}({\mathscr{X}},E,\mu,\sigma;\kappa)}
\leq C\|f\|_{H^p(E,\rho|_E,\sigma)},\quad\forall\,f\in{\mathcal{D}}_0(E).
\end{eqnarray}
Assume for the moment \eqref{BB-ZZ}. Then, it follows that the linear operator
$\delta_E^{\upsilon-m/q}\Theta$ is bounded from ${\mathcal{D}}_0(E)$ into
$L^{(p,q)}({\mathscr{X}},E,\mu,\sigma;\kappa)$. Based on this, \eqref{AKp-2} and
the fact hat the mixed-norm spaces $L^{(p,q)}({\mathscr{X}},E,\mu,\sigma;\kappa)$ 
are quasi-Banach (see \cite{MMMZ}, \cite{BMMM}), it follows that 
$\delta_E^{\upsilon-m/q}\Theta$ extends in a standard way to a linear operator from
$H^p(E,\rho|_E,\sigma)$ into $L^{(p,q)}({\mathscr{X}},E,\mu,\sigma;\kappa)$. Since the
latter spaces are only quasi-normed, to show that this extension is also bounded 
we use the following property of quasi-normed spaces 
(for a proof see \cite[Theorem~1.5, (6)]{MMMM-G})
\begin{eqnarray}\label{Q-Nor}
\begin{array}{c}
\mbox{if $(X,\|\cdot\|)$ is a quasi-normed vector space, then 
$\exists\,C\in[1,\infty)$ such that}
\\[6pt] 
\mbox{if $x_j\to x_\ast$ in $X$ as $j\to\infty$, in the topology induced on 
$X$ by $\|\cdot\|$, then}
\\[6pt]
C^{-1}\|x_\ast\|\leq\liminf\limits_{j\to\infty}\|x_j\|
\leq \limsup\limits_{j\to\infty}\|x_j\|\leq C\|x_\ast\|.
\end{array}
\end{eqnarray}
In summary, the boundedness claimed in \eqref{Ar-EEE.2+} follows, once \eqref{BB-ZZ} is 
proved. 

With the goal of establishing \eqref{BB-ZZ}, fix a function
$f\in\dot{\mathscr{C}}_{b,0}^\gamma(E,\rho|_E)$. 
By \cite[Proposition~3.1, p.\,112]{HYZ}, we have that
\begin{eqnarray}\label{HH-FD}
\begin{array}{c}
\exists\,(\lambda_j)_{j\in{\mathbb{N}}}\in \ell^p,\quad
\exists\,(a_j)_{j\in{\mathbb{N}}}\,\,\mbox{$p$-atoms, such that }\,\,
\Bigl(\sum\limits_{j=1}^\infty|\lambda_j|^p\Bigr)^{1/p}\leq C\|f\|_{H^p(E,\rho|_E,\sigma)}
\\[4pt]
\mbox{and }\,\,f=\sum\limits_{j=1}^\infty\lambda_ja_j\,\,
\mbox{ both in $H^p(E,\rho|_E,\sigma)$ and in $L^q(E,\sigma)$},
\end{array}
\end{eqnarray}
for some $C\in(0,\infty)$ independent of $f$. Also, from our assumption \eqref{Ar-EEE} 
we deduce that
\begin{eqnarray}\label{Ar-EEE-S}
\delta_E^{\upsilon-m/q}\Theta:L^q(E,\sigma)\to 
L^{(q,q)}({\mathscr{X}},E,\mu,\sigma;\kappa)\quad\mbox{is linear and bounded}.
\end{eqnarray}
Combining \eqref{Ar-EEE-S} with \eqref{HH-FD} it follows that, with $f$ as above,
\begin{eqnarray}\label{HH-FD-A}
\delta_E^{\upsilon-m/q}\Theta f & = & 
\delta_E^{\upsilon-m/q}\Theta\Bigl(\lim_{N\to\infty}\sum\limits_{j=1}^N\lambda_ja_j\Bigr)
=\lim_{N\to\infty}\delta_E^{\upsilon-m/q}\Theta\Bigl(\sum\limits_{j=1}^N\lambda_ja_j\Bigr)
\nonumber\\[4pt]
& = & \lim_{N\to\infty}\sum\limits_{j=1}^N\lambda_j\delta_E^{\upsilon-m/q}\Theta a_j
\quad\mbox{in }\,\,L^{(q,q)}({\mathscr{X}},E,\mu,\sigma;\kappa).
\end{eqnarray}
%
Granted this, we may apply \cite[Theorem~1.5]{MMMZ} to conclude that 
\begin{eqnarray}\label{PP}
\begin{array}{c}
\exists\,(N_k)_{k\in{\mathbb{N}}},\,\,N_k\nearrow +\infty\,\,\mbox{ as }\,\,k\to +\infty,
\,\,\mbox{ such that}
\\[4pt] 
\mbox{$\sum\limits_{j=1}^{N_k}\lambda_j\delta_E^{\upsilon-m/q}\Theta a_j\to 
\delta_E^{\upsilon-m/q}\Theta f$ pointwise
$\mu$-a.e. on ${\mathscr{X}}\setminus E$ as $k\to +\infty$}.
\end{array}
\end{eqnarray}
Since we are currently assuming that $0<p\leq 1\leq q<\infty$, an inspection 
of definition \eqref{Mixed-EEW} of the quasi-norm for the space 
$L^{(p,q)}({\mathscr{X}},E,\mu,\sigma;\kappa)$ reveals that
$\|\cdot\|_{L^{(p,q)}({\mathscr{X}},E,\mu,\sigma;\kappa)}^p$ is subadditive. 
As such, for each $k\in{\mathbb{N}}$, we may estimate
\begin{eqnarray}\label{DF-ar}
\Bigl\|\sum\limits_{j=1}^{N_k}\lambda_j\delta_E^{\upsilon-m/q}\Theta a_j
\Bigr\|^p_{L^{(p,q)}({\mathscr{X}},E,\mu,\sigma;\kappa)}
& \leq & \sum\limits_{j=1}^{N_k}\Bigl\|\lambda_j\delta_E^{\upsilon-m/q}\Theta a_j
\Bigr\|^p_{L^{(p,q)}({\mathscr{X}},E,\mu,\sigma;\kappa)}
\nonumber\\[4pt]
& = & \sum\limits_{j=1}^{N_k}|\lambda_j|^p\Bigl\|\delta_E^{\upsilon-m/q}\Theta a_j
\Bigr\|^p_{L^{(p,q)}({\mathscr{X}},E,\mu,\sigma;\kappa)}
\nonumber\\[4pt]
& \leq & C\sum\limits_{j=1}^{N_k}|\lambda_j|^p,
\end{eqnarray}
where for the last inequality in \eqref{DF-ar} we used estimate \eqref{hnv}
(note that the assumptions in Case~4 are currently satisfied). Next,
introduce 
\begin{eqnarray}\label{DF-arZZ}
F_k:=\sum\limits_{j=1}^{N_k}\lambda_j\delta_E^{\upsilon-m/q}\Theta a_j,\qquad
\forall\,k\in{\mathbb{N}}.
\end{eqnarray}
To proceed, observe that Fatou's lemma holds in the 
space $L^{(p,q)}({\mathscr{X}},E,\mu,\sigma;\kappa)$ 
(this is seen directly from \eqref{Mixed-EEW} by applying twice the classical 
Fatou's lemma in Lebesgue spaces). When used for the 
sequence $\{F_k\}_{k\in{\mathbb{N}}}$, this yields
\begin{eqnarray}\label{DF-ar-2}
\|\delta_E^{\upsilon-m/q}\Theta f\|_{L^{(p,q)}({\mathscr{X}},E,\mu,\sigma;\kappa)}
& = & \bigl\|\liminf_{k\to\infty} |F_k|\bigr\|_{L^{(p,q)}({\mathscr{X}},E,\mu,\sigma;\kappa)}
\nonumber\\[4pt]
&\leq &\liminf_{k\to\infty}\|F_k\|_{L^{(p,q)}({\mathscr{X}},E,\mu,\sigma;\kappa)}
\leq C \|f\|_{H^p(E,\rho|_E,\sigma)}.\quad
\end{eqnarray}
The equality in \eqref{DF-ar-2} is a consequence of \eqref{PP} and the fact that
\begin{eqnarray}\label{Hkuc}
\|u\|_{L^{(p,q)}({\mathscr{X}},E,\mu,\sigma;\kappa)}
=\|\,|u|\,\|_{L^{(p,q)}({\mathscr{X}},E,\mu,\sigma;\kappa)},\quad
\forall\,u\in L^{(p,q)}({\mathscr{X}},E,\mu,\sigma;\kappa), 
\end{eqnarray}
the first inequality is due to Fatou's Lemma and \eqref{Hkuc}, 
while the last inequality follows from \eqref{DF-ar}, \eqref{DF-arZZ} and \eqref{HH-FD}.

At this stage, we have established \eqref{DF-ar-2} for any function
$f\in\dot{\mathscr{C}}_{b,0}^\gamma(E,\rho|_E)$, so in order to finish the proof of 
\eqref{BB-ZZ} there remains to consider the case when $E$ is bounded and
$f={\mathbf{1}}_E$. In this setting, since $E$ is $d$-dimensional ADR, 
we have $\sigma(E)<\infty$, and we may write
\begin{eqnarray}\label{DF-ar-3}
\bigl\|\delta_E^{\upsilon-m/q}\Theta {\mathbf{1}}_E
\bigr\|_{L^{(p,q)}({\mathscr{X}},E,\mu,\sigma;\kappa)}
& = & \bigl\|{\mathscr{A}}_{q,\kappa}\circ\bigl(\delta_E^{\upsilon-m/q}
\Theta{\mathbf{1}}_E\bigr)\bigr\|_{L^p(E,\sigma)}
\nonumber\\[4pt]
& \leq & \sigma(E)^{\frac{1}{p}-\frac{1}{q}}
\bigl\|{\mathscr{A}}_{q,\kappa}\circ\bigl(\delta_E^{\upsilon-m/q}
\Theta{\mathbf{1}}_E\bigr)\bigr\|_{L^q(E,\sigma)}
\nonumber\\[4pt]
& \leq & C\sigma(E)^{\frac{1}{p}}=C<+\infty.
\end{eqnarray}
The first inequality in \eqref{DF-ar-3} uses H\"older's inequality for the integrability
index $q/p\geq 1$, while the second inequality uses \eqref{Ar-EEE}. Now \eqref{BB-ZZ}
follows by combining \eqref{DF-ar-2} and \eqref{DF-ar-3}, and with it the proof of
\eqref{Ar-EEE.2+} is finished. In particular, \eqref{BB-ZZ} may be rewritten as 
\begin{eqnarray}\label{Ar-EEE.2}
\begin{array}{c}
{\mathscr{A}}_{q,\kappa}\circ\bigl(\delta_E^{\upsilon-m/q}\Theta\bigr):
H^p(E,\rho|_E,\sigma)\to L^p(E,\sigma)\quad\mbox{is bounded}
\\[4pt]
\mbox{whenever $\tfrac{d}{d+\gamma}<p\leq 1\leq q<+\infty$, and \eqref{Ar-EEE} holds}.
\end{array}
\end{eqnarray}

The end-game in the proof of part $(I)$ in the statement of the 
theorem is now as follows. Assume first that $q\in(1,\infty)$, $p_o\in(q,\infty)$ 
are such that \eqref{dtbjHT} holds for every $f\in L^{p_o}(E,\sigma)$. 
Based on these assumptions and Case~1 we conclude that 
\begin{eqnarray}\label{Ar-EEE.A}
{\mathscr{A}}_{q,\kappa}\circ\bigl(\delta_E^{\upsilon-m/q}\Theta\bigr):
L^p(E,\sigma)\to L^p(E,\sigma)\quad\mbox{is bounded whenever $p\in (1,p_o)$}.
\end{eqnarray}
In particular, \eqref{Ar-EEE.A} with $p:=q\in(1,p_o)$ and \eqref{Ar-EEE.2} yield 
\begin{eqnarray}\label{Ar-EEE.Ai}
{\mathscr{A}}_{q,\kappa}\circ\bigl(\delta_E^{\upsilon-m/q}\Theta\bigr):
H^p(E,\rho|_{E},\sigma)\to L^p(E,\sigma)\quad\mbox{is bounded if 
$\tfrac{d}{d+\gamma}<p\leq 1$}.
\end{eqnarray}
To proceed, fix an exponent 
\begin{eqnarray}\label{Ar-EEE.3}
p_o'\in(q,p_o).
\end{eqnarray}
Then \eqref{Ar-EEE.A} corresponding to $p:=p_o'$ together with 
Case~2 used here with $p_o$ replaced by $p_o'$ imply that
\begin{eqnarray}\label{Ar-EEE.B}
{\mathscr{A}}_{q,\kappa}\circ\bigl(\delta_E^{\upsilon-m/q}\Theta\bigr):
L^p(E,\sigma)\to L^p(E,\sigma)\quad\mbox{is bounded for each $p\in (q,\infty)$}.
\end{eqnarray}
Now the claim in part $(I)$ in the statement of the theorem corresponding 
to the current working hypotheses follows from \eqref{Ar-EEE.A}, 
\eqref{Ar-EEE.Ai}, and \eqref{Ar-EEE.B}.

There remains to consider the situation when $q\in(1,\infty)$ is such that  
\eqref{kt-Dc-BIS} holds. From Case~3 we know that \eqref{kt-Dc} is valid in 
the range $p\in[q,\infty)$. Then in combination with  Case~1
(used with $p_o:=q\in(1,\infty)$), this gives that 
\begin{eqnarray}\label{Ar-EEE.A-BIS}
{\mathscr{A}}_{q,\kappa}\circ\bigl(\delta_E^{\upsilon-m/q}\Theta\bigr):
L^p(E,\sigma)\to L^p(E,\sigma)\quad\mbox{is bounded for each $p\in (1,q]$}.
\end{eqnarray}
Now reasoning as before we obtain that \eqref{Ar-EEE.Ai} holds. In summary, the 
above analysis shows that \eqref{kt-Dc} is valid in the range
$p\in\bigl(\tfrac{d}{d+\gamma},\infty\bigr)$, under the assumption that 
$q\in(1,\infty)$ is such that \eqref{kt-Dc-BIS} holds.
This concludes the proof of part $(I)$, and finishes the proof of the theorem. 
\end{proof}

The second main result in this subsection is a combination of 
Theorem~\ref{VGds-L2XXX} and Theorem~\ref{VGds-2}.

\begin{theorem}\label{VGds-2.33} 
Suppose that $d,m$ are real numbers such that $0<d<m$. 
Assume that $({\mathscr{X}},\rho,\mu)$ is an $m$-dimensional {\rm ADR} space, 
$E$ is a closed subset of $({\mathscr{X}},\tau_\rho)$, and $\sigma$ is a 
Borel regular measure on $(E,\tau_{\rho|_{E}})$ with the property 
that $(E,\rho\bigl|_E,\sigma)$ is a $d$-dimensional {\rm ADR} space. 
In addition, suppose that $\Theta$ is the integral operator defined in \eqref{operator} 
with a kernel $\theta$ as in \eqref{K234}, \eqref{hszz}, \eqref{hszz-3noalpha}. 
Finally, fix $\kappa>0$ and recall the exponent $\gamma$ from \eqref{WQ-tDD}.

If there exist $p_o\in(0,\infty)$ and a finite constant $C_o>0$ such that for every 
$f\in L^{p_o}(E,\sigma)$ 
\begin{eqnarray}\label{GvBh}
\hskip -0.30in
\sup_{\lambda>0}\left[\lambda\cdot
\sigma\Bigl(\Bigl\{x\in E:\int_{\Gamma_{\kappa}(x)}|(\Theta f)(y)|^2\,
\frac{d\mu(y)}{\delta_E(y)^{m-2\upsilon}}>\lambda^{2}\Bigr\}\Bigr)^{1/p_o}\right]
\leq C_o\|f\|_{L^{p_o}(E,\sigma)},
\end{eqnarray}
then for each $p\in\bigl(\frac{d}{d+\gamma},\infty\bigr)$ there holds
\begin{eqnarray}\label{Cfrd}
\hskip -0.20in
\left\|\Bigl(\int_{\Gamma_{\kappa}(x)}|(\Theta f)(y)|^2\,
\frac{d\mu(y)}{\delta_E(y)^{m-2\upsilon}}\Bigr)^{\frac{1}{2}}
\right\|_{L^p_x(E,\sigma)}\!\!\!\leq C\|f\|_{H^p(E,\rho|_{E},\sigma)},\quad\forall\,f\in H^p(E,\rho|_{E},\sigma),
\end{eqnarray}
where $C>0$ is a finite constant which is allowed to depend only on 
$p,C_o,\kappa,C_\theta$, and geometry. 
\end{theorem}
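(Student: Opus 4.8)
The strategy is to deduce Theorem~\ref{VGds-2.33} by combining the two extrapolation results that precede it. The crucial observation is that the weak estimate \eqref{GvBh} with $q=2$ supplies exactly the hypothesis needed to run Theorem~\ref{VGds-L2XXX}, which upgrades a weak $L^{p_o}$ square function bound (valid for a single exponent $p_o\in(0,\infty)$) to the full strong $L^2$ square function estimate. Indeed, as noted in the remark following Theorem~\ref{VGds-L2XXX}, the requirement \eqref{dtbh-L2iii} is weaker than the weak-type estimate \eqref{eqrem}, and \eqref{GvBh} specialized to $f={\mathbf 1}_\Delta$ over an arbitrary surface ball $\Delta\subseteq E$ yields \eqref{dtbh-L2iii}. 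Hence Theorem~\ref{VGds-L2XXX} applies and produces
\begin{eqnarray}\label{L2-consequence}
\int_{\mathscr{X}\setminus E}|(\Theta f)(x)|^2\delta_E(x)^{2\upsilon-(m-d)}\,d\mu(x)
\leq C\int_E|f|^2\,d\sigma,\qquad\forall\,f\in L^2(E,\sigma).
\end{eqnarray}

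\textbf{Passing from the $L^2$ bound to $H^p$ bounds.} The second step is to feed \eqref{L2-consequence} into Theorem~\ref{VGds-2}, applied with $q=2$. The point is that \eqref{L2-consequence}, after unraveling the definitions of $\Gamma_\kappa$, the area operator ${\mathscr{A}}_{2,\kappa}$, and using Fubini together with Lemma~\ref{lbDV} (so that $\sigma(\pi^\kappa_y)\approx\delta_E(y)^d$ on account of the ADR property of $E$), is precisely the statement that
\begin{eqnarray}\label{Aq-L2}
\left\|\Bigl(\int_{\Gamma_{\kappa}(x)}|(\Theta f)(y)|^2\,
\frac{d\mu(y)}{\delta_E(y)^{m-2\upsilon}}\Bigr)^{\frac{1}{2}}
\right\|_{L^2_x(E,\sigma)}\!\!\!\leq C\|f\|_{L^2(E,\sigma)},\qquad\forall\,f\in L^2(E,\sigma).
\end{eqnarray}
This is exactly hypothesis \eqref{kt-Dc-BIS} of Theorem~\ref{VGds-2} with the choice $q=2\in(1,\infty)$. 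Consequently part $(I)$ of Theorem~\ref{VGds-2} applies and yields \eqref{kt-Dc} for $q=2$ and every $p\in\bigl(\frac{d}{d+\gamma},\infty\bigr)$, where $\gamma=\min\{\alpha_\rho,\alpha\}$ is exactly the exponent in \eqref{WQ-tDD}; this is the asserted estimate \eqref{Cfrd}. The constant $C$ depends only on $p$, $\kappa$, $C_\theta$, the value $C_o$ from \eqref{GvBh} (which controls the constant in \eqref{L2-consequence}, hence in \eqref{Aq-L2}), and the ADR constants, in agreement with the statement.

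\textbf{Verifying the logical chain and the main obstacle.} The proof therefore consists of carefully checking that the hypotheses match: (i) the weak-type estimate \eqref{GvBh} restricted to indicator functions of surface balls gives \eqref{dtbh-L2iii} with $q=2$, with a constant controlled by $C_o$ (here one only needs $\sigma({\mathbf 1}_\Delta)^{1/p_o}=\sigma(\Delta)^{1/p_o}$ on the right); (ii) Theorem~\ref{VGds-L2XXX} then gives \eqref{L2-consequence}; (iii) rewriting \eqref{L2-consequence} as the area-operator bound \eqref{Aq-L2} uses only Tonelli's theorem, the definition \eqref{TLjb} of $\Gamma_\kappa$, the identity $\int_E{\mathbf 1}_{\pi^\kappa_y}\,d\sigma=\sigma(\pi^\kappa_y)$, and the two-sided bound on $\sigma(\pi^\kappa_y)$ from Lemma~\ref{lbDV} combined with the $d$-ADR property of $(E,\rho|_E,\sigma)$; (iv) Theorem~\ref{VGds-2}(I) with $q=2$, invoked in the branch where \eqref{kt-Dc-BIS} is assumed, delivers \eqref{Cfrd}. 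I do not anticipate a genuine obstacle here, since all the heavy lifting has already been carried out in Theorems~\ref{VGds-L2XXX} and \ref{VGds-2}; the one place requiring care is the bookkeeping in step (iii), namely confirming that $2\upsilon-(m-d)$ and the conical weight $m-2\upsilon$ are consistent (they differ precisely by the factor $\delta_E(y)^{-d}$ absorbed into $\sigma(\pi^\kappa_y)\approx\delta_E(y)^d$), so that the ``vertical'' form \eqref{L2-consequence} and the ``conical'' form \eqref{Aq-L2} are indeed equivalent. Once that identification is made, the theorem follows immediately.
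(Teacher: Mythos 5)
Your proposal is correct and follows essentially the same route as the paper's own proof: restrict \eqref{GvBh} to indicators of surface balls to get \eqref{dtbh-L2iii}, invoke Theorem~\ref{VGds-L2XXX} to obtain the strong $L^2$ square function bound, identify this (via Fubini, Lemma~\ref{lbDV} and the $d$-ADR property, so that $\sigma(\pi^\kappa_y)\approx\delta_E(y)^d$) with hypothesis \eqref{kt-Dc-BIS} for $q=2$, and conclude with part $(I)$ of Theorem~\ref{VGds-2}. Your explicit bookkeeping of the vertical-versus-conical weights in step (iii) is exactly the conversion the paper carries out (compare the computation \eqref{dSHV} in the final section), so no gap remains.
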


\begin{proof}
The assumption that the operator ${\mathcal{A}}_{2,\kappa}\circ(\delta_E^{\upsilon-m/2}\Theta):
L^{p_o}(E,\sigma)\rightarrow L^{p_o,\infty}(E,\sigma)$ is bounded
implies that \eqref{dtbh-L2iii} holds. Consequently, Theorem~\ref{VGds-L2XXX} applies 
and yields that ${\mathcal{A}}_{2,\kappa}\circ(\delta_E^{\upsilon-m/2}\Theta):
L^2(E,\sigma)\rightarrow L^2(E,\sigma)$ is bounded as well. With this in hand, 
part $(I)$ in Theorem~\ref{VGds-2} (pertaining to condition \eqref{kt-Dc-BIS} 
with $q=2$) applies and gives that \eqref{Cfrd} holds for every 
$p\in\bigl(\frac{d}{d+\gamma},\infty\bigr)$.
\end{proof}

\section{Conclusion}\label{Sect:6}
\setcounter{equation}{0}

Theorem~\ref{M-TTHH} asserts the equivalence of a number of the properties encountered
in the body of the manuscript. A formal proof is presented below.

\vskip 0.08in
\begin{proof}[Proof of Theorem~\ref{M-TTHH}]
The fact that {\it (1)} $\Rightarrow$ {\it (2)} is a consequence of Theorem~\ref{SChg}.
It is easy to see that if {\it (2)} holds, then {\it (7)} holds by taking
$b_Q:={\mathbf{1}}_Q$ for each $Q\in{\mathbb{D}}(E)$, hence 
{\it (2)} $\Rightarrow$ {\it (7)}.
The implication {\it (7)} $\Rightarrow$ {\it (1)} is proved in Theorem~\ref{Thm:localTb}.
The implication {\it (9)} $\Rightarrow$ {\it (1)} is proved in Theorem~\ref{Thm:BPSFtoSF}.
Moreover, {\it (1)} $\Leftrightarrow$ {\it (9)} $\Leftrightarrow$ {\it (10)}
by Theorem~\ref{Thm:BPSFtoSF.XXX}.
The implication {\it (11)} $\Rightarrow$ {\it (12)} is proved in Theorem~\ref{VGds-2.33}.
Clearly {\it (12)} $\Rightarrow$ {\it (11)}, while {\it (11)} $\Rightarrow$ {\it (1)}
is contained in Theorem~\ref{VGds-L2XXX}. To show that {\it (1)} $\Rightarrow$ {\it (11)},
suppose {\it (1)} holds and take $f\in L^2(E,\sigma)$ and $\lambda>0$ arbitrary. Then
starting with Tschebyshev's inequality we may write
\begin{eqnarray}\label{dSHV}
&& \hskip -0.40in
\lambda^2\cdot\sigma\Bigl(\Bigl\{x\in E:\int_{\Gamma_{\kappa}(x)}
|(\Theta f)(y)|^2\,\frac{d\mu(y)}{\delta_E(y)^{m-2\upsilon}}>\lambda^{2}\Bigr\}\Bigr)
\nonumber\\[4pt]
&& \hskip 0.30in
\leq \int_{\Bigl\{x\in E:\int_{\Gamma_{\kappa}(x)}
|(\Theta f)(y)|^2\,\frac{d\mu(y)}{\delta_E(y)^{m-2\upsilon}}>\lambda^{2}\Bigr\}}\Bigl(
\int_{\Gamma_{\kappa}(x)}|(\Theta f)(y)|^2\,\frac{d\mu(y)}{\delta_E(y)^{m-2\upsilon}}
\Bigr)d\sigma(x)
\nonumber\\[4pt]
&& \hskip 0.30in
\leq \int_E\Bigl(\int_{\Gamma_{\kappa}(x)}
|(\Theta f)(y)|^2\,\frac{d\mu(y)}{\delta_E(y)^{m-2\upsilon}}\Bigr)d\sigma(x)
\nonumber\\[4pt]
&& \hskip 0.30in
\leq \int_{{\mathscr{X}}\setminus E}\frac{|(\Theta f)(y)|^2}{\delta_E(y)^{m-2\upsilon}}
\,\sigma(\pi_y^\kappa)\,d\mu(y)
\nonumber\\[4pt]
&& \hskip 0.30in
\leq \int_{{\mathscr{X}}\setminus E}\frac{|(\Theta f)(y)|^2}{\delta_E(y)^{m-2\upsilon}}
\,\sigma\Bigl(E\cap B_{\rho_{\#}}\bigl(y_\ast,C_{\rho}(1+\kappa)\delta_E(y)\bigr)\Bigr)
\,d\mu(y)
\nonumber\\[4pt]
&& \hskip 0.30in
\leq C\int_{{\mathscr{X}}\setminus E}|(\Theta f)(y)|^2\delta_E(y)^{2\upsilon-(m-d)}
\,d\mu(y)
\nonumber\\[4pt]
&& \hskip 0.30in
\leq C\|f\|_{L^2(E,\sigma)}^2.
\end{eqnarray}
The third inequality in \eqref{dSHV} is due to \eqref{Mix+FR} (recall \eqref{reg-A2}),
the fourth uses \eqref{sgbr} in Lemma~\ref{lbDV}, the fifth uses the fact that 
$\bigl(E,\rho\bigl|_E,\sigma\bigr)$ is a $d$-dimensional {\rm ADR} space,
and the last inequality is a consequence of \eqref{G-UFXXX.2}. Thus,  
{\it (1)} $\Rightarrow$ {\it (11)} as desired. 
Since \eqref{ki-DUDXXX} is a rewriting of \eqref{CfrdXXX}, it is immediate that 
{\it (12)} $\Leftrightarrow$ {\it (13)}. In summary, so far we have shown that 
{\it (1)}, {\it (2)}, {\it (7)}, {\it (9)}, {\it (10)}, {\it (11)}, {\it (12)}, 
and {\it (13)} are equivalent.

The implication {\it (6)} $\Rightarrow$ {\it (4)} is trivial and, based on 
\eqref{dFvK}, we have that {\it (4)} $\Rightarrow$ {\it (2)}. We focus next on 
{\it (1)} $\Rightarrow$ {\it (6)}. Suppose {\it (1)} holds and fix
$f\in L^\infty(E,\sigma)$, $x\in E$, and $r\in(0,\infty)$ arbitrary. 
Then, using the notation $B_{cr}:=B_{\rho_{\#}}(x,cr)$ for $c>0$, we may write
\begin{eqnarray}\label{esL}
\int_{B_r\setminus E}|\Theta f|^2\delta_E^{2\upsilon-(m-d)}\,d\mu
& \leq & \int_{B_r\setminus E}
|\Theta (f{\mathbf{1}}_{E\cap B_{2rC_\rho}})|^2\delta_E^{2\upsilon-(m-d)}\,d\mu
\nonumber\\[4pt]
&& 
+\int_{B_r\setminus E}|\Theta (f{\mathbf{1}}_{E\setminus B_{2rC_\rho}})|^2
\delta_E^{2\upsilon-(m-d)}\,d\mu=:I+II.
\end{eqnarray}
To estimate $I$ we apply \eqref{G-UFXXX.2} and the property of $E$ being $d$-dimensional
ADR to obtain
\begin{eqnarray}\label{esL-2}
I\leq C\int_{E\cap B_{2rC_\rho}}|f|^2\,d\sigma
\leq C\|f\|^2_{L^\infty(E,\sigma)}\sigma(E\cap B_{r}).
\end{eqnarray}
As regards $II$, we first note that if $z\in B_r\setminus E$ and 
$y\in E\setminus B_{2rC_\rho}$ are arbitrary points then $\rho_{\#}(x,y)
\leq C_\rho(\rho_{\#}(x,z)+\rho_{\#}(z,y))< C_\rho r+C_\rho\rho_{\#}(z,y)
\leq \tfrac{1}{2}\rho_{\#}(x,y)+C_\rho\rho_{\#}(z,y)$ which implies 
$\rho_{\#}(z,y)\geq \rho_{\#}(x,y)/(2C_\rho)$. This, \eqref{hszz-AXXX}, 
and \eqref{WBA} then yield
\begin{eqnarray}\label{esL-3}
|\Theta (f{\mathbf{1}}_{E\setminus B_{2rC_\rho}})(z)|
&\leq & C\|f\|_{L^\infty(E,\sigma)}
\int_{E\setminus B_{2rC_\rho}}\frac{1}{\rho_{\#}(x,y)^{d+\upsilon}}\,d\sigma(y)
\nonumber\\[4pt]
&\leq & C\|f\|_{L^\infty(E,\sigma)}\,r^{-\upsilon},
\qquad\forall\,z\in B_r\setminus E.
\end{eqnarray}
Using this last estimate in $II$ and applying \eqref{lbzF} (with $R:=r$ 
and $\gamma:=m-d-2\upsilon$) we obtain
\begin{eqnarray}\label{esL-4}
II &\leq & C\|f\|^2_{L^\infty(E,\sigma)}r^{-2\upsilon}
\int_{B_r\setminus E}\delta_E^{2\upsilon-(m-d)}\,d\mu
\nonumber\\[4pt]
& \leq & C\|f\|^2_{L^\infty(E,\sigma)}r^{-2\upsilon}r^{d+2\upsilon}
\leq C\|f\|^2_{L^\infty(E,\sigma)}\sigma(E\cap B_{r}).
\end{eqnarray}
At this point, \eqref{UEHgXXX.2S} follows from \eqref{esL}, \eqref{esL-2}, and 
\eqref{esL-4}, completing the proof of {\it (1)} $\Rightarrow$ {\it (6)}. 
Based on \eqref{dFvK} we have that {\it (6)} $\Rightarrow$ {\it (3)} while 
{\it (3)} $\Rightarrow$ {\it (2)} is trivial.

Next, we shall show that {\it (8)} $\Rightarrow$ {\it (7)}. To this end, 
suppose {\it (8)} holds and let $\varepsilon_o:=\min\{\varepsilon,a_0\}$,
where $\varepsilon$ is as in Lemma~\ref{b:SV} and $a_0$ as in \eqref{ha-GVV}. 
Fix an arbitrary $Q\in{\mathbb{D}}(E)$ and define 
$\Delta_Q:=B_{\rho_{\#}}\Bigl(x_Q,\tfrac{\varepsilon_o\ell(Q)}{2C\rho}\Bigr)\cap E$.
Then \eqref{ha-GVV}, \eqref{zjrh} and the fact that $E$ is $d$-dimensional ADR
imply 
\begin{eqnarray}\label{VCV}
\Delta_Q\subseteq Q,\quad
B_{\rho_{\#}}\bigl(x_Q,\varepsilon_o\ell(Q)\bigr)\setminus E\subseteq T_E(Q),
\quad\sigma(\Delta_Q)\approx\sigma(Q)=C\ell(Q)^d.
\end{eqnarray} 
Hence, if we now define $b_Q:=b_{\Delta_Q}$, where $b_{\Delta_Q}$ is the function 
associated to $\Delta_Q$ as in {\it (8)}, then $b_{\Delta_Q}$ satisfies
\eqref{CON-BB.789} which, when combined with the support condition of $b_{\Delta_Q}$
and the last condition in \eqref{VCV}, implies that $b_Q$ satisfies the first two 
conditions in \eqref{CON-BB} (with $\widetilde{Q}=Q$). In order to show that $b_Q$ 
also verifies the last condition in \eqref{CON-BB}, we write
\begin{eqnarray}\label{DB-N}
&& \hskip -0.70in
\int_{T_E(Q)}|(\Theta\,b_Q)(x)|^2\delta_E(x)^{2\upsilon-(m-d)}\,d\mu(x)
\nonumber\\[4pt]
&& =\int_{T_E(Q)\setminus B_{\rho_{\#}}\bigl(x_Q,\varepsilon_o\ell(Q)\bigr)}
|(\Theta\,b_Q)(x)|^2\delta_E(x)^{2\upsilon-(m-d)}\,d\mu(x)
\nonumber\\[4pt]
&& \quad+\int_{B_{\rho_{\#}}\bigl(x_Q,\varepsilon_o\ell(Q)\bigr)}|(\Theta\,b_Q)(x)|^2
\delta_E(x)^{2\upsilon-(m-d)}\,d\mu(x)=:I_1+I_2.
\end{eqnarray}
To further estimate $I_2$, observe that if 
$x\in T_E(Q)\setminus B_{\rho_{\#}}\bigl(x_Q,\varepsilon_o\ell(Q)\bigr)$ and $y\in\Delta_Q$,
then $\rho_{\#}(x,y)\geq\tfrac{\varepsilon_o}{2C_\rho}\ell(Q)$. This, \eqref{hszz-AXXX},
the first estimate in \eqref{CON-BB}, and the last condition in \eqref{VCV}, imply $|(\Theta\,b_Q)(x)|\leq C\ell(Q)^{-\upsilon}$ for every 
$x\in T_E(Q)\setminus B_{\rho_{\#}}\bigl(x_Q,\varepsilon_o\ell(Q)\bigr)$. Hence,
if we also recall \eqref{dFvK} we have
\begin{eqnarray}\label{DB-N-2}
I_1 &\leq & 
C \ell(Q)^{-2\upsilon}
\int_{T_E(Q)\setminus B_{\rho_{\#}}\bigl(x_Q,\varepsilon_o\ell(Q)\bigr)}
\delta_E(x)^{2\upsilon-(m-d)}\,d\mu(x)
\nonumber\\[4pt]
&\leq & 
C \ell(Q)^{-2\upsilon}
\int_{B_{\rho_{\#}}\bigl(x_Q,C\ell(Q)\bigr)\setminus E}
\delta_E(x)^{2\upsilon-(m-d)}\,d\mu(x)
\nonumber\\[4pt]
&\leq & 
C \ell(Q)^{-2\upsilon}\ell(Q)^{d+2\upsilon}\leq C\sigma(Q),
\end{eqnarray}
where the third inequality in \eqref{DB-N-2} is a consequence of \eqref{lbzF} 
(applied with $R=r=\ell(Q)$ and $\gamma:=m-d-2\upsilon$). As for $I_2$, by recalling
\eqref{CON-BB.789} and the last condition in \eqref{VCV}, it is immediate that 
$I_2\leq C_0\sigma(\Delta_Q)\leq C\sigma(Q)$. This, \eqref{DB-N} and \eqref{DB-N-2}
show that $b_Q$ also satisfies the last condition in \eqref{CON-BB} since the constants
in our estimates are finite positive geometric, and independent of the choice of $Q$. This completes the
proof of  {\it (8)} $\Rightarrow$ {\it (7)}. 

It is not difficult to see that {\it (1)} $\Rightarrow$ {\it (8)}. Indeed, 
by taking $b_\Delta:={\mathbf{1}}_\Delta$ for each surface ball $\Delta$, the first two
estimates in \eqref{CON-BB.789} are immediate while the third one is a consequence of
\eqref{G-UFXXX.2} written for $f:=b_\Delta$.

Trivially, {\it (2)} $\Rightarrow$ {\it (5)}. If we assume that {\it (5)} holds and
for each $Q\in{\mathbb{D}}(E)$ we set $b_Q:=b{\mathbf{1}}_Q$, then it is easy to verify
based on \eqref{UEHgXXX.2PA} and the fact that $b$ is para-accretive that \eqref{CON-BB}
is satisfied by the family $\{b_Q\}_{Q\in{\mathbb{D}}(E)}$. 
Hence, {\it (5)} $\Rightarrow ${\it (7)}. The proof of Theorem~\ref{M-TTHH} 
is therefore complete.
\end{proof}

In the last part of this section we present the 

\vskip 0.08in
\begin{proof}[Proof of Theorem~\ref{sfe-cor}]
The idea is to apply Theorem~\ref{VGds-2} in the setting 
${\mathscr{X}}:=E\times[0,\infty)$ and $E\equiv E\times\{0\}$ (i.e., we identify
$(y,0)\equiv y$ for every $y\in E$). Moreover, we let
\begin{eqnarray}\label{GCvca-LL1}
&& \rho((x,t),(y,s)):=\max\{|x-y|,|t-s|\}\,\,
\mbox{ for every }\,\,(x,t),(y,s)\in E\times[0,\infty),
\\[4pt]
&& \mu:=\sigma\otimes{\mathcal{L}}^1, 
\label{GCvca-LL2}
\end{eqnarray}
where ${\mathcal{L}}^1$ is the one-dimensional Lebesgue measure on $[0,\infty)$, 
and consider the integral kernel
\begin{eqnarray}\label{K-spc}
\begin{array}{l}
\theta:({\mathscr{X}}\setminus E)\times E\to{\mathbb{R}}
\\[4pt]
\theta\bigl((x,t),y\bigr):=2^{-k}\psi_k(x-y)\,\,\mbox{ if }\,\,x,y\in E, t>0
\,\,\mbox{ and }\,\,k\in{\mathbb{Z}},\,\,2^k\leq t<2^{k+1}.
\end{array}
\end{eqnarray}
Also, we let $\Theta$ be the integral operator defined in \eqref{operator}
corresponding to this choice of $\theta$. Then it is not difficult to verify 
that $({\mathscr{X}},\rho,\mu)$ is a $(d+1)$-ADR space, that $\alpha_\rho=1$, 
that $\theta$ satisfies \eqref{K234}-\eqref{hszz-3} for 
$a:=0$, $\alpha:=1$, $\upsilon:=1$, and that $\delta_E(x,t)=t$ for every 
$x\in E$ and $t\in[0,\infty)$. In particular, $\gamma$ defined in \eqref{WQ-tDD}
now equals $1$. Fix some $\kappa>0$ and observe that 
\begin{eqnarray}\label{agv}
\Gamma_\kappa(x)=\bigl\{(y,t)\in E\times(0,\infty):\,
|x-y|<(1+\kappa)t\bigr\},\quad\forall\,x\in E.
\end{eqnarray}
In this context, for $f\in L^2(E,\sigma)$, we consider the square
of the term in the left hand-side of \eqref{kt-Dc-BIS} 
corresponding to $p=q=2$ and use Fubini's Theorem, the property 
that $E$ is $d$-ADR, and \eqref{K-spc} to write
\begin{eqnarray}\label{sBBv}
&& \hskip -0.20in
\left\|\Bigl(\int_{\Gamma_{\kappa}(x)}|(\Theta f)(y,t)|^2\,
\frac{d\mu(y,t)}{\delta_E(y,t)^{d-1}}\Bigr)^{\frac{1}{2}}
\right\|_{L^2_x(E,\sigma)}^2
=\int_E\int_{\Gamma_{\kappa}(x)}|(\Theta f)(y,t)|^2\,t^{1-d}d\mu(y,t)
\,d\sigma(x)
\nonumber\\[4pt]
&&\quad =\int_{E\times(0,\infty)}|(\Theta f)(y,t)|^2\,t^{1-d}
\sigma\bigl(\{x\in E:\,y\in\Gamma_\kappa(x)\}\bigr)\,d\mu(y,t)
\nonumber\\[4pt]
&&\quad =\int_{E\times(0,\infty)}|(\Theta f)(y,t)|^2\,t^{1-d}
\sigma\bigl(E\cap B(y,(1+\kappa)t)\bigr)\,d\mu(y,t)
\nonumber\\[4pt]
&&\quad \approx C\int_0^\infty\int_E|(\Theta f)(y,t)|^2\,t\,d\sigma(y)\,dt
\nonumber\\[4pt]
&&\quad =C\sum\limits_{k=-\infty}^{+\infty}\int_{2^k}^{2^{k+1}}
\int_E\Bigl|\int_E2^{-k}\psi_k(y-z)f(z)\,d\sigma(z)\Bigr|^2 
d\sigma(y)\,t\,dt
\nonumber\\[4pt]
&&\quad =C\sum\limits_{k=-\infty}^{+\infty}
\int_E\Bigl|\int_E\psi_k(y-z)f(z)\,d\sigma(z)\Bigr|^2 d\sigma(y).
\end{eqnarray}
However, under the current assumptions on $E$, it was proved in 
\cite[Theorem, p.\,10]{DaSe91} that there exists $C\in(0,\infty)$ 
with the property that 
\begin{eqnarray}\label{PP-Zs}
\sum\limits_{k=-\infty}^{+\infty}\int_E
\Bigl|\int_E\psi_k(x-y)f(y)\,d\sigma(y)\Bigr|^2\,d\sigma(x)
\leq C\int_E|f|^2\,d\sigma,\quad\forall\,f\in L^2(E,\sigma).
\end{eqnarray}
 Hence, we may apply
Theorem~\ref{VGds-2} to conclude that there exists $C\in(0,\infty)$ such that
estimate \eqref{kt-Dc} is valid for every $q\in(1,\infty)$ and every
$p\in\bigl(\frac{d}{d+1},\infty\bigr)$. In turn, reasoning as in \eqref{sBBv}, 
estimate \eqref{kt-Dc} may be rewritten in the form 
\begin{eqnarray}\label{Df-Ch}
\left\|\Bigl(\sum\limits_{k=-\infty}^{+\infty}
\meanint_{y\in\Delta(x,(1+\kappa)2^k)}\Bigl|\int_E\psi_k(z-y)f(z)\,d\sigma(z)
\Bigr|^q\,d\sigma(y)\Bigr)^{1/q}
\right\|_{L^p_x(E,\sigma)}\leq C'\|f\|_{H^p(E,\sigma)}
\end{eqnarray}
for every $f\in H^p(E,\sigma)$ and some $C'\in(0,\infty)$ independent
of $f$. The desired conclusion now follows by observing that if $q\in(1,\infty)$ 
and $p\in\bigl(\frac{d}{d+1},\infty\bigr)$ are fixed, then there exists 
some $C\in(0,\infty)$ such that \eqref{bh} holds for every $f\in H^p(E,\sigma)$ 
if and only if there exist $\kappa,C'\in(0,\infty)$ such that estimate 
\eqref{Df-Ch} holds for every $f\in H^p(E,\sigma)$. Indeed, one direction 
is obvious, while the opposite one may be handled by observing that if 
$\psi\in C^\infty_0({\mathbb{R}}^{n+1})$ is odd then 
$\widetilde{\psi}(x):=\psi(x/2^N)$, for some fixed sufficiently large 
$N\in{\mathbb{N}}$, is also odd, smooth and compactly supported, and satisfies
$\widetilde{\psi}_k=2^{dN}\psi_{k+N}$ for every $k\in{\mathbb{Z}}$. 
Writing \eqref{Df-Ch} for $\widetilde{\psi}$ in place of $\psi$ and 
shifting the index of summation in the left-hand side, the desired conclusion follows.
\end{proof}

\end{document}